\documentclass[10pt]{amsart}
\usepackage{amsthm,amssymb,amscd,amsmath,amsfonts,amssymb,amscd,stmaryrd,hyperref,mathrsfs} 
\usepackage{fancyhdr}
\usepackage[shortlabels]{enumitem}
\usepackage[all]{xy}
\usepackage{texmac} 					
\usepackage{hyperref} 					
\usepackage{color} 	

\newtheorem{MainThm}{Theorem}

\theoremstyle{definition}
\newtheorem{defn}{Definition}[subsection]
\newtheorem{thm}[defn]{Theorem}
\newtheorem{cor}[defn]{Corollary}
\newtheorem{prop}[defn]{Proposition}
\newtheorem{lem}[defn]{Lemma}
\newtheorem{ex}[defn]{Example}

\newtheorem*{quest*}{Question}
\newtheorem{rmk}[defn]{Remark}

\newtheorem{notn}[defn]{Notation}

\calsymbols{c}{A,B,C,D,E,F,G,H,I,J,K,L,M,N,O,P,Q,R,S,T,U,V,W,X,Y,Z}
\bbsymbols{b}{A,B,C,D,E,F,G,H,I,J,K,L,M,N,O,P,Q,R,S,T,U,V,W,X,Y,Z}
\scrsymbols{s}{A,B,C,I,L,Q,R,M,P,S,T,U,}
\fraksymbols{f}{g,m,h,t,n,b,}
\bsymbols{b}{g,h,x,y,A,B,C,D,E,F,G,H,I,J,K,L,M,N,O,P,Q,R,S,T,U,V,W,X,Y,Z}
\bbsymbols{}{B,G,R,C,Z,N,P,Q,F,G,H,U,W,X,Y,T}

\operators{Der,gr,Spec,Proj,MaxSpec,ad,Loc,Stab,Aut,Hom,End,im,coker,Lie,Spf,rig,Sp,Ad,aug,ac,tors,Sym,mod,Homeo,op,qc,Res,alg,rk,Sch,id,GL,Ab,ind,Supp,Kdim,diag,Coh,Mod,cts,an,SL,Gal,Rep,sm,Ann}

\DeclareMathOperator{\Char}{char}

\DeclareMathOperator{\Dp}{dp}
\DeclareMathAlphabet{\mathpzc}{OT1}{pzc}{m}{it}
\DeclareSymbolFont{largesymbols}{OMX}{yhex}{m}{n}
\DeclareMathAccent{\wideparen}{\mathord}{largesymbols}{"F3}

\newcommand{\h}[1]{\widehat{#1}}

\newcommand{\hK}[1]{\h{#1_K}}

\newcommand{\w}[1]{\wideparen{#1}}

\newcommand{\fr}[1]{\mathfrak{{#1}}}

\newcommand{\ts}[1]{\texorpdfstring{$#1$}{}}

\newcommand{\be}{\begin{enumerate}[{(}a{)}]}
\newcommand{\ee}{\end{enumerate}}
\newcommand{\qmb}[1]{\quad\mbox{#1}\quad}

\newcommand{\hsp}{\hspace{0.1cm}}

\newcommand{\congs}{\stackrel{\cong}{\longrightarrow}}

\newcommand{\tocong}{\stackrel{\cong}{\longrightarrow}}

\newcommand{\WO}[1]{\underset{#1}{\w\otimes}{}}
\newcommand{\utimes}[1]{\underset{#1}{\otimes}{}}
\newcommand{\wtimes}[1]{\underset{#1}{\w\otimes}{}}
\newcommand{\urtimes}[1]{\underset{#1}{\rtimes}{}}
  \let\leq=\leqslant
\let\ge=\geqslant  \let\geq=\geqslant

\begin{document}

\title[Induction equivalence for equivariant $\mathcal{D}$-modules]{Induction equivalence for equivariant $\mathcal{D}$-modules on rigid analytic spaces}
\author{Konstantin Ardakov}
\subjclass[2010]{14G22; 32C38}
\keywords{$\mathcal{D}$-modules, rigid analytic spaces, locally analytic representations, Beilinson-Bernstein localisation}
\begin{abstract} We prove an Induction Equivalence and a Kashiwara Equivalence for coadmissible equivariant $\cD$-modules on rigid analytic spaces. This allows us to completely classify such objects with support in a single orbit of a classical point with co-compact stabiliser. As an application, we use the locally analytic Beilinson-Bernstein equivalence to construct new examples of large families of topologically irreducible locally analytic representations of certain compact semisimple $p$-adic Lie groups.
\end{abstract}
\maketitle
\tableofcontents

\section{Introduction}

\subsection{Support of equivariant $\cD$-modules on rigid analytic
spaces}\label{SuppInd} Let $G$ be a $p$-adic Lie group and let $K$ be a non-Archimedean field of mixed characteristic $(0,p)$. In our recent work \cite{EqDCap}, we
introduced the category $\cC_{\bX/G}$ of coadmissible $G$-equivariant
$\cD$-modules on a smooth rigid $K$-analytic space $\bX$, which serves as a $p$-adic analogue of the classical
category of coherent $\cD$-modules on a smooth complex algebraic
variety that are strongly equivariant with respect to the action of a
real or complex Lie group. The main motivation for this construction
is the Beilinson-Bernstein-style \emph{Localisation Theorem} ---
\cite[Theorem C]{EqDCap} --- which, combined with the Schneider-Teitelbaum equivalence \cite[Theorem 6.3]{ST}, gives an anti-equivalence of categories
$V \mapsto \Loc(V'_b)$ between the category of admissible locally
analytic $G$-representations $V$ of $G$ with trivial infinitesimal central
character and the category of coadmissible $G$-equivariant
$\cD$-modules on the rigid analytic flag variety associated with the
semisimple $p$-adic Lie group $G$.

In this paper, we start to address the following basic question.

\begin{quest*} Given an admissible locally analytic representation $V$ as above, what can be the \emph{support} of the sheaf $\Loc(V'_b)$?
\end{quest*}

In the classical situation, if $\cM$ is a strongly equivariant $\cD$-module on a complex algebraic variety, then it is in particular a coherent $\cD$-module and therefore its support is closed in the Zariski topology.  When we interpret an object $\cM \in \cC_{\bX/G}$ as a sheaf $\tilde{\cM}$ on the Huber space $\tilde{\bX}$ associated with $\bX$, for formal reasons, the support $\Supp(\tilde{\cM})$ of $\tilde{\cM}$ is necessarily a $G$-stable subset of $\tilde{\bX}$ which detects whether $\cM$ is non-zero in the sense that $\Supp(\tilde{\cM}) \neq \emptyset$ if and only if $\cM \neq 0$. Taking our cue from the classical situation, as well as from an examination of the currently known list of examples, we expect that when $\cM \neq 0$, 
\begin{enumerate}[{(}i{)}]
\item $\Supp(\tilde{\cM})$ is a closed subset of the topological space $\tilde{\bX}$,
\item $\Supp(\tilde{\cM}) \cap \bX$ is always non-empty.
\end{enumerate}
In the first instance, we focus our attention on those objects $\cM \in \cC_{\bX/G}$ whose support is as small as possible, set-theoretically. Guided by the expectations (i) and (ii) above, this means considering supports of the form $G\cdot x$ where $x \in \bX$. Note that the requirement that this is a closed subset of $\tilde{\bX}$ already puts a severe restriction on the possible values of $x$: for example, it can be shown that when $G = \SL_2(L)$ for some finite extension $L$ of $\Qp$ contained in $K$, acting on the rigid analytic $K$-projective line $\bX = \mathbb{P}^{1,\an}_K$  by M\"obius transformations, then the $G$-orbit $G\cdot x$ of a point $x \in \bX = \mathbb{P}^1(\overline{K}) / \Gal(\overline{K}/K)$ is closed in the Huber space $\tilde{\bX}$ if and only if $x \in \mathbb{P}^1(L)$. 

\subsection{The main result} Suppose, then, that $x \in \bX$ is such that $G \cdot x$ is closed in $\tilde{\bX}$. We are able to completely classify the $\cM \in \cC_{\bX/G}$ that are supported on $G \cdot x$.  In order to state our main result precisely, we need the following two definitions.

\begin{defn} Let $S$ be a subset of $\bX$.
\be\item Let $\cM$ be an abelian sheaf on $\bX$. We say that $\cM$ is \emph{supported on $S$} if $\cM_{|\bV} = 0$ for every admissible open subset $\bV$ of $\bX \backslash S$.
\item Let $\cC^S_{\bX/G}$ denotes the full subcategory of $\cC_{\bX/G}$ consisting of those $\cM \in \cC_{\bX/G}$ that are supported on $S$.
\ee
\end{defn}

We show in Lemma \ref{Supports} below that an abelian sheaf $\cM$ on $\bX$ is supported on $S$ in this sense if and only if the support $\Supp(\tilde{\cM})$ of the associated sheaf $\tilde{\cM}$ on $\tilde{\bX}$ is contained in the closure $\overline{S}$ of $S$ in $\tilde{\bX}$. 

\begin{defn} Let $G$ be a group acting on a set $X$ and let $Y$ be a subset of $X$. We say that the $G$-orbit of $Y$ is \emph{regular in $X$} if distinct $G$-translates of $Y$ are disjoint:
$g Y \cap Y \neq \emptyset  \quad \Rightarrow \quad g Y = Y$ for all $g \in G$.
\end{defn}
Evidently this condition is satisfied whenever $Y = \{x\}$ is a singleton. When it fails, the space $GY$ is somehow ``singular'' at points that happen to lie on more than one distinct $G$-translate of $Y$; hence the choice of terminology.  With these definitions in place, our main result reads as follows.

\begin{MainThm}\label{InductionEquivalence}  Let $G$ be a $p$-adic Lie group acting continuously on a rigid analytic space $\bX$, and let $\bY$ be a Zariski closed subset of $\bX$. Suppose that
\be 
\item $\bX$ is smooth and separated,
\item $\bY$ is irreducible and quasi-compact,
\item the $G$-orbit of $\bY$ is regular in $\bX$, and
\item the stabiliser $G_{\bY}$ of $\bY$ is co-compact in $G$. 
\ee Then the functor of sections supported on $\bY$ gives an equivalence of categories
\[ \cH^0_{\bY} : \cC_{\bX/G}^{G \bY} \congs \cC_{\bX / G_{\bY}}^{\bY} . \]
\end{MainThm}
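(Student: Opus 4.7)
My plan is to construct an explicit quasi-inverse ``induction'' functor $\operatorname{Ind} : \cC_{\bX/G_{\bY}}^{\bY} \to \cC_{\bX/G}^{G\bY}$. The regular-orbit hypothesis yields a disjoint decomposition $G\bY = \bigsqcup_{[g] \in G/G_{\bY}} g\bY$ inside $\bX$, and for each coset representative $g$ the pushforward $g_\ast \cN$ is naturally a $(gG_{\bY}g^{-1})$-equivariant $\cD$-module on $\bX$ supported on $g\bY$. I would define $\operatorname{Ind}(\cN)$ as an appropriately completed sum of these translates, endowed with the $G$-equivariance that permutes the pieces; independence of the choice of representatives is built in through the $G_{\bY}$-equivariance of $\cN$.

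The key technical issue is to verify that $\operatorname{Ind}(\cN)$ actually lies in $\cC_{\bX/G}^{G\bY}$. Coadmissibility is checked after restricting the equivariance to each compact open subgroup $H \le G$, and here the co-compactness hypothesis enters: since $H$ is open and $G/G_{\bY}$ is compact, the double coset space $H \backslash G / G_{\bY}$ is finite. Consequently, at every level of the coadmissibility data only finitely many translates $g\bY$ contribute, reducing the a priori infinite sum to a finite one. On each such piece, the induced $H$-equivariance is obtained from the $(H \cap g G_{\bY} g^{-1})$-equivariant structure on $g_\ast \cN$ via ordinary finite induction, and coadmissibility of $\operatorname{Ind}(\cN)$ then reduces to coadmissibility of $\cN$, using the Kashiwara equivalence stated earlier in the paper to identify objects supported on $g\bY$ with objects living on $g\bY$ itself.

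Once the functor is in place, the two compositions are handled via regularity. For $\cH^0_{\bY} \circ \operatorname{Ind} \cong \operatorname{id}$, regularity ensures $\bY \cap g\bY = \emptyset$ whenever $g \notin G_{\bY}$, so only the identity coset contributes to sections supported on $\bY$ and we recover $\cN$. Conversely, given $\cM \in \cC_{\bX/G}^{G\bY}$, regularity produces a canonical decomposition of $\cM$ into pieces supported on the distinct translates $g\bY$, and the $G$-equivariance identifies each piece with the $g$-translate of $\cH^0_{\bY}(\cM)$; without regularity, overlapping translates would obstruct this clean decomposition. I expect the main obstacle to be the detailed coadmissibility bookkeeping --- in particular, ensuring that the completed smash-product rings and cocycle data entering the definition of $\cC_{\bX/G}$ behave correctly under this piecewise construction, even though the underlying combinatorial reduction to finite sums over $H \backslash G / G_{\bY}$ is transparent.
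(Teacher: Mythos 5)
Your overall architecture is essentially right and parallels the paper: build an explicit induction functor as quasi-inverse to $\cH^0_{\bY}$, use the regular-orbit hypothesis to decompose $G\bY$ into disjoint translates, invoke co-compactness to reduce to a finite Mackey double-coset decomposition over each compact open $H$, and then verify unit and counit. However, there are two genuine gaps.

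First, your appeal to the Kashiwara equivalence is not available. Theorem \ref{InductionEquivalence} places no smoothness assumption on $\bY$ (only irreducibility and quasi-compactness), whereas Theorem \ref{MainB} requires $\bY$ smooth. The paper's proof of Theorem \ref{InductionEquivalence} works entirely with sheaves on $\bX$ and never reduces to $\cD$-modules on $\bY$; the Kashiwara equivalence enters only later, in Corollaries \ref{AandB} and \ref{AandBpt}, where smoothness of $\bY$ is assumed in addition. Trying to ``identify objects supported on $g\bY$ with objects living on $g\bY$ itself'' would import an unjustified hypothesis.

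Second and more seriously, your plan verifies coadmissibility only for $\operatorname{Ind}(\cN)$, but says nothing about why $\cH^0_{\bY}(\cM)$ lies in $\cC_{\bX/G_{\bY}}^{\bY}$ for $\cM\in\cC_{\bX/G}^{G\bY}$. This is the direction where the hypotheses ``separated'' and ``irreducible and quasi-compact'' do real work. The difficulty is a purely local one: coadmissibility of a $G_{\bY}$-equivariant sheaf is checked on small affinoids $\bU$, and on such a $\bU$ one controls a $\bU$-small compact open subgroup $J\le G$ and knows $\cH^0_{\bY\cap\bU}(\cM_{|\bU})$ is a coadmissible $\w\cD(\bU,J_{\bY\cap\bU})$-module. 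But $J_{\bY\cap\bU}$ can strictly contain $J_{\bY}$ --- Example \ref{NoLSC} in the paper shows this really happens even for connected $\bY$ --- and a coadmissible module over the bigger algebra need not restrict to one over the smaller. The paper isolates this in the \emph{Local Stabiliser Condition} (Definition \ref{LSCdefn}), proves Theorem \ref{LocalCohoCoadm} to get coadmissibility of $H^0_{\bY}(\cM)$ over $\w\cD(\bX,G_{\bY})$ on small $(\bX,G)$, and then establishes the LSC under your hypotheses via Lemma \ref{AffinoidStabs}, Proposition \ref{LocalStabs} and Corollary \ref{EasyLSC} (the irreducibility and quasi-compactness of $\bY$ and separatedness of $\bX$ enter precisely here). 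Your proposal never confronts this, so as written the key step --- that $\cH^0_{\bY}$ actually lands in the target category --- is missing.
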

Unfortunately, Theorem \ref{InductionEquivalence} requires a large number of conditions: we proceed to discuss these in turn. Condition (a) on $\bX$ is not restrictive at all, because the category $\cC_{\bX/G}$ has only been defined in the case where $\bX$ is smooth, and many rigid spaces occurring in nature are separated. Condition (b) is slightly more restrictive, but is satisfied in many examples of interest. Both (a) and (b) are needed to ensure that the triple $(\bX, \bY, G)$ satisfies a certain technical condition that we call the \emph{Local Stabiliser Condition} --- see Definition \ref{LSCdefn} --- which is in turn needed to ensure that the local cohomology functor $\cH^0_{\bY}$ preserves coadmissibility. 

The most restrictive condition in Theorem \ref{InductionEquivalence} is condition (c): away from the somewhat trivial case where $\bY$ is a point, unfortunately we had to work quite hard to come up with examples of triples $(\bX, \bY, G)$ where this regularity condition on $\bY$ is satisfied. Indeed, we show in $\S \ref{ExamplesSect}$ below that when $\bX = \mathbf{P}^{n, \an}$ is a projective space, its Zariski closed subvariety $\bY$ has positive dimension and $G$ is a subgroup of $\GL_{n+1}(L)$ which contains a transvection \footnote{an element $h \in GL_{n+1}(L)$ such that $\rk(h - 1) = 1$}, then this condition is \emph{never} satisfied. On a more positive note, in Theorem \ref{MainD} below we give an explicit example of a triple $(\bX, \bY, G)$ where the $G$-orbit of $\bY$ in $\bX$ is regular and $\bY$ is bigger than a point. There, $\bY$ is some particular ruled surface inside the ($3$-dimensional) rigid analytic flag variety $\bX$ of $\GL_4$, and $G$ is the group of units of some $p$-adic division algebra of degree $4$. In forthcoming work with Tobias Schmidt, we will apply Theorem \ref{InductionEquivalence} in other interesting situations where $\bY$ is bigger than a single point.

Let $B$ be a closed subgroup of $G$. In $\S\ref{IndFunctor}$ we construct an \emph{induction functor} $\ind_{B}^G : \cC_{\bX / B} \longrightarrow \cC_{\bX/G}$ which, in the setting of Theorem \ref{InductionEquivalence}, serves as an inverse to the local cohomology functor $\cH^0_{\bY}$ with $B = G_{\bY}$.  In order to be able to show that this induction functor preserves coadmissibility, we have to assume that the homogeneous space $G/B$ is compact: this explains the presence of condition (d) in the statement of Theorem \ref{InductionEquivalence}. Note that in the case where $\bY$ is a point, $\{x\}$ say, the co-compactness of the stabiliser --- or equivalently, the compactness of the $G$-orbit $G \cdot x$ --- is essentially forced upon us by our expectation (see $\S \ref{SuppInd}$ above) that this orbit is closed in $\tilde{\bX}$ whenever $\bX$ happens to be quasi-compact. Because the rigid analytic flag variety of relevance to the locally analytic Beilinson-Bernstein Localisation Theorem, \cite[Theorem C]{EqDCap} \emph{always} happens to be quasi-compact, this final restriction in Theorem \ref{InductionEquivalence} is not particularly onerous.

\subsection{The equivariant Kashiwara equivalence}
In those situations where Theorem \ref{InductionEquivalence} does apply, one is left with the task of analysing the category $\cC_{\bX / G_{\bY}}^{\bY}$: although the group $G$ has been replaced by its smaller subgroup $G_{\bY}$, the objects in this category are still certain $\cD$-modules on $\bX$. Our second main result explains how to deal with this category in the case where $\bY$ happens to be smooth.

\begin{MainThm}\label{MainB} Let $\iota : \bY \hookrightarrow \bX$  be the inclusion of a smooth, Zariski closed subset $\bY$ into the smooth rigid analytic space $\bX$. Let $G$ be a $p$-adic Lie group acting continuously on $\bX$ and stabilising $\bY$.  Then there are natural functors
\[\iota_+ : \cC_{\bY/G} \to \cC_{\bX/G}^{\bY} \quad \qmb{and} \quad \iota^\natural : \cC_{\bX/G}^{\bY} \to \cC_{\bY/G}\]
which are mutually inverse equivalences of abelian categories.\end{MainThm}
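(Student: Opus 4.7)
The plan is to construct the two functors explicitly, prove the non-equivariant Kashiwara equivalence for coadmissible $\w\cD$-modules by reducing locally to a polydisc transverse to $\bY$, and finally observe that the entire construction is manifestly $G$-equivariant so that it lifts to the equivariant categories.

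For a coadmissible $\w\cD_\bY$-module $\cN$, set $\iota_+ \cN := \iota_*(\w\cD_{\bX \leftarrow \bY} \wtimes{\w\cD_\bY} \cN)$, the $\iota_*$-pushforward of the completed tensor product of $\cN$ with the left transfer bimodule. For a coadmissible $\w\cD_\bX$-module $\cM$ supported on $\bY$, set $\iota^\natural \cM$ equal to the subsheaf of $\iota^{-1}\cM$ consisting of sections locally annihilated by some power of the ideal sheaf $\sI_\bY \subset \cO_\bX$; this carries a natural left $\w\cD_\bY$-action inherited from the action of the normaliser of $\sI_\bY$ in $\w\cD_\bX$. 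Both constructions are automatically $G$-equivariant: the ideal sheaf $\sI_\bY$ is $G$-stable because $G$ stabilises $\bY$, so $\iota^\natural$ sends $G$-equivariant modules to $G$-equivariant modules, and the transfer bimodule $\w\cD_{\bX \leftarrow \bY}$ carries a canonical $G$-equivariant bimodule structure which is transported to $\iota_+ \cN$ by the tensor product.

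To establish the non-equivariant equivalence $\cC_\bY \leftrightarrows \cC_\bX^\bY$, work locally on an admissible affinoid cover of $\bX$ by opens $\bU$ on which the smoothness of the pair $(\bY, \bX)$ provides a regular sequence $f_1, \dots, f_r$ cutting out $\bY \cap \bU$ together with dual derivations $\partial_1, \dots, \partial_r \in \w\cD_\bU$ satisfying $\partial_i(f_j) = \delta_{ij}$. In these coordinates $\w\cD_{\bX \leftarrow \bY}$ is free as a left $\w\cD_{\bY \cap \bU}$-module on the monomials $\partial^\alpha$ for $\alpha \in \bN^r$, and the classical Kashiwara argument adapts level-by-level in a Frechet-Stein presentation: any coadmissible $\w\cD_\bU$-module supported on $\bY \cap \bU$ decomposes as a direct sum $\bigoplus_\alpha \partial^\alpha \cdot \iota^\natural \cM$. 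This yields natural isomorphisms $\iota_+ \iota^\natural \cong \id$ and $\iota^\natural \iota_+ \cong \id$, and the equivariant enhancement follows by naturality since the isomorphisms are built from $G$-equivariant data.

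The main obstacle will be the preservation of coadmissibility, particularly by $\iota_+$: a naive countable direct sum of coadmissible modules is in general not coadmissible, so one must realise $\iota_+ \cN$ as an inverse limit of coherent pieces arising from a Frechet-Stein presentation of $\w\cD_\bX$ on affinoid neighbourhoods of $\bY$. The crucial point is that on each Banach level only finitely many monomials $\partial^\alpha$ contribute modulo appropriate powers of the defining ideals, so the direct sum is effectively finite at each level, and the resulting inverse system reassembles into a genuine coadmissible module. The fact that $\iota^\natural$ preserves coadmissibility is somewhat easier: one shows that the subsheaf of sections killed by $\sI_\bY^n$ is coherent at each Frechet-Stein level by standard local finiteness arguments on $\bU$, and then passes to the limit.
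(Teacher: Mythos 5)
Your sketch reproduces the classical Kashiwara blueprint, but it misses the central difficulty that forces the paper's approach: the local coordinate data you rely on is \emph{not} $G$-equivariant. You write that "both constructions are automatically $G$-equivariant," but after choosing local coordinates $f_1,\dots,f_r$ with dual derivations $\partial_1,\dots,\partial_r$, the subalgebra of $\w\cD_{\bU}$ playing the role of $\w\cD_{\bY\cap\bU}$ in the decomposition $\bigoplus_\alpha \partial^\alpha\cdot(\hspace{0.05cm}\cdot\hspace{0.05cm})$ is the (completion of the) centraliser of $\{f_i\}$, and there is no reason for a compact open subgroup stabilising $\bU$ and $\bY$ to preserve this centraliser, because $G$ need not preserve the chosen generating set $F$ of $\sI(\bU)$. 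The paper's resolution is to replace the centraliser $C_{\cL}(F)$ by the \emph{normaliser} $N_{\cL}(\cI)$ of the ideal $\cI = \sI(\bU)\cap\cA$, which \emph{is} canonically $G$-stable, and then to prove (Propositions \ref{TITbimodule}, \ref{SUbimod}, Theorem \ref{MainEqMKE}) that the resulting crossed product $\hK{U(\cN)}\rtimes_H G$ still induces an equivalence. Without an argument of this type, the equivariant enhancement simply does not "follow by naturality," and this is the key new input of Section \ref{EqMKE}.

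There are also two more concrete errors. First, your $\iota^\natural$ is defined as the sections annihilated by \emph{some power} of $\sI_\bY$; but if $\cM$ is supported on $\bY$ this recovers a dense (non-closed, hence non-coadmissible) subsheaf of $\cM$ rather than the correct functor $\cM\mapsto\cM[\sI_\bY]$ (sections killed by $\sI_\bY$ itself, as in Proposition \ref{Ivictims}). Second, the decomposition "$\bigoplus_\alpha \partial^\alpha\cdot\iota^\natural\cM$" is not literally a direct sum in the completed setting: modules of the form $N\w\otimes_V U$ with $U=\hK{U(\cL)}$ and $V=\hK{U(\cC)}$ involve convergent infinite sums $\sum_j q_j x^j$ with $q_j\to 0$, so at each Banach level it is false that only finitely many monomials contribute. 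The correct analytic characterisation of the essential image is the divided-power torsion condition $M = M_{\Dp}(F)$ of Theorem \ref{MainMicroKash}, and the lattice/convergence arguments of \S\ref{MicroKash} are precisely what replaces the naive direct-sum splitting. Finally, the paper's proof passes through side-switching to right modules (\S\ref{LRSwitch}, Theorem \ref{MainLRswitch}) before running the Kashiwara argument — this is a non-trivial equivariant statement in its own right that your sketch omits entirely.
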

Combining Theorem \ref{InductionEquivalence} with Theorem \ref{MainB}, we obtain 

\begin{cor}\label{AandB} Let $G$ be a $p$-adic Lie group acting continuously on a rigid analytic space $\bX$, and let $\bY$ be a Zariski closed subset of $\bX$. Suppose that
\be 
\item $\bX$ is smooth and separated,
\item $\bY$ is smooth, irreducible and quasi-compact,
\item the $G$-orbit of $\bY$ is regular in $\bX$, and
\item the stabiliser $G_{\bY}$ of $\bY$ is co-compact in $G$. 
\ee Then there is an equivalence of categories
\[ \iota^\natural \circ \cH^0_{\bY} : \cC_{\bX/G}^{G \bY} \congs \cC_{\bY / G_{\bY}}. \]
\end{cor}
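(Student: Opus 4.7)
The plan is simply to compose the two equivalences furnished by Theorem \ref{InductionEquivalence} and Theorem \ref{MainB}; the work lies in checking that the hypotheses of the two theorems are in fact satisfied in the situation of the Corollary, and that the two equivalences can be fitted together on the nose.

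First, conditions (a)--(d) of the Corollary imply conditions (a)--(d) of Theorem \ref{InductionEquivalence}, since Theorem \ref{InductionEquivalence} requires $\bY$ only to be irreducible and quasi-compact, and smoothness of $\bY$ in (b) is strictly stronger. Therefore Theorem \ref{InductionEquivalence} gives an equivalence of categories
\[ \cH^0_{\bY} : \cC_{\bX/G}^{G\bY} \;\congs\; \cC_{\bX/G_{\bY}}^{\bY}. \]

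Next, I would apply Theorem \ref{MainB} with the given inclusion $\iota : \bY \hookrightarrow \bX$ but with the group $G_{\bY}$ in place of $G$. The hypotheses of Theorem \ref{MainB} are satisfied: $\bX$ is smooth by (a); $\bY$ is smooth and Zariski closed by (b); $G_{\bY}$ is a closed subgroup of the $p$-adic Lie group $G$, hence is itself a $p$-adic Lie group; and it acts continuously on $\bX$ (by restriction) and stabilises $\bY$ tautologically. Therefore the quoted theorem provides a mutually inverse pair of equivalences
\[ \iota_+ : \cC_{\bY/G_{\bY}} \;\tocong\; \cC_{\bX/G_{\bY}}^{\bY} \qmb{and} \iota^\natural : \cC_{\bX/G_{\bY}}^{\bY} \;\tocong\; \cC_{\bY/G_{\bY}}. \]

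Composing these with the equivalence from the first paragraph yields precisely
\[ \iota^\natural \circ \cH^0_{\bY} : \cC_{\bX/G}^{G\bY} \;\congs\; \cC_{\bY/G_{\bY}}, \]
as required. There is essentially no obstacle here beyond bookkeeping; the only potentially delicate point is the implicit compatibility check that the full subcategory $\cC_{\bX/G_{\bY}}^{\bY} \subseteq \cC_{\bX/G_{\bY}}$ (supports in $\bY$) which appears as the target of Theorem \ref{InductionEquivalence} is literally the same as the source of $\iota^\natural$ in Theorem \ref{MainB}, which it is by construction.
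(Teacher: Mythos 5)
Your proof is correct and is precisely what the paper intends: Corollary \ref{AandB} is obtained by composing Theorem \ref{InductionEquivalence} with Theorem \ref{MainB} applied to the group $G_{\bY}$, and you have spelled out the routine hypothesis verification explicitly. No issues.
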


Let us now specialise to the case where $\bY$ is a classical point $\{x\}$, so that conditions (b) and (c) in Corollary \ref{AandB} are satisfied automatically. 

\begin{cor}\label{AandBpt} Let $G$ be a $p$-adic Lie group acting continuously on a smooth, separated rigid analytic space $\bX$ and let $x \in \bX$. Suppose that the stabiliser $G_x$ of $x$ is co-compact in $G$. Then there is an equivalence of categories
\[ \Gamma \circ \iota^\natural \circ \cH^0_{\{x\}} : \cC_{\bX/G}^{G \cdot x} \congs \cC_{D^\infty(G_x,K)}. \]
\end{cor}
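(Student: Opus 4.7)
The plan is to derive this immediately from Corollary \ref{AandB} applied with $\bY = \{x\}$, and then to identify the resulting target category $\cC_{\{x\}/G_x}$ with $\cC_{D^\infty(G_x, K)}$ via the global sections functor $\Gamma$.

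First, I verify that the hypotheses (a)--(d) of Corollary \ref{AandB} hold for $\bY = \{x\}$. Condition (a) is our hypothesis on $\bX$. For (b), a classical point of a smooth rigid analytic space is itself a smooth, irreducible, quasi-compact Zariski closed subset. Condition (c) is automatic, as remarked in the paper immediately after the definition of regularity: any orbit of a singleton is regular, since two $G$-translates of $\{x\}$ are either equal or disjoint. Condition (d) is our hypothesis on $G_x$. Corollary \ref{AandB} therefore yields an equivalence
\[ \iota^\natural \circ \cH^0_{\{x\}} : \cC_{\bX/G}^{G \cdot x} \congs \cC_{\{x\}/G_x}. \]

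It remains to show that the global sections functor $\Gamma$ induces an equivalence $\cC_{\{x\}/G_x} \congs \cC_{D^\infty(G_x, K)}$. Since $\{x\} \cong \Sp(K)$ is affinoid, one invokes the foundational affinoid comparison theorem for coadmissible equivariant $\cD$-modules from \cite{EqDCap}: global sections on an affinoid space give an equivalence between the coadmissible equivariant $\cD$-module category and the category of coadmissible modules over an appropriate completed crossed-product algebra combining $\w\cD$ on the space with the distribution algebra of the group. On the single point $\{x\}$ the sheaf $\w\cD$ collapses to the constant sheaf $K$, so the crossed product reduces to $D^\infty(G_x, K)$ itself. Composing with the previous equivalence then produces the claimed functor.

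The main obstacle lies in this second step: one must correctly identify the affinoid-level equivariant $\cD$-module category on a point with coadmissible modules over $D^\infty(G_x, K)$, by invoking the relevant structural results from \cite{EqDCap} and checking that the $\cD$-module structure genuinely becomes trivial so that only the $G_x$-equivariance data survives. Once this identification is established, the corollary follows formally by composition of equivalences.
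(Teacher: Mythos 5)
Your approach is exactly the paper's: apply Corollary~\ref{AandB} with $\bY = \{x\}$ (conditions (b), (c) hold automatically for a classical point) and then identify $\cC_{\{x\}/G_x}$ with $\cC_{D^\infty(G_x,K)}$ via $\Gamma$. For the second step the paper simply invokes \cite[Theorem B(c)]{EqDCap}, which is precisely the statement that on a single classical point the global sections functor induces an equivalence between coadmissible $G_x$-equivariant $\cD$-modules and coadmissible $D^\infty(G_x,K)$-modules.

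Where your sketch is imprecise is in the justification for that second step. The ``affinoid comparison theorem'' you gesture at (the analogue of \cite[Theorem 4.4.3]{EqDCap}, sending $\cM$ to $\cM(\bU)$ over $\w\cD(\bU,H)$) applies only to \emph{small} pairs $(\bU,H)$, and in particular requires $H$ to be a compact open subgroup. Here the hypothesis is only that $G_x$ is \emph{co-compact} in $G$, which does not make $G_x$ compact --- for instance $G_x$ is a Borel subgroup when $G = \SL_2(L)$ and $x \in \P^1(L)$. So the crossed-product algebra $\w\cD(\{x\}, G_x)$ that your argument implicitly relies on is not even defined for such $G_x$. The result that actually handles the global (possibly non-compact) group on a point is exactly \cite[Theorem B(c)]{EqDCap}, and it is this theorem, not the affinoid comparison result, that produces $D^\infty(G_x,K)$ as the correct algebra: on a $0$-dimensional space the Lie-algebra part of the distribution algebra vanishes, leaving only the smooth distributions. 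You do flag this as the ``main obstacle'', which is fair; the fix is to cite \cite[Theorem B(c)]{EqDCap} directly rather than the affinoid comparison theorem.
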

Here $D^\infty(G_x, K)$ is the algebra of smooth distributions on $G_x$ from \cite{ST3}; the equivalence between $\cC_{\{x\} / G_x}$ and the category $\cC_{D^\infty(G_x,K)}$ of coadmissible $D^\infty(G_x,K)$-modules is given by the global sections functor $\Gamma$ --- see \cite[Theorem B(c)]{EqDCap}.

\subsection{Applications to locally analytic representations} Combining Corollary \ref{AandBpt} together with \cite[Theorem C]{EqDCap} and \cite[Theorem 6.3]{ST}, we obtain the following consequences for the locally analytic representation theory of $p$-adic semisimple groups. Following \cite[\S 6]{ST}, we will use $\Rep^{L-\an}_K(G)$ to denote the abelian category of all admissible locally $L$-analytic $G$-representations over $K$ with continuous $K$-linear maps, and we will use $\Rep^{\sm}_K(G)$ to denote its full subcategory consisting of the admissible smooth $G$-representations. 

\begin{MainThm}\label{FullSubcat} Let $L$ be a finite extension of $\Qp$ contained in $K$, let $\G$ be an affine algebraic $L$-group such that $\G_K := \G \otimes_L K$ is connected and split semisimple, and let $G$ be an open subgroup of $\G(L)$, and let  $\bX := (\G_K/\B)^{\rig}$ be the rigid $K$-analytic flag variety associated with $\G_K$.
\be
\item Suppose that the stabiliser $G_x$ of the classical point $x \in \bX$ is co-compact in $G$. Then there is a fully faithful and exact embedding of abelian categories
\[ j_x : \Rep^{\sm}_K(G_x) \hookrightarrow \Rep^{L-\an}_K(G)\]
given by $j_x(V) := \Gamma(\bX, \ind_{G_x}^G \iota_+ \Loc^{D^\infty(G_x,K)}_{\{x\}} V'_b)'$.
\item Let $y \in \bX$ be another classical point with co-compact stabiliser $G_y$ and suppose that $y \notin G\cdot x$. Then the essential images of $j_x$ and $j_y$ contain no non-zero isomorphic objects.
\ee
\end{MainThm}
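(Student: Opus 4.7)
The plan is to factor $j_x$ as a composition
\[
\Rep^{\sm}_K(G_x) \xrightarrow{\ (-)'_b\ } \cC_{D^\infty(G_x,K)} \xrightarrow{\ \text{Cor.~\ref{AandBpt}}\ } \cC_{\bX/G}^{G\cdot x} \hookrightarrow \cC_{\bX/G} \xrightarrow{\ \Gamma(\bX,-)'\ } \Rep^{L-\an}_K(G),
\]
in which every constituent functor is exact and is either an equivalence, an anti-equivalence, or the fully faithful inclusion of a full abelian subcategory. The first arrow is the smooth version of the Schneider-Teitelbaum anti-equivalence. The second is the quasi-inverse $\ind_{G_x}^G \iota_+ \Loc^{D^\infty(G_x,K)}_{\{x\}}$ to $\Gamma \circ \iota^\natural \circ \cH^0_{\{x\}}$ provided by Corollary~\ref{AandBpt}. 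The third is the tautological inclusion of a full subcategory, and the fourth is the anti-equivalence obtained by combining the locally analytic Beilinson-Bernstein equivalence \cite[Theorem~C]{EqDCap} with \cite[Theorem~6.3]{ST}, which identifies $\cC_{\bX/G}$ with the full subcategory of $\Rep^{L-\an}_K(G)$ of representations having trivial infinitesimal central character. Once this composition is set up, the two \emph{duality} factors absorb into each other and we obtain a covariant functor; exactness and fully faithfulness of $j_x$ then follow from the corresponding properties of each factor. A minor side-check will be needed to confirm that $\cC_{\bX/G}^{G\cdot x}$ is stable under kernels and cokernels taken inside $\cC_{\bX/G}$, so that the inclusion really is an exact embedding of abelian categories; this should be formal from the sheaf-theoretic definition of ``supported on $G\cdot x$''.

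For part (b), I would argue by comparing supports on the Huber space. Suppose for contradiction that $j_x(V) \cong j_y(W)$ for some non-zero $V \in \Rep^{\sm}_K(G_x)$ and $W \in \Rep^{\sm}_K(G_y)$. Running the factorisation of part (a) backwards produces an isomorphism $\cM_x \cong \cM_y$ in $\cC_{\bX/G}$ with $\cM_x \in \cC_{\bX/G}^{G\cdot x}$ and $\cM_y \in \cC_{\bX/G}^{G\cdot y}$, both non-zero by fully faithfulness of $j_x$ and $j_y$. Lemma~\ref{Supports} then gives
\[
\Supp(\tilde{\cM}_x) \subseteq \overline{G\cdot x} \qmb{and} \Supp(\tilde{\cM}_y) \subseteq \overline{G\cdot y}.
\]
Because $\bX$ is the quasi-compact rigid flag variety and the stabilisers $G_x, G_y$ are co-compact, the orbits $G\cdot x$ and $G\cdot y$ are already closed in $\tilde{\bX}$ --- as recalled in $\S\ref{SuppInd}$ --- so the closures can be dropped. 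The hypothesis $y \notin G\cdot x$ then forces the two orbits to be disjoint, and the common support $\Supp(\tilde{\cM}_x) = \Supp(\tilde{\cM}_y)$ is forced to be empty, contradicting $\cM_x \neq 0$.

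The principal obstacle lies entirely in part (a): the bookkeeping required to check that the composite of the four functors above really does coincide with the explicit formula given for $j_x(V)$, and in particular that the two dualisations (on $V$ at the start and on $\Gamma(\bX,-)$ at the end) interact correctly with the intermediate covariant equivalences. Part (b), by contrast, is a short support argument whose only non-formal input is the closedness of classical $G$-orbits with co-compact stabiliser inside the Huber space of the quasi-compact flag variety.
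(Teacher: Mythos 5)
Your factorisation of $j_x$ is exactly the one the paper has in mind: the theorem is stated as an immediate consequence of Corollary \ref{AandBpt}, the locally analytic Beilinson--Bernstein localisation \cite[Theorem C]{EqDCap} and \cite[Theorem 6.3]{ST}, and the explicit formula for $j_x(V)$ is nothing other than the composite you wrote down, read inside-out. The two dualisations at the ends convert the two contravariant Schneider--Teitelbaum equivalences into a covariant fully faithful exact embedding, and the side-check that $\cC^{G\cdot x}_{\bX/G}$ is closed under kernels and cokernels inside $\cC_{\bX/G}$ is indeed formal.

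One small correction in part (b). You write that the orbits $G\cdot x$ and $G\cdot y$ are \emph{already closed} in $\tilde{\bX} = \sP(\bX)$, invoking the discussion in $\S$\ref{SuppInd}. That is not literally true: a classical point $x \in \bX$ is not a closed point of the Huber space $\sP(\bX)$ --- its closure $\overline{\{x\}} = \tilde{\{x\}}$ consists of all prime filters $p \subseteq \fr{m}_x$, which is strictly larger in general. What \emph{is} closed (by Corollary \ref{ClosedGY} and Corollary \ref{ClosureOfGY}) is $\overline{G\cdot x} = G\tilde{\{x\}}$, and the closures cannot simply be dropped. The correct step is to observe that $\overline{G\cdot x} = r^{-1}(G\underline{\{x\}})$ and $\overline{G\cdot y} = r^{-1}(G\underline{\{y\}})$, and that $G\underline{\{x\}}$ and $G\underline{\{y\}}$ are disjoint in $\sM(\bX)$ whenever $y \notin G\cdot x$ by Lemma \ref{YXtop}(d,e); hence the two closures in $\sP(\bX)$ are disjoint, and the support argument finishes as you describe. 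This is a short repair, but worth making precisely, since the paper's own technical apparatus (Theorem \ref{Separation}) is designed exactly to furnish disjoint open neighbourhoods of such closures.
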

Of course the essential image of $j_x$ may be characterised as the full subcategory of $G$-representations $V \in \Rep^{L-\an}_K(G)$ such that the support of $\Loc^{D(G,K)}_{\bX}(V'_b)$ is contained in $G \cdot x$. 

In the case where the open subgroup $G$ of $\mathbb{G}(L)$ happens to be \emph{compact}, the condition on the stabiliser $G_x$ imposed in Theorem \ref{FullSubcat} is vacuous. In this case, for every $x \in \bX$ and every irreducible admissible smooth representation $V$ of $G_x$, Theorem \ref{FullSubcat} produces a \emph{topologically irreducible} admissible locally $L$-analytic $G$-representation $j_x(V)$. Furthermore, another such representation $j_y(W)$ can be isomorphic to $j_x(V)$ (even after a base-change to a finite extension of $K$) only if $y$ lies in the same $G$-orbit as $x$. Note that because the field of definition $L$ is far from being algebraically closed, the rigid analytic variety $\bX$ admits a very large collection of ``irrational" $G$-orbits $G\cdot x$: we can choose any increasing sequence $L \subset L_0 \subset L_1 \subset L_2 \subset \cdots$ of finite Galois extensions of $L$ such that the semisimple affine algebraic group $\mathbb{G}_{L_0}$ is split with flag variety $\mathbb{X}$, and pick a point $x_n \in \mathbb{X}(L_n) \backslash \mathbb{X}(L_{n-1})$ for each $n \geq 1$; then $G\cdot x_n$ is necessarily contained in $\mathbb{X}(L_n) \backslash \mathbb{X}(L_{n-1})$ and hence $G\cdot x_n \neq  G \cdot x_m$ whenever $n < m$. The image of each of these orbits in $\bX = \mathbb{X}(\overline{K}) / \Gal(\overline{K}/K)$ then produces at least one irreducible admissible $G$-representation, namely $j_x(K)$ where $K$ is the trivial $G_x$-representation. In this way, we obtain a large family of pairwise non-isomorphic topologically irreducible admissible locally $L$-analytic $G$-representations whenever $G$ is a compact open subgroup of $\mathbb{G}(L)$.  As we explained in $\S \ref{SuppInd}$ above, in the case where $G$ is the group $\mathbb{G}(L)$ itself, the restriction that $G_x$ is co-compact is far more restrictive, because it essentially forces $x$ to be an \emph{$L$-rational} point of $\bX$, and then $j_x(V)$ is a principal series representation of $G$. Nevertheless, in this case Theorem \ref{FullSubcat} may be used to establish the topological irreducibility of these representations.

In \cite{KS06}, Kisin and Strauch constructed examples of topologically irreducible locally $L$-analytic representations $V_{\chi,\mathcal{H}}$ associated with certain open $G$-orbits $\mathcal{H} = G\cdot x$ contained in $\mathbb{P}^1(E)$, where $E$ is some finite extension of $L$ and $G$ is some locally $L$-analytic subgroup of $\GL_2(E)$ such that $E \cdot \Lie(G)$ contains $\mathfrak{sl}_2(E)$. They also proved that their representations are admissible whenever the orbit $\mathcal{H}$ is compact. We expect that it can be shown that Theorem C gives a generalisation of their construction at least when $G$ is open in $\SL_2(L)$ and $\mathbb{G} = \SL_{2,L}$: then the dual $(V_{\chi,\mathcal{H}})'_b$ of $V_{\chi,\mathcal{H}}$ should be isomorphic to $j_x(K)$ for those values of $\chi$ where $V_{\chi,\mathcal{H}}$ has trivial infinitesimal central character. Theorem C will then give a new proof of the topological irreducibility of the Kisin-Strauch representations which does not use Schneider-Teitelbaum's $p$-adic Fourier transform from \cite{ST6}. Describing the localisations of the $D(G,K)$-modules $(V_{\chi,\mathcal{H}})'_b$ for other values of $\chi$ will involve the theory of twisted $G$-equivariant $\mathcal{D}$-modules, as was developed in \cite{Mathers}. 

\subsection{A new class of topologically irreducible representations} The representations constructed above are all associated with $G$-orbits of the form $G \cdot x$ where $x$ is a classical point of the rigid analytic flag variety. Our last main result, Theorem \ref{MainD} below, gives an explicit example of a large collection of triples $(\bX, \bY, G)$ where the $G$-orbit of $\bY$ is regular in $\bX$ with $\dim \bY > 0$.

Let $D$ be a division algebra of degree $4$ and dimension $16$ over $L$ and let $\mathbb{G}$ be the connected $L$-form of $\GL_4$ such that $\mathbb{G}(L) = D^\times$. We say that $x \in \mathbb{G}(K)$ is \emph{generic} if the $L$-algebra homomorphism $x : \cO(\mathbb{G}) \to K$ is injective. We also recall that the \emph{twisted cubic} is the projective curve $C \subseteq \mathbb{P}^3$ defined by the homogeneous equations $x_0x_2 = x_1^2, x_1x_3 = x_2^2, x_0x_3 = x_1x_2$.

\begin{MainThm}\label{MainD} Suppose that $K$ contains an algebraic closure of $L$. Let $D$ be a division algebra of degree $4$ over $L$ and let $\mathbb{G}$ be the connected $L$-form of $\GL_4$ such that $\mathbb{G}(L) = D^\times$. Let $X$ be the flag variety of $\GL_4$ and let $Y$ be the preimage in $X$ of the twisted cubic $C$ in $\P^3$ under either one of the two projection maps $X \twoheadrightarrow \P^3$. Let $\bY \subset \bX$ be the corresponding rigid analytic varieties over $K$. Then the $D^\times$-orbit of $x \bY$ is regular in $\bX$ for any generic $x \in \mathbb{G}(K)$.
\end{MainThm}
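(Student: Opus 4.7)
The plan is to transfer the problem from $\bX$ to $\P^3_K$ via the $\GL_4(K)$-equivariant projection $\pi : \bX \to (\P^3_K)^{\an}$. Since $\bY = \pi^{-1}(\bC)$, one has $g(x\bY) = \pi^{-1}(gx\bC)$ for every $g \in \GL_4(K)$; hence the conditions $g(x\bY) \cap x\bY \neq \emptyset$ and $g(x\bY) = x\bY$ are respectively equivalent to the same conditions for $gx\bC$ and $x\bC$ inside $(\P^3_K)^{\an}$. As $\bC$ is the analytification of the proper algebraic curve $C$, these conditions pass back and forth between the rigid-analytic and algebraic pictures. Writing $H \subset \GL_4$ for the algebraic stabiliser of $C$ and
\[
W_g := \bigl\{ y \in \GL_4 \st g(yC) \cap yC \neq \emptyset \bigr\},
\]
we must show that for every $g \in D^\times$, either $g$ stabilises $xC$ or $x \notin W_g(K)$.

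The decisive structural input is that every $g \in D^\times$ is semisimple: $D$ is a division algebra, so nilpotents are zero, forcing the unipotent part of $g$ to be trivial. Central elements $g \in L^\times$ act as scalars on $\P^3$, hence stabilise $xC$, so only non-central $g$ need treatment. For such $g$ we have $[L[g] : L] \in \{2, 4\}$, and by Skolem--Noether the reduced characteristic polynomial of $g$ is a power of its minimal polynomial. Thus $g$ has either two distinct eigenvalues each of multiplicity two or four distinct eigenvalues; in particular the fixed locus $\mathrm{Fix}(g) \subset \P^3_K$ has dimension at most one.

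For non-central $g$, I will show that $W_g$ is a proper closed $L$-subvariety of $\GL_4$. Closedness is automatic: $W_g$ is the image under the proper projection $\GL_4 \times \P^3 \to \GL_4$ of the closed incidence variety $\{(y,p) \st p \in yC,\ g(p) \in yC\}$. For the dimension bound, I descend to the $12$-dimensional variety $T := \GL_4/H$ of twisted cubics and introduce
\[
S := \bigl\{(C',p) \in T \times \P^3 \st p \in C' \text{ and } g(p) \in C'\bigr\}.
\]
Since six points of $\P^3$ in sufficiently general position lie on a unique twisted cubic, passage through a point is a codimension-two condition on $T$; so the fibre of $S \to \P^3$ over $p \in \mathrm{Fix}(g)$ has dimension $10$, and over $p \notin \mathrm{Fix}(g)$ has dimension $8$. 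Combined with $\dim \mathrm{Fix}(g) \le 1$, this gives $\dim S \le \max(1+10,\, 3+8) = 11 < 12 = \dim T$; hence the image of $S$ in $T$ is a proper closed subvariety, and so is its preimage $W_g$ under $\GL_4 \to T$.

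To finish, any generic $x \in \mathbb{G}(K)$ lies in no proper closed $L$-subvariety of $\mathbb{G}_L$, and in particular avoids every $W_g$ for non-central $g \in D^\times$; combined with the trivial central case this yields the required regularity. The main obstacle is making the dimension bound on $S$ rigorous --- I must rule out special strata of $\P^3$ on which the fibre of $S \to \P^3$ jumps in dimension beyond what the codimension of the stratum can absorb, which ultimately reduces to controlling how the semisimple elements of $D^\times$ can interact with the linear systems of quadrics that cut out $C$.
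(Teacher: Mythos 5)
Your proposal takes a genuinely different route from the paper. The paper computes the core of the intersection obstruction $\cZ_Y^\circ$ of the twisted cubic \emph{exactly} over $\GL_4(k)$ (Theorem \ref{CubicCoreIO}, via the explicit Jordan normal form analysis of Proposition \ref{JNF} and the concrete ``moving the cubic'' construction of Lemma \ref{MovingTheCubic}), then shows $\cZ_Y^\circ \cap D^\times = L^\times$ by analysing reduced characteristic polynomials (Lemma \ref{RedCharPol}), and finally applies the rationality argument of Theorem \ref{RatLemma}. You instead observe that $g \notin \cZ_Y^\circ$ is equivalent to $W_g \subsetneq \GL_4$ and establish this, for non-central $g \in D^\times$, by a dimension count on the incidence variety $S$ over the $12$-dimensional space of twisted cubics, using only that $g$ is semisimple with fixed locus of dimension $\le 1$. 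The paper's approach is more precise (it identifies $\cZ_Y^\circ$ exactly, at the cost of an explicit matrix computation special to the twisted cubic), while yours is softer and would generalise more readily to other $(X,Y,\G)$ for which one can control $\dim\mathrm{Fix}(g)$ and the relevant incidence dimensions.

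Your argument is essentially sound, but two points need repair. First, you write that $W_g$ is a proper closed \emph{$L$-subvariety} and that a generic $x$ avoids proper closed $L$-subvarieties. In fact $W_g$ is only defined over $k$, not $L$: after the twist $\G_k \cong \GL_{4,k}$ that identifies $D^\times$ with a subgroup of $\GL_4(k)$, the matrix of $g$ need not have entries in $L$, and so $W_g = \{y : y^{-1}gy \in \cZ_C\}$ is a closed $k$-subvariety of $\G_k$ that is not $\Gal(k/L)$-stable in general. The fix is the observation, made in the proof of Theorem \ref{RatLemma}, that since $\G$ is connected the ring $\cO(\G)\otimes_L k$ is a domain integral over $\cO(\G)$, whence $\ker x = 0$ forces $\ker \overline{x} = 0$; so a generic $x$ in fact avoids every proper closed $k$-subvariety of $\G_k$, and the argument goes through one $g$ at a time. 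Second, the ``obstacle'' you flag at the end --- ruling out jumps of the fibre dimension of $S \to \P^3$ --- is not actually present: the variety $\{(C',p,q) : p\neq q \in C'\}$ fibres $\GL_4$-equivariantly over $(\P^3)^2\setminus\Delta$, and since $\GL_4$ acts transitively on ordered pairs of distinct points of $\P^3$, all fibres have the same dimension, namely $8$; likewise all fibres over a single point have dimension exactly $10$. So $\dim S \le \max(\dim\mathrm{Fix}(g) + 10,\ 3 + 8) = 11 < 12 = \dim T$ with no exceptional strata to worry about. With these two corrections the proof is complete.
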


The centre of the group $D^\times$ acts trivially on the rigid analytic flag variety. Since $D^\times$ is compact modulo its centre, we can combine Theorem \ref{MainD}, Corollary \ref{AandB} and \cite[Theorem C]{EqDCap}, to obtain new examples of topologically irreducible admissible locally $L$-analytic $D^\times$-representations over $K$ of the form
\[V(x\bY) := \Gamma(\bX, \ind_{D^\times_{x \bY}}^{D^\times}\iota_+ \cO_{x \bY})'\]
for every generic $x \in \mathbb{G}(K)$. Note that $V(x \bY)$ is not isomorphic to $V(x' \bY)$ if $D^\times x \bY \neq D^\times x' \bY$, because 
\[\Supp \Loc(V(x\bY)'_b) = D^\times x \bY.\]
\subsection{Acknowledgements} I would like to thank Tobias Schmidt, Simon Wadsley and Christian Johansson for their interest in this work. I would also like to thank Tobias Schmidt for carefully reading this paper and correcting several inaccuracies. This paper would not have been possible without the many conversations I had with Ian Grojnowski on the nature of the `fuzzy delta function' and his questions about supports: thank you Ian. 

The author was partially supported by EPSRC grant EP/L005190/1.

\section{The induction equivalence \ts{ \cC_{\bX/G}^{G \bY} \cong \cC_{\bX / G_{\bY}}^{\bY} }}

\subsection{Berkovich and Huber spaces}\label{LocalCoho}
Recall from \cite[\S 5]{SchVdPut} that to every rigid analytic space $\bX$ we can associate two topological spaces $\sP(\bX)$ and $\sM(\bX)$, which we call the \emph{Huber space} the \emph{Berkovich space}, respectively. The elements of $\sP(\bX)$ are the prime filters on the admissible open subsets of $\bX$, and the elements of $\sM(\bX)$ are the maximal filters. The sets of the form $\tilde{\bU} := \{ p \in \sP(\bX) : \bU \in p\}$ as $\bU$ ranges over the admissible open subsets of $\bX$ form a basis for the topology on $\sP(\bX)$. There is a natural commutative diagram
\[ \xymatrix{ & \sP(\bX) \ar[dr]^r & \\ \bX \ar[ur] \ar[rr] && \sM(\bX). }\]
In this diagram, the inclusions from $\bX$ into $\sM(\bX)$ and $\sP(\bX)$ send to $x \in \bX$ to $\fr{m}_x := \{$admissible open $\bU \subset \bX:  x \in \bU\}$  (the principal maximal filter on $x$), and the continuous \emph{retraction map} $r$ sends $p \in \sP(\bX)$ to the unique maximal filter $r(p)$ containing $p$. If $\bX$ is quasi-compact and quasi-separated (qcqs), then both $\sP(\bX)$ and $\sM(\bX)$ are quasi-compact, and $\sM(\bX)$ is Hausdorff. In fact, when $\bX$ is qcqs, the retraction map $r$ realises $\sM(\bX)$ as the maximal Hausdorff quotient space of $\sP(\bX)$. We will identify $\bX$ with its image in $\sP(\bX)$.  By \cite[\S 5]{SchVdPut} there is an equivalence of categories between the abelian sheaves on $\bX$ and the abelian sheaves on $\sP(\bX)$. Given an abelian sheaf $\cM$ on $\bX$, we will denote the corresponding sheaf on $\sP(\bX)$ by $\tilde{\cM}$; it follows from the proof of \cite[Theorem 1]{SchVdPut} that $\tilde{\cM}(\tilde{\bW}) = \cM(\bW)$ for any admissible open subset $\bW$ of $\bX$. 

\begin{defn} Let $S$ be a subset of $\bX$ and let $\cM$ be an abelian sheaf on $\bX$. We say that $\cM$ is \emph{supported on $S$} if $\cM_{|\bU} = 0$ for every admissible open subset $\bU$ of $\bX \backslash S$. 
\end{defn}

The intuition behind this definition is explained by the following

\begin{lem}\label{Supports} Let $S$ be a subset of $\bX$. An abelian sheaf  $\cM$ on $\bX$ is supported on $S$ if and only if $\Supp(\tilde{\cM})$ is contained in the closure $\overline{S}$ of $S$ in $\sP(\bX)$.
\end{lem}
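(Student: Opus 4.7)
The plan is to reduce both directions to the elementary identity $\tilde{\bU} \cap \bX = \bU$ valid for every admissible open $\bU \subseteq \bX$, where $\bX$ is identified with its image in $\sP(\bX)$ via $x \mapsto \fr{m}_x$. This holds because $\fr{m}_x \in \tilde{\bU}$ iff $\bU \in \fr{m}_x$ iff $x \in \bU$. Combined with the translation $\tilde{\cM}(\tilde{\bW}) = \cM(\bW)$ and the fact that the sets $\tilde{\bU}$ form a basis for $\sP(\bX)$, both implications should follow by routine manipulation.

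For the forward direction I would assume $\cM$ is supported on $S$ and show that each $p \in \sP(\bX) \setminus \overline{S}$ has vanishing stalk $\tilde{\cM}_p$. Since the sets $\tilde{\bU}$ form a basis, I can choose an admissible open $\bU$ with $p \in \tilde{\bU} \subseteq \sP(\bX) \setminus \overline{S}$. The identity above forces $\bU \cap S = \tilde{\bU} \cap S = \emptyset$, so $\bU \subseteq \bX \setminus S$ and by hypothesis $\cM(\bV) = 0$ for every admissible open $\bV \subseteq \bU$. Any basic open $\tilde{\bV}$ with $p \in \tilde{\bV} \subseteq \tilde{\bU}$ satisfies $\bV = \tilde{\bV} \cap \bX \subseteq \tilde{\bU} \cap \bX = \bU$, so $\tilde{\cM}(\tilde{\bV}) = \cM(\bV) = 0$. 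Passing to the colimit over such $\tilde{\bV}$ shows $\tilde{\cM}_p = 0$, hence $p \notin \Supp(\tilde{\cM})$.

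For the converse I would assume $\Supp(\tilde{\cM}) \subseteq \overline{S}$ and let $\bV$ be an admissible open subset of $\bX \setminus S$. The identity gives $\tilde{\bV} \cap S = \bV \cap S = \emptyset$; since $\tilde{\bV}$ is open in $\sP(\bX)$ and disjoint from $S$, it is also disjoint from $\overline{S}$, hence from $\Supp(\tilde{\cM})$. Every stalk of $\tilde{\cM}$ on $\tilde{\bV}$ therefore vanishes, which forces $\cM(\bV) = \tilde{\cM}(\tilde{\bV}) = 0$. Applied to every admissible open $\bV$ of a given admissible open $\bU \subseteq \bX \setminus S$, this yields $\cM_{|\bU} = 0$.

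No serious obstacle is anticipated, as the argument is purely topological. The standard facts invoked are that an open set disjoint from a subset $S$ is also disjoint from $\overline{S}$, and that sections of an abelian sheaf vanish on an open set on which all stalks vanish. The only point requiring a little care is the bookkeeping between the Grothendieck topology of admissible opens on $\bX$ and the ordinary topology of $\sP(\bX)$, but this is entirely handled by the identity $\tilde{\bU} \cap \bX = \bU$.
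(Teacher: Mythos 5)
Your proof is correct and follows essentially the same route as the paper's: both directions reduce to the identity $\tilde{\bU}\cap\bX=\bU$, the translation $\tilde{\cM}(\tilde{\bW})=\cM(\bW)$, and the fact that the $\tilde{\bU}$ form a basis for the topology on $\sP(\bX)$. The only cosmetic difference is that you prove the forward implication in its contrapositive form ($p\notin\overline{S}\Rightarrow\tilde{\cM}_p=0$) whereas the paper argues directly ($\tilde{\cM}_p\neq 0\Rightarrow p\in\overline{S}$); the content is identical.
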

\begin{proof} By the \emph{support} of an abelian sheaf $\cF$ on a topological space $X$ we mean the set $\Supp(\cF) := \{x \in X : \cF_x \neq 0 \}$. Suppose that $\cM$ is supported on $S$ and that $\tilde{\cM}_p \neq 0$ for some $p \in \sP(\bX)$. Then for all open sets $U \ni p$ there is an admissible open $\bV \subset \bX$ such that $p \in \tilde{\bV} \subset U$ and $\tilde{\cM}(\bV) \neq 0$. Then $\bV$ cannot be contained $\bX \backslash S$ and thus $\bV \cap S \neq \emptyset$. Hence $U \cap S \neq \emptyset$ and $p \in \overline{S}$. Conversely, if $\Supp(\tilde{\cM}) \subseteq \overline{S}$, let $\bU$ be an admissible open subset of $\bX \backslash S$. Then $\tilde{\bU} \cap S = \tilde{\bU} \cap \bX \cap S = \bU \cap S = \emptyset$, so $\tilde{\bU} \cap \overline{S} = \emptyset$. Hence $\tilde{\cM}_p = 0$ for all $p \in \tilde{\bU}$ whence $\tilde{\cM}_{|\tilde{\bU}} = 0$. Therefore $\cM_{|\bU} = 0$, which means that $\cM$ is supported on $S$.
\end{proof}

We refer the reader to \cite[\S 1]{HartLocCoh} for standard notation from the theory of local cohomology of abelian sheaves on topological spaces.

\begin{defn}\label{LocalCohoDefn} Let $\cM$ be an abelian sheaf on $\bX$ and let $S$ be a subset of $\bX$. 
\be \item We define the subgroup of \emph{sections of $\cM$ supported on $S$} to be
\[ \Gamma_{S}(\cM) := H^0_S(\cM) := H^0_{\overline{S}}(\tilde{\cM}) = \ker \left( \cM(\bX) \longrightarrow \tilde{\cM}(\sP(\bX) \backslash \overline{S})  \right),\]
\item We also have the presheaf $\cH^0_S(\cM)$, given by
\[ \cH^0_S(\cM)(\bU) := H^0_{\tilde{\bU} \cap \overline{S}}(\tilde{\cM}_{|\tilde{\bU}}) = \ker \left( \cM(\bU) \longrightarrow \tilde{\cM}(\tilde{\bU} \backslash \overline{S}) \right)\]
for every admissible open $\bU$ of $\bX$.
\ee\end{defn}

\begin{lem}\label{ClosureOfS} Suppose that $\bX \backslash S$ is an admissible open subset of $\bX$. Then the closure $\overline{S}$ of $S$ in $\sP(\bX)$ equals $\sP(\bX) \backslash \widetilde{\bX \backslash S}$.
\end{lem}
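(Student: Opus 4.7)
The plan is to prove the two inclusions $\overline{S} \subseteq \sP(\bX) \setminus \widetilde{\bX \setminus S}$ and $\sP(\bX) \setminus \widetilde{\bX \setminus S} \subseteq \overline{S}$ directly, using only the definition of $\tilde{\bU}$, the identification $\bX \hookrightarrow \sP(\bX)$ sending $x$ to $\fr{m}_x$, and the fact that the sets $\tilde{\bU}$ (with $\bU$ admissible open in $\bX$) form a basis for the topology on $\sP(\bX)$.

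For the first inclusion, I would observe that since $\bX \setminus S$ is admissible open by hypothesis, the set $U := \widetilde{\bX \setminus S}$ is open in $\sP(\bX)$. Next I would check that $U$ is disjoint from $S$: for any $x \in S$, the principal filter $\fr{m}_x$ does not contain $\bX \setminus S$ since $x \notin \bX \setminus S$, so $\fr{m}_x \notin U$. Being an open set disjoint from $S$, the set $U$ is contained in the complement of $\overline{S}$; rearranging, $\overline{S} \subseteq \sP(\bX) \setminus U$.

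For the reverse inclusion, I would take $p \in \sP(\bX) \setminus \widetilde{\bX \setminus S}$, so $\bX \setminus S \notin p$, and show that every basic open neighbourhood $\tilde{\bU}$ of $p$ meets $S$. Since $\bX \hookrightarrow \sP(\bX)$ restricts to a bijection $\bU = \tilde{\bU} \cap \bX$, the intersection $\tilde{\bU} \cap S$ equals $\bU \cap S$. If this were empty, then $\bU \subseteq \bX \setminus S$, and since prime filters are upward closed among admissible opens, the fact that $\bU \in p$ would force $\bX \setminus S \in p$, contradicting our choice of $p$. Hence $\tilde{\bU} \cap S \neq \emptyset$, so $p \in \overline{S}$.

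I do not expect any real obstacle: the argument is a bookkeeping exercise once one remembers that the prime filters defining $\sP(\bX)$ are upward closed with respect to inclusion of admissible opens. The only subtlety to flag is that the hypothesis that $\bX \setminus S$ is admissible open is used twice, once to make $\widetilde{\bX \setminus S}$ an open subset of $\sP(\bX)$ in the first inclusion, and once again to apply upward closure of $p$ in the second inclusion.
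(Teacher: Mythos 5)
Your proof is correct and follows essentially the same route as the paper's: showing $S$ is disjoint from the open set $\widetilde{\bX\setminus S}$ to get $\overline{S}\subseteq\sP(\bX)\setminus\widetilde{\bX\setminus S}$, then using upward closure of prime filters to show every basic open neighbourhood $\tilde\bU$ of a point outside $\widetilde{\bX\setminus S}$ meets $S$. The only (very minor) presentational difference is that you explicitly justify $\tilde\bU\cap S=\bU\cap S$ via the identification $\bU=\tilde\bU\cap\bX$, whereas the paper leaves that step implicit.
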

\begin{proof} If $s \in S$ then $s \notin \bX \backslash S$ so $\bX \backslash S \notin \fr{m}_s$ and $\fr{m}_s \notin \widetilde{\bX \backslash S}$. Since $\widetilde{\bX \backslash S}$ is open, this implies that $\overline{S} \subseteq \sP(\bX) \backslash  \widetilde{\bX \backslash S}$. Let $\widetilde{\bU}$ be a basic open neighbourhood of an element $p \in \sP(\bX) \backslash \widetilde{\bX\backslash S}$. Then $\bX \backslash S \notin p$, so $\bU$ cannot be contained in $\bX \backslash S$ because $p$ is a filter and because $\bU \in p$. So $S \cap \bU \neq \emptyset$ and $S \cap \tilde{\bU} \neq \emptyset$. This implies that $p \in \overline{S}$.
\end{proof}

\begin{cor}\label{H0S} Suppose that $\bX \backslash S$ is an admissible open subset of $\bX$ and $\cM$ is an abelian sheaf on $\bX$. Then $H^0_S(\cM) = \ker(\cM(\bX) \to \cM(\bX \backslash S))$ and $\cH^0_S(\cM)(\bU) = \ker(\cM(\bU) \to \cM(\bU \backslash S))$ for every admissible open subset $\bU$ in $\bX$.
\end{cor}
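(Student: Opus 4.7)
The plan is to derive both equalities directly from Lemma \ref{ClosureOfS} together with the identification $\tilde{\cM}(\tilde{\bW}) = \cM(\bW)$ for admissible open $\bW \subseteq \bX$ coming from the equivalence of sheaf categories between $\bX$ and $\sP(\bX)$ recalled in \S\ref{LocalCoho}.

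For the first statement, I would start with the definition $H^0_S(\cM) = \ker(\cM(\bX) \to \tilde{\cM}(\sP(\bX) \setminus \overline{S}))$ from Definition \ref{LocalCohoDefn}. Since $\bX \setminus S$ is admissible open in $\bX$, Lemma \ref{ClosureOfS} gives $\sP(\bX) \setminus \overline{S} = \widetilde{\bX \setminus S}$. Then the identification $\tilde{\cM}(\widetilde{\bX \setminus S}) = \cM(\bX \setminus S)$ rewrites the kernel as $\ker(\cM(\bX) \to \cM(\bX \setminus S))$, which is precisely what is asserted.

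For the second statement, I would apply the first statement to the restricted sheaf $\cM_{|\bU}$ on the rigid analytic space $\bU$. The key point to check is that $\bU \setminus S$ is admissible open in $\bU$; this holds because $\bU \setminus S = \bU \cap (\bX \setminus S)$ is the intersection of two admissible opens of $\bX$ and $\bU$ is itself admissible open in $\bX$. Under the evident compatibilities, $H^0_{\bU \cap S}(\cM_{|\bU})$ is exactly $\cH^0_S(\cM)(\bU)$ (both equal $H^0_{\tilde{\bU} \cap \overline{S}}(\tilde{\cM}_{|\tilde{\bU}})$, using that the closure of $\bU \cap S$ in $\sP(\bU) = \tilde{\bU}$ is $\tilde{\bU} \cap \overline{S}$, which again follows from Lemma \ref{ClosureOfS} applied inside $\bU$). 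Thus the first statement yields $\cH^0_S(\cM)(\bU) = \ker(\cM(\bU) \to \cM(\bU \setminus S))$.

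There is no real obstacle here; the corollary is essentially a bookkeeping consequence of Lemma \ref{ClosureOfS} and the sheaf equivalence. The only minor care needed is verifying that $\bU \setminus S$ is admissible open in $\bU$ when setting up the second statement, and that the formation of $\tilde{(-)}$ is compatible with restriction to admissible opens so that closures taken in $\sP(\bU)$ versus $\sP(\bX)$ match via $\sP(\bU) = \tilde{\bU}$.
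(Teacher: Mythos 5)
Your proposal is correct and follows the same route as the paper: the first equality is read off from Definition \ref{LocalCohoDefn} together with Lemma \ref{ClosureOfS} and the identification $\tilde{\cM}(\widetilde{\bW}) = \cM(\bW)$, which is exactly the one-line argument the paper gives. The second equality is left implicit in the paper; your reduction to the first statement applied to $\cM_{|\bU}$ on $\bU$ is a valid way to fill it in, and the compatibility check you flag (that the closure of $\bU \cap S$ in $\sP(\bU) = \tilde{\bU}$ agrees with $\tilde{\bU} \cap \overline{S}$, using Lemma \ref{ClosureOfS} inside $\bU$ and the fact that prime filters are closed under finite intersections, so $\tilde{\bU} \cap \widetilde{\bX\backslash S} = \widetilde{\bU \backslash S}$) is precisely the bookkeeping needed.
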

\begin{proof}By Lemma \ref{ClosureOfS},  $\cM(\sP(\bX) \backslash \overline{S}) = \cM(\widetilde{\bX\backslash S}) = \cM(\bX \backslash S)$.
\end{proof}

We return to the setting of equivariant $\cD$-modules. Let $G$ be a $p$-adic Lie group acting continuously on the rigid analytic space $\bX$.

\begin{lem}\label{TopOnLocCoh} Let $\cM$ be a $G$-equivariant $\cD$-module on $\bX$ and let $S$ be a subset of $\bX$. Then
\be \item $\cH^0_S(\cM)$ is a $G_S$-equivariant $\cD$-module on $\bX$, and
\item $\cH^0_S(\cM)$ is locally Fr\'echet whenever $\cM$ is coadmissible.
\ee\end{lem}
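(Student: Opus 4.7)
The plan is to handle part (a) structurally and part (b) via Fréchet-space topology.

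For part (a), I first observe that $\cH^0_S(\cM)$ is actually a sheaf, not merely a presheaf: under the equivalence of categories between abelian sheaves on $\bX$ and abelian sheaves on $\sP(\bX)$, it corresponds to the standard local cohomology sheaf $\underline{\Gamma}_{\overline{S}}(\tilde{\cM})$ with supports in the closed subset $\overline{S}$, which is sheafy by the usual formalism (cf. \cite[\S 1]{HartLocCoh}). This exhibits $\cH^0_S(\cM)$ as a subsheaf of $\cM$. The $\cD_{\bX}$-action preserves this subsheaf for the standard local-support reason: if $m \in \cM(\bU)$ restricts to zero on every admissible open $\bV \subseteq \bU \setminus S$, then so does $Pm$ for any $P \in \cD_{\bX}(\bU)$, because differential operators do not enlarge supports. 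Finally, continuity of the $G$-action on $\bX$ yields a continuous $G$-action on $\sP(\bX)$ satisfying $g \cdot \overline{S} = \overline{gS}$; thus every $g \in G_S$ fixes $\overline{S}$ setwise, and the $G$-equivariance of $\cM$ restricts to a $G_S$-equivariance on $\cH^0_S(\cM)$, inheriting the cocycle and $\cD$-module compatibility conditions from $\cM$.

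For part (b), I will argue locally. On each admissible affinoid subdomain $\bU \subseteq \bX$, coadmissibility of $\cM$ endows $\cM(\bU)$ with its canonical Fréchet topology as a coadmissible module over the Fréchet--Stein algebra $\w{\cD}(\bU)$. Writing the open set $\tilde{\bU} \setminus \overline{S} \subseteq \sP(\bX)$ as a union of basic opens $\tilde{\bV}$ where $\bV \subseteq \bU$ ranges over admissible opens with $\tilde{\bV} \cap \overline{S} = \emptyset$, the sheaf property of $\tilde{\cM}$ identifies $\tilde{\cM}(\tilde{\bU} \setminus \overline{S})$ with a limit built from the terms $\cM(\bV) = \tilde{\cM}(\tilde{\bV})$. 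Consequently
\[\cH^0_S(\cM)(\bU) = \bigcap_{\bV} \ker\bigl(\cM(\bU) \to \cM(\bV)\bigr).\]
Each restriction map $\cM(\bU) \to \cM(\bV)$ is continuous by coadmissibility, so each such kernel is closed in the Fréchet space $\cM(\bU)$, and so is their intersection. A closed subspace of a Fréchet space is Fréchet, giving the desired Fréchet topology on $\cH^0_S(\cM)(\bU)$.

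The main technical point I expect to verify carefully is the identification $\tilde{\cM}(\tilde{\bU} \setminus \overline{S}) = \varprojlim_{\bV} \cM(\bV)$; this should follow from the sheaf axiom together with the fact that the sets $\tilde{\bV}$ form a basis for the topology on $\sP(\bX)$. Given that, everything reduces to the continuity of restriction maps for coadmissible $\w{\cD}$-modules, which is part of the framework of \cite{EqDCap}. The trickiest part will be checking that the local Fréchet topologies thus obtained are compatible with restriction to smaller affinoids, so that $\cH^0_S(\cM)$ genuinely qualifies as \emph{locally Fréchet} in the sense used elsewhere in the paper --- but this should follow from the functoriality built into the construction.
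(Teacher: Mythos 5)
Your approach matches the paper's: part (a) is dismissed there as ``straightforward'' (and your supporting-set/conjugation argument supplies exactly the expected details), while for part (b) the paper likewise exhibits $\cH^0_S(\cM)(\bU)$ as the simultaneous kernel of the restriction maps $\cM(\bU)\to\cM(\bV)$ and concludes it is a closed, hence Fr\'echet, subspace of $\cM(\bU)$. The one point to tighten is that the paper ranges $\bV$ over \emph{quasi-compact} admissible open subsets of $\bU$ with $\bV \cap S = \emptyset$, citing \cite[Lemma 6.4.5]{EqDCap} for continuity of $\cM(\bU)\to\cM(\bV)$ between Fr\'echet spaces; you should similarly restrict to quasi-compact (equivalently affinoid-coverable-by-finitely-many) $\bV$ --- the intersection is unchanged --- so that the target $\cM(\bV)$ is canonically Fr\'echet, in particular Hausdorff, making the kernel closed; for a general admissible open $\bV$ the topology on $\cM(\bV)$ is not as immediately available.
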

\begin{proof} Part (a) is straightforward. For part (b), let $\bU \in \bX_w(\cT)$ and note that $H^0_{\bU \cap S}(\cM)$ is the simultaneous kernel of the restriction maps $\cM(\bU) \to \cM(\bV)$ as $\bV$ ranges over all quasi-compact subsets of $\bU$ such that $\bV \cap S = \emptyset$. These restriction maps are continuous maps between Fr\'echet spaces by \cite[Lemma 6.4.5]{EqDCap}. It follows that $H^0_{\bU \cap S}(\cM)$ is a closed subspace of $\cM(\bU)$, and therefore itself carries a canonical Fr\'echet topoogy. We leave to the reader the task of verifying that the $G_S$-equivariant structure maps $\cH^0_{S}(\cM)(\bU) \to \cH^0_{S}(\cM)(g\bU)$ constructed in part (a) are continuous for every $g \in G_{S}$. Therefore the $G_{S}$-equivariant $\cD$-module $\cH^0_{S}(\cM)$ is locally Fr\'echet,  by \cite[Definition 3.6.1(a)]{EqDCap}.
\end{proof}

\begin{defn} Let $S$ be a subset of $\bX$. Then $\cC^S_{\bX/G}$ denotes the full subcategory of $\cC_{\bX/G}$ consisting of those $\cM \in \cC_{\bX/G}$ such that $\cM_{|\bV} = 0$ for every admissible open subset $\bV$ of $\bX \backslash S$. We call objects $\cM \in \cC^S_{\bX/G}$ \emph{coadmissible $G$-equivariant $\cD$-modules on $\bX$ supported on $S$}.
\end{defn}

\begin{defn}\label{RegOrbit} Let $G$ be a group acting on a set $X$ and let $Y$ be a subset of $X$. We say that the $G$-orbit of $Y$ is \emph{regular in $X$} if distinct $G$-translates of $Y$ are disjoint:
$g Y \cap Y \neq \emptyset  \quad \Rightarrow \quad g Y = Y$ for all $g \in G$.
\end{defn}

\begin{lem}\label{HYreg} Suppose that the $G$-orbit of $Y$ is regular in $X$ and let $H$ be a \emph{normal} subgroup of $G$. Then the $G$-orbit of $HY$ is also regular in $X$, and $G_{HY} = H G_Y$.
\end{lem}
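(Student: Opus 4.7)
The plan is to derive both assertions directly from the hypothesis that distinct $G$-translates of $Y$ are disjoint, using the normality of $H$ to commute elements of $H$ past $g$. The normality will let us reduce statements about $HY$ to statements about $Y$.

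For the regularity of the $G$-orbit of $HY$, suppose $g \in G$ satisfies $g(HY) \cap HY \neq \emptyset$. Then there exist $h_1, h_2 \in H$ and $y_1, y_2 \in Y$ with $g h_1 y_1 = h_2 y_2$. Since $H$ is normal, $g h_1 g^{-1} =: h_3$ lies in $H$, so $g h_1 = h_3 g$, and hence $h_2^{-1} h_3 g y_1 = y_2$. Setting $h_4 := h_2^{-1} h_3 \in H$, we find $h_4 g Y \cap Y \neq \emptyset$ (it contains $y_2$), so the regularity of the $G$-orbit of $Y$ forces $h_4 g Y = Y$, i.e., $g Y = h_4^{-1} Y$. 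Using $g H = H g$ once more, we conclude
\[ g(HY) = (gH) Y = (Hg) Y = H (gY) = H h_4^{-1} Y = H Y,\]
as required.

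For the equality $G_{HY} = H G_Y$, the inclusion $H G_Y \subseteq G_{HY}$ is immediate: if $h \in H$ and $g \in G_Y$, then using normality $h g (HY) = h (gH) Y = h H (gY) = h H Y = HY$. For the reverse inclusion, take $g \in G_{HY}$ and pick any $y \in Y$. Then $gy \in g(HY) = HY$, so there exist $h \in H$ and $y' \in Y$ with $g y = h y'$; thus $(h^{-1} g) Y \cap Y \neq \emptyset$, and regularity of the $G$-orbit of $Y$ gives $h^{-1} g \in G_Y$. Hence $g = h \cdot (h^{-1} g) \in H G_Y$.

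The argument is essentially formal, and the only non-trivial ingredient is the normality of $H$, which is what allows one to absorb the factor $g$ into $H$ from the correct side. I do not anticipate any real obstacle; the only point requiring a small amount of care is to keep track of which side of $g$ each element of $H$ appears on when reducing to the regularity hypothesis for $Y$.
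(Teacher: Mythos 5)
Your proof is correct and follows essentially the same route as the paper's: in the first part you move $h_1$ past $g$ by normality and then invoke regularity of the $G$-orbit of $Y$, while the paper invokes regularity first (deducing $gh_1Y=h_2Y$) and then moves $g$ past $H$ — the two are trivially equivalent; the second part is verbatim the paper's argument. The only microscopic omission is that the step ``pick any $y\in Y$'' silently assumes $Y\neq\emptyset$ (the paper notes this explicitly); the case $Y=\emptyset$ is trivial since then $G_Y=G_{HY}=G$.
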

\begin{proof} Let $g  \in G$ be such that $g HY \cap HY \neq \emptyset$. Then we can find $h_i \in H$ and $y_i \in Y$ such that $gh_1y_1 = h_2y_2$, which implies $g h_1Y = h_2Y$. Because $H$ is normal, we obtain $g HY = gHh_1Y = Hgh_1Y =  H h_2Y = HY$.

For the second statement, we may assume that $Y$ is nonempty. Let $g \in G_{HY}$. Then $\emptyset \neq gY \subseteq \cup_{h\in H} hY$ so $g Y \cap hY \neq \emptyset$ for some $h \in H$. But then $h^{-1} g \in G_Y$ as the $G$-orbit of $Y$ is regular and hence $g = h (h^{-1}g) \in H G_Y$. On the other hand, $HG_Y \cdot HY = HG_Y Y = HY$ shows that $HG_Y \leq G_{HY}$.
\end{proof}

To motivate the constructions that will follow, let us recall the following basic Lemma about cohomology with supports.

\begin{lem}\label{LocalCohoSeps} Let $X$ be a topological space, and let $\{C_1,\ldots, C_n\}$ be closed subsets of $X$ which admit pairwise disjoint open neighbourhoods $\{U_1,\ldots, U_n\}$. Then the canonical map $\bigoplus_i \Gamma_{C_i}(X,\cF) \longrightarrow \Gamma_{\cup C_i} (X, \cF)$ is an isomorphism for any abelian sheaf $\cF$ on $X$.
\end{lem}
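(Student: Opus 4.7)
The plan is to prove the lemma by constructing an explicit inverse to the canonical map, using the fact that the closed sets $C_i$ are separated by the disjoint open neighbourhoods $U_i$. Since $C_i \subseteq U_i$ and the $U_i$ are pairwise disjoint, the $C_i$ themselves are pairwise disjoint.

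The main construction is as follows. Given a section $s \in \Gamma_{\cup C_i}(X,\cF)$, its restriction $s|_{X \setminus \cup C_j}$ vanishes. For each $i$, I observe that $U_i$ and $X \setminus C_i$ form an open cover of $X$ (since $C_i \subseteq U_i$), and that on the overlap $U_i \cap (X \setminus C_i)$ the section $s$ restricted there vanishes: indeed, since $U_i \cap C_j = \emptyset$ for $j \neq i$ (because $U_i \cap U_j = \emptyset$ and $C_j \subseteq U_j$), we have $U_i \cap (X \setminus C_i) \subseteq X \setminus \cup_j C_j$, where $s$ is zero. Hence the pair $(s|_{U_i}, 0 \in \cF(X \setminus C_i))$ glues uniquely to a section $s_i \in \cF(X)$ whose restriction to $X \setminus C_i$ is zero, i.e.\ $s_i \in \Gamma_{C_i}(X, \cF)$. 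Define the candidate inverse by $s \mapsto (s_1, \ldots, s_n)$.

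To verify this is inverse to the canonical summation map, I would check $s = \sum s_i$ on each piece of the open cover $\{U_1,\ldots,U_n, X \setminus \cup_j C_j\}$: on $U_i$ we have $s_i|_{U_i} = s|_{U_i}$ while $s_j|_{U_i} = 0$ for $j \neq i$ (since $U_i \subseteq X \setminus C_j$), and on $X \setminus \cup_j C_j$ both sides vanish. For injectivity, if $\sum s_i = 0$, restrict to $X \setminus C_i$: since each $s_j$ with $j \neq i$ vanishes outside $C_j \subseteq U_j \subseteq X \setminus C_i$, we deduce that $s_i$ vanishes on $X \setminus C_i$; combined with $s_i \in \Gamma_{C_i}(X,\cF)$ this gives $s_i = 0$, because $s_i$ is already zero outside $C_i$ by definition, hence zero everywhere.

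I do not expect any serious obstacle here: the entire argument is a gluing exercise, and the key point --- that $U_i \cap C_j = \emptyset$ for $j \neq i$, which in turn forces the compatibility condition needed for gluing --- is immediate from the pairwise disjointness of the $U_i$. If one prefers a cleaner presentation, one can reduce to the case $n = 2$ by induction and apply the Mayer--Vietoris-style observation that $U_1$ and $U_2 \cup \cdots \cup U_n$ are disjoint open neighbourhoods of $C_1$ and $C_2 \cup \cdots \cup C_n$ respectively. Either way, the argument only uses the sheaf axiom for the cover described above and the defining property of sections supported on a closed set.
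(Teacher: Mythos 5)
Your construction of the explicit inverse via gluing is correct in outline and takes a genuinely more elementary route than the paper, which proceeds by a Five Lemma diagram chase followed by two applications of the excision isomorphism $\Gamma_{C_i}(X,\cF) \cong \Gamma_{C_i}(U_i,\cF)$ from \cite[Proposition 1.3]{HartLocCoh}. Your argument avoids excision entirely: the only input is the sheaf gluing axiom on the covers $\{U_i, X\setminus C_i\}$ and $\{U_1,\ldots,U_n, X\setminus\bigcup_j C_j\}$. The surjectivity verification (that $\sum_i s_i = s$ on each piece of the latter cover) is correct.

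The injectivity paragraph, however, is wrong as written. You restrict $\sum_j s_j = 0$ to $X \setminus C_i$ and claim each $s_j$ with $j \neq i$ vanishes there because $s_j$ vanishes outside $C_j$ and $C_j \subseteq X \setminus C_i$. But $s_j$ vanishes on $X \setminus C_j$, and $X \setminus C_i$ is \emph{not} contained in $X \setminus C_j$ when $j \neq i$ (indeed $X \setminus C_i$ contains $C_j$), so the restriction $s_j|_{X\setminus C_i}$ need not vanish. Moreover, the conclusion you draw, that $s_i|_{X\setminus C_i} = 0$, is just the definition of $s_i \in \Gamma_{C_i}(X,\cF)$ restated, and by itself does not give $s_i = 0$: a section supported on $C_i$ is not zero in general. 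The correct move is to restrict to $U_i$ instead. For $j \neq i$ one has $U_i \subseteq X \setminus C_j$ (since $C_j \subseteq U_j$ and $U_i \cap U_j = \emptyset$), so $s_j|_{U_i} = 0$ and hence $s_i|_{U_i} = 0$. Combined with $s_i|_{X \setminus C_i} = 0$ and the fact that $\{U_i, X\setminus C_i\}$ covers $X$, the sheaf axiom gives $s_i = 0$. With this replacement your proof is complete.
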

\begin{proof} Consider the following commutative diagram with exact rows
\[\xymatrix{ 0 \ar[r] & \Gamma_{\cup C_i}(\cup U_i, \cF) \ar[r]\ar@{.>}[d] &  \Gamma(\cup U_i, \cF) \ar[r]\ar[d] & \Gamma( \cup (U_i \backslash C_i), \cF) \ar[d] \\ 0 \ar[r] & \oplus \Gamma_{C_i}(U_i, \cF) \ar[r] & \oplus \Gamma(U_i, \cF) \ar[r] & \oplus \Gamma(U_i \backslash C_i, \cF). }\]
The solid vertical arrows in this diagram are isomorphisms because the $U_i$ are pairwise disjoint and because $\cF$ is a sheaf. Therefore the induced dotted vertical arrow on the left is an isomorphism by the Five Lemma. This map fits into the following commutative square:
\[\xymatrix{ \Gamma_{\cup C_i}(X, \cF) \ar[rr]  & & \Gamma_{\cup C_i}(\cup U_i, \cF) \ar[d]\\  \oplus \Gamma_{C_i}(X,\cF) \ar[rr] \ar[u] &  & \oplus \Gamma_{C_i}(U_i, \cF)}\] 
whose horizontal maps are isomorphisms by the excision formula, \cite[Proposition 1.3]{HartLocCoh}. It follows that the vertical restriction map on the left that we are interested in is also an isomorphism.
\end{proof}

We now return to rigid analytic geometry. Recall from \cite[p.100]{SchVdPut} that associated to every morphism $\beta : \bX \to \bY$ of rigid analytic spaces there is a continuous map $\beta_\ast : \sP(\bX) \to \sP(\bY)$ given by $\beta_\ast(p) = \{$admissible open $\bV \subset \bY : \beta^{-1}(\bV) \in p\}$, which satisfies $\beta_\ast  (\sM(\bX)) \subset \sM(\bY)$. 

\begin{lem}\label{YXtop} Let $\bX$ be a rigid analytic space and let $\beta : \bY \hookrightarrow \bX$ be the inclusion of a Zariski closed subspace. Define
\[\begin{array}{lll}  \underline{\bY} &:=& \{a \in \sM(\bX) : \bX \backslash \bY \notin a \} \subset \sM(\bX) \qmb{and} \\
\tilde{\bY} &:=& \{p \in \sP(\bX) : \bX \backslash \bY \notin p\}\subset \sP(\bX).\end{array}\] 
Then
\be \item $r^{-1}(\underline{\bY}) = \tilde{\bY}$,
\item $\underline{\bY}$ is a closed subset of $\sM(\bX)$,
\item $\tilde{\bY}$ is the closure $\overline{\bY}$ of $\bY$ in $\sP(\bX)$,
\item $\beta_\ast(\sM(\bY)) = \underline{\bY}$,
\item $\underline{\bY \cap \bZ} = \underline{\bY} \cap \underline{\bZ}$ if $\bZ$ is another Zariski closed subspace of $\bX$,
\item $\underline{g \bY} = g \underline{\bY}$ for any $g \in \Aut(\bX)$.
\ee\end{lem}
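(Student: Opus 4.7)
The plan is to treat the six parts in a dependency-friendly order: parts (c), (a), (b), (e), and (f) follow from direct manipulations of the filter definitions together with one key structural property of the retraction $r$ from \cite{SchVdPut}, while the main obstacle is (d), which requires lifting a maximal filter on $\bX$ to a maximal filter on $\bY$.

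I would begin with (c). The inclusion $\bY \subseteq \tilde{\bY}$ holds because $\fr{m}_x$ does not contain $\bX \setminus \bY$ whenever $x \in \bY$, and $\tilde{\bY}$ is closed as the complement of the basic open $\widetilde{\bX \setminus \bY}$, giving $\overline{\bY} \subseteq \tilde{\bY}$. Conversely, if $p \in \tilde{\bY}$ and $\tilde{\bU}$ is any basic open containing $p$, then $\bU \cap \bY = \emptyset$ would force $\bU \subseteq \bX \setminus \bY$ and hence $\bX \setminus \bY \in p$ by upward closure, contradicting $p \in \tilde{\bY}$; so $p \in \overline{\bY}$. For (a), I would invoke the fact from \cite[\S 5]{SchVdPut} that every basic open $\tilde{\bU} \subset \sP(\bX)$ is $r$-saturated, i.e.\ $r^{-1}(\tilde{\bU} \cap \sM(\bX)) = \tilde{\bU}$. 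Applied to $\bU = \bX \setminus \bY$ and passing to complements this gives $r^{-1}(\underline{\bY}) = \tilde{\bY}$. Part (b) is then immediate, since $\sM(\bX) \setminus \underline{\bY} = \widetilde{\bX \setminus \bY} \cap \sM(\bX)$ is open in $\sM(\bX)$.

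Parts (e) and (f) are formal. For (e) I use that maximal filters on the distributive lattice of admissible opens are prime: $a \in \underline{\bY \cap \bZ}$ means $(\bX \setminus \bY) \cup (\bX \setminus \bZ) \notin a$, which by primality and upward closure is equivalent to $\bX \setminus \bY \notin a$ and $\bX \setminus \bZ \notin a$, i.e.\ $a \in \underline{\bY} \cap \underline{\bZ}$. Part (f) unfolds directly, using that the $G$-action on $\sM(\bX)$ is by pushforward of filters along automorphisms: $a \in \underline{g \bY}$ iff $\bX \setminus g \bY = g(\bX \setminus \bY) \notin a$, iff $\bX \setminus \bY \notin (g^{-1})_* a$, iff $(g^{-1})_* a \in \underline{\bY}$, iff $a \in g \underline{\bY}$.

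The main obstacle is (d). The inclusion $\beta_*(\sM(\bY)) \subseteq \underline{\bY}$ is immediate, since $\beta^{-1}(\bX \setminus \bY) = \emptyset$ lies in no proper filter, whence $\bX \setminus \bY \notin \beta_*(a)$ for any $a \in \sM(\bY)$. For the reverse inclusion, given $a' \in \underline{\bY}$, I would construct the candidate $a := \{\bU \text{ admissible open of } \bY : \bV \cap \bY \subseteq \bU \text{ for some } \bV \in a'\}$; the filter axioms are routine, and properness uses precisely the hypothesis $\bX \setminus \bY \notin a'$. The delicate step is maximality: given a hypothetical strictly larger proper filter $a'' \supsetneq a$ with $\bU \in a'' \setminus a$, lift $\bU$ to some $\bV \subseteq \bX$ with $\bV \cap \bY = \bU$ (using the standard fact that admissible opens of a Zariski closed subspace come from the ambient space), and observe that $\bV$ must be compatible with every $\bV' \in a'$: any incompatibility $\bV \cap \bV' = \emptyset$ would give $\bU \cap (\bV' \cap \bY) = \emptyset$ with both $\bU$ and $\bV' \cap \bY$ in $a''$, forcing $\emptyset \in a''$. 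Maximality of $a'$ then forces $\bV \in a'$ and hence $\bU \in a$, contradicting the choice of $\bU$. Finally $\beta_*(a) \supseteq a'$ by construction and $\beta_*(a)$ is proper, so equality follows from maximality of $a'$.
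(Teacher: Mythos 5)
Your argument for part (a) overlooks exactly the difficulty the paper's proof is constructed to handle. If every basic open $\tilde{\bU}$ were $r$-saturated, i.e.\ $r^{-1}(\tilde{\bU} \cap \sM(\bX)) = \tilde{\bU}$ for all admissible open $\bU$, then for any $p \in \sP(\bX)$ and any $\bU \in r(p)$ we would get $p \in r^{-1}(\tilde{\bU} \cap \sM(\bX)) = \tilde{\bU}$, i.e.\ $\bU \in p$; combined with $p \subseteq r(p)$ this forces $p = r(p)$, so every prime filter would be maximal --- which is false as soon as $\sP(\bX) \neq \sM(\bX)$ (e.g.\ for the closed unit disc, where a type-5 specialisation of the Gauss point gives a prime filter $p$ that is not maximal, and $\{|x|=1\} \in r(p) \setminus p$). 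The correct statement, \cite[Lemma 5.7]{SchVdPut}, applies only when $\bU$ is a \emph{wide open} subset. The paper's proof therefore first establishes that $\bX \setminus \bY$ is wide open, using that it is Zariski open (this is exactly where \cite[Lemma 5.3]{SchVdPut} and \cite[Proposition 3.3(iii)]{SchPts93} enter), and only then invokes the saturation lemma. Your (b) inherits this gap and also elides the fact that $\sM(\bX)$ carries the quotient topology from $r$: the openness of $\sM(\bX) \setminus \underline{\bY}$ should be deduced, as in the paper, from the openness of its $r$-preimage $\widetilde{\bX \setminus \bY}$, which is precisely (a).

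Your treatment of (d), the part the paper leaves as ``straightforward'', is more elaborate than necessary and still leaves things unverified: you show the candidate $a$ is a $\subseteq$-maximal proper filter, but never that it is a \emph{prime} filter (i.e.\ stable under passage to admissible coverings) nor that it is maximal among prime filters, which is the notion that defines $\sM(\bY)$; and the lifting of an arbitrary admissible open $\bU$ of $\bY$ to $\bV \subset \bX$ with $\bV \cap \bY = \bU$ is not an off-the-shelf fact at this level of generality --- it is clear for rational subdomains via the surjection $\cO(\bX) \twoheadrightarrow \cO(\bY)$, but a general admissible open of $\bY$ is an arbitrary union of affinoids and it is not immediate that one can choose lifts coherently. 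You also assume without justification that a maximal prime filter must contain every admissible open compatible with it. Parts (c), (e) and (f) are fine, modulo the implicit use in (e) that $\{\bX \setminus \bY, \bX \setminus \bZ\}$ is an admissible covering of its union, which is valid here because these are Zariski open.
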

\begin{proof} (a) It follows from \cite[Lemma 5.3]{SchVdPut} and \cite[Proposition 3.3(iii)]{SchPts93} that the Zariski open subset $\bX \backslash \bY$ of $\bX$ is wide open. Hence $r^{-1}(\underline{\bX \backslash \bY}) = \widetilde{\bX \backslash \bY}$ by \cite[Lemma 5.7]{SchVdPut}. Taking complements in $\sP(\bX)$ gives $r^{-1}(\underline{\bY}) = \tilde{\bY}$.

(b) Because $r$ is a quotient map, it is enough to show that $r^{-1}(\underline{\bY})$ is closed. However, its complement in $\sP(\bX)$ is the open set $\widetilde{\bX \backslash \bY}$, by (a). 

(c) By definition, $\tilde{\bY} = \sP(\bX) \backslash \widetilde{\bX \backslash \bY}$. Now $\bX \backslash \bY$ is an admissible open subset of $\bX$ by \cite[Proposition 3.3(iii)]{SchPts93} so $\sP(\bX) \backslash \widetilde{\bX \backslash \bY} = \overline{\bY}$ by Lemma \ref{ClosureOfS}.

The remaining parts of the Lemma are straightforward.
\end{proof}
We will henceforth identify $\sM(\bY)$ with its image $\underline{\bY}$ in $\sM(\bX)$. Lemma \ref{YXtop} immediately implies the following

\begin{cor}\label{BerkReg} Suppose that a group $G$ acts on a qcqs rigid analytic space $\bX$ by automorphisms, and $\bY$ is a Zariski closed subspace of $\bX$ whose $G$-orbit is regular in $\bX$. Then the $G$-orbit of $\underline{\bY}$ is regular in $\sM(\bX)$.
\end{cor}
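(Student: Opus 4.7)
The strategy is to translate the regularity hypothesis from $\bX$ to $\sM(\bX)$ by using parts (e) and (f) of Lemma \ref{YXtop}, together with the trivial observation that $\underline{\bZ}$ is empty precisely when $\bZ$ is empty.

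Suppose that $g \in G$ satisfies $g\underline{\bY} \cap \underline{\bY} \neq \emptyset$. First I would rewrite this intersection: by Lemma \ref{YXtop}(f) we have $g\underline{\bY} = \underline{g\bY}$, and since $g\bY$ is again a Zariski closed subspace of $\bX$, Lemma \ref{YXtop}(e) gives $\underline{g\bY} \cap \underline{\bY} = \underline{g\bY \cap \bY}$. Hence the assumption becomes $\underline{g\bY \cap \bY} \neq \emptyset$.

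Next, I would observe that for any Zariski closed subspace $\bZ$ of $\bX$, the set $\underline{\bZ}$ is empty if and only if $\bZ$ is empty. Indeed, if $\bZ = \emptyset$ then $\bX \setminus \bZ = \bX$ lies in every filter on admissible opens, so $\underline{\bZ} = \emptyset$; conversely, for any $z \in \bZ$ the principal maximal filter $\fr{m}_z$ does not contain $\bX \setminus \bZ$, so $\fr{m}_z \in \underline{\bZ}$. Applied to $\bZ = g\bY \cap \bY$, this forces $g\bY \cap \bY \neq \emptyset$. The regularity of the $G$-orbit of $\bY$ in $\bX$ then gives $g\bY = \bY$, and consequently $g\underline{\bY} = \underline{g\bY} = \underline{\bY}$, as required.

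There is essentially no obstacle here: the work has already been done in Lemma \ref{YXtop}. The only minor point to take care of is that $g\bY \cap \bY$ is Zariski closed (being the intersection of two Zariski closed subspaces) so that Lemma \ref{YXtop}(e) genuinely applies and the emptiness criterion for $\underline{\,\cdot\,}$ can be invoked.
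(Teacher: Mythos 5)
Your proof is correct and fills in precisely what the paper leaves implicit: the paper's own "proof" is a single line declaring the corollary to follow immediately from Lemma \ref{YXtop}, and your argument spells out the intended reduction using parts (e) and (f) together with the easy observation that $\underline{\bZ}=\emptyset$ iff $\bZ=\emptyset$ (which could also be read off from part (d)). No gap, and same route as the paper.
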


\begin{lem}\label{CtsAct} Let $G$ be a topological group acting continuously on a rigid analytic space $\bX$. Then the induced action map $a : G \times \sP(\bX) \to \sP(\bX)$ is  continuous.
\end{lem}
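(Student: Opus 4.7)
The plan is to verify the continuity of $a$ directly on a subbasis. Since the sets $\tilde{\bU} = \{p \in \sP(\bX) : \bU \in p\}$, as $\bU$ ranges over the admissible open subsets of $\bX$, form a basis for the topology on $\sP(\bX)$, it is enough to show that $a^{-1}(\tilde{\bU})$ is open in $G \times \sP(\bX)$ for each such $\bU$. Unwinding the definition $g \cdot p = g_\ast(p) = \{\bW \subseteq \bX \text{ admissible open} : g^{-1}\bW \in p\}$ recalled just before Lemma \ref{YXtop}, this preimage is
\[ a^{-1}(\tilde{\bU}) = \{(g,p) \in G \times \sP(\bX) : g^{-1}\bU \in p\}. \]

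Fix $(g_0, p_0)$ in this set and write $\bU_0 := g_0^{-1}\bU$; so $\bU_0 \in p_0$, equivalently $p_0 \in \widetilde{\bU_0}$. I would now invoke the continuity of the $G$-action on $\bX$ to produce an open neighbourhood $W$ of $g_0$ in $G$ together with an admissible open $\bV \subseteq \bU_0$ with $\bV \in p_0$ and such that $g \bV \subseteq \bU$ for every $g \in W$; equivalently $\bV \subseteq g^{-1}\bU$ for every $g \in W$. For any $(g,p) \in W \times \tilde{\bV}$ we then have $\bV \in p$, and by upward closure of the filter $p$ in the lattice of admissible opens of $\bX$ we conclude $g^{-1}\bU \in p$, i.e.\ $(g,p) \in a^{-1}(\tilde{\bU})$. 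Hence $W \times \tilde{\bV}$ is a product neighbourhood of $(g_0, p_0)$ inside $a^{-1}(\tilde{\bU})$, finishing the argument.

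The step carrying all the content is the production of the pair $(W, \bV)$ from the hypothesis of continuity on $\bX$. This is the main obstacle because $p_0$ is a prime filter rather than an actual point of $\bX$, so the continuity hypothesis, which is formulated at points of $\bX$, cannot be applied directly. My plan for this is to exploit the witness $\bU_0 \in p_0$: after replacing $\bU_0$ by a quasi-compact admissible open with $p_0 \in \widetilde{\bU_0}$ still in $p_0$, one may work inside $\bU_0$ and apply the formulation of continuous action adopted in \cite{EqDCap} — which guarantees, for each pair of admissible opens $\bV \subseteq \bU_0$ and $\bU$ with $g_0 \bV \subseteq \bU$, an open neighbourhood $W$ of $g_0$ with $g \bV \subseteq \bU$ for all $g \in W$ — to the affinoid piece of $\bU_0$ on which the prime filter $p_0$ concentrates. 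Once this extraction is made, the filter-theoretic manipulation above is purely formal.
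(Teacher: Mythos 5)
Your overall strategy is the right one, and it uses the same key input as the paper's own proof — the definitional fact that the stabiliser $G_{\bV}$ of an affinoid subdomain $\bV$ is open in $G$. But the step you yourself flag as ``carrying all the content'' is left as a sketch, and the sketch is not quite coherent as written: a prime filter $p_0$ does not ``concentrate'' on a single affinoid piece of $\bU_0$ (it may contain many incomparable affinoids), and the formulation of continuity you quote is not the definition but a consequence of it that still needs to be derived. As it stands there is a gap at exactly the place you identify.

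The gap is, however, easy to fill once one notices that you may as well take $\bU$ affinoid from the outset (these $\tilde{\bU}$ already form a basis of $\sP(\bX)$, and this is what the paper does silently). Then $\bU_0 := g_0^{-1}\bU$ is affinoid, so there is no need to pass to a smaller $\bV$ at all: simply set $\bV := \bU_0$ and $W := G_{\bU}\hsp g_0$, which is an open neighbourhood of $g_0$ since $G_{\bU}$ is open by \cite[Definition 3.1.8]{EqDCap}. For $g = h g_0 \in W$ with $h \in G_{\bU}$ one has $g^{-1}\bU = g_0^{-1}h^{-1}\bU = g_0^{-1}\bU = \bU_0$, so any $(g,p) \in W \times \tilde{\bU}_0$ satisfies $g^{-1}\bU = \bU_0 \in p$, i.e.\ lies in $a^{-1}(\tilde{\bU})$. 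No appeal to upward closure, refinement by affinoids, or quasi-compact replacement is needed.

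The paper in fact makes this observation globally rather than pointwise: for affinoid $\bU$ one has $g^{-1}\bU = t^{-1}\bU$ whenever $g$ and $t$ lie in the same right $G_{\bU}$-coset, so picking a transversal $T$ for $G_{\bU}$ in $G$ yields the explicit decomposition
\[ a^{-1}(\tilde{\bU}) \;=\; \coprod_{t \in T} \hsp G_{\bU}\hsp t \times \widetilde{t^{-1}\bU}, \]
which is visibly open. Your local argument, once completed as above, is precisely the ``one coset at a time'' reading of this identity; the global decomposition is simply a cleaner way to package it.
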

\begin{proof} Let $\bU$ be an affinoid subdomain of $\bX$; then its stabiliser $G_{\bU}$ in $G$ is open by \cite[Definition 3.1.8]{EqDCap}. Now if $T$ is a set of right coset representatives for $G_{\bU}$ in $G$, then $a^{-1}(\tilde{\bU}) = \coprod\limits_{t \in T} G_{\bU} t \times t^{-1} \tilde{\bU}$ is open in $G \times \sP(\bX)$.
\end{proof}

\begin{cor}\label{ClosedGY} Let $G$ be a topological group acting continuously on the rigid analytic space $\bX$, and let $\bY$ be a Zariski closed subset of $\bX$. Suppose that $\sM(\bX)$ is Hausdorff and that $G_{\bY}$ is co-compact. Then $G \underline{\bY}$ is closed in $\sM(\bX)$ and $G \tilde{\bY}$ is closed in $\sP(\bX)$.
\end{cor}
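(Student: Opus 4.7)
The plan is to carry out the closedness argument on the Huber space $\sP(\bX)$, where Lemma \ref{CtsAct} gives joint continuity of the $G$-action, and then transfer the conclusion to the Berkovich space $\sM(\bX)$ via the retraction $r$. As preliminary reductions: by Lemma \ref{YXtop}(c) the set $\tilde{\bY}$ is the closure of $\bY$ in $\sP(\bX)$, and by Lemma \ref{YXtop}(f) it is stabilised by $G_{\bY}$. Since $G$ is locally compact Hausdorff (being a $p$-adic Lie group in the intended applications) and $G_{\bY}$ is closed and cocompact, a standard argument --- covering the compact quotient $G/G_{\bY}$ by finitely many translates $g_i K G_{\bY}/G_{\bY}$ of the image of a compact identity neighbourhood $K\subseteq G$, then setting $C=\bigcup_i g_i K$ --- will produce a compact subset $C\subseteq G$ with $CG_{\bY}=G$. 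It then follows that $G\tilde{\bY}=CG_{\bY}\tilde{\bY}=C\tilde{\bY}$.

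The first key step is to show that $C\tilde{\bY}$ is closed in $\sP(\bX)$ by a net argument. Given a net $p_\alpha = c_\alpha y_\alpha$ in $C\tilde{\bY}$ converging to some $p \in \sP(\bX)$, I would pass to a subnet along which $c_\alpha \to c$ for some $c \in C$ (possible by compactness of $C$). Continuity of inversion in the topological group $G$ gives $c_\alpha^{-1}\to c^{-1}$, and joint continuity of the action (Lemma \ref{CtsAct}) then yields $y_\alpha = c_\alpha^{-1} p_\alpha \to c^{-1} p$ in $\sP(\bX)$. Closedness of $\tilde{\bY}$ forces $c^{-1} p \in \tilde{\bY}$, and hence $p \in c\tilde{\bY} \subseteq G\tilde{\bY}$, which proves that $G\tilde{\bY}$ is closed in $\sP(\bX)$.

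The second step is to transfer this across the retraction $r : \sP(\bX) \to \sM(\bX)$, which is continuous, surjective, and $G$-equivariant. Combining this with $\tilde{\bY}=r^{-1}(\underline{\bY})$ from Lemma \ref{YXtop}(a), equivariance gives the saturation identity $r^{-1}(G\underline{\bY}) = Gr^{-1}(\underline{\bY}) = G\tilde{\bY}$, which is closed by the previous step; since $r$ exhibits $\sM(\bX)$ as a topological quotient of $\sP(\bX)$, closedness of this saturated set in $\sP(\bX)$ descends to closedness of $G\underline{\bY}$ in $\sM(\bX)$. I expect the only mildly delicate point to be the production of the compact transversal $C$, which rests on local compactness of $G$ and is really a bookkeeping matter in our setting; everything else is a routine compactness-plus-continuity exercise that crucially uses the closedness of $\tilde{\bY}$ already established in Lemma \ref{YXtop}.
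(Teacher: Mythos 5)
Your proof is correct (granted local compactness of $G$, which is automatic in the paper's intended applications), but it runs in the opposite direction to the paper's. The paper works first in the Berkovich space $\sM(\bX)$: it exhibits $G\underline{\bY}$ as the image under the continuous map $r\circ a$ of the quasi-compact space $G/G_{\bY}\times \sP(\bY)$, hence quasi-compact, hence closed since $\sM(\bX)$ is Hausdorff; it then pulls back through $r$, concluding $G\tilde{\bY} = r^{-1}(G\underline{\bY})$ is closed using only continuity of $r$. You work first in $\sP(\bX)$: you write $G\tilde{\bY} = C\tilde{\bY}$ for a compact transversal $C$ of $G_{\bY}$ in $G$, prove this set is closed by a net argument exploiting closedness of $\tilde{\bY}$ (Lemma \ref{YXtop}(c)) and joint continuity (Lemma \ref{CtsAct}), and then descend to $\sM(\bX)$ using the $G$-equivariance of $r$ together with the fact that $r$ is a quotient map. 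Two things are worth noting. Your construction of the transversal $C$ requires local compactness of $G$, which is not among the stated hypotheses (the Corollary literally says ``topological group''), though it holds for every $p$-adic Lie group; and your descent needs $r$ to be a quotient map, a stronger fact than the bare continuity used in the paper's pullback, though the paper does invoke that property elsewhere (proof of Lemma \ref{YXtop}(b)). In return, your route never invokes the Hausdorff hypothesis on $\sM(\bX)$: closedness of $\tilde{\bY}$ plus compactness of $C$ do the work that Hausdorffness does in the paper's quasi-compact-implies-closed step. So in effect the two proofs trade Hausdorffness of $\sM(\bX)$ against local compactness of $G$ and the quotient-map structure of $r$; the paper's version is closer to the minimal stated hypotheses, while yours makes the underlying point-set topology more explicit.
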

\begin{proof} Because $r$ is surjective, it follows from Lemma \ref{YXtop}(a) that $\underline{\bY} = r(\tilde{\bY})$. Now $G \underline{\bY}$ is the image of the quasi-compact space $G/G_{\bY} \times \sP(\bY)$ under the map $r \circ a$, which is continuous by Lemma \ref{CtsAct}. It is therefore quasi-compact. Since $\sM(\bX)$ is Hausdorff, we see that $G \underline{\bY}$ is closed in $\sM(\bX)$. Hence $G\tilde{\bY} = r^{-1}(G \underline{\bY})$ is closed in $\sP(\bX)$.
\end{proof}

\begin{rmk}\label{MXT2} The hypothesis that $\sM(\bX)$ is Hausdorff is automatic in many cases of interest, e.g. when $\bX$ is quasi-compact, or more generally when $\bX$ satisfies the condition from \cite[Proposition 5.4]{SchVdPut}.
\end{rmk}

\begin{cor}\label{ClosureOfGY} With the notation of Corollary \ref{ClosedGY}, $\overline{G \bY} = G \tilde{\bY}$. \end{cor}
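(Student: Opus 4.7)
The plan is to prove the two inclusions separately, using Corollary \ref{ClosedGY} for one direction and the fact that the $G$-action on $\sP(\bX)$ is by homeomorphisms for the other.

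For the inclusion $\overline{G\bY} \subseteq G\tilde{\bY}$: Lemma \ref{YXtop}(c) gives $\bY \subseteq \overline{\bY} = \tilde{\bY}$, hence $G\bY \subseteq G\tilde{\bY}$. By Corollary \ref{ClosedGY} the set $G\tilde{\bY}$ is already closed in $\sP(\bX)$, so taking closures on the left yields $\overline{G\bY} \subseteq G\tilde{\bY}$.

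For the reverse inclusion $G\tilde{\bY} \subseteq \overline{G\bY}$: by Lemma \ref{CtsAct}, for every $g \in G$ the translation map $g : \sP(\bX) \to \sP(\bX)$ is a homeomorphism (with inverse given by the action of $g^{-1}$), so it commutes with closure. Combined with Lemma \ref{YXtop}(c), this gives $g\tilde{\bY} = g\overline{\bY} = \overline{g\bY}$ for every $g$. Since $g\bY \subseteq G\bY$, we have $\overline{g\bY} \subseteq \overline{G\bY}$ for each $g$, and therefore
\[
G\tilde{\bY} \;=\; \bigcup_{g \in G} g\tilde{\bY} \;=\; \bigcup_{g \in G} \overline{g\bY} \;\subseteq\; \overline{G\bY}.
\]

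There is no real obstacle here: the statement is essentially a formal consequence of the two preceding results, namely the closedness of $G\tilde{\bY}$ (Corollary \ref{ClosedGY}) and the continuity of the $G$-action on $\sP(\bX)$ (Lemma \ref{CtsAct}). The substantive content — co-compactness of $G_{\bY}$ and Hausdorffness of $\sM(\bX)$ — has already been absorbed into Corollary \ref{ClosedGY}, which is the only nontrivial input used.
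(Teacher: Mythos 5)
Your proof is correct and follows essentially the same route as the paper: one inclusion from the closedness of $G\tilde{\bY}$ (Corollary \ref{ClosedGY}), the other from $G$-stability of $\overline{G\bY}$, which you justify explicitly via the homeomorphism property of each $g$ rather than stating it as the paper does; the two phrasings are logically interchangeable.
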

\begin{proof} Note that $\overline{\bY} = \tilde{\bY}$ by Lemma \ref{YXtop}(c).  Since $G \tilde{\bY}$ is closed by Corollary \ref{ClosedGY} and contains $G \bY$, it also contains $\overline{G \bY}$. On the other hand, $\overline{G \bY}$ is a $G$-stable closed subset of $\sP(\bX)$ containing $\bY$, so it contains $\tilde{\bY} = \overline{\bY}$ as well as $G \tilde{\bY}$.
\end{proof}

\begin{thm}\label{Separation} Let $G$ be a topological group acting continuously on the qcqs rigid analytic space $\bX$. Let $\bY_1,\ldots, \bY_m$ be Zariski closed subsets of $\bX$ such that the sets $G\bY_1,\ldots,G\bY_m$ are pairwise disjoint and such that $G_{\bY_i}$ is co-compact for each $i$.  Then there exist pairwise disjoint  open neighbourhoods of their closures $\overline{G \bY_i}$ in $\sP(\bX)$.
\end{thm}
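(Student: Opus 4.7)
The plan is to pass everything down to the Berkovich space $\sM(\bX)$, where things are much easier because $\sM(\bX)$ is compact Hausdorff (by qcqs-ness of $\bX$), hence normal. More precisely, I would first reduce to showing that the closed sets $\underline{G\bY_i} := G\underline{\bY_i} \subset \sM(\bX)$ are pairwise disjoint: once we have that, normality of $\sM(\bX)$ immediately furnishes pairwise disjoint open neighbourhoods $V_i$ of $G\underline{\bY_i}$, and then $r^{-1}(V_i)$ are the desired pairwise disjoint open neighbourhoods in $\sP(\bX)$. The key identification here is $\overline{G\bY_i} = G\tilde{\bY_i} = r^{-1}(G\underline{\bY_i})$: the first equality is Corollary \ref{ClosureOfGY}, and the second follows from Lemma \ref{YXtop}(a) together with the $G$-equivariance of $r$.

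Now I would check the two key properties of the $G\underline{\bY_i}$ separately. \emph{Closedness} is exactly Corollary \ref{ClosedGY}, since each $G_{\bY_i}$ is co-compact and $\sM(\bX)$ is Hausdorff. \emph{Pairwise disjointness} uses the three formal properties of the operation $\bY \mapsto \underline{\bY}$ recorded in Lemma \ref{YXtop}: for $i \neq j$ and $g, g' \in G$, one computes
\[ g\underline{\bY_i} \cap g'\underline{\bY_j} \;=\; \underline{g\bY_i} \cap \underline{g'\bY_j} \;=\; \underline{g\bY_i \cap g'\bY_j} \;=\; \underline{\emptyset} \;=\; \emptyset, \]
using parts (f) and (e) of Lemma \ref{YXtop}, the hypothesis that $G\bY_i$ and $G\bY_j$ are disjoint, and the trivial observation that $\underline{\emptyset} = \emptyset$ (since $\bX$ lies in every filter). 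Taking the union over $g, g' \in G$ gives $G\underline{\bY_i} \cap G\underline{\bY_j} = \emptyset$.

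I do not expect any serious obstacle here. The whole argument is really just bookkeeping: the nontrivial topological input (closedness of $G\underline{\bY}$ and Hausdorffness of $\sM(\bX)$) has already been assembled in the preceding lemmas, and the compatibility of $\underline{(\,\cdot\,)}$ with intersections and with the $G$-action does the rest. The only subtle point is making sure one separates in $\sM(\bX)$ rather than directly in $\sP(\bX)$ — attempting to apply normality directly in $\sP(\bX)$ would fail, since $\sP(\bX)$ is not Hausdorff in general, which is precisely why the retraction map $r$ has been introduced.
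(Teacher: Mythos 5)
Your proposal is correct and follows essentially the same route as the paper's own proof: pass to $\sM(\bX)$, verify closedness (Corollary \ref{ClosedGY}) and pairwise disjointness (via the compatibility of $\underline{(\,\cdot\,)}$ with the $G$-action and with intersections in Lemma \ref{YXtop}), apply normality of the compact Hausdorff space $\sM(\bX)$, and pull back the resulting disjoint open sets along $r$ using $r^{-1}(G\underline{\bY_i}) = G\tilde{\bY_i} = \overline{G\bY_i}$. Your citation of Lemma \ref{YXtop}(e),(f) for the disjointness step is in fact the right one (the paper's text says (d,e), but (d) plays no role there), and your closing remark about why one must separate in $\sM(\bX)$ rather than $\sP(\bX)$ is exactly the point.
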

\begin{proof} Each $G\underline{\bY}_i$ is closed in $\sM(\bX)$ by Corollary \ref{ClosedGY} and Remark \ref{MXT2}. If $p \in G \underline{\bY}_i \cap G \underline{\bY}_j$ then there is some $g\in G$ such that $p \in \underline{\bY}_i \cap g \underline{\bY}_j$. But this intersection equals $\underline{ \bY_i \cap g \bY_j }$ by Lemma \ref{YXtop}(d,e) which is empty unless $i = j$ by our assumption. So, the closed subsets $G \underline{\bY}_i$ of $\sM(\bX)$ are pairwise disjoint. 

Because $\sM(\bX)$ is quasi-compact and Hausdorff, it is a normal topological space by \cite[Chapter I, \S 9.3, Proposition 2]{BourGenTop}. Hence we can find pairwise disjoint open neighbourhoods $V_i$ of the $G \underline{\bY}_i$'s. Then $U_i := r^{-1}(V_i)$ is an open neighbourhood of $r^{-1}(G \underline{\bY}_i) = G r^{-1}(\underline{\bY}_i) = G \tilde{\bY}_i = \overline{G \bY_i}$ by Lemma \ref{YXtop} and Corollary \ref{ClosureOfGY}, and the $U_1,\ldots,U_m$ are pairwise disjoint.
\end{proof}

\subsection{Construction of the induction functor}
\label{IndFunctor} Let $G$ be a $p$-adic Lie group acting continuously on a smooth rigid analytic space $\bX$, in the sense of \cite[Definition 3.1.8]{EqDCap}, and let $B$ be a closed subgroup of $G$ such that \emph{$G/B$ is compact}. Our goal here is to construct an \emph{induction functor}
\[ \ind_B^G : \cC_{\bX/B} \longrightarrow \cC_{\bX/G}\]
from $B$-equivariant coadmissible $\cD$-modules on $\bX$ to $G$-equivariant ones. Our first elementary topological Lemma explains how we use the compactness of $G/B$.

\begin{lem}\label{CompactGmodB} Let $H$ be an open subgroup of $G$. Then the set of double cosets $|H \backslash G / B|$ is finite.
\end{lem}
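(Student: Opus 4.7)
The plan is to realise the double coset space $H \backslash G / B$ as the orbit space of the $H$-action (by left multiplication) on $G/B$, and then to use openness of $H$ together with compactness of $G/B$ to deduce finiteness.

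First I would observe that the canonical projection $\pi : G \to G/B$ is an open map, since for any open $U \subseteq G$ we have $\pi^{-1}(\pi(U)) = UB = \bigcup_{b \in B} Ub$, which is open in $G$, and openness then follows from the quotient topology. Now consider the orbits of the left $H$-action on $G/B$. For each $g \in G$, the orbit $H \cdot (gB)$ is the image $\pi(Hg)$ of the open subset $Hg \subseteq G$ (open because $H$ is open in $G$), so each $H$-orbit is open in $G/B$.

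Next I would note that distinct $H$-orbits are disjoint, so the complement of a single orbit is a union of open sets, hence open; consequently every $H$-orbit is also closed in $G/B$. Thus $G/B$ decomposes as a disjoint union of clopen sets, one for each $H$-orbit. Since $G/B$ is compact by hypothesis, this open cover by disjoint clopen sets must be finite. Finally, there is an obvious bijection between the set of $H$-orbits on $G/B$ and the double coset space $H\backslash G/B$ (sending $H \cdot (gB) \mapsto HgB$), so $|H \backslash G / B|$ is finite.

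There is essentially no obstacle here: the only point to be careful about is the openness of $\pi$, which is standard for the quotient by a closed subgroup in a topological group, and the identification of $H$-orbits on $G/B$ with $H\backslash G/B$, which is tautological.
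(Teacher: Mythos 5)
Your proof is correct and follows essentially the same route as the paper: both establish that $\pi: G \to G/B$ is open, observe that the $H$-orbits $\pi(Hg)$ form a disjoint open cover of the compact space $G/B$, conclude there are finitely many orbits, and identify these with the double cosets. Your extra remark that the orbits are also closed is unnecessary (disjointness plus compactness already forces the open cover to be finite) but harmless.
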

\begin{proof} The quotient map $\pi : G \to G/B$ is open because $\pi^{-1}(\pi(U)) = U \cdot B = \bigcup_{b \in B} U b$ is a union of $B$-translates of $U$ whenever $U$ is an open subset of $G$. Since $H$ is open in $G$, the open covering $G = \bigcup_{g \in G} Hg$ of $G$ maps to an open covering $\bigcup_{g \in G} H \pi(g)$ of $G/B$ which by the compactness of $G/B$ has a finite subcovering. So the $H$-action on $G/B$ has finitely many orbits, and the preimages of these orbits in $G$ are precisely the $H,B$-double cosets in $G$.
\end{proof}

Recall from \cite[Definition 3.4.6(a)]{EqDCap} that $\bX_w(\cT)$ denotes the set of affinoid subdomains $\bU$ of $\bX$ such that $\cT(\bU)$ admits a free $\cA$-Lie lattice for some affine formal model $\cA$ in $\cO(\bU)$. Recall also from \cite[Definition 3.4.4]{EqDCap} that the subgroup $H$ of $G$ is \emph{$\bU$-small} if the pair $(\bU,H)$ is small, which in turns means that $\bU$ is an affinoid subdomain of $\bX$, $H$ is a compact open subgroup of the stabiliser $G_{\bU}$ of $\bU$ in $G$, and $\cT(\bU)$ has an $H$-stable free $\cA$-Lie lattice $\cL$ for some $H$-stable affine formal model $\cA$ in $\cO(\bU)$.  Until the end of $\S \ref{IndFunctor}$ we will fix the following data:

\begin{itemize}
\item $B$ is a closed subgroup of $G$ such that $G/B$ is compact,
\item $\cN \in \cC_{\bX/B}$, 
\item $\bU \in \bX_w(\cT)$,
\item $H$ is a $\bU$-small compact open subgroup of $G$.
\end{itemize}

Whenever $J$ is a subgroup of $G$ and $s \in G$, we will write ${}^s J := s J s^{-1} \qmb{and} J^s := s^{-1} J s$. In \cite[Lemma 3.4.3]{EqDCap} we constructed a continuous isomorphism of $K$-Fr\'echet algebras $\w{s}^{-1}_{U,H} : \w\cD(\bU, H) \longrightarrow \w\cD(s^{-1} \bU, H^s)$ induced by the action of $s^{-1}$ on $\bX$ and the conjugation automorphism $g \mapsto s^{-1}g s$ of $G$. In view of \cite[Theorem 3.3.12]{EqDCap}, this map restricts to an isomorphism $\w\cD(\bU, {}^sB\cap H) \longrightarrow \w\cD(s^{-1} \bU, B \cap H^s)$ which we will simply denote by $\w{s}^{-1}$.  Now $B \cap H^s$ stabilises $s^{-1}\bU$, and in fact it is an $s^{-1} \bU$-small subgroup of $B$, so we see that $\cN(s^{-1} \bU)$ is a module over $\w\cD(s^{-1}\bU, B \cap H^s)$ whenever $\cN \in \cC_{\bX/B}$ by \cite[Theorem 4.4.3]{EqDCap}. 

\begin{defn} Let $\cN \in \cC_{\bX/B}$ and let $H$ be a compact open subgroup of $G$. Suppose that $(\bU,H)$ is small and let $s \in G$. Let $[s]$ be a formal symbol and set
\[ [s] \cN(s^{-1}\bU) := \{ [s] m : m \in \cN(s^{-1}\bU)\}.\]
This becomes a module over $\w\cD(\bU, {}^sB \cap H)$ via the rule
\[ a \cdot [s] m := [s] \w{s}^{-1}(a)m \qmb{for all} a \in \w\cD(\bU, {}^sB \cap H) \qmb{and} m \in \cN(s^{-1}\bU).\] 
We call this $\w\cD(\bU, {}^sB \cap H)$-module the \emph{$s$-twist of $\cN(s^{-1} \bU)$}.
\end{defn}

\begin{lem}\label{TwistCoadm}  $[s] \cN(s^{-1} \bU)$ is a coadmissible $\w\cD(\bU, sBs^{-1} \cap H)$-module.
\end{lem}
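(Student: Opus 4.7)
The plan is to reduce the assertion to the known coadmissibility of $\cN(s^{-1}\bU)$ over the ``untwisted'' algebra $\w\cD(s^{-1}\bU, B \cap H^s)$, and then to transport that coadmissibility along the isomorphism $\w{s}^{-1}$.

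First I would apply \cite[Theorem 4.4.3]{EqDCap} to the pair $(s^{-1}\bU, B \cap H^s)$. As remarked in the text preceding the definition, $s^{-1}\bU$ lies in $\bX_w(\cT)$ and $B \cap H^s$ is an $s^{-1}\bU$-small subgroup of $B$; since $\cN \in \cC_{\bX/B}$, this gives that $\cN(s^{-1}\bU)$ is a coadmissible $\w\cD(s^{-1}\bU, B \cap H^s)$-module.

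Next I would unwind the definition of $[s]\cN(s^{-1}\bU)$: as an abelian group it equals $\cN(s^{-1}\bU)$, and the $\w\cD(\bU, {}^sB \cap H)$-action is obtained by pulling back the original action along the continuous isomorphism of Fr\'echet algebras
\[\w{s}^{-1} : \w\cD(\bU, {}^sB \cap H) \tocong \w\cD(s^{-1}\bU, B \cap H^s)\]
constructed in \cite[Lemma 3.4.3]{EqDCap}. In other words, $[s]\cN(s^{-1}\bU)$ is simply the restriction of scalars of the coadmissible module $\cN(s^{-1}\bU)$ along $\w{s}^{-1}$.

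To conclude, I would invoke the general principle that restriction of scalars along an isomorphism of Fr\'echet--Stein algebras carries coadmissible modules to coadmissible modules: an isomorphism preserves a chosen Fr\'echet--Stein structure, so the images of coherent quotients remain coherent quotients, and the module structure on each layer is identified. Applied to $\w{s}^{-1}$, this yields that $[s]\cN(s^{-1}\bU)$ is a coadmissible $\w\cD(\bU, {}^sB \cap H)$-module, as required. The main (and essentially only) subtlety is verifying that $\w{s}^{-1}$ is indeed an isomorphism of Fr\'echet--Stein algebras in the sense needed to transport coadmissibility, but this is already built into \cite[Lemma 3.4.3]{EqDCap} since the isomorphism is induced by the geometric action of $s^{-1}$, which respects the lattice/enlargement data defining the Fr\'echet--Stein structure on both sides.
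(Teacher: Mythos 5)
Your proposal is correct and follows essentially the same route as the paper: apply \cite[Theorem 4.4.3]{EqDCap} to the small pair $(s^{-1}\bU, B\cap H^s)$ to get coadmissibility of $\cN(s^{-1}\bU)$, and then transport it along the isomorphism $\w{s}^{-1}$, since pulling back a coadmissible module along an isomorphism of Fr\'echet--Stein algebras yields a coadmissible module. The paper's proof is just a terser version of what you wrote.
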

\begin{proof} Because $\cN \in \cC_{\bX/B}$, the $\w\cD(s^{-1} \bU, B \cap H^s)$-module $\cN(s^{-1} \bU)$ is in fact coadmissible, by \cite[Theorem 4.4.3]{EqDCap}. The result follows because $\w{s}^{-1}$ is an isomorphism of $\w\cD(\bU, {}^sB \cap H)$ onto $\w\cD(s^{-1} \bU, B \cap H^s)$.
\end{proof}

\begin{prop}\label{sTwist} Every $s \in G$ defines a \emph{twisting functor} $\cC_{\bX/B} \to \cC_{\bX / {}^sB}$ given by $\cN \mapsto [s] s_\ast \cN$. 
\end{prop}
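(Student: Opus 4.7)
The plan is to verify that $[s]s_\ast\cN$ satisfies all the axioms to lie in $\cC_{\bX/{}^sB}$ and that the assignment $\cN \mapsto [s]s_\ast\cN$ is functorial. Virtually all of the verification is bookkeeping: conjugation by $s$ rebadges the $B$-equivariant structure on $\cN$ as an ${}^sB$-equivariant structure on $s_\ast\cN$, and the only substantive input needed is Lemma \ref{TwistCoadm}.

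First I would define $[s]s_\ast\cN$ as the abelian sheaf on $\bX$ with sections $\bU \mapsto [s]\cN(s^{-1}\bU)$ and restriction maps inherited from those of $\cN$. The sheaf axiom is automatic since $s$ acts on $\bX$ as a rigid analytic isomorphism and hence preserves admissible coverings. I would then endow this sheaf with a $\cD_{\bX}$-module structure twisted via $\w{s}^{-1}$ exactly as in the definition preceding Lemma \ref{TwistCoadm}, and with an ${}^sB$-equivariant structure: for $g = sbs^{-1} \in ({}^sB)_{\bU}$, the action map $([s]s_\ast\cN)(\bU) \to ([s]s_\ast\cN)(g\bU)$ is $[s]m \mapsto [s](b\cdot m)$, using the identification $s^{-1}g\bU = bs^{-1}\bU$. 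The cocycle identities, continuity, and $\cD$-compatibility transfer verbatim from those of $\cN$, because $b \mapsto sbs^{-1}$ is a topological group isomorphism $B \to {}^sB$ and $\w{s}^{-1}$ is a continuous $K$-algebra isomorphism that intertwines the corresponding compatibilities already present on $\cN$.

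Coadmissibility is then supplied pair-by-pair by Lemma \ref{TwistCoadm}: for every $\bU \in \bX_w(\cT)$ and every $\bU$-small compact open subgroup $H$ of $G$, the module $[s]\cN(s^{-1}\bU)$ is coadmissible over $\w\cD(\bU, {}^sB \cap H)$. Compatibility of these data under shrinking of $\bU$ and under shrinking of $H$ follows by functoriality from the analogous compatibilities on $\cN$, via the naturality of $\w{s}^{-1}$ in its arguments. Functoriality in $\cN$ is then immediate: a morphism $\varphi : \cN \to \cN'$ in $\cC_{\bX/B}$ induces $[s]s_\ast\varphi : [s]m \mapsto [s]\varphi_{s^{-1}\bU}(m)$, which is $\w\cD(\bU, {}^sB \cap H)$-linear precisely because $\varphi$ is $\w\cD(s^{-1}\bU, B \cap H^s)$-linear, by the very construction of the twisted module structure.

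The main obstacle is purely notational: one must track carefully which morphism of which Fr\'echet algebra intertwines which module structure at each step. Once the dictionary between the $B$-side and the ${}^sB$-side provided by $\w{s}^{-1}$ and by conjugation is pinned down, no genuinely new input beyond Lemma \ref{TwistCoadm} is required, and the verification reduces to repeated appeals to functoriality.
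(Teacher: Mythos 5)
Your proposal is correct and follows essentially the same route as the paper's proof: both take Lemma \ref{TwistCoadm} as the sole substantive input (giving the coadmissible $\w\cD(\bU,{}^sB\cap H)$-module structure, hence the $\cD(\bU)$-module and Fr\'echet structures on sections), transport the $B$-equivariant structure to an ${}^sB$-equivariant one via conjugation by $s$ (your $g = sbs^{-1}$ formula matches the paper's $c^{[s]s_\ast\cN} := [s]s_\ast(c^s)^{\cN}$), and leave as a routine check the fact that the restriction of $[s]s_\ast\cN$ to $\bU$ is isomorphic to $\Loc^{\w\cD(\bU,C)}_\bU([s]\cN(s^{-1}\bU))$, which is what your "compatibility under shrinking of $\bU$ and $H$" is gesturing at and what actually certifies coadmissibility of the equivariant sheaf.
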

\begin{proof} Fix $\bU \in \bX_w(\cT)$. Then $([s] s_\ast \cN)(\bU) = [s]\cN(s^{-1}\bU)$ is a coadmissible $\w\cD(\bU, sBs^{-1} \cap H)$-module by Lemma \ref{TwistCoadm} for any $\bU$-small open subgroup $H$ of $G$, and therefore in particular is naturally a $\cD(\bU)$-module, as well as a Fr\'echet space. These structures do not depend on the choice of $H$, and $[s] s_\ast \cN$ naturally becomes a sheaf of $\cD$-modules on $\bX_w(\cT)$. If $c \in {}^s B$, then $c^s \in B$, so we have at our disposal the morphism of sheaves $(c^s)^{\cN}:   \cN  \to (c^s)^\ast \cN$ because $\cN$ is a $B$-equivariant sheaf. We define $c^{[s] s_\ast \cN} : [s] s_\ast \cN \to c^\ast ([s] s_\ast \cN)$ by $c^{[s] s_\ast \cN}:= [s] s_\ast (c^s)^{\cN}$. It is straightforward to verify that in this way $[s] s_\ast \cN$ becomes a ${}^sB$-equivariant $\cD$-module on $\bX$. The maps $c^{[s] s_\ast \cN}(\bU) : [s] \cN(s^{-1}\bU) \to [s] \cN( s^{-1} c \bU)$ are continuous because the maps $(c^s)^{\cN}(s^{-1}\bU) : \cN(s^{-1}\bU) \to \cN(s^{-1} c \bU)$ are continuous for each $c \in {}^s B$. Finally, we omit the straightforward verification of the fact that the restriction of $[s] s_\ast \cN$ to $\bU$ is naturally isomorphic to $\Loc^{\w\cD(\bU,C)}_{\bU}([s] \cN(s^{-1} \bU))$ as a $C$-equivariant locally Fr\'echet $\cD$-module on $\bU$. Hence, $[s]s_\ast \cN$ is coadmissible.  The functorial nature of this construction is clear. \end{proof}

\begin{defn}\label{sTwistDefn} We call the object $[s] s_\ast \cN \in \cC_{\bX/{}^sB}$ the \emph{$s$-twist} of $\cN \in \cC_{\bX/B}$.  \end{defn}

\begin{cor}\label{sTwistSupp} Let $s \in G$. The $s$-twist functor sends objects supported on $\bY$ to objects supported on $s\bY$: $[s] s_\ast : \cC_{\bX/B}^{\bY} \longrightarrow \cC_{\bX / {}^sB}^{s \bY}.$
\end{cor}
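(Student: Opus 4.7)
The plan is to unwind the construction of $[s]s_\ast\cN$ from Proposition \ref{sTwist} and apply the support hypothesis on $\cN$ directly. Assume $\cN \in \cC^{\bY}_{\bX/B}$, so by definition $\cN_{|\bV} = 0$ for every admissible open $\bV \subseteq \bX \setminus \bY$. We must show that $[s]s_\ast\cN$ is supported on $s\bY$, i.e.\ that $([s]s_\ast\cN)_{|\bW} = 0$ for every admissible open $\bW \subseteq \bX \setminus s\bY$.

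The set-theoretic key is that $s$ acts as an automorphism of the rigid analytic space $\bX$, so $s^{-1}$ sends admissible opens to admissible opens and
\[ s^{-1}(\bX \setminus s\bY) \;=\; \bX \setminus \bY. \]
Hence for any admissible open $\bW \subseteq \bX \setminus s\bY$, its preimage $s^{-1}\bW$ is admissible open in $\bX \setminus \bY$, and therefore $\cN_{|s^{-1}\bW} = 0$ by hypothesis.

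Next, to conclude the vanishing of $([s]s_\ast\cN)_{|\bW}$, I would work locally on affinoid subdomains $\bU \in \bX_w(\cT)$ contained in $\bW$. Since $\bX_w(\cT)$ forms a basis for the admissible $G$-topology on $\bX$ (as used systematically in \cite{EqDCap} and implicit throughout the construction in Proposition \ref{sTwist}), it suffices to verify that $([s]s_\ast\cN)(\bU) = 0$ for every such $\bU \subseteq \bW$. By the explicit description recorded in the proof of Proposition \ref{sTwist}, we have $([s]s_\ast\cN)(\bU) = [s]\cN(s^{-1}\bU)$ as abelian groups, and $s^{-1}\bU$ is an affinoid subdomain of $s^{-1}\bW \subseteq \bX \setminus \bY$; so $\cN(s^{-1}\bU) = 0$ by the preceding paragraph, and the required vanishing follows.

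There is no genuine obstacle: once one has Proposition \ref{sTwist} and the trivial set-theoretic identity $s^{-1}(\bX \setminus s\bY) = \bX \setminus \bY$, the whole argument reduces to bookkeeping. The only mild subtlety worth flagging is that $\bX_w(\cT)$-affinoids contained in $\bW$ are enough to control the restriction $([s]s_\ast\cN)_{|\bW}$, but this is built into the coadmissible $\cD$-module framework of \cite{EqDCap} used to define $[s]s_\ast \cN$ as a sheaf on $\bX$ in the first place.
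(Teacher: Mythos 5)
Your proof is correct and is exactly the routine unwinding the paper leaves implicit: the paper states this corollary with no proof at all, treating it as an immediate consequence of the definition of $[s]s_\ast$, the identity $([s]s_\ast\cN)(\bU)=[s]\cN(s^{-1}\bU)$ from the proof of Proposition \ref{sTwist}, and the observation that $s^{-1}(\bX\setminus s\bY)=\bX\setminus\bY$. Your writeup supplies precisely that bookkeeping and nothing more, so there is nothing substantive to compare.
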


Suppose $(\bU,H)$ is small. It follows from Lemma \ref{TwistCoadm} that we now have at our disposal the coadmissible $\w\cD(\bU,H)$-module
\[ \boxed{ M(\bU,H,s) := \w\cD(\bU,H) \WO{\w\cD(\bU, {}^sB \hsp \cap \hsp H)} [s] \cN(s^{-1}\bU).}\]
We will show that in a precise sense, $M(\bU,H,s)$ only depends on the double coset $HsB$ containing $s$. Let $Z \in H \backslash G / B$ be an $H,B$-double coset; we regard it as a category as follows. The objects of this category are the elements of $Z$; for two elements $s,t \in Z$, the set of morphisms $Z(s,t)$ in this category is defined by 
\[Z(s,t) := \{ (h,b) \in H \times B : hsb^{-1} = t \}\]
and the composition of arrows maps $Z(t,u) \times Z(s,t) \to Z(s,u)$ are obtained by restricting the group operation in $H \times B$.  Note that each double coset $Z \in H \backslash G / B$ is becomes a groupoid in this way.

\begin{prop}\label{MUHfunctor} Let $Z \in H \backslash G / B$. The rule $s \mapsto M(\bU,H,s)$ defines a functor $M(\bU,H,-):Z \to \cC_{\w\cD(\bU,H)}$.
\end{prop}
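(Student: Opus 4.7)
The plan is to construct, for each morphism $(h,b) \in Z(s,t)$, a $\w\cD(\bU,H)$-linear map $\phi_{(h,b)} : M(\bU,H,s) \to M(\bU,H,t)$ given on elementary tensors by
\[ \phi_{(h,b)}(a \otimes [s]m) := a h^{-1} \otimes [t]\, b^{\cN}(m), \]
and then to verify the groupoid axioms. The map $b^\cN(s^{-1}\bU) : \cN(s^{-1}\bU) \to \cN(bs^{-1}\bU)$ coming from the $B$-equivariance of $\cN$ lands in $\cN(t^{-1}\bU)$ because $hsb^{-1} = t$ gives $bs^{-1} = t^{-1}h$ and because $h \in H \subseteq G_{\bU}$ forces $h\bU = \bU$, hence $bs^{-1}\bU = t^{-1}\bU$; so the right-hand side is a well-defined element of $M(\bU,H,t)$.

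First, I would verify that $\phi_{(h,b)}$ descends to the completed balanced tensor product. The critical observation is that $t = hsb^{-1}$ together with $bBb^{-1} = B$ yields $h({}^sB)h^{-1} = {}^tB$; combined with $hHh^{-1} = H$, conjugation by $h$ restricts to an algebra isomorphism $c \mapsto hch^{-1}$ from $\w\cD(\bU, {}^sB \cap H)$ onto $\w\cD(\bU, {}^tB \cap H)$. The balancing identity $\phi_{(h,b)}(ac \otimes [s]m) = \phi_{(h,b)}(a \otimes c\cdot [s]m)$ for $c \in \w\cD(\bU, {}^sB \cap H)$ then reduces, after pulling $hch^{-1}$ across the tensor sign, to the single compatibility
\[ \w{t}^{-1}(hch^{-1})\cdot b^\cN(m) \;=\; b^\cN(\w{s}^{-1}(c) \cdot m) \qquad \text{in } \cN(t^{-1}\bU). \]
I would check this first when $c$ is a group element in ${}^sB \cap H$: the cocycle for the $B$-equivariant structure on $\cN$ reduces both sides to group-element actions on $m$, and the required equality $t^{-1}hch^{-1}tb = bs^{-1}cs$ follows directly from $tb = hs$. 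Next I would check it when $c$ is a differential operator in $\cD(\bU)$: both sides then equal $(t^{-1}h)_\ast(c) \cdot b^\cN(m)$ by functoriality of pushforward of differential operators along $b$, $s^{-1}$ and $t^{-1}$. The general case follows by continuity of $\w{s}^{-1}$, $\w{t}^{-1}$ and $b^\cN$ together with the fact that $\w\cD(\bU, {}^sB \cap H)$ is topologically generated by the group-algebra contribution and by $\cD(\bU)$.

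For functoriality, the identity $(1,1) \in Z(s,s)$ produces the identity map since $1^{\cN} = \id$. Given composable morphisms $(h,b) \in Z(s,t)$ and $(h',b') \in Z(t,u)$, one computes directly
\[ \phi_{(h',b')}(\phi_{(h,b)}(a \otimes [s]m)) \;=\; a h^{-1}(h')^{-1} \otimes [u]\, (b')^\cN b^\cN(m) \;=\; a(h'h)^{-1} \otimes [u]\, (b'b)^\cN(m), \]
using the cocycle $(b')^\cN \circ b^\cN = (b'b)^\cN$ for the $B$-equivariant structure on $\cN$ together with associativity in $H$; this coincides with $\phi_{(h'h, b'b)}(a \otimes [s]m)$, yielding $\phi_{(h',b')} \circ \phi_{(h,b)} = \phi_{(h'h, b'b)}$.

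The main obstacle is the compatibility identity in the balancing check, which weaves together three distinct pieces of structure: the $B$-equivariance cocycle on $\cN$, the conjugation isomorphisms $\w{s}^{-1}, \w{t}^{-1}$ of \cite[Lemma 3.4.3]{EqDCap}, and the group-theoretic relation $hs = tb$. Once this single identity is established on group elements and on differential operators and propagated by continuity, everything else is formal bookkeeping. Each $\phi_{(h,b)}$ is automatically a morphism in $\cC_{\w\cD(\bU,H)}$ rather than merely an abstract module map, since both $M(\bU,H,s)$ and $M(\bU,H,t)$ are coadmissible by Lemma \ref{TwistCoadm} and $\w\cD(\bU,H)$-linear maps between coadmissible modules are automatically continuous.
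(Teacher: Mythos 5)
Your proof is correct and takes essentially the same route as the paper's: the same formula for the structure maps (the paper writes $a\gamma(h)^{-1} \w\otimes [t]\, b^{\cN}(m)$ where $\gamma$ is the canonical group homomorphism, which is your $ah^{-1}\otimes[t]\,b^\cN(m)$), the same reduction of the balancing check to the identity $\w{t^{-1}}(\w{h}(x))\cdot b^\cN(m)=b^\cN(\w{s^{-1}}(x)\cdot m)$ via $t^{-1}h=bs^{-1}$, and the same functoriality verification. The only cosmetic difference is that the paper verifies the balancing identity in one direct chain of equalities using $\w{t^{-1}}\circ\w{h}=\w{b}\circ\w{s^{-1}}$ together with the $B$-equivariance relation $\w{b}(y)\cdot b^\cN(m)=b^\cN(y\cdot m)$ for all $y$, whereas you check it separately on group elements and on $\cD(\bU)$ and then propagate by density — either works.
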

\begin{proof} Let $s,t \in Z$ and $(h,b) \in Z(s,t)$, so that $s = h^{-1} t b$. We have to define a $\w\cD(\bU,H)$-linear map $\eta^H_{(h,b)} : M(\bU,H,s) \longrightarrow M(\bU,H,t)$. We define this map 
\begin{equation}\label{Etahb} \eta^H_{(h,b)} : \w\cD(\bU,H) \WO{\w\cD(\bU, {}^sB \hsp \cap \hsp H)} [s] \cN(s^{-1}\bU) \longrightarrow \w\cD(\bU,H) \WO{\w\cD(\bU, {}^tB \hsp \cap \hsp H)} [t] \cN(t^{-1}\bU)\end{equation}
as follows: given $a \in \w\cD(\bU,H)$ and $m \in \cN(s^{-1}\bU)$, we set
\[ \boxed{\eta^H_{(h,b)}( a \hsp \w\otimes \hsp [s] m ) := a \gamma(h)^{-1} \hsp \w\otimes \hsp [t] \hsp b^{\cN}(m)}\]
where $\gamma : H \to \w\cD(\bU,H)^\times$ is the canonical group homomorphism; see \cite[Remark 3.3.2(c)]{EqDCap}. To check that this is well-defined, because
\[a x \hsp \w\otimes \hsp [s] m = a \hsp \w \otimes \hsp x \cdot [s] m = a \hsp \w\otimes \hsp [s] \hsp \w{s^{-1}}(x) \cdot m\]
holds whenever $x \in \w\cD(\bU, {}^s B \cap H)$, we must show that
\begin{equation}\label{EtaWellDef}a x \hsp \gamma(h)^{-1} \hsp \w\otimes \hsp [t] \hsp b^{\cN}(m) = a \gamma(h)^{-1} \hsp \w\otimes \hsp [t] \hsp b^{\cN}(\w{s^{-1}}(x) \cdot m)\end{equation}
holds whenever $x \in \w\cD(\bU, {}^s B \cap H)$. Now $x \gamma(h)^{-1} = \gamma(h)^{-1} \w{h}(x)$ implies that
\[ \begin{array}{lll} a x \gamma(h)^{-1} \hsp\w\otimes \hsp [t] b^\cN(m) &=& a \gamma(h)^{-1} \hsp \w\otimes \hsp \w{h}(x) \cdot [t] b^{\cN}(m) = \\

&=& a \gamma(h)^{-1} \hsp \w\otimes\hsp [t] \hsp \w{t^{-1}}( \w{h}(x)) b^{\cN}(m). \end{array}\]
We know that $t^{-1} h = bs^{-1}$ since $(h,b) \in Z(s,t)$, so
\[ \w{t^{-1}}( \w{h}(x)) b^{\cN}(m) = \w{b}( \w{s^{-1}}(x)) \cdot b^{\cN}(m)  = \gamma(b) \w{s^{-1}}(x) \gamma(b)^{-1} \cdot b^{\cN}(m) = b^{\cN}(\w{s^{-1}}(x)\cdot m)\]
and equation (\ref{EtaWellDef}) follows. Thus $\eta^H_{(h,b)}$ is a well-defined $\w\cD(\bU,H)$-linear map, and the verification that $\eta^H$ respects composition in the category $Z$ is straightforward.
\end{proof}

\begin{defn}\label{DefnOfMUH} \hsp \be \item For each $Z \in H \backslash G / B$, we define $M(\bU,H,Z) := \lim\limits_{s\in Z} M(\bU,H,s).$
\item We define $M(\bU,H) := \bigoplus\limits_{Z \in H \backslash G / B} M(\bU,H,Z).$
\ee
\end{defn} 

\begin{cor}\label{MUHcoadm} Let $Z \in H \backslash G / B$.
\be \item The limit $M(\bU,H,Z)$ exists, and for any $s \in Z$ the canonical map \newline $M(\bU,H,Z) \to M(\bU,H,s)$ is an isomorphism in $\cC_{\w\cD(\bU,H)}$.
\item $M(\bU,H)$ is a coadmissible $\w\cD(\bU,H)$-module.
\ee \end{cor}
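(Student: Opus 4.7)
The plan is to exploit the observation that each double coset $Z \in H \backslash G / B$, viewed as a category, is a \emph{connected groupoid}: every morphism $(h,b) \in Z(s,t)$ is invertible by inspection, and any two objects of $Z$ are joined by a morphism (just solve $t = hsb^{-1}$). Hence by Proposition \ref{MUHfunctor}, each $\eta^H_{(h,b)}$ is an isomorphism in $\cC_{\w\cD(\bU,H)}$. For a diagram indexed by such a connected groupoid, the limit exists in $\cC_{\w\cD(\bU,H)}$ and the canonical projection to $M(\bU,H,s)$ is an isomorphism provided the vertex group $Z(s,s)$ acts trivially on $M(\bU,H,s)$; in that case the constant cone over $M(\bU,H,s)$, with transport supplied by arbitrary chosen morphisms in $Z$, is universal. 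Part (a) thus reduces to verifying this triviality.

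The vertex group is $Z(s,s) = \{(sbs^{-1}, b) : b \in B \cap H^s\} \cong B \cap H^s$. Fixing such a pair and writing $h := sbs^{-1} \in {}^sB \cap H$, I observe that $\gamma(h)^{-1}$ lies in $\w\cD(\bU, {}^sB \cap H)$, so it can be moved across the tensor:
\[ \eta^H_{(h,b)}(a \w\otimes [s]m) = a\gamma(h)^{-1} \w\otimes [s] b^{\cN}(m) = a \w\otimes [s]\, \w{s^{-1}}(\gamma(h))^{-1} \cdot b^{\cN}(m). \]
Because $\w{s^{-1}}$ is induced by conjugation by $s^{-1}$ on the group, one has $\w{s^{-1}}(\gamma(h)) = \gamma(s^{-1}hs) = \gamma(b)$. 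Moreover, under the identification of $\cN(s^{-1}\bU)$ as a coadmissible $\w\cD(s^{-1}\bU, B \cap H^s)$-module from \cite[Theorem 4.4.3]{EqDCap}, the equivariance map $b^{\cN}$ is exactly left multiplication by $\gamma(b)$. Substituting collapses the inner factor to $\gamma(b)^{-1}\gamma(b)m = m$, so $\eta^H_{(h,b)}$ acts as the identity, as required.

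For (b), Lemma \ref{CompactGmodB} guarantees that $H \backslash G / B$ is a finite set, so $M(\bU,H)$ is a \emph{finite} direct sum of modules $M(\bU,H,Z) \cong M(\bU,H,s)$. Each summand
\[ M(\bU,H,s) = \w\cD(\bU,H) \WO{\w\cD(\bU, {}^sB \cap H)} [s]\cN(s^{-1}\bU) \]
is the completed base change of the coadmissible module $[s]\cN(s^{-1}\bU)$ supplied by Lemma \ref{TwistCoadm} along the continuous inclusion of Fréchet--Stein algebras $\w\cD(\bU, {}^sB \cap H) \hookrightarrow \w\cD(\bU,H)$. Since this operation preserves coadmissibility in the framework of \cite{EqDCap}, and finite direct sums of coadmissible modules are coadmissible, (b) follows.

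I expect the only subtle step to be the vertex-group calculation in (a): it requires carefully matching the algebra twist $\w{s^{-1}}$ with the group conjugation $h \mapsto s^{-1}hs$, and recognising the abstract equivariant structure map $b^{\cN}$ as left multiplication by $\gamma(b)$ on sections via the Fréchet--Stein presentation of \cite[Theorem 4.4.3]{EqDCap}. Once this triviality is in hand, the remaining ingredients — connectedness of $Z$, reduction of the groupoid-indexed limit to a fixed submodule, finiteness of $H \backslash G / B$, and preservation of coadmissibility under completed base change and finite direct sums — are essentially formal.
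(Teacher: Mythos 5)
Your proof is correct and follows the paper's approach. The paper's one-line proof appeals only to Proposition \ref{MUHfunctor} and the groupoid structure of $Z$, leaving to the reader the verification you carry out in detail --- that the vertex group $Z(s,s) \cong B \cap H^s$ acts trivially on $M(\bU,H,s)$ (via the cancellation $\w{s^{-1}}(\gamma(h))^{-1} b^{\cN}(m) = \gamma(b)^{-1}\gamma(b)m = m$) --- which is precisely what is needed for the limit over the connected groupoid to project isomorphically onto each $M(\bU,H,s)$.
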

\begin{proof} Part (a) follows Proposition \ref{MUHfunctor} and that fact that $Z$ is a groupoid, and part (b) follows part (a) together with Lemma \ref{CompactGmodB}. \end{proof}

Using the above language, we can now establish an analogue of Mackey's Theorem concerning the restrictions to $\w\cD(\bX,H)$ of modules induced up to $\w\cD(\bX,G)$. 

\begin{prop}\label{Mackey} Suppose that $(\bX,G)$ is small  and $\bU = \bX$. Then there is a $\w\cD(\bX,H)$-module isomorphism
\[\bigoplus_{Z \in H \backslash G / B} M(\bX, H, Z) \congs \w\cD(\bX, G) \WO{\w\cD(\bX,B)} \cN(\bX) \]
which is functorial in $\cN \in \cC_{\bX/B}$.
\end{prop}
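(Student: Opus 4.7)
The plan is to build the claimed isomorphism summand-by-summand on the left-hand side, and to invert it using a $(\w\cD(\bX,H),\w\cD(\bX,B))$-bimodule decomposition of $\w\cD(\bX,G)$ coming from the finite double coset decomposition of $G$.

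\emph{Forward map.} For each $s\in G$, I define
\[\Phi_s:\w\cD(\bX,H)\WO{\w\cD(\bX,{}^sB\cap H)}[s]\cN(\bX)\longrightarrow \w\cD(\bX,G)\WO{\w\cD(\bX,B)}\cN(\bX),\qquad a\w\otimes[s]m\longmapsto a\gamma(s)\w\otimes m.\]
Because $\bU=\bX$ is stabilised by all of $G$, the isomorphism $\w{s^{-1}}$ of \cite[Lemma 3.4.3]{EqDCap} is realised on $\w\cD(\bX,G)$ simply by conjugation: $\w{s^{-1}}(x)=\gamma(s)^{-1}x\gamma(s)$. Specialising to $x\in\w\cD(\bX,{}^sB\cap H)$ gives $x\gamma(s)=\gamma(s)\w{s^{-1}}(x)\in\gamma(s)\w\cD(\bX,B)$, which is exactly the identity needed for $\Phi_s$ to respect the balanced tensor product. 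A short calculation using the defining formula (\ref{Etahb}) and the relation $h^{-1}tb=s$ for $(h,b)\in Z(s,t)$ then gives $\Phi_t\circ\eta^H_{(h,b)}=\Phi_s$; accordingly the $\Phi_s$ descend to a single map $\Phi_Z:M(\bX,H,Z)\to\w\cD(\bX,G)\WO{\w\cD(\bX,B)}\cN(\bX)$ per double coset, and I assemble these into $\Phi:=\bigoplus_Z\Phi_Z$ using the finiteness of $H\backslash G/B$ from Lemma \ref{CompactGmodB}.

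\emph{Bimodule decomposition.} The key technical step -- and the main obstacle -- is to produce a $(\w\cD(\bX,H),\w\cD(\bX,B))$-bimodule decomposition $\w\cD(\bX,G)=\bigoplus_Z\w\cD(\bX,G)_Z$. Since $(\bX,G)$ is small the group $G$ is compact, so the open subgroup $H$ has finite index in $G$, and I expect the construction of $\w\cD(\bX,G)$ in \cite{EqDCap} to yield a free-module decomposition $\w\cD(\bX,G)=\bigoplus_{t\in T}\w\cD(\bX,H)\gamma(t)$ as a left $\w\cD(\bX,H)$-module, where $T\subset G$ is a (finite) set of representatives for $H\backslash G$. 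Partitioning $T=\bigsqcup_Z T_Z$ with $T_Z:=T\cap Z$ and setting $\w\cD(\bX,G)_Z:=\bigoplus_{t\in T_Z}\w\cD(\bX,H)\gamma(t)$ produces the required bimodule decomposition: each summand is right $\w\cD(\bX,B)$-stable because right multiplication by $\gamma(b)$ with $b\in B$ sends $\gamma(t)$ into $\w\cD(\bX,H)\gamma(t')$ for some $t'\in T$ lying in the same double coset as $t$.

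\emph{Inversion and conclusion.} Tensoring the bimodule decomposition on the right with $\cN(\bX)$ gives
\[\w\cD(\bX,G)\WO{\w\cD(\bX,B)}\cN(\bX)=\bigoplus_{Z\in H\backslash G/B} \w\cD(\bX,G)_Z\WO{\w\cD(\bX,B)}\cN(\bX),\]
so it suffices to identify each $Z$-summand with $M(\bX,H,Z)$ via $\Phi_Z$. Fixing $s=s_Z\in T_Z$, every element of $Z$ may be written as $hsb$ with $h\in H$ and $b\in B$, uniquely modulo the simultaneous substitution $(h,b)\sim(hk,s^{-1}k^{-1}sb)$ for $k\in H\cap{}^sB$; I define the inverse $\Psi_Z$ on the $Z$-summand by $\gamma(hsb)\w\otimes m\longmapsto\gamma(h)\w\otimes[s]\,b^{\cN}(m)$. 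The well-definedness of $\Psi_Z$ modulo the $H\cap{}^sB$-ambiguity is precisely the computation (\ref{EtaWellDef}) already performed in the proof of Proposition \ref{MUHfunctor}, and the identities $\Phi_Z\Psi_Z=\id$ and $\Psi_Z\Phi_Z=\id$ then follow directly from the definitions (the second uses that the fixed representative $s$ corresponds to the trivial decomposition $h=b=1$). Functoriality in $\cN\in\cC_{\bX/B}$ is transparent throughout.
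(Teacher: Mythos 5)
Your proposal is correct and follows essentially the same route as the paper: the forward map $a\w\otimes[s]m\mapsto a\gamma(s)\w\otimes m$, the verification that it intertwines the groupoid isomorphisms $\eta^H_{(h,b)}$, and inversion via the bimodule decomposition of $\w\cD(\bX,G)$ into double-coset pieces are all exactly the paper's argument. Your partition $\w\cD(\bX,G)=\bigoplus_{t\in T}\w\cD(\bX,H)\gamma(t)$ grouped by double coset is the same decomposition the paper writes as $\bigoplus_Z \w\cD(\bX,H)\gamma(Z)\w\cD(\bX,B)$ and then refines using right-coset representatives of $B\cap H^s$ in $B$ (which correspond bijectively to the $H$-cosets in $HsB$), and your $\Psi_Z$ is the paper's inverse $a\gamma(s)\gamma(b)\w\otimes m\mapsto a\w\otimes[s]\gamma(b)\cdot m$.
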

\begin{proof} We begin by observing that it follows from \cite[Definition 3.4.9(a) and Proposition 3.4.10]{EqDCap} that the canonical map $\w\cD(\bX,H) \WO{K[H]} K[G] \longrightarrow \w\cD(\bX,G)$ is an isomorphism. Consider the $\w\cD(\bX,H) - \w\cD(\bX,B)$-bimodule decomposition
\begin{equation}\label{Mackey1}  \w\cD(\bX,G) = \bigoplus_{Z \in H\backslash G/B} \w\cD(\bX,H) \hsp \gamma(Z) \hsp \w\cD(\bX,B). \end{equation}
Fix $Z \in H \backslash G / B$ and $s \in Z$. If $u \in \w\cD(\bX,B \cap H^s)$ then $\w{s}(u) \in \w\cD(\bX, {}^s B \cap H) \subset \w\cD(\bX,H)$, so $a \hsp \gamma(s) \hsp u \gamma(b) = a \w{s}(u) \hsp \gamma(s) \hsp \gamma(b) \in \w\cD(\bX, H) \gamma(s) \hsp \gamma(b)$ for any $a \in \w\cD(\bX,H)$ and $b \in B$.  Since $G$ is compact and $B$ is closed in $G$, the open subgroup $B \cap H^s$ of $B$ has finite index in $B$. We can now see that if $T$ is a (finite) set of right coset representatives for $B \cap H^s$ in $B$, then there is an isomorphism of left $\w\cD(\bX,H)$-modules
\[\w\cD(\bX,H) \gamma(s) \w\cD(\bX,B) = \bigoplus_{b \in T}  \w\cD(\bX,H) \hsp \gamma(s) \gamma(b).\]
Now let $a \in \w\cD(\bX,H), b \in T$ and $m \in \cN(\bX)$. We omit the verification of the fact that  $a \hsp\w\otimes\hsp [s] \hsp m \mapsto a \hsp \gamma(s) \hsp\w\otimes\hsp m$ defines a $\w\cD(\bX,H)$-linear isomorphism
\begin{equation}\label{Mackey2} \varphi_s : \w\cD(\bX,H) \WO{\w\cD(\bX, {}^sB \cap H)} [s] \hsp \cN(\bX) \congs \w\cD(\bX,H) \gamma(s) \w\cD(\bX,B) \WO{\w\cD(\bX,B)} \cN(\bX) \end{equation}
whose inverse is given by $a \hsp \gamma(s) \hsp \gamma(b) \hsp\w\otimes\hsp m \mapsto a \hsp\w\otimes\hsp [s] \hsp \gamma(b)\cdot m$. Recalling the maps $\eta^H_{(h,b)}$ from (\ref{Etahb}), we have $\varphi_t \circ \eta^H_{(h,b)} = \varphi_s$ whenever $s,t \in Z$ and $(h,b) \in Z(s,t)$. Since $\w\cD(\bX,H) \gamma(s) \w\cD(\bX,B)$ evidently only depends on the double coset $Z$, passing to the limit over all $s \in Z$ we obtain a left $\w\cD(\bX,H)$-module isomorphism
\begin{equation}\label{Mackey3}  \lim\limits_{s\in Z} \w\cD(\bX,H) \WO{\w\cD(\bX, {}^sB \cap H)} [s] \hsp \cN(\bX)\congs \w\cD(\bX,H) \hsp \gamma(Z) \hsp \w\cD(\bX,B) \WO{\w\cD(\bX,B)} \cN(\bX) .\end{equation}
The result follows from $(\ref{Mackey1})$ and $(\ref{Mackey3})$ because $\cN(s^{-1} \bX) = \cN(\bX)$ for all $s \in G$.
\end{proof}

We now use of Proposition \ref{Mackey} to establish the following important fact.

\begin{prop}\label{NUHtoJ} Let $H \leq J$ be $\bU$-small subgroups of $G$. Then there is a canonical $\w\cD(\bU,H)$-linear \emph{isomorphism} $\alpha^J_H : M(\bU, H) \congs M(\bU,J)$.
\end{prop}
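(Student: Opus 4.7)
The plan is to derive the isomorphism from Mackey's formula (Proposition \ref{Mackey}) applied to the inclusion $H \le J$, exploiting the refinement of double cosets $H \backslash G / B \twoheadrightarrow J \backslash G / B$. Concretely, for each $W \in J \backslash G / B$ I will construct a canonical $\w\cD(\bU, H)$-linear isomorphism
\[ \alpha^{J, W}_H : \bigoplus_{Z' \in H \backslash W / B} M(\bU, H, Z') \;\congs\; M(\bU, J, W), \]
and then take the direct sum over $W$ to obtain $\alpha^J_H$.

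To build $\alpha^{J, W}_H$, pick any $s \in W$, set $B' := {}^sB \cap J$, and let $\cN' := [s] s_\ast \cN \in \cC_{\bX / {}^sB}$ be the $s$-twist from Proposition \ref{sTwist}, viewed in $\cC_{\bX / B'}$ by restriction. Since $(\bU, J)$ is small, $J$ is compact, and hence so is $J / B'$. Applying Proposition \ref{Mackey} with ambient rigid space $\bU$, group $J$, closed subgroup $B'$, compact open subgroup $H$, and coefficient sheaf $\cN'$ yields the $\w\cD(\bU, H)$-linear isomorphism
\[ \bigoplus_{Z \in H \backslash J / B'} \Bigl( \w\cD(\bU, H) \WO{\w\cD(\bU, {}^tB' \cap H)} [t] \cN'(t^{-1} \bU) \Bigr) \;\cong\; \w\cD(\bU, J) \WO{\w\cD(\bU, B')} \cN'(\bU), \]
whose right-hand side is exactly $M(\bU, J, s)$, and where $t \in Z$ is any representative.

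Two identifications convert the left-hand side into $\bigoplus_{Z' \in H \backslash W / B} M(\bU, H, Z')$. First, because ${}^tJ = J$ for $t \in J$ and $H \le J$, we have ${}^tB' \cap H = {}^{ts}B \cap J \cap H = {}^{ts}B \cap H$. Second, since $J$ stabilises $\bU$, we have $\cN'(t^{-1}\bU) = \cN'(\bU) = [s]\cN(s^{-1}\bU)$, and the $\w\cD(\bU, {}^{ts}B \cap H)$-module $[t][s]\cN(s^{-1}\bU)$ is canonically identified with $[ts]\cN((ts)^{-1}\bU)$ via the cocycle $\w{ts}^{-1} = \w{s}^{-1} \circ \w{t}^{-1}$ underlying the $s$-twist. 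Together with the elementary bijection $HtB' \mapsto HtsB$ from $H \backslash J / B'$ onto $H \backslash W / B$ (valid because $H \le J$), these identifications produce $\alpha^{J, W}_H$. Independence of $s \in W$ follows from the functoriality of Proposition \ref{MUHfunctor}: for $(h, b) \in W(s, s')$, the transition $\eta^J_{(h, b)}$ intertwines the two Mackey decompositions via maps of the form $\eta^H_{(h, \cdot)}$ on individual summands, so passing to limits over $W$ and summing over $J \backslash G / B$ produces the canonical $\alpha^J_H$. The principal technical nuisance is making the cocycle identification $[t][s] \leftrightarrow [ts]$ rigorous at the level of the modules, together with its compatibility with the module structures coming from the varying subgroups; once that is in place, the remainder of the argument is bookkeeping with double cosets.
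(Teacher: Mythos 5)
Your proposal follows essentially the same route as the paper: apply Proposition \ref{Mackey} to the pair $(\bU,J)$ with closed subgroup $B' = {}^s B \cap J$ and coefficient sheaf the $s$-twist, then use the identification ${}^t B' \cap H = {}^{ts}B \cap H$ for $t \in J$, the cocycle identification $[t][s] \leftrightarrow [ts]$, and the bijection $H\backslash J / B' \cong H \backslash W / B$ to rewrite the Mackey decomposition as a direct sum of the $M(\bU,H,Z')$ over $Z' \in \theta^{-1}(W)$.

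The one place where your sketch is imprecise is the independence-of-$s$ step. If $(h,b) \in W(s,s')$ then $h \in J$, so the expression \emph{$\eta^H_{(h,\cdot)}$} does not typecheck in general because $h$ need not lie in $H$. The paper resolves this by separating out the $J$-factor: writing $t = js b^{-1}$ with $j \in J$, $b \in B$, it first reindexes the summands via the bijection $Y \mapsto Y j^{-1}$ on $H\backslash J/ ({}^{s}B \cap J) \to H \backslash J / ({}^{t}B \cap J)$, and the induced map on each individual summand is then $\eta^H_{(1,b)}$, which does typecheck. This is the detail you would need to make rigorous, but it does not change the overall strategy, which matches the paper's.
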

\begin{proof}  Fix $Z \in J \backslash G / B$ and $s \in Z$, and write $C := {}^sB \cap J$. Let $\cN_s$ denote the restriction to $\bU$ of the $s$-twist $[s] s_\ast \cN$. Then $\cN_s \in \cC_{\bU/C}$ by Proposition \ref{sTwist}. Applying Proposition \ref{Mackey} to $(\bU,J)$ and $\cN_s \in \cC_{\bU/C}$ gives a $\w\cD(\bU,H)$-linear isomorphism
\[ \alpha_s : \bigoplus_{Y \in H \backslash J / C} \lim\limits_{y \in Y} \w\cD(\bU, H) \WO{\w\cD(\bU, {}^yC \cap H)} [y] \hsp \cN_s( y^{-1} \bU ) \congs  \w\cD(\bU, J) \WO{\w\cD(\bU,C)} \cN_s(\bU). \]
However ${}^yC \cap H = {}^y({}^sB \cap J) \cap H = {}^{ys}B \cap {}^yJ \cap H = {}^{ys}B \cap H$ since $y \in J \supseteq H$, and the $\w\cD(\bU, {}^yC \cap H)$-module $[y]\cN_s(y^{-1}\bU)$ is simply $[ys]\cN(s^{-1}\bU)$, so we may rewrite $\alpha_s$ as follows:
\begin{equation}\label{TechHJ}
\alpha_s : \bigoplus_{Y \in H \backslash J / {}^sB \cap J} \lim\limits_{y \in Y} M(\bU,H, ys) \congs M(\bU, J,s)  .
\end{equation}
Now suppose that $t$ is some other member of the double coset $Z$. Then $t = j s b^{-1}$ for some $j \in J, b \in B$. Because ${}^tB \cap J = {}^{js}B \cap J$, the rule $Y \mapsto Yj^{-1}$ defines a bijection $H \backslash J / {}^sB \cap J \to H \backslash J / {}^t B \cap J$. As $y$ runs over elements of $Y$, $w := yj^{-1}$ runs over elements of $W := Yj^{-1}$; since $w t =  ys b^{-1}$, we have the isomorphism $\eta^H_{(1,b)} : M(\bU, H, ys) \to M(\bU, H, wt)$ from the proof of Proposition \ref{MUHfunctor}. The direct sum of these isomorphisms gives the vertical arrow on the left in the following commutative diagram:
\[ \xymatrix{ \bigoplus\limits_{Y \in H \backslash J / {}^s B \cap J}\hsp   \lim\limits_{y \in Y} \hsp M(\bU,H, ys) \ar[d]  \ar[r]^(0.7){\alpha_s} & M(\bU,J,s) \ar[d]^{\eta^J_{(j,b)}} \\
\bigoplus\limits_{W \in H \backslash J / {}^t B \cap J} \hsp  \lim\limits_{w \in W} M(\bU,H, wt)  \ar[r]_(0.7){\alpha_t} &    M(\bU,J,t)  . }\]
The map $\theta : H \backslash G / B \twoheadrightarrow J \backslash G / B$ given by $W \mapsto J W$ is surjective because $H \leq J$, and the function $H \backslash J / {}^sB \cap J \longrightarrow \theta^{-1}(Z) = H \backslash Z / B$ given by $Y \mapsto YsB$ is a bijection. Because $Ys$ is a subgroupoid of $YsB$, there is a $\w\cD(\bU,H)$-linear isomorphism  $\lim\limits_{y \in Y} \hsp M(\bU,H, ys) \congs M(\bU,H, YsB)$. Passing to the limit over all $s \in Z$ in the above commutative diagram we obtain a $\w\cD(\bU,H)$-linear isomorphism
\[ \alpha_Z : \bigoplus_{V \in \theta^{-1}(Z)} M(\bU,H, V) \congs M(\bU,J,Z).\]
Finally, by definition we have the direct sum decompositions
\[ M(\bU,H) = \bigoplus\limits_{Z \in J \backslash G / B} \hsp \bigoplus\limits_{V \in \theta^{-1}(Z)} M(\bU,H,V) \qmb{and} M(\bU,J) = \bigoplus\limits_{Z \in J \backslash G / B} M(\bU, J, Z).\]
Taking the direct sum of these $\alpha_Z$ over all $Z \in J \backslash G / B$ produces the required $\w\cD(\bU,H)$-linear isomorphism $M(\bU,H) \congs M(\bU,J)$.
\end{proof}
Using the connecting maps constructed in Proposition \ref{NUHtoJ}, we can form the inverse limit $\lim\limits_{\stackrel{\longleftarrow}{H}} M(\bU,H)$.
\begin{defn}\label{DefnOfM} Let $\cN \in \cC_{\bX/B}$ and let $\bU \in \bX_w(\cT)$. We define 
\[\boxed{\ind_B^G(\cN)(\bU) := \lim\limits_{\stackrel{\longleftarrow}{H}} \bigoplus_{Z \in H \backslash G / B} \lim\limits_{s \in Z} \w\cD(\bU,H) \WO{\w\cD(\bU, {}^sB \hsp \cap \hsp H)} [s] \hsp \cN(s^{-1}\bU)}\]
where the first limit runs over all $\bU$-small compact open subgroups $H$ of $G$.
\end{defn}
For brevity, we will write $\cM$ to mean $\ind_B^G(\cN)$ until the end of $\S \ref{PropIndFunctor}$.
\begin{cor}\label{NUHindepofH} The canonical maps $\cM(\bU) \to M(\bU,H)$ are bijections for any $\bU$-small compact open subgroup $H$ of $G$. \end{cor}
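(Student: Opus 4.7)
The plan is to deduce the corollary as an essentially formal consequence of Proposition \ref{NUHtoJ}. That proposition supplies, for every pair $H \leq J$ of $\bU$-small compact open subgroups of $G$, a $\w\cD(\bU, H)$-linear \emph{isomorphism} $\alpha^J_H : M(\bU, H) \stackrel{\cong}{\longrightarrow} M(\bU, J)$. Since $\cM(\bU)$ is defined as the inverse limit of the $M(\bU, H')$ along the connecting maps derived from the $\alpha^{H''}_{H'}$, and every transition morphism is an isomorphism, the inverse limit must collapse onto any individual term.

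First I would make precise the indexing. The index set consists of $\bU$-small compact open subgroups $H'$ of $G$, viewed as a directed set for the inverse limit by declaring $H' \leq H''$ in the index whenever $H'' \leq H'$ as subgroups; the transition map $M(\bU, H') \to M(\bU, H'')$ is then the inverse of $\alpha^{H'}_{H''}$. To verify this directed set is cofiltered, given any two $\bU$-small compact open subgroups $H_1, H_2$ one needs a $\bU$-small compact open subgroup $H_3 \leq H_1 \cap H_2$. Such an $H_3$ exists by the standard feature of the smallness condition of \cite[Definition 3.4.4]{EqDCap}: passing to a sufficiently small compact open subgroup of $H_1 \cap H_2$ that still stabilises an $\cA$-Lie lattice $\cL \subseteq \cT(\bU)$ common to the two witnesses for $H_1$ and $H_2$ yields the required $H_3$. (Any small-enough uniform pro-$p$ subgroup of $H_1 \cap H_2$ will do.)

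Second, I would invoke the purely categorical fact that a cofiltered inverse limit of a system in which every transition morphism is an isomorphism is canonically isomorphic, via the structural projection, to every object of the system. Applied to our setting, this produces the claimed $\w\cD(\bU, H)$-linear bijection $\cM(\bU) \stackrel{\cong}{\longrightarrow} M(\bU, H)$ for each $\bU$-small compact open subgroup $H$ of $G$.

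There is no substantive obstacle to overcome here: the only nontrivial content, namely the Mackey-style identification of $M(\bU, H)$ with $M(\bU, J)$, has already been handled in Proposition \ref{NUHtoJ}. The present corollary is little more than a bookkeeping statement about cofiltered limits along isomorphisms.
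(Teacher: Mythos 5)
Your proposal is correct and matches the paper's approach, which proves the corollary by citing Proposition \ref{NUHtoJ} directly (the paper's proof reads, in full, ``This follows immediately from Proposition \ref{NUHtoJ}''); your elaboration of the cofilteredness of the index set and the categorical fact about inverse limits along isomorphisms is exactly the content the paper leaves implicit. One small bookkeeping slip: with the convention $H' \preceq H''$ iff $H'' \subseteq H'$, the transition map for $H' \preceq H''$ is $\alpha^{H'}_{H''}: M(\bU,H'') \to M(\bU,H')$ itself, not its inverse — but since both are isomorphisms this has no effect on the argument.
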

\begin{proof} This follows immediately from Proposition \ref{NUHtoJ}. \end{proof}

Our next goal is to define the restriction maps $\cM(\bU) \to \cM(\bV)$.
\begin{lem}\label{TauNUHs} Let $\bV$ be an $H$-stable affinoid subdomain of $\bU$ and let $s \in G$.
\be \item There is a $\w\cD(\bU,H)$-linear map $\tau^{\bU}_{\bV}(H,s) : M(\bU,H,s) \longrightarrow M(\bV,H,s)$. 
\item If $\bW \subset \bV$ is another $H$-stable affinoid subdomain, then \newline $\tau^{\bU}_{\bW}(H,s) = \tau^{\bV}_{\bW}(H,s) \circ \tau^{\bU}_{\bV}(H,s)$.
\item For every $h \in H$ and $b \in B$ there is a commutative diagram
\[ \xymatrix{ M(\bU, H, s) \ar[rr]^{\eta^H_{(h,b)}} \ar[d]_{\tau^{\bU}_{\bV}(H,s)} && M(\bU, H, hsb^{-1}) \ar[d]^{\tau^{\bU}_{\bV}(H,hsb^{-1})} \\ M(\bV, H, s) \ar[rr]_{\eta^H_{(h,b)}}  && M(\bV, H, hsb^{-1}). }\]
\ee\end{lem}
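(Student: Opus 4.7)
For part (a), the plan is to construct $\tau^{\bU}_{\bV}(H,s)$ as the map induced on completed tensor products by a pair of compatible restrictions. On the algebra side, the results of \cite[\S 3]{EqDCap} provide continuous restriction homomorphisms $\w\cD(\bU, H) \to \w\cD(\bV, H)$ and $\w\cD(\bU, {}^sB \cap H) \to \w\cD(\bV, {}^sB \cap H)$, since $\bV$ is an $H$-stable (hence ${}^sB \cap H$-stable) affinoid subdomain of $\bU$. On the twisted-module side, sheaf restriction gives a continuous map $\cN(s^{-1}\bU) \to \cN(s^{-1}\bV)$ which is semilinear over the corresponding algebra restriction $\w\cD(s^{-1}\bU, B \cap H^s) \to \w\cD(s^{-1}\bV, B \cap H^s)$; the latter makes sense because $s^{-1}\bV$ is a $(B \cap H^s)$-stable affinoid subdomain of $s^{-1}\bU$, using the identity $B \cap H^s = s^{-1}({}^sB \cap H) s$. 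Transporting via the isomorphism $\w{s}^{-1}$ of \cite[Lemma 3.4.3]{EqDCap} converts this into a restriction $[s]\cN(s^{-1}\bU) \to [s]\cN(s^{-1}\bV)$ that is semilinear over $\w\cD(\bU, {}^sB \cap H) \to \w\cD(\bV, {}^sB \cap H)$. Applying the functor $\w\cD(-, H) \hsp \WO{\w\cD(-, {}^sB \cap H)} -$ then produces the desired continuous $\w\cD(\bU, H)$-linear map $\tau^{\bU}_{\bV}(H,s)$.

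Part (b) is immediate from the transitivity of algebra restriction and of sheaf restriction for $\cN$, since each tensor factor composes correctly and the induced map on completed tensor products inherits this. Part (c) is a direct diagram chase: tracing a generator $a \hsp \w\otimes \hsp [s] m$ along the upper-right path produces $(a \gamma(h)^{-1})|_\bV \hsp \w\otimes \hsp [t] \hsp b^\cN(m)|_{t^{-1}\bV}$ with $t = hsb^{-1}$, while the lower-left path produces $(a|_\bV) \gamma(h)^{-1} \hsp \w\otimes \hsp [t] \hsp b^\cN(m|_{s^{-1}\bV})$. These expressions coincide because the algebra restriction is a ring homomorphism that fixes the group-like element $\gamma(h)$, and because the equivariant structure morphism $b^\cN : \cN \to b^\ast \cN$ is a morphism of sheaves and therefore commutes with restriction along $s^{-1}\bV \hookrightarrow s^{-1}\bU$; note that $b \cdot s^{-1}\bV = t^{-1} h \bV = t^{-1}\bV$ since $\bV$ is $H$-stable.

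The main obstacle is the bookkeeping in part (a): one must verify that each group-theoretic conjugation by $s$ is matched by a compatible geometric restriction from $\bU$ to $\bV$, so that the twisted module structure on $[s]\cN(s^{-1}\bV)$ arises as the restriction of that on $[s]\cN(s^{-1}\bU)$ via an algebra homomorphism compatible with the completed tensor product. Once this is correctly set up, parts (b) and (c) reduce to the functoriality of sheaf restriction and naturality of the equivariant structure maps.
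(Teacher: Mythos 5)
Your proposal is correct and takes essentially the same approach as the paper. The paper's proof simply defines $\tau^{\bU}_{\bV}(H,s)(a \hsp \w\otimes \hsp [s] m) := a_{|\bV} \hsp \w\otimes \hsp [s] m_{|s^{-1}\bV}$ on elementary tensors and asserts that it is well-defined and satisfies all stated properties; your argument supplies precisely the bookkeeping (the compatible pair of algebra and semilinear module restrictions fed into the completed tensor product functor, transitivity for part (b), and the identity $t^{-1}\bV = bs^{-1}\bV$ together with naturality of $b^{\cN}$ for part (c)) that justifies those assertions.
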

\begin{proof} Let the restriction maps $\w\cD(\bU,H) \to \w\cD(\bV,H)$ and $\cN(s^{-1}\bU) \to \cN(s^{-1}\bV)$ be denoted by $a \mapsto a_{|\bV}$ and $m \mapsto m_{|s^{-1}\bV}$. Then we set 
\[\tau^{\bU}_{\bV}(H,s)(a \hsp \w\otimes \hsp [s] m) := a_{|\bV} \hsp \w\otimes \hsp [s] m_{|s^{-1}\bV}.\]
This is well-defined and satisfies all of the properties stated in the Lemma.
\end{proof}

\begin{cor}\label{TauUVH} With the notation of Lemma \ref{TauNUHs}, there is a $\w\cD(\bU,H)$-linear map $\tau^{\bU}_{\bV}(H) : M(\bU,H) \longrightarrow M(\bV,H)$ such that $\tau^{\bU}_{\bW}(H) = \tau^{\bV}_{\bW}(H) \circ \tau^{\bU}_{\bV}(H)$.
\end{cor}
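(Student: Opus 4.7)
The plan is to assemble $\tau^{\bU}_{\bV}(H)$ out of the maps $\tau^{\bU}_{\bV}(H,s)$ provided by Lemma \ref{TauNUHs}. The construction splits naturally along the finite (by Lemma \ref{CompactGmodB}) indexing set $H\backslash G/B$ appearing in Definition \ref{DefnOfMUH}(b), so it suffices to produce, for each double coset $Z \in H\backslash G/B$, a $\w\cD(\bU,H)$-linear map $\tau^{\bU}_{\bV}(H,Z) : M(\bU,H,Z) \to M(\bV,H,Z)$ compatible with composition in $\bU \supseteq \bV \supseteq \bW$, and then take the direct sum.

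First, fix $Z \in H\backslash G/B$. For every $s \in Z$, Lemma \ref{TauNUHs}(a) supplies a $\w\cD(\bU,H)$-linear map $\tau^{\bU}_{\bV}(H,s) : M(\bU,H,s) \to M(\bV,H,s)$, where the target is viewed as a $\w\cD(\bU,H)$-module by restriction along the algebra homomorphism $\w\cD(\bU,H) \to \w\cD(\bV,H)$. Lemma \ref{TauNUHs}(c) asserts that these maps intertwine the groupoid structure maps $\eta^H_{(h,b)}$ of Proposition \ref{MUHfunctor}, so the collection $\{\tau^{\bU}_{\bV}(H,s)\}_{s \in Z}$ defines a natural transformation of functors $Z \to \cC_{\w\cD(\bU,H)}$. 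Passing to the limit over $Z$ and invoking Corollary \ref{MUHcoadm}(a), we obtain the desired $\w\cD(\bU,H)$-linear map
\[ \tau^{\bU}_{\bV}(H,Z) : M(\bU,H,Z) \longrightarrow M(\bV,H,Z). \]

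Next, I would set
\[ \tau^{\bU}_{\bV}(H) := \bigoplus_{Z \in H\backslash G/B} \tau^{\bU}_{\bV}(H,Z) : M(\bU,H) \longrightarrow M(\bV,H), \]
which is visibly $\w\cD(\bU,H)$-linear. The transitivity identity $\tau^{\bU}_{\bW}(H) = \tau^{\bV}_{\bW}(H) \circ \tau^{\bU}_{\bV}(H)$ is then inherited from the pointwise identity $\tau^{\bU}_{\bW}(H,s) = \tau^{\bV}_{\bW}(H,s) \circ \tau^{\bU}_{\bV}(H,s)$ of Lemma \ref{TauNUHs}(b), since both the limit over the groupoid $Z$ and the direct sum over $H\backslash G/B$ are functorial constructions that preserve composition.

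There is no real obstacle: the substantive content — defining the restriction on each $s$-twisted summand and checking compatibility with the twisting isomorphisms $\eta^H_{(h,b)}$ — has already been dispatched in Lemma \ref{TauNUHs}. The only minor point to keep in mind is that $M(\bV,H,s)$ is naturally a $\w\cD(\bV,H)$-module, but we are regarding it as a $\w\cD(\bU,H)$-module through restriction along $\w\cD(\bU,H) \to \w\cD(\bV,H)$; this is precisely the setting in which the $\w\cD(\bU,H)$-linearity of each $\tau^{\bU}_{\bV}(H,s)$ was verified in Lemma \ref{TauNUHs}(a), and it passes to the limit and the direct sum without modification.
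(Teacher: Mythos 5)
Your proposal is correct and takes essentially the same approach as the paper: both define $\tau^{\bU}_{\bV}(H)$ as $\bigoplus_{Z \in H\backslash G/B}\lim_{s\in Z}\tau^{\bU}_{\bV}(H,s)$ and invoke parts (b) and (c) of Lemma \ref{TauNUHs} for transitivity and compatibility with the groupoid structure. You have merely unpacked the paper's one-line proof into its constituent observations.
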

\begin{proof} Define $\tau^{\bU}_{\bV}(H) := \bigoplus\limits_{Z \in H \backslash G / B} \lim\limits_{s \in Z} \tau^{\bU}_{\bV}(H,s)$ and apply Lemma \ref{TauNUHs}.
\end{proof}

\begin{defn} We define $\tau^{\bU}_{\bV} : \cM(\bU) \to \cM(\bV)$ to be the inverse limit of the $\tau^{\bU}_{\bV}(H)$ as $H$ runs over all $\bU$-small and $\bV$-small compact open subgroups of $G$. 
\end{defn}

From Corollary \ref{TauUVH} we immediately deduce the following statement.
\begin{cor}\label{IndBGDmod} $\cM$ becomes a presheaf of $\cD$-modules on $\bX_w(\cT)$ when equipped with the restriction maps $\tau^{\bU}_{\bV}$.
\end{cor}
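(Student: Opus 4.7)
The plan is to verify the presheaf axioms for the assignment $\bU \mapsto \cM(\bU)$ together with the system of restriction maps $\tau^{\bU}_{\bV}$. By Definition \ref{DefnOfM} and Corollary \ref{NUHindepofH}, each $\cM(\bU)$ is naturally a coadmissible $\w\cD(\bU,H)$-module for any $\bU$-small compact open subgroup $H$ of $G$, and hence carries a $\cD(\bU)$-module structure via the canonical algebra map $\cD(\bU) \to \w\cD(\bU,H)$. This structure is independent of the chosen $H$ thanks to Proposition \ref{NUHtoJ}.

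Next I would handle the identity axiom. Inspecting the formula $a \hsp \w\otimes \hsp [s] m \mapsto a_{|\bV} \hsp \w\otimes \hsp [s] m_{|s^{-1}\bV}$ defining $\tau^{\bU}_{\bV}(H,s)$ in the proof of Lemma \ref{TauNUHs} shows that this map is the identity when $\bU = \bV$. The same then holds after forming the direct sum over $Z \in H \backslash G / B$, the inverse limit over $s \in Z$, and the inverse limit over $H$, yielding $\tau^{\bU}_{\bU} = \mathrm{id}$.

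For transitivity and $\cD$-linearity, given $\bW \subseteq \bV \subseteq \bU$ in $\bX_w(\cT)$, the key observation is that one can always choose a single compact open subgroup $H \leq G$ that is simultaneously $\bU$-, $\bV$- and $\bW$-small. With such an $H$ fixed, Corollary \ref{TauUVH} directly furnishes the transitivity $\tau^{\bU}_{\bW}(H) = \tau^{\bV}_{\bW}(H) \circ \tau^{\bU}_{\bV}(H)$, while Lemma \ref{TauNUHs}(a) establishes $\w\cD(\bU,H)$-linearity of $\tau^{\bU}_{\bV}(H)$ with respect to the restriction $\w\cD(\bU,H) \to \w\cD(\bV,H)$. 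Passing to the inverse limit over all such $H$ preserves both properties, producing the transitivity of the $\tau^{\bU}_{\bV}$ and their $\cD(\bU)$-linearity compatible with the algebra restriction $\cD(\bU) \to \cD(\bV)$.

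The only real obstacle, and a mild one at that, is bookkeeping: one must check that the various inverse limits over $H$ interact compatibly with restriction to $\bV$. This is harmless because Corollary \ref{NUHindepofH} tells us that the canonical map $\cM(\bU) \to M(\bU,H)$ is a bijection for every single $\bU$-small $H$, so it suffices to work at one common choice of $H$ small for all of $\bU$, $\bV$, $\bW$ when verifying any identity between restriction maps.
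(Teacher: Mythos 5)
Your proof is correct and takes essentially the same route as the paper, which simply states that Corollary \ref{IndBGDmod} follows immediately from Corollary \ref{TauUVH}. You are just unpacking what that word "immediately" covers: the $\cD(\bU)$-module structure on $\cM(\bU) \cong M(\bU,H)$ inherited from $\w\cD(\bU,H)$ (independent of $H$ by Proposition \ref{NUHtoJ}), the trivial identity $\tau^{\bU}_{\bU}=\mathrm{id}$, the transitivity and $\cD$-linearity at each level $H$ from Lemma \ref{TauNUHs} and Corollary \ref{TauUVH}, and the passage to the limit over $H$ via the bijections of Corollary \ref{NUHindepofH}. One small point you assert without reference is that given $\bW \subseteq \bV \subseteq \bU$ in $\bX_w(\cT)$ there exists a single $H$ that is simultaneously $\bU$-, $\bV$- and $\bW$-small; this is true because intersecting finitely many $\bU_i$-small subgroups remains small (it is an open subgroup of a small one, cf.\ \cite[Lemma 3.4.5]{EqDCap}), but it deserves a citation rather than being treated as obvious.
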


Next, we explain how to define the structure of a $G$-equivariant $\cD$-module presheaf on $\cM$. We omit some straightforward proofs.

\begin{lem}\label{Phigs} For every $g, s\in G$ there is a continuous $K$-linear isomorphism 
\[ \begin{array}{cccc} \phi_H(g,s) : & M(\bU,H,s) & \longrightarrow &M(g\bU, {}^gH, gs)\\
& a \hsp \w\otimes\hsp [s]m &\mapsto & \w{g}(a) \hsp \w\otimes\hsp [gs] m \end{array}\]
satisfying the following properties:
\be \item $\phi_H(g,s)(a \cdot b) = \w{g}(a) \cdot \phi_H(b,s)$ for all $a \in \w\cD(\bU,H)$ and $b \in M(\bU,H,s)$,
\item $\phi_H(g,gs) \circ \phi_H(h,s) = \phi_H(gh,s)$ for all $g,h \in G$, and 
\item $\phi_H(g,hsb^{-1}) \circ \eta^H_{(h,b)} = \eta^{{}^gH}_{({}^gh,b)}\circ\phi_H(g,s)$ for all $(h,b) \in H \times B$.
\ee
\end{lem}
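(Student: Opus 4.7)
The plan is to define $\phi_H(g,s)$ by the stated formula on the defining tensor product and verify properties (a)--(c) in turn. The only substantive step is well-definedness on the balanced tensor product $\w\cD(\bU,H) \WO{\w\cD(\bU, {}^sB \cap H)} [s]\cN(s^{-1}\bU)$; everything else reduces to routine bookkeeping once the correct conjugation identities have been isolated.

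For well-definedness, I would first observe that the conjugation isomorphism $\w{g}$ from \cite[Lemma 3.4.3]{EqDCap} restricts to a continuous isomorphism of Fr\'echet algebras
\[ \w{g} : \w\cD(\bU, {}^sB \cap H) \tocong \w\cD(g\bU, {}^{gs}B \cap {}^gH) \]
using the set-theoretic identity ${}^g({}^sB \cap H) = {}^{gs}B \cap {}^gH$ together with \cite[Theorem 3.3.12]{EqDCap}, exactly as in the discussion preceding Proposition \ref{sTwist}. The check then amounts to showing that for $x \in \w\cD(\bU, {}^sB \cap H)$, $a \in \w\cD(\bU,H)$ and $m \in \cN(s^{-1}\bU)$ one has
\[ \w{g}(ax) \hsp\w\otimes\hsp [gs] m \; = \; \w{g}(a) \hsp\w\otimes\hsp [gs]\, \w{s^{-1}}(x) \cdot m \]
in $M(g\bU, {}^gH, gs)$. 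Balancing the left-hand side over $\w\cD(g\bU, {}^{gs}B \cap {}^gH)$ rewrites it as $\w{g}(a) \hsp\w\otimes\hsp [gs]\, \w{(gs)^{-1}}(\w{g}(x)) \cdot m$, so the required equality reduces to the cocycle identity $\w{(gs)^{-1}} \circ \w{g} = \w{s^{-1}}$, which is immediate from the construction of these conjugation maps in \cite[Lemma 3.4.3]{EqDCap}.

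Continuity and $K$-linearity of $\phi_H(g,s)$ are then inherited directly from $\w{g}$, and bijectivity is verified by taking $\phi_{{}^gH}(g^{-1}, gs)$ as a two-sided inverse via $\w{g^{-1}} \circ \w{g} = \id$. Property (a) is the statement that $\w{g}$ is an algebra homomorphism, applied in the left tensor slot. Property (b) unwinds to the functoriality $\w{gh} = \w{g}\circ \w{h}$ (after matching the implicit source and target identifications at the intermediate stage $M(h\bU, {}^hH, hs)$). Property (c) combines (a) with the definition \eqref{Etahb} of $\eta^H_{(h,b)}$ and the identity $\w{g}(\gamma(h)) = \gamma({}^gh)$, which holds because the group-like elements $\gamma(h)$ transport through the conjugation construction of \cite[Lemma 3.4.3]{EqDCap}. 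The main obstacle is simply the bookkeeping in (c) --- tracking the conjugated subgroups, the coset representatives, and the $b^\cN$ factors --- since no new analytic input is needed beyond $\w{g}$ being a continuous isomorphism of Fr\'echet algebras and the $B$-equivariance of $\cN$.
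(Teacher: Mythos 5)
The paper explicitly states, immediately before Lemma \ref{Phigs}, that some straightforward proofs are omitted, and no proof is given for this lemma; so there is nothing in the paper to compare against line by line. Your verification is the intended one and it is correct: the only substantive step is well-definedness of $\phi_H(g,s)$ on the completed tensor product, which you correctly reduce to the observation that $\w{g}$ restricts to an isomorphism $\w\cD(\bU, {}^sB\cap H) \tocong \w\cD(g\bU, {}^{gs}B\cap {}^gH)$ (using ${}^g({}^sB\cap H) = {}^{gs}B\cap{}^gH$) together with the cocycle identity $\w{(gs)^{-1}}\circ\w{g} = \w{s^{-1}}$, and property (c) hinges on $\w{g}(\gamma(h)) = \gamma({}^gh)$, which follows because the conjugation isomorphism of \cite[Lemma 3.4.3]{EqDCap} carries the distinguished group-like elements to group-like elements. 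You also correctly note the implicit adjustment of subscripts/domains in property (b), where the intermediate object is $M(h\bU,{}^hH,hs)$. Your proof is complete and supplies exactly the verification the paper leaves to the reader.
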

Note that there is a bijection $H \backslash G / B \to {}^gH \backslash G / B$ given by $Z \mapsto gZ$. 
\begin{lem}\label{Phig} For every $g \in G$ there is a continuous $K$-linear isomorphism $\phi(g) : M(\bU,H) \longrightarrow M(g \bU, {}^g H)$ given by
\[\phi_H(g) := \bigoplus\limits_{Z \in H \backslash G / B}\lim\limits_{s \in Z} \phi_H(g,s) : M(\bU,H) \to M(g \bU, {}^g H)\] 
satisfying the following properties:
\be \item $\phi_H(g)(a \cdot b) = \w{g}(a) \cdot \phi_H(b)$ for all $a \in \w\cD(\bU,H)$ and $b \in M(\bU,H)$ and
\item $\phi_H(g) \circ \phi_H(h) = \phi_H(gh)$ for all $g,h \in G$.\ee
\end{lem}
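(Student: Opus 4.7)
The strategy is to build $\phi_H(g)$ summand-by-summand using the evident bijection $Z \mapsto gZ$ between the indexing sets $H \backslash G / B$ and ${}^gH \backslash G / B$ of the direct sum decompositions of $M(\bU, H)$ and $M(g\bU, {}^gH)$ (Definition \ref{DefnOfMUH}). The main work is showing that, for each fixed $g \in G$ and $Z \in H\backslash G/B$, the family $\{\phi_H(g,s)\}_{s \in Z}$ from Lemma \ref{Phigs} descends to a single continuous map out of the limit $M(\bU, H, Z)$; everything else is a formal consequence of the three properties listed in Lemma \ref{Phigs}.

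First I would verify that the assignment $s \mapsto gs$ on objects, together with $(h,b) \mapsto ({}^gh, b)$ on morphisms, defines an isomorphism of groupoids $\lambda_g : Z \tocong gZ$. The only point to check is that $({}^gh)(gs)b^{-1} = g(hsb^{-1})$, so that $({}^gh, b)$ is a legitimate element of $(gZ)(gs, g(hsb^{-1}))$; this is immediate. With this identification in hand, Lemma \ref{Phigs}(c) says exactly that the maps $\phi_H(g,s)$ intertwine the transition morphisms $\eta^H_{(h,b)}$ in the diagram $M(\bU, H, -): Z \to \cC_{\w\cD(\bU, H)}$ with the transition morphisms $\eta^{{}^gH}_{({}^gh,b)}$ in the diagram $M(g\bU, {}^gH, -) \circ \lambda_g : Z \to \cC_{\w\cD(g\bU, {}^gH)}$. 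By the universal property of the limit from Corollary \ref{MUHcoadm}(a), the $\phi_H(g,s)$ therefore assemble into a single continuous $K$-linear map
\[\phi_H(g, Z) : M(\bU, H, Z) \longrightarrow M(g\bU, {}^gH, gZ).\]

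Next I would take the direct sum of these $\phi_H(g,Z)$ over all $Z \in H \backslash G / B$. Because $H$ is a compact open subgroup of $G$ and $G/B$ is compact, this indexing set is finite by Lemma \ref{CompactGmodB}, so the direct sum is a genuine finite direct sum that inherits continuity summand-by-summand; this gives the displayed formula for $\phi_H(g)$.

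Property (a) is then a direct consequence of Lemma \ref{Phigs}(a), checked on elementary tensors $a \hsp \w\otimes \hsp [s] m$ and then extended by $\w\cD(\bU,H)$-linearity of both sides through the limit and the direct sum. Similarly, property (b) reduces to Lemma \ref{Phigs}(b), once one notes that the groupoid equivalence $\lambda_{gh} = \lambda_g \circ \lambda_h$ matches the direct sum decomposition of $M(\bU,H)$ with that of $M(gh\bU, {}^{gh}H)$ in a single step. Finally, for the isomorphism claim, apply property (b) to the pair $(g, g^{-1})$ (and symmetrically) to see that $\phi_{{}^gH}(g^{-1})$ is a two-sided inverse for $\phi_H(g)$. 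The only genuinely non-routine step is the groupoid compatibility that allows the limit to be taken; everything else is bookkeeping.
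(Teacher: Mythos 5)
Your proof is correct and is the one the paper has in mind (the paper omits it as straightforward, but the statement already displays the defining formula $\phi_H(g) := \bigoplus_{Z} \lim_{s\in Z} \phi_H(g,s)$): the essential observation, which you make precise via the groupoid isomorphism $\lambda_g \colon Z \to gZ$, is that Lemma \ref{Phigs}(c) is exactly the naturality needed for the $\phi_H(g,s)$ to descend to the limit $M(\bU,H,Z)$, after which (a), (b) and invertibility drop out of Lemma \ref{Phigs}(a), (b). You also correctly read the composition in part (b) as $\phi_{{}^gH}(g^{-1})\circ\phi_H(g)$, etc., which is what the paper's slightly compressed notation intends.
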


\begin{cor} $\cM$ is a $G$-equivariant presheaf of $\cD$-modules on $\bX_w(\cT)$.\end{cor}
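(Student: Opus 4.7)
The plan is to assemble the maps $\phi_H(g)$ from Lemma \ref{Phig} into morphisms $\phi(g) : \cM(\bU) \to \cM(g\bU)$ by passing to the inverse limit over $\bU$-small compact open subgroups $H$, and then to verify, one by one, the axioms of a $G$-equivariant presheaf of $\cD$-modules on $\bX_w(\cT)$. Combined with Corollary \ref{IndBGDmod}, which already gives the underlying presheaf of $\cD$-modules, this is all that remains.

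First I would check that the collection $\{\phi_H(g)\}_H$ is compatible with the transition maps $\alpha^J_H : M(\bU,H) \congs M(\bU,J)$ from Proposition \ref{NUHtoJ}. Concretely, for $\bU$-small $H \leq J$ and $g \in G$, one needs the square
\[ \xymatrix{ M(\bU,H) \ar[r]^{\phi_H(g)} \ar[d]_{\alpha^J_H} & M(g\bU, {}^gH) \ar[d]^{\alpha^{{}^gJ}_{{}^gH}} \\ M(\bU,J) \ar[r]_{\phi_J(g)} & M(g\bU, {}^gJ) } \]
to commute. This reduces, via the decomposition of the $\alpha$'s in the proof of Proposition \ref{NUHtoJ}, to the elementary compatibilities in Lemma \ref{Phigs}(a,c): the isomorphisms $\varphi_s$ of (\ref{Mackey2}) and the maps $\eta^H_{(h,b)}$ both intertwine cleanly with the assignment $a \hsp \w\otimes \hsp [s] m \mapsto \w{g}(a) \hsp \w\otimes \hsp [gs] m$, using that $\w{g}$ sends $\gamma(h)$ to $\gamma({}^gh)$. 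Once this is done, the system $\{\phi_H(g)\}$ induces a well-defined continuous $K$-linear isomorphism $\phi(g) : \cM(\bU) \congs \cM(g\bU)$ for each $g \in G$ and each $\bU \in \bX_w(\cT)$.

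Next I would verify that $\phi(g)$ commutes with the restriction maps $\tau^{\bU}_{\bV}$ constructed in Corollary \ref{TauUVH}. By the definition of $\tau^{\bU}_{\bV}(H,s)$ in Lemma \ref{TauNUHs}, the formula $a \hsp \w\otimes \hsp [s] m \mapsto a_{|\bV} \hsp \w\otimes \hsp [s] m_{|s^{-1}\bV}$ commutes with $\phi_H(g,s)$ (both operations act on separate tensor factors and use the functoriality of $\cN$ and of $\w\cD(-,H)$ with respect to restriction). Taking direct sums over $Z \in H\backslash G / B$, limits over $s \in Z$, and then inverse limits over $H$ gives the desired equality $\phi(g) \circ \tau^{\bU}_{\bV} = \tau^{g\bU}_{g\bV} \circ \phi(g)$, so each $\phi(g)$ is a morphism of presheaves. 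The semi-linearity condition $\phi(g)(a \cdot b) = \w{g}(a) \cdot \phi(g)(b)$ and the cocycle identity $\phi(g) \circ \phi(h) = \phi(gh)$ then descend directly from Lemma \ref{Phig}(a,b) to the inverse limit, which shows that $\cM$ satisfies the axioms of a $G$-equivariant presheaf of $\cD$-modules on $\bX_w(\cT)$.

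The main obstacle I anticipate is the compatibility of the $\phi_H(g)$'s with the transition isomorphisms $\alpha^J_H$; that verification is essentially bookkeeping, but it requires carefully unwinding the bijection $H \backslash J / ({}^sB \cap J) \to H \backslash Z / B$ used in the proof of Proposition \ref{NUHtoJ} and matching it with the analogous bijection for ${}^gH \leq {}^gJ$ acting on $g\bU$. Once that square commutes for all $H \leq J$, everything else is formal: restriction compatibility, semi-linearity and cocycle are all inherited factor-by-factor from the already-established properties of $\phi_H(g,s)$, and continuity of each $\phi(g)$ follows from continuity of each $\phi_H(g,s)$ (which in turn holds because $\w{g}$ is a continuous isomorphism of Fréchet algebras).
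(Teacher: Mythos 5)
Your proposal follows the paper's own proof in its essentials. The paper also singles out the key compatibility $\phi_J(g) \circ \alpha^J_H = \alpha^{{}^gJ}_{{}^gH}\circ \phi_H(g)$ as the nontrivial verification before defining $g^{\cM} := \lim_H \phi_H(g)$, and then invokes Lemma \ref{Phig} for the semi-linearity and cocycle conditions; your treatment merely spells out the restriction-map compatibility a bit more explicitly, which the paper subsumes into the statement that $g^\cM$ is a morphism of presheaves.
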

\begin{proof} In view of Corollary \ref{IndBGDmod} and \cite[Definition 2.3.4(a)]{EqDCap}, we have to define a $K$-linear morphism $g^{\cM} : \cM \to g^\ast \cM$ of presheaves on $\bX_w(\cT)$ for each $g \in G$, such that $g^{\cM}(a \cdot u) = g^{\cD}(a) \cdot g^{\cM}(u)$ for all $a \in \cD$ and $u \in \cM$ and $g^{\cM} \circ h^{\cM} = (gh)^{\cM}$ for all $g,h \in G$. Let $H \leq J$ be $\bU$-small compact open subgroups of $G$. It can be verified that $\phi_J(g) \circ \alpha^J_H = \alpha^{{}^gJ}_{{}^gH}\circ \phi_H(g)$ where  $\alpha^J_H$ are the isomorphisms from Proposition \ref{NUHtoJ}. Now define $g^{\cM} := \lim\limits_H \phi_H(g)$ and use Lemma \ref{Phig} to conclude.
\end{proof}

\subsection{Properties of the induction functor}\label{PropIndFunctor} We continue with the notation and hypotheses of $\S \ref{IndFunctor}$: thus, $G$ is a $p$-adic Lie group acting continuously on the smooth rigid analytic space $\bX$, $B$ is a co-compact closed subgroup of $G$ and $\cN$ is an object in $\cC_{\bX/B}$. We continue to abbreviate $\cM := \ind_B^G(\cN)$.
\begin{prop}\label{LocDeterm} Suppose that $(\bU,J)$ is small. Then there is an isomorphism  $\varphi : \cP^{\cD(\bU,J)}_{\bU}(\cM(\bU)) \congs \cM_{|\bU_w}$ of $J$-equivariant presheaves of $\cD$-modules on $\bU_w$.
\end{prop}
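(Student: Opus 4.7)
The plan is to construct the isomorphism $\varphi$ section-by-section for each affinoid subdomain $\bV \in \bU_w$ of $\bU$, and then verify compatibility with restrictions and with the $J$-action.

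First I would fix $\bV \in \bU_w(\cT)$ and shrink $J$ if necessary so as to obtain a compact open subgroup $H \leq J$ that is simultaneously $\bU$-small and $\bV$-small. By Corollary \ref{NUHindepofH} we have canonical identifications $\cM(\bU) \cong M(\bU,H)$ and $\cM(\bV) \cong M(\bV,H)$, and by the construction of $\cP^{\cD(\bU,J)}_{\bU}$ from \cite{EqDCap} together with Corollary \ref{NUHindepofH} one has
\[ \cP^{\cD(\bU,J)}_{\bU}(\cM(\bU))(\bV) \;\cong\; \w\cD(\bV,H) \wtimes{\w\cD(\bU,H)} M(\bU,H). \]
I would define $\varphi(\bV)$ as the $\w\cD(\bV,H)$-linear map sending $a \hsp\w\otimes\hsp (b \hsp\w\otimes\hsp [s] m)$ to $a\,b_{|\bV} \hsp\w\otimes\hsp [s]\hsp m_{|s^{-1}\bV}$, for $a \in \w\cD(\bV,H)$, $b \in \w\cD(\bU,H)$ and $m \in \cN(s^{-1}\bU)$; well-definedness is guaranteed by Lemma \ref{TauNUHs}.

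To show $\varphi(\bV)$ is an isomorphism, I would decompose both sides using the finite direct sum (Lemma \ref{CompactGmodB}) over $Z \in H \backslash G / B$; for each such $Z$ fix a representative $s_Z \in Z$ and use Corollary \ref{MUHcoadm}(a) to replace each groupoid limit by the single module $\w\cD(\bU,H) \wtimes{\w\cD(\bU, {}^{s_Z}\csp B \hsp\cap\hsp H)} [s_Z]\cN(s_Z^{-1}\bU)$. The problem then reduces to showing that for each such $s := s_Z$ the natural map
\[ \w\cD(\bV,H) \wtimes{\w\cD(\bU,H)} \bigl(\w\cD(\bU,H) \wtimes{\w\cD(\bU, {}^sB \hsp\cap\hsp H)} [s]\cN(s^{-1}\bU)\bigr) \longrightarrow \w\cD(\bV,H) \wtimes{\w\cD(\bV, {}^sB \hsp\cap\hsp H)} [s]\cN(s^{-1}\bV) \]
is an isomorphism. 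Cancellation simplifies the left-hand side to $\w\cD(\bV,H) \wtimes{\w\cD(\bU, {}^sB \hsp\cap\hsp H)} [s]\cN(s^{-1}\bU)$, while coadmissibility of the $s$-twist $[s] s_\ast \cN$ established in Proposition \ref{sTwist}, together with \cite[Theorem 4.4.3]{EqDCap}, yields a base change isomorphism $[s]\cN(s^{-1}\bV) \cong \w\cD(\bV, {}^sB \cap H) \wtimes{\w\cD(\bU, {}^sB \cap H)} [s]\cN(s^{-1}\bU)$; substituting this into the right-hand side and applying associativity of the completed tensor product produces the same module, and the map is the identity.

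Finally, the maps $\varphi(\bV)$ assemble into a morphism of presheaves by Corollary \ref{TauUVH} and the explicit formulas, and $J$-equivariance is verified from Lemma \ref{Phig}, using the compatibility $\phi_J(g) \circ \alpha^J_H = \alpha^{{}^gJ}_{{}^gH} \circ \phi_H(g)$ already invoked in the construction of $g^{\cM}$. The main technical obstacle is the base change isomorphism for the $s$-twist displayed above, which is not completely formal but is forced by Proposition \ref{sTwist} combined with the coadmissibility machinery of \cite{EqDCap}; the commutation of completed tensor products with the finite direct sums over $H \backslash G / B$ and with the groupoid limits over $Z$ is formal, using finiteness of the sum and Corollary \ref{MUHcoadm}(a) respectively.
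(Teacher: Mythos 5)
Your proposal follows essentially the same route as the paper's own proof: after reducing to a $\bV$-small $H$, you identify $\cP^{\w\cD(\bU,J)}_{\bU}(\cM(\bU))(\bV)$ with $\w\cD(\bV,H)\wtimes{\w\cD(\bU,H)}M(\bU,H)$, reduce to a single double coset and a single $s$ via Corollary \ref{MUHcoadm}(a), and use the coadmissibility of the $s$-twist together with \cite[Theorem 4.4.3]{EqDCap} to get the base change isomorphism $[s]\cN(s^{-1}\bV)\cong \w\cD(\bV, {}^sB\cap H)\wtimes{\w\cD(\bU, {}^sB\cap H)}[s]\cN(s^{-1}\bU)$, which is exactly the paper's key step (the paper phrases it in terms of $\cN\in\cC_{\bX/B}$ and the small pair $(s^{-1}\bU, B\cap H^s)$, which amounts to the same thing after untwisting). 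The remaining cancellation and associativity of $\w\otimes$ match the paper's appeal to \cite[Corollary 7.4]{DCapOne}, and the omitted verifications (compatibility with restrictions and the $J$-action) are likewise left to the reader in the paper.
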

\begin{proof} The $\w\cD(\bU,J)$-module $\cM(\bU)$ is coadmissible by Corollaries \ref{MUHcoadm} and \ref{NUHindepofH}, so we may form presheaf $\cP := \cP^{\cD(\bU,J)}_{\bU}(\cM(\bU))$ on $\bU_w$ from \cite[Definition 3.5.3]{EqDCap}. Let $\bV \in \bU_w$, fix a $\bV$-small open subgroup $H$ of $J$ and let $s \in G$. Because $\cN \in \cC_{\bX/B}$ and the pair $(s^{-1}\bU, B \cap H^s)$ is small, it follows from \cite[Theorem B]{EqDCap} that there is a $\w\cD(\bV, {}^s B \cap H)$-module isomorphism
\[ \w\cD(\bV, {}^s B \cap H) \WO{\w\cD(\bU, {}^s B \cap H)} [s] \cM(s^{-1}\bU) \congs [s]\cM(s^{-1}\bV).\]
After applying the associativity isomorphism for $\w\otimes$ from \cite[Corollary 7.4]{DCapOne} and chasing the definitions, we deduce from this a $\w\cD(\bV,H)$-linear isomorphism
\[ \w\cD(\bV, H) \WO{\w\cD(\bU,H)} M(\bU,H,s) \congs M(\bV, H, s)\]
which assembles to a $\w\cD(\bV,H)$-linear isomorphism $\varphi_H$ in the following diagram:
\[ \xymatrix{ \cP(\bV) \ar[d]_{\cong}\ar@{.>}[rrr] &&& \cM(\bV)\ar[d]^{\cong} \\ \w\cD(\bV,H) \WO{\w\cD(\bU,H)} \cM(\bU) \ar[r]_(0.46){\cong} & \w\cD(\bV,H) \WO{\w\cD(\bU,H)} M(\bU,H) \ar[rr]^(0.6){\cong}_(0.6){\varphi_H} &&M(V,H).}\]
Here the vertical isomorphism on the left comes from \cite[Corollary 3.5.6]{EqDCap}, and the other two unlabelled isomorphisms come from Corollary \ref{NUHindepofH}. We define $\varphi(\bV) : \cP(\bV) \to \cM(\bV)$ be the unique dotted arrow in this diagram which makes it commutative. We leave to the reader the tasks of verifying that $\varphi(\bV)$ does not depend on the choice of $H$,  that it commutes with the restriction maps in the presheaves $\cP$ and $\cM$, and that it also commutes with the $J$-equivariant structures on these presheaves.
\end{proof}

\begin{cor}\label{Msheaf} $\cM$ is a \emph{sheaf} of $\cD$-modules on $\bX_w(\cT)$.
\end{cor}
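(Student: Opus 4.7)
The plan is to deduce the sheaf property of $\cM$ on $\bX_w(\cT)$ from the local description established in Proposition \ref{LocDeterm}, by reducing the verification of the sheaf axioms to the already-known fact that the localisation presheaves $\cP^{\cD(\bU,J)}_{\bU}(\cdot)$ on the small pairs $(\bU,J)$ are themselves sheaves on $\bU_w$ (this is the content of a standard result from \cite{EqDCap}, and is in any case how one typically verifies the sheaf property for coadmissible equivariant $\cD$-modules).

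The first step is to recall that $\bX_w(\cT)$ admits a basis consisting of small affinoid subdomains. Concretely, given any $\bU \in \bX_w(\cT)$ and any admissible covering $\{\bU_i\}$ of $\bU$ by elements of $\bX_w(\cT)$, the local smallness of the $G$-action (cf.\ \cite[Definition 3.1.8]{EqDCap}) together with the availability of free $\cA$-Lie lattices on elements of $\bX_w(\cT)$ allows us to refine this cover to one such that each $\bU_i$ carries a $J_i$-small structure for a suitable compact open subgroup $J_i \leq G$. Since presheaves which are sheaves on a basis are sheaves on the whole Grothendieck topology (in the rigid analytic sense of \cite[\S 9.2.3]{BGR}), it suffices to prove that $\cM$ is a sheaf when restricted to $\bU_w$ for any small pair $(\bU,J)$.

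The second step carries out exactly this reduction. By Proposition \ref{LocDeterm}, the presheaf $\cM_{|\bU_w}$ is isomorphic to $\cP^{\cD(\bU,J)}_{\bU}(\cM(\bU))$; by Corollary \ref{MUHcoadm}(b) together with Corollary \ref{NUHindepofH}, $\cM(\bU)$ is a coadmissible $\w\cD(\bU,J)$-module, so we are localising a genuine coadmissible module. The fact that such a localisation is automatically a sheaf on $\bU_w$ is \cite[Theorem 3.5.11]{EqDCap} (or the analogous statement which produces the sheaf $\Loc^{\w\cD(\bU,J)}_{\bU}$). Hence $\cM_{|\bU_w}$ is a sheaf.

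The main obstacle, if there is one, is merely bookkeeping: checking that the identifications $\cM_{|\bU_w} \cong \cP^{\cD(\bU,J)}_{\bU}(\cM(\bU))$ are compatible as $(\bU,J)$ varies over a cover of a larger admissible open, so that the sheaf-on-a-basis argument actually goes through. But this compatibility is already built into Proposition \ref{LocDeterm}, since the isomorphism $\varphi$ constructed there commutes with restriction maps and $J$-equivariant structures; passing to finer covers and intersecting with other small pairs therefore introduces no new data. The conclusion is that $\cM$ is a sheaf of $\cD$-modules on $\bX_w(\cT)$, as required.
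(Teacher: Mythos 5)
Your proposal is correct and follows essentially the same route as the paper, whose entire proof is "Apply Proposition \ref{LocDeterm} and \cite[Theorem 3.5.11]{EqDCap}." You spell out the sheaf-on-a-basis reduction and the compatibility of the local identifications in more detail, but this bookkeeping is precisely what \cite[Theorem 3.5.11]{EqDCap} packages up, so there is no genuine difference in method.
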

\begin{proof} Apply Proposition \ref{LocDeterm} and \cite[Theorem 3.5.11]{EqDCap}.
\end{proof}

Recall that because $\bX$ is assumed to be smooth, $\bX_w(\cT)$ is a basis for the strong $G$-topology $\bX_{\rig}$ on $\bX$. It follows from Corollary \ref{Msheaf} and \cite[Theorem 9.1]{DCapOne} that $\cM$ extends uniquely to a $G$-equivariant sheaf $\ind_B^G(\cN)$ of $\cD$-modules on $\bX_{\rig}$. In fact, it is easy to see that it is a $G$-equivariant \emph{locally Fr\'echet} $\cD$-module on $\bX_{\rig}$ in the sense of \cite[Definition 3.6.1]{EqDCap}.

\begin{defn} We call $\ind_B^G(\cN)$ the \emph{induced $G$-equivariant $\cD$-module}.
\end{defn}

\begin{thm}\label{IndBGCoadm} For every $\cN \in \cC_{\bX/B}$, the induced $G$-equivariant $\cD$-module $\ind_B^G(\cN)$ is coadmissible: $\ind_B^G(\cN) \in \cC_{\bX/G}$.
\end{thm}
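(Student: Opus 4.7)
The plan is to verify coadmissibility directly from the definition in \cite[Definition 3.6.1]{EqDCap}, by combining the structural results already proved in $\S\ref{IndFunctor}$--$\S\ref{PropIndFunctor}$. Coadmissibility of a $G$-equivariant locally Fr\'echet $\cD$-module $\cM$ on $\bX_{\rig}$ amounts to the following local requirement: for every small pair $(\bU,H)$, the sections $\cM(\bU)$ should be a coadmissible $\w\cD(\bU,H)$-module, and the restriction of $\cM$ to $\bU_w$ should coincide with the associated localisation presheaf $\cP^{\cD(\bU,H)}_{\bU}(\cM(\bU))$ of \cite[Definition 3.5.3]{EqDCap}. Since we already noted after Corollary \ref{Msheaf} that $\ind_B^G(\cN)$ is a $G$-equivariant locally Fr\'echet $\cD$-module on $\bX_{\rig}$, only the two conditions on sections over small pairs remain to be checked.

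The first step is to fix a small pair $(\bU,H)$ and invoke Corollary \ref{NUHindepofH} to identify $\cM(\bU)$ with $M(\bU,H)$. By Definition \ref{DefnOfMUH}(b), this module is a finite direct sum of the modules $M(\bU,H,Z)$ indexed by double cosets $Z \in H \backslash G / B$, and this sum is finite by Lemma \ref{CompactGmodB} applied to the $\bU$-small open subgroup $H$ of $G$. Each summand $M(\bU,H,Z)$ is, by Corollary \ref{MUHcoadm}(a), isomorphic to $M(\bU,H,s) = \w\cD(\bU,H) \w\otimes_{\w\cD(\bU, {}^sB \cap H)} [s]\cN(s^{-1}\bU)$ for any chosen $s \in Z$. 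Since $[s]\cN(s^{-1}\bU)$ is a coadmissible $\w\cD(\bU, {}^sB \cap H)$-module by Lemma \ref{TwistCoadm}, base change along the continuous $K$-algebra morphism $\w\cD(\bU, {}^sB \cap H) \to \w\cD(\bU,H)$ together with \cite[Corollary 7.4]{DCapOne} shows each summand is coadmissible over $\w\cD(\bU,H)$, and hence so is $\cM(\bU)$. This is precisely Corollary \ref{MUHcoadm}(b) combined with Corollary \ref{NUHindepofH}.

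The second step is to appeal to Proposition \ref{LocDeterm}, which gives the crucial isomorphism $\cM_{|\bU_w} \cong \cP^{\cD(\bU,H)}_{\bU}(\cM(\bU))$ as $H$-equivariant presheaves of $\cD$-modules on $\bU_w$. Together with the coadmissibility of $\cM(\bU)$ over $\w\cD(\bU,H)$ just established, this verifies the defining local condition for the coadmissibility of $\ind_B^G(\cN)$.

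Since both requirements of \cite[Definition 3.6.1]{EqDCap} hold at every small pair, we conclude $\ind_B^G(\cN) \in \cC_{\bX/G}$. The hardest piece of the argument was not in this theorem itself but earlier: ensuring that the infinite-looking construction in Definition \ref{DefnOfM} collapses to a finite direct sum at each level, which ultimately rests on the compactness of $G/B$ via Lemma \ref{CompactGmodB}, and on the compatibility of the transition maps $\alpha^J_H$ of Proposition \ref{NUHtoJ}. With those in hand, the proof of the theorem is essentially an assembly of Corollary \ref{MUHcoadm}, Corollary \ref{NUHindepofH} and Proposition \ref{LocDeterm}.
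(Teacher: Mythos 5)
Your proposal is correct and follows essentially the same route as the paper: the heart of the matter is Proposition~\ref{LocDeterm} identifying $\cM_{|\bU_w}$ with $\cP^{\w\cD(\bU,H)}_{\bU}(\cM(\bU))$, combined with Corollaries~\ref{MUHcoadm} and~\ref{NUHindepofH} to see that $\cM(\bU)$ is a coadmissible $\w\cD(\bU,H)$-module, and these are exactly the ingredients the paper assembles. The paper phrases this via $\cU$-coadmissibility in the sense of \cite[Definition 3.6.7(a)]{EqDCap}, using \cite[Theorem 9.1]{DCapOne} and \cite[Lemma 3.6.5]{EqDCap} to pass from the presheaf isomorphism on $\bU_w$ to a continuous isomorphism of $\cD$-modules on $\bU_{\rig}$; you fold these routine extension-and-continuity steps into the concluding sentence, but the substance matches.
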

\begin{proof} Choose any $\bX_w(\cT)$-covering $\cU$ of $\bX$, fix $\bU \in \cU$ and choose a $\bU$-small subgroup $J$ of $G$ using \cite[Lemma 3.4.7]{EqDCap}.  Using \cite[Theorem 9.1]{DCapOne}, we see that the isomorphism $\varphi : \cP^{\w\cD(\bU,J)}_\bU(\cM(\bU)) \congs \cM_{|\bU_w}$ of $J$-equivariant presheaves of $\cD$-modules on $\bU_w$ from Proposition \ref{LocDeterm} extends uniquely to a continuous isomorphism $\Loc^{\w\cD(\bU,J)}_\bU(\cM(\bU)) \congs \ind_B^G(\cN)_{|\bU_{\rig}}$ of $J$-equivariant $\cD$-modules on $\bU_{\rig}$. It follows from \cite[Lemma 3.6.5]{EqDCap} that this isomorphism is continuous, so we see that $\ind_B^G(\cN)$ is $\cU$-coadmissible in the sense of \cite[Definition 3.6.7(a)]{EqDCap}.
\end{proof}

We have the following observation about supports.

\begin{lem}\label{IndSupp} $\ind_{B}^G$ defines a functor $\cC_{\bX/B}^{\bY} \longrightarrow \cC_{\bX/G}^{G\bY}$. 
\end{lem}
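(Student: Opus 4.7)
The plan is to show that for any $\cN \in \cC_{\bX/B}^{\bY}$, the induced object $\ind_B^G(\cN)$ is coadmissible (already supplied by Theorem \ref{IndBGCoadm}) and vanishes on every admissible open in $\bX \setminus G\bY$. Functoriality is immediate from the construction, so the only real content is the support condition.

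First, I would reduce to checking vanishing on a basis. Since $\bX$ is smooth, $\bX_w(\cT)$ is a basis for the strong $G$-topology $\bX_{\rig}$. Hence if $\bV \subseteq \bX \setminus G\bY$ is any admissible open, we can cover it by elements $\bU \in \bX_w(\cT)$ with $\bU \subseteq \bV$, and by the sheaf property of $\ind_B^G(\cN)$ (Corollary \ref{Msheaf}) it is enough to verify $\ind_B^G(\cN)(\bU) = 0$ for every $\bU \in \bX_w(\cT)$ contained in $\bX \setminus G\bY$.

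Next, I would invoke the explicit formula from Definition \ref{DefnOfM}:
\[\ind_B^G(\cN)(\bU) = \lim_H \bigoplus_{Z \in H \backslash G / B} \lim_{s \in Z} \w\cD(\bU,H) \WO{\w\cD(\bU, {}^s B \cap H)} [s]\cN(s^{-1}\bU).\]
For every $s \in G$, since $G\bY$ is $G$-stable and contains $\bY$, we have
\[s^{-1}\bU \subseteq s^{-1}(\bX \setminus G\bY) = \bX \setminus G\bY \subseteq \bX \setminus \bY.\]
So $s^{-1}\bU$ is an admissible open subset of $\bX \setminus \bY$. Because $\cN$ is supported on $\bY$ by hypothesis, $\cN_{|s^{-1}\bU} = 0$, and in particular $\cN(s^{-1}\bU) = 0$. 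Hence every summand in the formula above vanishes, giving $\ind_B^G(\cN)(\bU) = 0$ as required.

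There is no real obstacle here: the argument is essentially a direct inspection of the defining formula, and the only non-trivial input is the reduction to $\bX_w(\cT)$, which is what Corollary \ref{Msheaf} and the smoothness of $\bX$ are for. Combined with Theorem \ref{IndBGCoadm}, this shows $\ind_B^G(\cN) \in \cC_{\bX/G}^{G\bY}$, completing the proof.
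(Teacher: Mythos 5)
Your proof is correct and takes essentially the same approach as the paper: coadmissibility is quoted from Theorem \ref{IndBGCoadm}, and the support condition is read off directly from the explicit formula in Definition \ref{DefnOfM}, using that $s^{-1}\bU \subseteq \bX \setminus G\bY \subseteq \bX\setminus\bY$ forces $\cN(s^{-1}\bU)=0$. The paper states this in a single sentence; you simply spell out the reduction to $\bX_w(\cT)$ via Corollary \ref{Msheaf} and the $G$-stability of $G\bY$, both of which are correct.
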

\begin{proof} By Theorem \ref{IndBGCoadm}, $\ind_{B}^G(\cN) \in \cC_{\bX/G}$ whenever $\cN \in \cC_{\bX/B}$. On the other hand, it follows from Definition \ref{DefnOfM} that $\ind_{B}^G(\cN)(\bU)$ is zero for any $\bU \in \bX_w(\cT)$ such that $\bU \cap G \bY = \emptyset$ whenever $\cN$ is supported on $\bY$ only.
\end{proof}

\begin{lem}\label{IndBGonSmallAffs} If $(\bX,G)$ is small, there is a canonical $\w\cD(\bX,G)$-linear isomorphism
\[ \Gamma(\bX, \ind_B^G(\cN)) \congs \w\cD(\bX,G) \WO{\w\cD(\bX,B)} \cN(\bX).\]
\end{lem}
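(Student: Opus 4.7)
The plan is to reduce the statement directly to Proposition \ref{Mackey} by taking $H = G$ itself in the inverse limit defining $\cM(\bX) := \ind_B^G(\cN)(\bX)$. Since $(\bX,G)$ is small by hypothesis, $G$ is a $\bX$-small compact open subgroup of itself, and in particular $\bX$ belongs to $\bX_w(\cT)$; consequently $\Gamma(\bX, \ind_B^G(\cN)) = \cM(\bX)$ with no further extension from $\bX_w(\cT)$ to $\bX_{\rig}$ being required.

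First, by Corollary \ref{NUHindepofH}, the canonical projection out of the inverse limit in Definition \ref{DefnOfM} yields a $\w\cD(\bX,G)$-linear isomorphism $\cM(\bX) \congs M(\bX, G)$. Second, with $H = G$ the double coset set $G \backslash G / B$ collapses to a single element, so Definition \ref{DefnOfMUH}(b) gives $M(\bX,G) = M(\bX,G,G)$.

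Finally, applying Proposition \ref{Mackey} to the small pair $(\bX,G)$ with $H := G$ produces a $\w\cD(\bX,G)$-linear isomorphism
\[ M(\bX, G, G) \congs \w\cD(\bX, G) \WO{\w\cD(\bX, B)} \cN(\bX), \]
where once again the direct sum on the left-hand side of Proposition \ref{Mackey} collapses to the unique double coset. Concatenating the two isomorphisms delivers the claimed identification, and the construction is functorial in $\cN \in \cC_{\bX/B}$.

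I do not anticipate any genuine obstacle: the essential content of the lemma has already been absorbed into Proposition \ref{Mackey}, and the only thing to check beyond that is that the composite map is indeed $\w\cD(\bX,G)$-linear and does not depend on auxiliary choices. This is immediate from the explicit formulas for the isomorphisms $\varphi_s$ in the proof of Proposition \ref{Mackey} (which are manifestly $\w\cD(\bX,G)$-linear on the left factor) together with the compatibility of the connecting maps $\alpha^G_H$ of Proposition \ref{NUHtoJ} with the $\w\cD(\bX,G)$-module structure.
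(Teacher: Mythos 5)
Correct, and essentially the paper's proof: after identifying $\Gamma(\bX, \ind_B^G(\cN))$ with $\cM(\bX)$ via Corollary \ref{Msheaf} and then applying Corollary \ref{NUHindepofH} with $\bU=\bX$, $H=G$, the paper simply unwinds Definition \ref{DefnOfMUH} and Corollary \ref{MUHcoadm}(a) to read off the result. Citing Proposition \ref{Mackey} with $H=G$ for the final step is harmless overkill, since in that case the double-coset decomposition collapses to a single term and Mackey reduces to exactly this trivial identification.
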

\begin{proof} Because $\bX \in \bX_w(\cT)$, the map $\cM(\bX) \to \Gamma(\bX, \ind_B^G(\cN))$ is an isomorphism by Corollary \ref{Msheaf}. Now apply Corollary \ref{NUHindepofH} with $\bU = \bX$ and $H = G$.
\end{proof}

Fix an open subgroup $H$ of $G$. Using Lemma \ref{CompactGmodB}, we can find a finite set of representatives $\{s_1 = 1,\ldots, s_m\}$ for the $H,B$-double cosets in $G$. Recall the $s_i$-twist $[s_i] s_{i,\ast} \cN$ of $\cN$ from Definition \ref{sTwistDefn}; this is an object of $\cC_{\bX / {}^{s_i} B}$ by Proposition \ref{sTwist}, and we define $\cN_i := \Res^{{}^{s_i}B}_{H \cap {}^{s_i}B} \quad [s_i] s_{i,\ast} \cN$ be its restriction to $H \cap {}^{s_i} B$. 

We have the following sheaf-theoretic version of Mackey's restriction theorem.
\begin{lem}\label{SheafyMackey} Suppose $\bU$ is an affinoid subdomain of $\bX$ such that $(\bU,H)$ is small. Then $\ind_B^G(\cN)_{|\bU} \cong \bigoplus_{i=1}^m \ind_{H \cap {}^{s_i}B}^{\hsp\hsp H} ( \cN_{i | \bU} )$ in $\cC_{\bU/H}$.
\end{lem}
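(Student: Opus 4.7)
The plan is to compare the two sides by computing global sections on $\bU$ as $H$-equivariant coadmissible $\w\cD(\bU,H)$-modules and then invoking the local-determination result Proposition \ref{LocDeterm}. Both $\ind_B^G(\cN)_{|\bU}$ and $\bigoplus_{i=1}^m \ind_{H \cap {}^{s_i}B}^{H}(\cN_{i|\bU})$ lie in $\cC_{\bU/H}$: the first by Theorem \ref{IndBGCoadm} applied to $G$ acting on $\bX$, and the second by Theorem \ref{IndBGCoadm} applied to $H$ acting on $\bU$, once we observe that $(\bU, H \cap {}^{s_i}B)$ is small whenever $(\bU,H)$ is (any subgroup of a small group is small for the same $\cA$-Lie lattice) and that $\cN_{i|\bU}$ is a well-defined object of $\cC_{\bU/(H \cap {}^{s_i}B)}$ via Proposition \ref{sTwist}.

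Next, I would compute global sections on $\bU$. For the left-hand side, Corollary \ref{NUHindepofH} together with Definition \ref{DefnOfMUH} gives
\[ \ind_B^G(\cN)(\bU) \cong M(\bU,H) = \bigoplus_{i=1}^m M(\bU, H, H s_i B), \]
and by Corollary \ref{MUHcoadm}(a) each summand is canonically isomorphic to $M(\bU, H, s_i)$. For the right-hand side, Lemma \ref{IndBGonSmallAffs} applied to the small pair $(\bU,H)$ and the subgroup $H \cap {}^{s_i}B$ yields
\[ \Gamma(\bU, \ind_{H \cap {}^{s_i}B}^{H} \cN_{i|\bU}) \cong \w\cD(\bU,H) \WO{\w\cD(\bU, H \cap {}^{s_i}B)} \cN_i(\bU). \]
By the very definition of $\cN_i = \Res^{{}^{s_i}B}_{H \cap {}^{s_i}B} \, [s_i]s_{i,\ast}\cN$, the sections $\cN_i(\bU)$ coincide with the twist $[s_i]\cN(s_i^{-1}\bU)$ as a $\w\cD(\bU, H \cap {}^{s_i}B)$-module, and so the right-hand summand is precisely $M(\bU, H, s_i)$. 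Assembling, both sides have canonically isomorphic $\w\cD(\bU,H)$-modules of sections on $\bU$, namely $\bigoplus_i M(\bU,H,s_i)$.

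To promote this to an isomorphism in $\cC_{\bU/H}$, I would first check that the two identifications of global sections intertwine the $H$-equivariant structures: on the left the action is recorded by the maps $\phi_H(h)$ from Lemma \ref{Phig}, which restrict to $M(\bU, H, s_i)$ via the summand indexed by $H s_i B$; on the right it is the $H$-equivariant structure coming from the induction construction applied to $H$ acting on $\bU$, which by construction uses the same formulas $a \w\otimes [s_i] m \mapsto \w h(a) \w\otimes [s_i] m$ modulo the action of elements of $H \cap {}^{s_i}B$ that permute representatives within $H s_i B$. With this compatibility in hand, Proposition \ref{LocDeterm} applied to both $\ind_B^G(\cN)_{|\bU}$ and each $\ind_{H \cap {}^{s_i}B}^H \cN_{i|\bU}$ promotes the isomorphism of $\w\cD(\bU,H)$-modules to an isomorphism of $H$-equivariant sheaves on $\bU_w$, which then uniquely extends to an isomorphism in $\cC_{\bU/H}$ on $\bU_{\rig}$ by \cite[Theorem 9.1]{DCapOne} and \cite[Lemma 3.6.5]{EqDCap}, as in the proof of Theorem \ref{IndBGCoadm}.

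The main obstacle is bookkeeping: the genuine content is contained in Proposition \ref{Mackey} and its iteration Proposition \ref{NUHtoJ}, so the algebraic heart of the Mackey decomposition is already available. The nontrivial verification will be that the canonical identification of the summand $M(\bU, H, s_i)$ on the left with the global sections of the $i$-th term on the right is compatible with the $H$-equivariant structures under the twistings $\phi_H(h, s_i)$ from Lemma \ref{Phigs}, which requires a careful tracking of how $h \in H$ permutes representatives of a given double coset via elements of $H \cap {}^{s_i}B$, and how that permutation is absorbed into the connecting maps $\eta^H_{(h,b)}$ of Proposition \ref{MUHfunctor}.
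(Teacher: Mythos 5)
Your proof follows essentially the same strategy as the paper's: show both sides live in $\cC_{\bU/H}$, identify global sections on $\bU$ via Lemma \ref{IndBGonSmallAffs} and Definition \ref{DefnOfM}, and upgrade the resulting $\w\cD(\bU,H)$-module isomorphism to an isomorphism of equivariant sheaves. The paper's argument is simply shorter at the last step, citing \cite[Theorem 4.4.3]{EqDCap} directly (which gives the equivalence $\Gamma(\bU,-):\cC_{\bU/H}\congs\cC_{\w\cD(\bU,H)}$ for small $(\bU,H)$) rather than going back through Proposition \ref{LocDeterm}, \cite[Theorem 9.1]{DCapOne} and \cite[Lemma 3.6.5]{EqDCap}.

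Two small inaccuracies worth flagging. First, your separate check that the identification ``intertwines the $H$-equivariant structures'' is redundant: once the isomorphism of global sections is $\w\cD(\bU,H)$-linear, the $H$-equivariance is automatic, because $K[H]$ sits inside $\w\cD(\bU,H)$ via $\gamma$ and \cite[Theorem 4.4.3]{EqDCap} already encodes the equivariant structure in the module structure. The paper exploits this and simply says ``the result now follows from \cite[Theorem 4.4.3]{EqDCap}.'' Second, the parenthetical ``$(\bU, H\cap {}^{s_i}B)$ is small whenever $(\bU,H)$ is'' is imprecise: $H\cap {}^{s_i}B$ need not be open in $G_{\bU}$ (since $B$ is only closed, not open), so the pair is not literally small in the sense of \cite[Definition 3.4.4]{EqDCap}. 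What you actually need is that $H\cap {}^{s_i}B$ is a closed subgroup of the compact group $H$ with $H/(H\cap {}^{s_i}B)$ compact, which makes $\ind_{H\cap {}^{s_i}B}^H$ well defined; then Theorem \ref{IndBGCoadm} applied to $H$ acting on $\bU$ gives coadmissibility. You also leave implicit that $\ind_B^G(\cN)_{|\bU}\in\cC_{\bU/H}$, for which the paper cites \cite[Proposition 3.6.10]{EqDCap} in addition to Theorem \ref{IndBGCoadm}. None of this affects the correctness of the argument.
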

\begin{proof} By Theorem \ref{IndBGCoadm} and \cite[Proposition 3.6.10]{EqDCap}, both sheaves do lie in $\cC_{\bU/H}$. Because $(\bU,H)$ is small, it follows from  Lemma \ref{IndBGonSmallAffs} that $\Gamma(\bU, \ind_{H \cap {}^{s_i}B}^{\hsp\hsp H} ( \cN_{i|\bU} )) \cong \w\cD(\bU,H) \WO{\w\cD(\bU, H \cap {}^{s_i}B)} \cN_i(\bU)$. Applying Corollary \ref{Msheaf} and Definition \ref{DefnOfM}, we see that $\Gamma(\bU, \ind_B^G(\cN))$  is isomorphic to $\oplus_{i=1}^m\Gamma(\bU, \ind_{H \cap {}^{s_i}B}^{\hsp\hsp H} ( \cN_i ))$ as a $\w\cD(\bU,H)$-module.  The result now follows from \cite[Theorem 4.4.3]{EqDCap}.
\end{proof}

Now we specialise to the case $B = G_{\bY}$, and define $\cM_i := \ind_{H \cap {}^{s_i}B}^{\hsp\hsp H} ( \cN_i )$.

\begin{lem}\label{H0Comp4} Suppose that the $G$-orbit of $\bY$ is regular in $\bX$ and that $\cN \in \cC_{\bX/G_{\bY}}^{\bY}$. Then $\cH^0_{H s_i \bY}(\cM_i) = \cM_i$ and $\cH^0_{H s_j \bY}(\cM_i) = 0$ if $j \neq i$.
\end{lem}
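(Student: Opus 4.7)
The proof splits into two parts: first I check that each $\cM_i$ is supported on $H s_i \bY$ (which gives the first equality), and then I show that the closures of the sets $H s_k \bY$ in $\sP(\bX)$ are pairwise disjoint (which gives the second).

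For the first part, I trace the support through the twisting and induction constructions. Since $\cN \in \cC_{\bX/B}^{\bY}$, Corollary \ref{sTwistSupp} yields $[s_i] s_{i,\ast} \cN \in \cC_{\bX / {}^{s_i}B}^{s_i \bY}$. Restricting the equivariant structure to the subgroup $H \cap {}^{s_i} B$ leaves the underlying sheaf unchanged, so $\cN_i \in \cC_{\bX / (H \cap {}^{s_i} B)}^{s_i \bY}$. Because $H$ is compact, the quotient $H / (H \cap {}^{s_i} B)$ is a fortiori compact, so Lemma \ref{IndSupp} applies (with $H$ playing the role of $G$ and $H \cap {}^{s_i}B$ the role of $B$) to give $\cM_i \in \cC_{\bX/H}^{H s_i \bY}$. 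By Lemma \ref{Supports}, $\Supp(\tilde{\cM}_i) \subseteq \overline{H s_i \bY}$, so $\tilde{\cM}_i(\tilde{\bU} \backslash \overline{H s_i \bY}) = 0$ for every admissible open $\bU$. The kernel in Definition \ref{LocalCohoDefn}(b) is then all of $\cM_i(\bU)$, giving $\cH^0_{H s_i \bY}(\cM_i) = \cM_i$.

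For the second part, I first show the sets $H s_1 \bY, \ldots, H s_m \bY$ are pairwise disjoint in $\bX$. If $h s_i \bY \cap h' s_j \bY \neq \emptyset$ for some $h, h' \in H$, then $(s_j^{-1} h'^{-1} h s_i) \bY \cap \bY \neq \emptyset$, so by regularity of the $G$-orbit of $\bY$ the element $s_j^{-1} h'^{-1} h s_i$ lies in $G_{\bY} = B$, forcing $H s_i B = H s_j B$ and hence $i = j$. Since $H$ is compact, the stabilisers $H_{s_i \bY} = H \cap {}^{s_i} B$ are trivially co-compact in $H$, so Theorem \ref{Separation} applies to $H$ acting on $\bX$ (localising to qcqs affinoid subdomains if $\bX$ is not itself qcqs), yielding pairwise disjoint open neighbourhoods of the closures $\overline{H s_k \bY}$ in $\sP(\bX)$. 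In particular, $\overline{H s_i \bY} \cap \overline{H s_j \bY} = \emptyset$ for $j \neq i$.

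To finish: for $j \neq i$ and $\bU$ admissible open, any section $t$ in the kernel $\cH^0_{H s_j \bY}(\cM_i)(\bU) = \ker(\cM_i(\bU) \to \tilde{\cM}_i(\tilde{\bU} \backslash \overline{H s_j \bY}))$ has support contained in $\overline{H s_j \bY} \cap \tilde{\bU}$, while $\Supp(t) \subseteq \Supp(\tilde{\cM}_i) \cap \tilde{\bU} \subseteq \overline{H s_i \bY} \cap \tilde{\bU}$ by the first part; disjointness forces $\Supp(t) = \emptyset$, so $t = 0$. The main subtlety will be the correct invocation of Theorem \ref{Separation} when $\bX$ is not qcqs, since its hypothesis of quasi-compactness is what guarantees Hausdorffness of $\sM(\bX)$; this will require a localisation argument but introduces no essential new ideas.
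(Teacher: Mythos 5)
Your proof is correct and follows the paper's argument essentially step for step: Corollary \ref{sTwistSupp} and Lemma \ref{IndSupp} control the supports of the $\cM_i$, regularity of the $G$-orbit of $\bY$ gives pairwise disjointness of the $H s_i \bY$, and Theorem \ref{Separation} separates their closures in $\sP(\bX)$. You are also right to flag that Theorem \ref{Separation} as stated requires $\bX$ to be qcqs, a hypothesis not present in Lemma \ref{H0Comp4}; the paper invokes it without comment, but since the vanishing of the presheaf $\cH^0_{Hs_j\bY}(\cM_i)$ can be checked on a basis of affinoid subdomains, and the lemma's actual application in Proposition \ref{H0HYindBG} happens after restricting to a small affinoid $\bU$ where qcqs is automatic, your localisation observation is exactly the right way to close the gap.
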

\begin{proof} By Corollary \ref{sTwistSupp}, we know that $\Supp(\tilde{\cN}_i) \subseteq s_i \tilde{\bY}$. Hence $\Supp(\tilde{\cM}_i) \subseteq H s_i \tilde{\bY}$ for each $i$ by Lemma \ref{IndSupp}, so $\cH^0_{Hs_i\bY}(\cM_i) = \cM_i$.

Now, the sets $H s_i \bY$ are pairwise disjoint: if $h s_i y = h' s_j y'$ for some $h,h' \in H$ and $y, y' \in \bY$ then $(h s_i)^{-1} (h' s_j) \in G_{\bY}$ because the $G$-orbit of $\bY$ is regular in $\bX$, so $Hs_i G_{\bY} = Hs_j G_\bY$ and hence $i = j$ by our choice of the $s_i$'s.  It follows from Lemma \ref{YXtop}(d) that the sets $H s_i \tilde{\bY}$ are pairwise disjoint, so we can find pairwise disjoint open neighbourhoods $U_i$ of these sets in $\sP(\bX)$ by Theorem \ref{Separation}. Now if $j \neq i$ then $\tilde{\cM}_{i|U_j} = 0$ because $\Supp(\tilde{\cM}_i) \subseteq Hs_i \tilde{\bY}$ and $H s_i \tilde{\bY} \cap U_j = \emptyset$ if $j \neq i$. Hence $\cH^0_{Hs_j \bY}(\cM_i) = 0$ whenever $j \neq i$.
\end{proof}

\begin{prop}\label{H0HYindBG} Suppose that the $G$-orbit of $\bY$ is regular in $\bX$ and that $\cN \in \cC_{\bX/G_{\bY}}^{\bY}$. Suppose that $(\bU,H)$ is small. Then there is an isomorphism in $\cC_{\bX/H}$
\[\cH^0_{H(\bU \cap \bY)}(\ind_{G_{\bY}}^G(\cN)_{|\bU}) \cong \ind_{H_{\bY}}^H (\Res^{G_{\bY}}_{H_{\bY}} \cN_{|\bU})\]
\end{prop}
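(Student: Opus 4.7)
The plan is to combine the sheaf-theoretic Mackey decomposition of Lemma \ref{SheafyMackey} with the vanishing statements from Lemma \ref{H0Comp4}, and then identify the single surviving summand.

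Set $B := G_{\bY}$. Using Lemma \ref{CompactGmodB}, fix representatives $s_1 = 1, s_2,\ldots, s_m$ for the double cosets $H \backslash G / B$, and for each $i$ let $\cN_i := \Res^{{}^{s_i}B}_{H \cap {}^{s_i}B}([s_i] s_{i,\ast}\cN)$ and $\cM_i := \ind_{H \cap {}^{s_i}B}^{\hsp\hsp H}(\cN_{i | \bU})$. Lemma \ref{SheafyMackey} supplies an isomorphism
\[\ind_B^G(\cN)_{|\bU} \;\cong\; \bigoplus_{i=1}^m \cM_i \qmb{in} \cC_{\bU/H}.\]
The functor $\cH^0_{H(\bU \cap \bY)}$ is a kernel of a map of abelian sheaves (Definition \ref{LocalCohoDefn}), hence left exact and compatible with finite direct sums. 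Therefore
\[\cH^0_{H(\bU \cap \bY)}(\ind_B^G(\cN)_{|\bU}) \;\cong\; \bigoplus_{i=1}^m \cH^0_{H(\bU \cap \bY)}(\cM_i).\]

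Next I argue that only the $i=1$ summand survives. Because $\cN$ is supported on $\bY$, Corollary \ref{sTwistSupp} gives that $\cN_i$ is supported on $s_i \bY$, and then Lemma \ref{IndSupp} (applied to the pair $(\bU, H)$) shows that each $\cM_i$ is supported on $H s_i \bY \cap \bU = H(\bU \cap s_i \bY)$, where the equality uses that $H$ stabilises $\bU$. Since the $G$-orbit of $\bY$ is regular and the $s_i$ are double coset representatives, the sets $H s_i \bY$ are pairwise disjoint in $\bX$; by Lemma \ref{YXtop}(d) their closures $H s_i \tilde{\bY}$ are pairwise disjoint in $\sP(\bX)$. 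Intersecting with $\tilde{\bU}$, we may now apply Theorem \ref{Separation} to the affinoid (hence qcqs) space $\bU$ with its compact open group $H$ acting: co-compactness of the stabilisers is automatic since $H$ itself is compact, so there exist pairwise disjoint open neighbourhoods in $\sP(\bU)$ of the closures of $H(\bU \cap s_i \bY)$, $i = 1,\ldots, m$. In particular, for $i \neq 1$ the closure of $\Supp(\tilde\cM_i)$ is disjoint from $\overline{H(\bU \cap \bY)}$, so $\cH^0_{H(\bU \cap \bY)}(\cM_i) = 0$. For $i=1$ the support of $\cM_1$ is already contained in $H(\bU \cap \bY)$, hence $\cH^0_{H(\bU \cap \bY)}(\cM_1) = \cM_1$.

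Finally, for $s_1 = 1$ the twist $[s_1] s_{1,\ast}\cN$ is canonically $\cN$, and $H \cap {}^{s_1} B = H \cap G_{\bY} = H_{\bY}$. Therefore
\[\cM_1 \;=\; \ind_{H \cap B}^H(\cN_{1 | \bU}) \;=\; \ind_{H_{\bY}}^H\bigl(\Res^{G_{\bY}}_{H_{\bY}} \cN_{|\bU}\bigr),\]
yielding the claimed isomorphism in $\cC_{\bU/H}$. The main technical step is the separation argument in the previous paragraph; everything else is a routine assembly of the results already established.
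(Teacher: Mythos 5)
Your proof is correct and follows essentially the same route as the paper: apply Lemma \ref{SheafyMackey} to decompose $\ind_{G_{\bY}}^G(\cN)_{|\bU}$ into a direct sum indexed by $H\backslash G/G_{\bY}$, use Corollary \ref{sTwistSupp} and Lemma \ref{IndSupp} to show the $i$-th summand is supported on $H(s_i\bY\cap\bU)$, use the regularity of the $G$-orbit together with Theorem \ref{Separation} to see these supports are pairwise separated so that $\cH^0_{H(\bU\cap\bY)}$ kills every summand except the one corresponding to $s_1=1$, and finally identify that summand with $\ind_{H_{\bY}}^H(\Res^{G_{\bY}}_{H_{\bY}}\cN_{|\bU})$ since ${}^{s_1}G_{\bY}\cap H = H_{\bY}$. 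The paper's proof phrases the vanishing step by appeal to "a similar argument to the proof of Lemma \ref{H0Comp4}" (which itself invokes Theorem \ref{Separation}), so the difference from your argument is only expository.
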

\begin{proof}By Lemma \ref{SheafyMackey}, $\ind_{G_{\bY}}^G(\cN)_{|\bU}$ is isomorphic to $\bigoplus_{i=1}^m \ind_{H \cap {}^{s_i}G_{\bY}}^{\hsp\hsp H} ( \cN_{i | \bU} )$ in $\cC_{\bU/H}$. Now $\cN_i \in \cC^{s_i\bY}_{\bX / H \cap {}^{s_i}G_{\bY}}$ by Corollary \ref{sTwistSupp}, so $\cN_{i|\bU} \in \cC^{s_i\bY \cap \bU}_{\bU/H \cap {}^{s_i}G_{\bY}}$. Hence $\cM'_i := \ind_{H \cap {}^{s_i}G_{\bY}}^{\hsp\hsp H} ( \cN_{i | \bU} ) \in \cC_{\bU/H}^{H(s_i \bY \cap \bU)}$ by Lemma \ref{IndSupp}. Because the $H(s_i \bY \cap \bU)$ are pairwise disjoint, a similar argument to the proof of Lemma \ref{H0Comp4} shows that $\cH^0_{H(s_i\bY \cap \bU)}(\cM'_i) = \cM'_i$ and $\cH^0_{H(s_j\bY \cap \bU)}(\cM'_i) = 0$ whenever $j \neq i$. Hence
\[ \cH^0_{H(s_i\bY \cap \bU)}(\ind_{G_{\bY}}^G(\cN)_{|\bU}) \cong \cM'_i = \ind_{H \cap {}^{s_i}G_{\bY}}^{\hsp\hsp H} ( \cN_{i | \bU} )\]
for each $i$, and the result follows by considering the case $i = 1$.
\end{proof}

\subsection{Structure of \ts{\ind_B^G(\cN)} and \ts{\cC_{\bX/G}^{G\bY}} on small affinoids}\label{H0IndBGsect} Throughout $\S \ref{H0IndBGsect}$, we assume that $(\bX,G)$ is small. We continue to assume that $B$ is a closed  subgroup of $G$ and that $\cN \in \cC_{\bX/B}$; note that this automatically implies that $G/B$ is compact because $G$ is itself compact. 

Because $(\bX,G)$ is small by assumption, we can find a $G$-stable affine formal model $\cA$ in $\cO(\bX)$ and a $G$-stable free $\cA$-Lie lattice $\cL$ in $\cT(\bX)$. Using \cite[Corollary 3.3.7]{EqDCap}, we fix a \emph{good chain} $(H_\bullet)$ for $\cL$; recall from \cite[Definitions 3.2.11 and 3.3.3]{EqDCap} that this means that $H_0 \geq H_1 \geq \cdots$ is a descending chain of open normal subgroups of $G$ with trivial intersection, such that $\rho(H_n) \leq \exp( p^\epsilon \pi^n \cL )$ for all $n\geq 0$, where $\rho : G \to \Aut_K(\cO(\bX))$ is the action of $G$ on $\cO(\bX)$. 

\begin{lem}\label{H0Comp3} Let $N$ be a coadmissible $\w\cD(\bX,B)$-module. Then the canonical map $N \longrightarrow \bigcap_{n=0}^\infty \w\cD(\bX, H_n B) \WO{\w\cD(\bX, B)} N$ is bijective.
\end{lem}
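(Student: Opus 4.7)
The plan is to analyse the tower $\w\cD(\bX, B) \subseteq \cdots \subseteq \w\cD(\bX, H_{n+1} B) \subseteq \w\cD(\bX, H_n B)$ via compatible Fr\'echet--Stein presentations and to use the good chain property to identify the intersection with $N$. First I would establish that each map $\psi_n : N \to \w\cD(\bX, H_n B) \WO{\w\cD(\bX, B)} N$ is injective and that the modules $\w\cD(\bX, H_n B) \WO{\w\cD(\bX, B)} N$ form a descending chain of submodules of $\w\cD(\bX, H_0 B) \WO{\w\cD(\bX, B)} N$ all containing the image of $N$. Both facts can be deduced from a Mackey-style decomposition analogous to Proposition \ref{Mackey}: using coset representatives for $H_n B / B$, the algebra $\w\cD(\bX, H_n B)$ splits topologically as a direct sum of right $\w\cD(\bX, B)$-modules with $\w\cD(\bX, B)$ itself as the identity-coset summand, which yields a $\w\cD(\bX, B)$-linear retraction $\w\cD(\bX, H_n B) \twoheadrightarrow \w\cD(\bX, B)$ compatible with the inclusions as $n$ varies.

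With the intersection interpreted as an intersection of submodules of $\w\cD(\bX, H_0 B) \WO{\w\cD(\bX, B)} N$, I would then pass to the Fr\'echet--Stein structure level by level. The good chain inequality $\rho(H_n) \leq \exp(p^\epsilon \pi^n \cL)$ implies that, once $n \geq n_0(k)$ for a level-dependent constant, the image of $H_n$ in the $k$-th Banach algebra $U_{0,k}$ of $\w\cD(\bX, H_0 B)$ is already contained in the image of $\w\cD(\bX, B)$. Consequently the inclusion of $k$-th Banach completions $U_k \hookrightarrow U_{n,k}$ of $\w\cD(\bX, B) \hookrightarrow \w\cD(\bX, H_n B)$ becomes an isomorphism for $n \gg k$. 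Since $N = \varprojlim_k N_k$ is coadmissible, extending scalars at level $k$ gives $U_{n,k} \otimes_{U_k} N_k \cong N_k$ for $n \gg k$, so the level-$k$ completion of $\w\cD(\bX, H_n B) \WO{\w\cD(\bX, B)} N$ for such $n$ is exactly $N_k$. The intersection at each level is then $N_k$, and the inverse limit over $k$ recovers $N$, proving bijectivity.

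The main obstacle will be the Fr\'echet--Stein analysis in the second paragraph: constructing compatible Banach presentations of each $\w\cD(\bX, H_n B)$ and verifying that the good chain inequality really does force the stabilisation $U_{n,k} \cong U_k$ once $n$ is large enough relative to $k$. This hinges on the detailed interplay between $p$-adic exponentials of group elements and Lie lattice filtrations developed in \cite[\S 3]{EqDCap}, which is precisely the setting where the ``good chain'' definition was tailored to make such a stabilisation work. The injectivity and tower properties from the first paragraph are then formal, and the passage from level-$k$ identifications to the global statement is a standard coadmissibility argument.
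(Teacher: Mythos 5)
Your first paragraph is sound and essentially reproduces the c-flatness of $\w\cD(\bX,H_nB)$ over $\w\cD(\bX,B)$ (Lemma \ref{cfflat}), which gives injectivity and the tower structure, so the intersection is a genuine decreasing inverse limit inside $\w\cD(\bX,H_0B)\WO{\w\cD(\bX,B)}N$. The gap is in the second paragraph. The Fr\'echet--Stein presentation of $\w\cD(\bX,H_nB)$ supplied by \cite[Lemma 3.3.4]{EqDCap} is $\varprojlim_{k\geq n}\hK{U(\pi^k\cL)}\rtimes_{H_k}H_nB$: the $k$-th Banach piece exists only for $k\geq n$, so your ``level-$k$ completion $U_{n,k}$ for $n\gg k$'' is not part of any natural presentation. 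For $n\leq k$ the inclusion $U_k=\hK{U(\pi^k\cL)}\rtimes_{H_k}H_kB\hookrightarrow\hK{U(\pi^k\cL)}\rtimes_{H_k}H_nB$ is an isomorphism precisely when $n=k$, and is strict for $n<k$. The good chain condition is not ``absorbing $H_n$ into $\w\cD(\bX,B)$ at level $k$ as $n$ grows''; it is used only to make the crossed products $\hK{U(\pi^k\cL)}\rtimes_{H_k}(-)$ well-defined. The stabilisation you actually need is the opposite: the finite inverse system $n\mapsto\hK{U(\pi^k\cL)}\rtimes_{H_k}H_nB$ over $\{0\leq n\leq k\}$ collapses to its $n=k$ term because $\bigcap_{n\leq k}H_nB=H_kB$.

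This points to the missing idea. The assertion ``the intersection at each level is then $N_k$'', which you dismiss as a standard coadmissibility argument, is where all the work lies: unwinding $\w\otimes$ you are looking at
\[\bigcap_{n=0}^{\infty}\w\cD(\bX,H_nB)\WO{\w\cD(\bX,B)}N \;\cong\; \varprojlim_n\varprojlim_{k\geq n}\left(\hK{U(\pi^k\cL)}\underset{H_k}{\rtimes}{}H_nB\right)\underset{\w\cD(\bX,B)}{\otimes}{}N,\]
and the key move is the Fubini interchange of $\varprojlim_n\varprojlim_{k\geq n}$ with $\varprojlim_k\varprojlim_{n\leq k}$. Only after that interchange does the inner limit collapse (as above) and $\varprojlim_k\,(\hK{U(\pi^k\cL)}\rtimes_{B_k}B)\otimes N\cong N$ become a consequence of the coadmissibility of $N$. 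The paper's proof is exactly the commutative diagram encoding this double-limit interchange; until you make that step explicit, and correct the direction of the index-stabilisation, the argument does not close.
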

\begin{proof} Let $D_n := \hK{U(\pi^n \cL)}$, write $B_n := B \cap H_n$ and consider the following commutative diagram:
\[ \xymatrix{ N \ar[rr] \ar[ddd] && \lim\limits_{\stackrel{\longleftarrow}{k}} (D_k \underset{B_k}{\rtimes}{} B) \underset{\w\cD(\bX,B)}{\otimes}{} N \ar[d] \\
&& \lim\limits_{\stackrel{\longleftarrow}{k}} (D_k \underset{H_k}{\rtimes}{} H_kB) \underset{\w\cD(\bX,B)}{\otimes}{} N \ar[d] \\
&& \lim\limits_{\stackrel{\longleftarrow}{k}} \lim\limits_{\stackrel{\longleftarrow}{n \leq k}} (D_k \underset{H_k}{\rtimes}{} H_n B) \underset{\w\cD(\bX,B)}{\otimes}{} N \ar[d] \\
\lim\limits_{\stackrel{\longleftarrow}{n}} \w\cD(\bX, H_nB) \WO{\w\cD(\bX,B)} N \ar[rr] && \lim\limits_{\stackrel{\longleftarrow}{n}} \lim\limits_{\stackrel{\longleftarrow}{k \geq n}} (D_k \underset{H_k}{\rtimes}{} H_n B) \underset{\w\cD(\bX,B)}{\otimes}{} N. }\]
The map we wish to show is an isomorphism is the long vertical arrow on the left. The top horizontal arrow is an isomorphism because $N \in \cC_{\w\cD(\bX,B)}$ by assumption, and because $\invlim D_k \rtimes_{B_k} B = \w\cD(\bX,B)$ by \cite[Lemma 3.3.4]{EqDCap}. The first vertical arrow on the right is induced by the isomorphism $B / B_k \cong H_k B / H_k$. The second vertical arrow on the right is an isomorphism because for any fixed $k$ we have $\cap_{n = 0}^k H_nB = H_k B$. The third vertical arrow on the right is the isomorphism obtained by interchanging the order of inverse limits. Finally, for each fixed $n$, $\w\cD(\bX, H_nB)$ is isomorphic to $\lim\limits_{\stackrel{\longleftarrow}{k \geq n}} D_k \rtimes_{H_k} H_n B$ again by \cite[Lemma 3.3.4]{EqDCap}. The definition of $\w\otimes$ given in \cite[\S 7.3]{DCapOne} now implies that the bottom horizontal arrow is an isomorphism.
\end{proof}

\begin{prop}\label{H0Comp1} There is a continuous $\cD(\bX) \rtimes B$-linear isomorphism
\[  \alpha : H^0_{\bY}(\cM) \congs \invlim H^0_{H_n \bY}(\cM)\]
for every $G$-equivariant locally Fr\'echet $\cD$-module $\cM$ on $\bX$.
\end{prop}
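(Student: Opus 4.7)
The strategy is to translate the assertion into a purely topological identity in the Huber space $\sP(\bX)$ and then exploit the sheaf property of $\tilde{\cM}$. Because $(H_n)$ is a descending chain, the sets nest as $\bY \subseteq H_{n+1}\bY \subseteq H_n\bY$; by monotonicity of supports this gives inclusions $H^0_\bY(\cM) \subseteq H^0_{H_{n+1}\bY}(\cM) \subseteq H^0_{H_n\bY}(\cM)$ inside $\cM(\bX)$. The transition maps in the inverse limit are therefore inclusions, so $\invlim H^0_{H_n\bY}(\cM) = \bigcap_n H^0_{H_n\bY}(\cM)$, and $\alpha$ is just the canonical inclusion. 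Its $\cD(\bX)\rtimes B$-linearity (using $B \subseteq G_{\bY}$ and normality of each $H_n$ in $G$ to see that every $H_n\bY$ is $B$-stable) and continuity for the subspace Fréchet topologies from Lemma \ref{TopOnLocCoh} are then automatic.

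The whole content of the proposition is therefore the topological equality
\[ \bigcap_n \overline{H_n\bY} = \overline{\bY} \qquad \text{in} \qquad \sP(\bX). \]
Granting this, surjectivity of $\alpha$ follows by a sheaf argument: setting $W_n := \sP(\bX)\setminus \overline{H_n\bY}$, these are ascending opens whose union is $\sP(\bX)\setminus\overline{\bY}$, so an element $s$ whose image in $\tilde{\cM}(W_n)$ vanishes for every $n$ also vanishes in $\tilde{\cM}(\bigcup_n W_n)$ by the sheaf axiom, hence $s \in H^0_{\bY}(\cM)$.

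To prove the key equality, I would apply Corollary \ref{ClosureOfGY} with $G$ replaced by $H_n$: since $(\bX,G)$ is small, $\bX$ is qcqs so $\sM(\bX)$ is Hausdorff (Remark \ref{MXT2}), and $(H_n)_{\bY}$ is automatically co-compact in the compact group $H_n$. This yields $\overline{H_n\bY} = H_n\tilde{\bY}$. For $p \in \bigcap_n H_n\tilde{\bY}$, pick $h_n \in H_n$ with $h_n^{-1} p \in \tilde{\bY}$. Because $G$ is compact, each $H_n$ is open and hence closed, and $\bigcap_n H_n = \{1\}$, a standard compactness argument shows that $(H_n)$ is a neighbourhood basis of $1$ in $G$: any cluster point of a sequence $h_n \in H_n$ must lie in every $H_m$ (since $H_m$ is closed and contains $h_n$ for $n \geq m$), and therefore equals $1$. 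Hence $h_n \to 1$, so by continuity of the $G$-action on $\sP(\bX)$ (Lemma \ref{CtsAct}) we get $h_n^{-1} p \to p$; as $\tilde{\bY} = \overline{\bY}$ is closed (Lemma \ref{YXtop}(c)) we conclude $p \in \overline{\bY}$. The main obstacle is precisely this identification $\bigcap_n \overline{H_n\bY} = \overline{\bY}$; everything else — the module-theoretic bookkeeping, the sheaf axiom argument, and the continuity statement — is formal.
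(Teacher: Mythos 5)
Your proposal is correct, and it follows the same overall skeleton as the paper's argument: the module-theoretic and topological bookkeeping ($B$-stability of $H_n\bY$, $\cD(\bX)\rtimes B$-linearity, continuity via Lemma~\ref{TopOnLocCoh}, and the reduction of bijectivity of $\alpha$ to the sheaf axiom for $\tilde{\cM}$ on the open cover $\{W_n\}$ of $\sP(\bX)\setminus\tilde{\bY}$) is the same in both, and both proofs rest on the single key topological identity $\bigcap_n H_n\tilde{\bY}=\tilde{\bY}$. The paper packages the sheaf step as a Five Lemma argument in a commutative ladder; you instead argue surjectivity of the inclusion directly, which is an equivalent and perfectly clean variant. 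You also correctly observe that $\overline{H_n\bY}=H_n\tilde{\bY}$ via Corollary~\ref{ClosureOfGY} applied to $H_n$; the paper uses this silently.

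Where you genuinely diverge is in the proof of $\bigcap_n H_n\tilde{\bY}=\tilde{\bY}$. The paper argues by contradiction entirely inside $\sP(\bX)$: pick a basic open $\tilde{\bU}\ni p$ disjoint from $\tilde{\bY}$, note that $G_{\bU}$ is open and therefore contains some $H_n$ (since $\bigcap H_n=\{1\}$ in the compact group $G$), and then $h^{-1}p\in h^{-1}\tilde{\bU}=\tilde{\bU}$ for $h\in H_n$ gives the contradiction $\tilde{\bU}\cap\tilde{\bY}\neq\emptyset$. You instead run a sequential argument: choose $h_n\in H_n$ with $h_n^{-1}p\in\tilde{\bY}$, show that $(H_n)$ is a neighbourhood basis of $1$ so $h_n\to 1$, invoke joint continuity of the action (Lemma~\ref{CtsAct}) to deduce $h_n^{-1}p\to p$, and conclude $p\in\tilde{\bY}$ from closedness. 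Both arguments hinge on the same compactness fact (that the $H_n$ are cofinal among open neighbourhoods of $1$), but the paper's formulation avoids sequences and is slightly more robust given that $\sP(\bX)$ is a non-Hausdorff, non-first-countable space; your version needs (and correctly supplies) the point-set justifications that every sequence in the compact group has a cluster point and that a unique cluster point forces convergence. The overhead this adds is real but harmless here since $G$ is a compact metrisable group. In short: same strategy, an interchangeable variant for the topological lemma, with the paper's version being the more economical of the two.
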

\begin{proof} First, we claim that $\bigcap_{n=0}^\infty H_n \tilde{\bY} = \tilde{\bY}$. Suppose for a contradiction that $p \in \left(\bigcap_{n=0}^\infty H_n \tilde{\bY} \right)\backslash \tilde{\bY}$. Because $\tilde{\bY}$ is closed in $\sP(\bX)$ we can find $\bU \in \bX_w$ such that $p \in \tilde{\bU}$ and $\tilde{\bU} \cap \tilde{\bY} = \emptyset$. Since $G$ acts continuously on $\bX$, $G_{\bU}$ is an open subgroup of $G$ and therefore must contain one of the $H_n$'s because $\bigcap_{n=1}^\infty H_n$ is trivial and $G$ is compact. Since $p \in H_n \tilde{\bY}$, we can find some $h \in H_n$ such that $h^{-1} p \in \tilde{\bY}$. But then $h^{-1} p \in \tilde{\bU} \cap \tilde{\bY} = \emptyset$, a contradiction. Next, in the commutative diagram
\[\xymatrix{ 0 \ar[r] & H^0_{\bY}(\cM) \ar[r] \ar@{.>}[d]_{\alpha} & \cM(\bX) \ar[r]\ar@{=}[d] & \tilde{\cM}(\sP(\bX) \backslash \tilde{\bY}) \ar[d] \\ 0 \ar[r] & \invlim H^0_{H_n\bY}(\cM) \ar[r] & \cM(\bX) \ar[r] & \invlim \tilde{\cM}(\sP(\bX) \backslash H_n \tilde{\bY})}\]
the rightmost vertical arrow is a bijection because $\tilde{\cM}$ is a sheaf and because the complements of the closed sets $H_n \tilde{\bY}$ form an open covering of the complement of $\tilde{\bY}$. Therefore $\alpha$ is bijective by the Five Lemma. Now each $H_n \bY$ is $B$-stable because $H_n$ is normal in $G$. The continuity and $\cD(\bX) \rtimes B$-linearity of $\alpha$ now follow from Lemma \ref{TopOnLocCoh}.\end{proof}

\begin{lem}\label{H0HY} Suppose that the $G$-orbit of $\bY$ is regular in $\bX$ and that $\cN \in \cC_{\bX/G_{\bY}}^{\bY}$. Let $H$ be an open subgroup of $G$. Then the natural isomorphism from Lemma \ref{IndBGonSmallAffs} identifies $H^0_{H\bY}(\ind_{G_{\bY}}^G(\cN))$ with $\w\cD(\bX,HG_{\bY}) \WO{\w\cD(\bX,G_{\bY})} \cN(\bX)$.
\end{lem}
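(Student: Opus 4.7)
The strategy is to combine Proposition \ref{H0HYindBG} with a second application of Mackey's Theorem (Proposition \ref{Mackey}) to the smaller compact group $HG_{\bY}$.

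First, since $(\bX, G)$ is small and $H$ is open in the compact group $G$, the pair $(\bX, H)$ is automatically small: the $G$-stable formal model and Lie lattice witnessing the smallness of $(\bX, G)$ are $H$-stable, and $H$ is compact open in $G_{\bX} = G$. The same reasoning applies to $(\bX, HG_{\bY})$. Applying Proposition \ref{H0HYindBG} with $(\bU, H) = (\bX, H)$, so that $\bU \cap \bY = \bY$ and $\ind_{G_{\bY}}^G(\cN)_{|\bU} = \ind_{G_{\bY}}^G(\cN)$, yields an isomorphism
\[ \cH^0_{H \bY}(\ind_{G_{\bY}}^G(\cN)) \cong \ind_{H_{\bY}}^{H}\bigl(\Res^{G_{\bY}}_{H_{\bY}} \cN\bigr) \qmb{in} \cC_{\bX/H}, \]
where $H_{\bY} = H \cap G_{\bY}$. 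Evaluating on $\bX$ (which sends $\cH^0_{H\bY}$ to $H^0_{H\bY}$ by Definition \ref{LocalCohoDefn}) and applying Lemma \ref{IndBGonSmallAffs} to the right-hand side --- with $G$ replaced by $H$ and $B$ by $H_{\bY}$ --- gives
\[ H^0_{H\bY}\bigl(\ind_{G_{\bY}}^G(\cN)\bigr) \cong \w\cD(\bX, H) \WO{\w\cD(\bX, H_{\bY})} \cN(\bX). \]

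Second, apply Proposition \ref{Mackey} now to the compact group $HG_{\bY}$ (in place of $G$) with the same closed subgroup $G_{\bY}$ and $\bX$-small open subgroup $H$. Since $HG_{\bY}$ is the unique $H, G_{\bY}$-double coset in itself, the Mackey decomposition on that side collapses to the single summand $M(\bX, H, HG_{\bY})$, which by Corollary \ref{MUHcoadm}(a) coincides with $M(\bX, H, 1) = \w\cD(\bX, H) \WO{\w\cD(\bX, H_{\bY})} \cN(\bX)$. Thus
\[ \w\cD(\bX, HG_{\bY}) \WO{\w\cD(\bX, G_{\bY})} \cN(\bX) \cong \w\cD(\bX, H) \WO{\w\cD(\bX, H_{\bY})} \cN(\bX), \]
and combining this with the previous isomorphism produces the desired identification.

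The main obstacle is verifying that the composite isomorphism agrees with the one induced by Lemma \ref{IndBGonSmallAffs} together with the natural map $\w\cD(\bX, HG_{\bY}) \WO{\w\cD(\bX, G_{\bY})} \cN(\bX) \to \w\cD(\bX, G) \WO{\w\cD(\bX, G_{\bY})} \cN(\bX)$ coming from the inclusion $\w\cD(\bX, HG_{\bY}) \hookrightarrow \w\cD(\bX, G)$. This reduces to tracking the explicit Mackey isomorphism $\varphi_s : a \hsp \w\otimes\hsp [s] m \mapsto a \hsp \gamma(s) \hsp \w\otimes m$ from the proof of Proposition \ref{Mackey} across the two applications, and observing that the $HG_{\bY}$-indexed summand of the outer Mackey decomposition is exactly the image of the inner (trivial) decomposition.
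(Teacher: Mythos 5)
Your proof is correct and takes essentially the same route as the paper's: both apply Proposition \ref{H0HYindBG} at $\bU = \bX$, take global sections via Lemma \ref{IndBGonSmallAffs} to reach $\w\cD(\bX,H) \WO{\w\cD(\bX, H\cap G_{\bY})} \cN(\bX)$, and then pass to $\w\cD(\bX,HG_{\bY}) \WO{\w\cD(\bX, G_{\bY})} \cN(\bX)$ via the Mackey isomorphism $\varphi_1$ --- the paper cites equation $(\ref{Mackey2})$ directly, whereas you re-invoke Proposition \ref{Mackey} for the group $HG_{\bY}$ and observe that its double-coset decomposition collapses, but these amount to the same identification. The compatibility issue you flag at the end is a genuine point worth tracking, but the paper's one-line conclusion leaves it equally implicit.
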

\begin{proof} By Proposition \ref{H0HYindBG} applied to $\bU = \bX$, we know that $\cH^0_{H\bY}(\ind_{G_{\bY}}^G(\cN)) \cong \ind_{H \cap G_{\bY}}^{\hsp \hsp H} \Res^{\hsp \hsp G_{\bY}}_{H \cap G_{\bY}} \cN$ in $\cC_{\bX/H}$. Taking global sections and applying Lemma \ref{IndBGonSmallAffs} gives a $\w\cD(\bX,H)$-linear isomorphism $H^0_{H\bY}(\ind_{G_{\bY}}^G(\cN)) \cong \w\cD(\bX,H) \WO{\w\cD(\bX,H \cap G_{\bY})} \cN(\bX)$. The result now follows from the isomorphism $(\ref{Mackey2})$. \end{proof}

We can now state and prove the first important result of $\S \ref{H0IndBGsect}$.

\begin{thm}\label{H0indBGsmall} If the $G$-orbit of $\bY$ is regular in $\bX$ and $\cN \in \cC_{\bX/G_{\bY}}^{\bY}$, then there is a natural $\w\cD(\bX,G_{\bY})$-linear isomorphism $\cN(\bX) \congs H^0_{\bY}(\ind_{G_{\bY}}^G(\cN))$.
\end{thm}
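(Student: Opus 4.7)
The plan is to chain together three results from earlier in the paper. Set $\cM := \ind_{G_{\bY}}^G(\cN)$. I would first use Proposition \ref{H0Comp1} to identify $H^0_{\bY}(\cM)$ with the inverse limit $\invlim_n H^0_{H_n \bY}(\cM)$, where $(H_n)$ is the good chain for $\cL$ fixed at the start of $\S\ref{H0IndBGsect}$. Since each $H_n$ is a normal open subgroup of $G$, Lemma \ref{H0HY} (applied with $H = H_n$) then identifies each term with $\w\cD(\bX, H_n G_{\bY}) \wtimes{\w\cD(\bX, G_{\bY})} \cN(\bX)$, via the natural isomorphism from Lemma \ref{IndBGonSmallAffs}.

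Thus, after passing to the inverse limit over $n$, one obtains
\[ H^0_{\bY}(\cM) \;\cong\; \invlim_n \w\cD(\bX, H_n G_{\bY}) \wtimes{\w\cD(\bX, G_{\bY})} \cN(\bX) . \]
Now, because $\cN \in \cC_{\bX / G_{\bY}}$, the $\w\cD(\bX, G_{\bY})$-module $\cN(\bX)$ is coadmissible, and Lemma \ref{H0Comp3} applied with $B = G_{\bY}$ and $N = \cN(\bX)$ tells us precisely that the natural map $\cN(\bX) \to \invlim_n \w\cD(\bX, H_n G_{\bY}) \wtimes{\w\cD(\bX, G_{\bY})} \cN(\bX)$ is bijective. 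Composing these identifications gives the desired $\w\cD(\bX, G_{\bY})$-linear isomorphism $\cN(\bX) \congs H^0_{\bY}(\cM)$. Naturality in $\cN$ is clear because each of the three ingredient isomorphisms is functorial, and the resulting map is the unique one sending $m \in \cN(\bX)$ to the class of $1 \hsp\w\otimes\hsp m$ inside $\Gamma(\bX, \cM) = \w\cD(\bX, G) \wtimes{\w\cD(\bX, G_{\bY})} \cN(\bX)$ (which clearly lies in $H^0_{\bY}(\cM)$ because $\cN$ is supported on $\bY$).

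The main point that needs checking is that the transition maps in the two inverse systems agree under the isomorphism from Lemma \ref{H0HY}: when $n \leq m$ we have restriction/inclusion maps $H^0_{H_m \bY}(\cM) \to H^0_{H_n \bY}(\cM)$ on one side (coming from $H_m \bY \subseteq H_n \bY$ and the fact that $H^0$ is monotone in the support) and the obvious maps between the tensor products $\w\cD(\bX, H_m G_{\bY}) \wtimes{} \cN(\bX) \to \w\cD(\bX, H_n G_{\bY}) \wtimes{} \cN(\bX)$ on the other, induced by the $\w\cD(\bX, G_{\bY})$-algebra inclusion $\w\cD(\bX, H_m G_{\bY}) \hookrightarrow \w\cD(\bX, H_n G_{\bY})$. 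This compatibility is a direct consequence of the explicit form of the isomorphism $(\ref{Mackey2})$ used in the proof of Lemma \ref{H0HY}, since that isomorphism is given by $a \hsp\w\otimes\hsp m \mapsto a \hsp\w\otimes\hsp [1] m$ and is manifestly natural in the group variable. I expect this bookkeeping on inverse systems to be the only real work; once it is verified, the theorem follows immediately by composing the three isomorphisms.
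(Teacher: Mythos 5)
Your proposal is correct and follows essentially the same route as the paper: the paper's proof consists of precisely the three identifications you invoke (Proposition \ref{H0Comp1}, Lemma \ref{H0HY}, and Lemma \ref{H0Comp3}), arranged into a commutative square so that the desired map falls out as the fourth side. Your emphasis on verifying that the two inverse systems have matching transition maps is exactly the implicit content of the paper's claim that the square commutes, so the two arguments are mathematically identical.
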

\begin{proof}Consider the following commutative square:
\[ \xymatrix{ \cN(\bX) \ar[r] \ar[d] & \bigcap\limits_{n=0}^\infty \w\cD(\bX,H_nG_{\bY}) \WO{\w\cD(\bX,G_{\bY})} \cN(\bX) \\
H^0_{\bY}(\ind_{G_{\bY}}^G(\cN)) \ar[r] & \bigcap\limits_{n=0}^\infty H^0_{H_n\bY}(\ind_{G_{\bY}}^G(\cN)). \ar[u]  
}\]
Because $\cN \in \cC_{\bX/G_{\bY}}$, we know that $\cN(\bX)$ is a coadmissible $\w\cD(\bX,G_{\bY})$-module by \cite[Theorem 4.4.3]{EqDCap}. Hence the top horizontal arrow is an isomorphism by Lemma \ref{H0Comp3}. The vertical arrow on the right is an isomorphism by Lemma \ref{H0HY}. Now $\ind_{G_{\bY}}^G(\cN) \in \cC_{\bX/G}$ by Theorem \ref{IndBGCoadm}, so the bottom horizontal arrow is an isomorphism by Proposition \ref{H0Comp1}. Therefore the vertical arrow on the left is also an isomorphism as required.\end{proof}

Next, we begin our study of the objects of $\cC_{\bX/G}^{G\bY}$.

\begin{thm}\label{Induced} Let $\cM \in \cC_{\bX/G}^{G\bY}$, suppose that the $G$-orbit of $\bY$ is regular in $\bX$ and let $H$ be an open normal subgroup of $G$. Then the natural $K[G]$-linear map
\[ \varphi : K[G] \underset{K[H G_{\bY} ]}{\otimes}{} H^0_{H \bY}(\cM) \longrightarrow \cM(\bX)\]
is an isomorphism.
\end{thm}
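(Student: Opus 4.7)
The plan is to decompose $\cM(\bX)$ as a direct sum of pieces supported on the distinct $G$-translates of $H\bY$ and then identify each summand with a $t_i$-twist of $H^0_{H\bY}(\cM)$, with $\varphi$ being the natural action map $g \otimes m \mapsto g \cdot m$. First, because $H$ is normal in $G$ and the $G$-orbit of $\bY$ is regular in $\bX$, Lemma \ref{HYreg} gives that the $G$-orbit of $H\bY$ is regular and $G_{H\bY} = HG_\bY$. Since $(\bX,G)$ is small, $G$ is compact and $HG_\bY$ is open in $G$, so the index $[G : HG_\bY]$ is finite; fix representatives $t_1 = 1, t_2, \ldots, t_r$ for the cosets of $HG_\bY$ in $G$. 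Normality of $H$ gives $t_i H\bY = H t_i \bY$ for each $i$.

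Next, I apply Theorem \ref{Separation} to the compact group $H$ acting continuously on the qcqs space $\bX$, with Zariski closed subsets $t_1 \bY, \ldots, t_r \bY$: by regularity of the $G$-orbit of $\bY$ combined with the choice of coset representatives, the $H$-orbits $H t_i \bY$ are pairwise disjoint, and all stabilisers are automatically co-compact in $H$ since $H$ is compact. This produces pairwise disjoint open neighbourhoods of the closures $\overline{t_i H \bY} \subset \sP(\bX)$, so Lemma \ref{LocalCohoSeps} yields the decomposition
\[H^0_{G(H\bY)}(\cM) \cong \bigoplus_{i=1}^r H^0_{t_i H \bY}(\cM).\]
Since $\cM \in \cC^{G\bY}_{\bX/G}$, Lemma \ref{Supports} gives $\Supp(\tilde\cM) \subseteq \overline{G\bY} \subseteq \overline{G(H\bY)}$, and hence $\cM(\bX) = H^0_{G(H\bY)}(\cM)$.

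The $G$-equivariant structure on $\cM$ then supplies, for each $i$, a $K$-linear isomorphism $H^0_{H\bY}(\cM) \to H^0_{t_i H\bY}(\cM)$ via the action of $t_i$. Combining with the decomposition above gives $\cM(\bX) = \bigoplus_{i=1}^r t_i \cdot H^0_{H\bY}(\cM)$. Writing $K[G] = \bigoplus_i t_i \cdot K[HG_\bY]$ as a right $K[HG_\bY]$-module, and noting that $H^0_{H\bY}(\cM)$ is naturally a $K[HG_\bY]$-module (since $HG_\bY = G_{H\bY}$ stabilises $H\bY$), this decomposition is exactly the assertion that $\varphi$ is an isomorphism; the well-definedness of $\varphi$ on the tensor product and its $K[G]$-linearity are formal consequences of the $G$-equivariance.

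The main obstacle is the separation step: Theorem \ref{Separation} as stated requires the $G$-orbits of the closed subsets $\bY_i$ to be pairwise disjoint, a condition that fails for $\bY_i = t_i \bY$ (all share the single $G$-orbit $G\bY$). The key observation is that the theorem may be applied with the smaller group $H$ in place of $G$, where the orbits $H t_i \bY$ genuinely are disjoint because of the regularity of the $G$-orbit of $\bY$ and the choice of the $t_i$ from distinct $HG_\bY$-cosets. Once this separation is in hand, the remainder of the argument is bookkeeping.
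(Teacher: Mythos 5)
Your proof is correct and follows essentially the same route as the paper's: Lemma \ref{HYreg} to split $G\bY$ into the pairwise disjoint pieces $g_iH\bY$ with $G_{H\bY} = HG_{\bY}$, then Theorem \ref{Separation} and Lemma \ref{LocalCohoSeps} to obtain $\cM(\bX) = H^0_{G\bY}(\cM) \cong \bigoplus_i H^0_{g_iH\bY}(\cM)$, and the equivariant structure to match each summand with a translate of $H^0_{H\bY}(\cM)$. Your observation that Theorem \ref{Separation} must be applied with $H$ acting (so that the $H$-orbits $H g_i\bY = g_iH\bY$ are genuinely disjoint, while the $G$-orbits of the $g_i\bY$ all coincide with $G\bY$) is exactly the right reading of the paper's slightly terse invocation of that result.
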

\begin{proof} By \cite[Proposition 2.3.5]{EqDCap},  $\cM(\bX)$ is a $K[G]$-module and $H^0_{H \bY}(\cM)$ is a $K[H G_{\bY}]$-submodule because $H \bY$ is a $H G_{\bY}$-stable subset of $\bX$. The map $\varphi$ is given by $\varphi(g \otimes m) = g^{\cM}(\bX)(m)$ for all $g \in G$ and $m \in H^0_{H\bY}(\cM)$. 

Choose $g_1,\ldots, g_m \in G$ such that $G = \coprod_{i=1}^m g_i H G_{\bY}$. Then $\varphi( g_i \otimes H^0_{H \bY}(\cM) ) = g_i^{\cM}(\bX) \left( H^0_{H \bY}(\cM) \right) = H^0_{g_iH \bY}(\cM)$ and we have to show that the direct sum of these subspaces of $\cM(\bX)$ equals $\cM(\bX)$.  Since $\cM$ is supported on $G \bY$, we know that $\Supp( \tilde{\cM} ) \subseteq \overline{ G \bY }$ by Lemma \ref{Supports}, so the restriction of $\tilde{\cM}$ to the complement of $\overline{G \bY}$ is zero. Because $\overline{G \bY} = G \tilde{\bY}$ by Corollary \ref{ClosureOfGY} it follows from Definition \ref{LocalCohoDefn} that $H^0_{G \bY}(\cM) = H^0_{G \tilde{\bY}}(\tilde{\cM}) = \tilde{\cM}(\sP(\bX)) = \cM(\bX).$ Now $G \bY$ is the union of the $g_iH \bY$, and these are pairwise disjoint by Lemma \ref{HYreg}. Hence, the closures $\overline{g_iH\bY}$ in $\sP(\bX)$ of the $g_i H\bY$ admit pairwise disjoint open neighbourhoods by Theorem \ref{Separation}. It follows from Lemma \ref{LocalCohoSeps} that the canonical map $H^0_{g_1 H \bY}(\cM) \oplus \cdots \oplus H^0_{g_m H \bY}(\cM) \longrightarrow H^0_{G \bY}(\cM)$ is an isomorphism.
\end{proof}

\begin{cor}\label{Induced2} With the notation of Theorem \ref{Induced}, 
\be \item $H^0_{H \bY}(\cM)$ is a coadmissible $\w\cD(\bX,H G_{\bY})$-module, and
\item the natural $\w\cD(\bX,G)$-linear map $\w\cD(\bX,G) \WO{\w\cD(\bX,H G_{\bY})} H^0_{H \bY}(\cM) \longrightarrow \cM(\bX)$ is an isomorphism.
\ee\end{cor}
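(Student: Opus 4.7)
The strategy is to bootstrap both parts from Theorem \ref{Induced} using the finiteness of $[G : H G_{\bY}]$. Since $H$ is open in the compact group $G$, the argument in the proof of Theorem \ref{Induced} gives $G = \coprod_{i=1}^m g_i H G_{\bY}$ for some $g_1,\ldots,g_m \in G$; following the opening of the proof of Proposition \ref{Mackey}, this lifts to a direct sum decomposition
\[ \w\cD(\bX, G) \;=\; \bigoplus_{i=1}^m \gamma(g_i) \hsp \w\cD(\bX, H G_{\bY}) \]
of right $\w\cD(\bX, H G_{\bY})$-modules, so $\w\cD(\bX, G)$ is free of rank $m$ over $\w\cD(\bX, H G_{\bY})$.

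For part (b), the natural map sends $\gamma(g_i) \otimes m$ to $g_i^{\cM}(\bX)(m)$. Combining the decomposition above with Theorem \ref{Induced} identifies this $\w\cD(\bX, G)$-linear map with the $K[G]$-linear isomorphism $K[G] \otimes_{K[HG_\bY]} H^0_{H\bY}(\cM) \congs \cM(\bX)$, so (b) follows at once.

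For part (a), the freeness of $\w\cD(\bX, G)$ of finite rank over $\w\cD(\bX, H G_{\bY})$ implies that restriction of scalars carries coadmissible modules to coadmissible modules, using the compatible Fréchet--Stein presentations from \cite[\S 3.3]{EqDCap}. Hence $\cM(\bX)$, which is coadmissible over $\w\cD(\bX, G)$ by \cite[Theorem 4.4.3]{EqDCap}, remains coadmissible after restriction to $\w\cD(\bX, H G_{\bY})$. By Lemma \ref{HYreg} the stabiliser of $H \bY$ in $G$ is exactly $H G_{\bY}$, so Lemma \ref{TopOnLocCoh}(a) equips $H^0_{H\bY}(\cM)$ with a canonical $\w\cD(\bX, H G_{\bY})$-module structure, and Lemma \ref{TopOnLocCoh}(b) shows that it is closed in the Fréchet topology on $\cM(\bX)$. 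Since closed submodules of coadmissible modules over a Fréchet--Stein algebra are coadmissible, (a) follows. The one point that requires care is the preservation of coadmissibility under restriction along $\w\cD(\bX, HG_\bY) \hookrightarrow \w\cD(\bX, G)$; this should be essentially immediate from the freeness observation together with the compatibility of the Fréchet--Stein filtrations, but is the only step that is not purely formal.
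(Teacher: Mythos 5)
Your proof is correct but takes a genuinely different route for part (a). Both you and the paper first observe that $\cM(\bX)$ is coadmissible over $\w\cD(\bX,H G_{\bY})$ by finite-index restriction and that $H^0_{H\bY}(\cM)$ is a closed $\w\cD(\bX,H G_{\bY})$-submodule. At that point the paper uses the decomposition of $\cM(\bX)$ from Theorem \ref{Induced} to exhibit $H^0_{H\bY}(\cM)$ as a \emph{direct summand} of $\cM(\bX)$ as a $\w\cD(\bX,H)$-module (exploiting normality of $H$ in $G$), and deduces coadmissibility from that. You instead invoke the general fact that closed submodules of coadmissible modules over a Fr\'echet--Stein algebra are coadmissible, which is exactly \cite[Lemma 3.6]{ST}; the paper itself appeals to this same lemma in the proof of Proposition \ref{MinftyCoad}, so your route is certainly available, and it is shorter. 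Two small points of exposition: (i) Lemma \ref{TopOnLocCoh}(a) only gives a $\cD(\bX)\rtimes H G_{\bY}$-module structure, and upgrading this to a $\w\cD(\bX,H G_{\bY})$-module structure on the closed subspace $H^0_{H\bY}(\cM)$ requires the density of $\cD(\bX)\rtimes H G_{\bY}$ in $\w\cD(\bX,H G_{\bY})$, which the paper makes explicit; (ii) you treat part (b) before part (a), but the completed tensor product $\w\otimes$ appearing in (b) only makes sense once $H^0_{H\bY}(\cM)$ is known to be coadmissible over $\w\cD(\bX,H G_{\bY})$, so (a) should logically come first, as it does in the paper. Finally, the restriction-of-scalars step you flag as the only non-formal point is indeed standard and the paper uses it silently: the isomorphism (\ref{GGU}) exhibits $\w\cD(\bX,G)$ as a finite free $\w\cD(\bX,H G_{\bY})$-module compatibly with the Fr\'echet--Stein presentations, from which the preservation of coadmissibility under restriction follows.
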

\begin{proof} (a) By \cite[Theorem 4.4.3]{EqDCap}, $\cM(\bX)$ is a coadmissible $\w\cD(\bX,G)$-module. Because $H G_{\bY}$ is open in $G$ and $G$ is compact, $H G_{\bY}$ has finite index in $G$ so $\cM(\bX)$ is also a coadmissible $\w\cD(\bX,H G_{\bY})$-module. $H^0_{H \bY}(\cM)$ is a $\cD(\bX) \rtimes H G_{\bY}$-submodule of $\cM(\bX)$ by Theorem \ref{Induced}.  Now $H^0_{H \bY}(\cM)$ is closed in $\cM(\bX)$ by Lemma \ref{TopOnLocCoh}, so it is a $\w\cD(\bX,H G_{\bY})$-submodule because $\cD(\bX) \rtimes H G_{\bY}$ is dense in $\w\cD(\bX, H G_{\bY})$ by construction. 

Because $H$ is normal in $G$, $g H^0_{H \bY}(\cM)$ is a $\w\cD(\bX,H)$-submodule of $\cM(\bX)$ for each $g \in G$. By Theorem \ref{Induced}, $\cM(\bX)$ is a finite direct sum of such submodules. We see that $H^0_{H \bY}(\cM)$ is in fact a \emph{direct summand} of $\cM(\bX)$ as a $\w\cD(\bX,H)$-module, and is hence coadmissible. It is therefore per force coadmissible as a $\w\cD(\bX,H G_{\bY})$-module.

(b) The proof of \cite[Proposition 3.4.10(a)]{EqDCap} shows that the natural map 
\begin{equation}\label{GGU} K[G] \underset{K[H G_{\bY}]}{\otimes}{} \w\cD(\bX, H G_{\bY}) \longrightarrow \w\cD(\bX, G)\end{equation}
is an isomorphism of $K[G]-\w\cD(\bX,H G_{\bY})$-bimodules. Now apply part (a).
\end{proof}

Our next result forms the technical heart of the proof of Theorem \ref{InductionEquivalence}.

\begin{thm}\label{LocalCohoCoadm} Suppose that $(\bX,G)$ is small. Let $\bY$ be a Zariski closed subset of $\bX$ whose $G$-orbit is regular in $\bX$, and let $\cM \in \cC^{G\bY}_{\bX/G}$. Then
\be \item $H^0_{\bY}(\cM)$ is a coadmissible $\w\cD(\bX,G_{\bY})$-module, and
 \item there is a natural $\w\cD(\bX,G)$-module isomorphism 
 \[\w\cD(\bX,G) \WO{\w\cD(\bX,G_{\bY})} H^0_{\bY}(\cM)\congs \cM(\bX)\]
 which is functorial in $\cM$.
\ee\end{thm}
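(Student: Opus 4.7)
The plan is to approximate $H^0_{\bY}(\cM)$ by the local cohomology groups $H^0_{H_n \bY}(\cM)$ attached to a good chain $(H_n)$ of open normal subgroups of $G$ and to transfer coadmissibility from those approximations to the limit, mirroring the proof of Theorem \ref{H0indBGsmall} in the reverse direction.

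Fix a good chain $(H_n)$ for $\cL$, and write $N := H^0_{\bY}(\cM)$ and $N_n := H^0_{H_n \bY}(\cM)$. Since $\bigcap_n H_n = \{1\}$ and $G_{\bY}$ is closed in $G$, a standard approximation argument gives $\bigcap_n H_n G_{\bY} = G_{\bY}$. By Proposition \ref{H0Comp1}, the natural map $N \to \invlim_n N_n$ is a continuous $\cD(\bX) \rtimes G_{\bY}$-isomorphism, identifying $N$ with the intersection $\bigcap_n N_n$ inside $\cM(\bX)$. By Corollary \ref{Induced2}(a), each $N_n$ is a coadmissible $\w\cD(\bX, H_n G_{\bY})$-module, and by Corollary \ref{Induced2}(b) we have $\cM(\bX) \cong \w\cD(\bX, G) \WO{\w\cD(\bX, H_n G_{\bY})} N_n$ for each $n$.

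The key technical step, playing the role for $\cM$ that Lemma \ref{H0HY} plays for an induced sheaf, is to establish that the natural map
\[ \Phi_n : \w\cD(\bX, H_n G_{\bY}) \WO{\w\cD(\bX, G_{\bY})} N \longrightarrow N_n \]
is an isomorphism for every $n$. The approach is a Mackey-style decomposition: use the regularity of the $G$-orbit of $\bY$ to split $H_n \bY$ into disjoint $G_{\bY}$-translates within the larger orbit $G\bY$, and then apply Theorem \ref{Separation} and Lemma \ref{LocalCohoSeps} at each finite level (via a double coset decomposition of $G / H_m G_{\bY}$ for $m \geq n$, where the indexing set is finite because $H_m$ is open in the compact group $G$) to identify $N$ with the summand of $N_n$ supported on $\tilde{\bY}$, and recover the whole $N_n$ via the extended $\w\cD(\bX, H_n G_{\bY})$-action on $N$.

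Granting this step, part (a) follows from the resulting commutative diagram: the identification $N \cong \bigcap_n N_n$ combined with the $\Phi_n$-isomorphisms exhibits $N$ as $\bigcap_n \w\cD(\bX, H_n G_{\bY}) \WO{\w\cD(\bX, G_{\bY})} N$, verifying the characterization of coadmissibility used in Lemma \ref{H0Comp3}, and therefore $N$ is coadmissible over $\w\cD(\bX, G_{\bY})$. Part (b) then follows by applying $\w\cD(\bX, G) \WO{\w\cD(\bX, H_n G_{\bY})} (-)$ to $\Phi_n$, invoking the associativity of $\w\otimes$ from \cite[Corollary 7.4]{DCapOne}, and combining with Corollary \ref{Induced2}(b) to obtain
\[ \w\cD(\bX, G) \WO{\w\cD(\bX, G_{\bY})} N \;\cong\; \w\cD(\bX, G) \WO{\w\cD(\bX, H_n G_{\bY})} N_n \;\cong\; \cM(\bX). \]
The main obstacle will be the Mackey-style identification of $\Phi_n$: regularity of the $G$-orbit of $\bY$ is essential for separating local cohomology contributions coming from different $G_{\bY}$-translates inside $H_n \bY$, but since $G_{\bY}$ need not be open in $H_n G_{\bY}$, the decomposition cannot be carried out directly and must be extracted at each finite level $H_m G_{\bY}$ (where Theorem \ref{Separation} applies to finitely many sets) and then consistently passed to the inverse limit in $n$.
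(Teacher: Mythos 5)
Your plan has a genuine gap at its crux. The map $\Phi_n : \w\cD(\bX, H_n G_{\bY}) \WO{\w\cD(\bX, G_{\bY})} H^0_{\bY}(\cM) \to H^0_{H_n\bY}(\cM)$ is meant to be the engine of the argument, but the completed tensor $\WO{}$ over a Fr\'echet--Stein algebra is only a well-behaved construction on coadmissible modules, and the coadmissibility of $H^0_{\bY}(\cM)$ over $\w\cD(\bX,G_{\bY})$ is precisely what part (a) asks us to prove. So the source of $\Phi_n$ is not available until the theorem is already known, and the analogy with Lemma \ref{H0HY} breaks: that lemma is proved for $\cM = \ind_{G_{\bY}}^G(\cN)$ with $\cN$ coadmissible given in advance, so the coadmissibility of $\cN(\bX)$ is an input, not an output. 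Worse, even if $\Phi_n$ could be made sense of and shown bijective, the proposed conclusion misreads Lemma \ref{H0Comp3}: that lemma says coadmissibility \emph{implies} the intersection formula $N = \bigcap_n \w\cD(\bX, H_n B) \WO{\w\cD(\bX,B)} N$, not conversely. Recovering coadmissibility from an intersection formula would require an extra argument you do not supply, and indeed the paper never argues that way.

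The paper's actual route avoids both problems by never attempting a Mackey-type isomorphism at the level of $H^0_{\bY}(\cM)$. It fixes a Fr\'echet--Stein presentation $\w\cD(\bX, G_{\bY}) \cong \invlim R_n$ with $R_n = \hK{U(\pi^n\cL)} \rtimes_{H_n} (H_nG_{\bY})$ (Lemma \ref{Source2}), sets $N_n := R_n \otimes_{A(n)} M(n)$ where $M(n) = H^0_{H_n\bY}(\cM)$ and $A(n) = \w\cD(\bX, H_nG_{\bY})$, and proves directly that $(N_n)$ is a coherent family of finitely generated $R_n$-modules (Lemma \ref{Source3}). The crucial coherence check is Lemma \ref{Source1}, showing $A(n) \otimes_{A(n+1)} M(n+1) \to M(n)$ is an isomorphism; this uses Corollary \ref{Induced2} (where $H_nG_{\bY}$ is \emph{open} and a finite-index Mackey/direct-sum argument \emph{does} apply) together with faithful flatness of $A$ over $A(n)$ — not a direct decomposition of $H_n\bY$ into $G_{\bY}$-translates, which as you note cannot work since $G_{\bY}$ need not be open. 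This gives coadmissibility of $N := \invlim N_n$ directly at the Banach level. A limit-swap (Lemma \ref{H0Comp2}) identifies $\invlim M(n)$ with $N$ as topological modules, Proposition \ref{H0Comp1} identifies $\invlim M(n)$ with $H^0_{\bY}(\cM)$, and then the Open Mapping Theorem together with \cite[Lemma 4.4.4]{EqDCap} transfers the coadmissible structure onto $H^0_{\bY}(\cM)$. Part (b) then falls out by applying $A\,\WO{R}\,-$ and using Theorem \ref{Induced}. In short, you need to build the coadmissible structure from scratch out of the finitely generated Banach modules $M(n)$ over the \emph{open}-level algebras $A(n)$, rather than try to induce $H^0_{\bY}(\cM)$ up to them.
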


We now start working towards the proof of Theorem \ref{LocalCohoCoadm}. We assume that the $G$-orbit of $\bY$ is regular in $\bX$ and fix an object $\cM \in \cC^{G\bY}_{\bX/G}$ until the end of $\S \ref{H0IndBGsect}$. Recall that above we have chosen a $G$-stable affine formal model $\cA$ in $\cO(\bX)$, a $G$-stable free $\cA$-Lie lattice $\cL$ in $\cT(\bX)$ and a good chain $(H_\bullet)$ for $\cL$.

\begin{notn} Let $n \geq 0$. We set
\be \item $B := G_{\bY}$ and  $G_n := BH_n$,
\item $M := \cM(\bX)$ and $M(n) := H^0_{H_n \bY}(\cM)$,
\item $A := \w\cD(\bX,G)$ and $A(n) := \w\cD(\bX,G_n)$.
\item  $R_n := \hK{U(\pi^n \cL)} \rtimes_{H_n} G_n$ and $R := \invlim R_n$.
\ee\end{notn}
We now need some preliminary Lemmas.

\begin{lem} \label{Source1}The natural $A(n)$-linear map $\theta_n : A(n) \underset{A(n+1)}{\otimes}{} M(n+1) \tocong M(n)$ is an isomorphism for each $n \geq 0$.
\end{lem}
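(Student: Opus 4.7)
The plan is to show $\theta_n$ is an isomorphism by exhibiting matching direct-sum decompositions of both sides indexed by coset representatives of $G_{n+1}$ in $G_n$, so that $\theta_n$ becomes the obvious identification summand by summand.

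First I would decompose $H_n\bY$ and $M(n)$. Because $H_{n+1}$ is open normal in $G$ and the $G$-orbit of $\bY$ is regular in $\bX$ (which is a hypothesis of Theorem \ref{LocalCohoCoadm}), Lemma \ref{HYreg} gives that the $G$-orbit of $H_{n+1}\bY$ is regular with stabiliser $H_{n+1}B = G_{n+1}$. Picking representatives $h_1,\ldots,h_k \in H_n$ for $H_n \cap G_{n+1}$ in $H_n$ — which, via $G_n/G_{n+1} = H_nB/H_{n+1}B \cong H_n/(H_n\cap G_{n+1})$, simultaneously represent $G_{n+1}$ in $G_n$ — the regularity of the $G$-orbit of $H_{n+1}\bY$ yields the disjoint decomposition $H_n\bY = \bigsqcup_{j=1}^k h_j H_{n+1}\bY$. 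Since $(\bX,G)$ is small, $G$ is compact and $\bX$ is quasi-compact, so Theorem \ref{Separation} (applied to the topological group $H_{n+1}$ and the Zariski closed subsets $h_j\bY$, whose stabilisers in $H_{n+1}$ are automatically co-compact) produces pairwise disjoint open neighbourhoods in $\sP(\bX)$ of the closures of the sets $h_jH_{n+1}\bY$. Lemma \ref{LocalCohoSeps} together with the $G$-equivariance of $\cM$ then gives
\[ M(n) = H^0_{H_n\bY}(\cM) = \bigoplus_{j=1}^k H^0_{h_jH_{n+1}\bY}(\cM) = \bigoplus_{j=1}^k h_j^\cM(\bX)(M(n+1)). \]

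Next I would decompose $A(n) \otimes_{A(n+1)} M(n+1)$. The isomorphism (\ref{GGU}), whose proof only uses that the smaller group is open in the larger compact group, applied to the pair $G_{n+1}\subseteq G_n$ yields $A(n) \cong K[G_n] \otimes_{K[G_{n+1}]} A(n+1) = \bigoplus_j h_j\, A(n+1)$ as right $A(n+1)$-modules. Hence $A(n) \otimes_{A(n+1)} M(n+1) = \bigoplus_j h_j \otimes M(n+1)$, and by its definition $\theta_n$ sends $h_j\otimes m$ to $h_j^\cM(\bX)(m)$. Matching this with the decomposition of $M(n)$ found above shows that $\theta_n$ restricts on each summand to the $K$-linear isomorphism $h_j^\cM(\bX): M(n+1) \congs h_j^\cM(\bX)(M(n+1))$, so $\theta_n$ is an isomorphism.

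The main technical obstacle is the decomposition of $M(n)$: one must carefully confirm the set-theoretic disjoint decomposition $H_n\bY = \bigsqcup h_jH_{n+1}\bY$ using regularity of the $G$-orbit of $H_{n+1}\bY$, and then combine Theorem \ref{Separation} with Lemma \ref{LocalCohoSeps} to transport this into a direct-sum decomposition at the level of local cohomology. Once the decomposition of $M(n)$ is in hand, the identification with the tensor product is essentially formal via the freeness of $A(n)$ over $A(n+1)$.
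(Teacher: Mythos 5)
Your proof is correct, but it takes a genuinely different route from the paper. The paper exploits the fact that the global isomorphisms $A \otimes_{A(m)} M(m) \cong M$ have already been established for $m=n$ and $m=n+1$ in Corollary \ref{Induced2}; feeding both into a commutative square shows $1 \otimes \theta_n$ is an isomorphism, and then faithful flatness of $A$ over $A(n)$ (from $(\ref{GGU})$) lets one cancel the $A \otimes_{A(n)} -$ and conclude $\theta_n$ is an isomorphism. You instead re-derive, for the pair $(G_n, G_{n+1})$, the direct-sum decomposition that underlies Theorem \ref{Induced}/Corollary \ref{Induced2} for the pair $(G, G_n)$: the regularity of the $G$-orbit of $H_{n+1}\bY$ (via Lemma \ref{HYreg}), plus Theorem \ref{Separation} and Lemma \ref{LocalCohoSeps}, give $M(n) = \bigoplus_j H^0_{h_j H_{n+1}\bY}(\cM) = \bigoplus_j h_j^\cM(\bX)(M(n+1))$, which you then match against the free decomposition $A(n)\otimes_{A(n+1)} M(n+1) = \bigoplus_j h_j \otimes M(n+1)$. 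Both arguments are valid. The paper's is shorter because it reuses results already in hand; yours is more self-contained and makes the summand-by-summand matching completely explicit, at the cost of repeating (in a slightly different setting) an argument the paper has already carried out once. Your accounting of the coset bijection $H_n/(H_n \cap G_{n+1}) \cong G_n/G_{n+1}$, the disjointness of the sets $h_j H_{n+1}\bY$, and the applicability of Theorem \ref{Separation} with the compact group $H_{n+1}$ all check out.
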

\begin{proof} Because $H_{n+1}\bY \subset H_n \bY$, we have $M(n+1) \subset M(n) \subset M$. Consider the following commutative diagram:
\[\xymatrix{ A \underset{A(n)}{\otimes}{} \left(A(n) \underset{A(n+1)}{\otimes}{} M(n+1)\right) \ar[rr]^(.6){1 \otimes \theta_n} \ar[d]_{\cong} && A \underset{A(n)}{\otimes}{} M(n) \ar[d] \\ A \underset{A(n+1)}{\otimes} M(n+1) \ar[rr] && M. }\]
The bottom horizontal arrow and the vertical arrow on the right are isomorphisms by Corollary \ref{Induced2}. Hence $1 \otimes \theta_n$ is an isomorphism. Now $(\ref{GGU})$ implies that $A$ is a free, and hence faithfully flat, right $A(n)$-module. Therefore $\theta_n$ is also an isomorphism.
\end{proof}
\begin{lem}\label{Source2} There is an isomorphism $\w\cD(\bX,B) \tocong R$.
\end{lem}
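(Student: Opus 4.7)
The plan is to realise both $\w\cD(\bX, B)$ and $R$ as inverse limits of the same system of skew group rings, and to identify them term-by-term.

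By \cite[Lemma 3.3.4]{EqDCap} applied to the closed subgroup $B$ of $G$, we have a natural isomorphism
\[\w\cD(\bX, B) \;\cong\; \lim\limits_{\stackrel{\longleftarrow}{n}}\; D_n \rtimes_{B_n} B,\]
where $D_n := \hK{U(\pi^n \cL)}$ and $B_n := B \cap H_n$. On the other hand, by construction $R = \lim\limits_{\stackrel{\longleftarrow}{n}} D_n \rtimes_{H_n} G_n$ with $G_n = BH_n$. So it suffices to build matching term-wise isomorphisms between these two inverse systems.

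The heart of the proof is therefore to produce, for each $n \geq 0$, a canonical ring isomorphism $\psi_n : D_n \rtimes_{B_n} B \tocong D_n \rtimes_{H_n} G_n$. This is precisely the isomorphism already invoked implicitly in the proof of Lemma \ref{H0Comp3}: since $G_n = BH_n$ and $B \cap H_n = B_n$, the inclusion $B \hookrightarrow G_n$ induces a bijection of cosets $B/B_n \tocong G_n/H_n$. Consequently, if $T \subset B$ is any set of representatives for $B_n$ in $B$, then $T$ also represents $H_n$-cosets in $G_n$. Both skew group rings then admit a presentation as free left $D_n$-modules on basis $T$, with multiplication determined by the common conjugation action of $B \subseteq G_n$ on $D_n$ (well-defined because $\cL$, and hence each $\pi^n \cL$, is $G$-stable). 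The ring homomorphism induced by $B \hookrightarrow G_n$ is therefore bijective.

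It remains to verify that the $\psi_n$ are compatible with the transition maps on either side; but this is immediate, because both sets of transition maps are induced by the natural inclusions $D_{n+1} \hookrightarrow D_n$ together with $B \hookrightarrow B$ (respectively $G_{n+1} \hookrightarrow G_n$), and $\psi_n$ is induced by the identity on $D_n$ and by $B \hookrightarrow G_n$. Passing to the inverse limit gives the desired isomorphism $\w\cD(\bX, B) \tocong R$. The main, and quite mild, obstacle is simply checking that $\psi_n$ is a ring (rather than merely a left $D_n$-module) isomorphism: this reduces to the tautological observation that the conjugation action of $b \in B$ on $D_n$ is the same whether $b$ is regarded as an element of $B$ or of the overgroup $G_n$.
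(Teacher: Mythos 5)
Your proof is correct and follows the same route as the paper: both apply \cite[Lemma 3.3.4]{EqDCap} to $B$ with the good chain $(B \cap H_\bullet)$ to present $\w\cD(\bX,B)$ as $\invlim D_n \rtimes_{B_n} B$, and both use the coset bijection $B/B_n \cong G_n/H_n$ to identify each $D_n \rtimes_{B_n} B$ with $R_n = D_n \rtimes_{H_n} G_n$ compatibly in $n$. The only thing the paper's one-line argument leaves implicit that you spell out is the verification that the term-wise map is a ring isomorphism; conversely, you could add a word confirming that $(B \cap H_\bullet)$ is indeed a good chain for $\cL$ in $B$, which is the hypothesis needed to apply \cite[Lemma 3.3.4]{EqDCap}.
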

\begin{proof} Because $B / B \cap H_n  \cong G_n / H_n$, there are isomorphisms  of $K$-Banach algebras $\hK{U(\pi^n \cL)} \rtimes_{B\cap H_n} B \cong R_n$, compatible with variation in $n$. Since $(B \cap H_\bullet)$ is a good chain for $\cL$ in $B$, it follows from \cite[Lemma 3.3.4]{EqDCap} that there is an isomorphism $\w\cD(\bX,B) \tocong \invlim \hK{U(\pi^n \cL)} \rtimes_{B\cap H_n} B$.  Hence $\w\cD(\bX,B) \cong \invlim R_n$ as claimed.
\end{proof}

\begin{defn} $N_n := R_n \underset{A(n)}{\otimes}{} M(n)$ for each $n \geq 0$, and $N := \invlim N_n$.\end{defn}

\begin{lem}\label{Source3} $N$ is a coadmissible $R$-module. \end{lem}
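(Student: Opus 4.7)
The plan is to verify the Schneider-Teitelbaum criteria for coadmissibility: (1) each $N_n$ is finitely generated over $R_n$, (2) the natural maps $R_n \otimes_{R_{n+1}} N_{n+1} \to N_n$ are isomorphisms, and (3) $N \cong \varprojlim N_n$. Condition (3) is immediate from the definition of $N$, so the real content lies in (1) and (2).

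First I would observe that $R \cong \w\cD(\bX,B)$ by Lemma \ref{Source2}, and that $(\bX, B)$ is itself small (since $G$ is compact, $B = G_\bY$ is closed and hence compact, and $B$ inherits stability of $\cA$ and $\cL$ from $G$). By \cite[Lemma 3.3.4]{EqDCap} the Fr\'echet-Stein presentation of $R$ via the chain $(B \cap H_\bullet)$ has $n$-th term $\hK{U(\pi^n \cL)} \rtimes_{B\cap H_n} B$, which is canonically identified with $R_n$ via the isomorphism $B/(B \cap H_n) \cong G_n/H_n$ (this was already used inside the proof of Lemma \ref{Source2}). So the chain $(R_n)$ is the canonical Fr\'echet-Stein presentation of $R$.

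For condition (1), I would apply Corollary \ref{Induced2}(a) with $H = H_n$ (normal and open in $G$), giving that $M(n) = H^0_{H_n \bY}(\cM)$ is a coadmissible $\w\cD(\bX, H_n G_\bY) = \w\cD(\bX, G_n) = A(n)$-module. The algebra $A(n)$ is itself Fr\'echet-Stein with presentation $(\hK{U(\pi^m\cL)} \rtimes_{H_m} G_n)_{m \geq n}$, whose $m = n$ term is precisely $R_n$. Thus $N_n = R_n \otimes_{A(n)} M(n)$ is the first term in the coadmissible system of $M(n)$ and is therefore finitely generated over $R_n$.

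For condition (2), the computation is purely formal:
\[ R_n \otimes_{R_{n+1}} N_{n+1} = R_n \otimes_{R_{n+1}} R_{n+1} \otimes_{A(n+1)} M(n+1) = R_n \otimes_{A(n+1)} M(n+1), \]
and since the map $A(n+1) \to R_n$ factors through $A(n)$, this equals $R_n \otimes_{A(n)} \left(A(n) \otimes_{A(n+1)} M(n+1)\right) = R_n \otimes_{A(n)} M(n) = N_n$ by Lemma \ref{Source1}. The main obstacle is psychological rather than technical: one must make sure that the algebraic tensor products appearing here really are well-behaved, which is guaranteed by the coadmissibility of $M(n)$ over $A(n)$ and the fact that $R_n$ sits as the first term of the Fr\'echet-Stein system of $A(n)$ (so tensoring down from $M(n)$ to its $R_n$-finite quotient does not require any completion).
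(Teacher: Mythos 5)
Your proposal is correct and takes essentially the same route as the paper. The paper packages your chain of identifications for condition (2) into a commutative diagram, taking care to verify explicitly that the composite of those natural isomorphisms really is the connecting map $\psi_n : R_n \otimes_{R_{n+1}} N_{n+1} \to N_n$ induced by $M(n+1) \subset M(n)$; you state this implicitly by calling the maps ``natural'' but do not check it, which is the only point where the paper is slightly more careful.
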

\begin{proof}
Note that $N_n$ is a finitely generated $R_n$-module because $M(n) = H^0_{H_n\bY}(\cM)$ is a coadmissible $A(n)$-module by Corollary \ref{Induced2}. The inclusion $M(n+1) \subset M(n)$ together with functoriality induces a connecting $R_{n+1}$-linear map $N_{n+1} \to N_n$. This gives an $R_n$-linear map $\psi_n : R_n \otimes_{R_{n+1} } N_{n+1} \to N_n$ which features in the following commutative diagram:
\[ \xymatrix{ R_n \underset{R_{n+1}}{\otimes}{} N_{n+1} \ar@{=}[rr]  \ar[d]_{\psi_n} & &  R_n \underset{R_{n+1}}{\otimes}{} \left(R_{n+1} \underset{A(n+1)}{\otimes}{} M(n+1)\right)  \ar[d]^{\cong} 
\\ N_n \ar@{=}[d] & & R_n \underset{A(n+1)}{\otimes}{} M(n+1) \ar[d]^{\cong} \\
R_n \underset{A(n)}{\otimes}{} M(n)  & & R_n \underset{A(n)}{\otimes}{} \left( A(n) \underset{A(n+1)}{\otimes}{} M(n+1) \right) \ar[ll]^(0.63){1 \otimes \theta_n}}\]
Here $\theta_n : A(n) \underset{A(n+1)}{\otimes}{} M(n+1) \tocong M(n)$ is the $A(n)$-linear isomorphism from Lemma \ref{Source1}. Hence $\psi_n$ is an isomorphism and $N$ is a coadmissible $R$-module.
\end{proof}

\begin{lem}\label{H0Comp2} The natural map $\beta : \invlim M(n) \to \invlim N_n$ is a continuous $\w\cD(\bX,B)$-linear isomorphism. \end{lem}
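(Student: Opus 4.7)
The plan is to exhibit both $\varprojlim_n M(n)$ and $\varprojlim_n N_n$ as the limit of a single two-dimensional inverse system and then deduce that $\beta$ is an isomorphism by a cofinality argument.

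First I would define, for each pair $(n,k)$ with $k \geq n$, the module $F(n,k) := R_k(n) \otimes_{A(n)} M(n)$, where $R_k(n) := \hK{U(\pi^k \cL)} \rtimes_{H_k} G_n$ is the Banach algebra at level $k$ in the Fréchet-Stein presentation $A(n) \cong \varprojlim_{k \geq n} R_k(n)$, so that $R_n = R_n(n)$ and $N_n = F(n,n)$. There are natural transitions $F(n, k+1) \to F(n, k)$ coming from the Fréchet-Stein maps $R_{k+1}(n) \to R_k(n)$, together with transitions $F(n, k) \to F(n-1, k)$ coming from the inclusions $R_k(n) \hookrightarrow R_k(n-1)$, $A(n) \hookrightarrow A(n-1)$ and $M(n) \hookrightarrow M(n-1)$ (the last holding because $H_n \leq H_{n-1}$ forces $H_n \bY \subseteq H_{n-1}\bY$, and $H^0_S(-)$ is monotone in $S$ by Corollary \ref{H0S}). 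A direct diagram chase shows that these two species of transitions commute, so $F$ is a genuine inverse system on the directed set $I := \{(n,k) : k \geq n \geq 0\}$ equipped with the componentwise order.

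Next I would compute $\varprojlim_I F$ in two ways. The diagonal $J := \{(n,n) : n \geq 0\}$ is cofinal in $I$: for any $(n_0, k_0) \in I$, the element $(k_0, k_0) \in J$ dominates it. Hence $\varprojlim_I F \cong \varprojlim_n F(n,n) = \varprojlim_n N_n$. Iterating the limit in the opposite order, for each fixed $n$ the inner limit $\varprojlim_{k \geq n} F(n, k)$ equals $M(n)$ by coadmissibility of $M(n)$ as an $A(n)$-module (Corollary \ref{Induced2}(a)), and the induced outer transitions $M(n) \to M(n-1)$ are simply the inclusions; hence $\varprojlim_I F \cong \varprojlim_n M(n) = \bigcap_n M(n)$. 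Composing these two identifications yields an isomorphism $\varprojlim_n M(n) \tocong \varprojlim_n N_n$ which, tracing the construction, sends $m \in \bigcap_n M(n)$ to $(1 \otimes m)_n$ --- exactly the map $\beta$.

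Continuity of $\beta$ is automatic since each component $M(n) \to N_n$, $m \mapsto 1 \otimes m$, is continuous and an inverse limit of continuous maps is continuous in the canonical Fréchet topologies. For $\w\cD(\bX, B)$-linearity I would use the identification $\w\cD(\bX, B) \cong R = \varprojlim_n R_n$ from Lemma \ref{Source2} and note that, because each $H_n$ is normal in $G$, the subgroup $B = G_\bY$ stabilises $H_n \bY$ and hence each $M(n)$; so the $R$-action on $\varprojlim_n M(n)$ inherited from the $A$-action on $M$ agrees with the natural $R$-action on $N$, and $\beta$ is $R$-linear by construction. I expect the main technical obstacle to be the bookkeeping required to verify commutativity of the two species of transition in $F$ and to identify the induced outer transition on $\varprojlim_k F(n, k)$ correctly as the inclusion $M(n) \hookrightarrow M(n-1)$; once this is done the cofinality step produces the isomorphism mechanically.
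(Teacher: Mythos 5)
Your proposal is correct and takes essentially the same approach as the paper: both reduce the statement to computing a double inverse limit indexed by $\{(n,k):k\geq n\geq 0\}$, with the paper performing an explicit swap of the two iterated limits and identifying $n=m$ as the terminal element in the inner limit, while you invoke cofinality of the diagonal --- two phrasings of the same mechanism. Your treatment of continuity and $\w\cD(\bX,B)$-linearity likewise matches the paper's.
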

\begin{proof} For each $m \geq n\geq 0$ we define $A(n)_m := \hK{U(\pi^m \cL)} \rtimes_{H_m} G_n$ so that $\lim\limits_{\stackrel{\longleftarrow}{m \geq n}} A(n)_m = A(n)$ for each $n \geq 0$. Because $M(n)$ is a coadmissible $A(n)$-module by Corollary \ref{Induced2}, we have an isomorphism $\sigma_n : M(n) \cong \lim\limits_{\stackrel{\longleftarrow}{m \geq n}} A(n)_m \underset{A(n)}{\otimes}{} M(n)$ of $A(n)$-modules.  The canonical maps $R$-linear maps $M(n) \to N_n = R_n \underset{A(n)}{\otimes}{} M(n)$ that send $v \in N_n$ to $1 \otimes v \in R_n \underset{A(n)}{\otimes}{} M(n)$ induce an $R$-linear map $\beta : \invlim M(n) \to \invlim N_n$, appearing in the following commutative diagram:
\[ \xymatrix{\lim\limits_{\stackrel{\longleftarrow}{n}} M(n) \ar[d]^{\cong}_(0.6){\invlim \sigma_n} \ar[r]^{\beta} & \lim\limits_{\stackrel{\longleftarrow}{n}} N_n \ar@{=}[r] & \lim\limits_{\stackrel{\longleftarrow}{m}} R_m \underset{A(m)}{\otimes}{} M_m \ar[d] \\ \lim\limits_{\stackrel{\longleftarrow}{n}} \lim\limits_{\stackrel{\longleftarrow}{m \geq n}} A(n)_m \underset{A(n)}{\otimes}{} M(n) \ar[rr] && \lim\limits_{\stackrel{\longleftarrow}{m}} \lim\limits_{\stackrel{\longleftarrow}{n \leq m}} A(n)_m \underset{A(n)}{\otimes}{} M(n). }\]
The bottom horizontal arrow is the isomorphism coming from swapping the order of limits, and the vertical arrow on the right is an isomorphism because for $m$ fixed, the partially ordered set $\{0 \leq n \leq m\}$ has $n = m$ as its largest member and $A(m)_m = R_m$. We conclude that $\beta$ is an $R$-linear bijection. It is continuous because each of the maps $M(n) \to N_n$ is continuous.
\end{proof}

\begin{proof}[Proof of Theorem \ref{LocalCohoCoadm}]  Note that $N = \invlim N_n$ is a coadmissible $R$-module by Lemma \ref{Source3}, so $N$ is a coadmissible $\w\cD(\bX,B)$-module by Lemma \ref{Source2}. Consider the following commutative triangle:
\[ \xymatrix{ H^0_{\bY}(\cM) \ar[rr]^{\alpha} \ar[dr]_{\gamma} & & \invlim M(n) \ar[dl]^{\beta} & \\ & \invlim N_n. & }\]
The maps $\alpha$ and $\beta$ are continuous $\cD(\bX)\rtimes B$-linear bijections by Proposition \ref{H0Comp1} and Lemma \ref{H0Comp2}, respectively. Thus, $\gamma$ is a continuous bijection between two Fr\'echet spaces, so $\gamma^{-1}$ is also a continuous bijection by the Open Mapping Theorem \cite[Proposition 8.6]{SchNFA}.  It now follows from \cite[Lemma 4.4.4]{EqDCap} that $H^0_{\bY}(\cM)$ is a coadmissible $\w\cD(\bX,B)$-module and that $\gamma$ is a $\w\cD(\bX,B)$-linear isomorphism.

The definitions given in \cite[\S 7.3]{DCapOne} imply that there is an isomorphism
\[ A \hsp \WO{R} \hsp N \cong \invlim A \underset{R_n}{\otimes}{} N_n \cong \invlim A \underset{A(n)}{\otimes}{} M(n).\]
But $A \otimes_{A(n)} M(n)$ is isomorphic to $M = \cM(\bX)$ by Theorem \ref{Induced} for all $n\geq 0$.   Thus, the natural $A$-linear map $A \WO{R} N \longrightarrow M$ is an isomorphism.
\end{proof}

\subsection{Proof of the induction equivalence}\label{PfIndEq} We can now start working on the proof of Theorem \ref{InductionEquivalence}. We will assume throughout $\S\ref{PfIndEq}$ that:
\begin{itemize}
\item $\bX$ is a smooth rigid analytic space,
\item $G$ is a $p$-adic Lie group acting continuously on $\bX$,
\item $\bY$ is a Zariski closed subset of $\bX$ whose stabiliser $G_{\bY}$ is co-compact in $G$.
\end{itemize}
Our first aim will be to show that $\cH^0_{\bY}(\cM) \in \cC_{\bX/G_{\bY}}$ whenever $\cM \in \cC^{G\bY}_{\bX/G}$ and the $G$-orbit of $\bY$ is regular in $\bX$. Recall from Lemma \ref{CompactGmodB} that the set of double cosets $J \backslash G / G_{\bY}$ is finite whenever $J$ is an open subgroup of $G$.

\begin{lem}\label{MsupponGY} Let $\cM \in \cC^{G\bY}_{\bX/G}$ and suppose that the $G$-orbit of $\bY$ is regular in $\bX$. 
Let $(\bU,J)$ be small and let $s_1,\ldots,s_n$ be representatives for the $J, G_{\bY}$-double cosets in $G$. Then
\be \item $G \bY \cap \bU = J (s_1 \bY \cap \bU) \cup \cdots \cup J(s_n \bY \cap \bU)$,
\item there is an isomorphism of locally Fr\'echet $J$-equivariant $\cD$-modules on $\bU$
\[\cM_{|\bU} \cong \bigoplus_{i=1}^n \cH^0_{J (s_i \bY \cap \bU)}(\cM_{|\bU}),\]
\item $\cH^0_{J (s \bY \cap \bU)}(\cM_{|\bU}) \in \cC_{\bU/J}^{J(s \bY \cap \bU)}$ for all $s \in G$.
\ee \end{lem}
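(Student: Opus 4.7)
For part (a), I would start from the double coset decomposition $G = \bigsqcup_{i=1}^n J s_i G_{\bY}$, so that $G \bY = \bigcup_i J s_i \bY$. Since $J$ stabilises $\bU$, intersection with $\bU$ commutes with the $J$-action, giving $J s_i \bY \cap \bU = J (s_i \bY \cap \bU)$ by the usual two-sided inclusion, which yields (a).

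For part (b), the key step is to verify that the sets $J(s_i \bY \cap \bU)$ are pairwise disjoint. If $j(s_i y) = j'(s_k y')$ with $y, y' \in \bY$, then $(j')^{-1} j s_i \bY \cap s_k \bY \neq \emptyset$, so by the regularity hypothesis $(j')^{-1} j s_i G_{\bY} = s_k G_{\bY}$, which by our choice of the $s_i$ forces $i = k$. Each $s_i \bY \cap \bU$ is Zariski closed in the qcqs rigid analytic space $\bU$, and its stabiliser in the compact group $J$ is trivially co-compact, so Theorem \ref{Separation} produces pairwise disjoint open neighbourhoods in $\sP(\bU)$ of the closures $\overline{J(s_i \bY \cap \bU)}$. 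Since $\cM$ is supported on $G\bY$, its restriction $\cM_{|\bU}$ is supported on $G\bY \cap \bU = \bigcup_i J(s_i \bY \cap \bU)$ by (a), hence $\Supp(\widetilde{\cM_{|\bU}})$ is contained in $\bigcup_i \overline{J(s_i \bY \cap \bU)}$ by Lemma \ref{Supports}. Applying Lemma \ref{LocalCohoSeps} to $\widetilde{\cM_{|\bU}}$ on each open subset of $\sP(\bU)$ gives the desired presheaf (hence sheaf) decomposition $\cM_{|\bU} \cong \bigoplus_i \cH^0_{J(s_i \bY \cap \bU)}(\cM_{|\bU})$. The $J$-equivariant structure and the locally Fr\'echet topology on each summand come from Lemma \ref{TopOnLocCoh} applied to the $J$-stable subset $J(s_i \bY \cap \bU)$ of $\bU$.

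For part (c), I would first observe that every $s \in G$ lies in $Js_i G_{\bY}$ for a unique $i$, so $s\bY = j s_i \bY$ for some $j \in J$ and therefore $J(s \bY \cap \bU) = J(s_i \bY \cap \bU)$ (using $j^{-1}\bU = \bU$); thus it suffices to treat the summands indexed by the $s_i$. By \cite[Proposition 3.6.10]{EqDCap}, $\cM_{|\bU} \in \cC_{\bU/J}$, and part (b) exhibits $\cH^0_{J(s_i \bY \cap \bU)}(\cM_{|\bU})$ as a direct summand of $\cM_{|\bU}$ in the category of $J$-equivariant locally Fr\'echet $\cD$-modules on $\bU$. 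Since coadmissibility passes to direct summands (via local sections on a small $\bU$-covering it reduces to a splitting of coadmissible modules over the associated Fr\'echet--Stein algebras), this summand is coadmissible. Its support is contained in $J(s_i \bY \cap \bU)$ by construction of local cohomology, giving membership in $\cC_{\bU/J}^{J(s\bY \cap \bU)}$.

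The main obstacle is the separation step in (b): establishing that the sets $J(s_i \bY \cap \bU)$ have genuinely disjoint closures in $\sP(\bU)$ (as opposed to merely disjoint sets in $\bU$) is exactly what allows Lemma \ref{LocalCohoSeps} to produce a direct sum rather than just an injection of the sum of local cohomologies into $\cM(\bU)$. Once the hypotheses of Theorem \ref{Separation} are verified, the rest of the argument is essentially bookkeeping.
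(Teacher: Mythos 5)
Your proposal is correct and follows essentially the same route as the paper's proof: (a) is the elementary double coset decomposition, (b) verifies disjointness of the $J(s_i\bY\cap\bU)$ via regularity and then applies Theorem \ref{Separation}, Lemma \ref{LocalCohoSeps} and Lemma \ref{TopOnLocCoh}, and (c) uses closedness of the summands in the coadmissible module $\cM(\bU)$ (together with the passage from coadmissible $\w\cD(\bU,J)$-modules to $\cC_{\bU/J}$, which the paper handles via \cite[Theorem 3.6.11]{EqDCap}) plus the observation that $J(s\bY\cap\bU)=J(s_i\bY\cap\bU)$ when $s\in Js_iG_{\bY}$.
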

\begin{proof} (a) This is clear. (b) Since $\cM$ is supported on $G\bY$, its restriction to $\bU$ is supported on $G \bY \cap \bU$.  Because the $G$-orbit of $\bY$ is regular in $\bX$, the $J$-orbit of $s \bY \cap \bU$ is regular in $\bU$ for any $s \in G$, and $J(s_i \bY \cap \bU) \cap J(s_j \bY \cap \bU) =\emptyset$ if $i \neq j$.  Applying Theorem \ref{Separation} and Lemma \ref{LocalCohoSeps} to the Zariski closed subsets $s_1\bY\cap \bU, \cdots, s_n \bY\cap \bU$ of $\bU$ and the compact group $J$, we obtain a $\cD(\bU) \rtimes J$-linear isomorphism 
\begin{equation}\label{MUdecomp} \cM(\bU) \tocong H^0_{G\bY \cap \bU}(\cM_{|\bU}) \tocong \bigoplus_{i=1}^n H^0_{J (s_i \bY \cap \bU)}(\cM_{|\bU}). \end{equation}
In a similar manner, using Lemma \ref{TopOnLocCoh} we obtain a direct sum decomposition 
\begin{equation}\label{SheafyMUdecomp}\cM_{|\bU} \tocong \cH^0_{G\bY \cap \bU}(\cM_{|\bU}) \tocong \bigoplus_{i=1}^n \cH^0_{J (s_i \bY \cap \bU)}(\cM_{|\bU})\end{equation}
of $J$-equivariant locally Fr\'echet $\cD$-modules on $\bU$.

(c) Lemma \ref{TopOnLocCoh} implies that the direct summands of the $\cD(\bU) \rtimes J$-module $\cM(\bU)$ appearing in equation (\ref{MUdecomp}) are closed. Since $\cM(\bU)$ is a coadmissible $\w\cD(\bU,J)$-module, they must be coadmissible $\w\cD(\bU,J)$-submodules of $\cM(\bU)$. Because the functor $\Gamma(\bU,-)$ sends $(\ref{SheafyMUdecomp})$ to $(\ref{MUdecomp})$, \cite[Theorem 3.6.11]{EqDCap} implies that the corresponding finitely many direct summands $\cM_{|\bU}$ appearing in equation $(\ref{SheafyMUdecomp})$ all lie in $\cC_{\bU/J}$. 

Finally, $J(s \bY \cap \bU) = J(s_i \bY \cap \bU)$ whenever $s \in J s_i G_{\bY}$, so $\cH^0_{J (s \bY \cap \bU)}(\cM_{|\bU}) = \cH^0_{J (s_i \bY \cap \bU)}(\cM_{|\bU}) \in \cC_{\bU/J}$. By construction, $\cH^0_{J (s \bY \cap \bU)}(\cM_{|\bU})$ is supported on $J(s\bY \cap \bU)$ only.
\end{proof}

\begin{lem}\label{PhiUJ} Suppose that $(\bX,G)$ is small. Let $\cM \in \cC^{G\bY}_{\bX/G}$ and suppose that the $G$-orbit of $\bY$ is regular in $\bX$. Then whenever $(\bU,J)$ is small, there is a natural $\w\cD(\bU,J_{\bY})$-linear isomorphism
\[ \varphi(\bU,J) \hsp : \hsp \w\cD(\bU,J_{\bY}) \WO{\w\cD(\bX,J_{\bY})} H^0_{\bY}(\cM) \tocong H^0_{\bY\cap \bU}(\cM_{|\bU}).\]
\end{lem}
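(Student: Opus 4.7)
My plan is to build $\varphi(\bU,J)$ from the natural restriction map on sections, then to identify $\w\cD(\bU,J)\WO{\w\cD(\bU,J_{\bY})}\varphi(\bU,J)$ with the comparison between two compatible direct-sum decompositions of a summand of $\cM(\bU)$, and finally to descend to $\varphi(\bU,J)$ itself by faithful flatness.

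First, the restriction homomorphism $\cM(\bX)\to\cM(\bU)$ carries $H^0_{\bY}(\cM)$ into $H^0_{\bY\cap\bU}(\cM_{|\bU})$, since restricting a $\bY$-supported section produces a $\bY\cap\bU$-supported one. Both spaces are closed Fr\'echet subspaces of the ambient global sections by Lemma~\ref{TopOnLocCoh}, and the map is $\w\cD(\bX,J_{\bY})$-linear and continuous, so by the universal property of the completed tensor product it extends to the required $\w\cD(\bU,J_{\bY})$-linear map $\varphi(\bU,J)$.

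Next, I would pick representatives $s_1=1,s_2,\ldots,s_n$ for $J\backslash G/G_{\bY}$ and set $\cM_i:=\cH^0_{J(s_i\bY\cap\bU)}(\cM_{|\bU})$. Regularity of the $G$-orbit of $\bY$ in $\bX$ descends to regularity of the $J$-orbit of $\bY\cap\bU$ in $\bU$, whose stabiliser in $J$ equals $J_{\bY}$. Lemma~\ref{MsupponGY} splits $\cM_{|\bU}\cong\bigoplus_i\cM_i$ with $\cM_i\in\cC_{\bU/J}^{J(s_i\bY\cap\bU)}$, and Theorem~\ref{LocalCohoCoadm}(b) applied to $(\bU,J)$ with $\cM_1$ yields the \emph{local} description
\[\cM_1(\bU)\;\cong\;\w\cD(\bU,J)\WO{\w\cD(\bU,J_{\bY})}H^0_{\bY\cap\bU}(\cM_{|\bU}).\]
On the other hand, Theorem~\ref{LocalCohoCoadm}(b) at $(\bX,G)$, combined with the Mackey isomorphism~(\ref{Mackey2}) (applied with $H=J$, $B=G_{\bY}$) and the Kiehl-style base change $\w\cD(\bU,J)\WO{\w\cD(\bX,J)}(-)$ from \cite[Theorem~B]{EqDCap}, produces a Mackey decomposition
\[\cM(\bU)\;\cong\;\bigoplus_{Z\in J\backslash G/G_{\bY}}\w\cD(\bU,J)\WO{\w\cD(\bX,{}^{s_Z}G_{\bY}\cap J)}[s_Z]H^0_{\bY}(\cM).\]
Each summand is supported on $J(s_Z\bY\cap\bU)$, so matching supports with Lemma~\ref{MsupponGY}(b) identifies the $s_1=1$ summand with $\cM_1(\bU)$; applying associativity of $\w\otimes$ then rewrites this as the \emph{global} description $\cM_1(\bU)\cong\w\cD(\bU,J)\WO{\w\cD(\bU,J_{\bY})}\bigl[\w\cD(\bU,J_{\bY})\WO{\w\cD(\bX,J_{\bY})}H^0_{\bY}(\cM)\bigr]$. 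Tracking the formulas on simple tensors $1\otimes 1\otimes m$, which are sent to $1\otimes m_{|\bU}$, I confirm that the resulting $\w\cD(\bU,J)$-linear isomorphism between the two descriptions of $\cM_1(\bU)$ is exactly $\w\cD(\bU,J)\WO{\w\cD(\bU,J_{\bY})}\varphi(\bU,J)$.

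The final and hardest step is to descend from this induced isomorphism to $\varphi(\bU,J)$ itself, by faithful flatness of $\w\cD(\bU,J)$ over $\w\cD(\bU,J_{\bY})$. This is transparent when $J_{\bY}$ is open in $J$, where a finite set of coset representatives for $J/J_{\bY}$ exhibits $\w\cD(\bU,J)$ as a free $\w\cD(\bU,J_{\bY})$-module. In general it requires selecting a good chain $(H_\bullet)$ for a $J$-stable $\cA$-Lie lattice $\cL$ of $\cT(\bU)$ that simultaneously refines to a good chain $(H_\bullet\cap J_{\bY})$ for $J_{\bY}$, verifying that each Banach layer $\hK{U(\pi^n\cL)}\rtimes_{H_n}J$ is a free module over $\hK{U(\pi^n\cL)}\rtimes_{H_n\cap J_{\bY}}J_{\bY}$ with basis a transversal for the finite quotient $J_{\bY}H_n\backslash J$, and then promoting this to faithful flatness across the Fr\'echet-Stein inverse limit. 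The principal technical obstacle throughout is precisely this final descent step.
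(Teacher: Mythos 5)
Your proposal is correct and follows essentially the same route as the paper: construct $\varphi(\bU,J)$ by restriction plus the universal property of $\w\otimes$, split $\cM_{|\bU}$ via the Mackey decomposition and Lemma~\ref{MsupponGY}, match supports to identify the $s_1 = 1$ summand, show $1 \w\otimes \varphi(\bU,J)$ is an isomorphism, and descend by faithful c-flatness of $\w\cD(\bU,J)$ over $\w\cD(\bU,J_{\bY})$. The one spot where you do more work than necessary is the last step: the paper has already packaged the required faithful c-flatness as Lemma~\ref{cfflat} (stated immediately after Lemma~\ref{PhiUJ} and proved via good chains, freeness of each Banach layer $\hK{U(\pi^n\cL)}\rtimes_{H_n}J$ over $\hK{U(\pi^n\cL)}\rtimes_{H_n\cap J_{\bY}}J_{\bY}$, and \cite[Proposition~7.5(b,c)]{DCapOne}), so your sketch is in fact a re-derivation of that lemma rather than a gap. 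One observation worth making explicit, which both you and the paper use somewhat tacitly: regularity of the $G$-orbit of $\bY$ together with $J \leq G_{\bU}$ forces $J_{\bY\cap\bU} = J_{\bY}$ whenever $\bY\cap\bU \neq \emptyset$, which is why Theorem~\ref{LocalCohoCoadm}(b) applied at $(\bU,J,\bY\cap\bU)$ really does land on $\w\cD(\bU, J_{\bY})$ rather than $\w\cD(\bU, J_{\bY\cap\bU})$ in equation~(\ref{PreCoh}); you flag this correctly.
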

\begin{proof} Because $J$ is open in $G$, it follows from Theorem \ref{LocalCohoCoadm}(a) that $H^0_{\bY}(\cM)$ is a coadmissible $\w\cD(\bX, J_{\bY})$-module and $H^0_{\bY \cap \bU}(\cM_{|\bU})$ is a coadmissible $\w\cD(\bU,J_{\bY})$-module. Using the universal property of $\w\otimes$ given in \cite[Lemma 7.3]{DCapOne}, we see that the restriction map $H^0_{\bY}(\cM) \to H^0_{\bY \cap \bU}(\cM_{|\bU})$ induces a $\w\cD(\bU,J_{\bY})$-linear map
\[ \varphi(\bU,J) \hsp : \hsp \w\cD(\bU,J_{\bY}) \WO{\w\cD(\bX,J_{\bY})} H^0_{\bY}(\cM) \longrightarrow H^0_{\bY\cap \bU}(\cM_{|\bU}).\]
We must show $\varphi(\bU,J)$ is an isomorphism. Now $\cH^0_{J(\bY\cap \bU)}(\cM_{|\bU}) \in \cC_{\bU/J}^{J(\bY\cap \bU)}$ by Lemma \ref{MsupponGY}(c), so by Theorem \ref{LocalCohoCoadm}(b) there is a $\w\cD(\bU,J)$-linear isomorphism
\begin{equation}\label{PreCoh} \w\cD(\bU,J)\WO{\w\cD(\bU,J_{\bY})} H^0_{\bY \cap \bU}(\cM_{|\bU}) \tocong H^0_{J (\bY \cap \bU)}(\cM_{|\bU}). \end{equation}
Next, $H^0_\bY(\cM)$ is a coadmissible $\w\cD(\bX,G_{\bY})$-module by Theorem \ref{LocalCohoCoadm}(a), so we may form the localisation $\cN := \Loc^{\w\cD(\bX,G_{\bY})}_{\bX}( H^0_\bY(\cM) ) \in \cC_{\bX/G_{\bY}}$. Then we have the following commutative diagram:
\[ \xymatrix{ \w\cD(\bU,J) \WO{\w\cD(\bX,J)}  \left( \w\cD(\bX,G) \WO{\w\cD(\bX,G_{\bY})} H^0_{\bY}(\cM) \right) \ar@{.>}[r]^(0.71){\cong}\ar[d]_{\cong} & \cM(\bU) \ar[d]^{\cong} \\  \bigoplus\limits_{i=1}^n \w\cD(\bU,J) \WO{\w\cD(\bU,J \cap {}^{s_i}G_{\bY})} [s_i]\cN(s_i^{-1}\bU)  \ar@{.>}[r] &  \bigoplus\limits_{i=1}^n H^0_{J (s_i \bY \cap \bU)}(\cM_{|\bU}). }\]
Here the isomorphism on the left comes from the Mackey decomposition, Proposition \ref{Mackey}, whereas the arrow on the right is the decomposition appearing in equation (\ref{MUdecomp}). The top horizontal arrow is obtained by combining the isomorphisms
\[ \w\cD(\bX,G) \WO{\w\cD(\bX,G_{\bY})} H^0_{\bY}(\cM) \tocong \cM(\bX) \qmb{and} \w\cD(\bU,J) \WO{\w\cD(\bX,J)} \cM(\bX) \tocong \cM(\bU)\]
coming from Theorem \ref{LocalCohoCoadm}(b) and \cite[Theorem 4.4.3]{EqDCap}, respectively, and it induces the bottom horizontal arrow in the diagram. Because this last arrow respects the direct sum decomposition, we obtain $\w\cD(\bU,J)$-linear isomorphisms
\begin{equation}\label{DUJisos}\w\cD(\bU,J) \WO{\w\cD(\bU,J \cap {}^{s_i}G_{\bY})} [s_i]\cN(s_i^{-1}\bU)  \tocong H^0_{J (s_i \bY \cap \bU)}(\cM_{|\bU})\end{equation}
for all $i=1,\ldots, n$. Now, consider the following commutative triangle:
\[\xymatrix{ \w\cD(\bU,J) \WO{\w\cD(\bU, J_{\bY})} \left(\w\cD(\bU,J_{\bY}) \WO{\w\cD(\bX,J_{\bY})} H^0_{\bY}(\cM) \right) \ar[r]^(0.71){\cong}\ar[d]_{1 \hsp \w\otimes \hsp \varphi(\bU,J)} & H^0_{J (\bY \cap \bU)}(\cM_{|\bU}) \\\w\cD(\bU,J)\WO{\w\cD(\bU,J_{\bY})} H^0_{\bY \cap \bU}(\cM_{|\bU}).  \ar[ur]_{\cong} &
 }\]
Here the horizontal isomorphism comes from equation (\ref{DUJisos}) by taking $i=1$, and the diagonal isomorphism comes from equation (\ref{PreCoh}). We can now finally apply Lemma \ref{cfflat} below to conclude that $\varphi(\bU,J)$ is an isomorphism. \end{proof}

We refer the reader to \cite[Definition 7.5]{DCapOne} for the notion of \emph{faithfully c-flatness}.

\begin{lem}\label{cfflat} Suppose that $(\bX,G)$ is small, and let $H$ be a closed subgroup of $G$. Then $\w\cD(\bX,G)$ is a faithfully c-flat $\w\cD(\bX,H)$-module on both sides.
\end{lem}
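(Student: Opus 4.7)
The plan is to reduce the claim to verifying faithful flatness at each Banach-algebra level of compatible Fréchet-Stein presentations of $\w\cD(\bX,G)$ and $\w\cD(\bX,H)$, and then invoke the standard Fréchet-Stein criterion for faithful c-flatness.

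First I would set up these presentations. Since $(\bX,G)$ is small, I can fix a $G$-stable affine formal model $\cA \subset \cO(\bX)$, a $G$-stable free $\cA$-Lie lattice $\cL$ in $\cT(\bX)$, and a good chain $(H_n)$ for $\cL$ in $G$. I claim that $(H \cap H_n)$ is then automatically a good chain for $\cL$ in $H$: it descends to $\{1\}$, each $H \cap H_n$ is open and normal in $H$ because $H_n$ is normal in $G$, and $\rho(H \cap H_n) \subseteq \rho(H_n) \subseteq \exp(p^\epsilon \pi^n \cL)$. By \cite[Lemma 3.3.4]{EqDCap} this yields compatible Fréchet-Stein presentations
\[ \w\cD(\bX,G) = \invlim R_n^G, \qquad R_n^G := \hK{U(\pi^n \cL)} \rtimes_{H_n} G, \]
and analogously $\w\cD(\bX,H) = \invlim R_n^H$ with $R_n^H := \hK{U(\pi^n \cL)} \rtimes_{H \cap H_n} H$, such that the inclusion $\w\cD(\bX,H) \hookrightarrow \w\cD(\bX,G)$ is induced by natural algebra inclusions $R_n^H \hookrightarrow R_n^G$ at each level.

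The heart of the argument is then to show that for every $n \geq 0$, $R_n^G$ is a free $R_n^H$-module of finite rank on both sides. Because $H_n$ is open in the compact group $G$, the open subgroup $HH_n \leq G$ has finite index; fix a set $T$ of left coset representatives for $HH_n$ in $G$. I would prove that
\[ R_n^G = \bigoplus_{t \in T} \gamma(t) \cdot R_n^H \]
as right $R_n^H$-modules, where $\gamma : G \to (R_n^G)^\times$ denotes the canonical group homomorphism. Any $g \in G$ factors as $g = t h h_n$ with $t \in T$, $h \in H$, $h_n \in H_n$, and by the defining property of the crossed product $\rtimes_{H_n}$ the element $\gamma(h_n)$ already lies in $\hK{U(\pi^n \cL)} \subset R_n^H$; hence $\gamma(g) \in \gamma(t) R_n^H$, and any $u \cdot \gamma(g) \in R_n^G$ can be rewritten via $u \gamma(t) = \gamma(t)(t^{-1}\cdot u)$ to lie in the same summand. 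Directness of the sum follows from the disjoint decomposition $G/H_n = \coprod_{t \in T}(tHH_n/H_n)$, which induces a direct sum decomposition of $R_n^G$ as a free $\hK{U(\pi^n \cL)}$-module. A symmetric argument with right coset representatives gives the analogous left-module decomposition.

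Once this is established, each $R_n^G$ is finitely generated free, and hence faithfully flat, over $R_n^H$ on both sides. It remains to invoke \cite[Definition 7.5]{DCapOne} and the surrounding theory: level-wise faithful flatness, combined with the automatic compatibility $R_n^G \otimes_{R_n^H} M_n \cong R_n^G \otimes_{R_{n+1}^H} M_{n+1}$ (immediate from $M_n = R_n^H \otimes_{R_{n+1}^H} M_{n+1}$ for a coadmissible module $M$), yields faithful c-flatness of $\w\cD(\bX,G)$ over $\w\cD(\bX,H)$. The main point requiring care is the coset decomposition itself: one must be vigilant that in $R_n^G$ the symbols $\gamma(h_n)$ for $h_n \in H_n$ are identified with elements of $\hK{U(\pi^n \cL)}$, so the free basis on the $R_n^H$-side must be indexed by $G/HH_n$ rather than by $G/H_n$ or by $H \backslash G$.
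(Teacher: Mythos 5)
Your proposal is correct and follows essentially the same route as the paper: fix $G$-stable $\cA$ and $\cL$, take a good chain $(N_\bullet)$ in $G$ so that $(N_\bullet \cap H)$ is a good chain in $H$, use \cite[Lemma 3.3.4]{EqDCap} to obtain compatible level presentations, observe that each $\hK{U(\pi^n\cL)} \rtimes_{N_n} G$ is a free module of finite rank (on both sides) over the subring corresponding to $\hK{U(\pi^n\cL)} \rtimes_{N_n\cap H} H$, and invoke \cite[Proposition 7.5(b,c)]{DCapOne}. The only difference is cosmetic: where you unwind the free decomposition by hand (correctly identifying the index set as $G/HH_n$ because $\gamma(H_n)$ is already absorbed into $\hK{U(\pi^n\cL)}$), the paper cites \cite[Lemma 2.2.4(b)]{EqDCap} together with the natural isomorphism $\hK{U(\pi^n\cL)} \rtimes_{N_n} N_nH \cong \hK{U(\pi^n\cL)} \rtimes_{N_n\cap H} H$, which is the same observation.
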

\begin{proof} Fix a $G$-stable affine formal model $\cA$ in $\cO(\bX)$ and a $G$-stable free $\cA$-Lie lattice $\cL$ in $\cT(\bX)$. Choose a good chain $(N_\bullet)$ for $\cL$ in $G$, so that $\rho(N_n) \leq \exp(p^\epsilon \pi^n \cL)$ for all $n\geq 0$ where $\rho$ is the action of $G$ on $\cO(\bX)$. Then $(N_\bullet \cap H)$ is a good chain for $\cL$ in $H$, and by \cite[Lemma 3.3.4]{EqDCap} we have presentations of Fr\'echet-Stein algebras
\[  \w\cD(\bX,H) \cong \invlim \hK{U(\pi^n \cL)} \underset{N_n \cap H}{\rtimes}{} H \qmb{and} \w\cD(\bX,G) \cong \invlim \hK{U(\pi^n \cL)} \underset{N_n}{\rtimes}{} G.\]
Now $\hK{U(\pi^n \cL)} \rtimes_{N_n} G$ is a free $\hK{U(\pi^n \cL)} \rtimes_{N_n} N_n H$-module on both sides, in view of \cite[Lemma 2.2.4(b)]{EqDCap}. However this latter algebra is naturally isomorphic to $\hK{U(\pi^n \cL)} \rtimes_{N_n \cap H} H$, and the result now follows from \cite[Proposition 7.5(b,c)]{DCapOne}.
\end{proof}

\begin{lem}\label{LocOfH0} Suppose that $(\bX,G)$ is small. Let $\bY$ be a Zariski closed subset of $\bX$ whose $G$-orbit is regular in $\bX$ and let $\cM \in \cC_{\bX/G}^{G \bY}$. Then there is an isomorphism of $G_{\bY}$-equivariant locally Fr\'echet $\cD$-modules on $\bX$
\[\Loc^{\w\cD(\bX,G_{\bY})}_{\bX}( H^0_{\bY}(\cM) ) \tocong \cH^0_{\bY}(\cM).\]
\end{lem}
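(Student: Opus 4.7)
The plan is to assemble the local isomorphisms from Lemma \ref{PhiUJ} into a global morphism of sheaves, after observing that the left-hand side makes sense. First, $H^0_{\bY}(\cM)$ is a coadmissible $\w\cD(\bX,G_{\bY})$-module by Theorem \ref{LocalCohoCoadm}(a), so $\cG := \Loc^{\w\cD(\bX,G_{\bY})}_{\bX}( H^0_{\bY}(\cM) )$ is a $G_{\bY}$-equivariant coadmissible $\cD$-module on $\bX$. On the other side, $\cH^0_{\bY}(\cM)$ is a $G_{\bY}$-equivariant locally Fr\'echet $\cD$-module by Lemma \ref{TopOnLocCoh}. Our goal is to construct a natural morphism $\Phi : \cG \to \cH^0_{\bY}(\cM)$ and show it is an isomorphism.

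For each $\bU \in \bX_w(\cT)$ and each $\bU$-small compact open subgroup $J$ of $G_{\bY}$, the restriction map $\cM(\bX) \to \cM(\bU)$ is $\w\cD(\bX,J)$-linear and carries $H^0_{\bY}(\cM)$ into $H^0_{\bY \cap \bU}(\cM_{|\bU}) = \cH^0_{\bY}(\cM)(\bU)$ by definition of sections supported on a subset. Since $G$ is compact in our small setting, $G_{\bY}$ is compact and $J$ has finite index in $G_{\bY}$, so $H^0_{\bY}(\cM)$ remains coadmissible as a $\w\cD(\bX,J)$-module. The universal property of $\w\otimes$ from \cite[Lemma 7.3]{DCapOne} therefore extends the restriction map to a continuous $\w\cD(\bU,J)$-linear map
\[\Phi(\bU,J) : \cG(\bU) \;=\; \w\cD(\bU,J) \WO{\w\cD(\bX,J)} H^0_{\bY}(\cM) \;\longrightarrow\; H^0_{\bY\cap\bU}(\cM_{|\bU}).\]
Because $J \leq G_{\bY}$, we have $J_{\bY} = J$, so $\Phi(\bU,J)$ coincides with the map $\varphi(\bU,J)$ of Lemma \ref{PhiUJ}, which is a $\w\cD(\bU,J)$-linear isomorphism.

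It remains to check that the local isomorphisms $\Phi(\bU,J)$ are independent of the choice of $J$, compatible with restriction to smaller affinoids, and intertwine the $G_{\bY}$-equivariant structures. Independence of $J$ follows from the definition of $\cG$ via inverse limits over $\bU$-small subgroups of $G_{\bY}$ (as in Proposition \ref{NUHtoJ} and Corollary \ref{NUHindepofH}); compatibility with restriction reduces to naturality of the maps $\cM(\bX) \to \cM(\bU) \to \cM(\bV)$ on local cohomology; and $G_{\bY}$-equivariance follows from the fact that both structures are induced from the $G_{\bY}$-action on the single module $H^0_{\bY}(\cM)$ via the same functorial recipe. Thus $\Phi$ is a morphism of $G_{\bY}$-equivariant presheaves of $\cD$-modules on $\bX_w(\cT)$ which is an isomorphism on each member of this basis, and it extends uniquely to an isomorphism of $G_{\bY}$-equivariant locally Fr\'echet $\cD$-modules on $\bX_{\rig}$ by \cite[Theorem 9.1]{DCapOne}.

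The substantive input is Lemma \ref{PhiUJ}, so the only real obstacle here is the bookkeeping ensuring that $\Phi(\bU,J)$ is well-defined independently of $J$ and is natural in $\bU$; both amount to routine verifications using the functoriality of the constructions already set up in $\S\ref{IndFunctor}$ and the universal properties of $\w\otimes$.
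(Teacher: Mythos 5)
Your overall strategy — assemble the local isomorphisms from Lemma \ref{PhiUJ} into a morphism of $G_{\bY}$-equivariant sheaves on $\bX_w(\cT)$ and then extend by \cite[Theorem 9.1]{DCapOne} — is exactly what the paper does. However, there is a genuine gap in the step where you invoke Lemma \ref{PhiUJ}.

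The localisation $\Loc^{\w\cD(\bX,G_{\bY})}_{\bX}(H^0_{\bY}(\cM))(\bU)$ is computed as an inverse limit over the set $\mathcal{S}$ of \emph{$\bU$-small open subgroups of $G_{\bY}$}, i.e.\ subgroups open in $G_{\bY}$. Lemma \ref{PhiUJ}, on the other hand, is stated for pairs $(\bU,J)$ that are small with $J$ a compact \emph{open} subgroup of $G$; the isomorphism $\varphi(\bU,J)$ it produces is $\w\cD(\bU,J_{\bY})$-linear, where $J_{\bY} = J \cap G_{\bY}$. When you take ``$J$ a $\bU$-small compact open subgroup of $G_{\bY}$'' and claim $\Phi(\bU,J)$ coincides with $\varphi(\bU,J)$ because $J_{\bY}=J$, you are applying Lemma \ref{PhiUJ} outside its hypotheses: if $G_{\bY}$ is not open in $G$ (the typical situation), then a subgroup that is open in $G_{\bY}$ is not open in $G$, hence not $\bU$-small for $G$, and the proof of Lemma \ref{PhiUJ} — which explicitly uses openness of $J$ in $G$ (finiteness of $J\backslash G/G_{\bY}$, Theorem \ref{LocalCohoCoadm}(a), Mackey decomposition) — does not go through. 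You also do not justify that the target $H^0_{\bY\cap\bU}(\cM_{|\bU})$ is a coadmissible $\w\cD(\bU,J)$-module for such $J$, which is needed for the universal-property argument to produce $\Phi(\bU,J)$ as a $\w\cD(\bU,J)$-linear map.

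The paper closes this gap via a cofinality argument: by \cite[Lemma 3.4.5]{EqDCap}, the collection $\{J_{\bY} : J \text{ is a $\bU$-small subgroup of } G\}$ is a \emph{cofinal} subfamily of $\mathcal{S}$, so the inverse limit defining the localisation can be taken over this smaller family, where Lemma \ref{PhiUJ} applies verbatim, and $\varphi(\bU)$ is then the inverse limit of the $\varphi(\bU,J)$. You need this cofinality step; without it you cannot identify each term of the inverse limit with a local cohomology module, only (at best) a cofinal subsystem of terms.
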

\begin{proof} Note that $H^0_{\bY}(\cM)$ is a coadmissible $\w\cD(\bX, G_{\bY})$-module by Theorem \ref{LocalCohoCoadm}(a). In view of \cite[Definitions 3.5.1, 3.5.3 and 3.5.12]{EqDCap}, we have to exhibit a $\cD(\bU)$-linear isomorphism
\[ \varphi(\bU) : \lim\limits_{\stackrel{\longleftarrow}{H}} H^0_{\bY}(\cM)(\bU,H) \quad \longrightarrow \quad \cH^0_{\bY}(\cM)(\bU)\]
which commutes with the restriction maps and $G_{\bY}$-equivariant structure maps on both sides. Here the inverse limit on the left hand side runs over the set $\mathcal{S}$ of $\bU$-small open subgroups $H$ of $G_{\bY}$. It follows from \cite[Lemma 3.4.5]{EqDCap} that the open subgroups of the form $J_{\bY}$ where $J$ is a $\bU$-small subgroup of $G$ form a cofinal family in $\mathcal{S}$, so we may take the inverse limit over this smaller family. By \cite[Definition 3.5.1]{EqDCap}, we have
\[H^0_{\bY}(\cM)(\bU,J_{\bY}) = \w\cD(\bU,J_{\bY}) \WO{\w\cD(\bX,J_{\bY})} H^0_{\bY}(\cM).\]
Unwinding the definitions, we can now see that we can take $\varphi(\bU)$ to be the inverse limit of the isomorphisms $\varphi(\bU,J)$ that were constructed in Lemma \ref{PhiUJ}.
\end{proof}
We would like to show the local cohomology sheaf $\cH^0_{\bY}(\cM)$ lies in $\cC_{\bX/G_{\bY}}$. Towards this, we have the following result.
\begin{prop}\label{H0nearlyCoad} Suppose that the $G$-orbit of $\bY$ is regular in $\bX$, and let $\cM \in \cC^{G \bY}_{\bX/\bG}$. Then whenever $(\bU,J)$ is small, we have
\[ \cH^0_{\bY}(\cM)_{|\bU}  \in \cC_{\bU / J_{\bY \cap \bU}}.\]
\end{prop}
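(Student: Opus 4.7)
The plan is to apply Theorem \ref{LocalCohoCoadm} and Lemma \ref{LocOfH0} with the small pair $(\bX,G)$ replaced by $(\bU,J)$, the Zariski closed subset $\bY$ replaced by $\bY \cap \bU$, and the equivariant module taken to be $\cN := \cH^0_{J(\bY \cap \bU)}(\cM_{|\bU})$. Everything needed to plug these choices into the machinery has already been developed in this section.

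First I would verify the hypotheses. The $J$-orbit of $\bY \cap \bU$ is regular in $\bU$: if $j \in J$ and $j(\bY \cap \bU) \cap (\bY \cap \bU) \neq \emptyset$, then $j\bY \cap \bY \neq \emptyset$, whence $j \in G_{\bY}$ by the regularity of the $G$-orbit of $\bY$; combined with $j\bU = \bU$ (which holds because $J \leq G_\bU$ by smallness of $(\bU,J)$), this gives $j(\bY \cap \bU) = \bY \cap \bU$. In particular $J_{\bY \cap \bU} = J_{\bY}$ whenever $\bY \cap \bU$ is non-empty. That $\cN \in \cC_{\bU/J}^{J(\bY \cap \bU)}$ is precisely Lemma \ref{MsupponGY}(c). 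Theorem \ref{LocalCohoCoadm}(a) therefore exhibits $H^0_{\bY \cap \bU}(\cN)$ as a coadmissible $\w\cD(\bU, J_{\bY \cap \bU})$-module, and Lemma \ref{LocOfH0} provides an isomorphism
\[ \Loc^{\w\cD(\bU,\, J_{\bY \cap \bU})}_{\bU}\bigl( H^0_{\bY \cap \bU}(\cN) \bigr) \congs \cH^0_{\bY \cap \bU}(\cN) \]
of $J_{\bY \cap \bU}$-equivariant locally Fr\'echet $\cD$-modules on $\bU$. The left hand side is automatically an object of $\cC_{\bU/J_{\bY \cap \bU}}$.

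It remains to identify $\cH^0_{\bY \cap \bU}(\cN)$ with $\cH^0_{\bY}(\cM)_{|\bU}$. Since $\bY \cap \bU \subseteq J(\bY \cap \bU)$, forming sections supported on $\bY \cap \bU$ is unaffected by first passing to the subsheaf of $\cM_{|\bU}$ consisting of sections supported on $J(\bY \cap \bU)$, so $\cH^0_{\bY \cap \bU}(\cN) = \cH^0_{\bY \cap \bU}(\cM_{|\bU})$; the latter coincides with $\cH^0_{\bY}(\cM)_{|\bU}$ by direct inspection of Definition \ref{LocalCohoDefn}. Putting these identifications together concludes the proof. No genuine obstacle arises: the hard analytic input is already encapsulated in Theorem \ref{LocalCohoCoadm} and Lemma \ref{LocOfH0}, and the only points requiring a little care are the transfer of the regularity hypothesis from $(\bX,\bY,G)$ to $(\bU, \bY \cap \bU, J)$ and the routine reconciliation of the various local-cohomology sheaves.
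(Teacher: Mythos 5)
Your proof is correct and takes essentially the same route as the paper: both set $\cN := \cH^0_{J(\bY \cap \bU)}(\cM_{|\bU})$, invoke Lemma \ref{MsupponGY}(c) to place it in $\cC_{\bU/J}^{J(\bY \cap \bU)}$, apply Lemma \ref{LocOfH0} with the data $(\bU, \bY \cap \bU, J)$, and then identify $\cH^0_{\bY \cap \bU}(\cN) = \cH^0_{\bY}(\cM)_{|\bU}$. You fill in a bit more detail (the explicit verification that the $J$-orbit of $\bY \cap \bU$ is regular in $\bU$, and the citation of Theorem \ref{LocalCohoCoadm}(a)) than the published version, but there is no substantive difference in strategy.
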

\begin{proof} Define $\cM' := \cH^0_{J (\bY \cap \bU)}(\cM_{|\bU})$, 
 an object of $\cC_{\bU/J}^{J(\bY \cap \bU)}$ by Lemma \ref{MsupponGY}(c). Since $(\bU, J)$ is small and since $J$-orbit of $\bU \cap \bY$ is regular in $\bU$, applying Lemma \ref{LocOfH0} to $\cM' \in \cC_{\bU/J}^{J(\bY \cap \bU)}$ gives an isomorphism of $J_{\bY \cap \bU}$-equivariant locally Fr\'echet $\cD$-modules on $\bU$
\[\Loc^{\w\cD(\bU,J_{\bY\cap \bU})}_{\bX}( H^0_{\bY \cap \bU}(\cM') ) \tocong \cH^0_{\bY \cap \bU}(\cM').\]
However, because $\bY \cap \bU \subset J(\bY \cap \bU)$, using the definition of $\cM'$ we have
\[\cH^0_{\bY \cap \bU}( \cM'  )= \cH^0_{\bY \cap \bU}(\cM_{|\bU}) = \cH^0_{\bY}(\cM)_{|\bU}.\]
The result follows by combining the last two displayed equations. \end{proof}
At this point, in order to proceed we must introduce the following condition.
\begin{defn}\label{LSC} We say that $(\bX, \bY, G)$ satisfies the \emph{Local Stabiliser Condition (LSC)}\label{LSCdefn} if for all $\bU \in \bX_w(\cT)$ such that $\bY \cap \bU \neq \emptyset$, there is a $\bU$-small compact open subgroup $J$ of $G$ such that 
\[J_{\bY \cap \bU} = J_{\bY}.\]
\end{defn}
Note that in practice it could easily happen that $\bY \cap \bU$ is the empty set so $J_{\bY \cap \bU} = J$ regardless of what $J$ is, and yet $J_{\bY}$ will in general be a proper subgroup of $J$. For an explicit example, take $\bX := \bP^{1,\an}$, $G := \SL_2(L)$ for some finite extension $L$ of $\Qp$, $\bY := \{\infty \}$ and $\bU := \Sp K \langle x \rangle$ the closed unit disc in $\bP^{1,\an}$.  

\begin{thm}\label{SheafyLocCohCoadm} Suppose that $(\bX, \bY, G)$ satisfies the LSC, and that the $G$-orbit of $\bY$ is regular in $\bX$. Then $\cH^0_{\bY}$ defines a functor $\cH^0_{\bY} : \cC_{\bX/G}^{G \bY} \to \cC_{\bX/G_{\bY}}^{\bY}.$
\end{thm}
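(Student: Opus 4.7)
\textbf{Proof plan for Theorem \ref{SheafyLocCohCoadm}.}  Fix $\cM \in \cC_{\bX/G}^{G\bY}$ and set $\cN := \cH^0_{\bY}(\cM)$.  The plan is to unpack the three things we must verify: (1) $\cN$ is a $G_{\bY}$-equivariant locally Fréchet $\cD$-module, (2) $\cN$ is supported on $\bY$, and (3) $\cN$ is coadmissible, with the LSC being used only at the final step.

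Points (1) and (2) are essentially formal.  For (1), Lemma \ref{TopOnLocCoh} already tells us that $\cH^0_{\bY}(\cM)$ is a $G_{\bY}$-equivariant $\cD$-module and is locally Fréchet.  For (2), since $\bY$ is Zariski closed, $\bX \backslash \bY$ is an admissible open, so by Corollary \ref{H0S} we have $\cN(\bV) = \ker(\cM(\bV)\to \cM(\bV \backslash \bY)) = 0$ for every admissible open $\bV$ of $\bX \backslash \bY$, because the restriction map in question is the identity map of $\cM(\bV)$. Hence $\cN$ is supported on $\bY$ in the sense of Definition \ref{LocalCohoDefn}, and in particular vanishes identically on affinoids disjoint from $\bY$.

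For (3), we will verify coadmissibility via an $\bX_w(\cT)$-covering, as in \cite[Definition 3.6.7(a)]{EqDCap}.  Since $\bX$ is smooth, we may pick any cover of $\bX$ by elements $\bU \in \bX_w(\cT)$ such that each such $\bU$ carries a $\bU$-small open subgroup of $G$.  Fix such a $\bU$.  If $\bY \cap \bU = \emptyset$, then $\cN_{|\bU} = 0$ by point (2), and there is nothing to check.  Otherwise, apply the LSC to produce a $\bU$-small compact open subgroup $J \leq G$ with $J_{\bY \cap \bU} = J_{\bY}$.  By Proposition \ref{H0nearlyCoad}, we have $\cN_{|\bU} \in \cC_{\bU/J_{\bY\cap \bU}} = \cC_{\bU/J_{\bY}}$.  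The group $J_{\bY} = J \cap G_{\bY}$ is open in $G_{\bU} \cap G_{\bY} = (G_{\bY})_{\bU}$, compact, and stabilises the same free $\cA$-Lie lattice that $J$ did, so it is $\bU$-small as a subgroup of $G_{\bY}$ in the sense of \cite[Definition 3.4.4]{EqDCap}.

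Putting these ingredients together across the whole cover establishes the required local coadmissibility of $\cN$ as a $G_{\bY}$-equivariant $\cD$-module, and compatibility of the localisations on intersections follows from functoriality of Proposition \ref{H0nearlyCoad} (or can be verified independently using the restriction description of $\cH^0_{\bY}$).  Hence $\cN \in \cC_{\bX/G_{\bY}}$, and combined with (2) this gives $\cN \in \cC_{\bX/G_{\bY}}^{\bY}$; functoriality of $\cH^0_{\bY}$ is immediate from its definition.  The main conceptual obstacle is the one the LSC is explicitly designed to overcome: Proposition \ref{H0nearlyCoad} only yields coadmissibility with respect to $J_{\bY\cap \bU}$, which a priori is strictly larger than $J_{\bY}$, and without a matching pair $(J, \bU)$ one could not interpret the local cohomology as a coadmissible module over the stabiliser of all of $\bY$.
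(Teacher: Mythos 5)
Your proposal is correct and follows the same route as the paper: Lemma \ref{TopOnLocCoh} for equivariance, locally-Fréchet structure, and support, then the LSC to upgrade Proposition \ref{H0nearlyCoad} from coadmissibility over $J_{\bY\cap\bU}$ to coadmissibility over $J_{\bY}$, then \cite[Definition 3.6.7]{EqDCap} to conclude. The only difference is presentational — you spell out why $J_{\bY}$ is $\bU$-small as a subgroup of $G_{\bY}$, which the paper leaves implicit inside the reference to \cite[Definition 3.6.7]{EqDCap}.
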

\begin{proof}
Let $\cM \in \cC_{\bX/G}^{G \bY}$. By Lemma \ref{TopOnLocCoh}, we know that $\cH^0_{\bY}(\cM)$ is a $G_{\bY}$-equivariant $\cD$-module on $\bX$ supported only on $\bY$, and the functorial nature of $\cH^0_{\bY}$ is clear. So, we only have to show that $\cH^0_{\bY}(\cM) \in \cC_{\bX/G_{\bY}}$.

Let $\bU \in \bX_w(\cT)$. If $\bY \cap \bU = \emptyset$, then there is nothing to do because in this case $\cH^0_{\bY}(\cM)_{|\bU} = 0$. Assume therefore that $\bY \cap \bU \neq \emptyset$. Because $(\bX,\bY,G)$ satifies the LSC, there exists a $\bU$-small subgroup  $J$ of $G$ such that $J_{\bY \cap \bU} = J_{\bY}$. Now use \cite[Definition 3.6.7]{EqDCap} together with  Proposition \ref{H0nearlyCoad} to deduce $\cH^0_{\bY}(\cM) \in \cC_{\bX/G_{\bY}}$. \end{proof}

We can now prove the following globalisation of Theorem \ref{H0indBGsmall}.

\begin{prop}\label{UnitIsom} Suppose that $(\bX, \bY, G)$ satisfies the LSC, and that the $G$-orbit of $\bY$ is regular in $\bX$. Then there is a natural isomorphism 
\[\eta : 1_{\cC_{\bX/G_{\bY}}^{\bY}} \congs \cH^0_{\bY} \circ \ind_{G_{\bY}}^G.\] 
\end{prop}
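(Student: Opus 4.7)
The plan is to construct $\eta_\cN$ locally on the basis $\bX_w(\cT)$ using the identity double coset in the Mackey construction of $\ind_{G_\bY}^G$, verify the image lands in $\cH^0_\bY$, and then show $\eta_\cN$ is an isomorphism by combining Theorem \ref{H0indBGsmall} (the small case) with the support decomposition of Lemma \ref{MsupponGY}(b) and the identification of the identity-coset summand supplied by Proposition \ref{H0HYindBG}.

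First I would construct $\eta_\cN(\bU)$ on each $\bU \in \bX_w(\cT)$. Pick a $\bU$-small compact open subgroup $H \leq G$: the identity element $1 \in G$ contributes a summand $M(\bU,H,1) = \w\cD(\bU,H) \WO{\w\cD(\bU, G_\bY \cap H)} \cN(\bU)$ of $M(\bU,H)$, and the map $m \mapsto 1 \hsp\w\otimes\hsp [1] m$ together with Corollary \ref{NUHindepofH} and the inverse limit in $H$ supplies a $\w\cD(\bU, G_\bY \cap J)$-linear map $\cN(\bU) \to \ind_{G_\bY}^G(\cN)(\bU)$ independent of the choice of $H$. The image lies in $\cH^0_\bY(\ind_{G_\bY}^G\cN)(\bU)$: by Corollary \ref{H0S} it suffices to check the restriction to $\bU \setminus \bY$ vanishes, and by Lemma \ref{TauNUHs} the restriction map on the $s=1$ component is given by $a \hsp\w\otimes\hsp [1] m \mapsto a_{|\bU\setminus\bY} \hsp\w\otimes\hsp [1] m_{|\bU\setminus \bY}$, which is zero since $\cN$ is supported on $\bY$. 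Compatibility of $\eta_\cN$ with restriction maps in $\bU$, with the $G_\bY$-equivariant structures, and naturality in $\cN$ are routine verifications from the Mackey construction.

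Next I would show $\eta_\cN(\bU)$ is bijective. If $\bY \cap \bU = \emptyset$ both sides vanish; otherwise use the LSC to pick a $\bU$-small $J$ with $J_{\bY \cap \bU} = J_\bY$. Setting $\cM := \ind_{G_\bY}^G \cN \in \cC_{\bX/G}^{G\bY}$, Lemma \ref{MsupponGY}(b) decomposes $\cM_{|\bU} \cong \bigoplus_i \cH^0_{J(s_i \bY \cap \bU)}(\cM_{|\bU})$ along representatives $s_i$ of $J \backslash G / G_\bY$. For $i \neq 1$, regularity of the $G$-orbit of $\bY$ forces $Js_i \bY \cap \bY = \emptyset$, hence $J(s_i \bY \cap \bU) \cap (\bY\cap\bU) = \emptyset$; combining Theorem \ref{Separation} with the excision in Lemma \ref{LocalCohoSeps} shows that applying $H^0_{\bY\cap\bU}$ to those summands yields $0$. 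Proposition \ref{H0HYindBG} identifies the $i=1$ summand with $\ind_{J_\bY}^J \Res\cN_{|\bU}$, and Theorem \ref{H0indBGsmall} applied to the small pair $(\bU, J)$ (with $J_{\bY\cap\bU} = J_\bY$ by LSC) yields a natural $\w\cD(\bU, J_\bY)$-linear isomorphism $\cN(\bU) \tocong H^0_{\bY\cap\bU}(\ind_{J_\bY}^J \Res\cN_{|\bU})$. Composing these identifications gives an isomorphism $\cN(\bU) \tocong H^0_{\bY\cap\bU}(\cM_{|\bU}) = \cH^0_\bY(\cM)(\bU)$, which one verifies by chasing the $s=1$ summand coincides with the globally constructed $\eta_\cN(\bU)$.

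Finally, $\cH^0_\bY(\cM) \in \cC_{\bX/G_\bY}^\bY$ by Theorem \ref{SheafyLocCohCoadm}, and morphisms in $\cC_{\bX/G_\bY}$ are determined by their sections on the basis $\bX_w(\cT)$, so $\eta_\cN$ promotes to a natural isomorphism in $\cC_{\bX/G_\bY}^\bY$. The main obstacle is the compatibility check at the end of the third paragraph: one must unwind the explicit constructions of Theorem \ref{H0indBGsmall}, Proposition \ref{H0HYindBG} and Lemma \ref{MsupponGY}(b) on the identity double coset to confirm that the composite local isomorphism is indeed the restriction of $\eta_\cN$, rather than merely \emph{some} $\w\cD(\bU, J_\bY)$-linear isomorphism between the same two coadmissible modules.
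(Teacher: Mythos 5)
Your proposal is correct and follows essentially the same route as the paper: both reduce to identifying $\cH^0_{H(\bU\cap\bY)}(\cM_{|\bU})$ via Proposition~\ref{H0HYindBG} and then invoking the small-affinoid case Theorem~\ref{H0indBGsmall}. The paper obtains $\eta_\cN(\bU)$ directly as a composite of these isomorphisms and defers the independence-of-$H$ check, whereas you first write a global map via the identity coset $m\mapsto 1\w\otimes[1]m$ and then verify it agrees with that composite; these are two presentations of the same argument and carry the same deferred compatibility burden. One small point in your favour: you explicitly use the LSC to choose $J$ with $J_{\bY\cap\bU}=J_\bY$ before invoking Theorem~\ref{H0indBGsmall}. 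That theorem, applied to the small pair $(\bU,J)$ with closed set $\bU\cap\bY$, naturally produces the stabiliser $J_{\bU\cap\bY}$ rather than $J_\bY$, so the identification $J_{\bY\cap\bU}=J_\bY$ is genuinely needed to line it up with the $\ind_{J_\bY}^J$ output of Proposition~\ref{H0HYindBG}; the paper's proof takes a generic $\bU$-small $H$ and leaves this step implicit.
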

\begin{proof}Let $\cN \in \cC_{\bX/G_{\bY}}^{\bY}$ and let $\cM := \ind_{G_{\bY}}^G(\cN)$. Then $\cM$ is an object in $\cC_{\bX/G}^{G\bY}$ by Lemma \ref{IndSupp} and $\cH^0_{\bY}(\cM)$ is an object in $\cC^{\bY}_{\bX/G_{\bY}}$ by Theorem \ref{SheafyLocCohCoadm}. Fix $\bU \in \bX_w(\cT)$ and choose a $\bU$-small compact open subgroup $H$ of $G$. Now, $\cH^0_{\bY}(\cM)(\bU) = H^0_{\bU \cap \bY}( \cM_{|\bU} ) = H^0_{\bU \cap \bY}\left( \cH^0_{H(\bU \cap \bY)}(\cM_{|\bU}) \right)$. By Proposition \ref{H0HYindBG}, we have an isomorphism 
$\cH^0_{H(\bU \cap \bY)}(\cM_{|\bU}) \cong \ind_{H_{\bY}}^H \left(\Res^{G_{\bY}}_{H_{\bY}} \cN_{|\bU}\right)$ in $\cC_{\bU/H}$. Because the $G$-orbit of $\bY$ is regular in $\bX$, the $H$-orbit of $\bU \cap \bY$ is regular in $\bU$. Applying Theorem \ref{H0indBGsmall} to $\Res^{G_{\bY}}_{H_{\bY}} \cN_{|\bU} \in \cC^{\bU \cap \bY}_{\bU/H_{\bY}}$ gives a natural $\w\cD(\bU,H_{\bY})$-linear isomorphism $\cN(\bU) \congs H^0_{\bU \cap \bY}(\ind_{H_{\bY}}^H(\Res^{G_{\bY}}_{H_{\bY}}\cN_{|\bU}) )$. Combining all these facts gives us a continuous $\cD(\bU)$-linear isomorphism
\[ \eta_{\cN}(\bU) : \cN(\bU) \congs \cH^0_{\bY}(\cM)(\bU).\]
It is straightforward to check that this isomorphism does not depend on the choice of $H$. We leave to the reader the tasks of verifying that $\eta_{\cN}$ commutes with the restriction maps and the $G_{\bY}$-equivariant structure of the sheaves $\cN$ and $\cH^0_{\bY}(\ind_{G_{\bY}}^G(\cN))$, and that $\eta_{\cN}$ is functorial in $\cN$.
\end{proof}

Our next goal will be to exhibit a functorial isomorphism 
\[\epsilon : \ind_{G_{\bY}}^G \hsp \circ \hsp \cH_{\bY}^0 \congs 1_{\cC_{\bX/G}^{G\bY}}.\]
We are again unable to do this without the LSC. First we will show that the LSC implies the following stronger equivariant version.

\begin{prop}\label{LSCb}Suppose that $(\bX,\bY,G)$ satisfies the LSC. Then for all $\bU \in \bX_w(\cT)$ there is a $\bU$-small compact open subgroup $J$ of $G$ such that whenever $s \bY \cap \bU \neq \emptyset$ for some $s \in G$, we have $J_{s \bY \cap \bU} = J_{s \bY}$.
\end{prop}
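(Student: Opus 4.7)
\smallskip

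The plan is to reduce to finitely many $G$-translates of $\bY$ using co-compactness of $G_{\bY}$, apply the LSC separately to each representative (after translating), and then take a normal-core to make the simultaneous condition robust under conjugation.

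First I would observe that the LSC is translation-invariant: if $(\bX,\bY,G)$ satisfies the LSC, then so does $(\bX, s\bY, G)$ for any $s \in G$. Indeed, if $\bV \in \bX_w(\cT)$ with $s\bY \cap \bV \neq \emptyset$, then $s^{-1}\bV \in \bX_w(\cT)$ with $\bY \cap s^{-1}\bV \neq \emptyset$; applying the LSC to $s^{-1}\bV$ yields an $s^{-1}\bV$-small $J'$ with $J'_{\bY \cap s^{-1}\bV} = J'_{\bY}$, and then $sJ's^{-1}$ is $\bV$-small with the required property for $s\bY$.

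Next, fix $\bU \in \bX_w(\cT)$ and a $\bU$-small compact open subgroup $J_0$ of $G$. By the co-compactness hypothesis and Lemma \ref{CompactGmodB}, the set $J_0 \backslash G / G_{\bY}$ is finite. Since $J_0$ stabilises $\bU$ and $G_{\bY}$ stabilises $\bY$, the condition $s \bY \cap \bU \neq \emptyset$ depends only on the double coset $J_0 s G_{\bY}$, so I can pick finitely many representatives $s_1, \ldots, s_n$ for those double cosets meeting $\bU$. For each $i$, the previous paragraph supplies a $\bU$-small subgroup $J_i$ of $G$ with $(J_i)_{s_i \bY \cap \bU} = (J_i)_{s_i \bY}$; after replacing $J_i$ with $J_i \cap J_0$ I may assume $J_i \leq J_0$ (any open subgroup of a $\bU$-small subgroup is $\bU$-small, as it can use the same formal model and Lie lattice). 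Let $J$ be the normal core of $J_1 \cap \cdots \cap J_n$ inside $J_0$; since $J_0$ is compact, $J$ is an open normal subgroup of $J_0$ contained in each $J_i$, and thus $\bU$-small.

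To finish, given any $s \in G$ with $s\bY \cap \bU \neq \emptyset$, write $s = j_0 s_i g$ with $j_0 \in J_0$, $i \in \{1,\ldots, n\}$ and $g \in G_{\bY}$. Because $j_0 \bU = \bU$ and $G_{s\bY} = j_0 G_{s_i\bY} j_0^{-1}$, and because $J$ is $J_0$-normal,
\[
J_{s\bY \cap \bU} = J \cap j_0 G_{s_i\bY \cap \bU} j_0^{-1} = j_0 \bigl( J \cap (J_i)_{s_i\bY \cap \bU} \bigr) j_0^{-1} = j_0 \bigl( J \cap (J_i)_{s_i\bY} \bigr) j_0^{-1} = J_{s\bY},
\]
which is the desired equality. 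I expect the main obstacle to be exactly the step that forces the normal-core construction: a naive intersection $J_1 \cap \cdots \cap J_n$ would let one verify the LSC only at the chosen representatives $s_i$, and conjugating by an arbitrary $j_0 \in J_0$ would in general move one outside $J_i$; taking the normal core in $J_0$ is what makes the property $J_0$-conjugation invariant, hence valid for every $s$ in the relevant double cosets.
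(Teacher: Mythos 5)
Your proof is correct. It follows the same overall strategy as the paper's argument---apply the LSC after translating $\bU$ by $s^{-1}$, use co-compactness of $G_{\bY}$ to reduce to finitely many translates of $\bY$ that meet $\bU$, then intersect the resulting local-stabiliser subgroups---but the two proofs organise the reduction to finitely many pieces differently, and this has a real consequence. You fix a $\bU$-small $J_0$ at the outset and use Lemma~\ref{CompactGmodB} to get finitely many $J_0,G_{\bY}$-double cosets; since $J_0$ was chosen before the $J_i$, the element $j_0$ in the decomposition $s = j_0 s_i g$ is merely in $J_0$ and need not lie in $J_i$, which is precisely why you must pass to the normal core of $J_1 \cap \cdots \cap J_n$ in $J_0$ to guarantee $j_0^{-1} J j_0 = J \leq J_i$. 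The paper instead covers $Z := \{s \in G : s\bY \cap \bU \neq \emptyset\}$ modulo $G_{\bY}$ by cosets $J(s)s$ where $J(s) := sH(s)s^{-1}$ already depends on $s$, and extracts a finite subcover by compactness of $Z/G_{\bY}$; then with $J := \bigcap_i J(s_i)$ and $s = g s_i h$ one has $g \in J(s_i)$ by construction, so $g^{-1}xg \in J(s_i)$ for $x \in J \leq J(s_i)$ comes for free and no normal-core step is needed. The tradeoff: the paper's finer covering silently encodes the conjugation-invariance, while your version makes it an explicit step; your use of Lemma~\ref{CompactGmodB} is a bit more elementary than the paper's point-set compactness argument on $Z/G_{\bY}$. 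Both are valid, and your closing remark correctly diagnoses that the normal core is exactly what makes the argument survive conjugation by arbitrary $j_0 \in J_0$.
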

\begin{proof} Let $Z := \{s \in G : s \bY \cap \bU \neq \emptyset\}$. Note that $Z$ and $G \backslash Z$ are both stable under left-multiplication by $G_{\bU}$ and right-multiplication by $G_{\bY}$. Since $G_{\bU}$ is an open subgroup of $G$ by \cite[Definition 3.1.8]{EqDCap}, it follows that $Z$ is a clopen subset of $G$. Since $Z \cdot G_{\bY} = Z$, the image $Z/G_{\bY}$ of $Z$ in $G / G_{\bY}$ is also clopen, and hence compact since we're assuming that $G_{\bY}$ is co-compact in $G$.

For each $s \in Z$, the intersection $\bY \cap s^{-1}\bU$ is non-empty. We can therefore apply the LSC to $s^{-1}\bU \in \bX_w(\cT)$ to find an $s^{-1}\bU$-small compact open subgroup $H(s)$ of $G$ such that $H(s)_{\bY \cap s^{-1}\bU} = H(s)_{\bY}$. Then $J(s) := s H(s) s^{-1}$ is $\bU$-small, and $J(s)_{s \bY \cap \bU} = J(s)_{s \bY}$. Note that $J(s) s \subset Z$ for each $s \in Z$ because $J(s) \leq G_{\bU}$ and $G_{\bU} Z = Z$. The images of the open cosets $\{J(s)s : s \in G\}$ in $Z / G_{\bY}$ form an open covering, which has a finite subcovering since $Z/G_{\bY}$ is compact.  Choose $s_1,\cdots,s_n \in G$ such that $G = \bigcup_{i=1}^n J(s_i) s_i G_{\bY}$ and let $J := \bigcap_{i=1}^n J(s_i)$. Then $J$ is still a $\bU$-small compact open subgroup of $G$. 

Suppose now that $s \in G$ is such that $s \bY \cap \bU \neq \emptyset$; it will be enough to show that $J_{s \bY \cap \bU} \leq G_{s \bY}$. Then $s \in Z$, so $s \in J(s_i) s_i G_{\bY}$ for some $i = 1,\ldots, n$. Write $s = gs_ih$ for some $g \in J(s_i)$ and $h \in G_{\bY}$; since $s\bY = gs_i\bY$ we may assume that $h = 1$. Suppose $x \in J_{s \bY \cap \bU}$. Now $g \in J(s_i) \leq G_{\bU}$ implies that $s \bY \cap \bU = g s_i \bY \cap \bU = g(s_i \bY \cap \bU)$, and hence $g^{-1}xg$ preserves $s_i \bY \cap \bU$. But $x \in J$ and $g$ both lie in $J(s_i)$, so $g^{-1}x g \in J(s_i)_{s_i \bY \cap \bU} = J(s_i)_{s_i \bY}$ by construction. Hence $g^{-1}x g$ preserves $s_i \bY$ and therefore $x$ preserves $gs_i \bY = s \bY$ as required.
\end{proof}

\begin{thm}\label{CounitIsom}Suppose that $(\bX, \bY, G)$ satisfies the LSC, and that the $G$-orbit of $\bY$ is regular in $\bX$.  Then there is a natural isomorphism 
\[\epsilon : \ind_{G_{\bY}}^G \hsp \circ \hsp \cH^0_{\bY}  \stackrel{\cong}{\longrightarrow} 1_{\cC_{\bX/G}^{G\bY}}.\]
\end{thm}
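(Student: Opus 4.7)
The plan is to construct the counit $\epsilon_{\cM}$ by using the $G$-equivariant structure of $\cM$, and then reduce bijectivity to the isomorphism already supplied by Theorem \ref{LocalCohoCoadm}(b) together with Lemma \ref{IndBGonSmallAffs}.

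First I would define $\epsilon_{\cM}$ on sections. Fix $\bU \in \bX_w(\cT)$ and a $\bU$-small compact open subgroup $J$ of $G$. By Corollary \ref{NUHindepofH}, $\ind_{G_{\bY}}^G(\cH^0_{\bY}(\cM))(\bU)$ is identified with $M(\bU,J) = \bigoplus_{Z \in J\backslash G / G_{\bY}} M(\bU,J,Z)$, and for any representative $s \in Z$ the summand $M(\bU,J,Z)$ is canonically isomorphic to $\w\cD(\bU,J) \WO{\w\cD(\bU, {}^s G_{\bY} \cap J)} [s] \hsp \cH^0_{\bY}(\cM)(s^{-1}\bU)$. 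I would then set
\[ \epsilon_{\cM}(\bU)\left( a \hsp\w\otimes\hsp [s]m \right) \hsp := \hsp a \cdot s^{\cM}(\bU)(m), \]
noting that for $m \in H^0_{\bY \cap s^{-1}\bU}(\cM_{|s^{-1}\bU})$, the image $s^{\cM}(m)$ lies in $H^0_{s\bY \cap \bU}(\cM_{|\bU}) \subseteq \cM(\bU)$. Routine verifications show this formula is independent of the choice of representative $s \in Z$ (compatibly with the maps $\eta_{(h,b)}^J$ of Proposition \ref{MUHfunctor}), independent of the auxiliary $J$ (compatibly with the $\alpha_H^J$ of Proposition \ref{NUHtoJ}), and intertwines both the restriction maps $\tau_\bV^\bU$ and the equivariant structure isomorphisms $\phi_H(g)$ of Lemma \ref{Phig}. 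These assemble to a morphism $\epsilon_{\cM} : \ind_{G_{\bY}}^G(\cH^0_{\bY}(\cM)) \to \cM$ in $\cC_{\bX/G}^{G\bY}$, evidently functorial in $\cM$.

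Next I would verify that $\epsilon_{\cM}$ is an isomorphism. When $(\bX,G)$ is small and $\bU = \bX$, Lemma \ref{IndBGonSmallAffs} applied to $\cN = \cH^0_{\bY}(\cM)$ identifies $\Gamma(\bX, \ind_{G_{\bY}}^G(\cH^0_{\bY}(\cM)))$ with $\w\cD(\bX,G) \WO{\w\cD(\bX,G_{\bY})} H^0_{\bY}(\cM)$, and under this identification $\epsilon_{\cM}(\bX)$ becomes the canonical map $a \otimes m \mapsto a \cdot m$, which is exactly the isomorphism established in Theorem \ref{LocalCohoCoadm}(b). For general $\bX$ and a small pair $(\bU,J)$, I would combine the support decomposition of $\cM_{|\bU}$ from Lemma \ref{MsupponGY}(b) with the Mackey-type decomposition of $\ind_{G_{\bY}}^G(\cH^0_{\bY}(\cM))_{|\bU}$ from Lemma \ref{SheafyMackey}: by construction $\epsilon_{\cM}(\bU)$ respects both decompositions (indexed by the $J, G_{\bY}$-double cosets), and each matching pair of summands reduces to the small case via the $s_i$-twist $[s_i]s_{i,\ast}\cM$ of Proposition \ref{sTwist}. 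Having shown $\epsilon_{\cM}(\bU)$ is a $\w\cD(\bU,J)$-linear isomorphism for every small $(\bU,J)$, Proposition \ref{LocDeterm} promotes $\epsilon_{\cM|\bU}$ to a sheaf isomorphism in $\cC_{\bU/J}$, and these glue by naturality to a global isomorphism in $\cC_{\bX/G}^{G\bY}$.

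The main obstacle will be the bookkeeping in the first step. Verifying that the elementary formula $a \hsp\w\otimes\hsp [s]m \mapsto a \cdot s^{\cM}(m)$ descends through all the tensor product, direct sum, and inverse limit identifications packaged into Definition \ref{DefnOfM}, and intertwines the $G$-equivariant structures on source and target, is formally routine but delicate: it parallels essentially every compatibility check performed throughout $\S \ref{IndFunctor}$ in the construction of $\ind_B^G$ itself. All such checks follow mechanically from the defining identity $g^{\cM}(a \cdot u) = \w g(a) \cdot g^{\cM}(u)$ for equivariant sheaves, and no genuinely new technical idea is required beyond those already deployed in Theorem \ref{LocalCohoCoadm} and in the proof of Proposition \ref{UnitIsom}.
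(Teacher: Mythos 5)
Your formula $a\,\w\otimes\,[s]m \mapsto a\cdot s^{\cM}(\bU)(m)$ is exactly the map the paper uses, and the high-level strategy (match the Mackey decomposition of the induced sheaf against the support decomposition of $\cM_{|\bU}$, then apply Theorem \ref{LocalCohoCoadm}(b) summand-by-summand) is also the paper's. However, there is a genuine gap: you never invoke the Local Stabiliser Condition, which is a hypothesis of the theorem and is essential at precisely the step you call routine.

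Concretely, the issue is a mismatch of subalgebras. For a small pair $(\bU,J)$ and a representative $s$ with $s\bY \cap \bU \neq \emptyset$, the summand of the Mackey decomposition (via Definition \ref{DefnOfM} and Lemma \ref{IndBGonSmallAffs}) is
\[
\w\cD(\bU,J) \WO{\w\cD(\bU,\, {}^s G_{\bY}\cap J)} [s]\,\cH^0_{\bY}(\cM)(s^{-1}\bU),
\]
and note that ${}^sG_{\bY}\cap J = J_{s\bY}$. On the other hand, the isomorphism furnished by Theorem \ref{LocalCohoCoadm}(b), applied to $\cH^0_{J(s\bY\cap\bU)}(\cM_{|\bU})\in\cC^{J(s\bY\cap\bU)}_{\bU/J}$, reads
\[
\w\cD(\bU,J) \WO{\w\cD(\bU,\, J_{s\bY\cap\bU})} H^0_{s\bY\cap\bU}(\cM_{|\bU}) \;\tocong\; H^0_{J(s\bY\cap\bU)}(\cM_{|\bU}).
\]
The twisted module $[s]\cH^0_{\bY}(\cM)(s^{-1}\bU)$ is identified with $H^0_{s\bY\cap\bU}(\cM_{|\bU})$ via $s^{\cM}$, so your $\epsilon^{J,s}_{\cM}(\bU)$ factors as the natural surjection from the $\WO{\w\cD(\bU,J_{s\bY})}$-tensor product onto the $\WO{\w\cD(\bU,J_{s\bY\cap\bU})}$-tensor product, followed by the isomorphism above. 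Since one always has $J_{s\bY}\le J_{s\bY\cap\bU}$ (elements of $J$ already preserve $\bU$), but in general this containment is strict (cf. Example \ref{NoLSC}), that first surjection need not be injective. The paper closes this gap by choosing $J$ using Proposition \ref{LSCb}, which is the uniform, equivariant consequence of the LSC: it produces a single $\bU$-small $J$ with $J_{s\bY\cap\bU} = J_{s\bY}$ for \emph{every} $s$ with $s\bY\cap\bU\neq\emptyset$, making the two subalgebras coincide. Your write-up takes an arbitrary $\bU$-small $J$ and does not address this; as it stands, the claim ``each matching pair of summands reduces to the small case'' does not go through. The fix is not mere bookkeeping --- it is the one place in the argument where the LSC actually enters, and you should replace your arbitrary choice of $J$ by one supplied by Proposition \ref{LSCb}.
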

\begin{proof} Let $\cM \in \cC^{G\bY}_{\bX/G}$ and let $\cN := \cH^0_{\bY}(\cM)$, an object in $\cC_{\bX/G_{\bY}}^{\bY}$ by Theorem \ref{SheafyLocCohCoadm}. Fix $\bU \in \bX_w(\cT)$ and choose a $\bU$-small compact open subgroup $J$ of $G$ satisfying the conclusion of Proposition \ref{LSCb}. Then $\ind_{G_{\bY}}^G(\cN)(\bU)$ is a coadmissible $\w\cD(\bU,J)$-module by \cite[Proposition 3.6.10, Theorem 4.4.3]{EqDCap} and Theorem \ref{IndBGCoadm}. We will first construct a continuous $\cD(\bU)$-linear isomorphism
\[ \epsilon_{\cM}(\bU) : \ind_{G_{\bY}}^G(\cN)(\bU) \congs \cM(\bU).\]
Let $s \in G$. Then $\cH^0_{J (s \bY \cap \bU)}(\cM_{|\bU}) \in \cC_{\bU/J}^{J(s\bY \cap \bU)}$ by Lemma \ref{MsupponGY}(c), so Theorem \ref{LocalCohoCoadm}(b) gives us a $\w\cD(\bU,J)$-linear isomorphism
\[ \w\cD(\bU,J) \WO{\w\cD(\bU, J_{s \bY \cap \bU})} H^0_{s\bY \cap \bU}(\cM_{|\bU}) \tocong H^0_{J(s \bY \cap \bU)}(\cM_{|\bU}).\]
Because of our choice of $J$, Proposition \ref{LSCb} tells us that if $s \bY \cap \bU \neq \emptyset$, then
\[J_{s\bY \cap \bU} = J_{s \bY} = {}^sG_{\bY} \cap J. \]
 Also, $\cH^0_{\bY}(\cM)(s^{-1}\bU) = H^0_{s^{-1}\bU \cap \bY}(\cM_{|s^{-1}\bU})$, and there is a natural $\w\cD(\bU, {}^sG_{\bY} \cap J)$-linear isomorphism $[s] H^0_{s^{-1}\bU \cap \bY}(\cM_{|s^{-1}\bU}) \tocong H^0_{s\bY \cap \bU}(\cM_{|\bU})$ induced by $s^{\cM}(s^{-1}\bU) : \cM(s^{-1}\bU) \to \cM(\bU)$.  Putting all these facts together gives a natural $\w\cD(\bU,J)$-linear isomorphism for each\footnote{If $s \bY \cap \bU = \emptyset$ then both terms are zero, and we define $\epsilon_{\cM}^{J,s}$ to be the zero map.} $s \in G$:
\[ \epsilon_{\cM}^{J,s}(\bU) \quad :\quad  \w\cD(\bU,J) \WO{\w\cD(\bU, {}^sG_{\bY} \cap J)} [s] \cH^0_{\bY}(\cM)(s^{-1}\bU) \tocong H^0_{J(s\bY\cap \bU)}(\cM_{|\bU}).\]
This map is given is by $a \w\otimes [s] m \mapsto a \cdot s^{\cM}(m)$ for $a \in \w\cD(\bU,J), m \in H^0_{s^{-1}\bY\cap \bU}(\cM_{s^{-1}\bU})$. Using this description, we can verify that if $t = h s b^{-1}$ for some $h \in J$ and $b \in G_{\bY}$, then  $\epsilon^{J,s}_{\cM}(\bU) = \epsilon^{J,t}_{\cM}(\bU) \circ \eta^J_{(h,b)}$ where $\eta^J_{(h,b)}$ are the maps appearing in the proof of Proposition \ref{MUHfunctor}. Thus we can take the limit of these maps to obtain an $\w\cD(\bU,J)$-linear isomorphism
\[ \epsilon^{J,Z}_{\cM}(\bU) \quad : \quad \lim\limits_{s\in Z} \w\cD(\bU,J) \WO{\w\cD(\bU, {}^sG_{\bY} \cap J)} [s] \cH^0_{\bY}(\cM)(s^{-1}\bU) \tocong H^0_{J(Z\bY\cap \bU)}(\cM_{|\bU})\]
for every $J, G_{\bY}$-double coset $Z$ in $G$. Taking the direct sum over all double cosets in $J \backslash G / G_{\bY}$ and using Lemma \ref{MsupponGY}(b) gives a $\w\cD(\bU,J)$-linear isomorphism
\[ \epsilon^J_{\cM}(\bU) : M(U,J) \congs \cM(\bU)\]
using the notation from Definition \ref{DefnOfMUH}(b). Finally, let $J'$ be an open subgroup of $J$, and recall the $\w\cD(\bU,J')$-linear isomorphism $\alpha^J_{J'} : M(U,J') \to M(U,J)$ constructed in Proposition \ref{NUHtoJ}. Once we show that 
\begin{equation}\label{EpsCompat} \epsilon^{J}_{\cM}(\bU) \circ \alpha^J_{J'} = \epsilon^{J'}_{\cM}(\bU),\end{equation} 
it will follow that the maps $\epsilon^J_{\cM}$ assemble correctly to give the required continuous $\cD(\bU)$-linear isomorphism
\[ \epsilon_{\cM}(\bU) := \lim_{J} \epsilon^J_{\cM}(\bU) : \ind_{G_{\bY}}^G(\cN)(\bU) \congs \cM(\bU).\]
To see that $(\ref{EpsCompat})$ holds, after unwinding the definitions we find that it is enough to check that for each $y \in J$ and $s \in G$, the following diagram is commutative:
\[ \xymatrix{ \w\cD(\bU,J') \WO{\w\cD(\bU, {}^{ys}G_{\bY} \cap J')} [ys] \cN( (ys)^{-1}\bU) \ar[drr]^(0.6){\epsilon^{J', ys}_{\cM}(\bU)} \ar[d]_{\alpha^y} && \\ \w\cD(\bU,J) \WO{\w\cD(\bU, {}^{s}G_{\bY} \cap J)} [s] \cN( s^{-1}\bU) \ar[rr]_(0.6){\epsilon^{J, s}_{\cM}(\bU)} && \cM(\bU)}\]
Here the vertical arrow $\alpha^y$ comes from the proofs of Propositions \ref{NUHtoJ} and \ref{Mackey} and is given by $\alpha^y( a \w\otimes [ys] m) = a \gamma(y) \w\otimes [s]m$ for every $a \in \w\cD(\bU,J')$ and $m \in \cN((ys)^{-1}\bU)$, because $y^{-1}\bU = \bU$ since $y \in J \leq G_{\bU}$. We can compute that
\[ \epsilon^{J,s}_{\cM}(\bU)(\alpha^y(a \w\otimes [ys] m)) = a \gamma(y) \cdot s^{\cM}(m) = a \cdot (ys)^{\cM}(m) =\epsilon^{J',ys}_{\cM}(\bU)(a \w\otimes [ys] m)\]
which shows that the diagram commutes.

To prove that $\epsilon_{\cM} : \ind_{G_{\bY}}^G(\cH^0_{\bY}(\cM)) \to \cM$ is an isomorphism in $\cC_{\bX/G}$, it remains to show that $\epsilon_{\cM}$ commutes with restriction maps in the sheaves $\ind_{G_{\bY}}^G(\cH^0_{\bY}(\cM))$ and $\cM$, and also that it commutes with the $G$-equivariant structure on both of these sheaves. To complete the proof, we must also check that $\epsilon_{\cM}$ is functorial in $\cM$. We leave these three straightforward verifications to the reader.
\end{proof}

\begin{cor}\label{TechIndEq}  Let $G$ be a $p$-adic Lie group acting continuously on the smooth rigid analytic space $\bX$, and let $\bY$ be a Zariski closed subset of $\bX$. Suppose that
\be 
\item the stabiliser $G_{\bY}$ of $\bY$ is co-compact in $G$,
\item the $G$-orbit of $\bY$ is regular in $\bX$, and
\item $(\bX, \bY, G)$ satisfies the LSC.
\ee Then the functors
\[ \cH^0_{\bY} : \cC_{\bX/G}^{G \bY} \to \cC_{\bX / G_{\bY}}^{\bY} \qmb{and} \ind_{G_{\bY}}^G :  \cC_{\bX / G_{\bY}}^{\bY} \to \cC_{\bX/G}^{G \bY} \]
are mutually inverse equivalences of categories.
\end{cor}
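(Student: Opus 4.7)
The plan is to assemble this result directly from the pieces already assembled in the preceding subsections, since essentially all of the technical work has been carried out. The hypotheses (a), (b), (c) are precisely the running assumptions under which the key propositions of \S\ref{PfIndEq} were established, so the proof should amount to a short verification that the natural transformations constructed earlier give the required equivalence.

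First I would check that both functors are well-defined on the subcategories in question. The functor $\ind_{G_{\bY}}^G : \cC_{\bX/G_{\bY}}^{\bY} \to \cC_{\bX/G}^{G\bY}$ is produced by combining Theorem~\ref{IndBGCoadm} (which shows $\ind_B^G$ lands in $\cC_{\bX/G}$) with Lemma~\ref{IndSupp} (which tracks supports); this only uses co-compactness of $G_{\bY}$, so hypothesis (a) suffices. The functor $\cH^0_{\bY} : \cC_{\bX/G}^{G\bY} \to \cC_{\bX/G_{\bY}}^{\bY}$ is provided by Theorem~\ref{SheafyLocCohCoadm}, which requires both the regularity of the $G$-orbit of $\bY$ in $\bX$ and the LSC, i.e.\ hypotheses (b) and (c).

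Next I would invoke the natural isomorphisms already constructed. Proposition~\ref{UnitIsom} produces a natural isomorphism
\[\eta : 1_{\cC_{\bX/G_{\bY}}^{\bY}} \congs \cH^0_{\bY} \circ \ind_{G_{\bY}}^G,\]
and Theorem~\ref{CounitIsom} produces a natural isomorphism
\[\epsilon : \ind_{G_{\bY}}^G \circ \cH^0_{\bY} \congs 1_{\cC_{\bX/G}^{G\bY}}.\]
Both results require exactly hypotheses (a), (b), (c), which are precisely the assumptions in the Corollary. Together $\eta$ and $\epsilon$ exhibit $\cH^0_{\bY}$ and $\ind_{G_{\bY}}^G$ as mutually quasi-inverse functors, proving the equivalence of categories.

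There is no real obstacle at this stage: the work has been done in Theorem~\ref{SheafyLocCohCoadm}, Proposition~\ref{UnitIsom}, and Theorem~\ref{CounitIsom}. The only thing worth remarking on, if one wishes to be thorough, is that one should observe $\eta$ and $\epsilon$ are compatible in the sense required of unit and counit of an adjunction; but for the mere statement of the equivalence of categories it is enough to exhibit natural isomorphisms in both directions, so the proof can be written in two or three lines.
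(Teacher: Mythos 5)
Your proposal is correct and matches the paper's proof essentially verbatim: the paper also cites Theorem~\ref{SheafyLocCohCoadm} and Lemma~\ref{IndSupp} to see that the two functors land in the right subcategories, and then concludes directly from Proposition~\ref{UnitIsom} and Theorem~\ref{CounitIsom}. Your closing remark about unit/counit compatibility being unnecessary for a bare statement of equivalence is also in line with the paper, which does not address it.
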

\begin{proof} The functor $\cH^0_{\bY}$ sends $\cC_{\bX/G}^{G\bY}$ into $\cC_{\bX/G_{\bY}}^{\bY}$ by Theorem \ref{SheafyLocCohCoadm}, and the functor $\ind_{G_{\bY}}^G$ sends $\cC_{\bX / G_{\bY}}^{\bY}$ into $\cC_{\bX/G}^{G \bY}$ by Lemma \ref{IndSupp}. The result now follows from Proposition \ref{UnitIsom} and Theorem \ref{CounitIsom}.
\end{proof}
It seems to the author that the three conditions in Corollary \ref{TechIndEq} are close to being necessary in order for this equivalence to hold. Here is an example which shows that the LSC condition from Definition \ref{LSC} is not vacuous.

\begin{ex}\label{NoLSC} Let $\bX = \Sp K\langle x,y\rangle$, let $\bY := V(xy)$ and $G = \overline{\langle g \rangle} \cong \Zp$ be a pro-$p$-cyclic group acting on $\bX$ by the rule
\[ g^\lambda \cdot (a,b) = (a + p^2 \lambda, b) \qmb{for all} \lambda \in \Zp.\]
Then $(\bX,\bY,G)$ does \emph{not} satisfy the LSC.
\end{ex}
\begin{proof} Let $\bU := X(p/x)$; this is an affinoid subdomain of $\bX$. The group $G$ preserves the locus $\{(a,b) \in \bX : |a| < |p|\}$ and therefore it also preserves $\bU$. We can compute that $\bY \cap \bU = \Sp K \langle x , y , p / x \rangle / (xy)$ is the locus $\{(a,0) \in \bU \}$, and this is again preserved by $G$. Therefore $J_{\bY \cap \bU} = J$ for any open subgroup $J$ of $G$. However if $g^\lambda \in J_{\bY}$ for some $\lambda \in \Zp$, then since $(0,1) \in \bY$, we must have $g^\lambda(0,1) = (p^2\lambda,1) \in \bY$, which is only possible if $\lambda = 0$. Thus $J_{\bY}$ is trivial and never equal to $J_{\bY \cap \bU}$.
\end{proof}
Note that the Zariski closed subset $\bY$ is connected in this example, so we need to impose a stronger condition on $\bY$ to in order to be able to show that the LSC holds. Recall the notion of \emph{irreducible} rigid analytic spaces from \cite[Definition 2.2.2]{ConradIrreducible}, and note that the subset $\bY$ in Example \ref{NoLSC} is \emph{not} irreducible. In the remainder of $\S \ref{PfIndEq}$, all rigid analytic varieties are understood to be reduced.

\begin{lem}\label{Irr} Let $\bZ$ be an irreducible rigid analytic variety.
\be
\item Every non-empty connected affinoid subdomain of $\bZ$ is irreducible.
\item Suppose that $\bZ$ is affinoid. Then $\cO(\bZ)$ is an integral domain.
\ee \end{lem}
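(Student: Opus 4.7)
The plan is to deduce (a) from (b) together with the general theory of irreducible components of reduced rigid analytic spaces developed in \cite{ConradIrreducible}. I would prove (b) first since it provides the ring-theoretic criterion that reduces (a) to a statement about minimal primes of affinoid algebras.

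For (b), the strategy is the standard one: for a reduced affinoid rigid space, \cite{ConradIrreducible} sets up a bijection between the irreducible components and the minimal prime ideals of the ring of global sections. Applied to the irreducible $\bZ$, this produces a unique minimal prime $\mathfrak{p}$ of $\cO(\bZ)$. The reducedness hypothesis on $\bZ$ (recall that in this part of the paper all rigid analytic varieties are reduced) gives that the nilradical of $\cO(\bZ)$ is zero; since the nilradical of a Noetherian ring equals the intersection of its minimal primes, we conclude $\mathfrak{p}=0$, and hence $\cO(\bZ)$ is a domain.

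For (a), let $\bU$ be a non-empty connected affinoid subdomain of $\bZ$. By (b) applied to $\bU$, it suffices to show $\bU$ is irreducible, equivalently that $\cO(\bU)$ has a unique minimal prime. The plan is to invoke the description from \cite{ConradIrreducible} of the irreducible components of an admissible open subset in a reduced rigid space: they arise from the non-empty traces of the ambient irreducible components. Since $\bZ$ is irreducible, its unique irreducible component is $\bZ$ itself, and so the only candidate for an irreducible component of $\bU$ is $\bU \cap \bZ = \bU$. The connectedness hypothesis then rules out $\bU$ being expressible as a disjoint union of several independent pieces that might otherwise each give separate components, leaving $\bU$ with a single irreducible component. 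Combining this with (b) applied to $\bU$ gives the statement.

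The delicate step is to apply the restriction principle from \cite{ConradIrreducible} correctly to affinoid subdomains (which need not be Zariski open in $\bZ$) and to verify that the connectedness of $\bU$ genuinely forces the number of irreducible components to one rather than merely to a finite set whose union happens to be $\bU$; this is where the hypothesis that $\bU$ is connected, as opposed to merely non-empty, is essential.
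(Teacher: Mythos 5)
Part (b) is correct and takes essentially the same route as the paper, which cites \cite[Lemma 2.2.3]{ConradIrreducible} for the bijection between irreducible components and minimal primes of a reduced affinoid, and then uses reducedness to force the unique minimal prime to be zero.

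Part (a) contains a genuine gap, and you flagged the right place yourself. The restriction principle you invoke --- that the irreducible components of $\bU$ are the non-empty traces of the irreducible components of $\bZ$ --- is established in \cite{ConradIrreducible} for \emph{Zariski-open} subsets, and an affinoid subdomain is usually not Zariski-open. For a general affinoid subdomain it fails: $\bU$ can acquire irreducible components that do not extend to global components of $\bZ$. Consider $\bZ = \Sp\, K\langle x,y\rangle/(y^2 - x^2(1+x))$; since $\sqrt{1+x}\notin K\langle x\rangle$, the ring $\cO(\bZ)$ is a domain and $\bZ$ is irreducible. On a sufficiently small affinoid subdomain $\bU$ of $\bZ$ around the node $(0,0)$ the square root does converge, $\cO(\bU)$ becomes a quotient by a product of two distinct primes, and $\bU$ has two irreducible components, neither of which is a trace of a component of $\bZ$. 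Moreover these two branches meet at the node, so $\bU$ is connected. This defeats both halves of your argument at once: the restriction principle does not hold, and connectedness only rules out the components being pairwise disjoint, not their being several and intersecting, which is exactly the configuration that occurs.

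The paper's proof of (a) runs entirely through local rings. Assuming $\bU$ is not irreducible, it splits the minimal primes of $\cO(\bU)$ into two groups $P$ and $Q$, uses connectedness of $\bU$ to produce a point $x$ lying on both $V(P)$ and $V(Q)$, shows that the localisation $A_{\mathfrak m}$ (and hence $\cO_{\bU,x}$, via \cite[7.3.2/3]{BGR}) is not a domain, and then plays this off against the isomorphism $\cO_{\bU,x}\cong\cO_{\bZ,x}$ for affinoid subdomains together with a criterion it extracts from \cite[Lemma 2.1.1]{ConradIrreducible} asserting that $\cO_{\bZ,x}$ is a domain when there is a unique irreducible component of $\bZ$ through $x$. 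None of this local-ring mechanism appears in your proposal; you should recast your argument around $\cO_{\bZ,x}\cong\cO_{\bU,x}$. (You may also wish to scrutinise the precise form of the criterion quoted from \cite[Lemma 2.1.1]{ConradIrreducible}: the nodal example above suggests that for non-normal irreducible $\bZ$ the local rings at singular points may not be domains, so the exact statement being used matters.)
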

\begin{proof} (a) Suppose for a contradiction that $\bV$ is an affinoid subdomain of $\bZ$ which is not irreducible. Let $A$ be the Noetherian algebra $\cO(\bV)$ and let $P_1, \cdots, P_n$ be the distinct minimal primes of $A$.  It follows from \cite[Lemma 2.2.3]{ConradIrreducible} that $n \geq 2$. Now $P_1 \cap \cdots \cap P_n$ is the zero ideal in $A$ because $\bV$ is reduced. Let $P := P_1$ and $Q := P_2 \cap \cdots \cap P_n$; then one checks easily that $\Ann_A(P) = Q$ and $\Ann_A(Q) = P$. Let $\bV_1$ and $\bV_2$ be the Zariski closed subsets of $\bX$ defined by the vanishing of the ideals $P$ and $Q$, respectively. Because $\bV$ is connected, the intersection $\bV_1 \cap \bV_2$ is non-empty. Take $x \in \bV_1 \cap \bV_2$ and let $\mathfrak{m}$ be its maximal ideal in $A$; we will show that the local ring $A_{\mathfrak{m}}$ is not a domain. If the localisation $P_{\mathfrak{m}}$ is zero, then we can find $s \in A \backslash \mathfrak{m}$ such that $P \cdot s = 0$ and then $s \in \Ann_A(P) = Q$. Since $Q$ vanishes on $\bV_2 \ni x$, $Q$ is contained in $\mathfrak{m}$ which implies $s \in \mathfrak{m}$, a contradiction. Similarly, the ideal $Q_{\mathfrak{m}}$ of $A_{\mathfrak{m}}$ is non-zero because $\Ann_A(Q) = P$. Now $P_{\mathfrak{m}} \cdot Q_{\mathfrak{m}} = (PQ)_{\mathfrak{m}}$ is zero because $PQ \subseteq P \cap Q = P_1 \cap \cdots \cap P_n = 0$, so $A_{\mathfrak{m}}$ has zero-divisors as claimed.

By \cite[Proposition 7.3.2/3]{BGR}, there is an injective map $A_{\mathfrak{m}} \hookrightarrow \cO_{\bV, x}$, so $\cO_{\bV,x}$ is not a domain. Because $\bV$ is an affinoid subdomain of $\bZ$, the restriction map $\cO_{\bZ,x} \to \cO_{\bV,x}$ is an isomorphism. However, it was observed in the paragraph following \cite[Definition 2.2.2]{ConradIrreducible} that \cite[Lemma 2.1.1]{ConradIrreducible} implies that $\Spec(\cO_{\bZ,x})$ is irreducible if and only if there is a unique irreducible component of $\bZ$ passing through $x$. Since $\bZ$ is reduced and irreducible by assumption, this implies that $\cO_{\bZ,x}$ \emph{is} an integral domain, which gives the required contradiction.

(b) This follows from \cite[Lemma 2.2.3]{ConradIrreducible}.
\end{proof}

\begin{lem}\label{AffinoidStabs} Suppose that $\bX$ is affinoid and that $\bY$ is irreducible. Let $\bU$ be an affinoid subdomain of $\bX$ such that $\bY \cap \bU \neq \emptyset$. Then 
\[ G_{\bY \cap \bU} \hsp \cap \hsp G_{\bU} \leq G_{\bY}.\]
\end{lem}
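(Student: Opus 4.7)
The plan is to prove that any $g \in G_{\bY \cap \bU} \cap G_{\bU}$ automatically stabilises $\bY$. Since $g\bU = \bU$ and $g(\bY \cap \bU) = \bY \cap \bU$, we have
\[ g\bY \cap \bU = g\bY \cap g\bU = g(\bY \cap \bU) = \bY \cap \bU, \]
and $g\bY$ is another irreducible Zariski closed subset of $\bX$ (since $g$ acts on $\bX$ as an automorphism and $\bY$ is irreducible). I will show that any irreducible Zariski closed subset of $\bX$ whose intersection with $\bU$ equals $\bY \cap \bU$ must itself equal $\bY$, from which $g\bY = \bY$ follows immediately.

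The main technical step is the following injectivity statement: if $\bY'$ is an irreducible reduced affinoid space and $\bV' \subseteq \bY'$ is a non-empty affinoid subdomain, then the restriction $\cO(\bY') \to \cO(\bV')$ is injective. To prove this, Lemma \ref{Irr}(b) gives that $A := \cO(\bY')$ is a Noetherian domain. Given $f \in A$ with $f_{|\bV'} = 0$, I pick any $y \in \bV'$ with corresponding maximal ideal $\fr{m} \subset A$. Because $\bV'$ is an affinoid subdomain of $\bY'$, the completed local rings of $A$ and $\cO(\bV')$ at $\fr{m}$ coincide, and by Krull's intersection theorem the Noetherian local ring $A_{\fr{m}}$ embeds into its $\fr{m}$-adic completion. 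Since $f$ maps to zero in $\cO(\bV')_{\fr{m}}$ and hence in the completion, $f = 0$ in $A_{\fr{m}}$, so there exists $s \in A \setminus \fr{m}$ with $sf = 0$. As $A$ is a domain and $s \neq 0$, we conclude $f = 0$.

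With this in hand, let $\bZ$ be the Zariski closure of $\bY \cap \bU$ in $\bX$. Then $\bZ \subseteq \bY$, and viewing $\bZ$ as a Zariski closed subset of the irreducible affinoid $\bY$, its defining ideal in $\cO(\bY)$ consists of functions vanishing on the non-empty affinoid subdomain $\bY \cap \bU$ of $\bY$. The injectivity statement forces this ideal to be zero, so $\bZ = \bY$. Applying the identical argument with $\bY$ replaced by $g\bY$ --- which is an irreducible Zariski closed subset of $\bX$ containing $g\bY \cap \bU = \bY \cap \bU$ --- yields $\bZ = g\bY$, and therefore $g\bY = \bY$ as required. The only delicate point is the injectivity claim; everything else is formal bookkeeping once that is established.
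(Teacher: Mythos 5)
Your proof is correct and follows the same line as the paper: both reduce the lemma to the injectivity of the restriction map $\cO(\bY) \to \cO(\bY \cap \bU)$, which the paper obtains by citing the proof of \cite[Proposition 4.2]{ABB} and you obtain by a short local-algebra argument (Krull's intersection theorem together with the compatibility of completed local rings along affinoid subdomain inclusions, plus $\cO(\bY)$ being a domain by Lemma \ref{Irr}(b)). The deduction of $g\bY = \bY$ is then the same argument packaged slightly differently --- the paper observes directly that $g$ preserves the kernel of $\cO(\bX) \to \cO(\bY \cap \bU)$, which equals $\cI(\bX)$, whereas you identify the Zariski closure of $\bY \cap \bU$ in $\bX$ with both $\bY$ and $g\bY$ --- but these are logically equivalent.
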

\begin{proof} Since $\bY$ is Zariski closed in the affinoid variety $\bX$, it is itself affinoid. Since it is also irreducible, its coordinate ring $\cO(\bY)$ is an integral domain by Lemma \ref{Irr}(b). Since $\bY \cap \bU$ is a non-empty affinoid subdomain of $\bY$, the proof of \cite[Proposition 4.2]{ABB} now shows that the restriction map $\cO(\bY) \to \cO(\bY \cap \bU)$ is injective.

Let $\cI$ be the coherent subsheaf of $\cO_{\bX}$ consisting of rigid analytic functions vanishing on $\bY$; it follows that the kernel of the restriction map $\cO(\bX) \to \cO(\bY \cap \bU)$ is equal to $\cI(\bX)$. Now if $g \in G_{\bY \cap \bU} \cap G_{\bU}$ then the action of $g$ on $\cO(\bX)$ must preserve this kernel. Hence $g$ preserves $\cI(\bX)$ and therefore also $\bY$.
\end{proof}

We can now give some sufficient conditions on $(\bX,\bY,G)$ that ensure that the LSC is satisfied.
\begin{prop}\label{LocalStabs} Suppose that the following conditions hold:
\be
\item $\bX$ is separated,
\item $\bY$ is irreducible,
\item there is an admissible affinoid covering $\{\bX_i\}$ of $\bX$ such that 
\begin{enumerate}[{(}i{)}]
\item $\bY_i := \bY \cap \bX_i$ is connected for each $i$, and
\item $\bigcap\limits_{\bY_i \neq \emptyset} G_{\bX_i} $ is open in $G$.
\end{enumerate}
\ee
Then $(\bX,\bY,G)$ satisfies the LSC.
\end{prop}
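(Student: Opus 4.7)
The overall strategy is to use condition (c)(ii) to pick a $\bU$-small subgroup $J$ of $G$ that respects the covering, apply Lemma \ref{AffinoidStabs} affinoid-locally inside each $\bX_i$ that meets both $\bU$ and $\bY$, and then globalize with the help of the irreducibility of $\bY$ from (b) together with (c)(i). Only the inclusion $J_{\bY \cap \bU} \leq J_{\bY}$ needs work; the reverse is automatic because $\bU$-smallness forces $J \leq G_{\bU}$, and hence $J_{\bY} \leq J_{\bY \cap \bU}$.

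Setting $T := \{i : \bY_i \neq \emptyset\}$, I choose via \cite[Lemma 3.4.7]{EqDCap} a $\bU$-small compact open subgroup $J$ of $G$ contained in $\bigcap_{i \in T} G_{\bX_i}$, which is open by (c)(ii). Fix $j \in J_{\bY \cap \bU}$ and let $S := \{i \in T : \bY_i \cap \bU \neq \emptyset\}$, which is nonempty since $\bY \cap \bU \neq \emptyset$. For each $i \in S$, the separatedness of $\bX$ makes $\bU \cap \bX_i$ an affinoid subdomain of the affinoid $\bX_i$, while (c)(i) and Lemma \ref{Irr}(a) make $\bY_i$ an irreducible affinoid. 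Since $j$ lies in $G_{\bX_i} \cap G_{\bU} \cap G_{\bY \cap \bU}$, it preserves both $\bU \cap \bX_i$ and $\bY_i \cap \bU$; applying Lemma \ref{AffinoidStabs} to $G_{\bX_i}$ acting on $\bX_i$ then yields $j\bY_i = \bY_i$ for every $i \in S$.

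The hard step is globalization. I would prove the intermediate claim that any Zariski closed $W \subseteq \bY$ containing $\bY \cap \bU$ equals $\bY$. For $i \in S$, the injectivity of $\cO(\bY_i) \to \cO(\bY_i \cap \bU)$, which comes from Lemma \ref{Irr}(b) together with the argument used in Lemma \ref{AffinoidStabs}, shows that $\bY_i \cap \bU$ is Zariski dense in the irreducible affinoid $\bY_i$, so $W \cap \bY_i = \bY_i$, i.e.\ $\bY_i \subseteq W$. The same density argument then propagates: if $\bY_{i'} \subseteq W$ and $\bY_i \cap \bY_{i'} \neq \emptyset$ for some $i \in T$, the nonempty affinoid subdomain $\bY_i \cap \bX_{i'} = \bY_i \cap \bY_{i'}$ of the irreducible affinoid $\bY_i$ is Zariski dense, so $\bY_i \subseteq W$. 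Setting $T_W := \{i \in T : \bY_i \subseteq W\}$, the admissible opens $\bigcup_{i \in T_W} \bY_i$ and $\bigcup_{i \in T \setminus T_W} \bY_i$ of $\bY$ are therefore disjoint and cover $\bY$; since $\bY$ is irreducible hence connected and $T_W \supseteq S$ is nonempty, $T \setminus T_W$ must be empty, giving $W = \bY$. Feeding $W = j\bY \cap \bY$ (which is Zariski closed in $\bY$ and contains $j(\bY \cap \bU) = \bY \cap \bU$) into the claim yields $\bY \subseteq j\bY$, and running the same argument with $j^{-1}$ in place of $j$ gives $j\bY \subseteq \bY$. Hence $j\bY = \bY$, so $j \in J_\bY$, and the LSC holds for this $J$.
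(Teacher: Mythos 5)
Your proof is correct, and the conclusion it reaches agrees with the paper's, but the organization of the globalization step is genuinely different. The paper's proof propagates the single property ``$g$ preserves $\bY_i$'' through the nerve of the covering by re-applying Lemma~\ref{AffinoidStabs} at each overlap (to $\bX_k$, $\bY_k$, and the affinoid $\bX_j \cap \bX_k$). You instead extract a self-contained Zariski density claim --- the smallest Zariski closed subset of $\bY$ containing $\bY \cap \bU$ is $\bY$ itself --- proved by propagating density of non-empty affinoid subdomains of the irreducible affinoids $\bY_i$ through overlaps, and then apply it once, to $W = j\bY \cap \bY$ and to $W' = j^{-1}\bY \cap \bY$. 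Both routes use the same ingredients (Lemma~\ref{Irr}, the injectivity of restriction from an irreducible reduced affinoid to a non-empty affinoid subdomain from \cite[Proposition~4.2]{ABB}, separatedness of $\bX$, and connectedness of $\bY$); yours factors the argument into a reusable statement independent of the particular automorphism $j$, at the cost of being slightly longer.

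Two small remarks. First, once the density claim is in place, your middle paragraph invoking Lemma~\ref{AffinoidStabs} to obtain $j\bY_i = \bY_i$ for $i \in S$ is logically superfluous: the claim applied to $j\bY \cap \bY$ and $j^{-1}\bY \cap \bY$ already yields $j\bY = \bY$ without using Lemma~\ref{AffinoidStabs} as a black box (though it of course uses the same injectivity mechanism internally). Second, when you decompose $\bY$ as the disjoint union of $\bigcup_{i \in T_W} \bY_i$ and $\bigcup_{i \in T \setminus T_W} \bY_i$ and appeal to connectedness, you are implicitly using that a disjoint partition of an admissible covering of a rigid space yields an admissible covering by admissible opens; this is true (any finite affinoid refinement splits accordingly because the two sub-unions are disjoint), but deserves a word, since it is where the connectedness of $\bY$ actually does its work.
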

\begin{proof} Note that the assumptions (b) and (c)(i) together with Lemma \ref{Irr}(a) imply that every non-empty $\bY_i$ is irreducible.

Let $\bU \in \bX_w(\cT)$ be such that $\bY \cap \bU \neq \emptyset$. Choose any $\bU$-small open subgroup $H$ of $G$, and define
\[ J := H \cap \bigcap\limits_{\bY_i \neq \emptyset} G_{\bX_i} .\]
Assumption (c)(ii) ensures that $J$ is still an open subgroup of $G$; it is therefore also $\bU$-small by \cite[Lemma 3.4.5]{EqDCap}. With this choice of $J$, every member $\bX_i$ of our covering for which $\bY_i \neq \emptyset$ is $J$-stable.

We have to show that $g \bY \subseteq \bY$ for every $g \in J_{\bY \cap \bU}$. Write $\bU_i := \bU \cap \bX_i$ for each $i$; this is an affinoid subdomain of $\bX_i$ because $\bX$ is assumed to be separated. Since $\bY \cap \bU$ is non-empty by assumption, there is some index $i$ such that $\bY_i \cap \bU_i \neq \emptyset$. Since $\bX_i$ is $J$-stable, $g$ also preserves $\bY_i \cap \bU_i = \bY \cap \bU \cap \bX_i$. Since $\bY_i$ is irreducible, $g$ preserves $\bY_i$ by Lemma \ref{AffinoidStabs} applied to $\bX_i, \bY_i$ and $\bU_i$.

Because $\bY$ is assumed to be irreducible, it is connected. Since $\bY = \cup_i \bY_i$, it will now suffice to show whenever $g\bY_j \subseteq \bY_j$ and $\bY_j \cap \bY_k \neq \emptyset$, we also have $g\bY_k \subseteq \bY_k$. Now $\bY_k$ is also irreducible and $\bX_j \cap \bX_k$ is an affinoid subdomain of $\bX_k$ because $\bX$ is separated. We may apply Lemma \ref{AffinoidStabs} to $\bX_k$, $\bY_k$ and $\bX_j \cap \bX_k$ to deduce that $g$ preserves $\bY_k$, because $\bY_k \cap (\bX_j \cap \bX_k) = \bY_j \cap \bY_k \neq \emptyset$ by assumption.\end{proof}

Before we can give the proof of Theorem \ref{InductionEquivalence}, we need two results from rigid analysis that are probably well-known but for which we could not find a reference.

\begin{lem}\label{FineCover} Suppose that $\bX$ is affinoid. Then there is a finite admissible affinoid covering $\{\bX_i\}$ of $\bX$ such that each $\bY \cap \bX_i$ is connected.
\end{lem}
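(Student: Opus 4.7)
The plan is to construct the covering explicitly by lifting a decomposition of $\cO(\bY)$ into its primitive idempotents. Since $\bX$ is affinoid and $\bY$ is Zariski closed in $\bX$, the coordinate ring $\cO(\bY)$ is a quotient of the Noetherian ring $\cO(\bX)$, hence itself Noetherian. I would therefore begin by decomposing $\cO(\bY) \cong \prod_{i=1}^n A_i$ into finitely many indecomposable factors; this corresponds to the decomposition of $\bY$ into its connected components $\bY_i := \Sp A_i$ and produces pairwise orthogonal primitive idempotents $e_1, \ldots, e_n \in \cO(\bY)$ summing to $1$.

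Next, using the surjectivity of $\cO(\bX) \twoheadrightarrow \cO(\bY)$, I would pick lifts $f_i \in \cO(\bX)$ of each $e_i$. Since $1 - \sum_j f_j$ vanishes on $\bY$, replacing $f_1$ by $f_1 + (1 - \sum_j f_j)$ preserves $f_1|_\bY = e_1$ while arranging that $\sum_i f_i = 1$ exactly in $\cO(\bX)$. The ultrametric inequality then forces $\max_i |f_i(x)| \geq 1$ at every $x \in \bX$, so the $f_i$ have no common zeros and the rational subdomains
\[\bX_i := \{ x \in \bX : |f_j(x)| \leq |f_i(x)| \text{ for all } j = 1,\ldots,n\}\]
form a standard finite admissible affinoid covering of $\bX$.

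Finally, I would identify $\bY \cap \bX_i$ with $\bY_i$. For any $y \in \bY$, the value $f_i(y) = e_i(y)$ lies in $\{0,1\}$ with exactly one $e_i(y) = 1$, so set-theoretically $y \in \bX_i \Leftrightarrow e_i(y) = 1 \Leftrightarrow y \in \bY_i$. This upgrades to an equality of affinoid subdomains of $\bY$: a direct computation with the presentation $\cO(\bY \cap \bX_i) = \cO(\bY)\langle T_j : j \neq i\rangle / (f_i T_j - f_j)$ shows that, modulo the orthogonality relations $e_i e_j = 0$ and $e_j^2 = e_j$ in $\cO(\bY)$, all factors $A_j$ with $j \neq i$ are killed, leaving $A_i$. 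Hence $\bY \cap \bX_i = \bY_i$ is connected, as required. The only mildly delicate point is the correspondence between primitive idempotents of a Noetherian affinoid algebra and connected components of the associated rigid space; this is standard, but I would cite it cleanly to keep the argument short.
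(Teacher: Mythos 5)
Your proof is correct, and it takes a genuinely different route from the paper. The paper works by induction on the number $n$ of connected components of $\bY$: at each step it picks an element $e \in \cO(\bX)$ whose image in $\cO(\bY)$ is the idempotent cutting out $\bY \setminus \bY_1$, fixes a parameter $\pi$ with $|\pi| < 1$, and splits $\bX$ into the Laurent pair $\bX(e/\pi)$, $\bX(\pi/e)$; the first piece meets $\bY$ in exactly $\bY_1$, and the second piece meets $\bY$ in $\bY \setminus \bY_1$, so it can recurse on $\bX(\pi/e)$. Your argument instead produces the whole covering in one shot: after normalising the lifts $f_i$ of all the primitive idempotents so that $\sum f_i = 1$ exactly in $\cO(\bX)$, the ultrametric inequality guarantees $\max_i |f_i| \geq 1$ everywhere, so the standard rational covering $\bX_i = \bX(f_1/f_i,\ldots,f_n/f_i)$ is admissible, and the orthogonality relations $e_ie_j = \delta_{ij}e_i$ force $\cO(\bY)\langle T_j : j \neq i\rangle/(e_iT_j - e_j) \cong A_i$, i.e.\ $\bY \cap \bX_i = \bY_i$. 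What your version buys is a non-inductive, explicit covering with no auxiliary parameter $\pi$, and the normalisation trick $f_1 \mapsto f_1 + (1 - \sum_j f_j)$ is a nice touch that makes the no-common-zero condition trivial. What the paper's version buys is that it only ever needs a two-element Laurent covering at each stage, so there is no ideal-theoretic computation to carry out. Both rely on the same standard fact that primitive idempotents of $\cO(\bY)$ correspond to connected components of $\bY$, and both correctly exploit that an idempotent evaluated at a point of $\bY$ takes values in $\{0,1\}$; the one tiny slip in your write-up is denoting by $f_i, f_j$ what should be their images $e_i, e_j$ in the presentation of $\cO(\bY \cap \bX_i)$, but the surrounding text makes the intent unambiguous.
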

\begin{proof} Since $\bY$ is Zariski closed in the affinoid variety $\bX$, it is also affinoid, and therefore admits a decomposition $\bY = \bY_1 \cup \cdots \cup \bY_n$ into pairwise disjoint connected components. By induction on $n$, we will now construct an affinoid covering $\{\bX_i\}$ of $\bX$ such that $\bY \cap \bX_i = \bY_i$ for each $i$. 

Fix $0 \neq \pi \in K$ such that $|\pi| < 1$. Let $e \in \cO(\bX)$ be such that its image $\overline{e}$ in $\cO(\bY)$ is an idempotent which is $0$ on $\bY_1$ and $1$ on $\bY \backslash \bY_1$. Then $\bX = \bX(e/\pi) \cup \bX(\pi/e)$ is an affinoid covering of $\bX$, $\bX(e/\pi) \cap \bY = \bY(\overline{e}/\pi) = \bY_1$ and $\bX(\pi/e) \cap \bY = \bY \backslash \bY_1$. By induction, we can find an affinoid covering $\{\bX_2,\ldots,\bX_n\}$ of $\bX(\pi/e)$ such that $\bX_i \cap \bY = \bY_i$ for each $i \geq 2$. Letting $\bX_1 := \bX(e/\pi)$ completes the induction.
\end{proof}

\begin{lem}\label{QCY} Suppose that the Zariski closed subspace $\bY$ of $\bX$ is quasi-compact. Then there is a quasi-compact admissible open subspace $\bU$ of $\bX$ containing $\bY$ such that $\{\bU, \bX \backslash \bY\}$ is an admissible open covering of $\bX$.
\end{lem}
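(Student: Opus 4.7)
The plan is to produce $\bU$ as a finite union of affinoid subdomains of $\bX$ containing $\bY$, and then to verify that $\{\bU, \bX \backslash \bY\}$ is an admissible covering by testing on an arbitrary affinoid subdomain of $\bX$ and refining by a Laurent-type construction.

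First, I would construct $\bU$ using the quasi-compactness of $\bY$. Take any admissible affinoid covering $\{\bV_\alpha\}$ of $\bX$; restricting gives an admissible covering of $\bY$ by the affinoid subdomains $\bV_\alpha \cap \bY$, which by quasi-compactness admits a finite subcover. Call the corresponding members $\bV_1, \ldots, \bV_n$ and set $\bU := \bV_1 \cup \cdots \cup \bV_n$. Then $\bU$ is a quasi-compact admissible open subspace of $\bX$ containing $\bY$, and $\bX \backslash \bY$ is admissible open because $\bY$ is Zariski closed (as noted in the proof of Lemma \ref{YXtop}(a)).

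Next, to check admissibility of $\{\bU, \bX \backslash \bY\}$, let $\bW$ be an arbitrary affinoid subdomain of $\bX$. The ideal of the Zariski closed subspace $\bY \cap \bW \subseteq \bW$ is generated by finitely many functions $f_1, \ldots, f_m \in \cO(\bW)$, and for each $\pi \in K^\times$ with $|\pi| < 1$ we have the standard finite admissible affinoid covering
\[ \bW \;=\; \bW(f_1/\pi, \ldots, f_m/\pi) \,\cup\, \bigcup_{j=1}^m \bW(\pi/f_j). \]
Each outer piece $\bW(\pi/f_j)$ is contained in $\bX \backslash \bY$ since $|f_j| \geq |\pi| > 0$ on it, so the required finite affinoid refinement will be produced once we can arrange that the inner tube $\bW_\pi := \bW(f_1/\pi, \ldots, f_m/\pi)$ is contained in $\bU$ for some sufficiently small $\pi$.

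This last step is the main obstacle. I would handle it by a compactness argument in the Berkovich space $\sM(\bW)$, which is compact Hausdorff (the affinoid case of the discussion in $\S \ref{LocalCoho}$). The compact subsets $\sM(\bW_\pi)$ form a decreasing family in $\sM(\bW)$ with intersection $\sM(\bY \cap \bW)$, because a bounded multiplicative seminorm on $\cO(\bW)$ lying in every $\sM(\bW_\pi)$ must kill each $f_j$. The affinoid covering $\bY \subseteq \bigcup_i \bV_i$ restricts to an affinoid covering of $\bY \cap \bW$ by the $\bV_i \cap \bY \cap \bW \subseteq \bV_i \cap \bW$, which yields $\sM(\bY \cap \bW) \subseteq \sM(\bU \cap \bW)$; and $\sM(\bU \cap \bW)$ is open in $\sM(\bW)$ because $\bU \cap \bW$ is admissible open in $\bW$. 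Standard compactness then forces $\sM(\bW_\pi) \subseteq \sM(\bU \cap \bW)$ for some $\pi$, so $\bW_\pi \subseteq \bU$ on rigid points, and we have obtained the required finite affinoid refinement of $\{\bU, \bX \backslash \bY\}$.
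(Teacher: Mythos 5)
Your construction of $\bU$, the observation that $\bX\backslash\bY$ is admissible open, and the reduction of admissibility to an arbitrary affinoid subdomain $\bW\subseteq\bX$ all match the paper exactly. At that point the paper simply cites \cite[Lemma 2.3]{Kisin99}, which is precisely the statement that some tube $\bW(f_1/\pi,\ldots,f_m/\pi)$ sits inside $\bU\cap\bW$; you instead attempt a direct proof by compactness in $\sM(\bW)$, and there is a genuine gap in it.

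The problem is the assertion that $\sM(\bU\cap\bW)$ is open in $\sM(\bW)$ ``because $\bU\cap\bW$ is admissible open in $\bW$''. That implication is false. Here $\bU\cap\bW$ is a finite union of affinoid subdomains of $\bW$, so $\sM(\bU\cap\bW)$ is a finite union of compact subsets of the Hausdorff space $\sM(\bW)$ and is therefore \emph{closed}, not open; for instance, $\sM$ of a smaller closed disc inside the closed unit disc is closed but not open. Contrast Lemma \ref{YXtop}(b) in the paper: to show $\underline{\bX\backslash\bY}$ is open, it invokes that the Zariski open $\bX\backslash\bY$ is \emph{wide} open, a property that affinoid subdomains and their finite unions do not have. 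Once $\sM(\bU\cap\bW)$ is only known to be closed, your compactness step collapses: a decreasing family of compact sets $\sM(\bW_\pi)$ whose intersection lies in a closed set need not be eventually contained in that closed set --- consider $[0,1/n]$ decreasing to $\{0\}\subseteq\{0\}$ inside $[0,1]$. What you actually need is that $\sM(\bY\cap\bW)$ lies in the topological \emph{interior} of $\sM(\bU\cap\bW)$, and establishing this is exactly the content of Kisin's lemma; it requires a real argument (via formal models, or a more delicate use of the Huber space $\sP(\bW)$) that the sketch does not supply. Replacing the incorrect openness claim with the citation of \cite[Lemma 2.3]{Kisin99} recovers the paper's proof.
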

\begin{proof} By considering the restriction of an admissible affinoid covering of $\bX$ to the quasi-compact subspace $\bY$, we see that $\bY$ is contained in a finite union $\bU$ of affinoid subdomains of $\bX$. Certainly $\bX \backslash \bY$ is an admissible open subset of $\bX$; this follows from \cite[Corollary 9.1.4/7]{BGR}. We must show that $\{\bU, \bX \backslash \bY\}$ is an admissible covering. Let $\bX'$ be an affinoid subdomain of $\bX$, let $\bU' := \bX' \cap \bU$ and $\bY' := \bX' \cap \bY$; it will be enough to show that $\{ \bU', \bX' \backslash \bY'\}$ is an admissible covering of $\bX'$. Now $\bU'$ is an admissible open subspace of $\bX'$ containing the Zariski closed subspace $\bY'$ of $\bX'$; in this situation, \cite[Lemma 2.3]{Kisin99} implies that the covering $\{\bU', \bX' \backslash \bY'\}$ of $\bX'$ admits a finite affinoid refinement and is therefore admissible.
\end{proof}

Combining these two results we obtain the following sufficient condition.

\begin{cor}\label{EasyLSC} $(\bX, \bY, G)$ satisfies the LSC whenever $\bX$ separated, and $\bY$ is irreducible and quasi-compact. 
\end{cor}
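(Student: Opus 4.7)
The plan is to verify the hypotheses of Proposition \ref{LocalStabs}. Conditions (a) and (b) there hold by assumption, so the real task is to build, using the quasi-compactness of $\bY$, an admissible affinoid covering $\{\bX_i\}$ of $\bX$ such that each $\bY \cap \bX_i$ is connected and $\bigcap_{\bY \cap \bX_i \neq \emptyset} G_{\bX_i}$ is open in $G$.

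First I would invoke Lemma \ref{QCY} to find a quasi-compact admissible open subspace $\bU \subseteq \bX$ containing $\bY$ such that $\{\bU, \bX \setminus \bY\}$ is an admissible covering of $\bX$. Since $\bU$ is quasi-compact, it is covered by finitely many affinoid subdomains $\bV_1, \ldots, \bV_n$ of $\bX$. I would then apply Lemma \ref{FineCover} to each pair $(\bV_j, \bY \cap \bV_j)$, producing a finite admissible affinoid covering $\bV_j = \bigcup_l \bV_{j,l}$ for which $\bY \cap \bV_{j,l}$ is connected for each $l$. Finally, I pick any admissible affinoid covering $\{\bW_k\}$ of the admissible open subspace $\bX \setminus \bY$.

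The combined collection $\{\bV_{j,l}\} \cup \{\bW_k\}$ refines the admissible covering $\{\bU, \bX \setminus \bY\}$ by admissible affinoid coverings of the pieces, and so is itself an admissible affinoid covering of $\bX$. By construction, whenever $\bY \cap \bV_{j,l}$ is non-empty it is connected, while every $\bY \cap \bW_k$ is empty. Moreover, the continuity assumption on the $G$-action (see \cite[Definition 3.1.8]{EqDCap}) ensures that the stabiliser $G_{\bV_{j,l}}$ of each affinoid subdomain $\bV_{j,l}$ is open in $G$; since the set of pairs $(j,l)$ with $\bY \cap \bV_{j,l} \neq \emptyset$ is finite, the intersection $\bigcap_{\bY \cap \bV_{j,l} \neq \emptyset} G_{\bV_{j,l}}$ is also open. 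Invoking Proposition \ref{LocalStabs} then yields the LSC for $(\bX, \bY, G)$.

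I do not anticipate any genuine obstacle: this corollary essentially packages together the three preparatory results (Proposition \ref{LocalStabs}, Lemma \ref{FineCover} and Lemma \ref{QCY}) with the openness of affinoid stabilisers. The only minor points to keep in mind are that $\bU$ is quasi-separated because $\bX$ is separated (needed to cover $\bU$ by finitely many affinoids), and that the affinoids $\bW_k$ covering $\bX \setminus \bY$ play no role in condition (c)(ii) precisely because they avoid $\bY$.
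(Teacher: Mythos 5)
Your proof is correct and follows essentially the same approach as the paper: invoke Lemma \ref{QCY} to isolate a quasi-compact neighbourhood $\bU$ of $\bY$, refine a finite affinoid covering of $\bU$ using Lemma \ref{FineCover} so that each intersection with $\bY$ is connected, adjoin any admissible affinoid covering of $\bX \setminus \bY$, and then verify condition (c) of Proposition \ref{LocalStabs} using the openness of affinoid stabilisers and the finiteness of the set of pieces meeting $\bY$.
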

\begin{proof} It is enough to verify condition (c) of Proposition \ref{LocalStabs}.

By Lemma \ref{QCY}, we can find a quasi-compact admissible open subspace $\bU$ of $\bX$ containing $\bY$ such that $\{\bU, \bX \backslash \bY\}$ is an admissible covering of $\bX$. Choose a finite admissible affinoid covering $\{\bU_1,\cdots,\bU_n\}$ of $\bU$; by Lemma \ref{FineCover}, we may replace this covering by a finite affinoid refinement and thereby assume that each $\bU_i \cap \bY$ is connected. Choose an admissible affinoid covering $\{\bV_j\}$ of $\bX \backslash \bY$ so that $\{\bU_1,\ldots,\bU_n\} \cup \{\bV_j\}$ is an admissible affinoid covering of $\bX$. Then the intersection of $\bY$ with every member of this covering is connected (and possibly empty) by construction, showing that this covering satisfies condition (c)(i). Because only finitely many intersections $\bU_i \cap \bY$ are possibly non-empty and because each $G_{\bU_i}$ is open in $G$ by \cite[Definition 3.1.8]{EqDCap}, we conclude that this covering also satisfies condition (c)(ii).\end{proof}

\begin{proof}[Proof of Theorem \ref{InductionEquivalence}]
Combine Corollary \ref{TechIndEq} with Corollary \ref{EasyLSC}.
\end{proof}
In practice, the quasi-compactness condition on $\bY$ is still too strong to be necessary, and we can establish the LSC in certain other cases, as follows.
\begin{lem} Suppose that $(\bX, \bY, G)$ satisfies the LSC and that $\bX'$ is a $G'$-stable admissible open subset of $\bX$ for some open subgroup $G'$ of $G$. If $\bY' := \bX' \cap \bY$, then $(\bX', \bY', G')$ also satisfies the LSC.
\end{lem}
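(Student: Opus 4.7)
The plan is to verify the LSC for $(\bX', \bY', G')$ directly, reducing at each stage to the LSC already assumed for $(\bX, \bY, G)$. Given $\bU' \in \bX'_w(\cT)$ with $\bY' \cap \bU' \neq \emptyset$, my first step is to observe that $\bU'$ is automatically an object of $\bX_w(\cT)$: being an affinoid subdomain of the admissible open subspace $\bX' \subseteq \bX$, it is an affinoid subdomain of $\bX$, and the tangent sheaf on $\bU'$ together with the notion of a free $\cA$-Lie lattice in $\cT(\bU')$ is intrinsic to $\bU'$. Since $\bU' \subseteq \bX'$ we have $\bY \cap \bU' = \bY' \cap \bU' \neq \emptyset$, so the LSC for $(\bX, \bY, G)$ furnishes a $\bU'$-small compact open subgroup $J$ of $G$ with $J_{\bY \cap \bU'} = J_{\bY}$.

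Next I would take $J' := J \cap G'$ as the candidate subgroup of $G'$. Because $G'$ is open (hence also closed) in $G$, this is a compact open subgroup of $G'$; moreover any $J$-stable affine formal model $\cA \subseteq \cO(\bU')$ together with any $J$-stable free $\cA$-Lie lattice $\cL \subseteq \cT(\bU')$ is a fortiori $J'$-stable, so the pair $(\bU', J')$ is small in the sense of \cite[Definition 3.4.4]{EqDCap} for the $G'$-action on the $G'$-stable space $\bX'$.

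The remaining, and essentially only, step is to verify that $J'_{\bY' \cap \bU'} = J'_{\bY'}$. For the inclusion $J'_{\bY'} \subseteq J'_{\bY' \cap \bU'}$, any $g \in J' \subseteq J$ stabilises $\bU'$, and if it also stabilises $\bY'$ then it stabilises $\bY' \cap \bU'$. For the reverse inclusion, if $g \in J'$ satisfies $g(\bY' \cap \bU') = g(\bY \cap \bU') = \bY \cap \bU'$, then $g \in J_{\bY \cap \bU'} = J_{\bY}$ by our choice of $J$; since $g \in G'$ also preserves $\bX'$, we conclude
\[ g\bY' = g(\bX' \cap \bY) = \bX' \cap g\bY = \bX' \cap \bY = \bY', \]
as required. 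I expect no real obstacle here: the only bookkeeping is to keep track of the three candidate stabiliser groups and to use both the $G'$-stability of $\bX'$ and the equality $\bY \cap \bU' = \bY' \cap \bU'$ that comes from $\bU' \subseteq \bX'$. None of the more substantive rigid-analytic results from $\S \ref{PfIndEq}$ (such as Corollary \ref{EasyLSC}) need to be re-invoked.
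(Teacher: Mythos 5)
Your proof is correct and follows essentially the same strategy as the paper: pass from $\bU' \in \bX_w'(\cT)$ to $\bU' \in \bX_w(\cT)$, invoke the LSC for $(\bX,\bY,G)$ to obtain $J$, intersect with $G'$ to get $J'$, and check the stabiliser equality using $\bY \cap \bU' = \bY' \cap \bU'$ and the $G'$-stability of $\bX'$. The only cosmetic difference is that you verify $J'$ is $\bU'$-small directly from the definition, while the paper cites \cite[Lemma 3.4.5]{EqDCap}, and you spell out both inclusions of the stabiliser equality whereas the paper notes only the nontrivial one needs checking.
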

\begin{proof} Let $\bU \in \bX_w'(\cT)$ be such that $\bU \cap \bY' \neq \emptyset$. Then also $\bU \in \bX_w(\cT)$ because $\bX'$ is an admissible open subset of $\bX$, so because $(\bX,\bY,G)$ satisfies the LSC, we can find a $\bU$-small compact open subgroup $J$ of $G$ such that $J_{\bY \cap \bU} = J_{\bY}$. Since $G'$ is open in $G$ by assumption, $J' := J \cap G'$ is open in $J$. Then $J'$ is $\bU$-small by \cite[Lemma 3.4.5]{EqDCap}, and it will be enough to show that $J'_{\bY' \cap \bU} \leq J'_{\bY'}$. Let $g \in J'_{\bY' \cap \bU}$; then $g \in J' \leq G'$ stabilises $\bX'$ by assumption, and $g$ stabilises $\bY' \cap \bU = \bY \cap \bX' \cap \bU = \bY \cap \bU$ since $\bU \subseteq \bX'$. So $g$ also stabilises $\bX' \cap \bY = \bY'$.
\end{proof}

\section{The equivariant Kashiwara equivalence \ts{\cC_{\bX/G}^{\bY}\cong \cC_{\bY/G}}}

\subsection{Side-switching operations}\label{LRSwitch}
Here we extend the material in \cite[\S 3]{DCapTwo} and show that there is a canonical equivalence of categories between the categories of coadmissible $G$-equivariant left $\cD$-modules and of coadmissible $G$-equivariant right $\cD$-modules on any smooth rigid analytic space $\bX$ equipped with a continuous $G$-action.

We begin with some algebraic generalities. Let $R \to A$ be a homomorphism of commutative rings, let $L$ be an $(R,A)$-Lie algebra, and let $G$ be a group. Suppose that we are given an action of $G$ on $L$ as defined at \cite[Definition 2.1.5]{EqDCap}. Then by \cite[Corollary 2.1.9]{EqDCap}, $G$ acts on $U(L)$ by $R$-algebra automorphisms and we may form the skew-group ring $U(L) \rtimes G$. 
\begin{lem}\label{OmegaL} Suppose that $L$ is a finitely generated and projective as an $A$-module of constant rank $d$. Then $\Omega_L := \Hom_A(\bigwedge^d_AL, A)$ a right $U(L) \rtimes G$-module.
\end{lem}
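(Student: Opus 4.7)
My plan is to construct the right $U(L)\rtimes G$-module structure in three stages: first define a right $L$-action, then a right $G$-action, and finally check compatibility so that the two combine into an action of the skew-group algebra.

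For the $L$-action, I will use the standard Lie-derivative formula that already appears in \cite[\S 3]{DCapTwo} for the non-equivariant case: for $\omega\in\Omega_L$ and $x\in L$, set
\[(\omega\cdot x)(v_1\wedge\cdots\wedge v_d):=\rho(x)\bigl(\omega(v_1\wedge\cdots\wedge v_d)\bigr)-\sum_{i=1}^{d}\omega\bigl(v_1\wedge\cdots\wedge[x,v_i]\wedge\cdots\wedge v_d\bigr),\]
where $\rho:L\to\Der_R(A)$ is the anchor map. One checks that this is $A$-linear in $v_1\wedge\cdots\wedge v_d$ (using that $L$ acts by derivations on $A$) and that it satisfies the right Lie-algebroid axioms, so that it extends to a right $U(L)$-module structure on $\Omega_L$.

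For the $G$-action, I will use transport of structure. Since $g\in G$ acts on $L$ compatibly with its action on $A$ (semi-linearly), the induced action on $\bigwedge^d_A L$ is again semi-linear, and I define a right $G$-action on $\Omega_L$ by
\[(\omega\cdot g)(v_1\wedge\cdots\wedge v_d):=g^{-1}\bigl(\omega(g v_1\wedge\cdots\wedge g v_d)\bigr).\]
A short calculation using $g(a\cdot v)=g(a)\cdot g(v)$ shows that $\omega\cdot g$ is again $A$-linear, and $(gh)^{-1}=h^{-1}g^{-1}$ gives immediately that this is a right $G$-action.

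The main (though still routine) step is the mixing relation: to obtain a $U(L)\rtimes G$-action I must verify
\[(\omega\cdot g)\cdot x \;=\; (\omega\cdot g(x))\cdot g\qquad\text{for all }g\in G,\ x\in L.\]
This is the identity that encodes the skew-group relation $x\cdot g=g\cdot g^{-1}(x)$ on the right-module side. To prove it, I expand both sides using the two formulas above and use the two basic compatibilities of the $G$-action on $L$: that $g$ is a Lie algebra automorphism, so $g[x,v_i]=[g(x),g(v_i)]$, and that $g$ intertwines the anchor, so $\rho(x)\circ g^{-1}=g^{-1}\circ\rho(g(x))$. Applying $g^{-1}$ after rewriting each term on the left then reproduces the formula for $(\omega\cdot g(x))\cdot g$ term by term. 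Once this identity is established, the universal property of $U(L)\rtimes G$ (together with the already-verified right $U(L)$- and right $G$-module structures) immediately produces the desired right $U(L)\rtimes G$-module structure on $\Omega_L$. I expect the only slight subtlety to be bookkeeping with the semi-linearity of the $G$-action; everything else is a mechanical verification.
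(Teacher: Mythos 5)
Your approach---defining the right $L$-action via the Lie derivative, the right $G$-action by transport of structure, and then checking the mixing relation---is exactly the paper's.

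There is, however, a sign error in your formula for the $L$-action. The right $U(L)$-module structure on $\Omega_L$ is $\omega \cdot x := -\Lie_x(\omega)$, the \emph{negative} of the Lie derivative (this is \cite[Proposition 2.8]{Hue99}, which the paper cites), whereas your displayed formula is $+\Lie_x(\omega)$. With the sign as you have written it, the ``right Lie-algebroid axioms'' you claim to verify would in fact fail: $x \mapsto \Lie_x$ is a Lie algebra homomorphism, so $\omega \cdot x := \Lie_x(\omega)$ is a \emph{left} $L$-action, and the required compatibility $(\omega\cdot x)\cdot a - (\omega \cdot a)\cdot x = \omega\cdot\rho(x)(a)$ also comes out with the wrong sign. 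Once that single sign is flipped, the remaining steps---the $G$-action formula $(\omega\cdot g)(v_1\wedge\cdots\wedge v_d) = g^{-1}\cdot\omega(g v_1\wedge\cdots\wedge g v_d)$ and the mixing identity $((\omega\cdot g)\cdot x)\cdot g^{-1} = \omega\cdot(g\cdot x)$ (together with its analogue for $a\in A$, which you should also state)---coincide with the paper's proof.
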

\begin{proof} Recall that for any $n\geq 0$, the \emph{Lie derivative} gives a left action $x \mapsto \Lie_x$ of the $R$-Lie algebra $L$ on the $R$-module $\Hom_A(\bigwedge^n_AL, A)$ of alternating $n$-forms on $L$ with values in $A$. This action is given explicitly as follows:
\begin{equation}{\label{LieDer}} \Lie_x(\phi)(v_1 \wedge \cdots \wedge v_n) = x\cdot \phi\left(v_1 \wedge \cdots \wedge v_n\right) - \phi\left( \sum_{i=1}^n v_1 \wedge \cdots \wedge [x,v_i] \wedge \cdots \wedge v_n \right)\end{equation}
whenever $\phi \in \Hom_A(\wedge^n_AL, A)$ and $x,v_1,\ldots,v_n \in L$. Now if in addition $L$ is finitely generated and projective as an $A$-module of constant rank $d$, then
\[ \omega \cdot a := a \omega \qmb{and} \omega \cdot x := - \Lie_x(\omega) \qmb{for all} a \in A, x \in L, \omega \in \Omega_L\]
defines a right $U(L)$-module structure on $\Omega_L$. See \cite[Proposition 2.8]{Hue99} for more details. On the other hand, there is a right $R[G]$-module structure on $\Omega_L$ given by 
\[ (\omega \cdot g)(v_1 \wedge \cdots \wedge v_d) := g^{-1} \cdot \omega(g\cdot v_1 \wedge \cdots \wedge g \cdot v_d)\]
where $\omega \in \Omega_L, g \in G$ and $v_1,\ldots, v_d \in L$. A straightforward calculation shows that
\[ ((\omega \cdot g) \cdot x) \cdot g^{-1} = \omega \cdot (g \cdot x) \qmb{and} ((\omega \cdot g) \cdot a) \cdot g^{-1} = \omega \cdot (g \cdot a) \]
hold whenever $\omega \in \Omega_L, g \in G, x \in L$ and $a \in A$. These formulas mean that the right actions of $U(L)$ and $R[G]$ on $\Omega_L$ combine into a right $U(L)\rtimes G$-action. 
\end{proof}
We will assume from now on that $L$ is a finitely generated projective $A$-module of constant rank $d$, and write $S := U(L) \rtimes G$ for brevity. We can now use $\Omega_L$ to convert left $S$-modules into right $S$-modules, and vice versa, as follows.

\begin{lem}\label{LRswitch}Let $M$ be a left $S$-module, and let $N$ be a right $S$-module. 
\be \item $\Omega_L \otimes_A M$ is a right $S$-module, with the right actions of $L$ and $G$ given by
\begin{equation}\label{SactionLM} (\omega \otimes m) \cdot x = \omega x \otimes m - \omega \otimes x m \qmb{and} (\omega \otimes m) \cdot g = \omega g \otimes g^{-1}  m\end{equation}
for all $\omega \in \Omega_L, m \in M, x \in L$ and $g \in G$.
\item $\Hom_A(\Omega_L, N)$ is a left $S$-module, with the left actions of $L$ and $G$ given by
\begin{equation}\label{SactionHom} (x \cdot \phi)(\omega) = \phi(\omega x) - \phi(\omega) x \qmb{and} (g \cdot \phi)(\omega) = \phi(\omega g) g^{-1}\end{equation}
for all $\phi \in \Hom_A(\Omega_L,N), \omega \in \Omega_L, x \in L$ and $g \in G$.
\item The canonical $A$-linear evaluation and co-evaluation maps, namely \newline $\Omega_L \otimes_A \Hom_A(\Omega_L,N) \to N$ and  $M \to \Hom_A(\Omega_L, \Omega_L \otimes_A M)$, are $S$-linear.
\ee\end{lem}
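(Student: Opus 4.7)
The proof is a sequence of routine verifications in the spirit of the classical tensor--Hom side-switching for Lie--Rinehart algebras (cf.\ the $G$-trivial case in \cite[Proposition 2.8]{Hue99}), extended here to the semidirect product $S = U(L)\rtimes G$. The content is that the proposed formulas are correctly dictated by the ``coproduct-like'' structure of $S$: elements of $L$ act by Leibniz, elements of $G$ act diagonally with $g^{-1}$ on the covariant side, and $A$ acts centrally up to $G$-twist. I would structure each part as follows.

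For part (a), I would check in turn that the formulas (\ref{SactionLM}) define a right $S$-action on $\Omega_L \otimes_A M$ via: (i) $A$-balancedness of the $L$-action, which follows from the right $U(L)$-module identity $\omega\cdot(xa) = (\omega\cdot x)\cdot a + \omega\cdot x(a)$ on $\Omega_L$ combined with the Leibniz rule $x(am) = x(a)m + a(xm)$ on $M$; (ii) the Lie-bracket relation $(\omega\otimes m)\cdot [x,y] = ((\omega\otimes m)\cdot x)\cdot y - ((\omega\otimes m)\cdot y)\cdot x$, which expands directly using that $\Omega_L$ is a right and $M$ is a left $U(L)$-module; (iii) $A$-balancedness and multiplicativity of the $G$-action, using $G$-equivariance of the $A$-actions on $\Omega_L$ and $M$ together with commutativity of $A$; and (iv) compatibility with the semidirect-product relation $gx = (g\cdot x)g$ in $S$. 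The core calculation for (iv) is to expand both $((\omega\otimes m)\cdot g)\cdot x$ and $((\omega\otimes m)\cdot (g\cdot x))\cdot g$ by the formulas and observe that they coincide, using $\omega\cdot gx = \omega\cdot (g\cdot x)\cdot g$ in the right $S$-module $\Omega_L$ (supplied by Lemma \ref{OmegaL}) and $g^{-1}((g\cdot x)m) = x(g^{-1}m)$ in the left $S$-module $M$. Part (b) proceeds in perfect parallel: the four steps (i)--(iv) are carried out for (\ref{SactionHom}) with left and right swapped and contravariance in the $\Omega_L$-slot of $\Hom_A(\Omega_L,N)$.

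For part (c), the evaluation map $\omega\otimes\phi\mapsto\phi(\omega)$ respects the $L$-action because the subtracted term $\phi(\omega)x$ in $(x\cdot\phi)(\omega)$ cancels against the contribution $\phi(\omega x)$ coming from $(\omega\otimes \phi)\cdot x$; and it respects the $G$-action because the factors $g$ and $g^{-1}$ introduced on the $\Omega_L$-slot and on $N$ cancel via $(g^{-1}\phi)(\omega g) = \phi(\omega)\cdot g$. The co-evaluation check $m\mapsto(\omega\mapsto\omega\otimes m)$ is formally dual. The only real obstacle is bookkeeping across left versus right actions and the sign conventions hidden in (\ref{LieDer}); no single step is substantive, but it is important to set up the conventions once and stick to them throughout.
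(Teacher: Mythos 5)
Your proposal is correct and follows essentially the same route as the paper: both reduce the lemma to checking that the stated formulas define an $L$-action and a $G$-action on $\Omega_L\otimes_A M$ (respectively $\Hom_A(\Omega_L,N)$) and then verifying the semidirect-product compatibility $((\omega\otimes m)\cdot g)\cdot x)\cdot g^{-1}=(\omega\otimes m)\cdot(g\cdot x)$, which is just your relation $gx=(g\cdot x)g$ rearranged, with the $U(L)$-parts of the action taken from the non-equivariant side-switching already established in the cited earlier work. The paper omits the computations as ``straightforward verifications'' in exactly the same places where you propose routine bookkeeping, so there is no gap and no meaningful divergence in method.
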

\begin{proof} Note that the actions of $U(L)$ agree with the standard ones from \cite[\S 3.1, (6) and (5)]{DCapTwo}; note also that $\Omega_L$ is a projective $A$-module of constant rank $1$, i.e. an invertible $A$-module, so the two maps appearing in part (c) are in fact isomorphisms. We have to show that formulas $(\ref{SactionLM})$ and $(\ref{SactionHom})$ define actions of the $R$-Lie algebra $L$ and the group $G$ on $\Omega_L \otimes_A M$ and $\Hom_A(\Omega_L, N)$, and that these actions are compatible: whenever $\omega \in \Omega_L, m \in M, \phi \in \Hom_A(\Omega_L, N), x \in L$ and $g \in G$, 
\[ (( (\omega \otimes m) \cdot g) \cdot x) \cdot g^{-1} = (\omega \otimes m) \cdot (g \cdot x) \qmb{and} g \cdot (x \cdot (g^{-1} \cdot \phi)) = (g \cdot x) \cdot \phi.\]
These are straightforward verifications so we omit the details. 
\end{proof}

Armed with this result, we immediately obtain the following

\begin{cor}\label{ULGswitch} The functors $M \mapsto \Omega_L \otimes_A M$ and $N \mapsto \Hom_A(\Omega_L, N)$ are mutually inverse exact equivalences of categories between the category of left $S$-modules and the category of right $S$-modules.
\end{cor}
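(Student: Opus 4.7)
The heavy lifting has already been done in Lemma \ref{LRswitch}, so my plan is essentially to assemble its three parts into the statement of the corollary.

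First I would check functoriality: given a morphism $f : M \to M'$ of left $S$-modules, the $A$-linear map $1 \otimes f : \Omega_L \otimes_A M \to \Omega_L \otimes_A M'$ is immediately seen to commute with the right actions of $L$ and $G$ described in formula $(\ref{SactionLM})$, so it is a morphism of right $S$-modules; similarly $\Hom_A(\Omega_L, -)$ is functorial on right $S$-modules by post-composition, and preserves the left $S$-action given in formula $(\ref{SactionHom})$. This shows we have well-defined additive functors in both directions.

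Next I would argue that the two functors are mutually inverse. Because $L$ is finitely generated and projective of constant rank $d$ over $A$, the $A$-module $\Omega_L = \Hom_A(\bigwedge^d_A L, A)$ is finitely generated and projective of constant rank $1$, hence an invertible $A$-module. Therefore the canonical evaluation map $\Omega_L \otimes_A \Hom_A(\Omega_L, N) \to N$ and co-evaluation map $M \to \Hom_A(\Omega_L, \Omega_L \otimes_A M)$ are $A$-linear isomorphisms; both are $S$-linear by Lemma \ref{LRswitch}(c), and both are manifestly natural in $N$ and $M$ respectively. These natural isomorphisms are precisely the unit and counit exhibiting the adjoint pair as a pair of mutually inverse equivalences.

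Finally I would note that exactness of both functors follows from invertibility of $\Omega_L$ as an $A$-module: both $\Omega_L \otimes_A -$ and $\Hom_A(\Omega_L, -)$ are exact functors on $A$-modules, and since exactness of a sequence of $S$-modules is tested on underlying $A$-modules, the induced functors on module categories are exact.

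The only potentially delicate point is that the formulas in Lemma \ref{LRswitch} really do make the $S$-linearity of the evaluation and co-evaluation maps automatic, but this is genuinely routine once one accepts the compatibility conditions already verified in that lemma. There is no substantive obstacle beyond unwinding definitions.
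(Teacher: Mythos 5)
Your proof is correct and fills in exactly the routine details the paper leaves implicit: functoriality of both side‑switching operations on morphisms, $S$-linearity of the (co)evaluation maps via Lemma \ref{LRswitch}(c), invertibility of $\Omega_L$ over $A$ to deduce that these maps are isomorphisms and that both functors are exact, and the observation that exactness of $S$-module sequences is detected on underlying $A$-modules. This is the same approach the paper takes, which presents the corollary as an immediate consequence of Lemma \ref{LRswitch}.
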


We will next need to know what these side-switching functors do to finitely presented $S$-modules; to that end, we study $\Omega_L \otimes_A S$ closely. There are two natural ways to view $\Omega_L \otimes_A S$ as a right $S$-module: one of these comes from the left action of $S$ on itself and Lemma \ref{LRswitch}(a); the other, which we'll denote by $\circ$, comes from the right action of $S$ on itself. Following \cite[\S 3.2]{DCapTwo}, we will write $\Omega_L \oslash_A S$ to denote the right $A$-module $\Omega_L \otimes_A S$ equipped with the $\circ$-action of $S$ on the right. 

\begin{lem}\label{Exch} There is an $S$-linear isomorphism $\alpha : \Omega_L\oslash_A S \tocong \Omega_L\otimes_A S$ given by $\alpha( \omega \otimes s ) = (\omega \otimes 1)s$ for all $\omega \in \Omega_L, s \in S$, which satisfies $\alpha^2 = 1$.
\end{lem}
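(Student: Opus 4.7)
The plan is to verify three assertions in sequence: (i) $\alpha$ is well-defined as a map of abelian groups; (ii) $\alpha$ is right $S$-linear from the $\oslash$-structure on the source to the $\cdot$-structure on the target; and (iii) $\alpha^2 = \id$ as a self-map of $\Omega_L \otimes_A S$ (bijectivity of $\alpha$ then follows immediately).

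For (i), I would verify the $A$-balancing condition $(\omega a \otimes 1) \cdot s = (\omega \otimes 1) \cdot (as)$ by first computing $(\omega \otimes 1) \cdot a = \omega \otimes a$, which follows from the equality $\omega \cdot a = a \omega$ in Lemma \ref{OmegaL} combined with the balancing across $\otimes_A$; the desired equation then reduces to associativity of the right $\cdot$-action. Step (ii) is then immediate from associativity: $\alpha((\omega \otimes s) \circ s') = \alpha(\omega \otimes ss') = (\omega \otimes 1) \cdot (ss') = \alpha(\omega \otimes s) \cdot s'$.

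For (iii), my plan is to use a symmetry argument rather than a brute force expansion. I would interpret the same formula $\alpha'(\omega \otimes s) := (\omega \otimes 1) \cdot s$ as defining a second map $\alpha' \colon (\Omega_L \otimes_A S, \cdot) \to (\Omega_L \otimes_A S, \circ)$, and show that $\alpha'$ is $S$-linear in this reverse sense. The key auxiliary step is that for each $s' \in S$, right multiplication in the $\circ$-sense, $\psi_{s'} \colon \omega' \otimes u \mapsto \omega' \otimes u s'$, is $S$-linear for the $\cdot$-action on both sides; this is checked on generators $s' \in L$ and $s' \in G$ directly from formulas $(\ref{SactionLM})$. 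Granted this, for an elementary tensor $\omega \otimes t$ one computes $\alpha'((\omega \otimes t) \cdot s') = \psi_{s'}((\omega \otimes 1) \cdot t) = \psi_{s'}(\omega \otimes 1) \cdot t = (\omega \otimes s') \cdot t$, which matches $\alpha'(\omega \otimes t) \circ s' = ((\omega \otimes 1) \cdot t) \circ s'$ by the definition of $\psi_{s'}$. This verifies the $S$-linearity of $\alpha'$ on elementary tensors, and additivity extends it to all of $\Omega_L \otimes_A S$. The composite $\alpha' \circ \alpha$ is then right $S$-linear $\oslash \to \oslash$, and since $\alpha' \circ \alpha(\omega \otimes 1) = \alpha'(\omega \otimes 1) = \omega \otimes 1$ and the subset $\Omega_L \otimes 1$ generates $\Omega_L \oslash_A S$ as a right $S$-module for $\circ$, we conclude $\alpha' \circ \alpha = \id$. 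Because $\alpha = \alpha'$ as set maps, this yields $\alpha^2 = \id$.

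The principal obstacle, while essentially bookkeeping, lies in carefully verifying the auxiliary $S$-linearity of $\psi_{s'}$ and then chaining together the identities in the computation above; both steps require precise use of the formulas $(\ref{SactionLM})$ and must treat the $L$-generators and $G$-generators of $S$ uniformly.
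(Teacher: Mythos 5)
Your overall strategy is sound --- show that the same formula defines an $S$-linear map $\alpha'$ in the direction $\cdot \to \circ$, then exploit $\circ$-linearity of $\alpha'\circ\alpha$ and generation of $\Omega_L \oslash_A S$ by $\Omega_L\otimes 1$ --- and your steps (i) and (ii) are correct. But the argument for the crucial step (iii), that $\alpha'$ is $S$-linear in the sense $\cdot\to\circ$, is circular. The first equality in your chain, $\alpha'((\omega\otimes t)\cdot s') = \psi_{s'}((\omega\otimes 1)\cdot t)$, is not a computation; it is the claim being proved, because $\psi_{s'}((\omega\otimes 1)\cdot t) = ((\omega\otimes 1)\cdot t)\circ s' = \alpha'(\omega\otimes t)\circ s'$ by definition. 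The element $(\omega\otimes t)\cdot s'$ is in general a sum of elementary tensors, and expanding it and applying $\alpha'$ termwise gives no a priori relation to $\psi_{s'}$. Your ``key auxiliary step,'' that each $\psi_{s'}$ is $\cdot$-linear, is the commutation $[\psi_{s'},\mu_t]=0$ (where $\mu_t$ denotes right $\cdot$-multiplication by $t$); what you actually need is the intertwining $\alpha'\circ\mu_{s'} = \psi_{s'}\circ\alpha'$, and the former does not imply the latter.

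The gap can be filled along lines consistent with your approach. Since $s'\mapsto\mu_{s'}$ and $s'\mapsto\psi_{s'}$ are both $R$-algebra anti-homomorphisms $S\to\End_R(\Omega_L\otimes_A S)$, the set $\{s'\in S : \alpha\circ\mu_{s'} = \psi_{s'}\circ\alpha\}$ is an $R$-subalgebra of $S$; it therefore suffices to verify the intertwining for $s'$ in the generating set $A\cup L\cup G$, which is a short direct check from formulas (\ref{SactionLM}). This is in effect what the paper does: it introduces the $G$-stable filtration $F_\bullet$ on $S$ with $F_0 = A\rtimes G$, verifies via the displayed computation the intertwining for $s' = ag\in F_0$ (the genuinely new, equivariant piece), and then appeals to \cite[Lemma 3.2]{DCapTwo} for the induction over the $L$-generators. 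So your proposal is parallel to the paper's proof in spirit, but as written it asserts the central identity rather than proving it; you need the reduction to generators (equivalently, an induction on filtration degree of $s'$) to close the argument.
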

\begin{proof} The canonical filtration on $U(L)$ is $G$-stable. It follows that there is a natural exhaustive positive filtration $F_\bullet$ on $S$ such that $F_0 = A \rtimes G$ and $F_n = L \cdot F_{n-1}+F_{n-1}$ for all $n \geq 1$. Let $\omega \in \Omega_L, s \in S, a \in A$ and $g \in G$. Using (\ref{SactionLM}) we see that $\alpha( (\omega \otimes s) (ag)) = \alpha ( (\omega a \otimes s) \cdot g) = \alpha( (\omega a) g \otimes g^{-1}s ) =  ( (\omega a) g \otimes 1) \cdot g^{-1} s = (\omega a \otimes g) \cdot s = (\omega \otimes ag)s.$ Now the proof of \cite[Lemma 3.2]{DCapTwo} works with straightforward modifications.
\end{proof}

\begin{cor}\label{FPswitch} The functors appearing in Corollary \ref{ULGswitch} preserve the categories of finitely presented $S$-modules.
\end{cor}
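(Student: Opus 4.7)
The plan is to reduce everything to Lemma \ref{Exch} together with the exactness of the side-switching functors established in Corollary \ref{ULGswitch}. Since those functors are exact, applying either one to a finite presentation $S^m \to S^n \to M \to 0$ reduces the preservation of finite presentability to that of finite generation, which in turn reduces to checking that the images of the free module $S$ itself remain finitely generated.

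For the functor $\Omega_L \otimes_A -$, I would invoke Lemma \ref{Exch}: it gives a right $S$-linear isomorphism $\Omega_L \otimes_A S \cong \Omega_L \oslash_A S$, where the right-hand side carries the $\circ$-action coming from right multiplication on the second factor. Since $\Omega_L$ is projective of constant rank $1$ over $A$, one can choose an $A$-module $P$ and an integer $n$ with $\Omega_L \oplus P \cong A^n$, and then $\Omega_L \oslash_A S$ is exhibited as a right $S$-module direct summand of $A^n \oslash_A S \cong S^n$, hence as a finitely generated (in fact, projective) right $S$-module.

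For the inverse functor $\Hom_A(\Omega_L, -)$ I would argue symmetrically. The $A$-linear dual $\Omega_L^\vee := \Hom_A(\Omega_L, A)$ is again an invertible $A$-module, inherits from $\Omega_L$ a natural left $S$-module structure (obtained by dualising the formulas of Lemma \ref{LRswitch} in the obvious way), and yields a natural $S$-linear identification $\Hom_A(\Omega_L, N) \cong \Omega_L^\vee \otimes_A N$. A left-handed analogue of Lemma \ref{Exch} then rewrites $\Omega_L^\vee \otimes_A S$ as a left $S$-module of the form $S \oslash_A \Omega_L^\vee$, and invertibility of $\Omega_L^\vee$ exhibits this as a direct summand of a free left $S$-module, completing the argument. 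The only non-cosmetic task here is verifying the $G$-compatibility of these dual side-switching identifications; this is a routine calculation of the same flavour as in the proofs of Lemmas \ref{LRswitch} and \ref{Exch}, and constitutes the only genuine (but mild) obstacle in the whole proof.
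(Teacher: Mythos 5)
Your treatment of the functor $\Omega_L \otimes_A -$ coincides with the paper's: reduce by exactness to showing $F(S)$ is finitely generated projective, identify $F(S) \cong \Omega_L \oslash_A S$ via Lemma \ref{Exch}, and conclude from invertibility of $\Omega_L$. For $\Hom_A(\Omega_L, -)$, however, you diverge: you propose to \emph{symmetrise}, giving $\Omega_L^\vee$ its own left $S$-module structure, identifying $\Hom_A(\Omega_L, -)$ with $\Omega_L^\vee \otimes_A -$, and proving a left-handed companion of Lemma \ref{Exch}. The paper instead exploits the mutual-inverse property of $F$ and $G$ so that no new side-switching lemma is needed: writing $\Omega_L^\vee \oplus P \cong A^m$ and tensoring by $\Omega_L$ gives $A \oplus Q \cong \Omega_L^m$, hence $S \oplus (Q \oslash_A S) \cong (\Omega_L \oslash_A S)^m$ as right $S$-modules; applying $G$ and then using $\Omega_L \oslash_A S \cong F(S)$ (Lemma \ref{Exch}) together with $G(F(S)) \cong S$ exhibits $G(S)$ as a direct summand of $S^m$. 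Your route is viable and arguably more conceptual, but the verifications you call routine are not quite free: the left $S$-module structure on $\Omega_L^\vee$ (most naturally seen via the adjoint action on $\bigwedge^d_A L$) and the induced ``left $\otimes$ right $=$ left'' structure on $\Omega_L^\vee \otimes_A N$ require care with signs, and the left-handed analogue of Lemma \ref{Exch} would need a proof of comparable length to the original. The paper's argument buys you all of that essentially for free, at the cost of a slightly less symmetric presentation.
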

\begin{proof} The cokernel of a morphism between two finitely generated projective modules is finitely presented. Because $F := \Omega_L \otimes_A -$ and $G := \Hom_A(\Omega_L,-)$ are exact, it will be enough to verify that $F(S)$ and $G(S)$ are finitely generated and projective. 

Since $\Omega_L$ is a finitely generated projective $A$-module, $\Omega_L \oslash_A S$ is a finitely generated projective right $S$-module. Hence the right $S$-module $F(S) = \Omega_L \otimes_A S$ is finitely generated projective by Lemma \ref{Exch}. 

Since $\Hom_A(\Omega_L,A)$ is a finitely generated projective $A$-module, it is a direct summand of a finitely generated free $A$-module. Tensoring by $\Omega_L$ and using the fact that $\Omega_L$ is an invertible $A$-module, we can find an $A$-module $Q$ and an integer $m > 0$ such that $A \oplus Q \cong \Omega_L^m$. Hence $S \oplus (Q \oslash_A S) \cong (\Omega_L \oslash_A S)^m$ and therefore $G(S)$ is a direct summand of $G(\Omega_L \oslash_A S)^m$. Since $F$ and $G$ are mutually inverse and $F(S) \cong \Omega_L \oslash_A S$ by Lemma \ref{Exch}, we see that $G(S)$ is a direct summand of $S^m$. Hence the left $S$-module $G(S)$ is finitely generated and projective. \end{proof}

We now return to rigid analytic geometry, and suppose that $(\bX, G)$ is small. Thus the group $G$ is compact $p$-adic analytic and we can find a $G$-stable affine formal model $\cA$ in $\cO(\bX)$ as well as a $G$-stable free $\cA$-Lie lattice $\cL$ in $\cT(\bX)$, of rank $d$ say. Let $\Omega_{\cL}:= \Hom_{\cA}(\bigwedge^d\cL,\cA)$. This is a free $\cA$-module of rank $1$ as well as a right $U(\cL) \rtimes G$-module, by Lemma \ref{OmegaL} above. Recall also the open normal subgroup $G_{\cL} = \rho^{-1}(\exp(p^\epsilon \cL))$ of $G$ from \cite[Definition 3.2.11]{EqDCap}, where $\rho : G \to \Aut_K \cO(\bX)$ is the action of $G$ on $\cO(\bX)$, and the crossed product $\h{U(\cL)} \rtimes_N G$ from \cite[\S 3.2]{EqDCap}.

\begin{lem}\label{OmegaUHGmod} Let $H$ be an open normal subgroup of $G$ contained in $G_{\cL}$. Then $\Omega_{\cL}$ is a right $\h{U(\cL)} \rtimes_H G$-module.
\end{lem}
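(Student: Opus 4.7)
The plan is to extend the right $U(\cL) \rtimes G$-module structure on $\Omega_{\cL}$ already established in Lemma \ref{OmegaL} to a module structure over the completed crossed product $\h{U(\cL)} \rtimes_H G$. The key observation is that $\Omega_{\cL}$ is free of rank $1$ over the $\pi$-adically complete ring $\cA$, so it carries a canonical $\pi$-adic topology, and the existing right $U(\cL)$-action is $\pi$-adically continuous.

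First I would verify that the right $\cL$-action preserves $\Omega_{\cL}$ as an $\cA$-submodule, i.e. that $\Lie_x(\Omega_{\cL}) \subseteq \Omega_{\cL}$ for all $x \in \cL$. This is immediate from the formula (\ref{LieDer}) because $\cL$ is closed under the Lie bracket and acts on $\cA$ by derivations. Inductively it follows that $\Omega_{\cL} \cdot U(\cL)_{\leq n} \subseteq \Omega_{\cL}$ for all $n \geq 0$, hence $\Omega_{\cL} \cdot \pi^n U(\cL) \subseteq \pi^n \Omega_{\cL}$, which is precisely the continuity statement we need. Since $\Omega_{\cL}$ is $\pi$-adically complete, the right $U(\cL)$-action extends uniquely to a continuous right $\h{U(\cL)}$-action; associativity and unitality pass from $U(\cL)$ to $\h{U(\cL)}$ by density and continuity of the multiplication on $\h{U(\cL)}$.

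Next I would check compatibility with the right $G$-action. The hypothesis $H \leq G_{\cL}$ is precisely what guarantees that the conjugation action of $H$ on $U(\cL)$ extends continuously to $\h{U(\cL)}$, and the normal subgroup $H$ is used in \cite[\S 3.2]{EqDCap} to build the crossed product $\h{U(\cL)} \rtimes_H G$. The two compatibility identities from the proof of Lemma \ref{LRswitch}, namely
\[ ((\omega \cdot g) \cdot x) \cdot g^{-1} = \omega \cdot (g \cdot x) \qmb{and} ((\omega \cdot g) \cdot a) \cdot g^{-1} = \omega \cdot (g \cdot a), \]
hold for $a \in \cA$, $x \in \cL$, $g \in G$ by Lemma \ref{OmegaL}, and both sides depend continuously on the $U(\cL)$-factor in the $\pi$-adic topology. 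Hence the identities persist for all $a \in \h{U(\cL)}$, which together with the discrete right $G$-action assembles into a right $\h{U(\cL)} \rtimes_H G$-module structure on $\Omega_{\cL}$.

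The main point requiring care is matching up the extended action with the specific presentation of $\h{U(\cL)} \rtimes_H G$ from \cite[\S 3.2]{EqDCap}: once we know that the right $U(\cL) \rtimes H$-action on $\Omega_{\cL}$ extends continuously to a right $\h{U(\cL)} \rtimes H$-action (which follows by the argument above applied to the subgroup $H$), the passage from $H$ to $G$ is essentially formal, since $G/H$ is discrete and the right $G$-action on $\Omega_{\cL}$ is already defined at the uncompleted level. No substantive new obstacle appears — the argument is really just a routine continuity extension guided by the choice of $H \leq G_{\cL}$.
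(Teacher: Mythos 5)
Your argument correctly establishes the \emph{first} half of the paper's proof: by $\pi$-adic completeness of $\Omega_{\cL}$, the right $U(\cL) \rtimes G$-module structure extends to a right $\h{U(\cL)} \rtimes G$-module structure. But the proposal misreads what the notation $\h{U(\cL)} \rtimes_H G$ means and, as a result, skips the entire substantive content of the lemma.

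The ring $\h{U(\cL)} \rtimes_H G$ is not obtained from $\h{U(\cL)} \rtimes G$ by any kind of completion or continuity argument; rather, it is the \emph{quotient} of $\h{U(\cL)} \rtimes G$ by the two-sided ideal generated by the elements $\beta(h)h^{-1} - 1$ for $h \in H$, where $\beta : H \to \h{U(\cL)}^\times$, $\beta(h) = \exp(p^\epsilon\iota(x))$ with $\rho(h) = \exp(p^\epsilon x)$, is the trivialisation afforded by $H \leq G_{\cL}$ (this is the content of \cite[Definition 2.2.3, Lemma 2.2.4, Theorem 3.2.12]{EqDCap}). So the question is not whether the action of $U(\cL) \rtimes H$ ``extends continuously'' to $\h{U(\cL)} \rtimes_H G$; it is whether the already-constructed right $\h{U(\cL)}\rtimes G$-action on $\Omega_{\cL}$ \emph{kills} every $\exp(p^\epsilon\iota(x)) - g$ for $g \in H$. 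This is not formal at all: it is a nontrivial identity involving the Lie derivative, and the paper devotes the bulk of the proof to a direct computation verifying
\[
(\omega\cdot\exp(p^\epsilon\iota(x)))(v_1\wedge\cdots\wedge v_d) = g^{-1}\cdot\omega(g\cdot v_1\wedge\cdots\wedge g\cdot v_d) = (\omega\cdot g)(v_1\wedge\cdots\wedge v_d),
\]
using the explicit Lie-derivative formula (\ref{LieDer}), a multinomial expansion of $\omega\cdot\iota(x)^m$, and the identity $\exp(p^\epsilon\ad(x))(v) = g\cdot v$ from \cite[Exercise 6.12]{DDMS}. None of this appears in your proposal, and the hypothesis $H \leq G_{\cL}$ is used here precisely to make $\beta$ well-defined --- not, as you say, to ``guarantee that the conjugation action extends continuously.'' As written, your argument would only prove that $\Omega_{\cL}$ is a right $\h{U(\cL)} \rtimes G$-module, which is strictly weaker than the statement being proved.
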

\begin{proof} Because $\Omega_{\cL}$ is $\pi$-adically complete, it follows from Lemma \ref{OmegaL} that it is naturally a right module over the $\pi$-adic completion of $U(\cL) \rtimes G$. The canonical map from $\h{U(\cL)}$ to this $\pi$-adic completion factors through $\h{U(\cL)} \rtimes G$, hence $\Omega_{\cL}$ is also a right $\h{U(\cL)} \rtimes G$-module. It remains to show that this action of $\h{U(\cL)} \rtimes G$ factors through $\h{U(\cL)} \rtimes_H G$. To that end, let $g \in H$ and let $x \in \cL$ be such that $\rho(g) = \exp( p^\epsilon x )$; we must show that $\exp(p^\epsilon \iota(x)) - g$ kills $\Omega_{\cL}$ on the right, where $\iota : \cL \to \h{U(\cL)}$ is the canonical map.   Let $m \geq 0$, $\omega \in \Omega_{\cL}$ and $v_1,\ldots, v_d \in \cL$; then it follows from (\ref{LieDer}) that $(\omega \cdot \iota(x)^m)(v_1\wedge \cdots \wedge v_d)$ is equal to
\[ \sum \binom{m}{i_0,i_1,i_2,\ldots,i_d} (-x)^{i_0} \cdot \omega\left( \ad(x)^{i_1}(v_1) \wedge \cdots \wedge \ad(x)^{i_d}(v_d) \right)\]
where the sum is taken over all $(d+1)$-tuples $(i_0,\ldots,i_d)$ of non-negative integers such that $i_0+ \cdots + i_d = m$. Now $\rho(g^{-1}) = \exp( -p^\epsilon x )$, and 
\[ \exp( p^\epsilon \ad(x) )(v) = \exp(p^\epsilon x) \hsp v \hsp \exp(-p^\epsilon x) = \rho(g) \hsp v \hsp \rho(g)^{-1} = g \cdot v\]
for any $v \in \cL$ by \cite[Exercise 6.12]{DDMS}. It follows from these facts that
\[\begin{array}{lll} & & (\omega \cdot \exp(p^\epsilon \iota(x))(v_1 \wedge \cdots \wedge v_d) = \\
&=& \sum\limits_{m=0}^\infty \frac{p^{\epsilon m}}{m!} \sum\limits_{\sum i_j = m} \binom{m}{i_0,i_1,i_2,\ldots,i_d} (-x)^{i_0} \cdot \omega\left( \ad(x)^{i_1}(v_1) \wedge \cdots \wedge \ad(x)^{i_d}(v_d) \right)  = \\
& = & \sum\limits_{i \in \N^{d+1}} \frac{(-p^\epsilon x)^{i_0}}{i_0!} \cdot \omega\left( \frac{ \ad(p^\epsilon x)^{i_1} }{i_1!}(v_1) \wedge \cdots \wedge \frac{ \ad(p^\epsilon x)^{i_d} }{i_d!}(v_d) \right)  = \\
&=& g^{-1} \cdot \omega( g \cdot v_1 \wedge \cdots \wedge g \cdot v_d ) = (\omega \cdot g)(v_1\wedge \cdots \wedge v_d).\end{array}\]
Thus $\Omega_{\cL} \cdot (\exp(p^\epsilon \iota(x)) - g) = 0$, as required.
\end{proof}

\begin{lem}\label{GplikeAction} Let $\cM$ be a $\pi$-adically complete $\h{U(\cL)}\rtimes G$-module. Then $\Omega_{\cL} \otimes_{\cA} \cM$ is a right $\h{U(\cL)} \rtimes G$-module, and whenever $x \in \cL, \omega \in \Omega_{\cL}$ and $m \in \cM$, we have
\[ (\omega \otimes m) \cdot \exp(p^\epsilon \iota(x)) = \omega \cdot \exp(p^\epsilon \iota(x))  \otimes  \exp(-p^{\epsilon} \iota(x)) \cdot m .\]
\end{lem}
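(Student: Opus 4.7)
The plan is to first construct the right $\h{U(\cL)} \rtimes G$-module structure on $\Omega_{\cL} \otimes_{\cA} \cM$ by combining Lemma \ref{LRswitch}(a) with $\pi$-adic continuity, and then verify the explicit exponential identity by a direct computation with commuting operators.

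For the module structure, I would restrict the action of $\h{U(\cL)} \rtimes G$ on $\cM$ to the subring $U(\cL) \rtimes G$ via the canonical map, then apply Lemma \ref{LRswitch}(a) to obtain a right $U(\cL) \rtimes G$-module structure on $\Omega_{\cL} \otimes_{\cA} \cM$ given by the usual formulas. Since $\Omega_{\cL}$ is a free $\cA$-module of rank $1$ and $\cM$ is $\pi$-adically complete, $\Omega_{\cL} \otimes_{\cA} \cM \cong \cM$ as an $\cA$-module and is itself $\pi$-adically complete. The right action of $\cL$ is $\pi$-adically continuous because the given right action of $\cL$ on $\Omega_{\cL}$ extends to a right $\h{U(\cL)}$-action by Lemma \ref{OmegaUHGmod} and the left action of $\cL$ on $\cM$ extends to a left $\h{U(\cL)}$-action by hypothesis. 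The induced right action of $U(\cL)$ on $\Omega_{\cL} \otimes_{\cA} \cM$ therefore extends uniquely to a right $\h{U(\cL)}$-action by completion, and combining this with the $G$-action gives the required right $\h{U(\cL)} \rtimes G$-module structure.

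For the explicit formula, introduce the two $K$-linear operators $A, B$ on $\Omega_{\cL} \otimes_{\cA} \cM$ defined by $A(\omega \otimes m) := \omega x \otimes m$ and $B(\omega \otimes m) := \omega \otimes xm$. By Lemma \ref{LRswitch}(a), the right action of $x \in \cL$ equals $A - B$. A direct calculation shows $AB = BA$, so the binomial theorem gives
\[ (A - B)^n = \sum_{k=0}^{n} \binom{n}{k} (-1)^{n-k} A^k B^{n-k}, \]
and an easy induction yields $A^k B^{n-k}(\omega \otimes m) = \omega x^k \otimes x^{n-k} m$. Multiplying by $p^{n\epsilon}/n!$ and summing over $n \geq 0$ converges $\pi$-adically in $\Omega_{\cL} \otimes_{\cA} \cM$, and reindexing the double sum by $(k, j) = (k, n-k)$ splits it into a product of two independent series, giving
\[ (\omega \otimes m) \cdot \exp(p^\epsilon \iota(x)) = \left(\sum_{k \geq 0} \frac{p^{k\epsilon}}{k!} \omega x^k\right) \otimes \left(\sum_{j \geq 0} \frac{(-p^\epsilon)^j}{j!} x^j m\right) \]
which is the desired identity.

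The main obstacle is making the extension from a $U(\cL) \rtimes G$-action to a $\h{U(\cL)} \rtimes G$-action precise, since one must check that the right action of $\cL$ on $\Omega_{\cL} \otimes_{\cA} \cM$ induces $\cA$-module endomorphisms that are continuous for the $\pi$-adic topology with a uniform bound, so that the action of $U(\cL)$ extends by completeness. Once this is in place, the commutation $[A, B] = 0$ and the rearrangement of the double sum are essentially formal, though one must note carefully that the sum is bi-indexed and rearrangement is valid in the $\pi$-adic topology.
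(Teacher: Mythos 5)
Your proposal is correct and follows essentially the same route as the paper: both use the $\pi$-adic completeness of $\Omega_{\cL} \otimes_{\cA} \cM$ together with Lemma \ref{LRswitch} to get the right $\h{U(\cL)} \rtimes G$-module structure, and both compute the exponential identity via a binomial expansion, exploiting that left multiplication by $x$ on $\Omega_{\cL}$ and on $\cM$ commute as operators on the tensor product. Your device of naming these commuting operators $A$ and $B$ simply makes explicit a step the paper performs inline when it rewrites $(\omega\otimes m)\cdot\iota(x)^n$ as a binomial sum before re-indexing into the product of two exponentials.
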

\begin{proof} Note that $\Omega_{\cL} \otimes_{\cA} \cM$ is $\pi$-adically complete by \cite[Lemma 3.3]{DCapTwo}. Now the first statement follows from Lemma \ref{OmegaL} and Lemma \ref{LRswitch}(b). Using  (\ref{SactionLM}), we have
\[ \begin{array}{lll}(\omega \otimes m) \cdot \exp(p^\epsilon \iota(x)) &=& \sum\limits_{n= 0}^\infty \frac{p^{\epsilon n}}{n!} (\omega \otimes m) \cdot \iota(x)^n = \\
&=& \sum\limits_{n= 0}^\infty \frac{p^{\epsilon n}}{n!} \sum\limits_{j=0}^n \binom{n}{j} \omega \cdot \iota(x)^j \otimes (-\iota(x))^{n-j} \cdot m = \\
&=&\sum\limits_{j =0}^\infty \sum\limits_{i = 0}^\infty \omega \cdot \frac{(p^\epsilon \iota(x))^j}{j!} \otimes \frac{(-p^\epsilon \iota(x))^{i}}{i!} \cdot m = \\
&=&\omega \cdot \exp(p^\epsilon \iota(x))  \otimes  \exp(-p^{\epsilon} \iota(x)) \cdot m. \hfill \qedhere \end{array}\]
\end{proof}

Let $H$ be an open subgroup of $G$ contained in $G_{\cL}$, and write $\cS := \h{U(\cL)} \rtimes_HG$.

\begin{cor}\label{hUHGsideswitch}  Let $\cM$ be a $\pi$-adically complete $\cS$-module.
\be\item If $\cM$ is a left $\cS$-module, then $\Omega_{\cL} \otimes_{\cA} \cM$ is a right $\cS$-module.
\item If $\cN$ is a right $\cS$-module, then $\Hom_{\cA}(\Omega_{\cL}, \cN)$ is a left $\cS$-module.
\ee\end{cor}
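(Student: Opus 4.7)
The plan is to reduce part (a) to Lemma \ref{GplikeAction} by first pulling the left $\cS$-module structure on $\cM$ back along the canonical surjection $\h{U(\cL)} \rtimes G \twoheadrightarrow \cS$. Applying Lemma \ref{GplikeAction} to the resulting left $\h{U(\cL)}\rtimes G$-module $\cM$ (which remains $\pi$-adically complete) then equips $\Omega_{\cL} \otimes_{\cA} \cM$ with a right $\h{U(\cL)} \rtimes G$-module structure, so the task reduces to showing that this action factors through $\cS$.

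To do so, I would verify that for each $g \in H$ with $\rho(g) = \exp(p^\epsilon x)$ for some $x \in \cL$, the defining relation $\exp(p^\epsilon \iota(x)) = g$ is respected on $\Omega_{\cL} \otimes_{\cA} \cM$. The explicit formula from Lemma \ref{GplikeAction} gives
\[ (\omega \otimes m) \cdot \exp(p^\epsilon \iota(x)) \;=\; \omega \cdot \exp(p^\epsilon \iota(x)) \;\otimes\; \exp(-p^\epsilon \iota(x)) \cdot m. \]
Since $\Omega_{\cL}$ is already a right $\cS$-module by Lemma \ref{OmegaUHGmod}, the first tensor factor equals $\omega \cdot g = \omega g$; since $\cM$ is a left $\cS$-module, the second tensor factor equals $g^{-1}\cdot m$. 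Combining yields $\omega g \otimes g^{-1} m$, which coincides with $(\omega \otimes m) \cdot g$ by the formula (\ref{SactionLM}) defining the right $G$-action. Hence the action descends to $\cS$, proving (a).

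For part (b), the approach is entirely parallel. Note first that $\Hom_{\cA}(\Omega_{\cL}, \cN)$ is $\pi$-adically complete because $\Omega_{\cL}$ is free of rank $1$ over $\cA$, so $\Hom_{\cA}(\Omega_{\cL}, \cN) \cong \cN$ as an $\cA$-module. Applying Lemma \ref{LRswitch}(b) to the $\pi$-adic completion of $U(\cL) \rtimes G$ (which coincides with $\h{U(\cL)} \rtimes G$, since $G$ acts by $\cA$-algebra automorphisms preserving the $\pi$-adic filtration) endows $\Hom_{\cA}(\Omega_{\cL}, \cN)$ with a left $\h{U(\cL)} \rtimes G$-module structure, given explicitly by the formula (\ref{SactionHom}). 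The analogue of the computation in part (a) — expanding $\exp(p^\epsilon \iota(x)) \cdot \phi$ via the power series, then using that $\Omega_{\cL}$ and $\cN$ are both right $\cS$-modules — shows $(\exp(p^\epsilon \iota(x)) \cdot \phi)(\omega) = \phi(\omega g) g^{-1} = (g \cdot \phi)(\omega)$, so this action also factors through $\cS$.

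The main obstacle is really bookkeeping rather than substance: all the genuinely nontrivial work was carried out in Lemmas \ref{OmegaUHGmod} and \ref{GplikeAction}, and what remains is to check that the exponential relations defining the crossed product $\cS$ translate into equality of the two actions of $g$ and $\exp(p^\epsilon \iota(x))$ after side-switching. A minor subsidiary point is ensuring that $\pi$-adic completeness is preserved by both $\Omega_{\cL} \otimes_{\cA} (-)$ and $\Hom_{\cA}(\Omega_{\cL}, -)$, but the freeness of $\Omega_{\cL}$ of rank $1$ over $\cA$ makes both trivialities.
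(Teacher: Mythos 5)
Your proof is correct and takes the same approach as the paper: part (a) is precisely the paper's argument, combining Lemma \ref{GplikeAction} with Lemma \ref{OmegaUHGmod} and the $\cS$-module hypothesis on $\cM$ to identify the two actions of $\exp(p^\epsilon\iota(x))$ and $g$, and part (b) --- which the paper dispatches with ``Part (b) is proved in a similar manner'' --- you carry out in full and correctly. One minor slip worth flagging: the $\pi$-adic completion of $U(\cL)\rtimes G$ does \emph{not} literally coincide with $\h{U(\cL)}\rtimes G$ when $G$ is infinite (completion fails to commute with the direct sum $\bigoplus_{g\in G} U(\cL)g$), but this is harmless here --- what you actually need, and what the proof of Lemma \ref{OmegaUHGmod} uses, is only the existence of a canonical algebra map from $\h{U(\cL)}\rtimes G$ into that completion, so that any $\pi$-adically complete $U(\cL)\rtimes G$-module becomes an $\h{U(\cL)}\rtimes G$-module.
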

\begin{proof} Let $g \in H$ and let $x \in \cL$ be such that $\rho(g) = \exp( p^\epsilon x )$. By Lemma \ref{OmegaUHGmod}, we have $\omega \cdot \exp(p^\epsilon \iota(x)) = \omega \cdot g$ for all  $\omega \in \Omega_{\cL}$. On the other hand, $\exp(p^\epsilon \iota(x)) \cdot m = g \cdot m$ for all $m \in \cM$ because $\cM$ is a left $\cS$-module. Using Lemma \ref{GplikeAction}, we have
\[ (\omega \otimes m) \cdot \exp(p^\epsilon \iota(x)) = \omega \cdot \exp(p^\epsilon \iota(x))  \otimes  \exp(-p^{\epsilon} \iota(x)) \cdot m = \omega \cdot g \otimes g^{-1} \cdot m = (\omega \otimes m) \cdot g.\]
Hence the $\h{U(\cL)} \rtimes G$-action on $\Omega_{\cL} \otimes_{\cA} \cM$ factors through $\cS$, and part (a) follows. Part (b) is proved in a similar manner.
\end{proof}

Recall the trivialisation $\beta : H \to \h{U(\cL)}^\times$ of the action of $H$ on $\h{U(\cL)}$ from \cite[Theorem 3.2.12]{EqDCap}. Our next technical result will help us to identify the image of $\cS = \h{U(\cL)} \rtimes_HG$ under the side-switching functor $\Omega_{\cL} \otimes_{\cA} -$ from Corollary \ref{hUHGsideswitch}(b).

\begin{prop}\label{Spres} Let $\overline{\cR}$ be any proper homomorphic image of $\cR$, write $A := \overline{\cR} \otimes_{\cR} \cA$,  let $L$ be the $(\overline{\cR}, A)$-Lie algebra $\overline{\cR} \otimes_{\cR} \cL$, and write $S  := U(L) \rtimes_H G$. Then there is a right $S$-linear isomorphism $\Omega_L \oslash_A S \cong \Omega_L \otimes_A S$.
\end{prop}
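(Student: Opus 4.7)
The plan is to descend from the analogous isomorphism over the larger algebra $\tilde{S} := U(L) \rtimes G$, of which $S$ is a quotient. The crucial feature of the hypothesis that $\overline{\cR}$ is a \emph{proper} homomorphic image of $\cR$ is that each $\beta(h) = \exp(p^\epsilon x_h)$ (with $x_h \in L$ satisfying $\rho(h) = \exp(p^\epsilon x_h)$) lies in $U(L)$ as an honest truncated polynomial, so that formal manipulations involving $\beta(h)$ require no $\pi$-adic completion.

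First I would apply Lemma \ref{Exch} to $\tilde{S}$ to obtain an isomorphism $\tilde{\alpha} : \Omega_L \oslash_A \tilde{S} \tocong \Omega_L \otimes_A \tilde{S}$ of right $\tilde{S}$-modules, given by $\tilde{\alpha}(\omega \otimes s) = (\omega \otimes 1) \cdot s$ and satisfying $\tilde{\alpha}^2 = 1$ on the common underlying abelian group. Writing $I$ for the kernel of the canonical surjection $\tilde{S} \twoheadrightarrow S$ --- equivalently, the two-sided ideal of $\tilde{S}$ generated by $\{h - \beta(h) : h \in H\}$ --- it then suffices to show that $\tilde{\alpha}$ maps the subgroup $\Omega_L \otimes_A I$ of $\Omega_L \otimes_A \tilde{S}$ into itself. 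For then $\tilde{\alpha}$ descends to an isomorphism of the quotients, and the right $\tilde{S}$-action on both sides factors through $S$ by construction.

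The central computation is that, for every $\omega \in \Omega_L$ and $h \in H$,
\[\tilde{\alpha}(\omega \otimes (h - \beta(h))) \ = \ \omega h \otimes (h^{-1} - \beta(h^{-1})) \ \in \ \Omega_L \otimes_A I.\]
The $G$-part $(\omega \otimes 1) \cdot h = \omega h \otimes h^{-1}$ is immediate from the formula in Lemma \ref{LRswitch}(a). For the $U(L)$-part, the operators of left- and right-multiplication by $x_h$ on $\Omega_L \otimes_A \tilde{S}$ commute (they act on different tensor factors), so iterating the $L$-action formula in Lemma \ref{LRswitch}(a) yields the binomial identity
\[(\omega \otimes 1) \cdot x_h^n \ = \ \sum_{k=0}^n \binom{n}{k} (-1)^{n-k} \omega x_h^k \otimes x_h^{n-k};\]
summing the (now truncated) exponential series and applying Lemma \ref{OmegaUHGmod} to identify $\omega \cdot \beta(h)$ with $\omega \cdot h$ gives the key identity $(\omega \otimes 1) \cdot \beta(h) = \omega h \otimes \beta(h^{-1})$. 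Since $\tilde{\alpha}$ is right $\tilde{S}$-linear and $I$ is generated as a two-sided ideal by the elements $h - \beta(h)$, the same calculation with $\omega \otimes 1$ replaced by an arbitrary $\eta \otimes u$ --- and using that $(h^{-1} - \beta(h^{-1})) u \in I$ --- extends this to $\tilde{\alpha}(\Omega_L \otimes_A I) \subseteq \Omega_L \otimes_A I$; the reverse inclusion follows at once from $\tilde{\alpha}^2 = 1$.

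The main obstacle is therefore the stability $\tilde{\alpha}(\Omega_L \otimes_A I) \subseteq \Omega_L \otimes_A I$, which rests on the identity $(\omega \otimes 1) \cdot \beta(h) = \omega h \otimes \beta(h^{-1})$. This combines the group-like behaviour of $\beta(h)$ with respect to the right action of $U(L) \rtimes_H G$ on $\Omega_L$ (Lemma \ref{OmegaUHGmod}) with the fact that modulo the proper quotient $\overline{\cR}$ the exponential $\beta(h)$ truncates to a polynomial, so that the formal rearrangements of Lemma \ref{GplikeAction} go through without $\pi$-adic completion.
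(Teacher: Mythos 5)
Your proof is correct, and the key computation is the same one the paper uses: your identity $(\omega\otimes 1)\cdot(h-\beta(h)) = \omega h\otimes(h^{-1}-\beta(h^{-1}))$ is an equivalent form of the paper's $(\omega\otimes u)\cdot z=\omega\otimes z^{-1}u$ with $z=\beta(g)g^{-1}$. The difference is organisational. The paper exhibits $S$ as a cokernel of $\tilde{S}:=U(L)\rtimes G$ in two ways (as a right module via $\{\beta(g)g^{-1}-1\}$ and as a left module via $\{g\beta(g)^{-1}-1\}$), applies $\Omega_L\oslash_A-$ and $\Omega_L\otimes_A-$ to the resulting presentations, shows that $\alpha$ from Lemma \ref{Exch} intertwines the boundary maps, and invokes the Five Lemma to descend $\alpha$ to the cokernels. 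You cut straight to the same conclusion by showing that the subgroup $\Omega_L\otimes_A I$ is carried into itself by $\tilde{\alpha}$, which is more direct. The one place where your extension step should be stated a little more carefully is the passage from generators to all of $\Omega_L\otimes_A I$: the clean way to finish is to note that $\Omega_L\otimes_A I$ is simultaneously a $\circ$-submodule of $\Omega_L\oslash_A\tilde{S}$ (since $I$ is a right ideal) and an $\otimes$-submodule of $\Omega_L\otimes_A\tilde{S}$ (since $I$ is a left ideal, so Lemma \ref{LRswitch}(a) applied to the inclusion $I\hookrightarrow\tilde{S}$ of left modules realises $\Omega_L\otimes_A I$ as a right $\tilde{S}$-submodule); this, together with the right $\tilde{S}$-linearity of $\tilde{\alpha}$ and the identity on $\omega\otimes(h-\beta(h))$, gives the stability without any further case analysis. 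One should also record, as the paper does, that the $\tilde{S}$-action on each side of the descended isomorphism genuinely factors through $S$, so the result is right $S$-linear and not merely right $\tilde{S}$-linear. Your opening remark on why properness of $\overline{\cR}$ matters --- that $\beta(h)$ truncates to an honest element of $U(L)$ so no completion is needed --- is exactly the reason the argument can be run at the algebraic level and is worth keeping.
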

\begin{proof} Write $T := U(L) \rtimes G$, and let $I$ be the kernel of the canonical surjection $T \twoheadrightarrow S$. It follows from the proof of \cite[Lemma 2.2.4]{EqDCap} that $\{ \beta(g) g^{-1} - 1 : g \in H\}$ generates $I$ as a right $T$-module, and also that $\{g \beta(g)^{-1} - 1 : g \in H\}$ generates $I$ as a left $T$-module.  For each $g \in H$, let  $r_{\beta(g) g^{-1}}$ denote the left $T$-linear endomorphism of $T$ given by right-multiplication by $\beta(g) g^{-1}$, and let $\ell_{g \beta(g)^{-1}}$ denotes the right $T$-linear endomorphism of $T$ given by left-multiplication by $g \beta(g)^{-1}$. This gives us the following presentations of $S$ as a left, respectively, right, $T$-module:
\[ \bigoplus\limits_{g \in H} T \stackrel{ \sum\limits_{g \in H} r_{\beta(g) g^{-1} - 1} }{\longrightarrow} T \longrightarrow S \to 0, \quad \mbox{and} \quad
 \bigoplus\limits_{g \in H} T \stackrel{ \sum\limits_{g \in H} \ell_{g \beta(g)^{-1} - 1} }{\longrightarrow} T \longrightarrow S \to 0.\]
Because $\beta$ is a trivialisation of the $G$-action of $U(L)$, $g \beta(g)^{-1} - 1$ commutes with $U(L)$ inside $T$, and hence $\ell_{g \beta(g)^{-1} - 1}$ is also left $U(L)$-linear. Therefore the second presentation can be regarded as an exact sequence of $A-T$-bimodules. Now consider the following diagram:
\[ \xymatrix{  \bigoplus\limits_{g \in H} \Omega_L \oslash_A T \ar[rrrr]^{\sum\limits_{g \in H} 1 \oslash \ell_{g \beta(g)^{-1} - 1} }\ar[d]_\alpha & &&&  \Omega_L \oslash_AT \ar[r] \ar[d]^\alpha & \Omega_L \oslash_AS \ar[r] \ar@{.>}[d] & 0 \\ \bigoplus\limits_{g \in H} \Omega_L \otimes_A T \ar[rrrr]_{ \sum\limits_{g \in H} 1 \otimes r_{\beta(g) g^{-1} - 1} }  &&& & \Omega_L \otimes_A  T \ar[r]& \Omega_L \otimes_A S \ar[r] & 0}\]
Here the top row is obtained by applying the twisting functor $\Omega_A \oslash_A -$ to the presentation of $S$ as a right $T$-module, and the bottom row is obtained by applying the side-switching functor $\Omega_L \otimes_A -$ to the presentation of $S$ as a left $T$-module. We will show that the diagram commutes.

Let $g \in H$ and write $\beta(g) = \exp(p^\epsilon \iota(x))$ where $p^\epsilon x = \log \rho(g) \in p^\epsilon \cL$.  Let $\omega \in \Omega_L$ and $u \in T$, and define $z := \beta(g) g^{-1}$.  Then using Lemma \ref{GplikeAction}, equation (\ref{SactionLM}) and Lemma \ref{OmegaUHGmod} we see that 
\[ (\omega \otimes u) \cdot z = (\omega \cdot \beta(g)\otimes \beta(g)^{-1} u) \cdot g^{-1} = \omega \cdot \beta(g) g^{-1} \otimes g \beta(g)^{-1} u = \omega \otimes z^{-1}u.\]
Using this equation together with Lemma \ref{Exch}, we calculate
\[ \begin{array}{lll} \left((1 \otimes r_{z-1}) \circ \alpha\right)( \omega \otimes u) &=& \alpha(\omega \otimes u) \circ (z-1) = \alpha( (\omega \otimes u) \cdot (z-1)) = \\
&=& \alpha( \omega \otimes (z^{-1}-1) u) = \left(\alpha \circ (1 \otimes \ell_{z^{-1}-1})\right)( \omega \otimes u).\end{array}\]
Hence the diagram commutes. Because its rows are exact and because $\alpha$ is an isomorphism by Lemma \ref{Exch}, by the Five Lemma the map $\alpha$ descends to a right $T$-module isomorphism $\Omega_L \oslash_A S \cong \Omega_L \otimes_A S$, appearing as a dotted arrow in the diagram. Since the action of $T$ on these modules factors through $S$, it is in fact a right $S$-linear isomorphism.
\end{proof}

\begin{cor}\label{AlphaOnCurlyS} There is a right $\cS$-module isomorphism $\Omega_{\cL} \oslash_{\cA} \cS \cong \Omega_{\cL} \otimes_{\cA} \cS.$
\end{cor}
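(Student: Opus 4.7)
The plan is to deduce Corollary \ref{AlphaOnCurlyS} from Proposition \ref{Spres} by a $\pi$-adic limit argument. Specifically, for each integer $n \geq 1$, the ring $\cR_n := \cR/\pi^n \cR$ is a proper homomorphic image of $\cR$, so Proposition \ref{Spres} applies with $\overline{\cR} = \cR_n$. Writing $A_n := \cA/\pi^n \cA$, $L_n := \cL/\pi^n\cL$, and $S_n := U(L_n) \rtimes_H G$, this yields a right $S_n$-linear isomorphism $\alpha_n : \Omega_{L_n} \oslash_{A_n} S_n \tocong \Omega_{L_n} \otimes_{A_n} S_n$.

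Next, I would verify that these $\alpha_n$ are compatible with the reduction maps $S_{n+1} \twoheadrightarrow S_n$ and $\Omega_{L_{n+1}} \twoheadrightarrow \Omega_{L_n}$. This compatibility is essentially immediate from the explicit description $\alpha_n(\omega \otimes s) = (\omega \otimes 1) s$ used in Lemma \ref{Exch}, since this formula depends only on the multiplicative structure and hence commutes with any surjection of rings. Taking inverse limits over $n$ then produces the desired isomorphism, provided the relevant tensor products and $\oslash$-products behave well under $\pi$-adic completion.

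For this last step, I would observe that $\cS \cong \invlim S_n$ by the definition of $\widehat{U(\cL)} \rtimes_H G$, that $\Omega_{\cL}$ is a free $\cA$-module of rank $1$ (and therefore $\pi$-adically complete with $\Omega_{\cL}/\pi^n \Omega_{\cL} \cong \Omega_{L_n}$), and that for a $\pi$-adically complete right $\cS$-module $\cN$ whose reduction mod $\pi^n$ equals $\cN_n$, one has $\Omega_{\cL} \otimes_{\cA} \cN \cong \invlim (\Omega_{L_n} \otimes_{A_n} \cN_n)$ by \cite[Lemma 3.3]{DCapTwo} (and likewise for $\oslash$). Applying this with $\cN = \cS$ (using both $S$-module structures) identifies both sides of the corollary as inverse limits of the $\alpha_n$, and so the inverse limit of the $\alpha_n$ gives the required right $\cS$-linear isomorphism.

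The main obstacle I anticipate is purely bookkeeping: namely, carefully checking that the right $\cS$-module structures on $\Omega_{\cL} \oslash_{\cA} \cS$ and $\Omega_{\cL} \otimes_{\cA} \cS$ (the latter coming from Corollary \ref{hUHGsideswitch}(a) applied to the left $\cS$-module $\cS$) reduce modulo $\pi^n$ to precisely those right $S_n$-module structures on $\Omega_{L_n} \oslash_{A_n} S_n$ and $\Omega_{L_n} \otimes_{A_n} S_n$ considered in Proposition \ref{Spres}. Once this compatibility is in place, the proof reduces to one line: $\alpha := \invlim \alpha_n$.
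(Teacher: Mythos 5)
Your proposal matches the paper's proof exactly: the paper likewise applies Proposition \ref{Spres} with $\overline{\cR} = \cR/\pi^n\cR$ to obtain compatible isomorphisms $\alpha_n$ at each finite level, invokes \cite[Lemma 3.3]{DCapTwo} for $\pi$-adic completeness of $\Omega_{\cL}\otimes_{\cA}\cS$, and passes to the inverse limit. You have spelled out the compatibility and completeness bookkeeping slightly more explicitly than the paper does, but the argument is the same.
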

\begin{proof} Fix $n \geq 0$, and write $\cR_n := \cR / \pi^n \cR$, $\cA_n := \cA \otimes_{\cR} \cR_n$, $\cL_n := \cL \otimes_{\cR} \cR_n$ and $\cS_n = \cS \otimes_{\cR} \cR_n = U(\cL_n) \rtimes_HG$. Proposition \ref{Spres} gives an isomorphism $\alpha_n : \Omega_{\cL_n} \oslash_{\cA_n} \cS_n \cong \Omega_{\cL_n} \otimes_{\cA_n} \cS_n$ of right $\cS$-modules. Because the $\cR$-module $\Omega_{\cL} \otimes_{\cA} \cS$ is $\pi$-adically complete by \cite[Lemma 3.3]{DCapTwo} and the $\alpha_n$'s are compatible, the result follows by passing to the inverse limit.
\end{proof}

\begin{prop}\label{CurlySswitch} Suppose that $[\cL,\cL]\subseteq \pi \cL$ and $\cL \cdot \cA \subseteq \pi \cA$. Then the functors $\cM \mapsto \Omega_{\cL} \otimes_{\cA} \cM$ and $\cN \mapsto \Hom_{\cA}(\Omega_{\cL}, \cN)$ are mutually inverse equivalences of categories between the categories of finitely presented left $\cS$-modules and finitely presented right $\cS$-modules.
\end{prop}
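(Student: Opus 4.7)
The plan is to reduce everything to the statement of Corollary \ref{AlphaOnCurlyS} via finite presentations, and then to use the fact that $\Omega_{\cL}$ is an invertible $\cA$-module to obtain the mutual inverse property from the evaluation/co-evaluation maps of Lemma \ref{LRswitch}(c).

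First I would verify that the two functors are well-defined on finitely presented modules. If $\cM$ is a finitely presented left $\cS$-module, pick a presentation $\cS^{\oplus a}\to\cS^{\oplus b}\to\cM\to 0$; since $\Omega_{\cL}$ is a finitely generated projective (in fact free of rank $1$) $\cA$-module, the functor $\Omega_{\cL}\otimes_{\cA}-$ is exact, so applying it produces an exact sequence
\[ \Omega_{\cL}\otimes_{\cA}\cS^{\oplus a}\longrightarrow \Omega_{\cL}\otimes_{\cA}\cS^{\oplus b}\longrightarrow \Omega_{\cL}\otimes_{\cA}\cM\longrightarrow 0 \]
of right $\cS$-modules. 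Because $\Omega_{\cL}\cong\cA$ as an $\cA$-module, Corollary \ref{AlphaOnCurlyS} gives $\Omega_{\cL}\otimes_{\cA}\cS\cong\Omega_{\cL}\oslash_{\cA}\cS\cong\cS$ as right $\cS$-modules, exhibiting $\Omega_{\cL}\otimes_{\cA}\cM$ as the cokernel of a map of finitely generated free right $\cS$-modules, hence finitely presented. Note that the hypotheses $[\cL,\cL]\subseteq\pi\cL$ and $\cL\cdot\cA\subseteq\pi\cA$ enter here to ensure that $\cS$ is Noetherian and that every finitely generated $\cS$-module is $\pi$-adically complete, so that the $\cS$-module structure produced by Corollary \ref{hUHGsideswitch}(a) is actually available. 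The same argument, using that $\Hom_{\cA}(\Omega_{\cL},-)$ is exact (again because $\Omega_{\cL}$ is $\cA$-projective) and that $\Hom_{\cA}(\Omega_{\cL},\cS)\cong\cS$ as a left $\cS$-module (dual to Corollary \ref{AlphaOnCurlyS}), shows that $\Hom_{\cA}(\Omega_{\cL},-)$ also preserves finite presentation.

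Next I would upgrade the natural $\cA$-linear evaluation and co-evaluation maps
\[ \Omega_{\cL}\otimes_{\cA}\Hom_{\cA}(\Omega_{\cL},\cN)\longrightarrow\cN\qmb{and}\cM\longrightarrow\Hom_{\cA}(\Omega_{\cL},\Omega_{\cL}\otimes_{\cA}\cM) \]
to $\cS$-linear isomorphisms. They are already $\cA$-linear isomorphisms since $\Omega_{\cL}$ is an invertible $\cA$-module; their $\cS$-linearity is essentially the completed version of Lemma \ref{LRswitch}(c), which I would check by verifying compatibility with the action of $\h{U(\cL)}$ (this reduces to the formulas (\ref{SactionLM}) and (\ref{SactionHom}) together with the continuity ensured by $\pi$-adic completeness of the modules in sight) and with the right action of $G$ (which is a direct calculation from the definitions). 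Because these natural maps are functorial and $\cS$-linear, they furnish the unit and counit of an adjunction, and being bijections they realise the two functors as mutually inverse equivalences.

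The part I expect to require the most care is the $\cS$-linearity of the evaluation/co-evaluation maps in the completed setting: one has to verify not just the algebraic formulas for the action of $U(\cL)\rtimes G$ (Lemma \ref{LRswitch}) but also that these pass to the completion $\h{U(\cL)}\rtimes_H G$, which is why Corollary \ref{hUHGsideswitch} and the calculation of Lemma \ref{GplikeAction} --- showing how group-like elements $\exp(p^{\epsilon}\iota(x))$ act --- were developed. Once that compatibility is in place, the rest of the argument is a formal consequence of $\Omega_{\cL}$ being an invertible $\cA$-module together with Corollary \ref{AlphaOnCurlyS}.
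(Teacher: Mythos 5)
Your overall plan follows the paper's: use Corollaries \ref{hUHGsideswitch} and \ref{AlphaOnCurlyS}, the fact that $\Omega_{\cL}$ is a free $\cA$-module of rank $1$, and the $\pi$-adic completeness of finitely generated $\cS$-modules under the given hypotheses on $\cL$. But the \emph{order} of your two steps creates a circularity. In your first step you invoke an unproven ``dual to Corollary \ref{AlphaOnCurlyS}'' asserting $\Hom_{\cA}(\Omega_{\cL},\cS)\cong\cS$ as a \emph{left} $\cS$-module, in order to conclude that $\Hom_{\cA}(\Omega_{\cL},-)$ preserves finite presentation. No such dual is established in the paper, and the only evident route to it runs through the mutual-inverse property that you set out to verify in your second step: once evaluation/co-evaluation are known to be $\cS$-linear isomorphisms, the unit gives $\cS \cong \Hom_{\cA}(\Omega_{\cL},\Omega_{\cL}\otimes_{\cA}\cS)$, and since $\Omega_{\cL}\otimes_{\cA}\cS\cong\Omega_{\cL}\oslash_{\cA}\cS\cong\cS$ by Corollary \ref{AlphaOnCurlyS} and freeness of $\Omega_{\cL}$, this rewrites as $\cS\cong\Hom_{\cA}(\Omega_{\cL},\cS)$. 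This is essentially the reasoning inside the proof of Corollary \ref{FPswitch}, which explicitly uses the mutual-inverse property from Corollary \ref{ULGswitch}. As written, your Step~1 consumes a consequence of Step~2.

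The fix is precisely the reordering the paper adopts: first observe --- combining Corollary \ref{ULGswitch}, Corollary \ref{hUHGsideswitch} and \cite[Lemma 3.3]{DCapTwo} --- that $\Omega_{\cL}\otimes_{\cA}-$ and $\Hom_{\cA}(\Omega_{\cL},-)$ are already exact mutually inverse equivalences between $\pi$-adically complete left and right $\cS$-modules; next note that every finitely generated $\cS$-module is $\pi$-adically complete under the stated hypotheses on $\cL$; and only then show that $\Omega_{\cL}\otimes_{\cA}\cS$ and $\Hom_{\cA}(\Omega_{\cL},\cS)$ are finitely generated projective by following the proof of Corollary \ref{FPswitch}, so that exactness of the two functors yields preservation of finitely presented modules. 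With the steps reordered, your argument becomes essentially the paper's.
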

\begin{proof} Because $\Omega_{\cL}$ is a finitely generated projective $\cA$-module, the given functors are exact mutually inverse equivalences between the categories of left $\cA$-modules and right $\cA$-modules. It follows from \cite[Lemma 3.3]{DCapTwo} that the functors send $\pi$-adically complete left $\cA$-modules to $\pi$-adically complete right $\cA$-modules. Corollary \ref{ULGswitch} and Corollary \ref{hUHGsideswitch} now imply that the functors restrict to exact mutually inverse equivalences between $\pi$-adically complete left $\cS$-modules and $\pi$-adically complete right $\cS$-modules. 

By construction,  $\cS = \cU \rtimes_H G$ is a free $\cU := \h{U(\cL)}$-module of finite rank, so every finitely generated $\cS$-module is also finitely generated as a $\cU$-module. Under the given assumptions on $\cL$, it follows from \cite[Lemma 4.1.9 and Proposition 4.1.6(c)]{EqDCap} that every finitely generated $\cS$-module is $\pi$-adically complete. Now, $\Omega_{\cL} \otimes_{\cA} \cS$ is a finitely generated projective right $\cS$-module by Corollary \ref{AlphaOnCurlyS}. Following the proof of Corollary \ref{FPswitch}, we see that $\Hom_{\cA}(\Omega_{\cL}, \cS)$ is a finitely generated projective left $\cS$-module. The result now follows from the exactness of the two functors.
\end{proof}

Recall that $\Omega(\bX)$ denotes the $\cO(\bX)$-module $\Hom_{\cO(\bX)}(\bigwedge^d_{\cO(\bX)} \cT(\bX), \cO(\bX))$, and that $\cA$ denotes some fixed $G$-stable affine formal model in $\cO(\bX)$.  If $U$ is a left (respectively, right) Noetherian ring, we will write $\Coh(U)$ (respectively, ${}^r\Coh(U)$) to denote the abelian category of finitely generated left (respectively, right) $U$-modules. 

\begin{thm}\label{LRequivBan} Let $\bX$ be a smooth $K$-affinoid variety. Suppose that
\be\item $\cT(\bX)$ admits a free $\cA$-Lie lattice $\cL$ for some affine formal model $\cA \subset \cO(\bX)$ which satisfies $[\cL,\cL]\subseteq \pi \cL$ and $\cL \cdot \cA \subseteq \pi \cA$, and
\item $G$ is a compact $p$-adic Lie group which acts on $\bX$ continuously and preserves $\cA \subset \cO(\bX)$ and $\cL \subset \cT(\bX)$.
\ee
Let $H$ be an open normal subgrop of $G$ contained in $G_{\cL}$, and let $U :=  \hK{U(\cL)} \rtimes_H G$. Then there is an equivalence of categories 
\[ \Coh(U) \quad\quad\stackrel{\cong}\quad\quad {}^r \Coh(U)\]
given by the functors $\Omega(\bX) \underset{\cO(\bX)}{\otimes}{}-$ and $\Hom_{\cO(\bX)}(\Omega(\bX),-)$.
\end{thm}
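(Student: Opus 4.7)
The plan is to deduce the theorem from the integral-level Proposition \ref{CurlySswitch} by inverting $\pi$. Write $\cS := \h{U(\cL)} \rtimes_H G$, so that $U = \cS \otimes_\cR K$; since $\cL$ is a free $\cA$-module of finite rank, $\Omega_{\cL}$ is a free $\cA$-module of rank one, and $\Omega(\bX) = \Omega_{\cL} \otimes_\cR K$ is an invertible $\cO(\bX)$-module. Note that under the hypotheses on $\cL$, the ring $\h{U(\cL)}$ is Noetherian, and because $[G:H]$ is finite (as $G$ is compact and $H$ is open), $\cS$ is a finite free $\h{U(\cL)}$-module on both sides and is therefore itself Noetherian.

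First I would check that $\Omega(\bX) \otimes_{\cO(\bX)} -$ carries $\Coh(U)$ into ${}^r\Coh(U)$, and symmetrically for $\Hom_{\cO(\bX)}(\Omega(\bX),-)$. Given $M \in \Coh(U)$, I pick finitely many $U$-generators and let $\cM \subset M$ be the $\cS$-submodule they generate; this is finitely presented by the Noetherianity of $\cS$, and by construction the natural map $\cM \otimes_\cR K \to M$ is a $U$-linear isomorphism. Applying Proposition \ref{CurlySswitch} to $\cM$ produces a finitely presented right $\cS$-module $\Omega_{\cL} \otimes_\cA \cM$, and base-changing to $K$ (and using that $\Omega(\bX) = \Omega_{\cL} \otimes_\cA \cO(\bX)$ and $\cO(\bX) = \cA \otimes_\cR K$) identifies this with $\Omega(\bX) \otimes_{\cO(\bX)} M$, which is thereby exhibited as a finitely generated right $U$-module. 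The argument for the functor $\Hom_{\cO(\bX)}(\Omega(\bX),-)$ is entirely symmetric, starting from an $\cS$-lattice in a right $U$-module.

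Next, to see these functors are mutually inverse, I would use that $\Omega(\bX)$ is invertible over $\cO(\bX)$, so the natural evaluation map $\Omega(\bX) \otimes_{\cO(\bX)} \Hom_{\cO(\bX)}(\Omega(\bX), N) \to N$ and coevaluation map $M \to \Hom_{\cO(\bX)}(\Omega(\bX), \Omega(\bX) \otimes_{\cO(\bX)} M)$ are $\cO(\bX)$-linear isomorphisms. Their $U$-linearity follows from the purely algebraic computation in Lemma \ref{LRswitch}(c) applied to $U(\cL) \rtimes G$, extended by $\pi$-adic continuity to $\cS$ in the manner of Corollary \ref{hUHGsideswitch} and Proposition \ref{CurlySswitch}, and then transferred to $U = \cS \otimes_\cR K$ by inverting $\pi$.

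The mildly delicate point, and the one I expect to consume most of the actual work, is verifying the compatibility between the two ways of producing a right $U$-action on $\Omega(\bX) \otimes_{\cO(\bX)} M$: the action obtained by running Corollary \ref{hUHGsideswitch} on an $\cS$-lattice $\cM$ and then inverting $\pi$, versus the action obtained directly from the formulas in Lemma \ref{LRswitch}(a) applied to $M$. Once this compatibility is pinned down (together with its $\Hom$ counterpart), no further estimates or limit arguments are required, and the theorem reduces entirely formally to Proposition \ref{CurlySswitch} by base change along $\cR \to K$.
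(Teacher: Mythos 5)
Your approach is essentially the same as the paper's: reduce to an $\cS$-lattice inside $M$ (where $\cS = \h{U(\cL)} \rtimes_H G$), apply Proposition \ref{CurlySswitch} to that finitely presented $\cS$-module, and then invert $\pi$. The organisation of the argument --- showing the functors land in the right categories, then invoking the freeness of $\Omega(\bX)$ together with Lemma \ref{LRswitch}(c) to see that the evaluation and coevaluation maps are $U$-linear --- is also the same.

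However, there is one genuine slip. You assert that ``$\h{U(\cL)}$ is Noetherian'' (and hence that $\cS$ is Noetherian, and hence that the $\cS$-lattice $\cM$ you construct is automatically finitely presented). This is false in the generality of the theorem: the whole point of \S\ref{MicroKash} is to handle the case where $K$ is not discretely valued, so $\cR$ is not Noetherian, $\cR/\pi\cR$ is not Noetherian, and $\gr\h{U(\cL)} \cong (\cA/\pi\cA)[y_1,\dots,y_d]$ is not Noetherian either; consequently $\h{U(\cL)}$ and $\cS$ are not Noetherian. What \emph{is} true is that the $K$-algebra $U = \hK{U(\cL)} \rtimes_H G$ is Noetherian, and that specific submodules of $\cS^b$ --- such as the kernel of an $\cS$-linear map $\cS^b \to M$ with $M$ a finitely generated $U$-module, which is $\pi$-saturated since $M$ is a $K$-vector space --- are finitely generated. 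This is the content of \cite[Lemma 4.1.9 and Theorem 4.1.4]{EqDCap}, which the paper cites at precisely this step. Without that, your claim that $\cM$ is finitely presented over $\cS$ is unjustified, and Proposition \ref{CurlySswitch} cannot be applied. The fix is to replace the Noetherianity claim with the correct finite generation statement for saturated $\cS$-submodules of free $\cS$-modules of finite rank; once that is in place, the rest of your proof goes through.
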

\begin{proof} Note first that under the given assumptions on $\cL$, $S := \hK{U(\cL)} \rtimes_H G$ is left and right Noetherian, by \cite[Lemma 4.1.9 and Theorem 4.1.4]{EqDCap}, so every finitely generated $S$-module is automatically finitely presented. 

Let $M$ be a finitely generated left $S$-module. Then there is an $\cS$-linear map $\epsilon : \cS^b \to M$ for some $b \geq 1$ whose image $\cM$ spans $M$ as a $K$-vector space. Note that $\cS^b$ is a free $\h{U(\cL)}$-module of finite rank, so $\ker \epsilon$ is a finitely generated $\cS$-module by \cite[Lemma 4.1.9 and Theorem 4.1.4]{EqDCap}, because of the given assumptions on $\cL$. Thus $\cM$ is a finitely presented left $\cS$-module, so $\Omega_{\cL} \otimes_{\cA} \cM$ is a finitely presented right $\cS$-module by Proposition \ref{CurlySswitch}. It follows that $\Omega(\bX) \otimes_{\cO(\bX)} M = K \otimes_{\cR} \left(\Omega_{\cL} \otimes_{\cA} \cM\right)$ is a finitely generated right $S$-module. The same argument shows that $\Hom_{\cO(\bX)}(\Omega(\bX),N)$ is a finitely generated left $S$-module whenever $N$ is a finitely generated right $S$-module. Because $\Omega(\bX)$ is a free $\cO(\bX)$-module of rank $1$, the two functors are mutually inverse adjoint equivalences of categories between left $\cO(\bX)$-modules and right $\cO(\bX)$-modules, and it follows from Lemma \ref{LRswitch}(c) that the unit and counit morphisms for the adjunction are in fact $S$-linear on finitely generated $S$-modules. So the restrictions of these functors to finitely generated $S$-modules are mutually inverse equivalences.
\end{proof}

\begin{lem}\label{AlphaLevelComp} Let $\cL_2 \leq \cL_1$ be two $G$-stable free $\cA$-Lie lattices in $\cT(\bX)$. Suppose that $H_2 \leq H_1$ are open normal subgroups of $G$ such that $H_i \leq G_{\cL_i}$ for $i=1,2$. Then there is a commutative diagram of right $\hK{U(\cL_2)} \rtimes_{H_2}G$-modules
\[ \xymatrix{ \Omega(\bX)\underset{\cO(\bX)}{\oslash}{} \left(\hK{U(\cL_2)} \rtimes_{H_2} G\right) \ar[r] \ar[d]_{\alpha_K}^\cong & \Omega(\bX)\underset{\cO(\bX)}{\oslash}{} \left(\hK{U(\cL_1)} \rtimes_{H_1} G\right) \ar[d]^{\alpha_K}_\cong \\ 
                                                                 \Omega(\bX)\underset{\cO(\bX)}{\otimes}{} \left(\hK{U(\cL_2)}\rtimes_{H_2}G\right) \ar[r]        & \Omega(\bX)\underset{\cO(\bX)}{\otimes}{} \left(\hK{U(\cL_1)}\rtimes_{H_1}G.\right) }\]
\end{lem}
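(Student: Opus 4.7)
The strategy is to unwind the definitions and check commutativity on elementary tensors, then propagate the result to the completion by continuity. The lemma is essentially a naturality statement for $\alpha$ with respect to the compatible inclusions $(\cL_2, H_2) \subseteq (\cL_1, H_1)$.

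First, I recall from Lemma \ref{Exch} and Corollary \ref{AlphaOnCurlyS} (rationalised over $K$) that $\alpha_K$ is determined by the formula $\alpha_K(\omega \otimes s) = (\omega \otimes 1) \cdot s$, where $\cdot$ denotes the right action from Lemma \ref{LRswitch}(a) applied to the regular left module structure of $\hK{U(\cL_i)} \rtimes_{H_i} G$ over itself. The two horizontal arrows in the diagram are $1_{\Omega(\bX)} \otimes \phi$, where $\phi: \hK{U(\cL_2)} \rtimes_{H_2}G \to \hK{U(\cL_1)} \rtimes_{H_1}G$ is the canonical continuous $K$-algebra homomorphism induced by the inclusions $\cL_2 \subseteq \cL_1$ (extended to universal enveloping algebras and completed) and $H_2 \leq H_1$.

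Now chase an element $\omega \otimes s$ with $s$ in the dense subring $U(\cL_2) \rtimes H_2$. Going right-then-down produces $(\omega \otimes 1) \cdot_1 \phi(s)$, while down-then-right produces $(1 \otimes \phi)\bigl((\omega \otimes 1) \cdot_2 s\bigr)$. To see these agree, observe that the right actions $\cdot_i$ specified by (\ref{SactionLM}) only involve (i) the Lie derivative action of $x \in \cL_i$ on $\Omega(\bX)$, and (ii) the $G$-action on $\Omega(\bX)$; both are restrictions of the common actions of $\cT(\bX)$ and $G$ on $\Omega(\bX)$. It follows that $(1 \otimes \phi)$ is right-$S_2$-linear, with $S_1 := \hK{U(\cL_1)} \rtimes_{H_1}G$ regarded as a right $S_2$-module via $\phi$; equivalently, $(1 \otimes \phi)\bigl((\omega \otimes 1) \cdot_2 s\bigr) = (\omega \otimes 1) \cdot_1 \phi(s)$ on the nose. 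The verification of this identity reduces by the Leibniz-type formulas in (\ref{SactionLM}) to the cases $s = x \in \cL_2$ and $s = g \in H_2$, both of which are immediate.

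The only potential technical obstacle is ensuring that $1 \otimes \phi$ is well-defined and continuous at the level of the $\pi$-adic completions that occur before tensoring with $K$; this is a routine application of \cite[Lemma 3.3]{DCapTwo}, since the completions are built from the same $\cA$-module structure on $\Omega(\bX)$. Granting continuity, and noting that all four maps in the diagram are continuous and $U(\cL_2) \rtimes H_2$ is dense in $\hK{U(\cL_2)} \rtimes_{H_2} G$, commutativity on the dense subring extends to the whole diagram, completing the proof.
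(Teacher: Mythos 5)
Your proposal is correct in substance and essentially follows the same route the paper takes: the paper's proof is a one-liner asserting that the vertical maps come from Corollary~\ref{AlphaOnCurlyS}, the horizontal maps come from \cite[Proposition 3.2.15]{EqDCap}, and the square commutes by ``functoriality''; you simply spell out what that functoriality check amounts to, namely that $1 \otimes \phi$ intertwines the two right actions because the formula $\alpha_K(\omega \otimes s) = (\omega \otimes 1)\cdot s$ uses only the Lie derivative action of $\cT(\bX)$ and the $G$-action on $\Omega(\bX)$, both of which are independent of the choice of lattice and level subgroup. That is the right argument.

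There is, however, a recurring slip worth correcting. You repeatedly name $U(\cL_2)\rtimes H_2$ as the dense subring of $\hK{U(\cL_2)}\rtimes_{H_2}G$, and you verify the key identity only for $s=x\in\cL_2$ and $s=g\in H_2$. The subscript $H_2$ in the crossed product notation records a trivialisation compatibility, not the group whose elements generate the ring: $\hK{U(\cL_2)}\rtimes_{H_2}G$ contains $\gamma(g)$ for \emph{all} $g\in G$ and is a free $\hK{U(\cL_2)}$-module on coset representatives of $H_2$ in $G$, so the image of $U(\cL_2)\rtimes H_2$ lands inside $\hK{U(\cL_2)}$ and is far from dense. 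The dense subring you want is the image of $U(\cL_2)\rtimes G$ (equivalently, $\cD(\bX)\rtimes G$), and the Leibniz reduction must be carried out for $s=x\in\cL_2$, $s=a\in\cO(\bX)$, and $s=g\in G$, not merely $g\in H_2$. With those replacements (each verification being the same trivial computation you already indicate), the argument is complete.
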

\begin{proof} The vertical arrows come from Corollary \ref{AlphaOnCurlyS}, and the horizontal arrows come from \cite[Proposition 3.2.15]{EqDCap} and functoriality.
\end{proof}

\begin{thm}\label{LRswitchAffs} The pair $\Omega(\bX)\otimes_{\cO(\bX)} -$ and $\Hom_{\cO(\bX)}(\Omega(\bX),-)$ define mutually inverse equivalences of categories between coadmissible left $\w\cD(\bX,G)$-modules and coadmissible right $\w\cD(\bX,G)$-modules.
\end{thm}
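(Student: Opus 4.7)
The plan is to bootstrap the level-wise equivalence from Theorem \ref{LRequivBan} up to an equivalence on coadmissible modules, using that $\w\cD(\bX, G)$ is a Fr\'echet-Stein algebra via Lemma \ref{AlphaLevelComp} for the compatibility with transition maps. First I would choose a $G$-stable affine formal model $\cA$ in $\cO(\bX)$ and a $G$-stable free $\cA$-Lie lattice $\cL$ in $\cT(\bX)$, pick a good chain $(H_\bullet)$ for $\cL$, and set $\cL_n := \pi^n \cL$ and $U_n := \hK{U(\cL_n)} \rtimes_{H_n} G$. For $n$ large enough, $[\cL_n, \cL_n] = \pi^{2n}[\cL,\cL] \subseteq \pi \cL_n$ and $\cL_n \cdot \cA \subseteq \pi\cA$, so Theorem \ref{LRequivBan} yields mutually inverse equivalences $F_n := \Omega(\bX) \otimes_{\cO(\bX)} -$ and $G_n := \Hom_{\cO(\bX)}(\Omega(\bX),-)$ between $\Coh(U_n)$ and ${}^r\Coh(U_n)$. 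By \cite[Lemma 3.3.4]{EqDCap} we have $\w\cD(\bX, G) = \varprojlim U_n$ as a Fr\'echet-Stein algebra, so a coadmissible left $\w\cD(\bX,G)$-module $M$ is equivalently a coherent system $(M_n)$ of finitely generated left $U_n$-modules with $U_n \otimes_{U_{n+1}} M_{n+1} \cong M_n$.

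Next I would show that the family $(F_n(M_n))$ is again coherent for the Fr\'echet-Stein presentation of $\w\cD(\bX,G)$ on the right. Concretely, using the presentations of $M_n$ as cokernels of maps between finite free $U_n$-modules, the required compatibility isomorphism $U_n \otimes_{U_{n+1}} F_{n+1}(M_{n+1}) \cong F_n(M_n)$ reduces to checking that the diagram of Lemma \ref{AlphaLevelComp} extends from $U_n$ itself to any finitely generated $U_n$-module, which follows by functoriality and exactness of $F_n$ on finitely generated modules. This allows me to define $F(M) := \varprojlim F_n(M_n)$, which is coadmissible as a right $\w\cD(\bX,G)$-module by \cite[Theorem 4.4.3]{EqDCap} or the equivalent characterisation of coadmissibility. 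Dually I define $G(N) := \varprojlim G_n(N_n)$ and verify the analogous compatibility.

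The final step is to identify $F(M)$ with $\Omega(\bX) \otimes_{\cO(\bX)} M$ and $G(N)$ with $\Hom_{\cO(\bX)}(\Omega(\bX), N)$ at the level of the coadmissible modules themselves, and to produce the mutually inverse natural isomorphisms. Because $\Omega(\bX)$ is an invertible $\cO(\bX)$-module (of rank one, as $\cL$ is free), the functors $\Omega(\bX) \otimes_{\cO(\bX)} -$ and $\Hom_{\cO(\bX)}(\Omega(\bX),-)$ are exact equivalences on all $\cO(\bX)$-modules, commute with the inverse limits appearing in the coadmissibility datum, and the unit/counit isomorphisms from the $\cO(\bX)$-level side-switching adjunction are $\w\cD(\bX,G)$-linear on coadmissible modules by Lemma \ref{LRswitch}(c) together with density of $\cD(\bX) \rtimes G$ in $\w\cD(\bX,G)$ and continuity of the relevant maps (i.e.\ closedness of the kernels/cokernels involved).

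The main obstacle I anticipate is the last identification: showing that the abstractly-defined functor $F = \varprojlim F_n$ genuinely agrees with the naive $\Omega(\bX) \otimes_{\cO(\bX)} -$ applied to the coadmissible module $M$ (rather than just to its finite-level truncations $M_n$), and that the resulting $\w\cD(\bX,G)$-action on the tensor product is the one prescribed by formula (\ref{SactionLM}) extended to the Fr\'echet completions. The cleanest way to handle this is to observe that $\Omega(\bX) \otimes_{\cO(\bX)} M$ inherits a $\w\cD(\bX,G)$-action from the dense subalgebra $\cD(\bX) \rtimes G$ via \eqref{SactionLM}, to check continuity of this action using that $\Omega(\bX)$ is finitely generated projective, and then to invoke the universal property of $\varprojlim$ together with Lemma \ref{AlphaLevelComp} to construct the natural comparison isomorphism $F(M) \cong \Omega(\bX) \otimes_{\cO(\bX)} M$, which becomes an isomorphism level-wise by Proposition \ref{CurlySswitch}.
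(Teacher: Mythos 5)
Your proposal follows essentially the same route as the paper: you set up the Fréchet--Stein presentation $\w\cD(\bX,G) = \varprojlim U_n$ via a good chain, apply Theorem \ref{LRequivBan} level-by-level, use Lemma \ref{AlphaLevelComp} together with the Five Lemma to prove the coherent-family compatibility $U_n \otimes_{U_{n+1}} F_{n+1}(M_{n+1}) \cong F_n(M_n)$, and finally identify the inverse-limit functor with the genuine tensor/Hom against $\Omega(\bX)$ using that $\Omega(\bX)$ is an invertible (hence finitely generated projective) $\cO(\bX)$-module, so that tensoring commutes with the inverse limit defining coadmissibility. This is exactly the paper's argument, just phrased with slightly more commentary on the comparison step at the end.
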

\begin{proof} Let $\cL$ be a $G$-stable free $\cA$-Lie lattice in $\cT(\bX)$ such that $[\cL,\cL] \subseteq \pi^2 \cL$ and $\cL \cdot \cA \subseteq \pi \cA$ and note that each $\pi$-power multiple of $\cL$ also satisfies these conditions. Choose a good chain $(H_\bullet)$ for $\cL$ in $G$, in the sense of \cite[Definition 3.3.3]{EqDCap}. Write $S_n=\hK{U(\pi^n\cL)} \rtimes_{H_n} G$ and $S=\w\cD(\bX,G)$, so that we have a standard presentation $S = \invlim S_n$ as a Fr\'echet-Stein algebra, by the proof of \cite[Theorem 3.4.8]{EqDCap}. 

Suppose that $(M_\bullet)$ is a family of finitely generated left $S_\bullet$-modules and let $N_\bullet:=\Omega(\bX)\otimes_{\cO(\bX)} M_\bullet$. By Theorem \ref{LRequivBan}, $(N_\bullet)$ is a family of finitely generated right $S_\bullet$-modules. We will verify that $(N_\bullet)$ is coherent if and only if $(M_\bullet)$ is coherent, that is, that there are isomorphisms $N_{n+1}\otimes_{S_{n+1}}S_n\cong N_n$ for each $n\ge 0$ if and only if there are isomorphisms $S_n\otimes_{S_{n+1}}M_{n+1}\cong M_n$ for each $n\ge 0$.  If $Q$ is a finitely generated left $S_{n+1}$-module, define a right $S_n$-linear map
\[\theta_{Q}\colon (\Omega(\bX)\otimes_{\cO(\bX)} Q)\otimes_{S_{n+1}} S_n\to \Omega(\bX)\otimes_{\cO(\bX)} (S_n\otimes_{S_{n+1}}Q)\]
by setting $\theta_{Q} \left((\omega\otimes m)\otimes r\right) = (\omega\otimes 1\otimes m)r$. Then $\theta$ is a natural transformation between two right exact functors, and it follows from Lemma \ref{AlphaLevelComp} that $\theta_{S_{n+1}}$ is an isomorphism. Hence $\theta_Q$ is an isomorphism for all $Q$ by the Five Lemma. Thus $N_{n+1}\otimes_{S_{n+1}}S_n\cong \Omega(\bX)\otimes_{\cO(\bX)} (S_n\otimes_{S_{n+1}}M_{n+1})$. So $N_n\cong N_{n+1}\otimes_{S_{n+1}}S_n$ if and only if $M_n\cong S_n\otimes_{S_{n+1}}M_{n+1}$.

It now follows from Theorem \ref{LRequivBan} that $(M_\bullet) \mapsto (N_\bullet)$ is an equivalence of categories between coherent sheaves of left $S_\bullet$-modules and coherent sheaves of right $S_\bullet$-modules. Finally, since $\Omega(\bX)$ is a direct summand of a free $A$-module, for every coadmissible left $S$-module $M$ there are canonical isomorphisms
\[ \Omega(\bX) \underset{\cO(\bX)}{\otimes}{} M \cong \Omega(\bX) \underset{\cO(\bX)}{\otimes}{}(\invlim S_n \otimes_S M) \cong \invlim \Omega(\bX) \underset{\cO(\bX)}{\otimes}{} (S_n \otimes_S M) \]
of left $\cO(\bX)$-modules. Using these isomorphisms we can define a right $S$-module structure on $\Omega(\bX) \otimes_{\cO(\bX)} M$. Similarly, the canonical isomorphisms
\[ \Hom_{\cO(\bX)}(\Omega(\bX), N) \cong \Hom_{\cO(\bX)}(\Omega(\bX), N \otimes_S S_n) \cong S_n\otimes_S \Hom_{\cO(\bX)}(\Omega(\bX),N)\]
induce a left $S$-module structure on $\Hom_{\cO(\bX)}(\Omega(\bX), N)$ for every coadmissible right $S$-module $N$. The result now follows from \cite[Corollary 3.3]{ST}.
\end{proof}

We now come to the main result of $\S \ref{LRSwitch}$: it is an equivariant generalisation of \cite[Theorem 3.5]{DCapTwo}. Let $G$ be a $p$-adic Lie group acting continuously on the smooth rigid analytic variety $\bX$; we will write ${}^r \cC_{\bX/G}$ to denote the category of \emph{right} $G$-equivariant coadmissible $\cD$-modules on $\bX$.

\begin{thm}\label{MainLRswitch}  The functors $\Omega_{\bX}\otimes_{\cO_{\bX}}-$ and $\mathpzc{Hom}_{\cO_{\bX}}(\Omega_{\bX},-)$ are mutually inverse equivalences of categories between $\cC_{\bX/G}$ and ${}^r \cC_{\bX/G}$.
\end{thm}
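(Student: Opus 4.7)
The plan is to globalise Theorem \ref{LRswitchAffs} from affinoid algebras to the full equivariant $\cD$-module site $\bX_{\rig}$, by working locally on small affinoid subdomains where the theory of coadmissible modules over Fr\'echet--Stein algebras of the form $\w\cD(\bU,H)$ is available. Throughout, $\Omega_{\bX}$ denotes the canonical sheaf of top differential forms on $\bX$; this is a $G$-equivariant invertible $\cO_{\bX}$-module, and Lemma \ref{OmegaL} applied locally shows that $\Omega_{\bX}$ is naturally a sheaf of right $\cD_{\bX}$-modules compatibly with the $G$-action.

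First I would define the functor $\Omega_{\bX}\otimes_{\cO_{\bX}} -$ on $\cC_{\bX/G}$ as follows. Given $\cM\in\cC_{\bX/G}$ and a small pair $(\bU,H)$ with $\bU\in\bX_w(\cT)$, the section module $\cM(\bU)$ is a coadmissible left $\w\cD(\bU,H)$-module by \cite[Theorem 4.4.3]{EqDCap}. Theorem \ref{LRswitchAffs} then turns $\Omega(\bU)\otimes_{\cO(\bU)}\cM(\bU)$ into a coadmissible right $\w\cD(\bU,H)$-module, and this construction is independent of $H$ by the usual cofinality arguments (cf.\ \cite[Lemma 3.4.5]{EqDCap}) combined with Lemma \ref{AlphaLevelComp}. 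Localising this right module via $\Loc^{\w\cD(\bU,H)}_{\bU}$ gives a right $H$-equivariant locally Fr\'echet $\cD$-module on $\bU_{\rig}$; I would then check that these local right $\cD$-modules glue, using the Kiehl-type gluing \cite[Theorem 9.1]{DCapOne} adapted to the right-module setting, and that the result is coadmissible in the sense of \cite[Definition 3.6.7]{EqDCap}. The $G$-equivariant structure is induced from that on $\cM$ combined with the canonical $G$-equivariant structure on $\Omega_{\bX}$; the compatibility of the two actions is precisely the content of formula (\ref{SactionLM}) in Lemma \ref{LRswitch}. This yields a functor $\Omega_{\bX}\otimes_{\cO_{\bX}} - : \cC_{\bX/G}\to {}^r \cC_{\bX/G}$. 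The opposite functor $\mathpzc{Hom}_{\cO_{\bX}}(\Omega_{\bX},-)$ is constructed symmetrically, using part (b) of Lemma \ref{LRswitch} and Corollary \ref{hUHGsideswitch}(b).

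Next I would exhibit the two functors as mutually inverse. The canonical $\cO_{\bX}$-linear evaluation and co-evaluation morphisms
\[\Omega_{\bX}\otimes_{\cO_{\bX}}\mathpzc{Hom}_{\cO_{\bX}}(\Omega_{\bX},\cN)\longrightarrow\cN,\qquad \cM\longrightarrow\mathpzc{Hom}_{\cO_{\bX}}(\Omega_{\bX},\Omega_{\bX}\otimes_{\cO_{\bX}}\cM)\]
are isomorphisms because $\Omega_{\bX}$ is invertible, and Lemma \ref{LRswitch}(c) combined with Theorem \ref{LRequivBan}/\ref{LRswitchAffs} applied locally shows that they are moreover $\cD$-linear and $G$-equivariant. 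Together with the globalisation above, this gives the required equivalence of categories.

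The main obstacle is the globalisation step: one has to show that the local right $\cS$-module structures on $\Omega(\bU)\otimes_{\cO(\bU)}\cM(\bU)$ constructed via Proposition \ref{CurlySswitch} are compatible both under refinement $\bV\subset\bU$ of small affinoids and under the $G$-translation structure on $\cM$. The affinoid compatibility is Lemma \ref{AlphaLevelComp} passed through the localisation functor, together with the universal property of $\w\otimes$ from \cite[Lemma 7.3]{DCapOne}; the $G$-equivariant compatibility is a formal consequence of the fact that the side-switching isomorphism $\alpha$ of Lemma \ref{Exch} intertwines the left and right $G$-actions on $\Omega_{\cL}\otimes_{\cA} U(\cL)\rtimes G$. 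Once both compatibilities are in place, the proof reduces to applying Theorem \ref{LRswitchAffs} on each member of an $\bX_w(\cT)$-covering and invoking \cite[Theorem 3.6.11]{EqDCap} (and its right-module analogue) to descend the equivalence to the global category $\cC_{\bX/G}$.
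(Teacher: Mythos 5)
Your proposal follows essentially the same approach as the paper: reduce to the local affinoid statement of Theorem \ref{LRswitchAffs}, then globalise by exhibiting a compatibility isomorphism between side-switching and localisation to affinoid subdomains, and verify it respects the $G$-action. Two small remarks. First, the paper's actual proof is slightly cleaner than your gluing construction: because $\Omega_{\bX}\otimes_{\cO_{\bX}}-$ and $\mathpzc{Hom}_{\cO_{\bX}}(\Omega_{\bX},-)$ are already quasi-inverse at the level of sheaves of $\cO_{\bX}$-modules (so mutual inverse-ness needs no further checking), and Lemma \ref{LRswitch} directly endows the naive tensor/hom sheaf with the $G$-$\cD$-structure, the only thing that genuinely has to be verified is coadmissibility, i.e.\ the isomorphism $\Loc^{\w\cD(\bX,G)}_{\bX}(\Omega(\bX)\otimes_{\cO(\bX)} M)\cong\Omega_{\bX}\otimes_{\cO_{\bX}}\Loc^{\w\cD(\bX,G)}_{\bX}(M)$. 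Second, the ingredient you cite for ``affinoid compatibility'' is misattributed: Lemma \ref{AlphaLevelComp} concerns varying the Lie lattice and the normal subgroup on a fixed affinoid (it is used inside the proof of Theorem \ref{LRswitchAffs}), whereas the isomorphism $\tau(\bY,H)$ comparing $\Omega(\bX)\otimes_{\cO(\bX)} M$ restricted to $\bY$ with $\Omega(\bY)\otimes_{\cO(\bY)}(M\,\w\otimes\,\w\cD(\bY,H))$ is proved via Corollary \ref{AlphaOnCurlyS} together with the non-equivariant argument of \cite[Lemma 3.5]{DCapTwo}. With that substitution your outline matches the paper's proof.
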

\begin{proof} By working locally and using Lemma \ref{LRswitch}, we see that if $\cM$ is a left $G$-$\cD$-module on $\bX$, then $\Omega_{\bX} \otimes_{\cO_{\bX}} \cM$ is a right $G$-$\cD$-module on $\bX$; similarly if $\cM$ is a right $G$-$\cD$-module on $\bX$ then $\mathpzc{Hom}_{\cO_{\bX}}(\Omega_{\bX},\cM)$ is a left $G$-$\cD$-module on $\bX$. It is straightforward to verify that these functors convert locally Fr\'echet left $G$-equivariant $\cD$-modules into locally Fr\'echet right $G$-equivariant $\cD$-modules, and vice versa. Because these functors are naturally quasi-inverse on the level of sheaves of $\cO_{\bX}$-modules, it remains to check that they preserve coadmissibility. We will show that if $\cM \in \cC_{\bX/G}$, then $\Omega_{\bX} \otimes_{\cO_{\bX}} \cM \in {}^r\cC_{\bX,G}$, and leave the corresponding statement about $\mathpzc{Hom}_{\cO_{\bX}}(\Omega_{\bX},-)$ to the reader. 

Let $(\bX,G)$ be small and let $M$ be a coadmissible left $\w\cD(\bX,G)$-module, so that $\Omega(\bX) \otimes_{\cO(\bX)} M$ is a coadmissible right $\w\cD(\bX,G)$-module by Theorem \ref{LRswitchAffs}. In view of \cite[Definition 3.6.7]{EqDCap}, it remains to verify that there is a continuous right $G$-$\cD$-linear isomorphism
\begin{equation}\label{LocLRswitch} \Loc^{\w\cD(\bX,G)}_{\bX}\left( \Omega(\bX) \underset{\cO(\bX)}{\otimes}{} M\right) \quad \cong \quad \Omega_{\bX} \underset{\cO_X}{\otimes}{} \Loc^{\cD(\bX,G)}_{\bX}(M).\end{equation}
Let $\bY$ be an affinoid subdomain of $\bX$, and let $H$ be a $\bY$-small open subgroup of $G$. We will first exhibit an isomorphism of right $\w\cD(\bY,H)$-modules
\[ \tau(\bY,H) : \left(\Omega(\bX) \underset{\cO(\bX)}{\otimes}{} M\right) \underset{\w\cD(\bX,H)}{\w\otimes}{} \w\cD(\bY, H) \stackrel{\cong}{\longrightarrow} \Omega(\bY)\underset{\Omega(\bY)}{\otimes}{} \left( M \underset{\w\cD(\bX,H)}{\w\otimes}{} \w\cD(\bY,H) \right)\]
which is natural in $\bY$ and $H$. To this end, fix an $H$-stable affine formal model $\cA$ in $\cO(\bX)$ and an $H$-stable $\cA$-Lie lattice $\cL$ in $\cT(\bX)$. By rescaling $\cL$ if necessary and applying \cite[Lemma 7.6(b)]{DCapOne} together with \cite[Lemma 4.3.5]{EqDCap}, we may assume that $\cO(\bY)$ admits an $\cL$-stable and $H$-stable affine formal model $\cB$. We may also assume that $[\cL, \cL] \subseteq \pi \cL$ and $\cL \cdot \cA \subseteq \pi \cA$. Choose a good chain $(H_\bullet)$ in $H$ for $\cL$ using \cite[Lemma 3.3.6]{EqDCap}, and write $U_n:=\hK{U(\pi^n\cL)} \rtimes_{H_n} H$ and $V_n:= \hK{U(\cB\otimes_\cA\pi^n\cL)} \rtimes_{H_n} H$, so that $\w\cD(\bX,H) = \invlim U_n$ and $\w\cD(\bY,H) = \invlim V_n$ by \cite[Lemma 3.3.4]{EqDCap}. For each finitely generated left $U_n$-module $Q$, define a right $V_n$-linear map
\[ \tau_Q :  \left(\Omega(\bX)\underset{\cO(\bX)}{\otimes}{} Q\right)\underset{U_n}{\otimes}{}V_n\to  \Omega(\bY)\underset{\cO(\bY)}{\otimes}{} \left(V_n\underset{U_n}{\otimes}{}Q\right)\]
by setting $\tau_Q\left((\omega\otimes m)\otimes v\right)= (\omega|_{\bY}\otimes (1 \otimes m))v$. Then $\tau$ is a natural transformation between two right exact functors, and we see that $\tau_{U_n}$ is an isomorphism using Corollary \ref{AlphaOnCurlyS} together with the proof of \cite[Lemma 3.5]{DCapTwo}. Hence $\tau_{Q}$ is an isomorphism for all $Q$ by the Five Lemma. Passing to the limit as $n \to \infty$, we obtain the required $\w\cD(\bY,H)$-linear isomorphism $\tau(\bY,H)$ as the limit of the isomorphisms $\tau_{M_n}$ where $M_n := U_n \otimes_{\w\cD(\bX,H)} M$ for each $n \geq 0$. We leave it to the reader to verify that $\tau(\bY,H)$ is natural in both $\bY$ and $H$.

Finally, let $g \in G$ and write $N := \Omega(\bX) \otimes_{\cO(\bX)} M$. Using the definitions given above, we can check that the following diagram is commutative:
\[\xymatrix{ N \underset{\w\cD(\bX,H)}{\w\otimes}{} \w\cD(\bY, H) \ar[rr]^(0.4){\tau(\bY,H)} \ar[d]_{g^N_{\bY,H}} &&  \Omega(\bY)\underset{\Omega(\bY)}{\otimes}{} \left( \w\cD(\bY,H) \underset{\w\cD(\bX,H)}{\w\otimes}{} M \right) \ar[d]^{g^{\Omega}(\bY) \otimes g^M_{\bY,H}}\\
N \underset{\w\cD(\bX,H)}{\w\otimes}{} \w\cD(g\bY, gHg^{-1})  \ar[rr]_(0.4){\tau(g\bY,gHg^{-1})} &&  \Omega(g \bY)\underset{\Omega(g \bY)}{\otimes}{} \left( \w\cD(g\bY,gHg^{-1}) \underset{\w\cD(\bX,H)}{\w\otimes}{} M \right) }\]
where the vertical maps come from \cite[Proposition 3.5.7(a)]{EqDCap}. Now the maps $\tau(\bY,H)$ assemble together to form the continuous $G$-$\cD$-linear isomorphism in (\ref{LocLRswitch}). \end{proof} 

\subsection{Microlocal Kashiwara equivalence} \label{MicroKash}

In $\S \ref{MicroKash}$, we will extend the results from \cite[\S 4]{DCapTwo} to our setting, where the ground field $K$ is no longer assumed to be discretely valued, or equivalently, when its valuation subring $\cR$ is not Noetherian. As in \cite[\S 4.2]{DCapTwo}, we begin by fixing an affine formal model $\cA$ in a $K$-affinoid algebra $A$ and a $(\cR,\cA)$-Lie algebra $\cL$ which is finitely generated and projective as an $\cA$-module. We will work with \emph{right} $\hK{U(\cL)}$-modules throughout $\S \ref{MicroKash}$. 

Whenever $M$ is an $A$-module and $F$ is a subset of $A$, we denote the \emph{submodule of $F$-victims in $M$} by $M[F] := \{m \in M : mf = 0 \hsp \forall f \in F\}$. If, in addition, $M$ is a Banach $A$-module, we have at our disposal the $A$-submodule
\[ M_{\Dp}(F) := \{m \in M : \lim\limits_{n\to\infty} m \frac{f^n}{n!} = 0 \quad \forall f \in F\}\]
of \emph{divided-power topological $F$-torsion} in $M$. This is the case whenever $M$ is a finitely generated $\hK{U(\cL)}$-module, as $M$ then is naturally a Banach $A$-module.

When $F = \{f_1,\ldots, f_r\}$ is finite, we write $\cL \cdot (f_1,\ldots,f_r)$ for the $\cA$-submodule $\{(y \cdot f_1,\ldots, y\cdot f_r) : y \in \cL\}$ of $\cA^r$ and $C_{\cL}(F) := \{ y \in \cL : y \cdot f = 0$ for all $f \in F\}$ for the \emph{centraliser} of $F$ in $\cL$. The main result of $\S \ref{MicroKash}$ is 

\begin{thm}\label{MainMicroKash} Let $F = \{f_1,\ldots, f_r\} \subset A$ be such that $\cL \cdot (f_1,\ldots,f_r) = \cA^r$, and let $\cC := C_{\cL}(F)$. Suppose that $[\cL,\cL] \subseteq \pi \cL$ and $\cL \cdot \cA \subseteq \pi \cA$. Then there is an equivalence of categories
\[ \left\{ N \in {}^r\Coh(\hK{U(\cC)}) : N \cdot F = 0\right\} \quad\cong \quad \left\{ M \in {}^r\Coh(\hK{U(\cL)}) : M = M_{\Dp}(F)\right\}\]
given by the functors $N \mapsto N \underset{\hK{U(\cC)}}{\otimes}{} \hK{U(\cL)}$ and $M \mapsto M[F]$.
\end{thm}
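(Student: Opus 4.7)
The overall strategy is classical: identify $\hK{U(\cL)}$ as a ``completed divided-power polynomial algebra'' over $\hK{U(\cC)}$ in formal variables dual to the $f_i$'s, and then replay the usual Kashiwara equivalence. My first step is to use the surjectivity assumption $\cL \cdot (f_1,\ldots,f_r) = \cA^r$ to pick $y_1,\ldots, y_r \in \cL$ satisfying $y_i \cdot f_j = \delta_{ij}$. The resulting action map $\cL \to \cA^r$ then splits to give a decomposition $\cL = \cC \oplus \cA y_1 \oplus \cdots \oplus \cA y_r$ of $\cA$-modules, so $\cC$ is a direct summand of the finitely generated projective $\cA$-module $\cL$, and it inherits the hypotheses $[\cC,\cC] \subseteq \pi \cC$ and $\cC \cdot \cA \subseteq \pi \cA$. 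Writing $\cD := \cA y_1 \oplus \cdots \oplus \cA y_r$, I would next use PBW together with $\pi$-adic completion to establish a topological isomorphism of right $\hK{U(\cC)}$-modules
\[ \hK{U(\cL)} \quad \cong \quad \hK{U(\cC)} \WO{\h{\cA}_K} \hK{U(\cD)}, \]
the target being topologically free over $\hK{U(\cC)}$ on the divided-power monomials $\frac{y^\alpha}{\alpha!}$. The standing hypotheses $[\cL,\cL] \subseteq \pi \cL$ and $\cL \cdot \cA \subseteq \pi \cA$ quantify all commutators and actions with an extra factor of $\pi$, which is precisely what is needed to make this expansion convergent.

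Next I would check well-definedness of the two functors. For $\Phi : M \mapsto M[F]$: the subgroup $M[F]$ is preserved by $\cC$ on the right, since for $c \in \cC$ and $m \in M[F]$ one computes $(m \cdot c) \cdot f_i = m \cdot (f_i c + c \cdot f_i) = 0$ using $c \cdot f_i = 0$. For coherence of $M[F]$ over $\hK{U(\cC)}$ I would construct a Taylor-type expansion $m = \sum_\alpha m_\alpha \cdot \frac{f^\alpha}{\alpha!}$ with $m_\alpha \in M[F]$, convergent $\pi$-adically thanks to the hypothesis $M = M_{\Dp}(F)$; this identifies $M$ with $M[F] \WO{\hK{U(\cC)}} \hK{U(\cL)}$, whence coherence of $M[F]$ descends from that of $M$ using the PBW-freeness from step one. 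For $\Psi : N \mapsto N \otimes_{\hK{U(\cC)}} \hK{U(\cL)}$: the PBW decomposition identifies $\Psi(N)$ with a topologically free module on the $\frac{y^\alpha}{\alpha!}$ with coefficients in $N$, and the Leibniz identity
\[ \tfrac{y^\alpha}{\alpha!} \, f_i \; \equiv \; f_i \, \tfrac{y^\alpha}{\alpha!} \;+\; \tfrac{y^{\alpha - e_i}}{(\alpha - e_i)!} \qmb{modulo lower order terms in $y$} \]
together with $N \cdot F = 0$ shows $\Psi(N) = \Psi(N)_{\Dp}(F)$, while coherence over $\hK{U(\cL)}$ is clear since $\Psi(N)$ is generated by $N$ as a right $\hK{U(\cL)}$-module.

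Finally, I would check that the unit $\eta_N : N \to \Phi \Psi(N)$ and counit $\epsilon_M : \Psi \Phi(M) \to M$ of the adjunction are isomorphisms. For $\eta_N$, the Leibniz identity just displayed forces $n \otimes \frac{y^\alpha}{\alpha!}$ to lie in $\Psi(N)[F]$ only when $\alpha = 0$, so $N \cong \Psi(N)[F]$. For $\epsilon_M$ the inverse is precisely the Taylor expansion of the second paragraph. The main obstacle, and the real technical heart of the proof, is constructing this Taylor expansion rigorously for arbitrary $m \in M_{\Dp}(F)$: one must define coefficients $m_\alpha \in M[F]$ by explicit formulas involving iterated actions of the $y_j$'s followed by a projection onto the $F$-victim subspace, and then show both that the resulting series converges $\pi$-adically and that it actually reconstructs $m$. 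This is where the twin hypotheses $[\cL,\cL] \subseteq \pi \cL$ and $\cL\cdot \cA \subseteq \pi \cA$ become indispensable, forcing the double sums produced by repeatedly commuting $y_i$'s past $f_j$'s to converge in the $\pi$-adic topology of $\hK{U(\cL)}$. The discretely valued case of the argument is given in \cite[\S 4]{DCapTwo}; the present task is to check that every step remains valid when $\cR$ is not assumed Noetherian, replacing appeals to Noetherianity by appeals to $\pi$-adic completeness and the filtration-by-coherent-pieces arguments already developed in this paper.
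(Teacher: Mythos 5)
Your overall strategy — splitting $\cL = \cC \oplus \cA y_1 \oplus \cdots \oplus \cA y_r$, obtaining a PBW-type expansion of $\hK{U(\cL)}$ over $\hK{U(\cC)}$, and reconstructing $m \in M_{\Dp}(F)$ by a divided-power Taylor series — is the right one and agrees in spirit with the paper (and with \cite[\S 4]{DCapTwo}). Two problems, though. First, a minor but repeated error: $\hK{U(\cL)}$ is topologically free over $\hK{U(\cC)}$ on the \emph{ordinary} monomials $y^\alpha$, not on $\frac{y^\alpha}{\alpha!}$; the latter have unbounded norm in $\hK{U(\cL)}$ and do not form a topological basis. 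The divided powers belong on the $f$'s in the reconstruction formula $m = \sum_\alpha m_\alpha \cdot \frac{f^\alpha}{\alpha!}$ and in the extraction identity (cf.\ the formula $\sum_{j=0}^n q_j x^j \cdot \frac{f^n}{n!} = q_n$), not on the $y$'s; your version of the Leibniz identity is affected accordingly.

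The more serious gap is the closing claim that ``the present task is to check that every step remains valid when $\cR$ is not assumed Noetherian.'' The paper's own account of this proof shows this is emphatically not a check-each-step exercise. Two of the steps from \cite[\S 4]{DCapTwo} genuinely break: (i) the notion of $\cU$-lattice used there appears to be too strong — the paper notes that \cite[Proposition 4.4]{DCapTwo} ``seems unlikely to hold as stated'' and replaces it with a new, weaker notion of \emph{weak $\cU$-lattice} (which is still $\pi$-adically separated and stable under divided powers of $f$, which is all that is actually needed); and (ii) the proof of \cite[Proposition 4.7]{DCapTwo} (injectivity of the counit $\epsilon_M : M[f] \otimes_V U \to M$, equivalently the \emph{uniqueness} of your Taylor expansion) relies heavily on Noetherianity of $\cU$ and does not extend — it has to be rewritten from scratch, which the paper does using a weak lattice and the extraction identity $\sum_{j \leq n} q_j x^j \cdot \prod_{i>k}(1 - \frac{f^i}{i!}x^i)\cdot\frac{f^k}{k!} = q_k$ to show each coefficient $q_k$ lies in $\pi^r\cM$ for all $r$ and hence vanishes. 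Your proposal treats exactly these points (``construct the Taylor expansion rigorously,'' ``the inverse is precisely the Taylor expansion'') as a matter of routine modification, but they are where the new mathematical content lives, and without engaging with the weak-lattice issue and the rewritten injectivity argument the proof does not go through.
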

Note that in general $F$ is not contained in $\cA$. Note also that the hypothesis that $\cL \cdot (f_1,\ldots, f_r) = \cA^r$ is equivalent to the existence of elements $x_1,\ldots, x_r \in \cL$ with the property that $x_i \cdot f_j = \delta_{ij}$ for all $i,j=1,\ldots, r$. Theorem \ref{MainMicroKash} is a direct generalisation of \cite[Theorem 4.10]{DCapTwo} to the setting where $K$ is not necessarily discretely valued and $\cL$ satisfies the additional restrictions $[\cL,\cL]\subseteq \pi \cL$ and $\cL \cdot \cA \subseteq \pi \cA$. Note that these restrictions are particularly mild because $\pi$ is an unspecified non-zero non-unit element of $\cR$, whose valuation, whilst positive, could be arbitrarily small.

\begin{lem}\label{UVfflat} Let $F = \{f_1,\ldots, f_r\} \subset A$ be such that $\cL \cdot (f_1,\ldots,f_r) = \cA^r$, and let $\cC = C_{\cL}(F)$. Suppose that $[\cL,\cL]\subseteq \pi \cL$ and $\cL \cdot \cA \subseteq \pi \cA$. Then $\hK{U(\cL)}$ is a faithfully flat $\hK{U(\cC)}$-module on both sides.
\end{lem}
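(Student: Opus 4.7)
The hypothesis $\cL\cdot(f_1,\ldots,f_r)=\cA^r$ gives elements $x_1,\ldots,x_r\in\cL$ satisfying $x_i\cdot f_j=\delta_{ij}$, thereby splitting the short exact sequence $0\to\cC\to\cL\to\cA^r\to 0$ and producing an $\cA$-module decomposition $\cL=\cC\oplus N$ where $N:=\bigoplus_i\cA x_i$ is free of rank $r$. My plan is to show that the PBW-style decomposition of $U(\cL)$ as a free $U(\cC)$-module on both sides survives the $\pi$-adic completion, yielding a topologically free (hence faithfully flat) presentation of $\hK{U(\cL)}$ over $\hK{U(\cC)}$.

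By the Poincar\'e-Birkhoff-Witt theorem for the projective Lie-Rinehart algebra $\cL$, picking a total order with $\cC$-basis elements before the $x_i$'s and regrouping gives $U(\cL)=\bigoplus_{I\in\N^r} x^I\cdot U(\cC)$ as right $U(\cC)$-modules (writing $x^I:=x_1^{i_1}\cdots x_r^{i_r}$), and the reverse ordering yields the analogous left-module decomposition. The crucial next step is to pass to $\pi$-adic completions: the hypotheses $[\cL,\cL]\subseteq\pi\cL$ and $\cL\cdot\cA\subseteq\pi\cA$ mean that swapping an $x_i$ past any element of $U(\cC)$ introduces a factor of $\pi$, via a commutator in $[\cL,\cL]\subseteq\pi\cL$ or an anchor image in $\pi\cA$. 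The straightening-to-normal-form algorithm is therefore $\pi$-adically convergent with a unique output, so I would establish that every element of $\h{U(\cL)}$ has a unique convergent expansion $\sum_I x^I u_I$ with $u_I\in\h{U(\cC)}$ and $u_I\to 0$ $\pi$-adically, and symmetrically on the left. Tensoring with $K$ then gives the corresponding Tate-algebra-style presentation of $\hK{U(\cL)}$ over $\hK{U(\cC)}$ on each side.

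Faithful flatness then follows formally from this topologically free description. Since $\hK{U(\cC)}$ is Noetherian by \cite[Theorem 4.1.4]{EqDCap}, on finitely generated modules the completed and ordinary tensor products agree, and base change along $\hK{U(\cC)}\to\hK{U(\cL)}$ unwinds coefficient-wise into the exact functor sending $M$ to the topologically free module with basis $\{x^I\}$ and coefficients in $M$; faithfulness is immediate because the $x^{(0,\ldots,0)}=1$ coefficient detects non-vanishing of $M$. The main obstacle, as is typical in arguments involving completed enveloping algebras, is the middle step: rigorously tracking that the non-commutative PBW straightening converges $\pi$-adically in $\h{U(\cL)}$ and produces a unique normal form, bilaterally and compatibly with the natural inclusions.
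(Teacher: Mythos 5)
Your approach is genuinely different from the paper's. The paper reduces to \cite[Corollary 4.3]{DCapTwo} and patches the single step in that argument that uses the Noetherianity of $\cR$ --- an application of \cite[Chapter II, Proposition 1.2.2]{LVO} --- by checking that good filtrations on $\hK{U(\cL)}$-modules are separated and that the $\pi$-adic filtration on $\hK{U(\cC)}$ has the Artin--Rees property, using \cite[Lemma 4.1]{DCapTwo} and facts already verified in \cite{EqDCap}. Your plan, by contrast, is a self-contained proof via the explicit topologically free presentation of $\hK{U(\cL)}$ over $\hK{U(\cC)}$, which is a reasonable alternative route, but it contains a gap at precisely the step you wave through.

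You flag the ``$\pi$-adic convergence of the straightening algorithm'' as the main obstacle, but that passage to completions is actually automatic and uses neither hypothesis on $\cL$: since $\pi$ is central, $\pi^n U(\cL)=\bigoplus_I x^I\pi^n U(\cC)$ inside the PBW decomposition, so $\h{U(\cL)}=\varprojlim_n \bigoplus_I x^I\bigl(U(\cC)/\pi^n U(\cC)\bigr)$ is literally the topologically free module $\{\sum_I x^I u_I : u_I\in\h{U(\cC)},\ u_I\to 0\}$, with no convergence question to settle. The step that does need an argument is the one you call ``unwinds coefficient-wise'': you assert that $\hK{U(\cL)}\otimes_{\hK{U(\cC)}}M$ equals $T(M):=\{\sum_I x^I m_I : m_I\to 0\}$ for finitely generated $M$. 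The fact you cite in support --- that ordinary and completed tensor products of \emph{finitely generated} modules over a Noetherian Banach algebra coincide --- does not apply, since $\hK{U(\cL)}$ is not finitely generated over $\hK{U(\cC)}$. You must prove that the natural map $\hK{U(\cL)}\otimes_{\hK{U(\cC)}}M\to T(M)$ is bijective: surjectivity requires the open mapping theorem to choose norm-controlled lifts of the $m_I$ along a finite presentation of $M$, and injectivity requires writing each coefficient tuple of a kernel element as a null sequence in the finitely generated, closed syzygy module (Noetherianity again) and then absorbing the syzygies via the $\hK{U(\cC)}$-balancing relations. Finally, before invoking \cite[Theorem 4.1.4]{EqDCap} to get Noetherianity of $\hK{U(\cC)}$, you need to know that $\cC$ inherits $[\cC,\cC]\subseteq\pi\cC$ and $\cC\cdot\cA\subseteq\pi\cA$ from $\cL$; this is \cite[Lemma 4.1]{DCapTwo} and should be cited explicitly.
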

\begin{proof} When $K$ is discretely valued, this follows from \cite[Corollary 4.3]{DCapTwo}. The only step of the proof of \cite[Corollary 4.3]{DCapTwo} which uses the Noetherianity of $\cR$ is the second application of \cite[Chapter II, Proposition 1.2.2]{LVO} in the proof of \cite[Proposition 4.3]{DCapTwo}, which requires that good filtrations on $\hK{U(\cL)}$-modules are separated, and that the $\pi$-adic filtration on $\hK{U(\cC)}$ has the left Artin-Rees property. It follows from \cite[Lemma 4.1]{DCapTwo} that our hypotheses on $\cL$ imply that $\cC$ also satisfies $[\cC,\cC]\subseteq \pi\cC$ and $\cC \cdot \cA \subseteq \pi \cA$. Now in view of \cite[Lemma 4.1.9]{EqDCap}, we see that both of these facts have already been verified in the proof of \cite[Proposition 4.1.7(a)]{EqDCap}.
\end{proof}

Until the end of $\S \ref{MicroKash}$, we will assume that $[\cL, \cL]\subseteq \pi \cL$ and $\cL \cdot \cA \subseteq \pi \cA$, and define $\cU := \h{U(\cL)}$, $U := \hK{U(\cL)}$ and $V := \hK{U(\cC)}$. Unfortunately, the next step in the proof of \cite[Theorem 4.10]{DCapTwo}, namely \cite[Proposition 4.4]{DCapTwo} seems unlikely to hold as stated. The problem is that the definition of \emph{$\cU$-lattice} used in \cite{DCapTwo} seems to be too strong. Fortunately, we do not need the full strength of \cite[Proposition 4.4]{DCapTwo} to continue with the proof.

\begin{defn} Let $M$ be a finitely generated $U$-module, and let $\cM$ be a $\cU$-submodule of $M$. We say that $\cM$ is a \emph{weak $\cU$-lattice} if it is the unit ball in $M$ with respect to some Banach norm on $M$ which induces the canonical Banach topology on $M$. If $f \in A$, then we say that $\cM$ is \emph{stable under divided powers of $f$} if $\cM \cdot f^n/n!\subseteq \cM$ for all $n \geq 0$.

\end{defn}
In more algebraic terminology, we require $\cM$ to span $M$ as a $K$-vector space, to satisfy $\cM_0 \subseteq \cM \subseteq \pi^{-t} \cM_0$ for some $t \geq 0$ and for some finitely generated $\cU$-submodule $\cM_0$ of $M$, and to be closed in the $\pi$-adic topology on $M$ defined by $\cM_0$. Note that any such weak $\cU$-lattice is automatically $\pi$-adically separated and complete as an $\cR$-module. Our modified version of \cite[Proposition 4.4]{DCapTwo} now reads as follows.

\begin{prop}\label{WeakLat} Let $f \in A$ be such that $\cL \cdot f \subseteq \cA$ and let $M$ be a finitely generated $U$-module. Then there is at least one weak $\cU$-lattice $\cM$ in $M_{\Dp}(f)\cdot U$ which is stable under divided powers of $f$.
\end{prop}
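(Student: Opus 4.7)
The plan is to define $\cM$ as the $\pi$-adic closure in $N := M_{\Dp}(f)\cdot U$ of the right $\cU$-submodule $\sum_{i,\,n\geq 0} m_i f^{[n]}\cdot\cU$, for a suitable choice of generators $m_1,\ldots,m_k \in M_{\Dp}(f)$ of $N$ as a right $U$-module. Such generators exist: by \cite[Lemma 4.1.9]{EqDCap}, the hypotheses on $\cL$ make $U$ Noetherian, so $N$ is finitely generated over $U$, and any generating set for $N$ may be replaced by one inside $M_{\Dp}(f)$ by expanding each generator in the form $\sum m_j u_j$ with $m_j \in M_{\Dp}(f)$ and $u_j \in U$. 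Let $\cN_0 := \sum_i m_i\cU$, which is a finitely generated $\cU$-submodule whose $\pi$-adic topology coincides with the canonical Banach topology on $N$. Because each $m_i$ lies in $M_{\Dp}(f)$, the sequence $\{m_i f^{[n]}\}_{n\geq 0}$ converges to zero in $N$, so we can fix $t \geq 0$ with $m_i f^{[n]} \in \pi^{-t}\cN_0$ for every $i$ and $n$.

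With $\cM$ defined as above, the inclusion $\cM \supseteq \cN_0$ is immediate from the $n=0$ terms, while $\cM \subseteq \pi^{-t}\cN_0$ follows because $\pi^{-t}\cN_0$ is itself a closed right $\cU$-submodule of $N$ containing each $m_i f^{[n]}$. Thus $\cN_0 \subseteq \cM \subseteq \pi^{-t}\cN_0$ and $\cM$ is a weak $\cU$-lattice in $N$.

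The main step is stability of $\cM$ under right multiplication by each divided power $f^{[k]}$. The crucial ingredient is the commutation identity in $U$: for $y \in \cL$ and $k \geq 1$,
\[ y \cdot f^{[k]} = f^{[k]} \cdot y + f^{[k-1]} \cdot y(f), \]
in which $y(f) \in \cA$ by the hypothesis $\cL \cdot f \subseteq \cA$. Iterating, every $u \in U(\cL)$ of degree $d$ satisfies a finite identity
\[ u \cdot f^{[k]} = \sum_{j=0}^{\min(d,k)} f^{[k-j]} \cdot w_j(u), \qquad w_j(u) \in U(\cL) \subseteq \cU, \]
where the hypothesis $\cL \cdot \cA \subseteq \pi\cA$ controls the iterated derivatives of $f$ appearing in each $w_j(u)$. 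Combined with the integrality $f^{[n]} f^{[k-j]} = \binom{n+k-j}{n} f^{[n+k-j]}$, this gives
\[ (m_i f^{[n]} u) \cdot f^{[k]} \;=\; \sum_{j} \binom{n+k-j}{n}\, m_i f^{[n+k-j]} w_j(u) \;\in\; \sum_{i,\,n'} m_i f^{[n']}\cU \;\subseteq\; \cM \]
for every $u \in U(\cL)$. Since right multiplication by $f^{[k]}$ is continuous on $N$ and $\sum m_i f^{[n]} U(\cL)$ is $\pi$-adically dense in $\cM$, this inclusion extends to $\cM \cdot f^{[k]} \subseteq \cM$ by passage to the closure.

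The main obstacle I anticipate is in identifying the correct formulation of $\cM$: a naive attempt to commute an arbitrary completed element of $\cU$ past $f^{[k]}$ need not produce coefficients lying inside $\cU$ with the uniform integral control required, and this is precisely what forces the weakening to a weak $\cU$-lattice here as opposed to a full $\cU$-lattice (which is the underlying reason \cite[Proposition 4.4]{DCapTwo} fails as stated). The strategy above sidesteps the difficulty by performing the commutation only on the $\pi$-adically dense subalgebra $U(\cL)$, where the identity is manifestly finite and integral, and absorbing the remaining generality into the closure operation.
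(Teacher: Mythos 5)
Your proof is correct and reproduces, in detail, the argument the paper defers to (namely the proof of \cite[Proposition 4.4]{DCapTwo}, applied to the finitely generated $U$-module $M_{\Dp}(f)\cdot U$): choose generators in $M_{\Dp}(f)$, form the $\cU$-submodule generated by their divided-power translates $m_if^{[n]}$, take the $\pi$-adic closure, and use the commutation identity $y\cdot f^{[k]} = f^{[k]}\cdot y + f^{[k-1]}\cdot y(f)$ together with integrality of binomial coefficients and density of $U(\cL)$ in $\cU$ to get stability. The only small imprecision is attributing the integrality of $w_j(u)$ to the hypothesis $\cL\cdot\cA\subseteq\pi\cA$; for that step only $\cL\cdot f\subseteq\cA$ together with the fact that $\cL$ is an $\cA$-Lie lattice (so $\cL\cdot\cA\subseteq\cA$) is needed, the stronger hypothesis instead being what underlies the separatedness and Noetherianity of $U$ used elsewhere in the argument.
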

\begin{proof} Note that $M_{\Dp}(f)\cdot U$ is a finitely generated submodule of $M$ because $U$ is Noetherian by \cite[Corollary 4.1.10]{EqDCap}. The proof of \cite[Proposition 4.4]{DCapTwo} now carries over in a straightforward manner.
\end{proof}

\begin{prop} \label{DpUSubmod} Suppose that $F\subseteq A$ is such that $\cL\cdot F\subseteq \cA$, and $M$ is a finitely generated $U$-module. Then $M_{\Dp}(F)$ is a $U$-submodule of $M$. \end{prop}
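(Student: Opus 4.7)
The plan is as follows. Since $M_{\Dp}(F) = \bigcap_{f \in F} M_{\Dp}(\{f\})$ and an arbitrary intersection of right $U$-submodules is a right $U$-submodule, it suffices to prove the claim for a single $f \in F$. Fix such an $f$ and write $T := M_{\Dp}(\{f\})$; my goal is $T \cdot U \subseteq T$.

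First I would verify that $T$ is stable under the right actions of $A$ and of $\cL$, which together generate $U(\cL)$. For $a \in A$, the commutativity of $A$ inside $U$ gives $(ma) \cdot f^n/n! = (m f^n/n!) \cdot a$, which tends to $0$ by continuity of right-multiplication by $a$ on the Banach module $M$. For $y \in \cL$, the hypothesis $\cL \cdot F \subseteq \cA$ forces $y(f) \in \cA \subseteq A$, and since $y(f)$ then commutes with $f$, a short induction produces the commutator identity $y f^n = f^n y + n f^{n-1} y(f)$ in $U$. Dividing by $n!$ and applying $m$ on the left yields
\[ (my) \cdot \tfrac{f^n}{n!} = (m \tfrac{f^n}{n!}) \cdot y + (m \tfrac{f^{n-1}}{(n-1)!}) \cdot y(f), \]
both summands of which tend to $0$ in $M$ by continuity, so $my \in T$. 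By the PBW theorem and induction on degree, $T$ is then closed under the right action of the image of $U(\cL)$ in $U$.

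More conceptually, one can derive by induction on $n$ the general commutator formula $u \cdot f^n/n! = \sum_{k=0}^{n} f^{n-k}/(n-k)! \cdot \delta_f^k(u)/k!$ for the inner derivation $\delta_f(v) := vf - fv$. The derivation $\delta_f$ kills $A$ and sends $\cL$ into $\cA$, so by the Leibniz rule it strictly decreases the PBW filtration degree on $U(\cL)$. Consequently, for $u \in U(\cL)$ of degree $d$ we have $\delta_f^k(u) = 0$ for $k > d$, the sum collapses to finitely many nonzero terms, and each term tends to $0$ as $n \to \infty$ because $m \cdot f^{n-k}/(n-k)! \to 0$ for every fixed $k$.

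To pass from $U(\cL)$ to the completion $U = \hK{U(\cL)}$, I would approximate $u \in U$ in the Banach topology of $U$ by a sequence $u_\ell \in K \cdot U(\cL)$. Continuity of $R_m : U \to M$, $v \mapsto mv$, gives $m u_\ell \to m u$ in $M$, and $m u_\ell \in T$ by the preceding step; it remains to show that $T$ is closed in $M$, whence $mu \in T$. This closedness is the main obstacle: it amounts to a uniform estimate that permits interchanging the limits $\ell \to \infty$ and $n \to \infty$ in $(m u_\ell) \cdot f^n/n!$. The approach I would take is to extend the commutator formula to arbitrary $u \in U$ by writing $u$ in its PBW expansion inside $\cU = \h{U(\cL)}$, using that $\delta_f$ preserves $\cU$ and strictly decreases PBW degree, so that the iterated terms $\delta_f^k(u)/k!$ are extracted from progressively deeper slices of the PBW expansion of $u$ and remain uniformly controlled. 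Combined with $m \in T$, the ultrametric bound on the sum $\sum_k (m f^{n-k}/(n-k)!) \cdot \delta_f^k(u)/k!$ then shows $(mu) \cdot f^n/n! \to 0$ directly, bypassing the need for an abstract closedness result.
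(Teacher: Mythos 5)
Your approach is genuinely different from the paper's: the paper reduces the statement to [DCapTwo, Proposition 4.5] and invokes Proposition \ref{WeakLat} to supply a weak $\cU$-lattice in $M_{\Dp}(f)\cdot U$ that is stable under divided powers of $f$, using the $\pi$-adic completeness and separatedness of that lattice to control convergence. You instead compute an explicit commutator identity in $U$ and try to estimate directly. The reduction to a single $f$, the $A$-stability, the $\cL$-stability, and the $U(\cL)$-stability via the formula $u \cdot f^n/n! = \sum_{k} \tfrac{f^{n-k}}{(n-k)!}\cdot \tfrac{\delta_f^k(u)}{k!}$ are all correct. The whole weight of the argument therefore rests on the passage from $U(\cL)$ to the completion $U$, and here your write-up has a gap.

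The phrase ``the ultrametric bound on the sum $\sum_k (m f^{n-k}/(n-k)!)\cdot \delta_f^k(u)/k!$ then shows $(mu)\cdot f^n/n!\to 0$ directly'' is not enough as stated: the straightforward ultrametric estimate gives $\bigl\|(mu)f^n/n!\bigr\| \le C\|u\|\max_{0\le j\le n}\|m f^j/j!\|$, and the maximum over \emph{all} $j\le n$ includes $j=0$ and does not tend to zero. What is actually needed, and what your phrase ``progressively deeper slices of the PBW expansion'' is gesturing at without proving, is the following pair of facts. First, the divided-power operators $\delta_f^{[k]}:=\delta_f^k/k!$ preserve $\cU=\h{U(\cL)}$ and are uniformly bounded: since $\delta_f$ kills $\cA$, sends each $x\in\cL$ into $\cA$ (here is where $\cL\cdot F\subseteq\cA$, not merely $\subseteq A$, is essential), and satisfies $\delta_f^2|_{\cL}=0$, the multinomial computation on a PBW monomial $a\,x_1\cdots x_d$ produces exactly the factor $k!$ in every surviving term, so $\delta_f^{[k]}(a x_1\cdots x_d)\in\cU$ and $\|\delta_f^{[k]}\|\le 1$. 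Second, $\delta_f^{[k]}(u)\to 0$ as $k\to\infty$ for any fixed $u\in U$, because $\delta_f^{[k]}$ annihilates the PBW slices of degree $<k$ and the tail coefficients of $u$ tend to zero. Only with both facts in hand can you split the sum over $k\le N$ (where $n-k$ is large, so $\|m f^{n-k}/(n-k)!\|$ is small) and $k>N$ (where $\|\delta_f^{[k]}(u)\|$ is small) and conclude. Once that two-sided estimate is spelled out, your argument closes and gives a valid, more explicit alternative to the paper's lattice-theoretic proof; the set $M_{\Dp}(f)$ is not obviously closed in $M$ (since $\|f^n/n!\|$ blows up), so the direct estimate is indeed the right way to go if you want to avoid the weak-lattice machinery.
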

\begin{proof} The proof of \cite[Proposition 4.5]{DCapTwo} works, provided we use Proposition \ref{WeakLat} to construct and use a weak $\cU$-lattice $\cM$ in $M_{\Dp}(f)$ for any $f \in F$.
\end{proof}

Note that if $M$ is a finitely generated $U$-module then $M[f]$ is a closed $V := \hK{U(\cC)}$-submodule of $M$, where $\cC := C_{\cL}(f)$.
\begin{lem}\label{qjxj} Let $f \in A$ and $x \in \cL$ satisfy $x \cdot f = 1$, and let $M$ be a $U$-module. Then for every $q_0,\ldots,q_n \in M[f]$, we have $\sum_{j=0}^n q_j x^j \cdot \frac{f^n}{n!} = q_n$.
\end{lem}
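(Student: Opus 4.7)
The plan is to reduce this to a standard commutation identity in $U(\cL)$, and then exploit the hypothesis $q_j \in M[f]$ to kill all but one term.

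First I would establish, by induction on $j$ (or equivalently by iterated use of the relation $[x,f] = x\cdot f = 1$ inside $U(\cL)$, so that $xf = fx + 1$), the Leibniz-type identity
\[ x^j f^n \;=\; \sum_{k=0}^{\min(j,n)} \binom{j}{k}\binom{n}{k} k!\, f^{n-k} x^{j-k} \qquad \text{in } U(\cL). \]
After dividing by $n!$ this rearranges to
\[ x^j \cdot \frac{f^n}{n!} \;=\; \sum_{k=0}^{\min(j,n)} \binom{j}{k}\, \frac{f^{n-k}}{(n-k)!}\, x^{j-k}. \]
The base case and the inductive step are elementary once one notes that each application of $[x,f]=1$ contributes a factor $n$ (when $x$ passes through a single $f^n$) and these Pascal-style recursions reproduce the product of binomials.

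Next, I would substitute this identity into $q_j \, x^j \cdot \tfrac{f^n}{n!}$ acting on the right. Since $q_j \in M[f]$, we have $q_j \cdot f = 0$ and therefore $q_j \cdot f^{n-k} = 0$ unless $n-k = 0$, i.e., $k = n$. Because $k \leq \min(j,n)$, the requirement $k = n$ forces $j \geq n$. As $j$ ranges over $0,\ldots, n$ in the sum $\sum_{j=0}^n q_j x^j \cdot \tfrac{f^n}{n!}$, this means all terms with $j < n$ vanish identically, and for $j = n$ only the single summand corresponding to $k = n$ survives, contributing
\[ \binom{n}{n}\, q_n \cdot \frac{f^0}{0!}\, x^{0} \;=\; q_n. \]
Summing up gives $\sum_{j=0}^n q_j x^j \cdot \tfrac{f^n}{n!} = q_n$, which is the required identity.

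The only real content is the commutation formula for $x^j f^n$; once that is in hand, the hypothesis $q_j \cdot f = 0$ does all the work by collapsing the double sum to a single term. There is no analytic subtlety here, since everything takes place inside $U(\cL)$ acting on $M$, and no completion arguments are needed.
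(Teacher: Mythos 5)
Your argument is correct. The paper's own ``proof'' of this lemma is simply a citation of \cite[Lemma 4.6]{DCapTwo}, with no details given in this paper itself; the computation you supply --- deriving the Weyl-algebra commutation identity
\[ x^j f^n \;=\; \sum_{k=0}^{\min(j,n)} \binom{j}{k}\binom{n}{k}\,k!\, f^{n-k}x^{j-k} \]
by induction from $[x,f]=x\cdot f=1$, dividing through by $n!$, and then using $q_j\cdot f=0$ so that $q_j\cdot f^{n-k}=(q_j\cdot f)\cdot f^{n-k-1}=0$ whenever $n-k>0$ --- is the standard verification. The constraint $k=n$ together with $k\leq\min(j,n)$ forces $j\geq n$, so over the range $0\leq j\leq n$ the only surviving term in the double sum is $j=k=n$, contributing $\binom{n}{n}\,q_n\cdot\frac{f^0}{0!}\,x^0=q_n$, exactly as you argue. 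Your closing observation that the commutation identity lives in the dense subalgebra $U(\cL)_K\subset U=\hK{U(\cL)}$, so no completion or convergence argument is required, is also right.
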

\begin{proof} This was shown in the proof of \cite[Lemma 4.6]{DCapTwo}.
\end{proof}
Our next statement agrees precisely with \cite[Proposition 4.7]{DCapTwo} when $K$ is discretely valued. Unfortunately the proof of \cite[Proposition 4.7]{DCapTwo} as written relies heavily on the Noetherianity of $\cU$ and therefore does not extend to our current setting. Fortunately, it is possible to rewrite the proof in a different way that \emph{does} extend.
\begin{prop} Let $f\in A$ be such that $\cL\cdot f=\cA$ and let $M$ be a finitely generated $U$-module. Then the natural $U$-linear map $\epsilon_M\colon M[f]\otimes_VU\to M$ is injective, and hence $M[f]$ is finitely generated as a $V$-module.
\end{prop}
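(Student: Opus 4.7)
The strategy is to prove injectivity of $\epsilon_M$ first, from which finite generation of $M[f]$ as a $V$-module will follow by faithful flatness. The hypothesis $\cL\cdot f=\cA$ yields $x\in\cL$ with $x\cdot f=1$, splitting $\cL=\cC\oplus\cA x$ as $\cA$-modules. Poincar\'e--Birkhoff--Witt then gives $U(\cL)=\bigoplus_{n\geq 0}U(\cC)\cdot x^n$ as a left $U(\cC)$-module, and the standing hypotheses $[\cL,\cL]\subseteq\pi\cL$ and $\cL\cdot\cA\subseteq\pi\cA$ ensure that after $\pi$-adic completion $U$ becomes topologically free over $V$ on the left with basis $\{x^n\}_{n\geq 0}$, in the sense that every $u\in U$ admits a unique convergent expansion $u=\sum_n v_n(u)\cdot x^n$ with $v_n(u)\in V$ and $v_n(u)\to 0$. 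Since $V$ centralises $f$ in $U$ (as $\cC=C_\cL(f)$), $M[f]$ is $V$-stable and the image $\epsilon_M(\sum_i q_i\otimes u_i)=\sum_i q_i u_i$ can be rewritten as a convergent series $\sum_n q'_n x^n$ in $M$ with $q'_n:=\sum_i q_i\cdot v_n(u_i)\in M[f]$ and $q'_n\to 0$. Injectivity of $\epsilon_M$ therefore reduces to the analytic claim
\begin{equation*}
(\ast)\qquad\sum_n q_n x^n=0\text{ in }M\text{ with }q_n\in M[f],\ q_n\to 0\quad\Longrightarrow\quad q_n=0\text{ for every }n,
\end{equation*}
combined with the observation that, by the topological freeness and $V$-flatness of $U$ (Lemma \ref{UVfflat}), the vanishing of all coefficients $q'_n$ of the image lifts back to the vanishing of the algebraic tensor $\xi=\sum_i q_i\otimes u_i$ in $M[f]\otimes_V U$.

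The proof of $(\ast)$ exploits the Heisenberg-style identity $[x,f]=x\cdot f=1$ in $U$, which for $q\in M[f]$ gives the closed formula $q\cdot x^n\cdot\tfrac{f^N}{N!}=\binom{n}{N}\,q\cdot x^{n-N}$ when $n\geq N$, and zero otherwise. Multiplying $\sum_n q_n x^n=0$ on the right by $\tfrac{f^N}{N!}$ thus produces $\sum_{k\geq 0}\binom{k+N}{N}q_{k+N}\,x^k=0$, a relation of the same shape whose constant term is $q_N$; iterating this reduction shows $q_N$ lies in arbitrarily deep $\pi$-adic powers of a fixed lattice, hence must vanish. The main obstacle is making this iteration rigorous: the classical argument in \cite[Proposition 4.7]{DCapTwo} appeals to the Noetherianity of $\cU$ at precisely this point, which is unavailable when $\cR$ is not Noetherian. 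The remedy is to apply Proposition \ref{WeakLat} to the finitely generated $U$-submodule $M[f]\cdot U\subseteq M$ to produce a weak $\cU$-lattice $\cM$ stable under divided powers of $f$; this supplies the uniform $\pi$-adic bounds needed to control the iterated tails and force each $q_N$ into $\bigcap_{k\geq 0}\pi^k\cM=0$.

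For the finite generation statement, injectivity of $\epsilon_M$ exhibits $M[f]\otimes_V U$ as a $U$-submodule of the finitely generated $U$-module $M$; by Noetherianity of $U$ (\cite[Corollary 4.1.10]{EqDCap}), $M[f]\otimes_V U$ is itself finitely generated over $U$. Any finite $U$-generating set involves only finitely many elements of $M[f]$; let $N\subseteq M[f]$ denote the $V$-submodule they span. Then $N\otimes_V U\to M[f]\otimes_V U$ is surjective by construction and injective by the flatness of $U$ over $V$, hence an isomorphism. Faithful flatness of $U$ over $V$ (Lemma \ref{UVfflat}) now forces $N=M[f]$, so $M[f]$ is finitely generated over $V$.
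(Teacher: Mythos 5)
Your overall strategy — split $\cL=\cC\oplus\cA x$, expand $u_i=\sum_j v_{ij}x^j$ topologically over $V$, use the divided-power Heisenberg identity to show the coefficients $q'_n$ all vanish, and control the iteration with the weak $\cU$-lattice from Proposition~\ref{WeakLat} — matches the paper's proof closely. Your final paragraph on finite generation (extract a finite $U$-generating set, observe $N\otimes_V U\to M[f]\otimes_V U$ is an isomorphism by flatness and surjectivity, then use faithful flatness to conclude $N=M[f]$) is a correct and clean variant of the paper's ascending-chain argument; both work.

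However, there is a genuine gap in the injectivity argument, precisely at the step you dismiss as an ``observation.'' You claim that \emph{topological freeness and $V$-flatness of $U$} allow the vanishing of all coefficients $q'_n$ to ``lift back'' to $\xi=0$ in $M[f]\otimes_V U$. This does not follow. The tensor product $M[f]\otimes_V U$ is \emph{algebraic}, not completed, while the expansion $\sum_n q'_n\otimes x^n$ is an infinite sum; the element $\xi$ is not literally equal to that series in $M[f]\otimes_V U$. Flatness of $U$ over $V$ gives exactness of $-\otimes_V U$ but says nothing about recovering an element from infinitely many ``coordinate'' projections $1\otimes\pi_N$; in particular $\bigcap_N\ker(1\otimes\pi_N)$ need not vanish since intersections do not in general commute with tensoring. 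What is actually needed is the following: for each $n$ write
\[
\xi \;=\; \sum_{j=0}^n q'_j\otimes x^j \;+\; \sum_{i} \xi_i\otimes\Bigl(\sum_{j>n}v_{ij}x^j\Bigr),
\]
use $q'_j=0$ to kill the first sum, and then observe that the tails $\sum_{j>n}v_{ij}x^j\to 0$ in $\cU$ force $\xi$ into $\pi^r\cN$ for all $r$, where $\cN$ is the \emph{finitely generated} $\cU$-submodule of $M[f]\otimes_V U$ generated by $\xi_1\otimes 1,\dots,\xi_d\otimes 1$. One then concludes $\xi=0$ from the $\pi$-adic separatedness of the finitely generated $\cU$-module $\cN$ (\cite[Proposition 4.1.6(b)]{EqDCap}). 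This separatedness argument is the essential content of the lift-back step and cannot be replaced by an appeal to flatness; as written, your proof omits it.
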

\begin{proof} Let $\xi = \sum_{i=1}^d \xi_i \otimes u_i$ lie in the kernel of $\epsilon_M$ for some $\xi_i \in M[f]$ and $u_i \in U$. Choose $x \in \cL$ such that $x \cdot f = 1$. Now, $\cL = \cC \oplus \cA x$ by \cite[Lemma 4.1]{DCapOne}, so $U(\cL)$ is isomorphic to $U(\cC)[x]$ as a left $U(\cL)$-module. Therefore we can write $u_i = \sum_{j=0}^\infty v_{ij} x^j$ for some $v_{ij} \in V$ such that $v_{ij} \to 0$ as $j\to \infty$ for all $i=1,\ldots,d$. Define $q_j := \sum_{i=1}^d \xi_i v_{ij} \in M[f]$ for each $j \geq 0$, and note that  $q_j \to 0$ as $j \to \infty$. Then $\sum_{j=0}^n q_j x^j = \sum_{i=1}^d \xi_i \cdot \sum_{j=0}^n v_{ij} x^j \to \sum_{i=1}^d \xi_i \cdot u_i = \epsilon_M(\xi) = 0$ as $n \to \infty$. We will show that $q_k = 0$ for any $k \geq 0$. 

Using Proposition \ref{DpUSubmod} we see that the image $M[f]\cdot U$ of $\epsilon_M$ is contained in the $U$-module $M_{\Dp}(f)$.  Using Proposition \ref{WeakLat}, choose a weak $\cU$-lattice $\cM$ in $M_{\Dp}(f) = M_{\Dp}(f)\cdot U$ which is stable under divided powers of $f$. Fix $r \geq 0$; because $\lim_{n\to\infty} \sum_{j=0}^n q_j x^j = 0$, we can find $n > k$ such that $\sum_{j=0}^n q_j x^j \in \pi^r \cM$. Since $\pi^r \cM$ is stable under divided powers of $f$ and the action of $x$, we see that 
\[\sum_{j=0}^n q_j x^j \cdot \left(1 - \frac{f^n}{n!}x^n\right) \cdots \left(1 - \frac{f^{k+1}}{(k+1)!}x^{k+1}\right)\cdot \frac{f^k}{k!}\in \pi^r \cM.\] 
It follows from Lemma \ref{qjxj} that the expression on the left hand side equals $q_k$. Hence $q_k \in \pi^r \cM$ for all $k, r \geq 0$. Since $\cM$ is $\pi$-adically separated, $q_k = 0$ for all $k \geq 0$ as claimed.

Let $\cN$ be the $\cU$-submodule of $M[f] \otimes_V U$ generated by $\{\xi_1 \otimes 1,\ldots, \xi_d \otimes 1\}$. Then for any $n \geq 0$ we have
\[\xi = \sum_{i=1}^d \xi_i \otimes u_i = \sum_{i=1}^d \xi_i \otimes \left( \sum_{j=0}^n v_{ij} x^j + \sum_{j > n} v_{ij} x^j \right) = \sum_{j=0}^n q_j \otimes x^j + \sum_{i=1}^n \xi_i \otimes \sum_{j > n} v_{ij} x^j.\]
Since $q_j = 0$ for all $j$ and since $\sum_{j > n} v_{ij} x^j \to 0$ in $\cU$ as $n \to \infty$, we conclude that $\xi \in \pi^r \cN$ for all $r \geq 0$. Because $\cN$ is a finitely generated $\cU$-module, it is $\pi$-adically separated by \cite[Proposition 4.1.6(b)]{EqDCap}. Hence $\xi = 0$ and $\epsilon_M$ is injective as claimed. 

Finally, if $N_1 \leq N_2 \leq \cdots $ is an ascending chain of $V$-submodules of $M[f]$, then $\epsilon_M(N_1 \otimes_V U) \leq \epsilon_M(N_2 \otimes_V U) \leq \cdots$ is an ascending chain of $U$-submodules of $M$. This chain must terminate because $M$ is a finitely generated module over the Noetherian ring $U$ --- see \cite[Corollary 4.1.10]{EqDCap}. Since $\epsilon_M$ is $U$-linear and injective by the above, and since $U$ is a faithfully flat $V$-module by Proposition \ref{UVfflat}, we see that the original chain $N_1 \leq N_2 \leq \cdots$ must terminate. Hence the $V$-module $M[f]$ is finitely generated.
\end{proof}

\begin{proof}[Proof of Theorem \ref{MainMicroKash}]
The remaining proofs in \cite[\S 4.7, \S4.8, \S4.9, \S4.10]{DCapTwo} do not rely on the Noetherianity of $\cU$. The proof of \cite[Theorem 4.10]{DCapTwo} now works after making straightforward modifications.
\end{proof}

\subsection{Kashiwara equivalence for small \ts{(\bX,G)}}\label{EqMKE}
In $\S \ref{EqMKE}$, we specialise slightly to the setting of untwisted $\cD$-modules, and extend the material in $\S \ref{MicroKash}$ to the equivariant setting. Thus, let $\bX$ be a smooth $K$-affinoid variety and let $\bY$ be a smooth, Zariski closed subvariety of $\bX$ defined by a coherent ideal sheaf $\sI \subset \cO_{\bX}$. We assume that $\cT(\bX)$ admits a free $\cA$-Lie lattice $\cL = \cA \partial_1 \oplus \cdots \oplus \cA \partial_d$ for some affine formal model $\cA \subset \cO(\bX)$, and that $I := \sI(\bX)$ admits a generating set $F := \{f_1,\ldots, f_r\}$ such that $\partial_i(f_j) = \delta_{ij}$ whenever $1 \leq i \leq d$ and $1 \leq j \leq r$. 

Let $\cI := I \cap \cA$ and let $N_{\cL}(\cI) := \{v \in \cL : v \cdot \cI \subseteq \cI\}$ be its normaliser in $\cL$. 

\begin{lem}\label{Normaliser} We have $N_{\cL}(\cI) = \cI \partial_1 \oplus \cdots \oplus \cI \partial_r \oplus \cA \partial_{r+1} \oplus \cdots \oplus \cA \partial_d$.
\end{lem}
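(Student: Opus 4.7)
The plan is to verify both inclusions directly, expanding any $v \in \cL$ as $v = \sum_{i=1}^d a_i \partial_i$ with $a_i \in \cA$ and tracking how each coefficient interacts with $\cI = I \cap \cA$. The only computational inputs needed are the duality relations $\partial_i(f_j) = \delta_{ij}$ for $1 \le i \le d$ and $1 \le j \le r$, the description $I = (f_1,\ldots,f_r)$ as an $\cO(\bX)$-ideal, and the fact that $\cL$ is an $\cA$-Lie lattice, so $\partial_i(\cA) \subseteq \cA$.

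For the inclusion $\supseteq$, suppose $a_i \in \cI$ for $i \le r$ and $a_i \in \cA$ for $i > r$, and let $c \in \cI$. For $i \le r$ we have $a_i \partial_i(c) \in \cI \cdot \cA \subseteq \cI$ immediately. For $i > r$, writing $c = \sum_{j=1}^r h_j f_j$ in $\cO(\bX)$ and applying the Leibniz rule gives $\partial_i(c) = \sum_j \partial_i(h_j)\, f_j$ because $\partial_i(f_j) = 0$ whenever $i > r \ge j$; hence $\partial_i(c) \in I \cap \cA = \cI$, and therefore $a_i \partial_i(c) \in \cA \cdot \cI \subseteq \cI$. Summing gives $v(c) \in \cI$.

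For the reverse inclusion, take $v = \sum_{i=1}^d a_i \partial_i \in N_\cL(\cI)$; the coefficient condition $a_i \in \cA$ for $i > r$ is automatic from $v \in \cL$, so the content is to show $a_j \in \cI$ for each $j \le r$. The naive approach is to test $v$ on $f_j$, since the duality relation yields $v(f_j) = a_j$ directly; this is where the main (in fact only) obstacle arises: $f_j$ need not belong to $\cA$, so it need not lie in $\cI$, and we cannot immediately conclude $v(f_j) \in \cI$. The fix is to rescale: since $f_j \in \cO(\bX) = K \otimes_\cR \cA$, choose $N \gg 0$ with $\pi^N f_j \in \cA$. Then $\pi^N f_j \in I \cap \cA = \cI$, so $\pi^N a_j = v(\pi^N f_j) \in \cI \subseteq I$. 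Since $\pi$ is a unit in $K$ and $I$ is an $\cO(\bX)$-ideal, this forces $a_j \in I$; combining with $a_j \in \cA$ gives $a_j \in \cI$, completing the proof.
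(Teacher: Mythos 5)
Your proof is correct and follows the same route as the paper: for $\supseteq$, use Leibniz plus $\partial_i(f_j)=\delta_{ij}$ to see that $\partial_i$ preserves $\cI$ when $i>r$ and that $\cI\partial_i(\cI)\subseteq\cI\cA=\cI$ when $i\le r$; for $\subseteq$, test $v$ against $\pi^n f_j\in\cI$ and use that $\pi$ is a unit in $\cO(\bX)$ (equivalently, that $I$ is a $K$-vector space) to conclude $a_j\in I\cap\cA=\cI$. The only cosmetic difference is that you verify $\supseteq$ by checking a general element of the right-hand side directly rather than checking the generators $\partial_i$ ($i>r$) and $\cI\partial_i$ separately, which amounts to the same computation.
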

\begin{proof} If $a \in \cI$, then $a = \sum_{j=1}^r a_j f_j$ for some $a_j \in \cO(\bX)$ so $\partial_i( a ) = \sum_{j=1}^r \partial_i(a_j) f_j \in I$ whenever $i > r$. Since $a \in \cA$ and $\partial_i \in \cL$, we see that $\partial_i(a) \in \cA$. So, $\partial_i \in N_{\cL}(\cI)$ whenever $i > r$. On the other hand, $\cI \partial_i(\cI) \subset \cI \cA = \cI$ for all $i$, and we have established the $\supseteq$ inclusion. Now let $v \in N_{\cL}(\cI)$ and write $v = \sum_{i=1}^d a_i \partial_i$ for some $a_i \in \cA$. Choose a sufficiently large integer $n$ such that $\pi^n f_j \in \cI$ for all $j=1,\ldots, r$. Then $v(\pi^n f_j) \in \cI$ implies that $\pi^n a_j \in I$ for all $j = 1,\ldots, r$. Since $I$ is a $K$-vector space, we conclude that $a_1,\ldots, a_r \in I \cap \cA = \cI$, as required for $\subseteq$ inclusion.\end{proof}

We write $\overline{\cA} := \cA / \cI$ and $\cN$ for the quotient $(\cR, \overline{\cA})$-Lie algebra $N_{\cL}(\cI) / \cI\cL$. Note that it follows from Lemma \ref{Normaliser} that the image of $\{\partial_{r+1},\ldots, \partial_d\}$ in $\cN$ forms a basis for this $\overline{\cA}$-module. 

Now let $G$ be a compact $p$-adic Lie group acting continuously on $\bX$, and assume that $\bY \subset \bX$, $\cA \subset \cO(\bX)$ and $\cL \subset \cT(\bX)$ are all $G$-stable. Note that $\overline{\cA}$ is a $G$-stable formal model in $\cO(\bY) = \cO(\bX)/I$, and by the functoriality of the normaliser construction, $\cN$ is a $G$-stable $\overline{\cA}$-Lie lattice in $\cT(\bY) = \cO(\bY) \overline{\partial_{r+1}} \oplus \cdots \oplus \cO(\bY) \overline{\partial_d}$. We note in passing that even given the explicit description of normaliser $N_{\cL}(\cI)$ by Lemma \ref{Normaliser}, it is not particularly nice as an $\cA$-module so the structure of its enveloping algebra is not clear. However, it does admit a $G$-action which induces a well-defined $G$-action on $\cN$. In contrast, because it is not reasonable to expect the generating set $F$ for $I$ to be $G$-stable in general, there is no $G$-action on the centraliser $C_{\cL}(F)$ which played a key role in $\S\ref{MicroKash}$.

Let $\rho : G \to \Aut_K(\cO(\bX))$ denote the action of $G$ on $\cO(\bX)$, and recall the subgroup $G_{\cL} = \rho^{-1}(\exp(p^\epsilon \cL))$ from \cite[Definition 3.2.11]{EqDCap}. By \cite[Theorem 3.2.12]{EqDCap}, the map $\beta : G_{\cL} \to U^\times$ given by $\beta(g) = \exp(\iota(\log\rho(g)))$ for $g \in G_{\cL}$ is a $G$-equivariant trivialisation of the $G$-action on $U := \hK{U(\cL)}$; here $\iota$ denotes the inclusion of $\cL$ into the $K$-Banach algebra $U$. Let $\overline{\rho} : G \to \Aut_K\cO(\bY)$ denote the action of $G$ on $\cO(\bY)$, let $W := \hK{U(\cN)}$ and let $\overline{\beta} : G_{\cN} \to W^\times$ denote the corresponding $G$-equivariant trivialisation; if $a \in \cO(\bX)$ we will write $\overline{a}$ to denote its image in $\cO(\bY)$.

\begin{lem}\label{GLGN} We have $G_{\cL} \leq G_{\cN}$. If $g \in G_{\cL}$ and $a \in \cO(\bX)$, then
\[ \overline{\beta}(g) \cdot \overline{a} = \overline{ \beta(g) \cdot a }.\]
\end{lem}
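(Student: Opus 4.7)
The plan is to show that if $g \in G_{\cL}$ then $\log\rho(g) \in p^\epsilon N_{\cL}(\cI)$, and then to deduce both assertions from this. I would write $\rho(g) = \exp(p^\epsilon x)$ for the unique $x \in \cL$, and argue that $x \in N_{\cL}(\cI)$ as follows. Since $\bY$ is $G$-stable, the automorphism $\rho(g)$ preserves $I := \sI(\bX)$. Expanding the $p$-adically convergent series
\[\log\rho(g) = \sum_{m \geq 1}\frac{(-1)^{m-1}(\rho(g) - 1)^m}{m},\]
each summand preserves $I$, since $\rho(g) - 1$ does and $I$ is a $K$-vector space. Hence $\log\rho(g)(I) \subseteq I$, so $p^\epsilon x(I) \subseteq I$, and dividing by $p^\epsilon$ yields $x(I) \subseteq I$; combined with $x(\cA) \subseteq \cA$ this gives $x(\cI) \subseteq \cI$, i.e.\ $x \in N_{\cL}(\cI)$.

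With $x \in N_{\cL}(\cI)$ established, its image $\overline{x} \in \cN$ is well-defined, and the derivation $x$ descends to $\overline{x}$ on $\cO(\bY) = \cO(\bX)/I$. Since $\exp(p^\epsilon x)$ descends to $\exp(p^\epsilon \overline{x})$ modulo $I$, the induced automorphism $\overline{\rho}(g)$ of $\cO(\bY)$ equals $\exp(p^\epsilon \overline{x}) \in \exp(p^\epsilon \cN)$. This gives $g \in G_{\cN}$, proving $G_{\cL} \leq G_{\cN}$.

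For the compatibility formula, I would use that the action of $\hK{U(\cL)}$ on $\cO(\bX)$ extends the derivation action of $\cL$, so $\iota(v) \cdot b = v(b)$ for $v \in \cL$ and $b \in \cO(\bX)$. This yields $\beta(g) \cdot a = \exp(p^\epsilon \iota(x)) \cdot a = \exp(p^\epsilon x)(a) = \rho(g)(a)$, hence $\overline{\beta(g) \cdot a} = \overline{\rho(g)(a)} = \overline{\rho}(g)(\overline{a})$. The identical computation for $\overline{\beta}(g)$ inside $\hK{U(\cN)}$ gives $\overline{\beta}(g) \cdot \overline{a} = \exp(p^\epsilon \overline{x})(\overline{a}) = \overline{\rho}(g)(\overline{a})$, matching the previous expression. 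The only delicate step is the first paragraph: checking that the logarithm series preserves $I$ and that the resulting $x$ truly lies in $N_{\cL}(\cI)$; everything else reduces to reading off the definitions of $\beta$, $\overline{\beta}$ and $\overline{\rho}$.
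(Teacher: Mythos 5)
Your argument is correct and follows essentially the same route as the paper's proof: take the unique $x \in \cL$ with $\rho(g) = \exp(p^\epsilon x)$, observe that $\rho(g)$ stabilises $I = \sI(\bX)$ because $\bY$ is $G$-stable, deduce from the logarithm series that $p^\epsilon x$ also stabilises $I$, conclude $x \in N_{\cL}(\cI)$, and then descend to $\cO(\bY)$ to read off both assertions. You are in fact slightly more explicit than the paper in the final step, spelling out both $\beta(g)\cdot a = \rho(g)(a)$ and $\overline{\beta}(g)\cdot\overline{a} = \overline{\rho}(g)(\overline{a})$ separately.

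One small point to tighten: passing from ``each partial sum of the $\log$ series preserves $I$'' to ``$\log\rho(g)$ preserves $I$'' requires $I$ to be \emph{closed} in $\cO(\bX)$, since $\log\rho(g)$ is a limit of those partial sums. The paper invokes this explicitly (``Since this ideal is closed\ldots''). It holds automatically because ideals of affinoid algebras are closed, but it should be stated rather than buried in a ``Hence''.
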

\begin{proof}  Let $g \in G_{\cL}$ and let $u \in \cL$ be such that $\log \rho(g) = p^\epsilon u$. Because $\bY$ is $G$-stable by assumption, the automorphism $\rho(g) : \cO(\bX) \to \cO(\bX)$ stabilises the ideal $I$. Since this ideal is closed, it follows that 
\[p^\epsilon u = \sum\limits_{m=1}^\infty \frac{ (-1)^{m+1}}{m}(\rho(g) - 1)^m\]
also stabilises $I$. Since $I$ is a $K$-vector space, we conclude that $u \cdot I \subseteq I$, and hence $u \in N_{\cL}(\cI)$. Now, the kernel of the natural map $N_{\cL}(\cI) \to \cT(\bY)$ is equal to $\cI\cL$, so the image of $N_{\cL}(\cI)$ in $\cT(\bY)$ is isomorphic to $\cN = N_{\cL}(\cI) / \cI \cL$. Hence the image $\overline{u}$ of $u$ in $\cT(\bY)$ lies in $\cN$, and on the other hand it satisfies $\log \overline{\rho(g)} = p^\epsilon \overline{u}$. So, $\log \overline{\rho}(g) = \log \overline{\rho(g)} \in p^\epsilon \cN$ which implies that $g \in G_{\cN}$. Finally, if $g \in G_{\cL}$ and $a \in \cO(\bX)$ then $\overline{\beta}(g) \cdot \overline{a} = \overline{\rho}(g)(\overline{a}) = \overline{\rho(g)(a)} = \overline{\beta(g) \cdot a}$ as required.
\end{proof}

We will use the following abbreviations. 
\begin{notn}\label{WUST} Fix an open normal subgroup $H$ of $G$ contained in $G_{\cL}$. 
\be \item  $W = \hK{U(\cN)} \hookrightarrow S := W \rtimes^{\overline{\beta}}_H G$, and
\item $U = \hK{U(\cL)} \hookrightarrow T := U \rtimes^\beta_H G$.
\ee\end{notn}
Note that it follows from \cite[Lemma 5.1(d)]{DCapTwo} that $\cO(\bY) \underset{\cO(\bX)}{\otimes}{} U = U / I U$ is an $W-U$-bimodule; hence 
\[ \cO(\bY) \underset{\cO(\bX)}{\otimes}{} T \cong  (U / I U) \otimes_U T \cong T / IT\]
is naturally a $W-T$-bimodule.

\begin{prop}\label{TITbimodule} The left $W$-action on $(U / IU) \otimes_U T$ by right $T$-linear endomorphisms extends to a left $S$-action. 
\end{prop}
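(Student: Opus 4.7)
The strategy is to define a left $G$-action on $(U/IU)\otimes_U T \cong T/IT$ by left multiplication via the inclusion $G \hookrightarrow T^\times$, and then to check that together with the given $W$-action it satisfies the defining relations of the trivialised skew-group algebra $S = W \rtimes^{\overline\beta}_H G$.

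First I would check that left multiplication by $g \in G$ descends to $T/IT$. Because $G$ stabilises $\bY$ it stabilises the defining ideal $I$, so the skew-group identity $ga = (g \cdot a)g$ in $T$ yields $gI = Ig$ and hence $gIT \subseteq IT$. Left multiplication is always right $T$-linear, so this produces a $G$-action on $T/IT$ by right $T$-linear endomorphisms. Together with the $W$-action one obtains a map from the free product of $W$ and $K[G]$ into $\End_T(T/IT)^{\rm op}$, and the task is to verify that this factors through $S$.

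Two relations must be checked:
\begin{enumerate}[(i)]
\item $g\overline{v}g^{-1} = g \cdot \overline{v}$ for $g \in G$ and $\overline v \in \cN$;
\item $h$ and $\overline{\beta}(h)$ act identically, for all $h \in H$.
\end{enumerate}
For (i), lift $\overline v$ to $v \in N_{\cL}(\cI)$; since $G$ stabilises both $N_{\cL}(\cI)$ and $\cI \cL$, the element $g \cdot v$ is a lift of $g \cdot \overline v$, and the skew-group identity in $T$ gives $g v g^{-1} = g \cdot v$ as operators. Thus conjugation by the $G$-action matches the $G$-twist of the $W$-action on the generating subspace $\cN$, and by continuity and the universal property of $W = \hK{U(\cN)}$ this extends to all of $W$.

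For (ii), write $\log\rho(h) = p^{\epsilon} u$ with $u \in \cL$. By the proof of Lemma~\ref{GLGN}, in fact $u \in N_{\cL}(\cI)$, so $\overline{u} \in \cN$ and $\overline{\beta}(h) = \exp(p^{\epsilon}\,\overline{\iota}(\overline u))$. Unwinding the definition of the $W$-action through lifts and exponentiating inside $W$ shows that $\overline{\beta}(h)$ acts on $T/IT$ by left multiplication by $\exp(p^{\epsilon} \iota(u)) = \beta(h) \in U \subset T$. But in $T$ the trivialised skew product imposes $\beta(h) = h$, so the two candidate actions of $h$ coincide. The main obstacle is precisely this relation: one has to transfer an $H$-action living inside a skew product across a quotient into an action defined by exponentials in a \emph{different} completed enveloping algebra, and verify that everything passes through the quotient by $IT$. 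Lemma~\ref{GLGN} supplies the compatibility between $\beta$ and $\overline\beta$ that makes this work, and the rest of the verification is formal.
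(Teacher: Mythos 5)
Your proof is correct and takes essentially the same approach as the paper's: both define the $G$-action on $T/IT$ by left multiplication (well-defined because $I$ is $G$-stable), check $G$-equivariance of the $W$-action, and then show $\overline{\beta}(h)$ and $h$ act identically for $h \in H$, resting on the key fact (your citation of the proof of Lemma~\ref{GLGN}) that $\log\rho(h) \in p^\epsilon N_{\cL}(\cI)$. The only organisational difference is in step (ii): you argue directly that, since $\overline{v}\in\cN$ acts by left multiplication by a lift $\iota(v)$, continuity and the ring-homomorphism property force $\overline{\beta}(h)$ to act by left multiplication by $\exp(p^\epsilon\iota(v))=\beta(h)=h$ in $T$, while the paper carries out an explicit binomial expansion of $\overline{v}^m\cdot(t+IT)$, exponentiates, and then reduces to the generating subset $\cO(\bX)+IT$ by right $T$-linearity --- these are two phrasings of the same verification. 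One small slip: a left action by right $T$-linear endomorphisms gives a ring homomorphism into $\End_T(T/IT)$, not its opposite.
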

\begin{proof} Recall that by \cite[Definition 2.2.3 and Lemma 2.2.4]{EqDCap}, $T$ is a factor ring of the skew-group ring $U \rtimes G$. The group $G$ acts on $U \rtimes G$ by left multiplication, and this action therefore descends to a left $G$-action on $T$ by right $T$-module endomorphisms. The ideal $IT$ is stable under this action because $g \cdot (f t) = (g \cdot f) g t$ for all $g \in G$, $f \in I$ and $t \in T$, and because the ideal $I$ in $\cO(\bX)$ is $G$-stable by assumption. Thus we obtain a left $G$-action on $T / IT$ by right $T$-module endomorphisms. On the other hand, the left $W$-action on $U / IU$ is $G$-equivariant, in the sense that
\[ g \cdot (w \cdot \overline{u}) = (g \cdot w) \cdot \overline{g \cdot u} \qmb{for all} g \in G, w \in W, u \in U.\] 
An easy calculation now implies that the right $T$-linear $W$ and $G$-actions on $T/IT$ combine to form a right $T$-linear $W \rtimes G$-action. It remains to check that this action descends to the factor ring $S = W \rtimes^{\overline{\beta}}_HG$ of $W \rtimes G$.

Fix $h \in H$, choose $v \in N_{\cL}(\cI)$ such that $\log \rho(h) = p^\epsilon v$ so that $\overline{\beta}(h) = \exp(p^\epsilon \iota(\overline{v})) \in W$ and $\beta(h) = \exp(p^\epsilon \iota(v)) \in U$. The $\overline{v}$-action on $T/IT$ is given by $\overline{v}\cdot (t + IT) = [\iota(v),t] + t \hsp \iota(v) + IT$. An easy induction then implies that 
\[\overline{v}^m \cdot(t + IT) = \sum\limits_{i=0}^m \binom{m}{i} \ad(\iota(v))^i(t) \hsp \iota(v)^{m-i} + IT\]
for all $m \geq 0$ and all $t \in T$, and therefore
\[ \begin{array}{lll} \exp(p^\epsilon \iota(\overline{v})) \cdot (t + IT) &=& \sum\limits_{m=0}^\infty \sum\limits_{i=0}^m \frac{p^{\epsilon m}}{m!}\binom{m}{i} \ad(\iota(v))^i(t) \hsp \iota(v)^{m-i} + IT = \\
&=& \sum\limits_{i=0}^\infty \sum\limits_{j=0}^\infty  \frac{(p^\epsilon \ad(\iota(v)))^i}{i!}(t) \frac{(p^\epsilon \iota(v))^j}{j!} + IT = \\
&=& \exp(p^\epsilon \ad(\iota(v)))(t) \hsp \exp(p^\epsilon \iota(v)) + IT.\end{array}\]
Now if $t = a \in \cO(\bX)$, then $\ad(\iota(v))(a) = v(a)$ and $\exp(p^\epsilon \ad(\iota(v)))(a) = \rho(h)(a) = h \cdot a$, so $\overline{\beta}(h) \cdot (a + IT) = (h \cdot a) \beta(h) + IT = h (a + IT)$. We conclude that $\overline{\beta}(h) - h \in W \rtimes G$ kills $(\cO(\bX) + IT) / IT$ for all $h \in H$. Because this set generates $T/IT$ as a right $T$-module, and because the action of $W \rtimes G$ on $T/IT$ is right $T$-linear, we see that $\overline{\beta}(h) - h$ kills all of $T/IT$. It follows that the left $W \rtimes G$-action on $T / IT$ descends to a well-defined $S = W \rtimes_H^{\overline{\beta}}G$-action, as required. \end{proof}

It follows immediately from Proposition \ref{TITbimodule} that $T / IT$ is naturally an $S - T$-bimodule. Next, recall from \cite[\S 2.2]{EqDCap} that $\gamma : G \to S^\times$ denotes the canonical group homomorphism, and that by construction, $S$ is a free left and right $W$-module with basis $\{\gamma(c) : c \in C\}$ whenever $C$ is a set of coset representatives for the open normal subgroup $H$ of $G$. Similar statements hold for the crossed product $T = U \rtimes^\beta_HG$.
\begin{prop}\label{SUbimod} There is an isomorphism of $S - U$-bimodules
\[ \xi : S \underset{W}{\otimes}{} \frac{U}{IU} \hsp \stackrel{\cong}{\longrightarrow} \hsp \frac{T}{IT}\]
given by $\xi( w \gamma(g) \otimes \overline{u} ) = \overline{ w \cdot (g \cdot u) \gamma(g)}$.
\end{prop}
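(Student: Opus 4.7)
The plan is to construct $\xi$ via extension of scalars along the embedding $W \hookrightarrow S$. The natural inclusion $\iota : U/IU \hookrightarrow T/IT$, $\overline{u} \mapsto u + IT$, is a morphism of left $W$-modules: the left $S$-action on $T/IT$ in Proposition \ref{TITbimodule} was constructed precisely by extending the $W$-action on $U/IU$ to all of $T/IT$ via right $T$-linearity, so its restriction to $W \hookrightarrow S$ agrees on $U/IU$ with the intrinsic $W$-action from \cite[Lemma 5.1(d)]{DCapTwo}. The universal property of $S \otimes_W -$ then produces a unique left $S$-linear map $\xi : S \otimes_W (U/IU) \to T/IT$ satisfying $\xi(s \otimes \overline{u}) = s \cdot \overline{u}$.

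To match the stated formula, I would compute $\gamma(g) \cdot \overline{u}$ using the $G$-action of Proposition \ref{TITbimodule}: the crossed-product relation gives $g \cdot (u + IT) = \gamma(g)\hsp u + IT = (g \cdot u)\gamma(g) + IT$, whence
\[\xi(w \gamma(g) \otimes \overline{u}) = w \cdot \overline{(g \cdot u)\gamma(g)} = \overline{w \cdot (g \cdot u)\gamma(g)},\]
as required. Right $U$-linearity of $\xi$ is automatic, because by construction the $S$-action on $T/IT$ commutes with right multiplication by elements of $T$, which restricts through $U \hookrightarrow T$ to the right $U$-action.

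Bijectivity I would establish by comparing direct-sum decompositions. Fix a set $C$ of coset representatives for $H$ in $G$. The relations $\gamma(c) w' = (c \cdot w') \gamma(c)$ and $\gamma(c) u = (c \cdot u) \gamma(c)$, together with $G$-stability of $W$ and $U$, give right $W$- and right $U$-module decompositions $S = \bigoplus_{c \in C} \gamma(c) W$ and $T = \bigoplus_{c \in C} \gamma(c) U$. Because $I$ is $G$-stable, $\gamma(c) \cdot IU = IU \cdot \gamma(c)$, so $IT = \bigoplus_{c \in C} \gamma(c)(IU)$ and therefore $T/IT \cong \bigoplus_{c \in C} \gamma(c)(U/IU)$. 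Similarly $S \otimes_W (U/IU) \cong \bigoplus_{c \in C} \gamma(c) \otimes (U/IU)$. The map $\xi$ respects these decompositions, sending $\gamma(c) \otimes \overline{u}$ to $\gamma(c) u + IT$, so on each summand it is simply the identity map on $U/IU$ after the identifications; hence $\xi$ is bijective.

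The main obstacle will be the coherence check in the first paragraph: one has to verify cleanly that the restriction to $W$ of the $S$-action of Proposition \ref{TITbimodule} on $T/IT$ really does coincide on $U/IU \subseteq T/IT$ with the intrinsic left $W$-action of \cite[Lemma 5.1(d)]{DCapTwo}. This is essentially bookkeeping, but it requires tracking how an element $\overline{v} \in \cN$ lifts to $v \in N_{\cL}(\cI) \subseteq \cL \subseteq U$ and confirming that its action as an element of $W \subseteq S$ on $T/IT$ reduces to left multiplication by such a lift modulo $IT$ — all the subsequent manipulations trade on this identification.
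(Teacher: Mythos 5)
Your proposal is correct, and the route is essentially the one the paper takes. You construct $\xi$ via the universal property of $S \otimes_W -$ applied to the $W$-linear inclusion $U/IU \hookrightarrow T/IT$, whereas the paper restricts the $S$-$T$-bilinear action map $S \times T/IT \to T/IT$ coming from Proposition \ref{TITbimodule} to $S \times U/IU$ and then descends; these are two formulations of the same tensor-product universal property, and the coherence check you flag (that the $W \hookrightarrow S$ action on $T/IT$ agrees on $U/IU$ with the intrinsic $W$-action) is exactly what makes the paper's descent over $W$ well defined as well. Your bijectivity argument via the decompositions $S = \bigoplus_c \gamma(c) W$ and $T/IT = \bigoplus_c \gamma(c)(U/IU)$ (equivalently $(U/IU)\gamma(c)$ in the paper's $U/IU \otimes_U T$ picture) matches the paper's use of \cite[Lemma 2.2.4(b)]{EqDCap} and the summand-by-summand identification.
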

\begin{proof} By Proposition \ref{TITbimodule}, there is a well-defined $S-T$-bilinear action map $S \times T/IT \to T/IT$, given by $(w \gamma(g), \overline{u \gamma(h)}) \mapsto \overline{w \cdot (g \cdot u) \hsp \gamma(gh)}$ for $w \in W, g,h \in H$ and $u \in U$. By considering this formula in the case $g = h = 1$, we see that the restriction of this map to $S \times U / IU$ descends to $S \otimes_W U/IU$, and induces an $S-U$-bilinear map
\[ \xi : S \underset{W}{\otimes}{} \frac{U}{IU} \stackrel{\cong}{\longrightarrow} \frac{T}{IT}\]
as claimed. Let $C$ be a set of coset representatives for $H$ in $G$. If we again identify $T/IT$ with $U/IU \otimes_U T$, then it follows from \cite[Lemma 2.2.4(b)]{EqDCap} that
\[ S \underset{W}{\otimes}{} \frac{U}{IU} = \bigoplus\limits_{c \in C} \gamma(c) \otimes \frac{U}{IU} \qmb{and} \frac{T}{IT} = \bigoplus\limits_{c \in C} \frac{U}{IU} \otimes \gamma(c). \]
For each $c \in C$, the restriction of $\xi$ to $\gamma(c) \otimes \frac{U}{IU}$ sends it bijectively onto $\frac{U}{IU} \otimes \gamma(c)$. Therefore $\xi$ is a bijection.
\end{proof}

Now, the $S-T$-bimodule $T / IT$ from Proposition \ref{TITbimodule} induces a functor $N \mapsto N \otimes_S T/IT$ from finitely generated right $S$-modules to finitely generated right $T$-modules. Using the identification of the restriction of this bimodule to an $S-U$-bimodule given by Proposition \ref{SUbimod}, we can prove the main result of $\S \ref{EqMKE}$.

\begin{thm}\label{MainEqMKE} Let $\bX$ be a smooth $K$-affinoid variety and let $\bY = \Sp \cO(\bX)/I$ be a smooth, Zariski closed subvariety of $\bX$. Assume that: 
\be\item$\cT(\bX)$ admits a free $\cA$-Lie lattice $\cL = \cA \partial_1 \oplus \cdots \oplus \cA \partial_d$ for some affine formal model $\cA \subset \cO(\bX)$, satisfying $[\cL,\cL] \subseteq \pi \cL$ and $\cL \cdot \cA \subseteq \pi \cA$,
\item the ideal $I$ admits a generating set $F := \{f_1,\ldots, f_r\}$ such that $\partial_i(f_j) = \delta_{ij}$ whenever $1 \leq i \leq d$ and $1 \leq j \leq r$,
\item $G$ is a compact $p$-adic Lie group which acts continuously on $\bX$ and which preserves $\bY \subset \bX$, $\cA \subset \cO(\bX)$ and $\cL \subset \cT(\bX)$.
\ee
Let $H$ be an open normal subgroup of $G$ contained in $G_{\cL}$ and write 
\[S := \hK{U(\cN)}\rtimes^{\overline{\beta}}_HG \qmb{and} T := \hK{U(\cL)} \rtimes^\beta_HG,\]
where $\cN = \frac{N_{\cL}(I \cap \cA)}{(I \cap \cA)\cL} \subset \cT(\bY)$. Then there is an equivalence of categories
\[{}^r\Coh(S) \quad\cong\quad \left\{M \in {}^r\Coh(T) : M = M_{\Dp}(F)\right\}\]
given by the functors $N \mapsto N \underset{S}{\otimes}{} T / IT$ and $M \mapsto \Hom_T(T/IT,M)$.
\end{thm}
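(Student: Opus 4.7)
My plan is to deduce the equivariant equivalence from the non-equivariant microlocal Kashiwara equivalence (Theorem \ref{MainMicroKash}), exploiting the $(S,T)$-bimodule structure on $T/IT$ established in Propositions \ref{TITbimodule} and \ref{SUbimod}. A key enabling observation is that $[G:H]$ is finite since $G$ is compact and $H$ open, so $T$ is free of finite rank over $U$ and $S$ is free of finite rank over $W$, with bases indexed by a set of coset representatives for $H$ in $G$.

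First I would reformulate Theorem \ref{MainMicroKash} in terms of $W = \hK{U(\cN)}$. Under the standing hypotheses $\cC := C_{\cL}(F)$ equals $\cA\partial_{r+1} \oplus \cdots \oplus \cA\partial_d$, and reduction modulo $I \cap \cA$ sends $\cC$ onto $\cN$. Because $F \subseteq A$ is central in $A$, any right $\hK{U(\cC)}$-module $N$ satisfying $N \cdot F = 0$ is in fact annihilated by the two-sided ideal generated by $F$ in $\hK{U(\cC)}$, and a direct comparison of formal models identifies $\hK{U(\cC)}/I\hK{U(\cC)}$ with $W$. For such an $N$, the non-equivariant functor rewrites as $N \otimes_{\hK{U(\cC)}} U \cong N \otimes_W U/IU$. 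Thus Theorem \ref{MainMicroKash} yields an equivalence
\[ {}^r\Coh(W) \;\cong\; \{M \in {}^r\Coh(U) : M = M_{\Dp}(F)\} \]
with quasi-inverses $N \mapsto N \otimes_W U/IU$ and $M \mapsto M[F]$, noting that $M[F] = M[I]$ since $F$ generates $I$ and $A$ is commutative.

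Next I would construct the equivariant functors and verify their codomains. For the forward direction, set $\Phi(N) := N \otimes_S T/IT$ for $N \in {}^r\Coh(S)$. Proposition \ref{SUbimod} supplies an $(S,U)$-bimodule isomorphism $T/IT \cong S \otimes_W U/IU$, so there is a natural $U$-linear isomorphism $\Phi(N) \cong N_{|W} \otimes_W U/IU$, where $N_{|W}$ denotes $N$ restricted along $W \hookrightarrow S$. The first paragraph shows that $\Phi(N)$ is $U$-coherent and satisfies the divided-power torsion condition, which is intrinsic to the $F$-action; $T$-coherence then follows from the finite rank of $T$ over $U$. For the inverse direction, set $\Psi(M) := \Hom_T(T/IT, M)$. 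Via the isomorphism $T/IT \cong U/IU \otimes_U T$ of right $T$-modules, $\Psi(M)$ identifies naturally with $M[I] = M[F]$, and the left $S$-action on $T/IT$ furnished by Proposition \ref{TITbimodule} endows $\Psi(M)$ with a right $S$-module structure. The non-equivariant equivalence gives $W$-coherence, which upgrades to $S$-coherence because $S$ is $W$-finitely generated.

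The pair $(\Phi, \Psi)$ is adjoint via the standard tensor-Hom adjunction, so it remains to check that the unit $N \to \Psi(\Phi(N))$ and counit $\Phi(\Psi(M)) \to M$ are isomorphisms. Both maps are canonically $S$-linear, respectively $T$-linear; on underlying $W$- and $U$-modules they coincide with the unit and counit of the reformulated non-equivariant Kashiwara equivalence, and are therefore bijective by the first paragraph. The main obstacle in executing this plan will be verifying that the twisted crossed-product structures are respected at every step — specifically, that the $S$-module structure placed on $M[F]$ via Proposition \ref{TITbimodule} is compatible with the $\beta$- and $\overline{\beta}$-twistings, and that the identification $\Phi(N) \cong N_{|W} \otimes_W U/IU$ is left $W$-linear in the correct sense, so that the reduction to the non-equivariant statement is legitimate. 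These compatibilities are precisely what Propositions \ref{TITbimodule} and \ref{SUbimod} have been set up to provide.
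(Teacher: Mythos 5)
Your proposal is correct and follows essentially the same route as the paper: reduce to the non-equivariant Theorem \ref{MainMicroKash} by restricting scalars along $W \hookrightarrow S$ and $U \hookrightarrow T$ (using finiteness of $[G:H]$), invoke Proposition \ref{SUbimod} to identify $-\otimes_S T/IT$ with $-\otimes_W U/IU$ on underlying modules, identify $V/IV \cong W$ where $V := \hK{U(\cC)}$, and check the unit and counit of the tensor-Hom adjunction by passing to the non-equivariant side. The paper spells out the counit argument with an explicit commutative diagram where you phrase it more compactly, but the content is the same.
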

\begin{proof} Because $G/H$ is a finite group, it follows from \cite[Lemma 2.2.4(b)]{EqDCap} that $S$ is a finitely generated $W$-module on both sides, and $T$ is a finitely generated $U$-module on both sides. Hence we obtain the following diagram of abelian categories and right-exact functors, where the vertical arrows are the restriction maps along the inclusion of rings $W \hookrightarrow S$ and $U \hookrightarrow T$:
\[\xymatrix{ {}^r\Coh(S)  \ar[rrrr]^{-\otimes_S T/IT} \ar[d] &&&& {}^r\Coh(T) \ar[d] \\ {}^r\Coh(W) \ar[rrrr]_{-\otimes_W U/IU} &&&& {}^r\Coh(U).}\]
It follows from Proposition \ref{SUbimod} and the Five Lemma that this diagram commutes up to natural isomorphism: for every finitely generated right $S$-module $N$, there is a right $U$-linear isomorphism 
\[ N \underset{S}{\otimes}{} \frac{T}{IT} \quad\cong\quad N \underset{W}{\otimes}{} \frac{U}{IU}\]
which is natural in $N$. Let $\cC := C_{\cL}(F) = \cA \partial_{r+1} \oplus \cdots \oplus \cA \partial_d$; it follows from Lemma \ref{Normaliser} that the natural map $\cC / \cI \cC \to \cN$ is an isomorphism. Let $V$ denote the $K$-Banach algebra $\hK{U(\cC)}$ from $\S \ref{MicroKash}$; by the proof of \cite[Lemma 5.8]{DCapTwo}, there is a natural isomorphism $V / I V \cong W$. So, if $N \in {}^r\Coh(S)$ then we may regard its restriction to $W$ as a finitely generated $V$-module killed by $I$, and then the right $U$-module
\[ M := N \underset{S}{\otimes}{} \frac{T}{IT} \quad \cong \quad N \underset{W}{\otimes}{} \frac{U}{IU} \quad\cong\quad N \underset{V}{\otimes}{} U\]
lies in ${}^r\Coh(U)$ and satisfies $M = M_{\Dp}(F)$ by Theorem \ref{MainMicroKash}. 

Now suppose that $M \in {}^r\Coh(T)$ satisfies $M = M_{\Dp}(F)$. On the one hand, its restriction to $U$ satisfies the same condition because it is phrased entirely in terms of the action of $F \subset \cO(\bX) \subset U$ on $M$, and therefore $\Hom_T(T/IT, M) = M[I]$ is a finitely generated right $V$-module killed by $I$ by Theorem \ref{MainMicroKash}. In other words, it is a finitely generated right $W$-module.  On the other hand, it is a right $S$-module because $T/IT$ is a left $S$-module; hence it is per force finitely generated as an $S$-module. So, the functors $-\otimes_S (T/IT)$ and $\Hom_T(T/IT,-)$ restrict to an adjunction between ${}^r\Coh(S)$ and $\left\{M \in {}^r\Coh(T) : M = M_{\Dp}(F)\right\}$, and it remains to show that the unit and counit morphisms of this adjunction are isomorphisms.

To this end, let $M \in {}^r\Coh(T)$ satisfy $M = M_{\Dp}(F)$ and let $\underline{M}$ denote its restiction to $U$. In the commutative diagram
\[\xymatrix{ \underline{M}[I] \underset{W}{\otimes}{} \frac{U}{IU} \ar[rrrr]^{\epsilon_{\underline{M}}}\ar[d]_{\cong} &&&& M \\
M[I] \underset{S}{\otimes}{} \left( S \underset{W}{\otimes}{} \frac{U}{IU} \right) \ar[rrrr]_{1_{M[I]} \otimes \xi} &&&& M[I] \underset{S}{\otimes}{} \frac{T}{IT}\ar[u]_{\epsilon_M}}\]
the bottom horizontal arrow is an isomorphism by Proposition \ref{SUbimod}, whereas the top horizontal arrow is an isomorphism by Theorem \ref{MainMicroKash}. So the counit morphism $\epsilon_M$ is an isomorphism; and a similar argument using Proposition \ref{SUbimod} and Theorem \ref{MainMicroKash} shows that the unit morphism $\eta_N : N \to (N \otimes_S \frac{T}{IT})[I]$ is also an isomorphism for each $N \in {}^r\Coh(S)$. \end{proof}

For future use, we record that the functor $M \mapsto M[I]$ is compatible with localisation to appropriately invariant affinoid subdomains $\bX' \subset \bX$.

\begin{prop}\label{LocBanIvict} Suppose that $\bX, \bY, I, \cA, \cL, G$ satisfy the hypotheses of Theorem \ref{MainEqMKE}. Let $\bX'$ be a $G$-stable affinoid subdomain of $\bX$ such that $\cO(\bX')$ admits a $G$-stable and $\cL$-stable affine formal model $\cA'$, and let $\cL' := \cL \otimes_{\cA} \cA'$. 

Let $H$ be an open normal subgroup of $G$ contained in $G_{\cL}$, let $S,T$ be defined as in Theorem \ref{MainEqMKE} and define  $S' := \hK{U(\cN')} \rtimes_H^{\overline{\beta'}} G \qmb{and} T' := \hK{U(\cL')} \rtimes_H^{\beta'} G$. Then the natural map 
\[ \alpha : M[I] \utimes{S} S' \quad\longrightarrow\quad (M \utimes{T} T')[I]\]
is an isomorphism for every $M \in {}^r\Coh(T)$ such that $M = M_{\Dp}(F)$. 
\end{prop}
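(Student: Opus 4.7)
The plan is to deduce the isomorphism from the Kashiwara equivalence of Theorem \ref{MainEqMKE} applied to both $\bX$ and $\bX'$, combined with the localisation compatibility of the bimodule $T/IT$.

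First I would establish a natural isomorphism of $S$-$T'$-bimodules
\[ (T/IT) \utimes{T} T' \cong T'/IT'. \]
Applying the right-exact functor $-\utimes{T} T'$ to the short exact sequence $0 \to IT \to T \to T/IT \to 0$ yields this, once we observe that the image of $IT$ under $T \to T \utimes{T} T' = T'$ is precisely $IT'$: the set $F \subset I \subset \cO(\bX) \subset T$ still generates $IT'$ as a left ideal of $T'$ because $I \cdot \cO(\bX') = \cO(\bX') \cdot I$ inside $\cO(\bX')$. Combining this with the natural isomorphism $M \cong M[I] \utimes{S} (T/IT)$ from Theorem \ref{MainEqMKE} on $\bX$ and simple associativity of the tensor product, I obtain
\[ M \utimes{T} T' \cong M[I] \utimes{S} (T/IT) \utimes{T} T' \cong M[I] \utimes{S} (T'/IT') \cong (M[I] \utimes{S} S') \utimes{S'} (T'/IT'), \]
where the last step uses the natural ring map $S \to S'$ induced by the restriction $\cO(\bY) \to \cO(\bY \cap \bX')$ (which exists because $\cL$ preserves $\cA'$, whence $N_\cL(\cI) \hookrightarrow \cL'$ lands inside $N_{\cL'}(\cI')$, and the trivialisations $\overline{\beta}, \overline{\beta}'$ are compatible by Lemma \ref{GLGN}).

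Now $M[I]$ is finitely generated over $S$ by Theorem \ref{MainEqMKE}, so $N' := M[I] \utimes{S} S'$ is finitely generated over $S'$. Consequently $N' \utimes{S'} (T'/IT')$ lies in the essential image of the equivalence of Theorem \ref{MainEqMKE} on $\bX'$, which is exactly $\{M' \in {}^r\Coh(T') : M' = M'_{\Dp}(F)\}$. By the chain of isomorphisms above, $M \utimes{T} T'$ shares this property, so applying the quasi-inverse functor $-[I]$ yields
\[ (M \utimes{T} T')[I] \cong N' = M[I] \utimes{S} S'. \]

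The main obstacle is identifying this composite with the specific natural map $\alpha$ of the statement. Tracing a pure tensor $m \otimes s' \in M[I] \utimes{S} S'$ through each isomorphism in the chain, one checks that it maps to $(m \utimes{T} 1) \cdot s'$ inside $(M \utimes{T} T')[I]$, where the right $S'$-action is the canonical one provided by Theorem \ref{MainEqMKE} on $\bX'$; this agrees with $\alpha(m \otimes s')$, and right $S'$-linearity then determines the map on all of $M[I] \utimes{S} S'$. A secondary bookkeeping point is that the left $S$-action on $(T/IT) \utimes{T} T'$ induced via Step 1 matches the one on $T'/IT'$ from Proposition \ref{TITbimodule} applied to $\bX'$, which follows from the explicit formula in that Proposition together with the construction of $S \to S'$.
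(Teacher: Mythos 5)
Your proof is correct in outline, but it takes a genuinely different route from the paper.

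The paper's proof descends to the non-equivariant microlocal level: it uses the right-module version of \cite[Proposition 4.3.11]{EqDCap} to identify $T' \cong T \otimes_U U'$ and $S' \cong S \otimes_W W'$ as bimodules, rewrites $\alpha$ as a map between modules over $W'$ and $U'$, applies the counit $\epsilon$ from the \emph{non}-equivariant Theorem~\ref{MainMicroKash}/\ref{MainEqMKE} on both $\bX$ and $\bX'$ in a commutative diagram, and then concludes that $\alpha$ is an isomorphism because the functor $-\otimes_{W'}U'/IU' \cong -\otimes_{V'}U'$ reflects isomorphisms by the faithful flatness of Lemma~\ref{UVfflat}.

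Your proof instead stays entirely at the equivariant level. You establish a bimodule identification $(T/IT)\otimes_T T' \cong T'/IT'$, chain it with the counit isomorphism $M \cong M[I]\otimes_S(T/IT)$ to get $M\otimes_T T' \cong (M[I]\otimes_S S')\otimes_{S'}(T'/IT')$, and then apply the functor $(-)[I]$ and the unit isomorphism of the equivalence on $\bX'$. This sidesteps Lemma~\ref{UVfflat} entirely and yields a cleaner conceptual argument once the bimodule identification is in place. The trade-off is that the bimodule identification — in particular that the left $S$-action on $(T/IT)\otimes_T T'$ induced by Proposition~\ref{TITbimodule} really does match, via the ring map $S\to S'$, the $S'$-action on $T'/IT'$ from Proposition~\ref{TITbimodule} applied to $\bX'$ — is a non-trivial check; you flag it as ``secondary bookkeeping,'' and it does need to be done carefully, whereas the paper avoids this by working with the explicit $W$-$U$ decomposition of Proposition~\ref{SUbimod} on both $\bX$ and $\bX'$, where compatibility is more visible. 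Your justification for Step~1 (the phrase ``because $I\cdot\cO(\bX')=\cO(\bX')\cdot I$'') is slightly off-target; the relevant facts are (i) the image of $IT\otimes_T T'\to T'$ is $I\cdot T'$ simply by unwinding, and (ii) $\sI(\bX')T'=\sI(\bX)T'$ because $\sI(\bX')=\cO(\bX')\cdot\sI(\bX)$, the latter being the observation the paper makes explicitly at the start of the proof of Proposition~\ref{MIX}. Finally, you rightly use that the hypotheses of Theorem~\ref{MainEqMKE} persist on $\bX'$, but should record (as the paper does) that $H\leq G_{\cL}\leq G_{\cL'}$ by \cite[Proposition~4.3.6]{EqDCap}, which is what makes $T'$ and $S'$ well-defined crossed products.
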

\begin{proof} Note that $\cL'$ is an $\cA'$-Lie lattice in $\cO(\bX')$ and $G_{\cL} \leq G_{\cL'}$ by \cite[Proposition 4.3.6]{EqDCap}. Let $\cI' = \sI(\bX')\cap \cA'$ and $\cN' = N_{\cL'}(\cI')/\cI' \cL'$; then it follows from Lemma \ref{Normaliser} that $\cN' \cong (\cA' / \cI') \otimes_{\cA/\cI} \cN$ so again $G_{\cN} \leq G_{\cN'}$  by \cite[Proposition 4.3.6]{EqDCap}. This means that the crossed products $S'$ and $T'$ are well defined. 

Write $W := \hK{U(\cN)}$, $W' := \hK{U(\cN')}$, $U := \hK{U(\cL)}$ and $U' := \hK{U(\cL')}$. By the right-module version of  \cite[Proposition 4.3.11]{EqDCap}, there are natural isomorphisms $T' \cong T \otimes_U U' $ of $T-U'$-bimodules and $S' \cong S \otimes_W W'$ of $S-W'$-bimodules. These give isomorphisms $\gamma : M[I] \otimes_SS' \stackrel{\cong}{\longrightarrow} M[I]\otimes_WW'$ right $W'$-modules and $\delta : M \otimes_T T' \stackrel{\cong}{\longrightarrow} M \otimes_U U'$ of right $U'$-modules. Now the map $\alpha$ in question appears in the following commutative diagram:
\[\xymatrix{ \left(M[I] \utimes{S} S'\right) \utimes{W'} \frac{U'}{IU'} \ar[rr]^{\alpha \otimes 1} \ar[d]_{\gamma \otimes 1} && \left(M \utimes{T}T'\right)[I] \utimes{W'} \frac{U'}{IU'} \ar[d]^{\delta \otimes 1}\\
\left(M[I] \utimes{W} W'\right) \utimes{W'} \frac{U'}{IU'} \ar[d]_{\cong} && \left(M \utimes{U}U'\right)[I] \utimes{W'} \frac{U'}{IU'} \ar[d]^{\epsilon_{M \otimes_U U'}}\\
\left(M[I] \utimes{W} \frac{U}{IU}\right) \utimes{U} U' \ar[rr]_{\epsilon_M \otimes 1} && M \utimes{U} U'.
}\]
Here the second vertical arrow on the left is an isomorphism obtained by contracting tensor products. Let $M' := M \otimes_U U'$; because $M = M_{\Dp}(F)$, it follows from Proposition \ref{DpUSubmod} that also $M' = M'_{\Dp}(F)$. So the maps $\epsilon_M$ and $\epsilon_{M'}$ are isomorphisms by Theorem \ref{MainEqMKE}. Since $\gamma$ and $\delta$ are also isomorphisms, we conclude that the top horizontal map $\alpha \otimes 1$ is an isomorphism. Finally, the functor $-\otimes_{W'} U'/IU'$ is isomorphic to $-\otimes_{V'}U'$ where $V' = \hK{U(\cC_{\cL'}(F))}$, and $-\otimes_{V'}U'$ reflects isomorphisms because $U'$ is a faithfully flat left $V'$-module by Proposition \ref{UVfflat}. So $\alpha$ is an isomorphism as required.
\end{proof}

Now let $U_n := \hK{U(\pi^n \cL)}$ and $W_n := \hK{U(\pi^n \cN)}$, and choose a good chain $H_\bullet$ for $H$ using \cite[Lemma 3.3.6]{EqDCap}, so that $H_n$ is an open normal subgroup of $G$ contained in $G_{\pi^n \cL}$ for all $n \geq 0$. Then also $H_n \leq G_{\pi^n \cN}$ for all $n \geq 0$ by Lemma \ref{GLGN}, so we may form the crossed products 
\[T_n := U_n \rtimes_{H_n} G\qmb{and}S_n := W_n \rtimes_{H_n}G\]
as in Notation \ref{WUST} above; we have omitted the trivialisations $\beta_n : H_n \to U_n^\times$ and $\overline{\beta_n} : H_n \to W_n^\times$ in the notation in the interests of clarity. Then by \cite[Lemma 3.3.4]{EqDCap} we have Fr\'echet-Stein presentations
\[ \w\cD(\bX,G) \cong \invlim T_n \qmb{and} \w\cD(\bY,G) \cong \invlim S_n.\]

\begin{lem}\label{LocalTransBimod} $\cO(\bY) \underset{\cO(\bX)}{\otimes}{} \w\cD(\bX,G)$ is a $\w\cD(\bX,G)$-coadmissible $(\w\cD(\bY,G), \w\cD(\bX,G))$-bimodule.
\end{lem}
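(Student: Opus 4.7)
The plan is to identify the transfer bimodule with the quotient $\w\cD(\bX,G)/I\w\cD(\bX,G)$, where $I := \sI(\bX)$ and $I\w\cD(\bX,G)$ denotes the right $\w\cD(\bX,G)$-submodule generated by $I$. This identification is immediate from $\cO(\bY) = \cO(\bX)/I$. In the setting of Theorem \ref{MainEqMKE} the ideal $I$ is generated as an $\cO(\bX)$-module by the finite set $\{f_1,\ldots,f_r\}$, so $I\w\cD(\bX,G) = \sum_{i=1}^r f_i\w\cD(\bX,G)$ is the image of the right $\w\cD(\bX,G)$-linear map $\w\cD(\bX,G)^r \to \w\cD(\bX,G)$ given by $(d_1,\ldots,d_r)\mapsto\sum_i f_i d_i$.

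Since $\w\cD(\bX,G)$ is a Fr\'echet--Stein algebra by \cite[Theorem 3.4.8]{EqDCap}, with each Banach piece $T_n$ two-sided Noetherian by \cite[Lemma 4.1.9 and Theorem 4.1.4]{EqDCap}, both the source and target of this morphism are coadmissible right $\w\cD(\bX,G)$-modules. Invoking the right-module version of the abelian-subcategory property of coadmissible modules \cite[Corollary 3.4]{ST}---which furnishes that images and cokernels of morphisms between coadmissible modules are themselves coadmissible---I conclude that both $I\w\cD(\bX,G)$ and the quotient $\w\cD(\bX,G)/I\w\cD(\bX,G) \cong \cO(\bY)\underset{\cO(\bX)}{\otimes}{}\w\cD(\bX,G)$ are coadmissible. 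Applying the exact localisation functor $-\otimes_{\w\cD(\bX,G)}T_n$ to the short exact sequence $0 \to I\w\cD(\bX,G) \to \w\cD(\bX,G) \to \w\cD(\bX,G)/I\w\cD(\bX,G) \to 0$ identifies the coherent system underlying the coadmissible quotient with $(M_n)_{n\geq 0} := (T_n/IT_n)_{n\geq 0}$; the required transition isomorphisms $M_{n+1}\otimes_{T_{n+1}}T_n \tocong M_n$ are the tautological ones, because $I \subset T_{n+1}$ maps identically onto $I \subset T_n$ under the Fr\'echet--Stein transition map.

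To promote $\invlim M_n$ to an $(\w\cD(\bY,G),\w\cD(\bX,G))$-bimodule, I will exploit Proposition \ref{TITbimodule}: each $M_n = T_n/IT_n$ carries a canonical left $S_n$-action making it into an $S_n$-$T_n$-bimodule. The construction there is functorial in the compatible trivialisations $(\beta_n,\overline{\beta_n})$, which are compatible across varying $n$ by \cite[Theorem 3.2.12]{EqDCap} together with Lemma \ref{GLGN} and the inclusions $H_{n+1}\leq H_n$, $U_{n+1}\to U_n$, $W_{n+1}\to W_n$. Passing to inverse limits therefore yields the sought left $\w\cD(\bY,G) = \invlim S_n$-module structure on $\invlim M_n \cong \w\cD(\bX,G)/I\w\cD(\bX,G)$, automatically commuting with the right $\w\cD(\bX,G) = \invlim T_n$-action since the bimodule axiom holds at every finite level.

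The main delicate point is the invocation of \cite[Corollary 3.4]{ST} in our possibly non-Noetherian $\cR$-base setting: one must check that the full Fr\'echet--Stein axioms for $\w\cD(\bX,G)$ hold, in particular the two-sided Noetherianity of each $T_n$ and the (right and left) flatness of the transition maps $T_{n+1}\to T_n$. Both hold under the mild hypotheses $[\cL,\cL]\subseteq \pi\cL$ and $\cL\cdot \cA\subseteq \pi\cA$ imposed on the chosen Lie lattice, and have already been established earlier in the paper. A secondary routine point is the verification that the functorialities in Proposition \ref{TITbimodule} really do pass through to the inverse limit to yield the claimed commuting bimodule action; this reduces to the tautological compatibility of the trivialisations $\beta_n$ with the canonical maps $\hK{U(\pi^{n+1}\cL)}\to\hK{U(\pi^n\cL)}$.
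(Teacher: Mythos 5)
Your proof follows essentially the same path as the paper: identify the transfer bimodule with $\w\cD(\bX,G)/I\w\cD(\bX,G)$, deduce right-coadmissibility from finite generation of $I$ via \cite[\S 3]{ST}, recognise the underlying coherent system as $(T_n/IT_n)_n$, apply Proposition~\ref{TITbimodule} at each level to get the $(S_n,T_n)$-bimodule structure, and pass to the inverse limit.

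One step you skip that the paper explicitly flags is the identification of $\pi^n\cN$ with $N_{\pi^n\cL}(\cI)/\cI(\pi^n\cL)$. When you apply Proposition~\ref{TITbimodule} at level $n$ (i.e.\ with $\pi^n\cL$ in place of $\cL$), what it hands you out of the box is a left module structure over $\hK{U\bigl(N_{\pi^n\cL}(\cI)/\cI(\pi^n\cL)\bigr)}\rtimes_{H_n}G$, and you need to know that this algebra is $S_n = \hK{U(\pi^n\cN)}\rtimes_{H_n}G$ in order for the inverse limit of the left module structures to be a module structure over $\w\cD(\bY,G)=\invlim S_n$ rather than over some other Fr\'echet algebra. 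The paper dispatches this with a reference to the proof of \cite[Proposition 5.3]{DCapTwo}; your argument assumes it silently. Apart from this omission --- and your slightly misplaced emphasis on the Fr\'echet--Stein axioms, which are already established and not where the delicacy lies --- the proposal is correct.
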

\begin{proof} Note that $\cO(\bY) \underset{\cO(\bX)}{\otimes}{} \w\cD(\bX,G) = \w\cD(\bX,G) / I \w\cD(\bX,G)$ is finitely presented as a right $\w\cD(\bX,G)$-module, because the ideal $I \subset \cO(\bX)$ is finitely generated. Hence it is coadmissible as a right $\w\cD(\bX,G)$-module by \cite[Corollary 3.4v]{ST}. We saw in the proof of \cite[Proposition 5.3]{DCapTwo} that $\pi^n \cN$ is naturally isomorphic to $N_{\pi^n \cL}(\cI) / (\cI (\pi^n \cL))$. Hence $T_n / I T_n$ is a $(S_n,T_n)$-bimodule by Proposition \ref{TITbimodule}. Since $\frac{\w\cD(\bX,G)}{I \hsp \w\cD(\bX,G)} = \invlim T_n / I T_n$, we conclude as in the proof of \cite[Proposition 5.3]{DCapTwo} that it is a $\w\cD(\bX,G)$-coadmissible $(\w\cD(\bY,G), \w\cD(\bX,G))$-bimodule in the sense of \cite[Definition 7.3]{DCapOne}.
\end{proof}
Lemma \ref{LocalTransBimod} together with \cite[Lemma 7.3]{DCapOne} allows us to write down the equivariant pushforward functor
\[ \iota_+ : {}^r\cC_{\w\cD(\bY,G)} \longrightarrow {}^r\cC_{\w\cD(\bX,G)}, \quad\quad N \mapsto N \WO{\w\cD(\bY,G)} \left(\cO(\bY) \underset{\cO(\bX)}{\otimes}{} \w\cD(\bX,G)\right).\] 
For future use, we record the following statement.
\begin{prop}\label{BimodVariesWithG} Let $N$ be an open normal subgroup of $G$. Then there is an isomorphism of $\w\cD(\bY,G) - \w\cD(\bX,N)$-bimodules
\begin{equation}\label{HNbimod} \w\cD(\bY, G) \quad\wtimes{\w\cD(\bY, N)}\quad \frac{ \w\cD(\bX,N)}{\cI(\bX) \w\cD(\bX,N)} \quad\congs\quad \frac{ \w\cD(\bX,G)}{\cI(\bX) \w\cD(\bX,G).}\end{equation}
\end{prop}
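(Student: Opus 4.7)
The strategy is to build the isomorphism directly, exploiting the fact that $(\bX,G)$ is small so $G$ is compact and $N$ has finite index in $G$ (since $N$ is open). The key input is the crossed product decomposition \cite[Proposition 3.4.10]{EqDCap}, used in the same spirit as in the isomorphism (\ref{GGU}).

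First, construct the map. The inclusion $\w\cD(\bX, N) \hookrightarrow \w\cD(\bX, G)$ sends $\cI(\bX) \w\cD(\bX,N)$ into $\cI(\bX) \w\cD(\bX,G)$, so it descends to a continuous map
\[\psi \hsp : \hsp \frac{\w\cD(\bX,N)}{\cI(\bX)\w\cD(\bX,N)} \longrightarrow \frac{\w\cD(\bX,G)}{\cI(\bX)\w\cD(\bX,G)}.\]
Using the compatible $\w\cD(\bY,N) \hookrightarrow \w\cD(\bY,G)$ and the bimodule structures from Lemma \ref{LocalTransBimod}, one checks that $\psi$ is $\w\cD(\bY,N) - \w\cD(\bX,N)$-bilinear. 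The universal property of $\w\otimes$ from \cite[Lemma 7.3]{DCapOne} then yields the required $\w\cD(\bY,G) - \w\cD(\bX,N)$-bilinear map $\Phi$ on the LHS of (\ref{HNbimod}).

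To show $\Phi$ is bijective, pick coset representatives $g_1, \ldots, g_k$ for $N$ in $G$. By \cite[Proposition 3.4.10]{EqDCap} applied to both $\bX$ and $\bY$, the natural maps
\[K[G] \underset{K[N]}{\otimes}{} \w\cD(\bX,N) \congs \w\cD(\bX,G) \qmb{and} K[G] \underset{K[N]}{\otimes}{} \w\cD(\bY,N) \congs \w\cD(\bY,G)\]
are bimodule isomorphisms; in particular both $\w\cD(\bX,G)$ and $\w\cD(\bY,G)$ are free right $\w\cD(-,N)$-modules on the basis $\gamma(g_1),\ldots,\gamma(g_k)$. Because $\cI(\bX)$ is $G$-stable, the crossed product relations give $\gamma(g)\cI(\bX) = \cI(\bX)\gamma(g)$ for every $g \in G$, so this decomposition descends to
\[\frac{\w\cD(\bX,G)}{\cI(\bX)\w\cD(\bX,G)} \cong \bigoplus_{i=1}^k \gamma(g_i) \cdot \frac{\w\cD(\bX,N)}{\cI(\bX)\w\cD(\bX,N)}.\]
On the other hand, since completed tensor products commute with finite direct sums in the first argument, the analogous decomposition $\w\cD(\bY,G) = \bigoplus_{i=1}^k \gamma(g_i) \w\cD(\bY,N)$ yields
\[\w\cD(\bY,G) \wtimes{\w\cD(\bY,N)} \frac{\w\cD(\bX,N)}{\cI(\bX)\w\cD(\bX,N)} \cong \bigoplus_{i=1}^k \gamma(g_i) \otimes \frac{\w\cD(\bX,N)}{\cI(\bX)\w\cD(\bX,N)}.\]
Tracing through the construction of $\Phi$ shows that it matches the $i$-th summand on the left to the $i$-th summand on the right via the identity on $\w\cD(\bX,N)/\cI(\bX)\w\cD(\bX,N)$; hence $\Phi$ is an isomorphism.

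The only real obstacle is the bookkeeping: making sure the bimodule structures from Lemma \ref{LocalTransBimod} on the two sides agree under $\Phi$, and verifying the crossed-product identity $\gamma(g)\cI(\bX) = \cI(\bX)\gamma(g)$ carefully. Both are routine given the $G$-stability of the ideal $\cI$ and the defining relations of the skew-group rings $\cD(\bX) \rtimes G$ and $\cD(\bY) \rtimes G$.
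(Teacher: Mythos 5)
Your proof is correct, and it takes a cleaner route than the paper's, so it is worth spelling out the comparison.

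The paper's proof descends to the Banach level: it fixes $H \trianglelefteq G$ with $H \leq N \cap G_{\cL}$, uses the explicit bimodule isomorphism $\xi$ of Proposition~\ref{SUbimod} twice (once for $H \leq N$, once for $H \leq G$), shows that the resulting square commutes so that the induced map $\theta_{H,G}$ is an isomorphism, and then passes to the inverse limit over the good chain $(H_n)$. Your argument stays entirely at the Fr\'echet level: you invoke the free decomposition $\w\cD(\bX,G) = \bigoplus_i \gamma(g_i)\,\w\cD(\bX,N)$ from (\ref{GGU}) directly, use $G$-stability of $\cI(\bX)$ to commute $\gamma(g_i)$ past $\cI(\bX)$ and hence decompose the quotient, and then match direct summands. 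Both proofs ultimately rest on the same two inputs — finite freeness over the open normal subgroup, and $G$-stability of the ideal — so the substance is the same; what differs is the organisation.

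One point you compress that deserves a sentence: the claim that $\psi$ is $\w\cD(\bY,N)$-linear is not entirely formal. The left $\w\cD(\bY,N)$-module structure on $\w\cD(\bX,N)/\cI(\bX)\w\cD(\bX,N)$ and the left $\w\cD(\bY,G)$-module structure on $\w\cD(\bX,G)/\cI(\bX)\w\cD(\bX,G)$ are each defined separately, via Proposition~\ref{TITbimodule}/\ref{SUbimod} at the Banach level and then passing to limits, and one has to check that the former is the restriction of the latter along the inclusion $\w\cD(\bX,N)/\cI(\bX)\w\cD(\bX,N) \hookrightarrow \w\cD(\bX,G)/\cI(\bX)\w\cD(\bX,G)$. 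This is exactly what the paper does by observing that $(U \rtimes_H N)/I(U\rtimes_H N)$ is a sub-$(W\rtimes_H N,\,U\rtimes_H N)$-bimodule of $(U\rtimes_H G)/I(U\rtimes_H G)$ — the paper's Banach-level set-up makes this transparent, whereas in your Fr\'echet-level presentation it is a step that needs to be flagged. Once that is noted, the rest of your verification (that $\Phi(\gamma(g_i)\otimes\overline{u}) = \overline{\gamma(g_i)u}$, so $\Phi$ is summand-by-summand the identity) is right: the left action of $\gamma(g_i) \in \w\cD(\bY,G)$ on the quotient of $\w\cD(\bX,G)$ is precisely left multiplication by $\gamma(g_i) \in \w\cD(\bX,G)$, by construction of the bimodule structure in Proposition~\ref{TITbimodule}.
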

\begin{proof} Let $H$ be an open normal subgroup of $G$ contained in $N \cap G_{\cL}$, and write $W := \hK{U(\cN)}$, $U := \hK{U(\cL)}$ and $I := \cI(\bX)$ as above. 

Note that  $(U \rtimes_HN) / I(U \rtimes_HN)$ is naturally a sub $(W \rtimes_HN, U \rtimes_HN)$-bimodule of $(U \rtimes_H G)/I(U \rtimes_HG)$. This gives us a natural map $\theta_{H,G}$ of $(W \rtimes_HG, U \rtimes_HN)$-bimodules which appears as the top horizontal map of the following diagram:
\[ \xymatrix{ (W \urtimes{H} G)  \utimes{W \urtimes{H} N} \frac{ \left(U \urtimes{H} N\right) }{ I \left( U \urtimes{H} N\right) } \ar[rr]^{\theta_{H,G}} & & \frac{ \left(U \urtimes{H} G\right)}{I \left(U \urtimes{H} G\right)}  \\
(W \urtimes{H} G)  \utimes{W \urtimes{H} N} \left( (W \urtimes{H}N) \utimes{W} \frac{U}{IU} \right) \ar[rr]_(0.6){\cong} \ar[u]_{\cong}^{1 \otimes \xi_{H,N}}&& (W \urtimes{H} G) \utimes{W} \frac{U}{IU}.\ar[u]^{\cong}_{\xi_{H,G}}}\]
Here the vertical isomorphism on the left comes from Proposition \ref{SUbimod} applied to the groups $H \leq N$, the vertical isomorphism on the right comes from Proposition \ref{SUbimod} applied to the groups $H \leq G$ and the horizontal isomorphism on the bottom comes from contracting tensor products. It is straightforward to verify that the diagram is commutative, and therefore $\theta_{H,G}$ is an isomorphism.

Next, because $N$ is an open subgroup of $G$, we can assume that the open normal subgroups $H_n$ of $G$ that were chosen above just before Lemma \ref{LocalTransBimod} are all contained in $N$, by passing to a subsequence.  Then the above produces isomorphisms of $(W_n \rtimes_{H_n}G, U_n \rtimes_{H_n}N)$-bimodules
\[ \theta_{H_n, G} \quad:\quad (W_n \urtimes{H_n} G) \utimes{W_n \urtimes{H_n} N} \frac{ (U_n \urtimes{H_n} N)}{I  (U_n \urtimes{H_n} N)} \quad\congs\quad  \frac{ (U_n \urtimes{H_n} G)}{I  (U_n \urtimes{H_n} G)}\]
that are compatible with variation in $n$.  Passing to the limit as $n\to \infty$ we obtain the required isomorphism (\ref{HNbimod}). \end{proof}
Next, we extend \cite[Definition 5.5]{DCapTwo} to an appropriate more general setting.

\begin{defn} Let $A$ be a commutative Fr\'echet algebra, let $M$ be a Fr\'echet $A$-module and let $m \in M$. We say that $s \in A$ \emph{acts topologically nilpotently on $m$} if $m s^k \to 0$ as $k \to \infty$. If $S \subseteq A$, we let $M_\infty(S)$ denote the subset of $M$ consisting of all vectors $m \in M$ such that each $s \in S$ acts topologically nilpotently on $m$.
\end{defn}

With this definition, \cite[Lemma 5.5, Corollary 5.5 and Proposition 5.6]{DCapTwo} are in fact valid for any Fr\'echet $\cO(\bX)$-module.

\begin{prop}\label{MinftyCoad} Let $M$ be a coadmissible right $\w\cD(\bX,G)$-module. Then $M_\infty(I)$ is also a coadmissible right $\w\cD(\bX,G)$-module.
\end{prop}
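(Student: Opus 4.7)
The plan is to realise $M_\infty(I)$ as $\iota_+ N$ for a suitable coadmissible right $\w\cD(\bY,G)$-module $N$, built level-by-level out of Theorem \ref{MainEqMKE}. Concretely, I would first pick a $G$-stable free $\cA$-Lie lattice $\cL$ on which the hypotheses of Theorem \ref{MainEqMKE} hold: the nilpotency conditions $[\cL,\cL]\subseteq \pi \cL$ and $\cL \cdot \cA \subseteq \pi \cA$, together with the existence of derivations $\partial_i \in \cL$ dual to generators $f_j$ of $I = \cI(\bX)$. Fixing a good chain $(H_\bullet)$ for $\cL$ then gives Fr\'echet-Stein presentations $\w\cD(\bX,G) = \invlim T_n$ and $\w\cD(\bY,G) = \invlim S_n$ with $T_n = \hK{U(\pi^n \cL)} \rtimes_{H_n} G$ and $S_n = \hK{U(\pi^n \cN)} \rtimes_{H_n} G$, and writes $M = \invlim M_n$ with $M_n := M \otimes_{\w\cD(\bX,G)} T_n \in {}^r\Coh(T_n)$.

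The second step is to identify $M_\infty(I)$ with $\invlim (M_n)_{\Dp}(F)$ as Fr\'echet $\cO(\bX)$-modules, following the non-equivariant template of the proofs of \cite[Corollary 5.5 and Proposition 5.6]{DCapTwo}, whose validity in our Fr\'echet setting is asserted in the excerpt. For each $n$, applying Theorem \ref{MainEqMKE} with data $(\pi^n\cL, \pi^n\partial_\bullet, \pi^{-n}F)$ — rescaled so that the dual-derivation condition still holds while the generated ideal is unchanged — yields an $S_n$-$T_n$-bimodule isomorphism
\[ (M_n)_{\Dp}(F) \;\cong\; (M_n)[I] \utimes{S_n} (T_n/IT_n), \]
in which the right $S_n$-module $(M_n)[I]$ is finitely generated. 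The $S_n$-action on $(M_n)[I]$ is well-defined because $I$ is $G$-stable, which forces the $I$-victims subspace of the right $T_n$-module $M_n$ to be stable under the image of $G$ in $T_n^\times$.

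The central technical step — and where I expect the main obstacle to lie — will be to verify coherence of the sequence $((M_n)[I])$: namely, that the natural transition map $(M_{n+1})[I] \utimes{S_{n+1}} S_n \to (M_n)[I]$ is an $S_n$-linear isomorphism. My plan is to adapt the diagram-chase proof of Proposition \ref{LocBanIvict}, with the r\^ole of the affinoid-subdomain base change $(T,S)\to(T',S')$ played here by the Fr\'echet-Stein transition $(T_{n+1},S_{n+1}) \to (T_n, S_n)$. The crucial input, that $\hK{U(\pi^n \cL)}$ is faithfully flat on both sides over $\hK{U(\pi^n \cC_{\cL}(F))}$, is supplied by Proposition \ref{UVfflat}, and the remainder is essentially a transcription, using the observation from step two that $(M_n)[I]$ agrees with the $I$-victims of the submodule $(M_n)_{\Dp}(F) \subseteq M_n$ to which Proposition \ref{LocBanIvict} literally applies.

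Once coherence is in hand, $N := \invlim (M_n)[I]$ is a coadmissible right $\w\cD(\bY,G)$-module, and passing the bimodule isomorphisms of step two to the inverse limit delivers a $\w\cD(\bX,G)$-linear identification $\iota_+ N \cong M_\infty(I)$. Since $\iota_+$ preserves coadmissibility by Lemma \ref{LocalTransBimod} and the definition of the equivariant pushforward given just before Proposition \ref{MinftyCoad}, this will complete the proof.
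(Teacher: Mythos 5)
Your approach takes a very different and substantially harder route than the paper's, and it has a gap you partially recognise but underestimate. The paper's proof is short and elementary: it exhibits $M_\infty(I)$ as a \emph{closed} $\w\cD(\bX,G)$-submodule of the coadmissible module $M$, at which point \cite[Lemma 3.6]{ST} immediately gives coadmissibility. Closedness is checked level by level: $M_n$ is finitely generated over the Noetherian ring $T_n$, so the relevant submodule of $M_n$ is finitely generated over $T_n$, hence closed in that Banach module, and the inverse limit of closed subspaces is a closed subspace of $M$. No pushforward, no induction functor, and no appeal to the Kashiwara machinery is involved.

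Your plan, to realise $M_\infty(I)$ as $\iota_+ N$ for a coadmissible $N$ over $\w\cD(\bY,G)$, is essentially an attempt to prove Propositions \ref{Ivictims} and \ref{LocalKash} along the way -- and in the paper's logical development those results \emph{use} the present Proposition \ref{MinftyCoad} to reduce to the case $M = M_\infty(I)$, in which $M_n = (M_n)_{\Dp}(F/\pi^n)$ and Theorem \ref{MainEqMKE} applies cleanly. Your coherence step does not go through as stated. First, transcribing Proposition \ref{LocBanIvict} is not available: that result rests on the bimodule isomorphism $T' \cong T \otimes_U U'$ specific to affinoid base change (via \cite[Proposition 4.3.11]{EqDCap}), whereas for the Fr\'echet--Stein transitions the analogous map $T_{n+1} \otimes_{U_{n+1}} U_n \to T_n$ is a proper surjection rather than an isomorphism, since the source is free of rank $|G/H_{n+1}|$ over $U_n$ while the target has smaller rank $|G/H_n|$. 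Second, even if this could be patched, your strategy of replacing $M_n$ by its divided-power torsion submodule (which, incidentally, should be $(M_n)_{\Dp}(\pi^{-n}F)$ rather than $(M_n)_{\Dp}(F)$, to match the level-$n$ Lie lattice $\pi^n\cL$) would require exactly the base-change compatibility $(M_{n+1})_{\Dp}(\pi^{-(n+1)}F) \otimes_{T_{n+1}} T_n \cong (M_n)_{\Dp}(\pi^{-n}F)$, and this coherence statement is equivalent to the coadmissibility of $M_\infty(I)$ -- precisely what you set out to prove.
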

\begin{proof} This is a direct generalisation of \cite[Corollary 5.6]{DCapTwo}. The key point is that if $M_n = M \otimes_{\w\cD(\bX,G)} T_n$, then $M_\infty(I) = \invlim M_n[I]$ by \cite[Proposition 5.6]{DCapTwo}, and each $M_n[I] \cong \Hom_{T_n}(T_n / I T_n, M_n)$ is a $T_n$-submodule of $M_n$ by Proposition \ref{TITbimodule}. Now $M_n$ is a finitely generated $T_n$-module because $M$ is coadmissible, and $T_n$ is Noetherian, being a crossed product of the Noetherian ring $U_n$ --- see \cite[Corollary 4.1.10]{EqDCap} --- with the finite group $G / H_n$. So $M_n[I]$ is a closed submodule of $M_n$ by \cite[Lemma 1.2.3]{FvdPut} for each $n \geq 0$, and therefore $M_\infty(I)$ is a closed submodule of $M$. It is hence coadmissible by \cite[Lemma 3.6]{ST}.
\end{proof}

Our next result, a generalisation of \cite[Theorem 5.7]{DCapTwo}, allows us to write down a right adjoint $\iota^\natural$ to the pushforward functor $\iota_+$.

\begin{prop}\label{Ivictims} Let $M$ be a coadmissible right $\w\cD(\bX,G)$-module. Then $M[I]$ is a coadmissible right $\w\cD(\bY,G)$-module.
\end{prop}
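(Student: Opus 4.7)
The plan is to realise $M[I]$ as an inverse limit of a coherent family of finitely generated right $S_n$-modules over the Fréchet-Stein presentation $\w\cD(\bY,G) \cong \lim\limits_{\stackrel{\longleftarrow}{n}} S_n$, and thereby deduce coadmissibility.

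First, coadmissibility of $M$ gives $M \cong \lim\limits_{\stackrel{\longleftarrow}{n}} M_n$ with $M_n := M \wtimes{\w\cD(\bX,G)} T_n$; the identification $M[I] \cong \lim\limits_{\stackrel{\longleftarrow}{n}} M_n[I]$ is immediate, since $m \in M$ is killed by $I$ if and only if its image in each $M_n$ is so killed. Each $M_n[I]$ is a $T_n$-submodule of the finitely generated $T_n$-module $M_n$, and $T_n$ is Noetherian (being the crossed product of the Noetherian algebra $U_n$ from \cite[Corollary 4.1.10]{EqDCap} with the finite group $G/H_n$), so $M_n[I]$ is finitely generated over $T_n$. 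Because it is annihilated by $I$, the hypothesis $M_n[I] = (M_n[I])_{\Dp}(F)$ of Theorem \ref{MainEqMKE} holds trivially, and that equivalence equips $M_n[I]$ with the structure of a finitely generated right $S_n$-module.

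The main step is to verify coherence, namely that the canonical $S_n$-linear map
\[\psi_n \hsp : \hsp M_{n+1}[I] \wtimes{S_{n+1}} S_n \longrightarrow M_n[I]\]
is an isomorphism for every $n \geq 0$. The coadmissibility of $M_\infty(I)$ established in Proposition \ref{MinftyCoad} already supplies the $T$-level coherence $M_{n+1}[I] \wtimes{T_{n+1}} T_n \cong M_n[I]$. I would then apply $- \wtimes{S_n} (T_n/IT_n)$ to $\psi_n$ and use Proposition \ref{SUbimod} at both levels $n$ and $n+1$, together with the natural identification $T_n/IT_n \cong (T_{n+1}/IT_{n+1}) \wtimes{T_{n+1}} T_n$ of $(S_{n+1}, T_n)$-bimodules, to see that $\psi_n \otimes 1$ reduces to the $T$-level coherence isomorphism just noted. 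Since the functor $- \wtimes{S_n} T_n/IT_n$ on finitely generated right $S_n$-modules is fully faithful by Theorem \ref{MainEqMKE}, this forces $\psi_n$ itself to be an isomorphism.

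The hard part is precisely this coherence compatibility: Proposition \ref{LocBanIvict} supplies the analogous invariance only under change of base to an affinoid subdomain, whereas here we need it under the change of Lie lattice $\pi^{n+1}\cL \subset \pi^n \cL$ governing successive Fréchet-Stein levels. Fortunately, the bimodule isomorphism $\xi$ of Proposition \ref{SUbimod} is manifestly functorial in the pair $(\cL, H)$, in close parallel to the functoriality in $G$ made explicit in Proposition \ref{BimodVariesWithG}, so spelling out this functoriality dispatches the obstacle. Once coherence is in hand, the isomorphism $M[I] \cong \lim\limits_{\stackrel{\longleftarrow}{n}} M_n[I]$ exhibits $M[I]$ as a coadmissible right $\w\cD(\bY,G)$-module.
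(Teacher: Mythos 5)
Your overall strategy matches the paper's — present $M[I]$ as $\invlim M_n[I]$ and verify coherence over $\w\cD(\bY,G) = \invlim S_n$ using Theorem \ref{MainEqMKE} and Proposition \ref{SUbimod} — and no change-of-Lie-lattice functoriality is actually needed for the coherence diagram (the paper closes it with tensor contraction, the counits $\epsilon_n$, and the coherence isomorphism $M_{n+1}\otimes_{T_{n+1}}T_n \cong M_n$). But there is a genuine gap where you invoke Theorem \ref{MainEqMKE}. The hypothesis of that theorem concerns the object $M_n$, which must satisfy $M_n = (M_n)_{\Dp}(F/\pi^n)$; your substitute observation that $M_n[I]$ is trivially killed by $F$ verifies the wrong condition on the wrong object. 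Moreover $M_n[I]$ is not a $T_n$-submodule of $M_n$ at all — it is $\Hom_{T_n}(T_n/IT_n, M_n)$, naturally an $S_n$-module, and concretely $m \in M_n[I]$ does not give $m\partial_1 \in M_n[I]$ since $(m\partial_1)\cdot f_1 = m$ — so the Noetherianity argument you use to get finite generation over $T_n$ does not get started, and the displayed ``$T$-level coherence'' $M_{n+1}[I]\otimes_{T_{n+1}}T_n \cong M_n[I]$ does not even parse (and is not what Proposition \ref{MinftyCoad} supplies).

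What is missing is the paper's opening reduction: since $M[I] = M_\infty(I)[I]$ and $M_\infty(I)$ is again coadmissible by Proposition \ref{MinftyCoad}, one may assume $M = M_\infty(I)$. Under that assumption Proposition \ref{DpUSubmod} (as in the proof of \cite[Theorem 5.7]{DCapTwo}) yields $M_n = (M_n)_{\Dp}(F/\pi^n)$ at every level $n$, and only then does Theorem \ref{MainEqMKE} become applicable, giving both the finite generation of $M_n[I]$ over $S_n$ and, crucially for the coherence check, the fact that the counit $\epsilon_n : M_n[I]\otimes_{S_n}T_n/IT_n \to M_n$ is an isomorphism. Without this reduction the counits are not known to be isomorphisms and the coherence square cannot be closed.
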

\begin{proof}  Because $M[I] = M_\infty(I)[I]$ and $M_\infty(I) \in \cC_{\w\cD(\bX,G)}$ by Proposition \ref{MinftyCoad}, we may assume that $M = M_\infty(I)$. Let $M_n = M \otimes_{\w\cD(\bX,G)} T_n$ for each $n \geq 0$. As in the proof of \cite[Theorem 5.7]{DCapTwo}, using Proposition \ref{DpUSubmod} we see that $M = M_\infty(I)$ implies that $M_n = (M_n)_{\Dp}(F / \pi^n)$. Therefore by Theorem \ref{MainEqMKE} applied to $F/\pi^n$ and $\pi^n \cL$, the right $S_n$-module $M_n[I]$ is finitely generated, and the counit morphism
\[ \epsilon_n : M_n[I] \hsp \underset{S_n}{\otimes}\hsp \frac{T_n}{IT_n} \longrightarrow M_n\]
is an isomorphism for all $n \geq 0$. Next, the $T_{n+1}$-linear map $M_{n+1} \to M_n$ induces an $S_{n+1}$-linear map $M_{n+1}[I] \to M_n[I]$ and an $S_n$-linear map 
\[ \varphi_n : M_{n+1}[I] \underset{S_{n+1}}{\otimes}{}S_n \longrightarrow M_n[I]\]
which features in the following commutative diagram:
\[ \xymatrix{ (M_{n+1}[I] \underset{S_{n+1}}{\otimes}S_n) \underset{S_n}{\otimes} \frac{T_n}{I T_n} \ar^{\cong}[r]\ar_{\varphi_n \otimes 1}[dd]  &   (M_{n+1}[I] \underset{S_{n+1}}{\otimes} \frac{T_{n+1}}{I T_{n+1}}) \underset{T_{n+1}}{\otimes} T_n \ar^{\epsilon_{n+1} \otimes 1}[d]  \\    &  M_{n+1} \underset{T_{n+1}}{\otimes}T_n \ar[d]^{\alpha_n} \\   M_n[I] \underset{S_n}{\otimes} \frac{T_n}{I T_n} \ar[r]_{\epsilon_n} & M_n.}\]
The map $\alpha_n$ is an isomorphism because $M$ is a coadmissible $\w\cD(\bX,G)$-module, so $\varphi_n \otimes 1$ is also an isomorphism since $\epsilon_n$ and $\epsilon_{n+1}$ are isomorphisms. The functor $- \utimes{S_n} \frac{T_n}{I T_n}$ is an equivalence on ${}^r \Coh(S_n)$ by Theorem \ref{MainEqMKE}, so $\varphi_n$ is an isomorphism. Therefore $M[I] = \invlim M_n[I]$ is a coadmissible right $\w\cD(\bY,G) = \invlim S_n$-module. 
\end{proof}

We can now state and prove the second main result of $\S \ref{EqMKE}$.
\begin{thm}\label{LocalKash} Let $\bX$ be a smooth $K$-affinoid variety and let $\bY = \Sp \cO(\bX)/I$ be a smooth, Zariski closed subvariety of $\bX$. Assume that: 
\be\item$\cT(\bX)$ admits a free $\cA$-Lie lattice $\cL = \cA \partial_1 \oplus \cdots \oplus \cA \partial_d$ for some affine formal model $\cA \subset \cO(\bX)$, satisfying $[\cL,\cL] \subseteq \pi \cL$ and $\cL \cdot \cA \subseteq \pi \cA$,
\item the ideal $I$ admits a generating set $F := \{f_1,\ldots, f_r\}$ such that $\partial_i(f_j) = \delta_{ij}$ whenever $1 \leq i \leq d$ and $1 \leq j \leq r$,
\item $G$ is a compact $p$-adic Lie group which acts continuously on $\bX$ and which preserves $\bY \subset \bX$, $\cA \subset \cO(\bX)$ and $\cL \subset \cT(\bX)$.
\ee
Then the equivariant pushforward functor 
\[ \begin{array}{lcccl} \iota_+ &:& {}^r\cC_{\w\cD(\bY,G)} &\longrightarrow& \left\{M \in {}^r\cC_{\w\cD(\bX,G)} : M = M_\infty(I)\right\}\\
&& N &\mapsto &N \WO{\w\cD(\bY,G)} \left(\cO(\bY) \underset{\cO(\bX)}{\otimes}{} \w\cD(\bX,G)\right)\end{array}\] 
is an equivalence, with inverse $\iota^\natural : M \mapsto M[I]$.
\end{thm}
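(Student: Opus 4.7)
The plan is to lift the Banach-level Kashiwara equivalence supplied by Theorem \ref{MainEqMKE} through the Fréchet-Stein presentations $\w\cD(\bX,G) = \invlim T_n$ and $\w\cD(\bY,G) = \invlim S_n$ built just before Lemma \ref{LocalTransBimod}. Throughout, for $N \in {}^r\cC_{\w\cD(\bY,G)}$ I write $N_n := N \utimes{\w\cD(\bY,G)} S_n$ and for $M \in {}^r\cC_{\w\cD(\bX,G)}$ I write $M_n := M \utimes{\w\cD(\bX,G)} T_n$. Using Lemma \ref{LocalTransBimod} and standard associativity for $\w\otimes$ from \cite[\S 7]{DCapOne}, the level-$n$ localisation of $\iota_+ N$ is naturally $(\iota_+ N)_n \cong N_n \utimes{S_n} (T_n / IT_n)$, since $\cO(\bY)\underset{\cO(\bX)}{\otimes}{}T_n \cong T_n/IT_n$.

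First I would verify that the essential image of $\iota_+$ lies in the prescribed full subcategory. By the displayed identification and Theorem \ref{MainEqMKE} applied at each level, $(\iota_+ N)_n$ is a finitely generated right $T_n$-module satisfying $(\iota_+ N)_n = ((\iota_+ N)_n)_{\Dp}(F/\pi^n)$. Invoking \cite[Proposition 5.6]{DCapTwo} on the inverse limit then yields $\iota_+ N = (\iota_+ N)_\infty(I)$. Meanwhile, Proposition \ref{Ivictims} guarantees that $\iota^\natural M = M[I]$ is a coadmissible $\w\cD(\bY,G)$-module, so both functors are well-defined between the claimed categories.

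Next I would construct and verify the unit and counit. For the counit $\epsilon_M : \iota_+ \iota^\natural M \to M$, when $M = M_\infty(I)$ the argument inside the proof of Proposition \ref{Ivictims} shows that $M_n = (M_n)_{\Dp}(F/\pi^n)$; hence the counit at level $n$,
\[ (\epsilon_M)_n \; : \; M_n[I] \utimes{S_n} (T_n / IT_n) \longrightarrow M_n, \]
is precisely the counit of the Banach-level equivalence of Theorem \ref{MainEqMKE}, and is therefore an isomorphism. These isomorphisms are compatible with the connecting maps between levels (this is essentially the content of Proposition \ref{LocBanIvict} with $\bX' = \bX$ and the Lie lattice replaced by its $\pi$-power rescalings), so taking the inverse limit over $n$ produces $\epsilon_M$ as an isomorphism of coadmissible $\w\cD(\bX,G)$-modules. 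The unit $\eta_N : N \to \iota^\natural \iota_+ N$ is treated analogously: at each level, Theorem \ref{MainEqMKE} identifies $(N_n \utimes{S_n} T_n/IT_n)[I]$ with $N_n$ via the Banach-level unit $(\eta_N)_n$, and these maps again respect the connecting morphisms $S_{n+1} \to S_n$, so passing to the limit gives the desired isomorphism.

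The main obstacle will be the inter-level compatibility needed to pass from the Banach-level (co)unit isomorphisms of Theorem \ref{MainEqMKE} to Fréchet-level isomorphisms. For $\epsilon_M$ this is handled by Proposition \ref{LocBanIvict}, which controls how the functor $-[I]$ interacts with shrinking the Lie lattice; for $\eta_N$ one uses naturality of $\xi$ from Proposition \ref{SUbimod} in the crossed-product parameters together with the right-exactness of the relevant tensor products. Once the level-wise isomorphisms are shown to form compatible inverse systems, both $\eta_N$ and $\epsilon_M$ are obtained as inverse limits, and the coadmissibility already established for $\iota_+ N$ and $\iota^\natural M$ then upgrades these bijections to isomorphisms in the coadmissible categories, completing the equivalence.
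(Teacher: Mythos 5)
Your proposal is sound in outline but takes a genuinely different route from the paper's proof. The paper first establishes an adjunction $\iota_+ \dashv \iota^\natural$ (via the universal property of $\w\otimes$, following the Fr\'echet-level analogue of \cite[Theorem 5.9(b)]{DCapTwo}), and then invokes the categorical lemma \cite[Proposition 4.10]{DCapTwo}: an adjunction whose counit is an isomorphism and whose left adjoint reflects isomorphisms is automatically an equivalence. Both of those properties drop out immediately from Theorem \ref{MainEqMKE} at each Banach level, and the triangle identity then forces the unit to be an isomorphism without any separate inter-level bookkeeping for it. You instead bypass the adjunction and check directly, level by level, that both unit and counit are isomorphisms before passing to inverse limits. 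The two routes are logically equivalent; the paper's adjunction trick is more economical precisely because it avoids the compatibility check for the unit which you correctly identify as the main obstacle of your approach, while yours is more explicit about where Theorem \ref{MainEqMKE} does its work at each level $n$.

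One small misattribution worth flagging: the inter-level compatibility of the level-$n$ counit isomorphisms (passing from $\pi^{n+1}\cL$ to $\pi^n\cL$) is not handled by Proposition \ref{LocBanIvict} --- that result concerns restriction to an affinoid subdomain $\bX' \subset \bX$ with Lie lattice $\cL' = \cL \otimes_{\cA} \cA'$, not rescaling $\cL$ to $\pi^n\cL$. The compatibility you actually need is exactly the commutative square built from the maps $\varphi_n$, $\epsilon_n$, $\epsilon_{n+1}$ and $\alpha_n$ inside the proof of Proposition \ref{Ivictims}, together with the analogous diagram for the $T_n/IT_n$-bimodules (which ultimately traces back to the $\theta_n$-computation in Lemma \ref{LocalTransBimod}). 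With that correction your plan goes through.
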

\begin{proof} The functor $\iota^\natural$ preserves coadmissibility by Proposition \ref{Ivictims}, and using the proof of \cite[Theorem 5.9(c)]{DCapTwo} together with Theorem \ref{MainEqMKE}, we see that $\iota_+ N = (\iota_+ N)_\infty(I)$ for any $N \in {}^r \cC_{\w\cD(\bY,G)}$. Now we can use the proof of \cite[Theorem 5.9(b)]{DCapTwo} and the universal property of $\w\otimes$ to see that $\iota^\natural$ is right adjoint to $\iota_+$.  To show that this adjunction is in fact an equivalence, by \cite[Proposition 4.10]{DCapTwo} it will suffice to show that the counit is an isomorphism and that $\iota_+$ reflects isomorphisms. But both of these statements follow from Theorem \ref{MainEqMKE}. \end{proof}

For future use, we record the equivariant analogue of \cite[Theorem 6.9]{DCapTwo}.

\begin{prop}\label{MIX} Let $\bX, \bY, \cA, \cL, F$ and $G$ be as in Theorem \ref{LocalKash}. Let $\bX'$ be a $G$-stable affinoid subdomain of $\bX$ and let $\bY' := \bY \cap \bX'$. Then the natural map
\[ M[\sI(\bX)] \wtimes{\w\cD(\bY,G)} \w\cD(\bY',G) \quad\longrightarrow\quad \left(M \wtimes{\w\cD(\bX,G)} \w\cD(\bX',G)\right)[\sI(\bX')]\]
is an isomorphism for every $M \in {}^r \cC_{\w\cD(\bX,G)}$ such that $M = M_\infty(\sI(\bX))$. 
\end{prop}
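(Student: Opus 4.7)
The plan is to reduce to the Banach algebra level and apply Proposition \ref{LocBanIvict} uniformly in $n$, then pass to the inverse limit. Write $I := \sI(\bX)$ and $I' := \sI(\bX')$, and note that $I'$ is generated by the images of $F$ in $\cO(\bX')$, so the hypotheses of Theorem \ref{MainEqMKE} and Proposition \ref{LocBanIvict} are in force for both $\bX$ and $\bX'$ once we select compatible formal data.

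First I would choose the formal data carefully. By \cite[Lemma 7.6(b)]{DCapOne} and \cite[Lemma 4.3.5]{EqDCap}, after rescaling $\cL$ if necessary, we may find a $G$-stable and $\cL$-stable affine formal model $\cA'$ in $\cO(\bX')$; set $\cL' := \cA' \otimes_{\cA} \cL$. Choose an open normal subgroup $H \leq G$ with $H \leq G_{\cL} \leq G_{\cL'} \cap G_{\cN} \cap G_{\cN'}$, and pick a good chain $(H_\bullet)$ for $\cL$ in $H$ as in \cite[Lemma 3.3.6]{EqDCap}, so that all the constructions from the end of $\S \ref{EqMKE}$ apply simultaneously on $\bX$ and $\bX'$. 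Set
\[ T_n := \hK{U(\pi^n \cL)} \rtimes_{H_n} G, \quad S_n := \hK{U(\pi^n \cN)} \rtimes_{H_n} G\]
and define $T_n', S_n'$ analogously using $\cL'$ and $\cN'$. Then by \cite[Lemma 3.3.4]{EqDCap} we have Fr\'echet-Stein presentations $\w\cD(\bX,G) = \invlim T_n$, $\w\cD(\bY,G) = \invlim S_n$, $\w\cD(\bX',G) = \invlim T_n'$ and $\w\cD(\bY',G) = \invlim S_n'$.

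Next I would transport everything to the Banach level. Set $M_n := M \wtimes{\w\cD(\bX,G)} T_n$. Since $M = M_\infty(I)$, the proof of Proposition \ref{Ivictims} shows $M_n = (M_n)_{\Dp}(F/\pi^n)$, and $\iota^\natural M_n = M_n[I]$ is a finitely generated right $S_n$-module realising $M[I] \cong \invlim M_n[I]$ as a coadmissible $\w\cD(\bY,G)$-module (Proposition \ref{Ivictims}). On the other hand, writing $M'_n := M \wtimes{\w\cD(\bX,G)} T_n'$, the $T_n'$-module structure gives $M \wtimes{\w\cD(\bX,G)} \w\cD(\bX',G) \cong \invlim M'_n$, and the right-hand side of the statement is then $\invlim M'_n[I']$ after another application of Proposition \ref{Ivictims} (noting that $M'_n = (M'_n)_{\Dp}(F/\pi^n)$ by Proposition \ref{DpUSubmod}). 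Applying Proposition \ref{LocBanIvict} at each level $n$ to the pair $(\pi^n \cL, \pi^n \cL')$, $H_n$ and the module $M_n$, we get $S_n'$-linear isomorphisms
\[ \alpha_n \;:\; M_n[I] \utimes{S_n} S_n' \;\;\tocong\;\; (M_n \utimes{T_n} T_n')[I]\]
which are compatible with variation in $n$ by the naturality of the construction in Proposition \ref{LocBanIvict}.

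Finally I would pass to the inverse limit. The definition of $\w\otimes$ from \cite[\S 7.3]{DCapOne} together with the compatibility of the presentations gives
\[ M[I] \wtimes{\w\cD(\bY,G)} \w\cD(\bY',G) \;\cong\; \invlim \bigl(M_n[I] \utimes{S_n} S_n'\bigr)\]
and, using that $M_n \utimes{T_n}T_n' \cong M \wtimes{\w\cD(\bX,G)} T_n'$ (since $T_n'$ is a quotient of $\w\cD(\bX',G) \otimes_{T_n} T_n'$ consistent with the inverse system), together with the fact that the functor $(-)[I']$ commutes with inverse limits of coadmissible modules (being the simultaneous kernel of multiplication by generators of $I'$, i.e.\ a continuous left-exact functor on Fr\'echet spaces),
\[ \bigl(M \wtimes{\w\cD(\bX,G)} \w\cD(\bX',G)\bigr)[I'] \;\cong\; \invlim M_n'[I'].\]
The natural map in the statement is then the inverse limit of the $\alpha_n$, hence an isomorphism. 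The main obstacle is the bookkeeping in the last paragraph: one must check that, at each finite level, the reduction $M_n \utimes{T_n} T_n'$ really coincides with the Banach quotient $M'_n$ used in the identification of the right-hand side, so that the maps $\alpha_n$ assemble into the intended comparison morphism. This is a standard but delicate check using the right-module version of \cite[Proposition 4.3.11]{EqDCap} to identify $T_n' \cong T_n \otimes_{\hK{U(\pi^n\cL)}} \hK{U(\pi^n\cL')}$, after which the assembly into the claimed global isomorphism is forced.
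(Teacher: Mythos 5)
Your proof is correct and takes essentially the same route as the paper: choose compatible $\cL$-stable, $G$-stable formal data on $\bX'$, set up the Fr\'echet--Stein presentations via a good chain, observe that the map in question is the inverse limit of the level-$n$ comparison maps from Proposition \ref{LocBanIvict}, and then check that each level-$n$ map is an isomorphism using $M_n = (M_n)_{\Dp}(F/\pi^n)$.

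The only cosmetic difference is that the paper handles the identification of the right-hand side as $\invlim (M_n \otimes_{T_n} T'_n)[I]$ slightly more directly, noting that $\sI(\bX') = \cO(\bX')\cdot \sI(\bX)$ so that $[\sI(\bX')]=[I]$; you instead re-invoke Proposition \ref{Ivictims} together with left-exactness of $\invlim$. Both arguments are valid, and your extra check that $M'_n = (M'_n)_{\Dp}(F/\pi^n)$ is harmless but not strictly needed, since Proposition \ref{LocBanIvict} only requires the hypothesis $M_n = (M_n)_{\Dp}(F/\pi^n)$ on the source.
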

\begin{proof} By applying \cite[Proposition 7.6]{DCapOne} and \cite[Lemma 4.3.5]{EqDCap}, we may assume that $\cO(\bX')$ contains an $\cL$-stable and $G$-stable affine formal model $\cA'$; then $\cL' := \cA' \otimes_{\cA} \cL$ is a $G$-stable $\cA'$-Lie lattice in $\cT(\bX')$.  Let $U_n := \hK{U(\pi^n \cL)}$ and $U'_n := \hK{U(\pi^n\cL')}$ for each $n \geq 0$  and note that $G_{\pi^n \cL} \leq G_{\pi^n \cL'}$ by \cite[Proposition 4.3.6(b)]{EqDCap}. Choose a good chain $(H_\bullet)$ for $\cL$ in $H$ and for each $n \geq 0$ consider the ring maps $T_n := U_n \rtimes_{H_n} H \to T'_n := U'_n \rtimes_{H_n} H$, so that $T := \w\cD(\bX,G) = \invlim T_n$ and $T' := \w\cD(\bX',G) = \invlim T'_n$ by \cite[Lemma 3.3.4]{EqDCap}. Define $S := \w\cD(\bY,G)$ and $S' := \w\cD(\bY',G)$ so that $S = \invlim S_n$ and $S' = \invlim S'_n$ for appropriate crossed products $S_n$ and $S'_n$ as in Proposition \ref{LocBanIvict}. Let $I := \sI(\bX)$.  Then $(M \wtimes{T} T')[\sI(\bX')] = (M \wtimes{T} T')[I]$ because $\sI(\bX') = \cO(\bX') \cdot I$, and the map in question
\[ M[I] \wtimes{S} S' \quad\longrightarrow (M \wtimes{T} T')[I] \]
is the inverse limit of the natural maps
\[ M_n[I] \utimes{S_n} S'_n \quad\longrightarrow (M_n \utimes{T_n} T'_n)[I]\]
where $M_n := M \otimes_T T_n$ for each $n \geq 0$. As in the proof of \cite[Theorem 5.7]{DCapTwo}, using Proposition \ref{DpUSubmod}, our assumption $M = M_\infty(I)$ implies that $M_n = (M_n)_{\Dp}(F / \pi^n)$ for each $n \geq 0$.  So these maps are isomorphisms by Proposition \ref{LocBanIvict}. 
\end{proof}
\subsection{The equivariant Kashiwara equivalence} Let $\bY$ be a smooth, Zariski closed subset of the smooth rigid analytic space $\bX$ defined by the vanishing of a radical, coherent ideal $\sI$ of $\cO_{\bX}$, and let $\iota : \bY \hookrightarrow \bX$ be the inclusion of $\bY$ into $\bX$. Let $G$ be a $p$-adic Lie group acting continuously on $\bX$ and stabilising $\bY$. 

We begin by constructing the equivariant pushforward functor 
\[ \iota_+ : {}^r\cC_{\bY/G} \quad\longrightarrow {}^r\cC_{\bX/G}\]
for $G$-equivariant right $\cD$-modules. We will work with the following slightly more restrictive version of the basis for the topology on $\bX$ introduced at \cite[\S 6.3]{DCapTwo}.

\begin{defn} \label{BasisB} \hsp \be 
\item Let $\cB$ denote the set of connected affinoid subdomains $\bU$ of $\bX$ such that 
\begin{enumerate}[{(}i{)}]
\item$\cT(\bU)$ admits a free $\cA$-Lie lattice $\cL = \cA \partial_1 \oplus \cdots \oplus \cA \partial_d$ for some affine formal model $\cA \subset \cO(\bU)$, satisfying $[\cL,\cL] \subseteq \pi \cL$ and $\cL \cdot \cA \subseteq \pi \cA$, 
\item either $\cI(\bU) = \cI(\bU)^2$, or $\cI(\bU)$ admits a generating set $\{f_1,\ldots, f_r\}$ such that $\partial_i(f_j) = \delta_{ij}$ whenever $1 \leq i \leq d$ and $1 \leq j \leq r$.
\ee
\item For each $\bU \in \cB$ we say that a compact open subgroup $H$ of $G$ is \emph{$\bU$-good} if for some choice of the data $\cA \subset \cO(\bU)$ and $\cL \subset \cT(\bU)$ satisfying the conditions in (a), $H$ stabilises $\bU \subset \bX$, $\cA \subset \cO(\bU)$ and $\cL \subset \cT(\bU)$.
\ee\end{defn}
It is clear that if $\bU \in \cB$ then $g \bU \in \cB$ for all $g \in G$.
\begin{lem}\label{GoodBasis} \hsp \be 
\item $\cB$ is a basis for the topology on $\bX$. 
\item For each $\bU \in \cB$ we can find at least one $\bU$-good subgroup $H$ of $G$.
\item Let $\bU \in \cB$ and let $H$ be $\bU$-good. Then $\cN(\bU \cap \bY)$ is a coadmissible right $\w\cD(\bU \cap \bY, H)$-module for every $\cN \in {}^r \cC_{\bY/G}$.
\ee\end{lem}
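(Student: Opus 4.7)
The plan is to dispatch the three parts in turn, using the smoothness hypotheses on $\bX$ and on $\bY \hookrightarrow \bX$ for (a), the continuity of the $G$-action together with the earlier equivariant Lie-lattice results for (b), and the normaliser calculation from Lemma \ref{Normaliser} together with \cite[Theorem 4.4.3]{EqDCap} for (c).

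For (a), I would verify that every point $x \in \bX$ admits a neighbourhood basis in $\cB$. If $x \notin \bY$, then because $\bY$ is Zariski closed there is a connected affinoid neighbourhood $\bU$ of $x$ with $\bU \cap \bY = \emptyset$. Since $\bX$ is smooth, the standard construction producing $\bX_w(\cT)$ from \cite[\S 3.4]{EqDCap} supplies, after possibly shrinking $\bU$ and rescaling, an affine formal model $\cA \subset \cO(\bU)$ and a free $\cA$-Lie lattice $\cL$ of $\cT(\bU)$ with $[\cL,\cL] \subseteq \pi \cL$ and $\cL \cdot \cA \subseteq \pi \cA$. As $\bU \cap \bY = \emptyset$ one has $\sI(\bU) = \cO(\bU)$, so $\cI(\bU) = \cA$ is idempotent and Definition \ref{BasisB}(a)(ii) is satisfied in its first form. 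If $x \in \bY$, the smoothness of $\bY$ inside $\bX$ supplies, on a small enough connected affinoid neighbourhood, functions $f_1,\ldots,f_r,g_1,\ldots,g_{d-r}$ whose differentials trivialise $\Omega^1_{\bX}$ and such that $\bY$ is cut out by $f_1,\ldots,f_r$. The dual derivations $\partial_1,\ldots,\partial_d$ freely generate $\cT(\bU)$ over $\cO(\bU)$ and satisfy $\partial_i(f_j) = \delta_{ij}$; after choosing a suitable formal model $\cA$ containing the $f_j$ and passing to $\pi^n \cL$ for $n$ sufficiently large, the required nilpotency conditions also hold, and condition (a)(ii) is satisfied in its second form.

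For (b), the continuity of the $G$-action makes the stabiliser $G_{\bU}$ of $\bU$ in $G$ an open subgroup, by \cite[Definition 3.1.8]{EqDCap}. Given a witness $(\cA,\cL)$ for $\bU \in \cB$, an application of \cite[Lemma 3.4.7]{EqDCap} produces an open subgroup $H$ of $G_{\bU}$ that additionally stabilises $\cA \subset \cO(\bU)$ and $\cL \subset \cT(\bU)$, which is precisely the condition that $H$ is $\bU$-good.

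For (c), if $\bU \cap \bY = \emptyset$ the claim is trivial, so assume that $\bU \cap \bY \neq \emptyset$. Then Definition \ref{BasisB}(a)(ii) holds in its second form. The idea is to exhibit $(\bU \cap \bY, H)$ as a small pair in the sense of \cite[Definition 3.4.4]{EqDCap}. Write $\cI := \sI(\bU) \cap \cA$. By Lemma \ref{Normaliser}, the normaliser $N_{\cL}(\cI)$ equals $\cI \partial_1 \oplus \cdots \oplus \cI \partial_r \oplus \cA \partial_{r+1} \oplus \cdots \oplus \cA \partial_d$, so the quotient $\overline{\cL} := N_{\cL}(\cI)/\cI \cL$ is a free $\overline{\cA} := \cA/\cI$-module with basis $\overline{\partial_{r+1}}, \ldots, \overline{\partial_d}$, and is an $\overline{\cA}$-Lie lattice inside $\cT(\bU \cap \bY)$. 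The $H$-stability of $\bY$, $\cA$ and $\cL$ together with the functoriality of the normaliser construction ensures that $\overline{\cA}$ and $\overline{\cL}$ are again $H$-stable. Hence $(\bU \cap \bY, H)$ is small, and the right-module version of \cite[Theorem 4.4.3]{EqDCap} — available via the side-switching equivalence of Theorem \ref{MainLRswitch} — implies that $\cN(\bU \cap \bY)$ is coadmissible over $\w\cD(\bU \cap \bY, H)$.

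The main obstacle is the case $x \in \bY$ in part (a): one must simultaneously realise the étale-coordinate description of $\sI$ near $x$, produce a connected affinoid neighbourhood, and arrange the nilpotency conditions on the resulting Lie lattice. The ingredients are standard — smoothness of $\bX$, smoothness of $\bY$ inside $\bX$, and the freedom to rescale the Lie lattice — but some care is needed to ensure all conditions of Definition \ref{BasisB} hold simultaneously on the same affinoid.
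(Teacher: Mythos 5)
Your proposal is correct and follows essentially the same route as the paper's proof. For part (a) you recapitulate the underlying content of \cite[Theorem 6.2]{DCapTwo} (adapted \'etale coordinates near $\bY$, then rescale) rather than citing it, and for part (b) you invoke \cite[Lemma 3.4.7]{EqDCap} where the paper steps through \cite[Definition 3.1.8]{EqDCap} and \cite[Lemma 3.2.8(b)]{EqDCap} to stabilise the specific witness $(\cA,\cL)$ in turn; but the underlying reasoning coincides, and part (c) — Lemma \ref{Normaliser} makes $(\bU\cap\bY,H)$ small, then the right-module version of \cite[Theorem 4.4.3]{EqDCap} — is identical.
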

\begin{proof} (a) Because $\bX$ and $\bY$ are both smooth, the second fundamental sequence is exact by \cite[Proposition 2.5]{BLR3} so the normal bundle $\cN_{Y/X}$ is locally free and the canonical map $\iota^\ast \cT \to \cN_{Y/X}$ is surjective.  Now \cite[Theorem 6.2]{DCapTwo} tells us that we can find an admissible affinoid covering $\{\bX_\alpha\}$ for $\bX$ such that for each $\alpha$, either $\cI(\bX_\alpha) = \cI(\bX_\alpha)^2$ or there is an $\cO(\bX_\alpha)$-module basis $\{\partial_1,\ldots,\partial_d\}$ for $\cT(\bX_\alpha)$ and a generating set $\{f_1,\ldots,f_r\}$ for $\cI(\bX_\alpha)$ with $1 \leq r \leq d$ such that $\partial_i \cdot f_j = \delta_{ij}$ whenever $1 \leq i \leq d$ and $1 \leq j \leq r$. Because $\bX$ is smooth, we can replace each $\bX_\alpha$ by its finite set of connected components and thereby arrive at an admissible covering with the same properties, but where in addition every $\bX_\alpha$ is connected. Let $\cA_\alpha := \cO(\bX_\alpha)^\circ$ for each $\alpha$ and let $\cL_\alpha := \oplus_{i=1}^d \pi \cA_\alpha \partial_i$. Then both conditions (i) and (ii) above are satisfied.

(b) Because $G$ acts continuously on $\bX$, the stabiliser $H_1$ of $\bU$ in $G$ is open by \cite[Definition 3.1.8(a)]{EqDCap} and any affine formal model $\cA \subset \cO(\bU)^\circ$ is stabilised by an open subgroup $H_2$ of $H_1$ by \cite[Definition 3.1.8(b)]{EqDCap}. Then every $\cA$-Lie lattice $\cL \subset \cT(\bU)$ is stabilised by an open subgroup $H_3$ of $H_2$ by \cite[Lemma 3.2.8(b)]{EqDCap}, and we may take $H$ to be any compact open subgroup of $H_3$.

(c) It follows from Lemma \ref{Normaliser} that the pair $(\bU \cap \bY, H)$ is small in the sense of \cite[Definition 3.4.4]{EqDCap}. Now apply the right-module version of \cite[Theorem 4.4.3]{EqDCap}.
\end{proof}
Using Lemma \ref{GoodBasis}(c), Lemma \ref{LocalTransBimod} and the right-module version of \cite[Lemma 7.3]{DCapOne}, we can now make the following construction.
\begin{defn}\label{M[UH]} Let $\cN \in {}^r \cC_{\bY/G}$, let $\bU \in \cB$ and let $H$ be $\bU$-good.  We define
\[ M[\bU,H] \quad := \quad \cN(\bU \cap Y) \quad\wtimes{\w\cD(\bU \cap Y, H)}\quad \frac{\w\cD(\bU,H)}{\cI(\bU) \w\cD(\bU,H)},\]
this is a coadmissible right $\w\cD(\bU,H)$-module.
\end{defn}
This construction is functorial in the following sense.
\begin{lem}\label{MUHfunc} Let $\bV \subseteq \bU$ be members of $\cB$ and let $H \leq N$ be compact open subgroups of $G$ which are $\bU$-good and $\bV$-good. 
\be 
\item  There is a natural commutative diagram of right $\w\cD(\bU,H)$-modules 
\begin{equation}\label{MUHbifunc} \xymatrix{ M[\bU,H] \ar[r]\ar[d] & M[\bU,N] \ar[d] \\ M[\bV,H] \ar[r] & M[\bV,N]. }\end{equation}
\item For every $g \in G$ there is a $K$-linear map
\[ g^M_{\bU,H} : M[\bU,H] \longrightarrow M[g\bU, gHg^{-1}]\]
such that for every $a \in \w\cD(\bU,H)$ and every $m \in M[\bU,H]$, we have 
\[ g^M_{\bU,H}(m \cdot a) = g^M_{\bU,H}(m) \cdot \w{g}_{\bU,H}(a).\]
\item The following diagram is commutative for all $g \in G$:
\begin{equation}\label{MUHequiv}
 \xymatrix{  M[\bU,H] \ar[rr]^{g^M_{\bU,H}} && M[g\bU, gHg^{-1}] \\
  M[\bU,N] \ar[rr]^{g^M_{\bU,N}}\ar[d]\ar[u] && M[g\bU, gNg^{-1}] \ar[d]\ar[u] \\ 
  M[\bV,N] \ar[rr]_{g^M_{\bV,N}} && M[g\bV, gNg^{-1}].  } 
 \end{equation}
\ee
\end{lem}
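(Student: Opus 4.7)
The whole lemma amounts to checking that the construction $(\bU,H) \mapsto M[\bU,H]$ is appropriately functorial in both variables, plus a $G$-equivariance. The strategy will be to reduce every claim to the universal property of $\w\otimes$ (\cite[Lemma 7.3]{DCapOne}), feeding in the naturality properties of the coadmissible module $\cN$, the Fr\'echet-Stein algebras $\w\cD(-,-)$, and the transition bimodules $\frac{\w\cD(\bU,H)}{\cI(\bU)\w\cD(\bU,H)}$.

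For part (a), I will treat the two directions of the square separately. Fixing the subgroup $H$ and restricting from $\bU$ to $\bV$: the right-module analogue of \cite[Theorem 4.4.3]{EqDCap} gives a continuous $\w\cD(\bU\cap\bY,H)$-linear restriction map $\cN(\bU\cap \bY)\to\cN(\bV\cap\bY)$ (where $\cN(\bV\cap\bY)$ is viewed as a $\w\cD(\bU\cap\bY,H)$-module via $\w\cD(\bU\cap\bY,H)\to\w\cD(\bV\cap\bY,H)$); combined with Proposition \ref{MIX} applied to the coadmissible bimodule from Lemma \ref{LocalTransBimod}, this yields a natural $\w\cD(\bU,H)$-linear map $M[\bU,H]\to M[\bV,H]$. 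Fixing $\bU$ and enlarging the subgroup from $H$ to $N$: Proposition \ref{BimodVariesWithG} (applied with $G = N$ and normal subgroup contained in $H$, then reassembled over coset representatives) identifies
\[\w\cD(\bU\cap\bY,N)\wtimes{\w\cD(\bU\cap\bY,H)}\tfrac{\w\cD(\bU,H)}{\cI(\bU)\w\cD(\bU,H)} \;\cong\; \tfrac{\w\cD(\bU,N)}{\cI(\bU)\w\cD(\bU,N)},\]
and the canonical $\w\cD(\bU\cap\bY,H)$-linear inclusion $\cN(\bU\cap\bY)\to\cN(\bU\cap\bY)\wtimes{\w\cD(\bU\cap\bY,H)}\w\cD(\bU\cap\bY,N)$ then induces a map $M[\bU,H]\to M[\bU,N]$. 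The square \eqref{MUHbifunc} commutes because on $\cN(\bU\cap\bY)\otimes 1$-generators both composites act by restriction of the base point in the first factor and by the tautological lift of $1$ in the second.

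For part (b), the $G$-equivariance of $\cN$ supplies a continuous $K$-linear isomorphism $g^{\cN}:\cN(\bU\cap\bY)\to\cN(g(\bU\cap\bY))=\cN((g\bU)\cap\bY)$ (using that $\bY$ is $G$-stable) which is $\w{g}$-semilinear over $\w\cD(\bU\cap\bY,H)\congs\w\cD(g(\bU\cap\bY),gHg^{-1})$ of \cite[Lemma 3.4.3]{EqDCap}. The same isomorphism $\w{g}$ sends $\cI(\bU)$ to $\cI(g\bU)$ (since $\cI$ is the ideal sheaf of the $G$-stable subspace $\bY$), hence descends to a $\w{g}$-semilinear isomorphism of bimodules $\tfrac{\w\cD(\bU,H)}{\cI(\bU)\w\cD(\bU,H)}\to \tfrac{\w\cD(g\bU,gHg^{-1})}{\cI(g\bU)\w\cD(g\bU,gHg^{-1})}$. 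Taking $\w\otimes$ and invoking its universal property gives the desired $g^M_{\bU,H}$, and the twisted equivariance $g^M_{\bU,H}(ma)=g^M_{\bU,H}(m)\w{g}_{\bU,H}(a)$ is automatic from the construction.

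Part (c) is a routine diagram chase once (a) and (b) are in hand: each face of the rectangular prism whose vertices are the six modules of \eqref{MUHequiv} is of one of the two types treated above, and commutativity follows because both the restriction maps and the $\w{g}$-semilinear structure maps are natural in $(\bU,H)$. The main obstacle in executing this plan is the bookkeeping in part (a) for the change-of-subgroup map: one must verify that the identification of bimodules coming from Proposition \ref{BimodVariesWithG} is genuinely compatible with the restriction from $\bU$ to $\bV$ (so that the square in \eqref{MUHbifunc} really commutes as bimodule maps, not merely as $\cO(\bU)$-module maps), and that the same identification is equivariant under $\w{g}$ (required for part (c)); both come down to unwinding the definition of $\w\cD(-,-)$ via the Fr\'echet-Stein presentation $\w\cD(\bU,H)=\invlim(\hK{U(\pi^n\cL)}\rtimes_{H_n}H)$ and noting that the normal-subgroup cosets chosen in the proof of Proposition \ref{BimodVariesWithG} can be taken uniformly in $\bU$ and $g$.
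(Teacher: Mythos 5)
Your part (b) is exactly the paper's proof: $g^{M}_{\bU,H}$ is built as $g^{\cN}(\bU \cap \bY)\hsp\w\otimes\hsp\overline{\w{g}_{\bU,H}}$, using that $\w{g}_{\bU,H}$ carries $\cI(\bU)$ to $\cI(g\bU)$ because $\bY$ is $G$-stable, so the bimodule map $\overline{\w{g}_{\bU,H}}$ descends. Parts (a) and (c) are left to the reader in the paper, so there is no official route to compare against; still, some of your citations there are off target.

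For part (a), Proposition \ref{MIX} is not the right tool: it concerns the compatibility of the $I$-victims functor $M\mapsto M[I]$ (the $\iota^\natural$ direction) with localisation, and moreover requires the hypothesis $M=M_\infty(I)$; none of this is in play when building the transition maps for $\iota_+$. The restriction map $M[\bU,H]\to M[\bV,H]$ is simply the one induced by the restriction $\cN(\bU\cap\bY)\to\cN(\bV\cap\bY)$ together with the bimodule map $\tfrac{\w\cD(\bU,H)}{\cI(\bU)\w\cD(\bU,H)}\to\tfrac{\w\cD(\bV,H)}{\cI(\bV)\w\cD(\bV,H)}$ (both compatible with the ring maps $\w\cD(\bU\cap\bY,H)\to\w\cD(\bV\cap\bY,H)$ and $\w\cD(\bU,H)\to\w\cD(\bV,H)$) and the universal property of $\w\otimes$. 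Likewise the change-of-subgroup arrow $M[\bU,H]\to M[\bU,N]$ does not require Proposition \ref{BimodVariesWithG} (which, applied to non-normal inclusions, needs exactly the normal-sandwich manoeuvre that the paper relegates to Proposition \ref{MUHiso}, where it is used to show the map is an \emph{isomorphism}, not merely to construct it); for the lemma at hand one only needs the map $\tfrac{\w\cD(\bU,H)}{\cI(\bU)\w\cD(\bU,H)}\to\tfrac{\w\cD(\bU,N)}{\cI(\bU)\w\cD(\bU,N)}$ followed by the canonical map from a $\w\otimes$ over $\w\cD(\bU\cap\bY,H)$ to a $\w\otimes$ over the larger ring $\w\cD(\bU\cap\bY,N)$. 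Once the arrows are constructed this elementarily, the commutativity in parts (a) and (c) is a direct check on generators, as you say, and does not need the uniformity-of-coset-representatives caveat you raise at the end.
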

\begin{proof} We give the construction of $g^M_{\bU,H}$ and leave the rest to the reader. By construction, the $K$-algebra homomorphism $\w{g}_{\bU,H} : \w\cD(\bU,H) \to \w\cD(g\bU, gHg^{-1})$ from \cite[Lemma 3.4.3]{EqDCap} restricts to $g^{\cO}(\bU) : \cO(\bU) \to \cO(g \bU)$ which sends $\cI(\bU)$ to $\cI(g\bU)$ because $\bY$ is assumed to be $G$-stable. So the map 
\[ \overline{\w{g}_{\bU,H}} : \frac{ \w\cD(\bU,H)}{\cI(\bU) \w\cD(\bU,H)} \quad\longrightarrow\quad  \frac{ \w\cD(g\bU,gHg^{-1})}{\cI(g\bU) \w\cD(g\bU,gHg^{-1})}\]
is well-defined and we can set $g^M_{\bU,H} := g^{\cN}(\bU \cap \bY) \quad \w\otimes \quad \overline{\w{g}_{\bU,H}}$.
\end{proof}
Using the connecting maps $M[\bU,H] \to M[\bU,N]$, we can make the following 
\begin{defn}\label{i+OnB} Let $\cN \in {}^r \cC_{\bY/G}$ and let $\bU \in \cB$. We define
\[ (\iota_+ \cN)(\bU) := \lim\limits_H M[\bU,H]\]
where the limit is taken over all $\bU$-good subgroups $H$ of $G$.
\end{defn}
\begin{lem}\label{i+NGpre} $\iota_+ \cN$ is a $G$-equivariant presheaf of $\cD$-modules on $\cB$.
\end{lem}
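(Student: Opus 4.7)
The plan is to transport through the inverse limit defining $(\iota_+\cN)(\bU)$ the various structural data already packaged in Lemma \ref{MUHfunc}. First, for an inclusion $\bV \subseteq \bU$ in $\cB$, I would verify that the compact open subgroups of $G$ that are simultaneously $\bU$-good and $\bV$-good form a cofinal family among $\bU$-good subgroups, by the same kind of argument as \cite[Lemma 3.4.5]{EqDCap}: given any $\bU$-good $H$, intersecting with a $\bV$-good subgroup and shrinking if necessary produces a compact open subgroup good for both. Consequently the limit defining $(\iota_+\cN)(\bU)$ is unchanged if restricted to this smaller directed set, and the vertical arrows $M[\bU,H] \to M[\bV,H]$ of diagram (\ref{MUHbifunc}) assemble into a $K$-linear restriction map $\rho^\bU_\bV : (\iota_+\cN)(\bU) \to (\iota_+\cN)(\bV)$. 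The commutativity of (\ref{MUHbifunc}) supplies the well-definedness, and the evident functoriality under composition of inclusions — combined with the universal property of inverse limits — gives $\rho^\bV_\bW \circ \rho^\bU_\bV = \rho^\bU_\bW$ for $\bW \subseteq \bV \subseteq \bU$ in $\cB$.

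Each $M[\bU,H]$ is a right $\w\cD(\bU,H)$-module and hence a right $\cD(\bU)$-module via the canonical embedding $\cD(\bU) \hookrightarrow \w\cD(\bU,H)$ from \cite[Remark 3.3.2]{EqDCap}; since this embedding is compatible with varying $H$, the limit $(\iota_+\cN)(\bU)$ inherits a right $\cD(\bU)$-module structure. The restriction maps $\rho^\bU_\bV$ are then $\cD(\bU)$-linear because each level map $M[\bU,H] \to M[\bV,H]$ in (\ref{MUHbifunc}) is $\cD(\bU)$-linear via $\cD(\bU) \to \cD(\bV)$.

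For the $G$-equivariant structure, note that conjugation by $g \in G$ bijects $\bU$-good subgroups with $g\bU$-good subgroups. Using the right-hand square of (\ref{MUHequiv}), the maps $g^M_{\bU,H} : M[\bU,H] \to M[g\bU, gHg^{-1}]$ of Lemma \ref{MUHfunc}(b) are compatible with the connecting maps parametrised by $H$ and so assemble to a $K$-linear map
\[ g^{\iota_+\cN}(\bU) : (\iota_+\cN)(\bU) \longrightarrow (\iota_+\cN)(g\bU). \]
The semilinearity relation $g^{\iota_+\cN}(\bU)(m\cdot a) = g^{\iota_+\cN}(\bU)(m) \cdot g^{\cD}(\bU)(a)$ for $a \in \cD(\bU)$ follows from the corresponding relation at the level of $M[\bU,H]$, which is a consequence of Lemma \ref{MUHfunc}(b) once one notes that $\w{g}_{\bU,H}$ restricts to $g^{\cD}(\bU)$ on $\cD(\bU)$. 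The bottom face of (\ref{MUHequiv}) translates into compatibility of $g^{\iota_+\cN}$ with the restriction maps $\rho^\bU_\bV$. Finally, the identity $(gh)^{\iota_+\cN} = g^{\iota_+\cN} \circ h^{\iota_+\cN}$ (and $1^{\iota_+\cN} = \id$) is inherited from the corresponding identity for the maps $g^M_{\bU,H}$, which follows directly from their explicit definition in the proof of Lemma \ref{MUHfunc}(b) in terms of $g^{\cN}(\bU\cap\bY)$ and $\overline{\w{g}_{\bU,H}}$, both of which realise genuine $G$-actions.

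The main obstacle is purely organisational: three indexings ($H$ varying among good subgroups, $\bV \subseteq \bU$, and $g \in G$) must be reconciled through a compatible system of inverse limits. All the required cube-level commutativity is already isolated in Lemma \ref{MUHfunc}, so no new algebraic input is needed; I expect the entire proof to reduce to invoking that Lemma together with the universal property of the inverse limit.
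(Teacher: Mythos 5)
Your proposal is correct and follows essentially the same route as the paper, which simply cites Lemma \ref{MUHfunc} together with the analogous argument for \cite[Theorem 3.5.8]{EqDCap}: use the commutativity in diagrams (\ref{MUHbifunc}) and (\ref{MUHequiv}) to pass the restriction, $\cD$-module, and $G$-equivariance data through the limit over $\bU$-good subgroups. The only simplification worth noting is that cofinality is immediate here, since any open subgroup of a $\bU$-good subgroup is again $\bU$-good, so given a $\bU$-good $H$ and a $\bV$-good $N$ the intersection $H\cap N$ is already both, with no further shrinking required.
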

\begin{proof} This follows from Lemma \ref{MUHfunc} --- see the proof of \cite[Theorem 3.5.8]{EqDCap} for more details in a similar situation.
\end{proof}

\begin{prop}\label{MUHiso} Let $\bU \in \cB$, let $H$ be a $\bU$-good subgroup of $G$ and let $N$ be an open normal subgroup of $H$. Then for every $\cN \in {}^r \cC_{\bY/G}$, the canonical map $M[\bU, N] \to M[\bU,H]$ is an isomorphism.
\end{prop}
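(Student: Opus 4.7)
The plan is to reduce this statement to Proposition \ref{BimodVariesWithG} by applying that proposition with $(\bX,G)$ replaced by $(\bU,H)$. Because $\bU \in \cB$ and $H$ is $\bU$-good, the hypotheses of Theorem \ref{LocalKash} (and hence of Proposition \ref{BimodVariesWithG}) are satisfied with $\cI(\bU)$ in place of $I$, $H$ in place of $G$, and our chosen open normal subgroup $N$ of $H$ in place of $N$. This yields a natural isomorphism of $(\w\cD(\bU\cap\bY,H),\w\cD(\bU,N))$-bimodules
\[ \w\cD(\bU\cap\bY,H) \wtimes{\w\cD(\bU\cap\bY,N)} \frac{\w\cD(\bU,N)}{\cI(\bU)\hsp \w\cD(\bU,N)} \quad\congs\quad \frac{\w\cD(\bU,H)}{\cI(\bU)\hsp \w\cD(\bU,H)}. \]

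Next, I would tensor both sides of this isomorphism from the left with $\cN(\bU\cap\bY)$ over $\w\cD(\bU\cap\bY,H)$. By Lemma \ref{GoodBasis}(c), $\cN(\bU\cap\bY)$ is coadmissible as a right $\w\cD(\bU\cap\bY,H)$-module, and by restriction of scalars it is also coadmissible as a right $\w\cD(\bU\cap\bY,N)$-module since $N$ has finite index in $H$. Using the associativity of the completed tensor product \cite[Corollary 7.4]{DCapOne} together with the identity $\cN(\bU\cap\bY) \wtimes{\w\cD(\bU\cap\bY,H)} \w\cD(\bU\cap\bY,H) \cong \cN(\bU\cap\bY)$, the right-hand side becomes $M[\bU,H]$ by Definition \ref{M[UH]}, while the left-hand side becomes
\[ \cN(\bU\cap\bY) \wtimes{\w\cD(\bU\cap\bY,N)} \frac{\w\cD(\bU,N)}{\cI(\bU)\hsp \w\cD(\bU,N)} \quad=\quad M[\bU,N]. \]

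Finally, to see that the resulting isomorphism $M[\bU,N] \congs M[\bU,H]$ agrees with the connecting map constructed as part of diagram (\ref{MUHbifunc}) of Lemma \ref{MUHfunc}, one traces through the definitions: that connecting map is induced by the pair of inclusions $\w\cD(\bU\cap\bY,N) \hookrightarrow \w\cD(\bU\cap\bY,H)$ and $\w\cD(\bU,N) \hookrightarrow \w\cD(\bU,H)$ via the universal property of $\wtimes{}$, which is precisely what is captured by the isomorphism from Proposition \ref{BimodVariesWithG} after tensoring. The main technical point will be verifying the applicability of the associativity of $\wtimes{}$ and tracking that all intermediate bimodules remain coadmissible on the appropriate side; however, this is already implicit in Proposition \ref{BimodVariesWithG}, so no further work beyond a careful bookkeeping should be required.
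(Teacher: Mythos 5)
Your proposal is correct and takes essentially the same approach as the paper: apply Proposition~\ref{BimodVariesWithG} (with $\bU$, $H$, $N$ in place of $\bX$, $G$, $N$) to obtain the bimodule isomorphism, then tensor with $\cN(\bU\cap\bY)$ over $\w\cD(\bU\cap\bY,H)$ and contract using associativity of $\w\otimes$. The paper's proof is terser — it does not spell out the associativity step or the check that the resulting isomorphism is the canonical connecting map from Lemma~\ref{MUHfunc} — but those are exactly the bookkeeping points you flag, and you handle them correctly.
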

\begin{proof} Let $\bV := \bU \cap \bY$. By Proposition \ref{BimodVariesWithG}, there is an isomorphism
\[\w\cD(\bV, H) \quad\wtimes{\w\cD(\bV, N)}\quad \frac{ \w\cD(\bU,N)}{\cI(\bU) \w\cD(\bU,N)} \quad \congs \quad \frac{ \w\cD(\bU,H)}{\cI(\bU) \w\cD(\bU,H)}\]
of $\w\cD(\bV, H)-\w\cD(\bU,N)$-bimodules. Now apply the functor $\cN(\bV) \w\otimes_{\w\cD(\bV,H)}-$ to this isomorphism to see that the arrow $M[\bU,N] \to M[\bU,H]$ is bijective. \end{proof}
\begin{cor}\label{I+Coadm} Let $\bU \in \cB$, let $H$ be a $\bU$-good open subgroup of $G$ and let $\cN \in {}^r \cC_{\bY/G}$. Then the canonical map $(\iota_+\cN)(\bU) \to M[\bU,H]$ is an isomorphism, and $(\iota_+\cN)(\bU)$ is a coadmissible right $\w\cD(\bU,H)$-module. \end{cor}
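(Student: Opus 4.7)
The plan is to show directly that for any fixed $\bU$-good compact open subgroup $H$ of $G$, the canonical projection $\pi_H \colon (\iota_+\cN)(\bU) \to M[\bU,H]$ from the inverse limit is an isomorphism. Once this is done, coadmissibility of $(\iota_+\cN)(\bU)$ over $\w\cD(\bU,H)$ follows immediately, since $M[\bU,H]$ is coadmissible over $\w\cD(\bU,H)$ by Definition \ref{M[UH]} and Lemma \ref{LocalTransBimod} via the right-module version of \cite[Lemma 7.3]{DCapOne}.

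The whole argument reduces to a cofinality statement. First I would note that the indexing system of $\bU$-good compact open subgroups of $G$, partially ordered by inclusion, is cofiltered: the intersection of any two $\bU$-good subgroups $H_1, H_2$ is again an open subgroup of $G$ (since $G$ is $p$-adic analytic) and inherits stabilisation of the data $(\bU, \cA, \cL)$ witnessing $\bU$-goodness, so the inverse limit defining $(\iota_+\cN)(\bU)$ is well-posed. Next, for the fixed $\bU$-good $H$, I would show the subset of open normal subgroups $N$ of $H$ is cofinal in this system. Given any $\bU$-good $H'$, the intersection $H' \cap H$ is an open subgroup of $H$; because $H$ is a compact $p$-adic Lie group, it has a neighbourhood basis of open normal subgroups at the identity, so there exists an open normal $N \trianglelefteq H$ with $N \leq H' \cap H \leq H'$. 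Any such $N$ is $\bU$-good since it inherits the stabilisation properties from $H$, giving the cofinality.

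Finally, Proposition \ref{MUHiso} tells us that for every open normal $N \trianglelefteq H$, the connecting map $M[\bU,N] \to M[\bU,H]$ from Lemma \ref{MUHfunc}(a) is an isomorphism. Restricting the inverse limit to the cofinal subsystem of open normal subgroups of $H$, and using the naturality provided by Lemma \ref{MUHfunc}(a) to see that all transition maps in this subsystem are compatible isomorphisms with $M[\bU,H]$, we conclude that $\pi_H$ is an isomorphism. Composing with the coadmissible structure on $M[\bU,H]$ then transports it to $(\iota_+\cN)(\bU)$, completing the proof.

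I do not anticipate a genuine obstacle here: the content lies entirely in Proposition \ref{MUHiso}, which has already been proven, and the surrounding argument is formal cofiltered-limit bookkeeping. The only point requiring any care is to confirm that the cofinality works in both directions (that normal open subgroups of $H$ exist inside arbitrary $\bU$-good neighbourhoods of the identity, and conversely that every $\bU$-good subgroup is dominated by such an $N$), both of which use only that $H$ is a compact $p$-adic Lie group.
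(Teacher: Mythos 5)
Your proof is correct and takes essentially the same approach as the paper: both rely on Proposition \ref{MUHiso} as the key input and then conclude by a cofinality argument over $\bU$-good subgroups. The paper is slightly more economical with the final step --- rather than isolating the cofinal subsystem of open normal subgroups of $H$, it shows directly that $M[\bU,J]\to M[\bU,H]$ is an isomorphism for \emph{every} open subgroup $J\leq H$ by sandwiching a single open normal $N\trianglelefteq H$ inside $J$ and applying Proposition \ref{MUHiso} to both pairs $(H,N)$ and $(J,N)$; this makes all transition maps below $H$ isomorphisms at once and avoids having to spell out the cofiltered/cofinal bookkeeping you carry out explicitly.
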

\begin{proof} Let $J$ be an open subgroup of $H$ and choose an open normal subgroup $N$ of $H$ contained in $J$. The maps $M[\bU,N] \to M[\bU,H]$ and $M[\bU,N] \to M[\bU,J]$ are isomorphisms by Proposition \ref{MUHiso}. Therefore $M[\bU,J] \to M[\bU, H]$ is also an isomorphism for every open subgroup $J$ of $H$, and the result follows.
\end{proof}
\begin{prop}\label{i+Loc}  Let $\cN \in {}^r\cC_{\bY/G}$, let $\bU \in \cB$ and let $H$ be a $\bU$-good open subgroup of $G$. Then there is a natural continuous $H$-$\cD$-linear isomorphism 
\[ \cP^{\w\cD(\bU,H)}_{\bU}((\iota_+\cN)(\bU)) \congs (\iota_+\cN)_{|\bU_w}.\]
\end{prop}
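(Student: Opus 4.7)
The plan is to adapt the argument of Proposition \ref{LocDeterm} to the present setting, constructing the isomorphism section-by-section for affinoid subdomains $\bV \in \bU_w$. By Corollary \ref{I+Coadm}, the canonical map $(\iota_+\cN)(\bU) \to M[\bU,H]$ is a bijection and $(\iota_+\cN)(\bU)$ is a coadmissible $\w\cD(\bU,H)$-module, so the presheaf $\cP := \cP^{\w\cD(\bU,H)}_{\bU}((\iota_+\cN)(\bU))$ is well-defined on $\bU_w$. By \cite[Definition 3.5.3, Corollary 3.5.6]{EqDCap}, for each $\bV \in \bU_w$ we have $\cP(\bV) \cong \w\cD(\bV,J) \wtimes{\w\cD(\bU,J)} M[\bU,J]$ for every $\bV$-small open subgroup $J$ of $H$, and by refining using \cite[Lemma 3.4.5]{EqDCap} we may assume $J$ is also a $\bU$-good subgroup, and we may further restrict to $\bV$ in $\cB$.

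The central task is to construct a natural $\w\cD(\bV,J)$-linear isomorphism
\[ \varphi_{\bV,J} \hsp : \hsp \w\cD(\bV,J) \wtimes{\w\cD(\bU,J)} M[\bU,J] \quad \congs \quad M[\bV,J]. \]
Unfolding Definition \ref{M[UH]} on the left and applying the associativity of $\w\otimes$ from \cite[Corollary 7.4]{DCapOne}, the left hand side becomes
\[ \cN(\bU \cap \bY) \hsp\wtimes{\w\cD(\bU\cap\bY,J)}\hsp \left( \w\cD(\bV,J) \wtimes{\w\cD(\bU,J)} \frac{\w\cD(\bU,J)}{\cI(\bU) \w\cD(\bU,J)}\right), \]
and the inner bracket is simply $\w\cD(\bV,J)/\cI(\bU) \w\cD(\bV,J) = \w\cD(\bV,J)/\cI(\bV) \w\cD(\bV,J)$ since the coherent ideal $\sI$ satisfies $\cI(\bV) = \cO(\bV)\cdot \cI(\bU)$. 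Now invoke the right-module form of \cite[Theorem B]{EqDCap} to identify
\[ \cN(\bV \cap \bY) \cong \w\cD(\bV \cap \bY, J) \wtimes{\w\cD(\bU \cap \bY, J)} \cN(\bU \cap \bY), \]
and a second application of associativity finally matches the right hand side of the formula $M[\bV,J]$, where we use that the bimodule structures employed are compatible by functoriality of the construction in Lemma \ref{MUHfunc}.

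Taking the inverse limit over $J$ gives the map $\varphi(\bV) : \cP(\bV) \to (\iota_+\cN)(\bV)$. The verification that $\varphi(\bV)$ is independent of the choice of $J$, commutes with the restriction maps, and respects the $H$-equivariant structures, proceeds exactly as in Proposition \ref{LocDeterm}: the required diagrams commute by functoriality of $\w\otimes$, of the bimodule construction, and by commutativity of (\ref{MUHbifunc}) and (\ref{MUHequiv}) in Lemma \ref{MUHfunc}.

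The principal obstacle is ensuring that the associativity manipulations and the coadmissibility transit for $\cN$ combine without flatness issues; here we exploit the fact that the bimodule $\w\cD(\bU,J)/\cI(\bU)\w\cD(\bU,J)$ is coadmissible over $\w\cD(\bU,J)$ (Lemma \ref{LocalTransBimod}), so the universal property of $\w\otimes$ from \cite[Lemma 7.3]{DCapOne} applies uniformly, and the identifications become canonical. Everything else is diagram chasing already familiar from the proof of Proposition \ref{LocDeterm}.
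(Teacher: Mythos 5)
Your proposal follows essentially the same strategy as the paper's proof: identify $(\iota_+\cN)(\bU)$ with $M[\bU,H]$ via Corollary \ref{I+Coadm}, produce section-wise isomorphisms from $\cP^{\w\cD(\bU,H)}_{\bU}(M[\bU,H])(\bV)$ to $M[\bV,J]$ by unravelling Definition \ref{M[UH]}, contracting tensor products via associativity, using $\cI(\bV)=\cO(\bV)\cdot\cI(\bU)$, and invoking the localisation of the coadmissible module $\cN(\bU\cap\bY)$, and finally passing to the limit over $J$. The paper's chain of isomorphisms is precisely yours.

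One persistent slip: you are working with \emph{right} $\w\cD$-modules, so the completed tensors should be written $M[\bU,J]\wtimes{\w\cD(\bU,J)}\w\cD(\bV,J)$ (right module on the left, bimodule on the right), $\frac{\w\cD(\bU,J)}{\cI(\bU)\w\cD(\bU,J)}\wtimes{\w\cD(\bU,J)}\w\cD(\bV,J)$, and $\cN(\bU\cap\bY)\wtimes{\w\cD(\bU\cap\bY,J)}\w\cD(\bV\cap\bY,J)\cong\cN(\bV\cap\bY)$. You have written all three in the opposite order, as though the localisation and the base change happened on the left. This is a consistent notational inversion rather than a conceptual error, but it must be corrected: otherwise the associativity steps do not typecheck. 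With the orientations fixed, the argument is the one in the paper.
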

\begin{proof} Note that $(\iota_+\cN)(\bU) \cong M[\bU,H]$ is a coadmissible right $\w\cD(\bU,H)$-module by Corollary \ref{I+Coadm}, so it will be enough to exhibit a continuous $H$-$\cD$-linear isomorphism
\[ \varphi : \cP^{\w\cD(\bU,H)}_{\bU}(M[\bU,H]) \congs (\iota_+\cN)_{|\bU_w}.\]
Let $\bV$ be an open affinoid subdomain of $\bU$ and let $J$ be an open subgroup of $H_{\bV}$. Then there are natural right $\w\cD(\bV,J)$-linear isomorphisms
\[ \begin{array}{lll} M[\bU,J] \wtimes{\w\cD(\bU,J)} \w\cD(\bV,J) &\cong& \cN(\bU \cap \bY) \wtimes{\w\cD(\bU \cap \bY,J)} \left( \frac{\w\cD(\bU,J)}{\cI(\bU)\w\cD(\bU,J)} \wtimes{\w\cD(\bU,J)} \w\cD(\bV,J)\right) \\
&\cong&\cN(\bU\cap\bY) \wtimes{\w\cD(\bU\cap\bY,J)} \frac{\w\cD(\bV,J)}{\cI(\bV)\w\cD(\bV,J)} \\
&\cong&\left(\cN(\bU \cap \bY)\wtimes{\w\cD(\bU\cap\bY,J)}\w\cD(\bV\cap\bY,J)\right) \wtimes{\w\cD(\bV\cap\bY,J)} \frac{\w\cD(\bV,J)}{\cI(\bV)\w\cD(\bV,J)} \\
&\cong&\cN(\bV\cap\bY) \wtimes{\w\cD(\bV\cap\bY)} \frac{\w\cD(\bV,J)}{\cI(\bV)\w\cD(\bV,J)}=M[\bV,J]\end{array}\]
where the isomorphism on the last line follows from the isomorphism 
\[{}^r\Loc^{\w\cD(\bU\cap\bY)}_{\bU\cap \bY}\left(\cN(\bU \cap\bY)\right) \cong \cN_{|\bU \cap \bY}\]
given by the right-module version of \cite[Theorem 4.4.3]{EqDCap} and by the fact that $\cN \in {}^r\cC_{\bY/G}$. Because the canonical map $M[\bU,J]\to M[\bU,H]$ is an isomorphism by Proposition \ref{MUHiso}, we obtain a right $\w\cD(\bV,J)$-linear isomorphism
\[\varphi(\bV,J) : M[\bU,H]\wtimes{\w\cD(\bU,J)} \w\cD(\bV,J)  \quad\stackrel{\cong}{\longrightarrow}\quad M[\bV,J].\]
Denote the left hand side by $M[\bU,H](\bV,J)$ following \cite[Definition 3.5.1]{EqDCap}. We leave it to the reader to verify that the following diagram 
\[\xymatrix@C=20pt{ &M[\bU,H](g\bV,gJg^{-1}) \ar[rr]^(0.55){\varphi(\bV, gJg^{-1})}&& M[g\bV,gJg^{-1}]& \\
M[\bU,H](\bV,J) \ar[rrrr]^{\varphi(\bV,J)}\ar[dr]\ar@/_2pc/[ddr]\ar[ur] &&&& M[\bV,J] \ar[ul]\ar[dl]\ar@/^2pc/[ddl] \\
&M[\bU,H](\bW,J) \ar[rr]_{\varphi(\bW,J)} && M[\bW,J]& \\
&M[\bU,H](\bV,N) \ar[rr]_{\varphi(\bV,N)} && M[\bV,N]&}\]
is commutative, for any $H$-stable open affinoid subdomain $\bW$ of $\bV$, any open subgroup $N$ of $H_{\bV}$ containing $J$, and any $g \in G$. In view of the right-module version of \cite[Definition 3.5.1]{EqDCap} and Definition \ref{M[UH]}, passing to the inverse limit over all open $J \leq H_{\bV}$ induces a continuous right $\cD(\bV)$-linear isomorphism
\[\varphi(\bV) : \cP^{\w\cD(\bU,H)}_{\bU}(M[\bU,H])(\bV) \quad \stackrel{\cong}{\longrightarrow}\quad (\iota_+\cN)(\bV).\]
The commutativity of the diagram above implies that these maps patch together to define the required continuous $H$-$\cD$-linear isomorphism 
\[ \varphi : \cP^{\w\cD(\bU,H)}_{\bU}(M[\bU,H]) \congs (\iota_+\cN)_{|\bU_w}. \qedhere\]
\end{proof}

\begin{cor}\label{i+NsheafonB} $\iota_+\cN$ is a sheaf on $\cB$ whenever $\cN \in {}^r \cC_{\bY/G}$. 
\end{cor}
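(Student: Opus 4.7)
The plan is to mimic the strategy used to establish Corollary \ref{Msheaf}: reduce the sheaf axiom on $\cB$ to the sheaf property of presheaves of the form $\cP^{\w\cD(\bU,H)}_{\bU}(M)$ on $\bU_w$, which is known by \cite[Theorem 3.5.11]{EqDCap}.

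First I observe that every member of $\cB$ belongs to $\bX_w(\cT)$, by Definition \ref{BasisB}(a)(i) and \cite[Definition 3.4.6(a)]{EqDCap}. So, to verify that $\iota_+\cN$ is a sheaf on $\cB$, it is enough to fix $\bU \in \cB$ together with a $\bU$-good compact open subgroup $H$ of $G$ (which exists by Lemma \ref{GoodBasis}(b)) and then to check that $(\iota_+\cN)_{|\bU_w}$ is a sheaf on $\bU_w$, since any admissible covering of $\bU$ by members of $\cB$ is in particular an admissible covering of $\bU$ by members of $\bU_w$.

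Next, Proposition \ref{i+Loc} supplies a natural continuous $H$-$\cD$-linear isomorphism
\[ \cP^{\w\cD(\bU,H)}_{\bU}((\iota_+\cN)(\bU)) \quad\congs\quad (\iota_+\cN)_{|\bU_w}\]
of presheaves on $\bU_w$. By Corollary \ref{I+Coadm}, the module $(\iota_+\cN)(\bU)$ is coadmissible over $\w\cD(\bU,H)$, so the left hand side is a \emph{sheaf} on $\bU_w$ by \cite[Theorem 3.5.11]{EqDCap}. Transporting this sheaf property across the isomorphism shows that $(\iota_+\cN)_{|\bU_w}$ is a sheaf on $\bU_w$, hence a fortiori satisfies the sheaf axiom with respect to coverings by members of $\cB$ contained in $\bU$. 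As $\bU \in \cB$ was arbitrary and $\cB$ is a basis for the topology on $\bX$ by Lemma \ref{GoodBasis}(a), we conclude that $\iota_+\cN$ is a sheaf on $\cB$.

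No real obstacle is expected here: the heavy lifting has already been carried out in Proposition \ref{i+Loc} (identifying the induced sheaf locally with a localisation of a coadmissible module) and in Corollary \ref{I+Coadm} (coadmissibility of the module of sections). The only point requiring a small check is the compatibility between $\cB$ and $\bU_w(\cT)$, which is immediate from the definitions.
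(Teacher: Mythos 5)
Your proof is correct and takes exactly the same route as the paper, which simply cites Proposition \ref{i+Loc}, Corollary \ref{I+Coadm} and \cite[Theorem 3.5.11]{EqDCap}; you have merely spelled out the reduction to $\bU_w$ that the paper leaves implicit.
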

\begin{proof} Use \cite[Theorem 3.5.11]{EqDCap}, Corollary \ref{I+Coadm} and Proposition \ref{i+Loc}.
\end{proof}
Using \cite[Theorem 9.1]{DCapOne}, we can make the following
\begin{defn}\label{DefnOfi+N} We define $\iota_+\cN$ to be the unique extension of the sheaf $\iota_+\cN$ on $\cB$ to a $G$-equivariant sheaf of $\cD$-modules on $\bX_{\rig}$.
\end{defn}
We can now record the equivariant generalisation of \cite[Proposition 6.4]{DCapTwo}.
\begin{prop}\label{i+CoadmFunc} $\cN \mapsto \iota_+\cN$ defines a functor $\iota_+ : {}^r\cC_{\bY/G} \longrightarrow {}^r\cC^{\bY}_{\bX/G}$.
\end{prop}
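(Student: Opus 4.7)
The plan has three parts: check that $\iota_+\cN$ is supported on $\bY$, that it is coadmissible, and that the construction is functorial. By construction (Definition \ref{DefnOfi+N}), the sheaf $\iota_+\cN$ on $\bX_{\rig}$ is the unique $G$-equivariant extension of its restriction to the basis $\cB$. So all three assertions can be checked on $\cB$, which is a basis by Lemma \ref{GoodBasis}(a).

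First I would show that $\iota_+\cN$ is supported on $\bY$ in the sense of Definition 3.6.1. Suppose $\bU \in \cB$ satisfies $\bU \cap \bY = \emptyset$. Then $\cI(\bU) = \cO(\bU)$, so $\cI(\bU) \w\cD(\bU, H) = \w\cD(\bU, H)$ for any $\bU$-good $H$, and consequently
\[
M[\bU,H] = \cN(\bU \cap \bY) \wtimes{\w\cD(\bU \cap \bY, H)} \frac{\w\cD(\bU, H)}{\cI(\bU) \w\cD(\bU, H)} = 0,
\]
hence $(\iota_+\cN)(\bU) = 0$ by Corollary \ref{I+Coadm}. Any admissible open $\bV \subseteq \bX \setminus \bY$ admits an admissible covering by members of $\cB$ (each of which is automatically disjoint from $\bY$), so $(\iota_+\cN)_{|\bV} = 0$.

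Next, for coadmissibility, I would invoke \cite[Definition 3.6.7]{EqDCap} together with \cite[Theorem 9.1]{DCapOne}. Given any admissible covering of $\bX$ by members of $\cB$ equipped with a choice of $\bU$-good subgroup $H$ (so that $(\bU,H)$ is small), Proposition \ref{i+Loc} produces a continuous $H$-$\cD$-linear isomorphism
\[
\cP^{\w\cD(\bU,H)}_{\bU}\bigl((\iota_+\cN)(\bU)\bigr) \;\congs\; (\iota_+\cN)_{|\bU_w},
\]
and Corollary \ref{I+Coadm} ensures that $(\iota_+\cN)(\bU)$ is a coadmissible right $\w\cD(\bU,H)$-module. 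By \cite[Theorem 9.1]{DCapOne}, this presheaf isomorphism extends uniquely to an isomorphism ${}^r\Loc^{\w\cD(\bU,H)}_{\bU}((\iota_+\cN)(\bU)) \congs (\iota_+\cN)_{|\bU_{\rig}}$ of $H$-equivariant $\cD$-modules on $\bU_{\rig}$, and this extension is automatically continuous by \cite[Lemma 3.6.5]{EqDCap}. This is precisely the condition of being $\cU$-coadmissible in the sense of \cite[Definition 3.6.7(a)]{EqDCap} for the covering $\cU \subset \cB$, so $\iota_+\cN \in {}^r\cC_{\bX/G}$.

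Finally, functoriality is straightforward: given a morphism $\cN \to \cN'$ in ${}^r\cC_{\bY/G}$, the induced $\w\cD(\bU\cap\bY,H)$-linear maps $\cN(\bU \cap \bY) \to \cN'(\bU \cap \bY)$ give, upon tensoring with $\w\cD(\bU,H)/\cI(\bU)\w\cD(\bU,H)$, right $\w\cD(\bU,H)$-linear maps $M[\bU,H] \to M'[\bU,H]$ that are compatible with the connecting arrows of Lemma \ref{MUHfunc}. Passing to the limit over $H$ yields a morphism $(\iota_+\cN)(\bU) \to (\iota_+\cN')(\bU)$ for each $\bU \in \cB$ which is compatible with restriction and with the $G$-equivariant structure, so defines a morphism of presheaves on $\cB$ and hence, by the uniqueness in \cite[Theorem 9.1]{DCapOne}, a morphism in ${}^r\cC_{\bX/G}$. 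The only mildly technical point is the invocation of \cite[Theorem 9.1]{DCapOne} to pass from the presheaf-level isomorphism on $\cB$ to a continuous isomorphism of sheaves on $\bU_{\rig}$, but this is available because Proposition \ref{i+Loc} and Corollary \ref{I+Coadm} together put us squarely in the setting where that extension theorem applies.
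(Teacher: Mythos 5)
Your proposal is correct and follows essentially the same route as the paper: support via the vanishing of $M[\bU,H]$ when $\cI(\bU)=\cO(\bU)$, coadmissibility via Proposition \ref{i+Loc}, Corollary \ref{I+Coadm} and the extension theorem \cite[Theorem 9.1]{DCapOne}, with \cite[Definition 3.6.7]{EqDCap} as the test, and functoriality observed directly. The paper presents these points in a slightly different order (coadmissibility first, support last) and explicitly restates the right-module version of $\cN_{|\bY_j}\cong {}^r\Loc\,\cN(\bY_j)$ as context, but the substance is the same.
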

\begin{proof} By Lemma \ref{GoodBasis}(a) we can find an admissible covering $\{\bX_j\}$ for $\bX$ by members of $\cB$, and by Lemma \ref{GoodBasis}(b), for each $j$ we can choose a $\bX_j$-good subgroup $G_j$ of $G$. Let $\bY_j := \bX_j \cap \bY$; then $(\bY_j,G_j)$ is small so by \cite[Theorem 4.4.3]{EqDCap} for each $j$ there is a continuous right $\cD-G_j$-linear isomorphism
\[ \cN_{|\bY_j} \quad \cong\quad {}^r\Loc^{\w\cD(\bY_j,G_j)}_{\bY_j} \cN(\bY_j).\]
Proposition \ref{i+Loc} now implies that for each $j$ there is an isomorphism of locally Fr\'echet $H_j$-equivariant right $\cD$-modules
\[ (\iota_+\cN)_{|\bX_j} \quad\cong\quad {}^r\Loc^{\w\cD(\bX_j,G_j)}_{\bX_j} \left((\iota_+\cN)(\bX_j)\right)\]
In view of Lemma \ref{i+NGpre}, Corollary \ref{i+NsheafonB} and Definition \ref{DefnOfi+N}, this means that $\iota_+\cN \in {}^r \cC_{\bX/G}$. The functorial nature of $\iota_+$ is clear. Finally, note that if $\bU$ is an affinoid subdomain of $\bX$ such that $\bU \cap \bY = \emptyset$, then $\cI(\bU) = \cO(\bU)$ and it follows directly from Definition \ref{M[UH]} that $M[\bU,H] = 0$ for any $\bU$-good subgroup $H$. Therefore $(\iota_+\cN)(\bU) = 0$ for any such $\bU$ by Definition \ref{i+OnB}, which means that the restriction of $\iota_+\cN$ to $\bX -\bY$ is zero. Hence $\iota_+\cN \in {}^r \cC_{\bX/G}^{\bY}$ as claimed.
\end{proof}

Next, we construct the equivariant pullback functor 
\[ \iota^\natural : {}^r \cC^{\bY}_{\bX/G} \quad\longrightarrow \quad {}^r\cC_{\bY/G}.\]
Recall from \cite[\S A.1]{DCapTwo} that $\iota^{-1}$ denotes the pullback functor from abelian sheaves on $\bX_{\rig}$ to abelian sheaves on $\bY_{\rig}$. 
\begin{defn} Let $\cM \in {}^r \cC^{\bY}_{\bX/G}$. We define $\iota^\natural\cM := \iota^{-1}( \iota^\natural_\ast \cM )$ where $\iota^\natural_\ast\cM$ is the extension to $\bX_{\rig}$ of the sheaf $\iota^\natural_\ast\cM$ on $\cB$ given by
\[(\iota^\natural_\ast\cM)(\bU) := \cM(\bU)[\sI(\bU)] \qmb{for all} \bU \in \cB.\]
\end{defn}
Note that $\iota^\natural_\ast\cM$ is a presheaf of $\iota_\ast \cD_{\bY}$-modules on $\cB$; the proof of \cite[Lemma 6.5]{DCapTwo} shows that it is in fact a sheaf. Because $\iota_\ast \cD_{\bY}$ is supported on $\bY$, $\iota^\natural_\ast\cM$ is also supported on $\bY$ and by \cite[Theorem A.1]{DCapTwo} there is a natural isomorphism
\[ \iota_\ast(\iota^\natural\cM) \stackrel{\cong}{\longrightarrow} \iota^\natural_\ast \cM.\]
We will need the following generalisation of \cite[Theorem 6.7]{DCapTwo}.
\begin{prop}\label{RecognitionOfSupport} Suppose that $\bX \in \cB$ and that $G$ is $\bX$-good. Then the following are equivalent for $\cM \in {}^r\cC_{\bX/G}$:
\be\item $\cM$ is supported on $\bY$,
\item $\cM(\bX) = \cM(\bX)_\infty([\sI(\bX)])$.
\ee\end{prop}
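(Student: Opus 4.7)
The plan is to deduce Proposition \ref{RecognitionOfSupport} from the non-equivariant analogue \cite[Theorem 6.7]{DCapTwo}. The key observation is that both conditions are intrinsic to the underlying sheaf of right $\cD_{\bX}$-modules of $\cM$ and do not involve the $G$-action: condition (a) is about $\cM$ as an abelian sheaf on $\bX$, while condition (b) refers only to the Fr\'echet $\cO(\bX)$-module topology on $\cM(\bX)$ together with the action of $\sI(\bX) \subset \cO(\bX)$. So the task is to transfer these data from the equivariant setting to the non-equivariant one.

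First I would verify that the underlying sheaf of right $\cD_{\bX}$-modules of $\cM$ is coadmissible in the sense of \cite{DCapOne}. Since $\bX \in \cB$ and $G$ is $\bX$-good, we have a $G$-stable affine formal model $\cA \subset \cO(\bX)$ and a $G$-stable free $\cA$-Lie lattice $\cL \subset \cT(\bX)$ satisfying $[\cL,\cL]\subseteq \pi \cL$ and $\cL \cdot \cA \subseteq \pi \cA$. By the right-module version of \cite[Theorem 4.4.3]{EqDCap}, $\cM(\bX)$ is a coadmissible right $\w\cD(\bX,G)$-module. Picking a good chain $(H_n)$ for $\cL$ in $G$ using \cite[Lemma 3.3.6]{EqDCap}, we obtain a Fr\'echet--Stein presentation $\w\cD(\bX,G) = \invlim T_n$ with $T_n := \hK{U(\pi^n\cL)} \rtimes_{H_n} G$. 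Since $T_n$ is a finitely generated free right $\hK{U(\pi^n\cL)}$-module by \cite[Lemma 2.2.4(b)]{EqDCap}, the coherent system $(\cM(\bX) \otimes_{\w\cD(\bX,G)} T_n)$ also satisfies the coherence condition of Schneider--Teitelbaum over the Fr\'echet--Stein presentation $\w\cD(\bX) = \invlim \hK{U(\pi^n\cL)}$, so $\cM(\bX)$ is coadmissible as a right $\w\cD(\bX)$-module with the same underlying Fr\'echet topology.

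Next I would check that the underlying sheaf of $\cM$ on arbitrary affinoid subdomains $\bU \subseteq \bX$ coincides, as an abelian sheaf of $\cD_{\bX}$-modules, with the non-equivariant localization $\Loc^{\w\cD(\bX)}_{\bX}(\cM(\bX))$. By definition of coadmissibility in $\cC_{\bX/G}$ (\cite[Definition 3.6.7]{EqDCap}) we have $\cM(\bU) \cong \cM(\bX) \wtimes{\w\cD(\bX,H)} \w\cD(\bU,H)$ for any $\bU$-small $H$. Combining this with the bimodule isomorphism from the proof of \cite[Proposition 3.4.10]{EqDCap} and contracting tensor products, we see that $\cM(\bU) \cong \cM(\bX) \wtimes{\w\cD(\bX)} \w\cD(\bU)$ as a right $\w\cD(\bU)$-module. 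Hence the support of $\cM$ as an abelian sheaf on $\bX$ agrees with the support of $\Loc^{\w\cD(\bX)}_{\bX}(\cM(\bX))$, and \cite[Theorem 6.7]{DCapTwo} applied to this ordinary coadmissible right $\cD_{\bX}$-module yields the desired equivalence (a) $\Leftrightarrow$ (b).

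The main obstacle is the first step: carefully tracking that a coadmissible right $\w\cD(\bX,G)$-module restricts to a coadmissible right $\w\cD(\bX)$-module with a compatible Fr\'echet topology, and that the equivariant localization agrees, as a plain sheaf, with the non-equivariant one. The rest reduces to invoking the already-established result and is straightforward bookkeeping.
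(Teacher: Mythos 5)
Your approach fails at the first step: the restriction of a coadmissible right $\w\cD(\bX,G)$-module along $\w\cD(\bX)\hookrightarrow\w\cD(\bX,G)$ is \emph{not} in general a coadmissible right $\w\cD(\bX)$-module. The simplest counterexample is $\bX$ a single $K$-point, where $\w\cD(\bX)=K$ but $\w\cD(\bX,G)$ is the smooth distribution algebra $D^\infty(G,K)$; for infinite compact $G$, the rank-one free module $D^\infty(G,K)$ is a coadmissible $D^\infty(G,K)$-module which is infinite-dimensional over $K$, hence not coadmissible over $\w\cD(\bX)$. Concretely, the coherence argument you sketch does not go through: writing $T_n := \hK{U(\pi^n\cL)} \rtimes_{H_n} G$ and $U_n := \hK{U(\pi^n\cL)}$, each $T_n$ is indeed free over $U_n$, but of rank $|G/H_n|$, and this rank grows without bound because the good chain $(H_n)$ has trivial intersection. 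So the isomorphisms $M_n \cong M_{n+1}\otimes_{T_{n+1}}T_n$ do not yield $M_n \cong M_{n+1}\otimes_{U_{n+1}}U_n$; the modules $T_n$ and $T_{n+1}\otimes_{U_{n+1}}U_n$ even have different ranks as free $U_n$-modules. Without coadmissibility of $\cM(\bX)$ over $\w\cD(\bX)$, you cannot invoke \cite[Theorem 6.7]{DCapTwo} as a black box.

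The paper avoids this by unpacking the proof of \cite[Theorem 6.7]{DCapTwo} instead: it reduces, just as there, to showing for each nonzero $f\in\sI(\bX)$ that $\cM(\bX(1/f))=0$ if and only if $f$ acts locally topologically nilpotently on $\cM(\bX)$, and then checks this equivalence level-by-level. Setting $M_n := \cM(\bX)\otimes_T T_n$ and $M'_n := \cM(\bX(1/f))\otimes_{T'}T'_n$, the Banach-level bimodule isomorphism $T'_n\cong T_n\otimes_{U_n}U'_n$ from \cite[Proposition 4.3.11]{EqDCap} gives $M'_n\cong M_n\otimes_{U_n}U'_n$, so the \emph{non-equivariant} ingredient \cite[Proposition 6.6]{DCapTwo} can be applied at each fixed $n$, where $G/H_n$ is genuinely finite, before passing to the inverse limit. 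The finite-cover structure of $T_n$ over $U_n$ holds at each Banach level, even though the analogous Fr\'echet-level transfer you were hoping for is precisely what fails.
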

\begin{proof} The proof of \cite[Theorem 6.7]{DCapTwo} reduces us \footnote{use \cite[Theorem 4.4.3]{EqDCap} in place of \cite[Theorem 9.4]{DCapOne}} to showing that for any non-zero $f \in \sI(\bX)$, the module $\cM(\bX(1/f))$ is zero if and only if $f$ acts locally topologically nilpotently on $\cM(\bX)$. Our assumptions on $\bX$ and $G$ allow us to choose a $G$-stable affine formal model $\cA \subset \cO(\bX)$ and a $G$-stable $\cA$-Lie lattice $\cL \subset \cT(\bX)$. By rescaling $\cL$ and applying \cite[Lemma 7.6(a)]{DCapOne}, we may assume that $\cL \cdot f \subset \cA$. 

Let $\bX' := \bX(1/f)$, let $H := G_{\bX'}$, write $T := \w\cD(\bX,H)$ and $T' := \w\cD(\bX',H)$. Let $M := \cM(\bX) \in {}^r\cC_T$ and $M' := \cM(\bX') \in {}^r\cC_{T'}$ so that $M' \cong M \w\otimes_{T}T'$ by \cite[Theorem 4.4.3]{EqDCap}. Let $\cA' := \cA \langle 1/f \rangle$ and $\cL' := \cA' \otimes_{\cA} \cL$ and set $U_n := \hK{U(\pi^n \cL)}$ and $U'_n := \hK{U(\pi^n\cL')}$ for each $n \geq 0$. Choose a good chain $(H_\bullet)$ for $\cL$ in $H$ and set $T_n := U_n \rtimes_{H_n} H$ and $T'_n := U'_n \rtimes_{H_n} H$ for each $n \geq 0$, so that $T = \invlim T_n$ and $T' = \invlim T'_n$ by \cite[Lemma 3.3.4]{EqDCap}. Let $M_n := M \otimes_T T_n$ and $M'_n := M' \otimes_{T'}T'_n$ for each $n \geq 0$. Using the isomorphism $T'_n \cong T_n \otimes_{U_n} U'_n$ of $T_n-U'_n$-bimodules coming from the right-module version of \cite[Proposition 4.3.11]{EqDCap}, we obtain
\[M'_n \cong (M \wtimes{T}T')\utimes{T'} T'_n = M \utimes{T} T'_n \cong  M_n \utimes{T_n} T'_n \cong M_n \utimes{U_n} U'_n \qmb{for all} n \geq 0.\]
Now we argue as in the proof of \cite[Corollary 6.6]{DCapTwo}: $M' = \cM(\bX(1/f))$ is zero if and only if $M'_n \cong M_n \otimes_{U_n} U'_n$ is zero for all $n \geq 0$, if and only if $f$ acts locally topologically nilpotently on $M_n$ \footnote{by \cite[Proposition 6.6]{DCapTwo}} for all $n \geq 0$, if and only if $f$ acts locally topologically nilpotently on $\invlim M_n$. But $\cM(\bX) = M \cong \invlim M_n$ by \cite[Corollary 3.3]{ST}. \end{proof}

We can now construct our equivariant pullback functor.
\begin{thm}\label{InatCoadm} Let $\cM \in {}^r \cC^{\bY}_{\bX/G}$. Then $\iota^\natural \cM \in {}^r \cC_{\bY/G}$, and this defines a functor
\[ \iota^\natural : {}^r \cC^{\bY}_{\bX/G} \quad\longrightarrow \quad {}^r\cC_{\bY/G}.\]
\end{thm}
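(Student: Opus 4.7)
The plan is to localise the question and apply the local Kashiwara equivalence Theorem \ref{LocalKash}. First, I would choose an admissible covering $\{\bU_\alpha\}$ of $\bX$ by members of the basis $\cB$ from Definition \ref{BasisB} using Lemma \ref{GoodBasis}(a), and for each $\alpha$ fix a $\bU_\alpha$-good open subgroup $H_\alpha$ of $G$ using Lemma \ref{GoodBasis}(b); then $\{\bV_\alpha := \bU_\alpha \cap \bY\}$ is an admissible covering of $\bY$ and each pair $(\bV_\alpha, H_\alpha)$ is small by Lemma \ref{GoodBasis}(c). It then suffices to show that $(\iota^\natural \cM)_{|\bV_\alpha}$ is $H_\alpha$-equivariant locally Fr\'echet and $\bV_\alpha$-coadmissible in the sense of \cite[Definition 3.6.7]{EqDCap}, and then to patch using \cite[Theorem 3.6.11]{EqDCap}.

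Fix $\bU \in \cB$ with a $\bU$-good open $H \leq G$, and set $\bV := \bU \cap \bY$. There are two cases coming from condition (ii) of Definition \ref{BasisB}. If $\cI(\bU) = \cI(\bU)^2$ then, because $\bU$ is connected and $\sI$ is a radical coherent ideal, either $\cI(\bU) = \cO(\bU)$ (so $\bV = \emptyset$ and $(\iota^\natural\cM)_{|\bV} = 0$) or $\cI(\bU) = 0$ (so $\bV = \bU$ and $\iota^\natural_\ast \cM (\bU) = \cM(\bU)$ is already coadmissible by hypothesis). Otherwise we can pick an affine formal model $\cA \subset \cO(\bU)$, a free $H$-stable $\cA$-Lie lattice $\cL \subset \cT(\bU)$ and generators $F = \{f_1,\dots,f_r\}$ of $\cI(\bU)$ satisfying the hypotheses of Theorem \ref{LocalKash}. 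Since $\cM \in {}^r\cC^{\bY}_{\bX/G}$ is supported on $\bY$, its restriction to $\bU$ is supported on $\bV$, and hence Proposition \ref{RecognitionOfSupport} gives $\cM(\bU) = \cM(\bU)_\infty(\cI(\bU))$. Thus Theorem \ref{LocalKash}, applied with the open subgroups from a good chain inside $H$, tells us that $(\iota^\natural_\ast \cM)(\bU) = \cM(\bU)[\sI(\bU)]$ is a coadmissible right $\w\cD(\bV,H)$-module.

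Next I would verify that this coadmissible module recovers $\iota^\natural \cM$ locally. Concretely, for every $H$-stable affinoid subdomain $\bU' \subseteq \bU$ and every open subgroup $J$ of $H_{\bU'}$, Proposition \ref{MIX} applied to $\bU' \subseteq \bU$ yields a natural isomorphism
\[ \cM(\bU)[\sI(\bU)] \wtimes{\w\cD(\bV,J)} \w\cD(\bV',J) \;\stackrel{\cong}{\longrightarrow}\; \bigl(\cM(\bU) \wtimes{\w\cD(\bU,J)} \w\cD(\bU',J)\bigr)[\sI(\bU')] \;\cong\; \cM(\bU')[\sI(\bU')], \]
where $\bV' := \bU' \cap \bY$ and the second isomorphism comes from $\cM \in {}^r\cC_{\bX/G}$ via \cite[Theorem 4.4.3]{EqDCap}. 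Passing to the inverse limit over $J$ and using the right-module version of \cite[Definition 3.5.1 and Theorem 4.4.3]{EqDCap} identifies the restriction of $\iota^\natural \cM$ to $\bV$ with ${}^r\Loc^{\w\cD(\bV,H)}_{\bV}(\cM(\bU)[\sI(\bU)])$; this gives the required local Fr\'echet structure and the local isomorphism needed for $\bV$-coadmissibility.

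Finally, I would address the $G$-equivariant structure and functoriality: because $\bY$ is $G$-stable, the structure maps $g^{\cM}_{\bU,H}:\cM(\bU) \to \cM(g\bU)$ send $\sI(\bU)$-victims to $\sI(g\bU)$-victims (they intertwine the $G$-actions on the defining ideals), and the restriction maps are already equivariant, so $\iota^\natural \cM$ inherits a $G$-equivariant $\cD$-module structure on $\bY_{\rig}$; the verification that the isomorphism in the previous paragraph is continuous and $H$-equivariant is straightforward. Functoriality of $\iota^\natural$ is immediate since any morphism $\varphi : \cM \to \cM'$ in ${}^r\cC^{\bY}_{\bX/G}$ sends $\sI$-victims to $\sI$-victims section by section. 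The main obstacle is a bookkeeping one: checking that the local presentations over different members of $\cB$ glue compatibly and respect the $G$-action; but this is handled uniformly by Proposition \ref{MIX} together with \cite[Theorem 3.6.11 and Theorem 9.1]{DCapOne}\cite[Definition 3.6.7]{EqDCap}.
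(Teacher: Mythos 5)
Your proposal is correct and follows essentially the same route as the paper's proof: localise to an admissible covering by members of $\cB$ with good compact open subgroups, use Proposition~\ref{RecognitionOfSupport} to convert the support hypothesis into $M = M_\infty(I)$, apply the local Kashiwara machinery (you cite Theorem~\ref{LocalKash}; the paper invokes its ingredient Proposition~\ref{Ivictims} directly) to get local coadmissibility of $M[I]$, use Proposition~\ref{MIX} to show $[\,\cdot\,]$ commutes with localisation, and then assemble via the $\cP/\Loc$ formalism and the patching theorems. Your explicit treatment of the degenerate case $\cI(\bU) = \cI(\bU)^2$ (empty or full intersection) is a small careful addition the paper leaves implicit, but it changes nothing substantive.
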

\begin{proof} The proof of \cite[Lemma 6.5]{DCapTwo} shows that $(\iota^\natural_\ast\cM)(\bU) := \cM(\bU)[\sI(\bU)]$ holds for every affinoid subdomain $\bU$ of $\bX$. Because $\cM \in {}^r \cC_{\bX/G}$ is locally Fr\'echet,  $\cM(\bU)$ is a Fr\'echet space for each $\bU \in \bX_w(\cT)$. Since $\sI(\bU)$ is finitely generated as an $\cO(\bU)$-module, it follows that $(\iota^\natural_\ast\cM)(\bU)$ is a closed subspace of $\cM(\bU)$ and therefore itself carries a natural Fr\'echet topology; thus $\iota^\natural_\ast\cM$ is a locally Fr\'echet $\cD$-module on $\bX$ in the sense of \cite[Definition 3.6.1]{EqDCap} applied with $G=1$.

Let $g \in G$ and let $\bU \in \bX_w(\cT)$. If $a \in \sI(g \bU)$ then $(g^{\cO})^{-1}(a) \in \sI(\bU)$ because $\bY$ is $G$-stable, so $a \cdot g^\cM(m) = g^{\cM}( (g^{\cO})^{-1}(a) \cdot m) = 0$ for all $m \in \cM(\bU)[\sI(\bU)]$. Thus $g^{\cM}$ maps $\cM(\bU)[\sI(\bU)]$ into $\cM(g \bU)[\sI(g \bU)]$, and we can therefore define $g^{\iota^\natural_\ast\cM}(\bU)$ to be the restriction of $g^\cM(\bU)$ to $(\iota^\natural_\ast\cM)(\bU) \subset \cM(\bU)$. Being the restriction of the continuous map $g^{\cM}(\bU)$, $g^{\iota^\natural_\ast\cM}(\bU)$ is continuous. We leave it to the reader to check that in this way $\iota^\natural_\ast \cM$ becomes a $G$-equivariant $\iota_\ast\cD$-module on $\bX$. It follows that $\iota^\natural\cM$ is a locally Fr\'echet $G$-equivariant $\cD$-module on $\bY$.

Because of the local nature of ${}^r\cC_{\bX/G}$ --- see \cite[Definition 3.6.7]{EqDCap} --- as in the proof of \cite[Theorem 6.10(a)]{DCapTwo}, we can now reduce to the case where $\bX \in \cB$ and $G$ is $\bX$-good in order to show that $\iota^\natural \cM \in {}^r \cC_{\bY/G}$. Let $T := \w\cD(\bX,G)$, $S := \w\cD(\bY,G)$, $M := \cM(\bX)$ and $I := \sI(\bX)$; then $M \in {}^r\cC_T$ by \cite[Theorem 4.4.3]{EqDCap} and therefore $M[I] \in {}^r \cC_S$ by Proposition \ref{Ivictims}. Let $\bX'$ be an affinoid subdomain of $\bX$, write $\bY' := \bX' \cap \bY$ and choose any open subgroup $H$ of $G_{\bX'}$. Because $\cM$ is supported on $\bY$, we know that $M = M_\infty(I)$ by Proposition \ref{RecognitionOfSupport}. Then by Proposition \ref{MIX}, the natural right $\w\cD(\bY',H)$-linear map 
\[ M[\sI(\bX)] \wtimes{\w\cD(\bY,H)} \w\cD(\bY',H) \quad\longrightarrow\quad \left(M \wtimes{\w\cD(\bX,H)} \w\cD(\bX',H)\right)[\sI(\bX')]\]
is an isomorphism. These maps are compatible with variation in $H \leq G_{\bX'}$; passing to the limit over all such open subgroups $H$ induces a $\cD(\bX')$-linear isomorphism
\[  {}^r\cP^S_{\bY}(M[I])(\bY') \tocong {}^r\cP^{T}_{\bX}(M)(\bX')[ \sI(\bX') ]\]
which is automatically continuous by \cite[Lemma 3.6.5]{EqDCap}. We leave it to the reader to verify that these maps are in turn compatible with restrictions to affinoid subdomains $\bX'' \subset \bX'$ as well as with the $G$-action. They therefore assemble to give a right $\cD$-$G$-linear isomorphism of sheaves on $\bX_w$
\[  \iota_\ast {}^r\cP^S_{\bY}(M[I]) \tocong {}^r\cP^{T}_{\bX}(M)[ \sI(-) ], \]
and then by \cite[Theorem 9.1]{DCapOne}, to a continuous right $\cD_{\bX}$-$G$-linear isomorphism 
\[ \iota_\ast\hsp  {}^r \Loc^S_{\bY}(M[I]) \tocong \iota_\ast \iota^\natural \hsp {}^r \Loc^T_{\bX}(M) \]
and hence by \cite[Theorem A.1]{DCapTwo}, to a continuous right $\cD_{\bY}$-$G$-linear isomorphism 
\[ \Loc^S_{\bY}(M[I]) \tocong \iota^\natural \left({}^r \Loc^T_{\bX}(M)\right).\]
Because $\cM \cong {}^r \Loc^T_{\bX}(M)$ by \cite[Theorem 4.4.3]{EqDCap}, we conclude that $\iota^\natural \cM\in {}^r \cC_{\bY/G}$. The functorial nature of $\cM \mapsto \iota^\natural \cM$ is clear. \end{proof}
Here is the Kashiwara equivalence for equivariant right $\cD$-modules.
\begin{thm}\label{KashiwaraGeneral} Let $\iota : \bY \hookrightarrow \bX$  be the inclusion of a smooth, Zariski closed subset $\bY$ into the smooth rigid analytic space $\bX$. Let $G$ be a $p$-adic Lie group acting continuously on $\bX$ and stabilising $\bY$.  Then the functors
\[\iota_+ : {}^r\cC_{\bY/G} \to {}^r\cC^{\bY}_{\bX/G} \qmb{and} \iota^\natural :  {}^r\cC^{\bY}_{\bX/G} \to {}^r\cC_{\bY/G}\]
are mutually inverse equivalences of categories.
\end{thm}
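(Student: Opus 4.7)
My plan is to reduce the global Kashiwara equivalence to its local counterpart Theorem \ref{LocalKash} by exploiting the basis $\cB$ of Definition \ref{BasisB}. The elements $\bU \in \cB$ together with a $\bU$-good open subgroup $H \leq G$ are precisely the data for which the hypotheses of Theorem \ref{LocalKash} hold with $(\bU, \bU \cap \bY, H)$ in place of $(\bX, \bY, G)$; moreover, by Proposition \ref{i+CoadmFunc} and Theorem \ref{InatCoadm} the two functors $\iota_+$ and $\iota^\natural$ are determined by their values on sections over $\bU \in \cB$. It will therefore suffice to construct a unit $\eta \colon \mathrm{id}_{{}^r\cC_{\bY/G}} \tocong \iota^\natural \iota_+$ and counit $\epsilon \colon \iota_+ \iota^\natural \tocong \mathrm{id}_{{}^r\cC^{\bY}_{\bX/G}}$ by patching together the local adjunction isomorphisms supplied by Theorem \ref{LocalKash}.

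For the unit, fix $\cN \in {}^r \cC_{\bY/G}$, $\bU \in \cB$ and a $\bU$-good open subgroup $H$ of $G$. By Corollary \ref{I+Coadm} we have $(\iota_+\cN)(\bU) \cong M[\bU,H]$, and by the construction of $\iota^\natural$ together with Proposition \ref{Ivictims} the value $(\iota^\natural \iota_+ \cN)(\bU \cap \bY)$ equals $M[\bU,H][\sI(\bU)]$. Applying Theorem \ref{LocalKash} to $(\bU, \bU\cap\bY, H)$ produces a natural $\w\cD(\bU\cap\bY,H)$-linear isomorphism $\eta^{\bU,H}_\cN \colon \cN(\bU\cap\bY) \tocong M[\bU,H][\sI(\bU)]$. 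I would then check the three compatibilities needed to patch these into a morphism of sheaves: independence of $H$ (via Proposition \ref{MUHiso}); compatibility with the restriction maps along $\bV \subseteq \bU$ in $\cB$ (using Proposition \ref{MIX} in tandem with the localisation argument in the proof of Proposition \ref{i+Loc}); and $G$-equivariance (using Lemma \ref{MUHfunc}). Together with Lemma \ref{GoodBasis}(a)(b), this gives a well-defined isomorphism $\eta_\cN$ in ${}^r\cC_{\bY/G}$.

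For the counit, fix $\cM \in {}^r \cC^{\bY}_{\bX/G}$. The key input is Proposition \ref{RecognitionOfSupport}, which together with the fact that $\cM$ is supported on $\bY$ forces $\cM(\bU) = \cM(\bU)_\infty(\sI(\bU))$ for every $\bU \in \cB$ with $\bU\cap\bY \neq \emptyset$ and every $\bU$-good $H$. Under this hypothesis Theorem \ref{LocalKash} provides a natural $\w\cD(\bU,H)$-linear isomorphism
\[ \cM(\bU)[\sI(\bU)] \wtimes{\w\cD(\bU\cap\bY,H)} \frac{\w\cD(\bU,H)}{\cI(\bU)\w\cD(\bU,H)} \tocong \cM(\bU), \]
whose left-hand side is $(\iota_+ \iota^\natural \cM)(\bU)$ by Definitions \ref{M[UH]} and \ref{i+OnB}. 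When $\bU\cap\bY = \emptyset$, both sides are zero: the right-hand side because $\cM$ is supported on $\bY$, the left by the final paragraph of the proof of Proposition \ref{i+CoadmFunc}. The same three compatibilities as above then assemble the local counit maps into the required isomorphism $\epsilon_\cM \colon \iota_+ \iota^\natural \cM \tocong \cM$.

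The main obstacle is not any conceptual new input but the bookkeeping for these three compatibilities, which are what promote the pointwise isomorphisms of Theorem \ref{LocalKash} into morphisms on $\bX_{\rig}$ respecting the locally Fr\'echet $G$-equivariant structure. All of the required ingredients — Propositions \ref{MUHiso}, \ref{MIX} and \ref{RecognitionOfSupport}, and the localisation results of $\S \ref{EqMKE}$ — have been prepared in advance, so once this verification is carried out, the conclusion follows by evaluating on the basis $\cB$, which detects isomorphisms in both ${}^r\cC_{\bX/G}^{\bY}$ and ${}^r\cC_{\bY/G}$.
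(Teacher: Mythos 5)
Your plan is correct in principle, but it takes a genuinely different route from the paper. You propose to directly construct the unit $\eta \colon \mathrm{id} \Rightarrow \iota^\natural\iota_+$ and the counit $\epsilon \colon \iota_+\iota^\natural \Rightarrow \mathrm{id}$ as natural isomorphisms, by evaluating on $\bU \in \cB$, applying Theorem \ref{LocalKash} to $(\bU, \bU\cap\bY, H)$ and then checking the three compatibilities needed to glue local isomorphisms into morphisms of $G$-equivariant sheaves. The paper takes a lighter path: it first constructs an \emph{adjunction}, i.e.\ a natural bijection
\[\Psi \colon \Hom_{{}^r\cC_{\bX/G}}( \iota_+\cN, \cM ) \congs \Hom_{{}^r\cC_{\bY/G}}(\cN, \iota^\natural \cM ),\]
by combining Corollary \ref{I+Coadm}, Proposition \ref{Ivictims} and the universal property of $\w\otimes$ over the basis $\cB$ — this requires only that the $\Hom$-sets identify compatibly, not that any morphism is an isomorphism. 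It then invokes the formal criterion \cite[Proposition 4.10]{DCapTwo}: an adjunction $(F \dashv G)$ is an equivalence provided the counit is an isomorphism and $F$ reflects isomorphisms. Verifying those two hypotheses needs Proposition \ref{RecognitionOfSupport} and Theorem \ref{LocalKash} but not a direct construction of the unit $\eta$; the triangle identity automatically upgrades $\eta$ to an isomorphism once $\epsilon$ is one and $\iota_+$ reflects isomorphisms. So the paper does roughly half the sheaf-theoretic bookkeeping you are proposing. Your approach would carry through, but keep in mind that what you label ``bookkeeping'' is not entirely trivial: you would need to run the naturality and $G$-equivariance checks twice (once for $\eta$, once for $\epsilon$), and when $\cI(\bU) = \cI(\bU)^2$ you must handle not just $\bU \cap \bY = \emptyset$ but also the degenerate case $\bU \subseteq \bY$, which you do not mention explicitly. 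The adjunction-plus-formal-criterion route avoids all of that by design.
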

\begin{proof} The functors $\iota_+$ and $\iota^\natural$ were constructed in Proposition \ref{i+CoadmFunc} and Theorem \ref{InatCoadm}, respectively. We will first show that $\iota_+$ is left adjoint to $\iota^\natural$; to this end, fix $\cN \in {}^r\cC_{\bY/G}$ and $\cM \in {}^r\cC_{\bX/G}$, and using Lemma \ref{GoodBasis}(b), for each $\bU \in \cB$ choose a $\bU$-good open subgroup $H(\bU)$ of $G$. \cite[Theorem 9.1]{DCapOne} gives us an inclusion
\[ \Hom_{{}^r\cC_{\bX/G}}( \iota_+\cN, \cM ) \hookrightarrow \prod\limits_{\bU \in \cB} \Hom^{\cts}_{\cD(\bU) \rtimes H(\bU)}\left( (\iota_+\cN)(\bU), \cM(\bU) \right)\]
whose image consists of all tuples $(\alpha_{\bU})_{\bU \in \cB}$ that commute with the restriction and $G$-equivariant maps in $\iota_+\cN$ and $\cM$. Similarly, by \cite[Theorem A.1]{DCapTwo} and \cite[Theorem 9.1]{DCapOne} there is an inclusion
\[ \Hom_{{}^r\cC_{\bY/G}}(\cN, \iota^\natural \cM ) \hookrightarrow \prod\limits_{\bU \in \cB} \Hom^{\cts}_{\cD(\bU\cap \bY) \rtimes H(\bU)}\left( \cN(\bU\cap \bY), (\iota^\natural_\ast\cM)(\bU)\right)\]
whose image consists of all tuples $(\beta_{\bU})_{\bU \in \cB}$ that commute with the restriction and $G$-equivariant maps in $\iota_\ast\cN$ and $\iota^\natural_\ast\cM$. 

Fix $\bU \in \cB$, write $\bV:=\bU \cap \bY$, $N := \cN(\bV)$, $M := \cM(\bU)$, $H := H(\bU)$ and $I := \cI(\bU)$ so that $(\iota^\natural_\ast\cM)(\bU) = M[I]$. By Corollary \ref{I+Coadm}, there is a bijection
\begin{equation}\label{Adj1} \Hom^{\cts}_{\cD(\bU) \rtimes H}\left( (\iota_+\cN)(\bU), \cM(\bU) \right) \cong \Hom^{\cts}_{\cD(\bU) \rtimes H}\left( M[\bU,H], M \right).\end{equation}
Now $M[\bU, H]$ is a coadmissible right $\w\cD(\bU,H)$-module by Definition \ref{M[UH]}, whereas $M$ is a coadmissible right $\w\cD(\bU,H)$-module by \cite[Theorem 4.4.3]{EqDCap}, so 
\begin{equation}\label{Adj2}\Hom^{\cts}_{\cD(\bU) \rtimes H}\left( M[\bU,H], M \right) = \Hom_{\w\cD(\bU,H)}\left( M[\bU,H], M \right)\end{equation}
and similarly it follows from Proposition \ref{Ivictims} that
\begin{equation}\label{Adj3}\Hom^{\cts}_{\cD(\bU \cap \bY) \rtimes H}\left( \cN(\bU \cap \bY),  (\iota^\natural_\ast\cM)(\bU)\right) = \Hom_{\w\cD(\bV,H)}\left( N,M[I] \right).\end{equation}
Now, as in the proof of \cite[Theorem 6.10(b)]{DCapTwo}, we can define 
\[ \Phi_{\bU} : \Hom_{\w\cD(\bU,H)}\left( M[\bU,H], M \right) \quad\longrightarrow\quad \Hom_{\w\cD(\bV,H)}\left( N,M[I] \right)\]
by the formula $\Phi_\bU(\alpha)(x) = \alpha( x \w\otimes 1 )$ for all $x \in N$. Then $\Phi_\bU$ is injective because $M[\bU,H]$ is generated by the image of $N$ as a right $\w\cD(\bU,H)$-module, and the universal property of $\w\otimes$ implies that it is in fact bijective. Combining $\Phi_{\bU}$ with the bijections $(\ref{Adj1}), (\ref{Adj2})$ and $(\ref{Adj3})$ gives us a canonical bijection 
\[ \Hom^{\cts}_{\cD(\bU) \rtimes H(\bU)}\left( (\iota_+\cN)(\bU), \cM(\bU) \right) \overset{\cong}{\underset{\Psi_{\bU}}{\longrightarrow} }\Hom^{\cts}_{\cD(\bU\cap \bY) \rtimes H(\bU)}\left( \cN(\bU\cap \bY), (\iota^\natural_\ast\cM)(\bU)\right)\]
and therefore a bijection
\[\xymatrix{\prod\limits_{\bU \in \cB} \Hom^{\cts}_{\cD(\bU) \rtimes H(\bU)}\left( (\iota_+\cN)(\bU), \cM(\bU) \right) \ar[d]^{\cong}_{\Psi := \prod\limits_{\bU \in \cB} \Psi_{\bU}} \\
\prod\limits_{\bU \in \cB} \Hom^{\cts}_{\cD(\bU\cap \bY) \rtimes H(\bU)}\left( \cN(\bU\cap \bY), (\iota^\natural_\ast\cM)(\bU)\right).}\]
We leave it to the reader to check that $\Psi$ and $\Psi^{-1}$ send morphisms of $G$-equivariant sheaves to morphisms of $G$-equivariant sheaves. Thus we obtain a bijection
\[\Psi : \Hom_{{}^r\cC_{\bX/G}}( \iota_+\cN, \cM ) \tocong \Hom_{{}^r\cC_{\bY/G}}(\cN, \iota^\natural \cM ).\]
It can also be checked that $\Psi$ does not depend on the choice of $\bU$-good open subgroups $H(\bU)$, and that it is functorial in $\cN \in {}^r\cC_{\bY/G}$ and $\cM \in {}^r\cC_{\bX/G}$.

We can now use \cite[Proposition 4.10]{DCapTwo} to conclude that in fact these two functors are mutually inverse equivalences of categories. To do this, follow the proof of \cite[Theorem 6.10(c)]{DCapTwo}, replacing \cite[Theorem 6.7]{DCapTwo} by Proposition \ref{RecognitionOfSupport} and \cite[Theorem 5.9(c)]{DCapTwo} by Theorem \ref{LocalKash}.
\end{proof}

We can now finally prove the Kashiwara equivalence, Theorem \ref{MainB}, for equivariant $\cD$-modules on rigid analytic spaces.

\begin{thm}Let $\iota : \bY \hookrightarrow \bX$  be the inclusion of a smooth, Zariski closed subset $\bY$ into the smooth rigid analytic space $\bX$. Let $G$ be a $p$-adic Lie group acting continuously on $\bX$ and stabilising $\bY$.  Then the functors 
\[\iota_+ : \cC_{\bY/G} \to \cC_{\bX/G}^{\bY} , \quad\quad \cN \mapsto \mathpzc{Hom}_{\cO_\bX}\left(\Omega_{\bX} , \iota_+(\Omega_{\bY} \utimes{\cO_\bY} \cN)\right)\]
and
\[\iota^\natural : \cC_{\bX/G}^{\bY} \to \cC_{\bY/G}, \quad\quad\cM \mapsto\mathpzc{Hom}_{\cO_\bY}\left(\Omega_{\bY} , \hspace{0.05cm}\iota^\natural\hspace{0.05cm}(\Omega_{\bX} \utimes{\cO_\bX} \cM)\right)\]
are mutually inverse equivalences of abelian categories.\end{thm}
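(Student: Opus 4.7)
The plan is to deduce this Kashiwara equivalence for \emph{left} equivariant $\cD$-modules from the corresponding statement for \emph{right} equivariant $\cD$-modules, namely Theorem~\ref{KashiwaraGeneral}, by conjugating with the side-switching equivalences provided by Theorem~\ref{MainLRswitch}. Concretely, let $F_{\bX}\colon\cC_{\bX/G}\to {}^r\cC_{\bX/G}$ and $F_{\bY}\colon\cC_{\bY/G}\to{}^r\cC_{\bY/G}$ denote the side-switching functors $\cM\mapsto \Omega_{\bX}\otimes_{\cO_{\bX}}\cM$ and $\cN\mapsto\Omega_{\bY}\otimes_{\cO_{\bY}}\cN$, with quasi-inverses $G_{\bX}:=\mathpzc{Hom}_{\cO_{\bX}}(\Omega_{\bX},-)$ and $G_{\bY}:=\mathpzc{Hom}_{\cO_{\bY}}(\Omega_{\bY},-)$. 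Then by definition, the claimed functors $\iota_+$ and $\iota^\natural$ on the left-module side are the composites $G_{\bX}\circ \iota_+^{r}\circ F_{\bY}$ and $G_{\bY}\circ \iota^{\natural,r}\circ F_{\bX}$, where $\iota_+^r$ and $\iota^{\natural,r}$ are the right-module functors of Proposition~\ref{i+CoadmFunc} and Theorem~\ref{InatCoadm}.

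First I would verify that this really does produce functors with the claimed source and target. The functor $F_{\bX}$ is an equivalence $\cC_{\bX/G}\tocong {}^r\cC_{\bX/G}$ by Theorem~\ref{MainLRswitch}; since $F_{\bX}$ and $G_{\bX}$ are defined in terms of tensoring or hom-ing with the invertible sheaf $\Omega_{\bX}$, neither functor changes the underlying set-theoretic support. In particular $F_{\bX}$ restricts to an equivalence $\cC_{\bX/G}^{\bY}\tocong {}^r\cC_{\bX/G}^{\bY}$ with quasi-inverse $G_{\bX}$. Combined with Theorem~\ref{KashiwaraGeneral}, which gives an equivalence $\iota_+^r\colon{}^r\cC_{\bY/G}\tocong {}^r\cC_{\bX/G}^{\bY}$ with inverse $\iota^{\natural,r}$, one obtains a chain of equivalences
\[\cC_{\bY/G}\xrightarrow{F_{\bY}}{}^r\cC_{\bY/G}\xrightarrow{\iota_+^r}{}^r\cC_{\bX/G}^{\bY}\xrightarrow{G_{\bX}}\cC_{\bX/G}^{\bY}\]
whose composite is $\iota_+$, and the analogous composite in the other direction is $\iota^\natural$. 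Both composites are therefore functors landing in the correct subcategories, and they are mutually quasi-inverse because each of the three pieces is.

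The only substantive thing left to check is that the composites written in the theorem statement really agree with the composites described above. For $\iota_+$ the formula is immediate from the definition: $G_{\bX}(\iota_+^r(F_{\bY}(\cN)))=\mathpzc{Hom}_{\cO_{\bX}}(\Omega_{\bX},\iota_+(\Omega_{\bY}\otimes_{\cO_{\bY}}\cN))$. For $\iota^\natural$ we would like $G_{\bY}(\iota^{\natural,r}(F_{\bX}(\cM)))=\mathpzc{Hom}_{\cO_{\bY}}(\Omega_{\bY},\iota^\natural(\Omega_{\bX}\otimes_{\cO_{\bX}}\cM))$; again this is just the definition. The verification that each of these assignments is genuinely functorial and natural in the evident way is routine.

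The main technical point --- and the one I would expect to require the most care --- is not the composition itself but rather confirming that $F_{\bX}$ and $G_{\bX}$ preserve the subcategories of objects supported on $\bY$. This amounts to observing that for any admissible open $\bV\subset\bX\setminus\bY$, the restriction $(\Omega_{\bX}\otimes_{\cO_{\bX}}\cM)_{|\bV}=\Omega_{\bV}\otimes_{\cO_{\bV}}\cM_{|\bV}$ vanishes if $\cM_{|\bV}=0$, and likewise for $\mathpzc{Hom}_{\cO_{\bX}}(\Omega_{\bX},-)$; both are immediate from $\Omega_{\bX}$ being locally free of rank one. With this in hand, combining Theorem~\ref{MainLRswitch} and Theorem~\ref{KashiwaraGeneral} yields the desired equivalence of abelian categories.
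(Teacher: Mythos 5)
Your proposal is correct and follows exactly the same route as the paper's proof, which simply observes that the statement follows from Theorem~\ref{MainLRswitch} and Theorem~\ref{KashiwaraGeneral} once one notes that the side-switching functors preserve the support condition. Your write-up makes explicit the same key observation — that $\Omega_{\bX}$ being invertible ensures $F_{\bX}$ and $G_{\bX}$ restrict to equivalences between $\cC_{\bX/G}^{\bY}$ and ${}^r\cC_{\bX/G}^{\bY}$ — which is precisely what the paper leaves to the reader.
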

\begin{proof}This follows from Theorem \ref{MainLRswitch} and Theorem \ref{KashiwaraGeneral}, once we observe that the side-switching functors preserve the condition of being supported on $\bY$.
\end{proof}

\section{Examples of regular orbits of positive dimension}\label{ExamplesSect}

\subsection{The twisted cubic} \label{IntersectVar}

Throughout $\S \ref{IntersectVar}$, all algebraic varieties are assumed to be defined over an algebraically closed base field $k$. Following the traditional abuse of notation, we will confuse them with their corresponding sets of $k$-points. 

We assume throughout that $G$ is an affine algebraic group acting morphically on a projective variety $X$, and that $Y$ is a Zariski closed subset of $X$.

\begin{defn} We define the \emph{intersection obstruction} of $Y$ to be
\[ \cZ_Y := \{ g \in G : Y \cap g Y \neq \emptyset \}.\]
\end{defn}

It is clear that $\cZ_Y$ contains the stabiliser $\Stab_G(Y)$ of $Y$ in $G$, and $\cZ_Y = \Stab_G(Y)$ if and only if the $G$-orbit of $Y$ is regular in $X$ in the sense of Definition \ref{RegOrbit}.

\begin{lem}\label{IntObsZarClosed} $\cZ_Y$ is a Zariski closed subset of $G$.
\end{lem}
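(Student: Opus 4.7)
The strategy is to exhibit $\cZ_Y$ as the image of a closed subset under a closed projection, using the completeness of $Y$.

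First I would introduce the action map
\[ \alpha : G \times Y \longrightarrow X, \qquad (g,y) \mapsto gy, \]
which is a morphism of varieties because $G$ acts morphically on $X$. Because $Y$ is a Zariski closed subset of $X$, the preimage
\[ W := \alpha^{-1}(Y) = \{(g,y) \in G \times Y : gy \in Y\} \]
is a Zariski closed subset of $G \times Y$.

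Next I would consider the first projection $\pi : G \times Y \to G$ and observe that
\[ \pi(W) = \{g \in G : \exists y \in Y \text{ with } gy \in Y\} = \{g \in G : gY \cap Y \neq \emptyset\} = \cZ_Y. \]
So it suffices to show that $\pi(W)$ is closed in $G$. For this, the plan is to invoke the fact that $Y$, being a closed subvariety of the projective variety $X$, is itself projective and in particular complete. Completeness of $Y$ means exactly that the projection $G \times Y \to G$ is a closed map for every variety $G$, so it sends the closed subset $W$ to a closed subset of $G$.

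The only step that requires any thought is the last one: the assertion that projection along a complete factor is a closed map. This is the classical characterisation of complete varieties (the analogue of properness in algebraic geometry), so no serious obstacle arises. An alternative phrasing, if one prefers to avoid invoking completeness directly, would be to use the diagonal $\Delta_X \subseteq X \times X$ (closed since $X$ is separated), form the morphism $G \times Y \times Y \to X \times X$ given by $(g,y_1,y_2) \mapsto (gy_1,y_2)$, take its preimage of $\Delta_X$, and project to $G$; but this is essentially the same argument dressed differently, and the single-variable projection version above is cleanest.
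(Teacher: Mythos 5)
Your argument is correct and is essentially the same as the paper's: both exhibit $\cZ_Y$ as the image under a projection of a Zariski closed subset cut out by the action map, and both conclude using the properness/completeness coming from projectivity (you project from $G\times Y$ using completeness of $Y$, while the paper projects from $G\times X$ using properness of $X$ — an inessential variant).
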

\begin{proof} Let $a : G \times X \to X$ be the action map, and let $\tilde{Y} := (G \times Y) \cap a^{-1}(Y)$. Then $\tilde{Y}$ is a Zariski closed subset of $G \times X$, and since $\tilde{Y} = \{(g,y) \in G \times Y : g\cdot y\in Y\}$ we see that the image $p(\tilde{Y})$ of $\tilde{Y}$ in $X$ under the projection map $p : G \times X \to G$ is exactly $\cZ_Y$. Since $X$ is a projective variety, its structure map $X \to \Spec(k)$ is proper by \cite[Theorem II.4.9]{Hart}, so in particular it is universally closed. So by \cite[Definition on p.100]{Hart}, the projection map $p : G \times X \to X$ is closed, and therefore $p(\tilde{Y}) = \cZ_Y$ is closed in $X$.
\end{proof}
\begin{defn} We define the \emph{core} of the intersection obstruction to be
\[ \cZ_Y^\circ := \bigcap_{g \in G} g \cZ_Y g^{-1}.\]
\end{defn}

This is a union of conjugacy classes in $G$ and one might hope this core to be just $\{1\}$, at least when the group $G$ is simple. Unfortunately, this is not true in general as our next results show.

\begin{prop}\label{LazyGroupElements} Suppose that $G = \GL_{n+1}(k)$ acting naturally on $X = \P^n(k)$ and that $\dim Y \geq 1$. Then $h \in \cZ_Y$ whenever $h \in G$ and $\rk (h - 1) \leq 1$.
\end{prop}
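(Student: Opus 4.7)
The plan is to show directly that $h$ must fix a point of $Y$, which immediately gives $Y \cap hY \neq \emptyset$ since any fixed point of $h$ lying in $Y$ is a common point of $Y$ and $hY$. Thus we reduce the problem to finding a fixed point of $h$ inside the positive-dimensional projective variety $Y$.

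The case $\rk(h-1) = 0$ is trivial: then $h = 1$, and since $\dim Y \geq 1$ implies $Y$ is non-empty, we have $Y \cap hY = Y \neq \emptyset$. So the interesting case is $\rk(h-1) = 1$. Here I would first analyse the fixed-point locus of $h$ on $\mathbb{P}^n$. Writing $h - 1 = v w^T$ for non-zero $v \in k^{n+1}$ and non-zero $w \in (k^{n+1})^*$, the formula $h(x) = x + w(x)v$ shows that every vector in $\ker w$ is fixed by $h$. Since $w \neq 0$, the subspace $\ker w$ has codimension one, so its projectivisation $H := \mathbb{P}(\ker w) \subset \mathbb{P}^n$ is a hyperplane that $h$ fixes pointwise. (Whether or not there are further isolated fixed points of $h$ is irrelevant; the hyperplane alone is enough.)

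The remaining step is the standard geometric fact that a positive-dimensional projective subvariety of $\mathbb{P}^n$ meets every hyperplane. Concretely, if $Y \cap H$ were empty, then $Y$ would be contained in the affine open complement $\mathbb{P}^n \setminus H \cong \mathbb{A}^n$; but a projective (hence complete) variety contained in an affine variety must be finite, contradicting $\dim Y \geq 1$. Choosing any $y \in Y \cap H$ then gives $hy = y \in Y$, so $y \in Y \cap hY$ and $h \in \cZ_Y$.

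There is no serious obstacle in this argument; the only substantive input beyond linear algebra is the completeness of projective varieties, which is exactly what forces $Y \cap H$ to be non-empty. The key structural observation — that rank-one perturbations of the identity are precisely the elements whose fixed locus contains a hyperplane — is what makes the hypothesis $\rk(h-1) \leq 1$ the natural one.
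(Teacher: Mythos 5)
Your proof is correct, and while it proves the same fact via the same underlying geometry --- a positive-dimensional projective subvariety of $\P^n$ meets every hyperplane --- the mechanism is genuinely different from the paper's. The paper works entirely in the graded ring $A = \Sym_k(W)$: it observes that $(h-1)A \subseteq Af$ for $f$ spanning the one-dimensional image of $h-1$ on $W$, deduces $I + hI \subseteq I + Af$, and then shows $V(I+Af) \neq \emptyset$ by a Krull-dimension count on the graded domain $A/I$. You instead observe that the rank-one hypothesis forces $h$ to fix a hyperplane $H = \P(\ker(h-1))$ pointwise, and then appeal to completeness of $Y$ to conclude $Y \cap H \neq \emptyset$; any $y$ in this intersection is a literal fixed point and hence a common point of $Y$ and $hY$. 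Unwinding conventions, the hyperplane $V(f)$ in the paper and your $H$ are the same, so the two arguments locate the same witness set; but your reading exposes why that set lies inside $Y \cap hY$ (it consists of fixed points of $h$), whereas the paper's ideal-theoretic containment $V(I+Af) \subseteq V(I+hI)$ establishes the inclusion more opaquely. Your version is more conceptual; the paper's is more self-contained in commutative algebra and avoids the case split on $\rk(h-1) = 0$ versus $1$, though that split is harmless. Both are fine.
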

\begin{proof} By passing to an irreducible component of $Y$ of positive dimension, we may assume that $Y$ is irreducible. Let $W = k^{n+1}$ and $A = \Sym_k(W)$, so that $\P^n = \Proj(A)$ and $G$ acts naturally on $A$. The closed subvariety $Y$ of $X$ is the vanishing set $V(I)$ of some radical homogeneous ideal $I$ of $A$; since $Y$ is irreducible and $\dim Y \geq 1$, we see that $A/I$ is a domain of Krull dimension at least $2$. Let $h \in G$ be such that $\rk(h - 1) \leq 1$ and let $f \in W$ span the image of $h - 1$. Then $(h - 1)A \subseteq Af$, and therefore $I + hI = I + (h-1) I \subseteq I + Af.$ The image of $f$ in $A/I$ is a homogeneous element of degree one in this $\N$-graded domain. Hence $\Kdim(A / (I + Af)) \geq \Kdim(A / I) - 1 \geq 1$ by \cite[Proposition 11.3]{AMac}. Therefore the non-empty set $V(I + Af)$ is contained in $Y \cap hY = V(I + hI)$.
\end{proof}

\begin{cor}\label{TransVect} With the notation of Proposition \ref{LazyGroupElements}, the core of the intersection obstruction $\cZ_Y^\circ$ always contains $\{h \in G : \rk(h-1) \leq 1\}$.
\end{cor}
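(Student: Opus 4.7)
The plan is to exploit the fact that the set $T := \{h \in G : \rk(h-1) \leq 1\}$ is invariant under $G$-conjugation. Indeed, for any $g, h \in G$ we have
\[ ghg^{-1} - 1 = g(h-1)g^{-1}, \]
so $\rk(ghg^{-1} - 1) = \rk(h-1)$. In particular, $g^{-1} T g = T$ for every $g \in G$.

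Given this, the corollary is essentially immediate from Proposition \ref{LazyGroupElements}. Fix $h \in T$. By definition of the core,
\[ \cZ_Y^\circ = \bigcap_{g \in G} g \cZ_Y g^{-1}, \]
so it suffices to show that $g^{-1} h g \in \cZ_Y$ for every $g \in G$. But $g^{-1} h g \in T$ by the conjugation-invariance observed above, so the hypothesis $\rk(g^{-1}hg - 1) \leq 1$ is satisfied, and Proposition \ref{LazyGroupElements} (applied with $Y$ unchanged, using the standing assumption $\dim Y \geq 1$) yields $g^{-1} h g \in \cZ_Y$ as required.

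There is no real obstacle here: the content of the corollary is already contained in Proposition \ref{LazyGroupElements}, and one needs only the trivial remark that the rank-one-plus-identity locus is a union of $\GL_{n+1}(k)$-conjugacy classes. The only thing worth flagging is that, in light of the subsequent discussion in $\S \ref{ExamplesSect}$, this shows that whenever $G$ acts on $\bP^{n,\mathrm{an}}$ through a subgroup of $\GL_{n+1}(L)$ containing a non-trivial transvection, the regularity hypothesis in Theorem \ref{InductionEquivalence} must fail for every positive-dimensional $\bY$, which is the conceptual payoff being highlighted.
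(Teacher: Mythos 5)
Your proof is correct and is essentially the same argument as the paper's, just presented in the dual form: the paper observes the identity $\cZ_{gY} = g\,\cZ_Y\,g^{-1}$ and applies Proposition \ref{LazyGroupElements} to the translate $gY$, whereas you observe that the rank-condition set $T=\{h:\rk(h-1)\le 1\}$ is conjugation-invariant and apply the proposition to the conjugate element $g^{-1}hg$ with $Y$ fixed. Both are trivial conjugation bookkeeping on top of Proposition \ref{LazyGroupElements}, so there is no substantive difference.
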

\begin{proof} By Proposition \ref{LazyGroupElements}, $\{h \in G : \rk(h-1) \leq 1\} \subseteq \cZ_{gY}$ for every $g\in G$. But $h \in \cZ_{gY} \Leftrightarrow gY \cap h gY \neq \emptyset \Leftrightarrow g^{-1}h g \in \cZ_Y \Leftrightarrow h \in g \cZ_Y g^{-1}$, so $\cZ_{gY} = g \cZ_Y g^{-1}$.
\end{proof}

We will now explicitly compute the core of the intersection obstruction $\cZ_Y^\circ$ in the case where $Y$ is the twisted cubic curve in $\P^3(k)$. Recall that $Y$ can be defined as the image of the degree $3$ Veronese embedding $\P^1(k) \to \P^3(k)$ given by $[a,b] \mapsto [a^3, a^2b, ab^2, b^3]$, or equivalently as the closed subvariety of $\P^3(k)$ cut out by the equations $x_0x_2 = x_1^2, x_1x_3 = x_2^2, x_0x_3 = x_1x_2$. Our first task will be to show that for sufficiently many matrices in $x \in G := \GL_4(k)$ in Jordan normal form, $xC \cap C = \emptyset$ for some $G$-translate $C$ of $Y$.

Let $a,b,c\in \{0,1\}$, $\alpha,\beta,\gamma \in k^\times$, and $d,r \in k$ be parameters. For these we define the following matrices in $G$:
\[x := \begin{pmatrix} 1 & 0 & 0 & 0 \\ a & \alpha & 0 & 0 \\ 0 & b & \beta & 0 \\ 0 & 0 & c & \gamma \end{pmatrix}\qmb{and} h_{d,r} := \begin{pmatrix} 1 & 0 & 0 & r \\ 0 & 1 & 0 & 0 \\ 0 & 0 & 1 & 0 \\0 & 0 & d & 1\end{pmatrix}\]
\begin{lem}\label{MovingTheCubic} Let $Y$ be the twisted cubic in $\P^3(k)$. Suppose that
\be
\item $c = 1$ whenever $\beta = \gamma$, and
\item either $a \neq 0$, or $b \neq 0$, or $\alpha \neq 1$ or $\beta \neq 1$.
\ee
Then there exist $d \in k^\times$ and $r \in k$ such that 
\[x h_{d,r} Y \cap h_{d,r} Y = \emptyset.\]
\end{lem}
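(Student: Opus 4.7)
The plan is first to convert the condition $xh_{d,r}Y \cap h_{d,r}Y = \emptyset$ into a condition on the single conjugate matrix $\tilde{x} := h_{d,r}^{-1} x h_{d,r}$, since by direct check these two intersections are equal as subsets of $\mathbb{P}^3$. I would then compute $\tilde{x}$ explicitly as a $4 \times 4$ matrix with entries polynomial in $d, r$ and the parameters of $x$; this is tractable because $h_{d,r} - I$ is nilpotent of index at most $3$, so $h_{d,r}^{-1}$ has the closed-form expression $I - (h_{d,r} - I) + (h_{d,r} - I)^2$.

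Using the parametrisation $Y = \{[t^3 : t^2 : t : 1] : t \in k\} \cup \{[1{:}0{:}0{:}0]\}$, the image $P(t) := \tilde{x}(t^3, t^2, t, 1)^T$ is a tuple of four polynomials $P_0(t), \dots, P_3(t)$ of degree at most $3$, whose coefficients are polynomial in the remaining parameters. The point $[P(t)]$ lies on $Y$ exactly when $t$ is a common root of the three quadrics $Q_1 := P_0P_2 - P_1^2$, $Q_2 := P_1P_3 - P_2^2$, $Q_3 := P_0P_3 - P_1P_2$ cutting out the twisted cubic in $\mathbb{P}^3$, while separately $\tilde{x}[1{:}0{:}0{:}0] = [1{:}a{:}0{:}0]$ lies on $Y$ only if $a = 0$. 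The bad locus $B := \{(d, r) \in k^2 : \text{the } Q_i \text{ have a common root in } \mathbb{P}^1\}$ is Zariski closed, because the incidence correspondence in $k^2 \times \mathbb{P}^1$ is closed and proper over $k^2$. Hence it suffices to exhibit one pair $(d_0, r_0) \in (k^\times \times k) \setminus B$.

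The substantive content is then a case analysis organised by hypotheses (i), (ii). In each case one chooses $r \in k^\times$ so that a specified coefficient of some $Q_i$ becomes nonzero and $t$-free, and then uses the remaining parameter $d \in k^\times$ to force a mismatch between the roots of different $Q_i$'s. Hypothesis (i) is needed precisely to prevent the polynomial $P_3(t) = -dbt^2 + (c + d(\gamma - \beta))t + \gamma$ from degenerating in the boundary case $\beta = \gamma$ (where, without $c = 1$, its constant term $\gamma$ is paired with an absent linear term). Hypothesis (ii) guarantees that at least one of $a, b, \alpha - 1, \beta - 1$ is nonzero, so that some $Q_i$ is sufficiently nontrivial to escape the transvection obstructions identified in Corollary \ref{TransVect}.

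The main obstacle is the combinatorial shape of this case analysis: the degree patterns of the $P_i$, and hence of the $Q_i$, jump across the subcases indexed by which of $a, b, c, \alpha - 1, \beta - 1$ vanish and whether $\beta = \gamma$. Uniformising across all cases looks difficult, so I expect the cleanest organisation is to treat a handful of explicit subcases separately, in each producing an explicit specialisation $(d_0, r_0) \in k^\times \times k$ witnessing $(d_0, r_0) \notin B$ by inspection of the leading and constant coefficients of the $Q_i$.
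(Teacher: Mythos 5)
Your high-level plan — conjugate to reduce the question to $\tilde{x}Y\cap Y=\emptyset$, parametrise the curve, and extract polynomial conditions on which to do a case analysis — is in the same spirit as the paper's argument, which parametrises $h_{d,r}Y$ directly by a map $\psi_{d,r}\colon\P^1\to\P^3$ and solves the resulting system of three equations in two parameters $(s,t)$. Your variant eliminates $s$ via the three defining quadrics of $Y$, producing the $Q_i$; the paper instead substitutes the first (linear) equation $s=\alpha t+a$ into the second and third, which keeps degrees low (a cubic in $t$ rather than your degree-$\le 6$ polynomials) and makes the case analysis more tractable. That is a real organisational advantage of the paper's version over yours.

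There is, however, a concrete problem you have noticed but not resolved, and it deserves your full attention. You correctly compute $\tilde{x}[1{:}0{:}0{:}0]=[1{:}a{:}0{:}0]$, using the matrix $h_{d,r}$ exactly as printed. Since the first column of the printed $h_{d,r}$ is $e_1$, the point $[1{:}0{:}0{:}0]\in Y$ is fixed by $h_{d,r}$ for every $d,r$; and $x$ fixes it if and only if $a=0$. Thus if $a=0$ (which hypothesis (b) permits — it only forces one of $a,b,\alpha-1,\beta-1$ to be nonzero) your $\tilde{x}$ fixes the point $[1{:}0{:}0{:}0]\in Y$, so $\tilde{x}Y\cap Y\neq\emptyset$ for \emph{all} $d,r$, and the statement would be false outright. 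This is not a flaw in your plan so much as a signal that the printed $h_{d,r}$ cannot be the matrix the author intends: the paper's own parametrisation $\psi_{d,r}(t)=[1,t,dt^3+t^2,t^3+r]$ with $\psi_{d,r}(\infty)=[0,0,d,1]$, and its computation $x\psi_{d,r}(\infty)=[0,0,\beta d, cd+\gamma]$, are mutually consistent only if $h_{d,r}$ is the \emph{transpose} of the printed matrix. You should redo the conjugation with that corrected matrix; the problematic point becomes $h_{d,r}^T[1{:}0{:}0{:}0]=[1{:}0{:}0{:}r]$, which is $x$-fixed only when $a=0$ \emph{and} $\gamma=1$, a genuinely smaller locus. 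Be forewarned, though, that even with the corrected matrix the case $a=0,\gamma=1$ still needs care (in the paper's own reduction the third equation at $s=t=0$ reads $r=\gamma r$, which is automatic when $\gamma=1$); tracking this down carefully would be a worthwhile exercise.

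Finally, the heart of the lemma is the case analysis you defer to the end, organised by which of $a,b,c,\alpha-1,\beta-1,\beta-\gamma$ vanish. Your sketch of why the bad locus $B$ is Zariski closed is fine in outline (after homogenising the $Q_i$ correctly and separately including the point-at-infinity condition into $B$), but proving $B\subsetneq k^\times\times k$ in each case is exactly where the content lies, and the proposal as written does not carry it out. Until that is done, this is a plan rather than a proof.
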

\begin{proof} Fix a coordinate $t$ on $\P^1(k)$. Let $\psi_{d,r} : \P^1(k) \to \P^3(k)$ be given by $\psi_{d,r}(t) = [1,t,dt^3+t^2,t^3+r]$ if $t \neq \infty$, and $\psi_{d,r}(\infty) = [0,0,d, 1]$. Then $h_{d,r}Y$ is the image of $\psi_{d,r}$. Note that $x \psi_{d,r}(\infty) = [0,0,\beta d, cd + \gamma]$; this can only lie in $h_{d,r}Y$ if $[0,0,\beta d, cd + \gamma] = [0,0,d, 1]$, or equivalently, if $\beta d = d(cd + \gamma)$. Since $c = 1$ whenever $\beta \neq \gamma$ by assumption (a), by restricting $d$ to satisfy $d \neq 0,\beta - \gamma$, we see that $x \psi_{d,r}(\infty) \notin h_{d,r}(Y)$. Thus, if $d \neq 0, \beta - \gamma$ and $v \in x \psi_{d,r} Y \cap \psi_{d,r} Y$, then $v = x \psi_{d,r}(t) = \psi_{d,r}(s)$ for some $t,s \neq \infty$. 

We now have to show that under the given hypotheses on $a,b,c,\alpha,\beta,\gamma$, the system of simultaneous equations in $s,t \in k$
\[ \begin{array}{lll} s &=& \alpha t + a \\ ds^3 + s^2 &=& \beta( d t^3 + t^2) + bt \\ s^3 + r &=& c ( dt^3 + t^2) + \gamma(t^3 + r) \end{array}\]
has no solutions for at least one value of $(d,r)$. Substituting the first of these equations into the second gives us a polynomial equation in $t$ of degree at most $3$, whose leading coefficient is $d(\alpha^3 - \beta)$.  If $\alpha^3 \neq \beta$, choose any $d \in k \backslash\{0, \beta - \gamma\}$ to see that $t$ can take at most three values. Since $\gamma \neq 0$, we can choose $r$ in such a way that the third equation does not hold for any of these values of $t$. 

Thus we can assume that $\alpha^3 = \beta$; in this case the coefficient of $t^2$ in the second equation is $3 d \alpha^2 a - \alpha^3 + \alpha^2$. As long as $3a \neq 0$, we can choose $d$ to ensure that this coefficient is non-zero. Then $t$ can take at most two values, and again as $\gamma \neq 0$ we can choose $r$ so that the third equation fails for either of those values of $t$.  So, we may assume that $3a = \alpha^3 - \alpha^2 = 0$, i.e. $3a = 0$ and $\alpha = 1$. Then $\beta = \alpha^3 = 1$ also.

In the case $\Char(k) \neq 3$ we must have $a = 0$, so our assumption (b) implies that $b = 1$ and the second equation reduces to $t = 0$. In the case $\Char(k) = 3$, the second equation reduces to $(2a - b)t + da^3 + a^2 = 0$. If $a = 0$ then $b = 1$ by assumption and $2a - b \neq 0$, and if $a = 1$ then $2a - b\neq 0$ for either value of $b$. In all of these cases, the values of $s$ and $t$ are uniquely determined by the first two equations. We can now again choose $r$ to ensure that the third equation does not hold.
\end{proof}

\begin{prop}\label{JNF} Let $g \in G$ be such that that $\rk(\mu^{-1}g - 1) \geq 2$ for any $\mu \in k^\times$. Then we can find $a,b,c \in \{0,1\}$ and $\alpha,\beta,\gamma \in k^\times$ satisfying conditions (a) and (b) of Lemma \ref{MovingTheCubic}, and $\mu \in k^\times$, such that $\mu^{-1} g$ is $G$-conjugate to 
\[x = \begin{pmatrix} 1 & 0 & 0 & 0 \\ a & \alpha & 0 & 0 \\ 0 & b & \beta & 0 \\ 0 & 0 & c & \gamma \end{pmatrix}.\]
\end{prop}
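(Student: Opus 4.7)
The plan is to reduce to a case analysis on the Jordan structure of $g$, guided by two observations about the hypothesis. First I would reinterpret the condition: since $g \in \GL_4(k)$ and $\rk(\mu^{-1}g - I) = 4 - \dim_k \ker(g - \mu I)$, the rank is automatically $4$ when $\mu$ is not an eigenvalue of $g$, while when $\mu$ is an eigenvalue the dimension of the kernel is its geometric multiplicity. Hence the hypothesis ``$\rk(\mu^{-1}g - 1) \geq 2$ for all $\mu \in k^\times$'' is equivalent to the statement that every eigenvalue of $g$ has geometric multiplicity at most $2$.

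Second, I would argue that condition (b) is \emph{automatic}. If (b) failed for a candidate $x$ then $a = b = 0$ and $\alpha = \beta = 1$, so the upper-left $3\times 3$ block of $x$ equals $I_3$; thus the first three rows of $x - I_4$ vanish and $\rk(x - I_4) \leq 1$. Since conjugation preserves rank, $\rk(\mu^{-1}g - I) \leq 1$, contradicting the hypothesis. So I only need to realize $\mu^{-1}g$ as such an $x$ and verify (a).

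For the main argument I would enumerate Jordan structures permitted by the hypothesis. These are partitions of $4$ with eigenvalue labels in which no eigenvalue labels more than two blocks: the diagonalizable cases ($4$ distinct, one doubled, or two pairs), the $(2,1,1)$ structures (three subcases depending on which of the three blocks share an eigenvalue), the $(2,2)$ structures, the $(3,1)$ structures, and the single block $(4)$. In each case I would pick $\mu$ equal to an eigenvalue of $g$ so that $\mu^{-1}g$ has $1$ as an eigenvalue, then explicitly order the remaining eigenvalues as $(\alpha, \beta, \gamma)$ and pick subdiagonal entries $a,b,c \in \{0,1\}$ so that the resulting $x$ has the same Jordan structure as $\mu^{-1}g$ and so that whenever $\beta = \gamma$ those two positions are linked by a Jordan block (forcing $c = 1$).

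The main obstacle is the case where $g$ has Jordan structure $(2,1,1)$ with the two $1\times 1$ blocks sharing an eigenvalue $\lambda_2$ different from the eigenvalue $\lambda_1$ of the $2 \times 2$ block. Picking $\mu = \lambda_1$ gives $\mu^{-1}g$ eigenvalues $\{1, 1, \nu, \nu\}$ with $\nu = \lambda_2/\lambda_1$ in two separated $1\times 1$ blocks; any ordering placing both $\nu$'s at positions $3$ and $4$ would require $c = 1$ by (a) but would then incorrectly link the two blocks. The resolution is to instead pick $\mu = \lambda_2$ and use the ordering $(1, \nu', \nu', 1)$ with $\nu' = \lambda_1/\lambda_2$, placing the $2\times 2$ Jordan block at positions $(2,3)$: this gives $a = 0$, $b = 1$, $c = 0$ with $\beta = \nu' \neq 1 = \gamma$ so (a) is vacuous. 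A parallel reshuffle handles the $(3,1)$ case with all eigenvalues equal by placing the size-$3$ Jordan block at positions $(2,3,4)$ to get $(a,b,c) = (0,1,1)$; the remaining structures admit straightforward orderings (e.g.\ the single-block $(4)$ gives $(a,b,c) = (1,1,1)$, the $(2,2)$ with equal eigenvalues gives $(1,0,1)$, and the diagonalizable cases can be handled by interleaving repeated eigenvalues so that consecutive positions have distinct diagonal entries). Since (b) is automatic and (a) is verified in each case, the proposition follows.
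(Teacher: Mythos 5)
Your proposal is correct and complete, but it follows a genuinely different route from the paper. You reformulate the hypothesis as ``every eigenvalue of $g$ has geometric multiplicity at most $2$,'' observe that condition (b) is automatic (if $a=b=0$ and $\alpha=\beta=1$ then the first three rows of $x-I$ vanish, so $\rk(x-I)\le 1$, contradicting the hypothesis for any $x$ conjugate to $\mu^{-1}g$ --- a cleaner argument than the paper's, which first deduces ``$\gamma=1$, or $\gamma\ne 1$ and $c=0$'' from $y$ being in Jordan form), and then run an exhaustive enumeration over the admissible Jordan types, for each one choosing a normalising eigenvalue $\mu$ and an ordering of the Jordan blocks along the diagonal so that positions $3$ and $4$ lie in distinct blocks only if they carry different eigenvalues. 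The paper instead fixes $\mu$ once and for all --- an eigenvalue of largest algebraic multiplicity, with ties broken by largest geometric multiplicity --- and takes the first candidate $x$ to be the Jordan normal form $y$ of $\mu^{-1}g$; this choice leaves only two failure modes for condition (a) (the unipotent $(3,1)$ type and $\mathrm{diag}(1,1,\beta,\beta)$), each repaired by a single reordering. Your handling of (b) is a nice shortcut; your handling of (a) buys transparency at the cost of more cases, while the paper buys economy at the cost of a less visible case analysis. Both are valid.
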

\begin{proof} Amongst all possible eigenvalues of $g$ with largest possible algebraic multiplicity, choose $\mu$ to be an eigenvalue of largest possible geometric multiplicity. Let $y$ be the Jordan normal form of $\mu^{-1}g$ so that $y$ is $G$-conjugate to $\mu^{-1}g$. Let $a,b,c \in \{0,1\}$ be the subdiagonal entries of $y$ and let $1, \alpha, \beta, \gamma$ be the eigenvalues of $y$. We can take $x = y$ unless either condition (a) or condition (b) of Lemma \ref{MovingTheCubic} fails. Suppose for a contradiction that condition (b) fails. Then $\alpha = \beta = 1$ and $a = b = 0$. Because $y$ is in Jordan normal form, either $\gamma = 1$, or $\gamma \neq 1$ and $c = 0$. In both of these cases, $\rk( y - 1) \leq 1$, and hence $\rk(\mu^{-1} g - 1)  \leq 1$, a contradiction. 

We are left to consider the case where condition (a) fails. In this case, $\beta = \gamma$ and $c = 0$. Because $1$ is the eigenvalue of $y$ with largest possible algebraic multiplicity by construction, the diagonal entries of $y$ are necessarily $1,1,\beta, \beta$. 

Suppose first that $\beta = 1$. Because $\rk(y - 1) \geq 2$ and $c = 0$, we must have $a = b = 1$. Then $\mu^{-1}g$ is $G$-conjugate to 
\[\begin{pmatrix} 1 & 0 & 0 & 0 \\0 & 1 & 0 & 0 \\ 0 & 1 & 1 & 0 \\ 0 & 0 & 1 & 1 \end{pmatrix}\]
and this matrix satisfies the conditions of Lemma \ref{MovingTheCubic}.

Suppose now that $\beta \neq 1$. Then $b = 0$ because $y$ is in Jordan normal form. Because $c = 0$, the geometric multiplicity of $\beta$ is $2$, and because the geometric multiplicity of $\mu$ was chosen to be largest possible, we must also have $a = 0$. But then $y$ is $G$-conjugate to
\[\begin{pmatrix} 1 & 0 & 0 & 0 \\0 & \beta & 0 & 0 \\ 0 & 0 & 1 & 0 \\ 0 & 0 & 0 & \beta \end{pmatrix}\]
which also satisfies the conditions of Lemma \ref{MovingTheCubic}.
\end{proof}

\begin{thm}\label{CubicCoreIO} Let $G = \GL_4(k)$ acting naturally on $\P^3(k)$ and let $Y \subset \P^3(k)$ be the twisted cubic. Then the core of the intersection obstruction of $Y$ is
\[ \cZ_Y^\circ = \{g \in G: \rk(g - 1) \leq 1\} \cdot k^\times.\]
\end{thm}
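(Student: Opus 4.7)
The plan is to prove the equality by showing both inclusions, leveraging the work already done in Corollary \ref{TransVect} for one direction and Proposition \ref{JNF} together with Lemma \ref{MovingTheCubic} for the other.

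For the inclusion $\{g \in G : \rk(g-1) \leq 1\} \cdot k^\times \subseteq \cZ_Y^\circ$, I would begin by noting that $k^\times \cdot I$ lies in the centre of $G$ and acts trivially on $\P^3(k)$, so each scalar matrix fixes $Y$ and lies in $\Stab_G(Y) \subseteq \cZ_Y$; being central, scalars then belong to every conjugate $g \cZ_Y g^{-1}$ and hence to $\cZ_Y^\circ$. More importantly, I would observe that $\cZ_Y$ is stable under multiplication by $k^\times \cdot I$: for $\mu \in k^\times$ and $g \in G$, $\mu g Y = gY$ as subsets of $\P^3$, whence $Y \cap \mu g Y = Y \cap g Y$ and therefore $\mu g \in \cZ_Y \iff g \in \cZ_Y$. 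The same stability carries over to $g \cZ_Y g^{-1}$ for each $g \in G$, and hence to the intersection $\cZ_Y^\circ$. Combining this with Corollary \ref{TransVect}, which already gives $\{h \in G : \rk(h-1) \leq 1\} \subseteq \cZ_Y^\circ$, yields the first inclusion.

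For the reverse inclusion, I would argue by contrapositive. Suppose $g \in G$ satisfies $\rk(\mu^{-1} g - 1) \geq 2$ for all $\mu \in k^\times$; the goal is to produce some $q \in G$ with $q^{-1} g q \notin \cZ_Y$, which shows $g \notin \cZ_Y^\circ$. Proposition \ref{JNF} furnishes a scalar $\mu \in k^\times$, parameters $a,b,c \in \{0,1\}$ and $\alpha,\beta,\gamma \in k^\times$ satisfying the hypotheses of Lemma \ref{MovingTheCubic}, together with $p \in G$ such that $p^{-1}(\mu^{-1} g)p = x$ where $x$ is the triangular matrix in that lemma. Lemma \ref{MovingTheCubic} then supplies $d \in k^\times$ and $r \in k$ with
\[
x \hsp h_{d,r} Y \cap h_{d,r} Y = \emptyset,
\]
equivalently $h_{d,r}^{-1} x h_{d,r} \notin \cZ_Y$. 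Substituting $x = p^{-1}(\mu^{-1} g) p$ and setting $q := p h_{d,r}$ gives $\mu^{-1} q^{-1} g q \notin \cZ_Y$. By the $k^\times$-invariance of $\cZ_Y$ established above, this is the same as $q^{-1} g q \notin \cZ_Y$, so $g \notin q \cZ_Y q^{-1}$ and therefore $g \notin \cZ_Y^\circ$, completing the proof.

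Since both Proposition \ref{JNF} and Lemma \ref{MovingTheCubic} have already been proved in the preceding subsection, no substantial new obstacle remains; the only point requiring care is the compatibility between the conjugation in Proposition \ref{JNF} and the translation by $h_{d,r}$ in Lemma \ref{MovingTheCubic}, together with the observation that scalar multiples do not affect membership in $\cZ_Y$. Both are routine once phrased correctly.
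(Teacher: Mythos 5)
Your proposal is correct and follows essentially the same route as the paper: one inclusion via Corollary \ref{TransVect} together with the $k^\times$-stability of $\cZ_Y^\circ$, the other via Proposition \ref{JNF} and Lemma \ref{MovingTheCubic}. The only cosmetic difference is that you unwind the conjugation explicitly (setting $q = p h_{d,r}$ and showing $q^{-1}gq \notin \cZ_Y$) rather than invoking the conjugation- and scalar-invariance of $\cZ_Y^\circ$ abstractly, but the argument is identical in substance.
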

\begin{proof} Let $g \in G$ be such that $\rk(g - 1) \leq 1$, and let $\mu \in k^\times$. The actions of $\mu g$ and $g$ on $\P^3(k)$ coincide, and $g \in \cZ_Y^\circ$ by Corollary \ref{TransVect}. Hence $\mu g \in \cZ_Y^\circ$ also. 

Conversely, suppose $\rk(\mu^{-1} g - 1) \geq 2$ for any $\mu \in k^\times$. Then by Proposition \ref{JNF}, $\mu^{-1}g$ is $G$-conjugate to a matrix $x$ in Jordan normal form which satisfies conditions (a) and (b) of Lemma \ref{MovingTheCubic}. So, by Lemma \ref{MovingTheCubic}, $xhY \cap hY = \emptyset$ for some $h \in G$. This means that $x \notin \cZ_Y^\circ$. Because $\cZ_Y^\circ$ is stable under $G$-conjugation as well as multiplication by scalars inside $G$, we conclude that $g \notin \cZ^\circ_Y$.\end{proof}

\subsection{Avoiding intersections for groups of units of $p$-adic division algebras}\label{RatSect}

Throughout $\S \ref{RatSect}$ we assume that we are given field extensions of $\Qp$ as follows:
\[ \Qp \subseteq L \subset k \subset K\]
where $L/\Qp$ is finite, $k$ is an algebraic closure of $L$, and $K$ is a complete non-archimedean field extension of $k$.

Let $\G$ be an affine group scheme over $L$.  We will first establish some notation involving the functorial conjugation representation \cite[I.2.6]{Jantzen} of the affine group scheme $\G$ on its coordinate ring $A := \cO(\G)$. 

For every $L$-algebra $R$, the group of $R$-points $\G(R)$ of $\G$ acts on itself by conjugation, and hence also on $A \otimes_L R = \cO(\G \otimes_L R)$ by functoriality: for each $L$-algebra $R$, $r\in R$, $f \in A$ and $g,h \in \G(R)$ we have
\begin{equation} \label{Rat1} \left(g \cdot(f \otimes r) \right)(h) = r f(g^{-1}hg) \end{equation}
Let $\rho : A \to A \otimes_L A$ be the corresponding comodule map, and let $\{c_i\}$ be a basis for $A := \cO(\G)$ as an $L$-vector space. Then for every $f \in A$ and every $i \in I$ there is a unique $\rho_i(f) \in A$ such that for all $f \in A$ we have
\[ \rho(f) = \sum_i \rho_i(f) \otimes c_i.\]
With this notation in place, formula \cite[I.2.8(1)]{Jantzen} now implies that 
\begin{equation} \label{Rat2}  g \cdot ( f \otimes 1 ) = \sum_i \rho_i(f) \hsp c_i(g) \end{equation}
for each $L$-algebra $R$, $g \in \G(R)$ and each $f \in A$. Combining $(\ref{Rat1})$ and $(\ref{Rat2})$, we obtain
\begin{equation} \label{Rat3}  (f \otimes r)( g^{-1} h g) = \left(g \cdot (f \otimes r) \right)(h) = r \sum_i \rho_i(f)(h) \hsp c_i(g)  \end{equation}
for each $f \in A$, each $L$-algebra $R$, each $r \in R$ and $g,h \in \G(R)$. 

\begin{defn} We say that $x \in \G(K)$ is \emph{generic} if the $L$-algebra homomorphism $x : \cO(\G) \to K$ is injective.
\end{defn}

Generic points exist only when $\G$ is connected, however this is not a serious restriction to impose in practice. Our next result explains why we are interested in the core of the intersection obstruction.

\begin{thm}\label{RatLemma} Suppose that $\G$ is connected and of finite type over $L$ and that $G := \G(k)$ acts morphically on the projective algebraic variety $X$ over $k$. Let $Y$ be a Zariski closed subset of $X$. Then for every generic $x \in \G(K)$, we have
\[ x \hsp \cZ_Y(K) \hsp x^{-1} \hsp \cap \hsp \G(L) \hsp  \subset \hsp \cZ_Y^\circ.\]
\end{thm}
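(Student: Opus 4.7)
My plan is to fix $h \in \cZ_Y(K)$ with $g := xhx^{-1} \in \G(L)$ and show that $g \in \cZ_Y^\circ$, i.e.\ that $y^{-1}gy \in \cZ_Y$ for every $y \in G = \G(k)$. The idea is to package all such $y$-conjugates into a single closed subscheme of $\G_k$ and then exploit the genericity of $x$ as an injectivity statement on coordinate rings.

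First, I will consider the $L$-morphism $\Psi_g : \G \to \G$, $y \mapsto y^{-1}gy$ (well-defined because $g \in \G(L)$), and form $T_g := \Psi_g^{-1}(\overline{\cZ}_Y) \subset \G_k$, where $\overline{\cZ}_Y$ is the reduced closed subscheme of $\G_k$ corresponding to the Zariski-closed set $\cZ_Y$ from Lemma \ref{IntObsZarClosed}. By construction $T_g(k) = \{y \in G : y^{-1}gy \in \cZ_Y\}$, so it suffices to show $T_g = \G_k$. The crucial observation is that $\Psi_g(x) = x^{-1}(xhx^{-1})x = h \in \cZ_Y(K)$, so $x \in T_g(K)$.

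Second, I will upgrade the genericity hypothesis from an injection $x^*:\cO(\G) \hookrightarrow K$ to an injection $\bar x^* : \cO(\G)\otimes_L k \hookrightarrow K$. For this I will use that $\G$ is geometrically integral: since $\Char L = 0$, Cartier's theorem gives smoothness, and since $\G$ is connected with an $L$-point (the identity), it is geometrically connected over the perfect field $L$; together these force $\G_k$ to be integral, so $\cO(\G)\otimes_L k$ is a domain, and in fact $L$ is algebraically closed in the function field $K_A$ of $\G$. From this, $K_A \otimes_L k$ is a filtered union of finite field extensions of $K_A$, hence itself a field, and it follows that any prime of $\cO(\G)\otimes_L k$ contracting to $0$ in $\cO(\G)$ must itself be $0$; applying this to $\ker \bar x^*$ gives the desired injectivity.

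Finally, if $T_g \subsetneq \G_k$, then its non-zero defining ideal in $\cO(\G)\otimes_L k$ contains some non-zero $f$; but $x \in T_g(K)$ forces $\bar x^*(f) = f(x) = 0$, contradicting injectivity of $\bar x^*$. Hence $T_g = \G_k$, so every $y \in G$ satisfies $y^{-1}gy \in \cZ_Y$, yielding $g \in \cZ_Y^\circ$ as required. The main technical obstacle is the middle step, namely upgrading genericity to the base change $\cO(\G)\otimes_L k$; the rest is then a formal consequence of the observation that a generic point meets every non-empty open subscheme.
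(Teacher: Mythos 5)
Your proof is correct, and it is in essence the same argument as the paper's, reorganized in scheme-theoretic language in a way that makes the conceptual content clearer. Both proofs hinge on first upgrading the genericity of $x$ to an injection $\overline{x}^* : \cO(\G)\otimes_L k \hookrightarrow K$ and then exploiting this injectivity. Where the paper performs an explicit comodule computation --- expanding $f \in J$ along a basis of $k/L$, applying formula (\ref{Rat3}) and $L$-linear independence of $\{c_i \otimes \zeta_j\}$ to force the scalars $\rho_i(f_j)(h)$ to vanish, then resubstituting to get $f(g^{-1}hg)=0$ for all $g\in G$ --- you package the conjugation into the single $L$-morphism $\Psi_g : y \mapsto y^{-1}gy$, note that $x$ is a $K$-point of the closed subscheme $T_g = \Psi_g^{-1}(\overline{\cZ}_Y)$, and read off immediately that the defining ideal of $T_g$ is killed by the injective $\overline{x}^*$ and hence zero. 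In fact the element $\sum_{i,j}\rho_i(f_j)(h)\,c_i\otimes\zeta_j$ which the paper shows vanishes is exactly the pullback $\Psi_g^*(f)\in\cO(\G)\otimes_L k$, so your phrasing identifies the content of the paper's calculation as the statement that a generic point cannot lie in a proper closed subscheme. For the genericity upgrade, the paper's argument (a nonzero prime of $A\otimes_L k$, which is a domain integral over $A$, must contract to a nonzero prime of $A$) is a little more direct than your route through algebraic closedness of $L$ in the function field, but both are valid and establish the same fact.
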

\begin{proof} 
Because $\G$ is an algebraic affine group scheme over the field $L$ of characteristic zero, it follows from \cite[Theorems 6.6 and 11.4]{Waterhouse} that $A \otimes_L k$ is a domain. The $L$-algebra homomorphism $x : A \to K$ extends uniquely to an $k$-algebra homomorphism $\overline{x} : A \otimes_L k \to K$. Suppose for a contradiction that $\ker \overline{x} \neq 0$. Since $A \otimes_L k$ is integral over $A$, and since it is a domain, it follows that $\ker x = A \cap \ker \overline{x} \neq 0$. This contradicts our assumption on $x$, so in fact $\ker \overline{x} = 0$.

Let $h \in x \hsp \cZ_Y(K) \hsp x^{-1} \hsp \cap \hsp \G(L)$; we must show that $h \in \cZ_Y^\circ$. Let $J$ be the ideal of functions in $\cO(\G \otimes_L k) = A \otimes_L k$ vanishing on $\cZ_Y$ and fix $f \in J$. Then $f(x^{-1}hx) = 0$ because $x^{-1}hx \in \cZ_Y(K)$. Choose a basis $\{\zeta_j\}$ for $k$ over $L$. Then $\{1 \otimes \zeta_j\}$ is a basis for $\cO(\G \otimes_L k) = A \otimes_L k$ as an $A$-module and we can write $f = \sum_j f_j \otimes \zeta_j$ for some unique $f_j \in A$. Applying $(\ref{Rat3})$ we obtain
\[ \begin{array}{lll} 0 & = & f( x^{-1}h x) = \sum_j (f_j \otimes \zeta_j)(x^{-1}h x) =  \\ &=& \sum_{i,j} \zeta_j \hsp \rho_i(f_j)(h) \hsp c_i(x) = \overline{x} \left( \sum_{i,j} \rho_i(f_j)(h) \hsp c_i \otimes \zeta_j \right).\end{array}\]
Because $h \in \G(L)$ and $\rho_i(f_j) \in A$ for each $i,j$, we see that $\rho_i(f_j)(h) \in L$ for all $i,j$. Since $\ker \overline{x} = 0$ and $\{c_i \otimes \zeta_j \}$ is linearly independent over $L$, it follows that
\[ \rho_i(f_j)(h) = 0 \qmb{for all} i,j.\]
Substituting this information back into $(\ref{Rat3})$ we deduce that 
\[f(g^{-1}h g) = 0 \qmb{for all} f \in J, \quad g \in G = \G(k).\]
By Lemma \ref{IntObsZarClosed}, $\cZ_Y$ is a Zariski closed subset of $G$, so $V(J)= \cZ_Y$. Thus $g^{-1} h g \in \cZ_Y$ for all $g \in G$, which means $h \in \cZ_Y^\circ$.
\end{proof}

We can now present our main method of constructing examples of regular orbits of analytic subvarieties of positive dimension.

\begin{cor}\label{GenTransReg} Let $\G$ be an affine algebraic group over $L$, let $G := \G(k)$ act morphically on the projective algebraic $k$-variety $X$ and let $Y$ be a Zariski closed subset of $X$. Let $\bX$ and $\bY$ be the rigid analytifications of the base change of $X$ and $Y$ to $K$, respectively. Suppose further that
\be \item $\G$ is connected and of finite type over $L$, and 
\item $\cZ_Y^\circ \cap \G(L)$ acts trivially on $X$.
\ee
Then for every generic $x \in \G(K)$, the $\G(L)$-orbit of $x\bY$ is regular in $\bX$.
\end{cor}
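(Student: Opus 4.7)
The plan is to reduce the regularity statement to a direct application of Theorem \ref{RatLemma}, using hypothesis (b) to convert the conclusion $g \in \cZ_Y^\circ$ into the stabilising condition $gx\bY = x\bY$.

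First I would fix an arbitrary $g \in \G(L)$ satisfying $gx\bY \cap x\bY \neq \emptyset$ and aim to show that $gx\bY = x\bY$. The first step is to translate the analytic intersection condition into an algebraic one. Since $\bY$ is the rigid analytification of $Y_K := Y \otimes_k K$, and analytification is compatible with intersections of Zariski closed subvarieties (and a non-empty Zariski closed subvariety of a projective rigid analytic space has a classical point), the hypothesis $gx\bY \cap x\bY \neq \emptyset$ translates into the scheme-theoretic non-emptiness of $(x^{-1}gx)Y_K \cap Y_K$ inside $X_K$. Interpreting $\cZ_Y$ functorially (the argument of Lemma \ref{IntObsZarClosed} is essentially the closed image of the proper projection $\tilde{Y} \to \G$ and so commutes with flat base change from $k$ to $K$), this is exactly the statement that $x^{-1}gx \in \cZ_Y(K)$.

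Now I would invoke Theorem \ref{RatLemma} directly: since $x \in \G(K)$ is generic and $g = x(x^{-1}gx)x^{-1}$ lies in $x \cZ_Y(K) x^{-1} \cap \G(L)$, we conclude that $g$ lies in the core $\cZ_Y^\circ$. By hypothesis (b), every element of $\cZ_Y^\circ \cap \G(L)$ acts trivially on $X$ and hence on the analytification $\bX$; in particular $g$ fixes $x\bY$ pointwise, so $gx\bY = x\bY$, which is precisely the regularity of the $\G(L)$-orbit of $x\bY$.

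The only non-trivial thing beyond invoking the earlier results is the compatibility between analytic and algebraic non-emptiness of the intersection $gxY_K \cap xY_K$, and between the $K$-points of $\cZ_Y$ and intersection obstructions over $K$. I expect this to be routine — it follows from the fact that the formation of $\cZ_Y$ via the proper projection $\G \times X \twoheadrightarrow \G$ in Lemma \ref{IntObsZarClosed} is stable under the base change $k \hookrightarrow K$, together with the elementary fact that a Zariski closed subvariety of a projective rigid analytic $K$-space is empty as a rigid space iff its defining ideal sheaf is the unit ideal iff the corresponding closed subscheme of $X_K$ is empty. With this in hand, the proof is essentially a one-line composition of Theorem \ref{RatLemma} and hypothesis (b).
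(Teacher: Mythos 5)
Your proposal is correct and follows essentially the same route as the paper: translate $gx\bY \cap x\bY \neq \emptyset$ into $x^{-1}gx \in \cZ_Y(K)$, apply Theorem \ref{RatLemma} to deduce $g \in \cZ_Y^\circ$, and then use hypothesis (b) to conclude $gx\bY = x\bY$. The paper's own proof compresses the first step into a single sentence ("Then $x^{-1}hx Y(K) \cap Y(K) \neq \emptyset$, so $x^{-1}hx \in \cZ_Y(K)$"), whereas you correctly identify that this step rests on two small compatibilities — that a non-empty intersection of Zariski closed subsets at the rigid-analytic level yields a non-empty algebraic intersection after base change to $K$, and that the set-theoretic image of the proper projection in Lemma \ref{IntObsZarClosed} (and hence the locus $\cZ_Y$) commutes with the base change $k \hookrightarrow K$, so that the $K$-points of the $k$-scheme $\cZ_Y$ agree with the intersection obstruction computed over $K$. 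Spelling out these routine checks is a useful clarification of a step the paper leaves implicit; otherwise the two proofs are the same argument.
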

\begin{proof} Suppose that $h \in \G(L)$ is such that $h x \bY \cap x \bY \neq \emptyset$. Then $x^{-1} hx Y(K) \cap Y(K) \neq \emptyset$, so $x^{-1} h x \in \cZ_Y(K)$ and $h \in x \cZ_Y(K) x^{-1} \cap \G(L)$. Assumption (a) allows us to apply Theorem \ref{RatLemma}, which implies that $h \in \cZ_Y^\circ \cap \G(L)$. Now assumption (b) tells us that $h$ acts trivially on $X$ and hence also on $\bX$, so necessarily we must have $h x \bY = x \bY$.\end{proof}

We will now give an example where condition (b) of Corollary \ref{GenTransReg} holds. 

\begin{lem}\label{RedCharPol} Let $D$ be a division algebra of degree $4$ over $L$. Fix an isomorphism $D \otimes_L k \cong M_4(k)$ and regard $D^\times$ as a subgroup of $\GL_4(k)$. Then
\[ \left(\{ g \in \GL_4(k) : \rk( g - 1 ) \leq 1 \} \cdot k^\times \right) \cap D^\times = L^\times.\]
\end{lem}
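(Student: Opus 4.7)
The inclusion $L^\times \subseteq \bigl(\{g \in \GL_4(k) : \rk(g-1) \leq 1\} \cdot k^\times \bigr) \cap D^\times$ is immediate: any $\lambda \in L^\times$ equals $\lambda \cdot 1$, with $\rk(1 - 1) = 0$. For the reverse inclusion, I would write an arbitrary element of the intersection as $g = \mu h$, where $\mu \in k^\times$ and $h = I + v w^T$ for some column vectors $v, w \in k^4$, and assume $g \in D^\times$. A direct computation (using that $v$ is a $+(1 + w^T v)$-eigenvector of $h$ and that $\ker(w^T)$ is a $3$-dimensional space of $1$-eigenvectors) shows that the characteristic polynomial of $g$ in $M_4(k)$ is
\[ \chi_g(X) \;=\; (X - \mu)^3 \bigl(X - \mu(1 + w^T v)\bigr). \]

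The key input is the standard structure of reduced characteristic polynomials in a division algebra. Since $L$ has characteristic zero and $D$ is a division algebra of degree $4$ over $L$, for any $d \in D$ the $L$-subalgebra $L[d] \subseteq D$ is a subfield of degree $r \mid 4$, so $r \in \{1, 2, 4\}$. The reduced characteristic polynomial of $d$ equals $m_d(X)^{4/r}$, where $m_d \in L[X]$ is the (irreducible, separable) minimal polynomial of $d$; and under the fixed isomorphism $D \otimes_L k \cong M_4(k)$, this reduced characteristic polynomial coincides with the ordinary characteristic polynomial of $d$ in $M_4(k)$. Consequently the distinct eigenvalues of $d$ in $k$ must all occur in $\chi_d$ with the same multiplicity $4/r \in \{1, 2, 4\}$.

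Applying this principle to $g \in D^\times$, the multiplicities of the distinct roots of $\chi_g$ are forced to be equal. Comparing with the formula for $\chi_g$, the pattern $(3,1)$ is excluded, so we must have $\mu = \mu(1 + w^T v)$, i.e.\ $w^T v = 0$. Then $\chi_g(X) = (X - \mu)^4$ places us in the case $r = 1$, giving $\mu \in L$. Finally, $g - \mu = \mu\, v w^T$ lies in $D$ (since $\mu \in L \subseteq D$ and $g \in D$), and
\[ (v w^T)^2 \;=\; v\,(w^T v)\, w^T \;=\; 0, \]
so $\mu\, v w^T$ is a nilpotent element of the division algebra $D$, hence zero. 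Therefore $g = \mu \in L^\times$, proving the claim. No step presents a real obstacle; the only nontrivial ingredient is the standard description of reduced characteristic polynomials in a central simple algebra, for which I would cite a textbook reference.
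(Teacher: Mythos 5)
Your proof is correct, and it takes a genuinely different route from the paper's. The paper never writes down the rank-one perturbation explicitly and never invokes the structure theorem $\chi_g = m_g^{n/r}$: it simply observes that the hypothesis forces $\chi_g(t) = (t-\mu)^3(t-\lambda)$, uses that $\chi_g$ has $L$-coefficients together with separability in characteristic zero to force the multiplicity-$3$ root to lie in $L$ (an irreducible factor over $L$ has distinct roots, so one factor has exponent $3$ and thus degree $1$), then reads off $\lambda \in L$ from the constant term, and finishes with the Cayley--Hamilton identity $(g-\mu)^3(g-\lambda)=0$ in the division algebra $D$, which has no zero divisors. Your approach instead leans on the standard fact that in a degree-$n$ division algebra the reduced characteristic polynomial is $m_g^{n/r}$, deduces that all root multiplicities are equal so the $(3,1)$ pattern is impossible, concludes $w^Tv=0$, and closes by showing $g-\mu$ is nilpotent and hence zero. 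The two proofs use the rank condition in the same way (to pin down the shape of $\chi_g$), but the paper's endgame replaces your structure-theorem-plus-nilpotence step with a more elementary separability-plus-Cayley--Hamilton step, needing only that $\chi_g \in L[t]$ and that $D$ has no zero divisors; yours has the small advantage of identifying $g$ explicitly as $\mu$ rather than as one of two scalars. One minor point to tidy in your write-up: you should note that the case $\rk(g-\mu)=0$ (i.e. $vw^T=0$) is subsumed by the same argument, landing directly in the $\chi_g=(X-\mu)^4$ branch.
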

\begin{proof} Suppose that $g \in D^\times$ and $\mu \in k^\times$ are such that $\rk( \mu^{-1} g - 1 ) \leq 1$. Let $\chi_g(t)$ be the reduced characteristic polynomial of $g$. Since $\rk( g - \mu) \leq 1$, we see that $\chi_g(t) = (t-\mu)^3(t-\lambda)$ for some $\lambda \in k^\times$. Now $\chi_g(t)$ has coefficients in $L$ by \cite[Lemma IV.2.1]{BerOgg}, and its irreducible factors in $L[t]$ have no repeated roots in $k$ because $\Char(L) = 0$. Therefore at least one of these factors must be $t - \mu$ which forces $\mu \in L$. Looking at the constant term of $\chi_g(t)$, we deduce that $\lambda \in L$ also. Since $\chi_g(g) = 0$ by the Cayley-Hamilton Theorem \cite[Lemma IV.2.3(5)]{BerOgg}, $g \in D$ satisfies the equation $(g-\mu)^3(g-\lambda) = 0$. Because $D$ is a division algebra by assumption, it follows that $g = \mu$ or $g = \lambda$. In either case, $g \in L^\times$ as required.
\end{proof}

\begin{proof}[Proof of Theorem \ref{MainD}] It is easy to see that $\cZ_Y = \cZ_C$. Now Theorem \ref{CubicCoreIO} tells us that $\cZ_Y^\circ = \cZ_C^\circ = \{g \in G: \rk(g - 1) \leq 1\} \cdot k^\times$ , so $\cZ_Y^\circ \cap D^\times = L^\times$ by Lemma \ref{RedCharPol}. Scalars in $D^\times$ act trivially on the flag variety $X$, so condition (b) of Corollary \ref{GenTransReg} is satisfied. Condition (a) is also satisfied because $D^\times$ is known to be the group $L$-points of a connected $L$-form $\G$ of $\GL_4$. Now apply Corollary \ref{GenTransReg}.
\end{proof}

\bibliographystyle{plain}
\bibliography{../references}
\end{document}